\documentclass[11pt]{amsart}
\usepackage[utf8]{inputenc}

\addtolength{\hoffset}{-1.5cm}
\addtolength{\textwidth}{3cm}

\newcommand{\ma}{\mathcal{A}}
\usepackage{amsmath,amssymb}
\usepackage{comment}
\usepackage{amscd,eucal,amsthm}
\usepackage{pb-diagram}
\usepackage{tikz-cd}
\newcommand{\bc}{\mathbb{C}}
\newcommand{\br}{\mathbb{R}}
\newcommand{\bz}{\mathbb{Z}}
\newcommand{\bn}{\mathbb{N}}
\newcommand{\fg}{\mathfrak{g}}
\newcommand{\fn}{\mathfrak{n}}
\newcommand{\fh}{\mathfrak{h}}
\newcommand{\fk}{\mathfrak{k}}
\newcommand{\fb}{\mathfrak{b}}
\newcommand{\fz}{\mathfrak{z}}
\newcommand{\ft}{\mathfrak{t}}
\newcommand{\bp}{\mathbb{P}}
\newcommand{\ba}{\mathbb{A}}
\DeclareMathOperator{\diag}{\Delta}

\DeclareMathOperator{\rk}{rk}
\DeclareMathOperator{\spann}{span}
\DeclareMathOperator{\gr}{gr}
\DeclareMathOperator{\Id}{id}
\DeclareMathOperator{\fsl}{\mathfrak{sl}}
\DeclareMathOperator{\ev}{ev}
\DeclareMathOperator{\Hom}{Hom}
\DeclareMathOperator{\Res}{Res}
\newcommand{\ul}{{\underline{\lambda}}}
\newcommand{\uz}{{\underline{z}}}
\newcommand{\fgl}{\mathfrak{gl}}

\newcommand{\CB}{\mathcal{B}}
\newcommand{\CA}{\mathcal{A}}
\newcommand{\CO}{\mathcal{O}}
\newcommand{\C}{\mathbb{C}}
\newcommand{\CP}{\mathbb{CP}}
\newcommand{\Rees}{\operatorname{Rees}}
\newcommand{\Cx}{\bc^\times}
\newcommand{\PP}{{\mathbb P}}
\newcommand{\CS}{{\mathcal S}}
\newcommand{\tU}{{\widetilde U}}
\newcommand{\BG}{{\mathbb G}}
\newcommand{\CF}{{\mathcal F}}
\newcommand{\CU}{{\mathcal U}}
\newcommand{\tCU}{\widetilde \CU}
\newcommand{\BA}{{\mathbb A}}
\newcommand{\BC}{{\mathbb C}}
\newcommand{\fr}{{\mathfrak{r}}}
\newcommand{\frr}{{\widetilde \fr}}
\newcommand{\fs}{{\mathfrak{s}}}
\newcommand{\CW}{{\mathcal W}}

\newcommand{\Cf}{{\mathcal T}}
\newcommand{\CQ}{{\mathcal Q}}
\newcommand{\CG}{{\mathcal G}}
\newcommand{\CV}{{\mathcal V}}
\newcommand{\Gr}{{\mathrm{Gr}}}
\newcommand{\bM}{{\mathbf M}}
\newcommand{\End}{{\operatorname{End}}}

\newcommand{\sslash}{\mathbin{/\mkern-6mu/}}

\DeclareMathAlphabet\mathbfcal{OMS}{cmsy}{b}{n}

\newtheorem{lemma}{Lemma}[section]
\newtheorem{defn}[lemma]{Definition}
\newtheorem{prop}[lemma]{Proposition}
\newtheorem{thm}[lemma]{Theorem}
\newtheorem{lem}[lemma]{Lemma}
\newtheorem{corol}[lemma]{Corollary}
\newtheorem{ex}[lemma]{Example}
\newtheorem{conj}[lemma]{Conjecture}
\newtheorem{rem}[lemma]{Remark}

\newcommand*{\dupcntr}[2]{%
  \expandafter\let\csname c@#1\expandafter\endcsname\csname c@#2\endcsname
}
\makeatother
\author{Aleksei Ilin, Joel Kamnitzer and Leonid Rybnikov}

\title{Gaudin models and moduli space of flower curves}

\begin{document}
\begin{abstract} We introduce and study the family of trigonometric Gaudin subalgebras in $U\fg^{\otimes n}$ for arbitrary simple Lie algebra $\mathfrak{g}$. This is the family of commutative subalgebras of maximal possible transcendence degree that serve as a universal source for higher integrals of the trigonometric Gaudin quantum spin chain attached to $\mathfrak{g}$. We study the parameter space that indexes all possible degenerations of subalgebras from this family. In particular, we show that (rational) inhomogeneous Gaudin subalgebras of $U\fg^{\otimes n}$ previously studied in \cite{ffry} arise as certain limits of trigonometric Gaudin subalgebras. Moreover, we show that both families of commutative subalgebras glue together into the one parameterized by the space $\overline{\mathcal F}_n$, which is the total space of degeneration of the Deligne-Mumford space of stable rational curves $ \overline M_{n+2} $ to the moduli space of cactus flower curves $ \overline F_n $ recently introduced in \cite{iklpr}.
As an application, we show that trigonometric Gaudin subalgebras act on tensor products of irreducible finite-dimensional $\fg$-modules without multiplicities, under some explicit assumptions on the parameters in terms of two different real forms of $\overline M_{n+2}$. This gives rise to a monodromy action of the affine cactus group on the set of eigenstates for the trigonometric Gaudin model. We also explain the relation
between the trigonometric Gaudin model and the quantum cohomology of minuscule resolutions of affine Grassmannian slices.


\end{abstract}
\maketitle

\section{Introduction.}

\subsection{Gaudin models} Let $\fg$ be a complex simple Lie algebra and $U\fg$ be its universal enveloping algebra. We consider some remarkable families of large commutative subalgebras in $U\fg^{\otimes n}$ called \emph{Gaudin subalgebras}. Originally, in \cite{g1}, Gaudin introduced the generators of such subalgebras for $\fg=\fsl_2$ as a complete set of integrals of a useful degeneration of the Heisenberg quantum spin chain. In \cite{g2}, the Gaudin model was generalized to an arbitrary semisimple Lie algebra $\fg$. In \cite{ffre}, Feigin, Frenkel and Reshetikhin constructed higher integrals of this generalized model with the help of the critical level phenomenon for the affine Lie algebra $\hat{\fg}$. 

In this paper, we consider three versions of the Gaudin models: rational homogeneous, rational inhomogeneous, and trigonometric.  Our main goal is to study the compactified parameter space for each of these families of subalgebras.  For the homogeneous Gaudin model, this has been thoroughly studied in \cite{r}, so we will focus on the inhomogeneous and trigonometric models.

In order to explain these subalgebras, it will be necessary to introduce some notation.  Fix a triangular decomposition $\fg = \fn_+ \oplus \fh \oplus \fn_-$, where $\fh$ is a Cartan subalgebra.  Fix a non-degenerate invariant symmetric bilinear form $ (\cdot,\cdot) $ on $ \fg$ and let $ \Omega \in \fg^{\otimes 2}$ be the corresponding Casimir element.  Write $ \Omega = \Omega_+ + \Omega_0 + \Omega_-$ where $ \Omega_+ \in \fn_+ \otimes \fn_-$, $\Omega_0 \in \fh \otimes \fh$ and $ \Omega_- \in \fn_- \otimes \fn_+$.  


\begin{itemize}
    \item Let $ z_1, \dots, z_n \in \bc$ be distinct.  The \emph{homogeneous Gaudin subalgebra} is a maximal free commutative subalgebra $$\ma(z_1, \ldots, z_n) \subset (U\fg^{\otimes n})^{\fg} \subset U\fg^{\otimes n}$$ 
    which contains the Gaudin Hamiltonians $H_i$ for $ i = 1, \ldots, n$
$$H_i = \sum_{j \not  =i} 
\frac{\Omega^{(ij)}}{z_i - z_j}.
$$
where $ \Omega^{(ij)} $ denotes the result of inserting $ \Omega $ in the $ i, j$ tensor factors.

\item Let $z_1, \dots, z_n $ be as above and also let $ \chi \in \fg $.  The \emph{inhomogeneous Gaudin subalgebra} is a maximal free commutative subalgebra $$\ma_{\chi}(z_1, \ldots, z_n) \subset (U\fg^{\otimes n})^{\fz_{\fg}(\chi)} \subset U\fg^{\otimes n}$$ 
which contains the inhomogeneous Gaudin Hamiltonians $H_{i,\chi}$ for $ i = 1, \ldots, n$
$$ H_{i,\chi} = \chi^{(i)} + \sum_{j\ne i} \frac{\Omega^{(ij)}}{z_i - z_j}$$
(where $\chi^{(i)}$ denotes $\chi$ inserted to the $i$-th tensor factor) as well as the dynamical Gaudin Hamiltonians, see Section \ref{inhom}.

\item Let $ z_1, \dots, z_n \in \bc^\times$ be distinct and let $ \theta \in \fh $.   The \emph{trigonometric Gaudin subalgebra} is a maximal free commutative subalgebra $$\ma_{\theta}^{trig}(z_1, \ldots, z_n) \subset (U\fg^{\otimes n})^{\fh} \subset U\fg^{\otimes n}$$ which 
contains the trigonometric Gaudin Hamiltonians $H_{i,\theta}^{trig}$ for $ i = 1, \ldots, n$
$$ H_{i, \theta}^{trig} = \frac{\theta^{(i)}}{z_i} + \sum_{j\ne i } \frac{\Omega^{(ij)}}{z_i - z_j} - \sum_j \frac{\Omega_-^{(ij)}}{z_i}$$

\end{itemize}

\subsection{Trigonometric Gaudin model}
In the present paper, we define the trigonometric Gaudin model for any semisimple Lie algebra. In type A, it was considered in \cite{mtv} and \cite{mr}.\footnote{The formulas for quadratic Hamiltonians in \cite{mtv} and \cite{mr} are slightly different from ours but become the same up to a change of the parameter $\theta$ after restriction to a weight space (with respect to the diagonal Cartan subalgebra) in a representation of $U\fg^{\otimes n}$.} We give three equivalent descriptions of the trigonometric Gaudin subalgebras, each of which can be used to explain the above complicated formulas for the quadratic elements.  As above, we fix $ z_1, \dots, z_n \in \bc^\times$ distinct and $ \theta \in \fh $. 

\subsubsection{Image of a Gaudin algebra}
We begin with the Gaudin subalgebra $\ma(0, z_1, \ldots, z_n) \subset U\fg^{\otimes {n+1}} $.  Then we define $ \ma^{trig}_\theta(z_1, \dots, z_n) $ to be its image under a certain homomorphism $\psi_{\theta}: (U\fg^{\otimes n+1})^{\fg} \to U\fg^{\otimes n}$ coming from the quantum Hamiltonian reduction with respect to the diagonally embedded copy of $ \fn_+$, see section \ref{se:deftrig}.

\subsubsection{Verma tensor product multiplicity} \label{se:introVerma}
As $ \ma^{trig}_\theta(z_1, \dots, z_n) $ is a subalgebra of $ (U\fg^{\otimes n})^\fh $, it acts on tensor products of representations, preserving weight spaces.  Fix $ \lambda_1, \dots, \lambda_n $ dominant weights and consider the tensor product of irreducible representations $ V(\ul) := V(\lambda_1) \otimes \cdots \otimes V(\lambda_n) $.  We will now characterize its action on this representation.

For any $ \tau \in \fh^*$, let $ M(\tau)$ denote the Verma module with highest weight $ \tau$. Then for generic $ \theta \in \fh \cong \fh^*$ we have an identification
$$
\Hom_{\fg}(M(\theta + \mu), M(\theta)\otimes V(\ul)) \cong V(\ul)_\mu
$$
 
With respect to this identification, in section \ref{4.3} we establish an equality in the endomorphism algebra of this vector space
$$
\text{image of } \ma(0, z_1 \dots, z_n) = \text{ image of } \ma_\theta^{trig}(z_1 \dots, z_n) 
$$

\subsubsection{Image of a universal trigonometric Gaudin algebra}
For this construction, we begin with the universal Gaudin algebra $ \ma \subset U(\fg[t^{-1}]) $, which is defined using the centre of the affine Lie algebra at critical level.  Then using quantum Hamiltonian reduction, we produce a universal trigonometric Gaudin algebra $ \ma_\theta^{trig} \subset U(\fn_+ \oplus t^{-1} \fg[t^{-1}])$.  The trigonometric Gaudin algebra $ \ma_\theta^{trig}(z_1, \dots, z_n)$ is the image of this algebra under the evaluation homomorphisms.

\subsection{Parameter spaces of Gaudin subalgebras}
Each class of Gaudin subalgebra depends on a choice of parameters.  However, they exhibit the following invariance with respect to scaling and/or translation.
\begin{itemize}
    \item $\ma(z_1, \ldots, z_n) = \ma(a z_1 +b, \ldots, a z_n +b)$ for any $a \in \bc^{\times}$ and $b \in \bc$;
    \item  $\ma_{\theta}^{trig}(z_1, \ldots, z_n) = \ma_{\theta}^{trig}(a z_1, \ldots, a  z_n)$ for any $a \in \bc^{\times}$; 
    \item $\ma_{\chi}(z_1, \ldots, z_n) = \ma_{\chi}(z_1 + b, \ldots, z_n +b)$ for any $ b \in \bc $; \\ 
    $\ma_{\chi}(z_1, \ldots, z_n) = \ma_{a^{-1} \chi} (a z_1, \ldots, a z_n)$ for any $a \in \bc^{\times}$.
\end{itemize}

In the present paper, we consider families of trigonometric/inhomogeneous Gaudin subalgebra with fixed $\theta \in \fh$ and $\chi \in \fg$ and varying $ z_1, \dots, z_n $. 

Using the above invariance properties, we assign the following parameter spaces for families of Gaudin subalgebras.  Here $ B = \bc^\times \ltimes \bc $ is the group of affine linear transformations and $ \Delta $ denotes the fat diagonal.
\begin{itemize}
    \item For the homogeneous Gaudin model, we have the parameter space $M_{n+1} := (\bc^n \setminus \Delta) / B $, which we identify with the moduli space of rational curves with $n+1$  distinct marked points $z_1, \ldots, z_n, \infty$.
    \item For the trigonometric Gaudin model, we have the parameter space $M_{n+2} := ((\bc^\times)^n \setminus \Delta) / \bc^\times$, which we identify the moduli space of rational curves with $n+2$ distinct marked points $0, z_1, \ldots, z_n, \infty$.
    \item For the inhomogeneous Gaudin model, we have the parameter space $F_n := (\bc^n \setminus \Delta) /  \bc$, which we identify with the moduli space of rational curves with $n$ distinct marked points $z_1, \ldots, z_n$ and one distinguished point $ \infty$ which caries a non-zero tangent vector.
\end{itemize}

The main goal of the present paper is to describe all possible limits of these Gaudin subalgebras and to determine the compactified parameter space for these algebras.  These limit subalgebras arise as some of the parameters $z_i$ approach each other or tend to $ \infty $ (or $0$ in the trigonometric case).  More precisely, we have an injective map 
$$ M_{n+1} \rightarrow \{ \text{subalgebras of $ (U\fg)^{\otimes n} $} \}$$ 
(this is for the homogeneous Gaudin model; for the other models, the source will be different) and we wish to determine the closure of the image.  The rigorous definition of this closure depends on a choice of filtration by finite-dimensional subalgebras, see section \ref{se:Family}.

For homogeneous Gaudin subalgebras, the first author \cite{r} gave an explicit description of all limit subalgebras and the corresponding compactification of the parameter space. These results are summarized in section \ref{homgaudin} and more tersely as follows.  Let $ \overline M_{n+1}$ be the Deligne-Mumford compactification of $M_{n+1}$; it parametrizes genus 0 stable nodal curves with $n+1$ distinct marked points.
\begin{thm} \label{th:intromain0}
The compatified parameter space of homogeneous Gaudin subalgebras is $ \overline M_{n+1}$.  Moreover, the limit subalgebras are products of Gaudin subalgebras glued together according the operadic structure of $\overline M_{n+1}$. 
\end{thm}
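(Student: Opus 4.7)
The proof would start by fixing the filtration under which limits are taken: use the natural PBW filtration on $U\fg^{\otimes n}$ (or equivalently the Rees construction from the Poisson counterpart). The homogeneous Gaudin subalgebra $\ma(z_1, \dots, z_n)$ is a free polynomial subalgebra of transcendence degree $\dim \fn_+ \cdot n$ whose Hilbert series does not depend on the parameters. Thus the assignment $M_{n+1} \to \operatorname{Hilb}^d((U\fg^{\otimes n})^\fg)$ lands in a single connected component of a Hilbert scheme, and the main content is to identify the closure of its image together with the set-theoretic limit subalgebras. For this the plan is to pass to the Feigin--Frenkel--Reshetikhin description: $\ma(z_1, \dots, z_n)$ is the image of the centre $\fz(\hat\fg)$ at critical level under the evaluation homomorphism $\fg((t-z_i))\to \fg$ at the points $z_i$, where the centre is identified with a polynomial algebra in shifts of Segal--Sugawara vectors placed at each $z_i$.

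Next I would analyze one-parameter degenerations. Suppose a subset $S\subset\{1,\dots,n\}$ of points collides at rate $\epsilon\to 0$, i.e.\ $z_i = z_\ast + \epsilon w_i$ for $i\in S$ with $w_i$ distinct. Writing each Segal--Sugawara generator in terms of the local coordinate $t-z_\ast$ and re-scaling in $\epsilon$ produces a two-tier limit: the leading term lives on a new bubble $\bp^1$ with marked points $\{w_i\}_{i\in S}\cup\{\infty\}$, while the subleading term lives on the original curve with $S$ replaced by the nodal attachment point. Iterating over nested collisions corresponds exactly to the combinatorics of a stable rational tree. This gives, for every point of $\overline M_{n+1}\setminus M_{n+1}$ with dual tree $T$ and with vertex $v$ carrying marked points $S_v\cup\{\text{node}_v\}$, a candidate limit
\[
\ma_T \;=\; \bigotimes_{v\in \mathrm{Vert}(T)} \ma(\uz_v) \;\subset\; U\fg^{\otimes n},
\]
where the tensor factor at $v$ acts on the tensor slots associated to the leaves of the subtree rooted at $v$ via the iterated coproduct. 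Invariance under the affine group $B$ at each vertex is what matches the stability condition. Containment (the limit contains $\ma_T$) follows by taking appropriate $\epsilon$-rescalings of the Segal--Sugawara generators and tracking leading orders.

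To conclude I would need two matching bounds. \textbf{Dimension bound.} The transcendence degree of any flat limit equals the generic one, $n\dim\fn_+$, and this is exactly the transcendence degree of $\ma_T$ because for a stable tree the sum of $(\dim\fn_+)\cdot(\#\text{slots at }v-2)$ over vertices telescopes to $(n-1)\dim\fn_+$ plus the contribution of the marked infinity. Hence the flat limit equals $\ma_T$. \textbf{Injectivity and properness.} I would show that the assignment $T \mapsto \ma_T$ is injective, for instance by reading off the dual tree from the commutant structure of the quadratic Casimirs $\Omega^{(ij)}$ inside $\ma_T$: the finest partition of $\{1,\dots,n\}$ into blocks whose partial sums $\sum_{i,j\in I}\Omega^{(ij)}$ lie in the algebra recovers the children of the root, and one proceeds inductively. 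Together with the fact that $\overline M_{n+1}$ is proper while the map $M_{n+1}\to\mathrm{Hilb}$ is defined on the complement, this forces the closure to be exactly $\overline M_{n+1}$.

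The hard step is the dimension/maximality claim: showing that the rescaled Segal--Sugawara generators actually produce enough elements in the limit to fill out $\ma_T$ and that no "hidden" generators appear, i.e.\ flatness of the family over $\overline M_{n+1}$. The cleanest route is to use the universal Gaudin algebra $\ma\subset U(\fg[t^{-1}])$ and its behaviour under the operadic compositions of $\overline M_{n+1}$ corresponding to gluing two stable curves at a marked point, which reduces flatness at the boundary to flatness of the FFR construction at a single node — a statement that can be verified by a direct computation with Segal--Sugawara vectors and their Poisson analogues on $\fg^\ast[[t]]$.
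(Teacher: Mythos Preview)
Your outline has the right architecture---construct candidate limit algebras indexed by stable trees, prove flatness (constant Hilbert--Poincar\'e series), and prove faithfulness (distinct boundary points give distinct subalgebras)---but two of the three pillars are not actually established, and the approach diverges from the paper's in a way that leaves real gaps.

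On flatness: the numerical bookkeeping is already off, since $\ma(z_1,\dots,z_n)$ is a polynomial ring in $(n-1)(p+r)+r$ generators with $p=\dim\fn_+$, $r=\rk\fg$, not $n\dim\fn_+$. More importantly, your ``dimension bound'' is circular: you invoke that a flat limit preserves transcendence degree and then match against $\ma_T$, but the question is precisely whether the limit in the product of Grassmannians of filtered pieces coincides with $\ma_T$, which requires knowing that $\ma_T$ has the \emph{same Hilbert--Poincar\'e series}, not merely the same transcendence degree. The paper's route (following \cite{r}, and carried out in full generality in Section~7 here) exhibits on each open affine chart $W_\tau$ indexed by a planar binary tree an explicit collection of generators $s^m_{l,v,\tau}$ obtained as sums of residues of $S_l(w;\ldots)$, and proves their algebraic independence at every boundary point by induction on the stratification. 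The key algebraic input is Knop's theorem that $(U\fg^{\otimes I})^\fg$ is free over the diagonal centre $\Delta(Z(U\fg))$, which guarantees that in the operadic description $\Delta^\CB(\ma(w_1,\dots,w_m))\otimes_Z\bigotimes_k\ma(C_k)$ the factors combine without collapse. Your $\ma_T=\bigotimes_v\ma(\uz_v)$ omits this tensoring over $Z$, and your proposed ``direct computation at a single node'' is exactly the place where Knop's freeness is doing the work; it is not a routine check.

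On faithfulness: the paper's argument is different and sharper. One restricts to the quadratic filtered piece, observes that $\ma(\uz)\cap(U\fg^{\otimes n})^{(2)}=\gamma(Q(\uz))\oplus\spann(\omega^{(i)})$ where $\gamma:\fs_n\to U\fg^{\otimes n}$ embeds the Drinfeld--Kohno holonomy Lie algebra, and then invokes the theorem of Aguirre--Felder--Veselov that $\uz\mapsto Q(\uz)$ extends to an isomorphism $\overline M_{n+1}\cong\overline N_n\subset\Gr(n-1,\fs_n^1)$. Your proposal to recover the dual tree from the partial Casimirs $\sum_{i,j\in I}\Omega^{(ij)}$ lying in the algebra is morally the same idea (it amounts to reading off $Q(C)$), but the actual content---that the limit of $Q(\uz)$ in the Grassmannian is precisely the subspace encoded by the stable curve, and that this map is an isomorphism onto its image---is the AFV theorem, which you have not reproved or cited.
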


The first main result of present paper is to generalize this result to inhomogeneous and trigonometric Gaudin models. 

\begin{thm} \label{th:intromain1}
\begin{enumerate}
\item For any $\theta \in \fh$ the compactified parameter space of trigonometric Gaudin subalgebras is  $\overline M_{n+2}$.  The limit subalgebras are certain products of trigonometric Gaudin subalgebras, homogeneous Gaudin subalgebras and certain images of homogeneous Gaudin subalgebras, see Theorem \ref{trigcomp}.
\item For any $\chi \in \fh^{reg}$ the compactified parameter space of inhomogeneous Gaudin subalgebras is the space of cactus flower curves $\overline{F}_n$. 
 The limit subalgebras are certain products of inhomogeneous and homogeneous Gaudin subalgebras, see Theorem \ref{compinhom}.
\end{enumerate}
\end{thm}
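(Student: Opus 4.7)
The plan for part (1) is to leverage the realization $\ma^{trig}_\theta(z_1,\dots,z_n) = \psi_\theta\bigl(\ma(0,z_1,\dots,z_n)\bigr)$ from Section \ref{se:deftrig} together with Theorem \ref{th:intromain0}. I would view $\ma(0,z_1,\dots,z_n)\subset (U\fg^{\otimes(n+1)})^\fg$ as a flat family over $M_{n+2}$: fixing one marked point to be $0$ uses up the translation freedom in the affine group, leaving only the scaling $\bc^\times$ as residual symmetry, so the natural parameter space is exactly $M_{n+2}=(\bc^\times)^n\setminus\Delta/\bc^\times$. By Theorem \ref{th:intromain0} (applied to the homogeneous family with $n+1$ arguments $0,z_1,\dots,z_n$), this family extends flatly to $\overline M_{n+2}$, the Deligne--Mumford moduli of stable genus-$0$ nodal curves with $n+2$ marked points. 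Since $\psi_\theta$ is a fixed algebra homomorphism defined on all of $(U\fg^{\otimes(n+1)})^\fg$, pushing forward yields a flat family of commutative subalgebras of $U\fg^{\otimes n}$ parametrized by $\overline M_{n+2}$.

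For the identification of boundary strata I would stratify $\overline M_{n+2}$ by the combinatorial type of the nodal curve. The homogeneous result of \cite{r} already expresses each limit subalgebra of $\ma(0,z_1,\dots,z_n)$ as a tensor product of homogeneous Gaudin algebras indexed by the irreducible components of the stable curve, glued operadically via the nodal structure. One then traces how $\psi_\theta$ acts on this product: on a component containing neither $0$ nor $\infty$, $\psi_\theta$ acts as the identity, giving a plain homogeneous factor; on a component containing exactly one of $0,\infty$, one obtains the image of a homogeneous Gaudin algebra under an analogue of $\psi_\theta$; and on the components of the unique path in the dual graph joining $0$ to $\infty$, the reduction produces a trigonometric Gaudin algebra with a shifted Cartan parameter. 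This matches Theorem \ref{trigcomp}. Injectivity of the parameterization --- that distinct points of $\overline M_{n+2}$ yield distinct subalgebras --- can then be verified by passing to the associated graded with respect to the PBW filtration and comparing with the classical Gaudin family, reducing to the injectivity established in \cite{r}.

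For part (2), where $\chi\in\fh^{reg}$ is fixed, the plan is to realize the inhomogeneous family as a rescaled limit of the trigonometric one: the substitution $z_i\mapsto 1+\varepsilon z_i$, $\theta\mapsto \varepsilon^{-1}\chi$ together with an overall rescaling by $\varepsilon$ converts $H_{i,\theta}^{trig}$ into $H_{i,\chi}$ as $\varepsilon\to 0$, up to central shifts. This one-parameter degeneration is parametrized precisely by the total space $\overline{\mathcal F}_n$ of \cite{iklpr}, whose generic fibre is $\overline M_{n+2}$ and whose special fibre is the moduli $\overline F_n$ of cactus flower curves, the petals of which record the limiting clusters of the $z_i$ at $\varepsilon=0$. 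I would then extend the family of part (1) over the generic fibres of $\overline{\mathcal F}_n$ and show that the limit along the special fibre is the inhomogeneous Gaudin subalgebra attached to the corresponding flower curve, with the operadic decomposition promised by Theorem \ref{compinhom}. The main obstacle, as I see it, is precisely this last degeneration step: proving that the extended family remains flat across the special fibre of $\overline{\mathcal F}_n$, which requires a careful analysis of the universal (reduced) affine Gaudin algebra from Section 2.2.3 together with the classification of all limit subalgebras on $\overline F_n$ via the combinatorics of cactus flower curves.
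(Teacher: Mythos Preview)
Your outline for part (1) has a genuine gap at the step ``pushing forward yields a flat family''. The image of a flat family of subalgebras under a fixed homomorphism $\psi_\theta$ is not automatically flat: the Hilbert--Poincar\'e series of $\psi_\theta(\ma(C))$ could a priori jump at boundary strata, and this is exactly where the paper invests the real work. The easy case is when $z_0$ and $z_{n+1}$ lie on the same component (Corollary~\ref{co:trig-easy-limit}), where the tensor decomposition behaves well under $\psi_\theta$. But when $z_0$ and $z_{n+1}$ are separated --- in particular for caterpillar curves --- neither factor $\psi_\theta(\Delta^\CB(\ma(C_\infty)))$ nor $\psi_\theta(j_{B_0}(\ma(C_0)))$ is a Gaudin or trigonometric Gaudin algebra, and one must prove by hand (Lemma~\ref{sizelemma}) that certain explicit generators remain algebraically independent, via a filtration argument and a differential computation at a carefully chosen point $(h,\dots,h,f,\dots,f,f+h)$. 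Proposition~\ref{pr:HPsame} then bootstraps from caterpillars to arbitrary $C$. Your proposal treats this as automatic, which it is not. Relatedly, your description of the boundary subalgebras is slightly off: there is no ``shifted Cartan parameter'' along the $0$--$\infty$ path; the factors there are genuinely new images of homogeneous Gaudin algebras under $\psi_\theta$ that are neither homogeneous nor trigonometric.

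Your faithfulness argument is also different from the paper's and, as stated, does not close. Passing to the PBW associated graded does not reduce to~\cite{r}, since $\gr\psi_\theta$ lands in $S\fg^{\otimes n}$, not in $(S\fg^{\otimes(n+1)})^\fg$, and is independent of $\theta$; you would need a separate injectivity statement for classical trigonometric subalgebras. The paper instead works with the quadratic part: it shows (Lemma~\ref{le:injective}) that $\gamma_\theta^1:\fs_{n+1}^1\to (U\fg^{\otimes n})^{(2)}$ is injective, so the quadratic piece of $\ma_\theta^{trig}(C)$ is $\gamma_\theta^1(Q(C))$ plus Casimirs, and then invokes Aguirre--Felder--Veselov (Theorem~\ref{th:AFV}) to conclude that $C\mapsto Q(C)$ already embeds $\overline M_{n+2}$. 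For part~(2) your overall strategy via $\overline{\mathcal F}_n$ matches the paper's, and you correctly flag flatness across the special fibre as the obstacle; the paper's resolution is to build explicit residue-type generators $s_{l,v,\tau}^m$ on the open affines $\CW_\tau$ and prove their algebraic independence stratum by stratum (Steps~1--4 of Section~\ref{sec6}), together with a new analogue of AFV for $\overline F_n$ (Theorem~\ref{th:FnGnIso}) to obtain faithfulness.
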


The above space $ \overline F_n $ was defined in \cite{iklpr} as a blowup of the Ardila-Boocher matroid Schubert variety associated to the type A root hyperplane arrangement.  A point of $ \overline F_n $ can be viewed a ``cactus flower curve'';  a curve $ C = C_1 \cup \dots \cup C_m$ where each $ C_j$ is a genus 0, nodal, marked stable curve, all $C_j$ meet at a distinguished point $ z_{n+1}$, and we have the data of a non-zero tangent vector to each $ C_j$ at $ z_{n+1}$.

The proof of Theorem \ref{th:intromain0} in \cite{r} used a theorem of Aguirre-Felder-Veselov \cite{afv} who showed that $\overline M_{n+1}$ is a space of maximal commutative subspaces of the Drinfeld-Kohno (or holonomy) Lie algebra (see Section \ref{se:afv}).  We use their theorem to help prove part (1) of Theorem \ref{th:intromain1}.  In order to prove part (2) of Theorem \ref{th:intromain1}, we introduce a new Lie algebra which we call the inhomogeneous holonomy Lie algebra $ \fr_n$ and prove an isomorphism (Theorem \ref{th:FnGnIso}) between $ \overline F_n $ and a space of commutative subalgebras of $\fr_n $ (an analog of the Aguirre-Felder-Veselov result).

\subsection{Relation between the models}
It is possible to include these integrable models into one picture describing different degenerations of the XXZ Heisenberg spin chain\footnote{We thank Pavel Etingof for explaining this.}:
$$
\begin{tikzcd}
& {\text{XXZ Heisenberg}} \arrow[dr] \arrow[dl] \\
{\text{XXX Heisenberg}} \arrow[dr] &&
  {\text{Trigonometric Gaudin}}  \arrow [dl,swap] \\
& {\text{(In)homogeneous Gaudin}}
\end{tikzcd}
$$
According to Mukhin, Tarasov and Varchenko \cite{mtv}, 
in type A, the intermediate degenerations are related by a duality which can be seen as Howe duality on the algebras of observables and as bispectral duality on the solutions of the Bethe ansatz.  From this perspective, the compactified parameter space for trigonometric Gaudin algebras from Theorem \ref{th:intromain1} can be compared with the compactification result for the XXX spin chain in type A, see \cite{ir2}. For a concrete result about the degeneration of the XXX model to inhomogeneous Gaudin subalgebras, see \cite[Theorem 8.12]{kmr}. For degeneration of XXZ to Trigonometric Gaudin in type A see \cite{mr}. To the best of our knowledge there are no results in the literature about degeneration of Bethe subalgebras in $U_q(
\hat \fg)$ (quantum affine universal enveloping algebra)  to Bethe subalgebras in Yangian, however see \cite{kz}.

In the present paper, we prove that the trigonometric Gaudin algebras can degenerate to the inhomogeneous ones.  In particular, we have the following result for $\chi\in\fh^{reg}$ (Theorem \ref{th:familyCFn}):
$$\lim_{\varepsilon \to 0} \ma^{trig}_{\varepsilon^{-1}\chi}(1-\varepsilon  z_1, \ldots, 1-\varepsilon z_n) = \ma_{\chi}(z_1, \ldots, z_n).$$

In order to study this degeneration further, we introduce some schemes over $ \BA^1$.  Following Zahariuc \cite{Z}, we consider
$$
\BG = \{(z, \varepsilon) : 1 - z \varepsilon \ne 0 \}
$$
a group scheme over $ \BA^1 $ (via the map $(z,\varepsilon) \mapsto \varepsilon$) which degenerates $ \Cx$ to $ \C $.  Let $ \CF_n = (\BG^n \setminus \Delta) / \BG $.  This is the total space of a degeneration of $ M_{n+2} $ to $ F_n $.  We prove that for fixed $ \chi \in \fh$, there is injective map 
$$ \CF_n \rightarrow \{ \text{subalgebras of $ (U\fg)^{\otimes n} $} \}$$ 
which at $ \varepsilon \ne 0 $ gives the trigonometric Gaudin subalgebras (for $ \theta = \varepsilon^{-1} \chi $) and at $ \varepsilon = 0 $ gives the inhomogeneous Gaudin subalgebras.

We prove that the fibrewise (with respect to $ \varepsilon \in \BA^1$) compactification of this family is $ \overline \CF_n$, another space we introduced in \cite{iklpr}.  
This is the total space of a degeneration of $ \overline M_{n+2}$ to $ \overline F_n $.  Geometrically, $\overline{\mathcal F}_n $ parametrizes families of marked curves where two marked points come together to form a distinguished point with a tangent vector.  

As an application, we use the degeneration of trigonometric Gaudin subalgebras to inhomogeneous Gaudin subalgebras to show the existence of a cyclic vector for trigonometric Gaudin subalgebras (for generic parameters). Potentially, this is important for the completeness of the Bethe ansatz in the trigonometric Gaudin model.




\subsection{Bethe ansatz for Gaudin models.}
One of the main problems in Quantum Integrable systems is to explicitly find eigenvalues and eigenspaces for a given model. The Bethe ansatz method outputs a system of algebraic equations on some auxiliary parameters, such that the desired eigenvectors and eigenvalues are some explicit rational expressions of these parameters. However, almost always it is impossible to solve these equations explicitly; in particular, the solutions of the Bethe ansatz equations are highly multivalued functions of the parameters.
The Bethe ansatz conjecture states that 
the number of solutions of a corresponding system is equal to the dimension of a representation (at least for generic values of the parameters of the model), i.e. the Bethe ansatz method is complete. In \cite{ffre}, the Bethe Ansatz equations were interpreted as a ``no-monodromy'' condition on certain space of {\em opers} on the projective line $\mathbb{P}^1$. Namely, it was shown that $\ma(\uz)$ is isomorphic to
the algebra of functions on the space of $G^\vee$-opers on $\mathbb{P}^1$ with regular singularities at the points $z_1,\dots,z_n$ and
$\infty$, where $G^\vee$ is the Langlands dual group of $G$, and $G^\vee$-opers are connections on a principal $G^\vee$-bundle over $\mathbb{P}^1$ satisfying a certain transversality condition. This led to the new form of the completeness conjecture in terms of monodromy-free opers, which is known both for homogeneous and inhomogeneous Gaudin models: see \cite{r4} for the homogeneous and \cite{ffry} for the inhomogeneous model. In \cite[Conjecture 5.8]{efk}, Etingof, Frenkel and Kazhdan generalized this form of the Bethe ansatz conjecture so that it covers the trigonometric Gaudin model as well.

In the present paper, we work towards this conjecture for the trigonometric Gaudin model. We show that under certain reality conditions on the parameters, the spectrum of the trigonometric Gaudin model is simple. In particular we consider two real forms of $\overline{M}_{n+2}$: split $\overline{M}_{n+2}^{split} $ and compact $\overline{M}_{n+2}^{comp}$.  (Both spaces are in fact compact; the names come from the fact that they are related to the split (resp. compact) real forms of the torus $ (\bc^\times)^n / \bc^\times $.)  On the other hand, we consider only one real form of $ \overline F_n$, which we denote by $ \overline F_n(\br)$; it appears as the degeneration of both $\overline{M}_{n+2}^{split} $ and $\overline{M}_{n+2}^{comp}$. More precisely we consider two real forms of $\overline \CF_n$,  $ \overline{\mathcal F}^{split}_n$ and $ \overline{\mathcal F}^{comp}_n$, which have general fibres $\overline{M}_{n+2}^{split} $ and $\overline{M}_{n+2}^{comp}$, respectively, and each have $ \overline F_n(\br) $ as central fibre.

As above, for any $ C \in \overline M_{n+2}$ and $ \theta \in \fh$, the algebra $\ma_\theta^{trig}(C) $ acts on $V(\ul)_\mu$, a weight space in a $n$-fold tensor product.  Similarly, for $ C \in \overline F_n$ and $ \chi \in \fg$, the algebra $ \ma_\chi(C) $ acts on $ V(\ul)$.  An important question is to determine when this action splits $ V(\ul)_\mu$, or $ V(\ul)$, into a direct sum of distinct one-dimensional representations (or joint eigenvectors) for these algebras; in this case we say that the algebra acts with \textit{simple spectrum}.

The second main result of the present paper is summarized in the following theorem.  Here $\fh^{split}$ (resp. $\fh^{comp}$) denotes the Cartan subalgebra of the split (resp. compact) real form of $ \fg $.

\begin{thm} \label{thmA}
\begin{enumerate}
\item Let $ \theta \in \fh^{split} $ be generic and sufficiently dominant (i.e. $(\alpha,\theta) \gg 0$ for all positive roots $\alpha$).  For every $ C\in \overline M_{n+2}^{split}$, $\ma_\theta^{trig}(C)$ acts on $V(\ul)$ with 
 simple spectrum.
\item
Let $ \theta \in \fh $ be generic and such that $\theta-\frac{1}{2}\mu\in \fh^{comp}$.
For every $C \in \overline M_{n+2}^{comp}$, $\ma_\theta^{trig}(C)$ acts on $V(\ul)_\mu$ with simple spectrum.
\item Let $ \chi \in \fh^{split}$ be regular.  For every $ C\in \overline F_n(\br) $, $ \ma_\chi(C)$ acts on $ V(\ul)$ with simple spectrum.
\end{enumerate}
\end{thm}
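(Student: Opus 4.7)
The proof naturally splits into three parts, but a common strategy unites them: in each case I would combine a self-adjointness computation for the generators with the structure of the compactified family from Theorem \ref{th:intromain1} in order to propagate simple spectrum from generic points to the entire real form.

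For part (2), I would use the Verma module characterization recalled in Section \ref{se:introVerma}: the action of $\ma_\theta^{trig}(z_1,\dots,z_n)$ on $V(\ul)_\mu$ coincides with the action of $\ma(0,z_1,\dots,z_n)$ on $\Hom_\fg(M(\theta+\mu), M(\theta)\otimes V(\ul))$. Equip $V(\ul)_\mu$ with the tensor product of the positive-definite contravariant Hermitian forms on the $V(\lambda_i)$. I expect a direct computation to show that, when $|z_i|=1$ (so $\bar z_i = z_i^{-1}$) and $\theta - \tfrac12\mu \in \fh^{comp}$, the quadratic generators $H_{i,\theta}^{trig}$ are self-adjoint; the shift by $\tfrac12\mu$ is exactly what balances the anti-Hermitian ``twist'' term $-\sum_j\Omega_-^{(ij)}/z_i$ against its adjoint on the fixed weight space $V(\ul)_\mu$. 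Self-adjointness then follows for all higher generators by commutation with the quadratic ones and the quantum Hamiltonian reduction construction of $\ma_\theta^{trig}$. Once self-adjointness and commutativity are in place, simple spectrum on $V(\ul)_\mu$ is equivalent to the image of $\ma_\theta^{trig}(C)$ in $\End(V(\ul)_\mu)$ having maximal dimension $\dim V(\ul)_\mu$. For generic $C \in M_{n+2}^{comp}$, this is the known simple-spectrum statement for homogeneous Gaudin on Verma tensor products (via the identification above). For the remaining $C \in \overline M_{n+2}^{comp}$, Theorem \ref{trigcomp} describes the limit subalgebra as a product of smaller trigonometric/homogeneous Gaudin subalgebras acting on a corresponding factorization of $V(\ul)_\mu$, so simple spectrum follows by induction on $n$.

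For part (1), the setup is parallel but with the split real form: $\theta \in \fh^{split}$ and $z_i \in \br^\times$. In place of the positive-definite Hermitian form I would use the (real-valued) Shapovalov form on $M(\theta)\otimes V(\ul)$; for $\theta$ sufficiently dominant this form is non-degenerate on the relevant weight space, and the generators of $\ma(0,z_1,\dots,z_n)$ with real parameters are real-symmetric with respect to it. The same open-and-closed argument together with Theorem \ref{trigcomp} extends simple spectrum from generic $C \in M_{n+2}^{split}$ to all of $\overline M_{n+2}^{split}$. To pass from each weight space to all of $V(\ul)$, I would use that $\sum_i \theta^{(i)} \in \ma_\theta^{trig}$ acts on $V(\ul)_\mu$ by the scalar $\theta(\mu)$; for generic dominant $\theta$ these scalars separate the weights appearing in $V(\ul)$.

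For part (3), I would use Theorem \ref{th:intromain1}(2), which identifies the compactified parameter space with $\overline F_n$. On the interior $F_n(\br)$ (real $z_i$'s), simple spectrum of $\ma_\chi(C)$ on $V(\ul)$ for regular $\chi \in \fh^{split}$ is essentially the theorem of \cite{ffry}. For boundary $C \in \overline F_n(\br)\setminus F_n(\br)$, Theorem \ref{compinhom} identifies the limit algebra as a product of inhomogeneous and homogeneous Gaudin subalgebras attached to the irreducible components of the limit cactus flower curve, so an induction on the number of components — combined with the known simple-spectrum result for homogeneous Gaudin on real $\overline M_{n+1}$ from \cite{r4} — gives the claim.

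The main obstacle in this plan is the self-adjointness computation for part (2): the precise matching $\theta - \tfrac12\mu \in \fh^{comp}$ is a nontrivial output of the calculation, and verifying it for the full generating set of $\ma_\theta^{trig}$ (not only the quadratic Hamiltonians) requires careful use of the quantum Hamiltonian reduction description in Section \ref{se:deftrig}. A secondary technical point is the boundary induction: one must match the factorization of limit subalgebras in Theorem \ref{trigcomp} with the corresponding factorization of $V(\ul)_\mu$ (or $V(\ul)$) as a module over the limit algebra, in order to reduce simple spectrum to strictly smaller cases.
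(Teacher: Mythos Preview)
Your treatment of part (3) is essentially the paper's: cyclicity via the explicit product description of the limit algebras, and normality as a closed condition extending from $F_n(\br)$. That part is fine.

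The real gap is in your cyclicity argument for parts (1) and (2). You claim that maximal dimension of the image for generic $C$ is ``the known simple-spectrum statement for homogeneous Gaudin on Verma tensor products,'' and then propose to propagate to the boundary by induction via Theorem \ref{trigcomp}. Neither step works as stated. First, the cited cyclicity results (\cite{r4}) are for $\ma(C)$ acting on $V(\ul)^{\fn_+}$ with $V(\ul)$ finite-dimensional; there is no prior result giving a cyclic vector on $(M(\theta)\otimes V(\ul))^{\fn_+}$ that you can simply invoke. Second, and more seriously, the boundary induction breaks down: as the paper itself emphasizes in Section \ref{comp}, when $z_0$ and $z_{n+1}$ lie on different components the limit subalgebra $\ma_\theta^{trig}(C)$ is \emph{not} a product of smaller trigonometric and homogeneous Gaudin subalgebras, so there is no clean factorization of $V(\ul)_\mu$ to induct on. Your ``open-and-closed'' phrasing is also misleading here, since having a cyclic vector is an open condition, not a closed one.

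The paper avoids all of this by proving cyclicity via the degeneration $\overline{\CF}_n\to\BA^1$ (Proposition \ref{cyc}): the no-cyclic-vector locus is Zariski closed in $\overline{\CF}_n$, its image under the proper map to $\BA^1$ is closed, and at $\varepsilon=0$ this locus is empty by part (3). Hence for all but finitely many $\theta\in\bc^\times\cdot\chi$ every $\ma_\theta^{trig}(C)$ has a cyclic vector on $V(\ul)$. So the logical order is the reverse of yours: part (3) is the input, and the $\overline{\CF}_n$-family is what transports cyclicity to the trigonometric case uniformly over $\overline M_{n+2}$. For normality in part (2), the paper also does not compute on quadratic generators and then ``extend by commutation'' (which would not suffice: commuting with a self-adjoint operator does not make you normal); instead it shows directly that the \emph{image} of $\ma_\theta^{trig}(\uz)$ in $\End V(\ul)_\mu$ is $*$-closed, by combining projective invariance of the homogeneous Gaudin model (Theorem \ref{projective-invariance}) with the behaviour under the Cartan anti-involution, yielding $\pi_\ul^\mu(\ma_\theta^{trig}(\uz))^+=\pi_\ul^\mu(\ma_\theta^{trig}(\uz))$ precisely when $\theta-\tfrac12\mu\in\fh^{comp}$ and $\bar z_i=z_i^{-1}$.
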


One can think about the above statements as necessary conditions for the completeness of the Bethe ansatz.

\subsection{Monodromy of eigenvectors}
From Theorem \ref{thmA}, we can form  $S_n$-equivariant coverings of $ \overline M_{n+2}^{split}$, $ \overline M_{n+2}^{comp} $, and $ \overline F_n(\br) $ whose fibres over a point $C$ is the set of joint eigenvectors for the action of $\ma^{trig}_\theta(C) $ (or $\ma_\chi(C)$) on $ V(\ul)_\mu$ (here we have fixed $ \theta$ or $\chi$ satisfying the corresponding conditions of Theorem \ref{thmA}).  In each situation, this leads to an action of the equivariant fundamental group of the base on the fibre of the covering.

In \cite{hkrw}, we studied the corresponding covering of joint eigenvectors for the action of homogeneous Gaudin algebras and related it to the action of the cactus group $ C_n = \pi_1^{S_n}(\overline M_{n+1}^{split})$ on tensor products of crystals. 

In our setting, the fundamental groups were described in \cite[Theorems 11.11, 11.12]{iklpr}.
\begin{thm}
    \begin{enumerate}
        \item $\pi_1^{S_n}(\overline M_{n+2}^{comp}) \cong \widetilde{AC}_n$, \text{ the extended affine cactus group}
        \item $\pi_1^{S_n}(\overline F_n(\br)) \cong vC_n$, \text{ the virtual cactus group}
    \end{enumerate}
\end{thm}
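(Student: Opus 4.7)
The plan is to adapt the methods of Davis--Januszkiewicz--Scott (who computed the non-equivariant $\pi_1$ of $\overline M_{n+1}^{split}$) and of Henriques--Kamnitzer (who identified the $S_n$-equivariant version with the cactus group $C_n$) to the compact and flower settings. In that framework, the real moduli space is realized as an aspherical small cover over a convex polytope, and one reads off the presentation of $\pi_1^{S_n}$ directly from the codimension-one faces (generators) and codimension-two faces (relations) of the polytope, combined with the deck group of the $S_n$-cover.

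For part (1), I would first give a polytopal model of $\overline M_{n+2}^{comp}$. Since the compact real form is distinguished by the compact torus $U(1)^n/U(1)$ inside $(\bc^\times)^n/\bc^\times$, its chambers in $M_{n+2}^{comp}$ are indexed by cyclic orderings of $\{0, z_1, \dots, z_n, \infty\}$ on $S^1$ modulo rotation. The corresponding polytope should be a cyclohedron-type object whose facets correspond to contractions of cyclically consecutive subsets of marked points, which match the ``cyclic intervals'' indexing the generators $s_{[i,j]}^{\mathrm{cyc}}$ of the affine cactus group. The codimension-two faces then yield the usual cactus relations (commutation for disjoint cyclic intervals, conjugation for nested ones), while the extra $\mathbb{Z}$-factor distinguishing $\widetilde{AC}_n$ from $AC_n$ comes from the deck transformation that cyclically rotates the labels --- a symmetry of the polytope not captured by the $S_n$-action on the $z_i$.

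For part (2), the Ardila--Boocher matroid Schubert blow-up description of $\overline F_n$ already provides a natural polytopal model for the real locus $\overline F_n(\br)$. Its chambers are indexed by ``flower structures'' on $\{1,\dots,n\}$: an ordered sequence of linearly ordered petals meeting at the distinguished point. Two types of facets then appear. Facets of the first type contract adjacent marked points inside a single petal and contribute the standard cactus generators $s_{[i,j]}$. Facets of the second type swap two adjacent petals at the center, contributing the ``virtual'' generators (simple transpositions) of $vC_n$. The codimension-two incidences then reproduce the cactus relations inside each petal, the braid relations among petal swaps, and the mixed virtual-cactus relations which describe conjugation of an in-petal generator by an adjacent petal swap.

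The main obstacle in both parts is establishing asphericity of the universal cover of each space, which is what justifies the purely combinatorial reading of the presentation. For $\overline M_{n+1}^{split}$ this was achieved by showing that the universal cover is a CAT(0) cube complex built from the associahedron; the cyclohedral case for $\overline M_{n+2}^{comp}$ should admit a parallel treatment, though one must verify that the nerve of the cyclic facet structure is a flag complex. The flower case is more delicate because of the mixed facet structure at the central point: the petal-swap facets produce cubes that interact nontrivially with the cactus cubes. Most likely one would proceed by induction on $n$, using the operadic decomposition of $\overline F_n(\br)$ into smaller moduli spaces at each petal, in order to reduce its asphericity to the already-known asphericity of $\overline M_{k+1}^{split}$ for $k < n$.
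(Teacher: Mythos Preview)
This theorem is not proved in the present paper at all: it is quoted in the introduction with the attribution ``the fundamental groups were described in \cite[Theorems 11.11, 11.12]{iklpr}'' and no argument is supplied here. So there is no proof in this paper to compare your proposal against; the result is imported wholesale from the companion paper \cite{iklpr}.

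As a standalone sketch, your proposal follows a sensible template (the Davis--Januszkiewicz--Scott / Henriques--Kamnitzer paradigm of reading off a presentation from a polytopal tiling of an aspherical space), but it is not a proof as written, and you yourself flag the essential gap: asphericity. Two specific points deserve caution. First, for part~(1) you assert that $\overline M_{n+2}^{comp}$ should be a small cover of a cyclohedron, but this is not established anywhere in the paper and is not obvious; the compact real form is a genuinely different real structure from the split one, and one cannot simply transport the associahedral tiling of $\overline M_{n+1}^{split}$ by analogy. Second, for part~(2) your proposed inductive reduction of asphericity of $\overline F_n(\br)$ to that of the $\overline M_{k+1}^{split}$ is not justified: the operadic stratification does not by itself exhibit $\overline F_n(\br)$ as an iterated fibration with aspherical base and fibre, so knowing asphericity of the pieces does not immediately give asphericity of the whole. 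To turn your outline into an actual proof you would need either a direct CAT(0) argument for the relevant cube complex (checking the flag condition on the nerve, which for the flower case involves the interaction of petal-swap and in-petal facets that you mention), or to consult the argument in \cite{iklpr}.
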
 
See \cite[\S 10]{iklpr} for the exact definitions of these groups.  For our purposes, the most important thing to know is that $ vC_n$ is generated by a copy of the cactus group $ C_n $ and of the symmetric group $ S_n$.

Moreover, as $ \pi_1^{S_{n+1}}(\overline M_{n+2}^{split}) \cong C_{n+1}$, it is easy to see that $ \pi_1^{S_n}(\overline M_{n+2}^{split})$ is the preimage of $ S_n$ in $ C_{n+1}$.  

In \cite[Theorems 9.23, 9.24]{iklpr}, we also proved that $ \overline{\mathcal F}^{split}_n$ and $ \overline{\mathcal F}^{comp}_n$ both deformation retract onto $ \overline F_n(\br) $ and thus the monodromy action of the fundamental groups $ \pi_1^{S_{n}}(\overline M_{n+2}^{split})$, $\pi_1^{S_n}(\overline M_{n+2}^{comp})$ must factor through homomorphisms from these groups to $ vC_n$.

 We expect this monodromy action to have a combinatorial meaning in terms of crystals.  Recall, from \cite{hk}, that a \emph{coboundary category} is a monoidal category $ \mathcal{C} $ along with a natural isomorphism called \emph{commutor} $$ \sigma_{A, B} : A \otimes B \rightarrow B \otimes A$$
	satisfying the following two axioms.
	\begin{enumerate}
		\item For all $A, B \in \mathcal{C}$, we have $\sigma_{B, A} \circ \sigma_{A,B} = id_{A \otimes B} $
		\item For all $ A, B, C \in \mathcal{C} $ we have $\sigma_{A, B \otimes C}\circ (id \otimes\sigma_{B, C}) = \sigma_{A \otimes B, C}\circ (\sigma_{A, B} \otimes id)$.
	\end{enumerate}

All possible commutors on the $n$-fold tensor product	$X_1\otimes\ldots\otimes X_n$ in a coboundary category $\mathcal{C}$ generate an action of the cactus group $C_n$.

Similarly, the \emph{virtual cactus group} $vC_n=\pi_1^{S_n}(\overline F_n)$ arises in \emph{concrete} coboundary monoidal categories \cite{KR}. Namely, suppose we are given a coboundary monoidal category $\mathcal{C}$ and a faithful monoidal functor $F:\mathcal{C}\to Sets$. Then for any $n$-tuple of objects $X_1,\ldots,X_n\in \mathcal{C}$, the set $F(X_1\otimes\ldots\otimes X_n)=F(X_1)\times\ldots\times F(X_n)$ is naturally acted on by $vC_n$, where the symmetric group $S_n=\pi_1^{S_n}(pt)$ acts by naive permutations of factors (i.e. by commuters in the category of sets) and $C_n$ acts by commutors in the category $\mathcal{C}$. 

The main example of a coboundary monoidal category is the category of Kashiwara $\fg$-crystals (a combinatorial version of the category of finite-dimensional $\fg$-modules). More precisely, to any irreducible representation $V(\lambda)$ one assigns an oriented graph $B(\lambda)$ (called the normal crystal of highest weight $\lambda$) whose vertices correspond to basis vectors of $V(\lambda)$ and are labeled by the weights of $V(\lambda)$, while the edges correspond to the action of the Chevalley generators and are labeled by the simple roots of $\fg$.  There is a purely combinatorial rule of tensor multiplication that provides a structure of a crystal on any Cartesian product $B(\lambda_1)\times B(\lambda_2)$.  This is a concrete coboundary monoidal category with $F$ being the forgetful functor.  

In \cite{hkrw}, we proved that there is a bijection between the set of eigenvectors for the Gaudin Hamiltonians in $\Hom_\fg(V(\mu),V(\ul))$ and the multiplicity set of $ B(\mu)$ in the tensor product of  $\fg$-crystals $B(\lambda_1)\otimes\ldots\otimes B(\lambda_n)$, compatible with the $C_n$-action.
 
In a future work \cite{KR}, we will use these ideas to prove the following conjectures. 

\begin{conj} \label{conj:monodromy}
\begin{enumerate} \item The monodromy action of $ vC_n = \pi_1^{S_n}(\overline F_n(\br))$ on the inhomogeneous Gaudin eigenlines in $ V(\ul)_\mu$ matches the action of $ vC_n$ on $ B(\lambda_1) \otimes \cdots \otimes B(\lambda_n)$ given by crystal commutors and naive permutation of tensor factors.

\item The monodromy action of $\widetilde{C_n}=\pi_1^{S_n}(\overline M_{n+2}^{split}(\br))$ on the trigonometric Gaudin eigenlines in $V(\ul)_\mu$ factors surjectively through the action of $ vC_n$.
\item
The monodromy action of $ \widetilde{AC}_n = \pi_1^{S_n}(\overline M_{n+2}^{comp}(\br))$ on the trigonometric Gaudin eigenlines in $ V(\ul)_\mu$ factors through the action of $ vC_n $ and is given by crystal commutors and cyclic rotation of tensor factors.
\end{enumerate}
\end{conj}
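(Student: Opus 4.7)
The plan is to prove the three parts together, treating part (1) as the combinatorial heart and deducing (2) and (3) by the degeneration of trigonometric to inhomogeneous Gaudin. Throughout, the engine is the main theorem of \cite{hkrw} identifying the monodromy of the cactus group $C_n$ on homogeneous Gaudin eigenlines with the crystal commutor action on multiplicity sets, combined with the operadic descriptions of limit subalgebras from Theorem \ref{th:intromain1}.

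For part (1), I would first construct the bijection between inhomogeneous Gaudin eigenlines in $V(\ul)_\mu$ and the $\mu$-weight part of $B(\lambda_1) \otimes \cdots \otimes B(\lambda_n)$ by degenerating to a maximally separated flower, in which each $z_i$ sits on its own petal meeting at $\infty$.  Theorem \ref{compinhom} shows the limit subalgebra decomposes as a product of pieces acting on individual tensor factors, and the resulting bijection with the product of bases of the $V(\lambda_i)$ can be promoted to a $B(\lambda_i)$-crystal bijection on each petal via a further homogeneous degeneration. To establish $vC_n$-equivariance, I would treat the generators separately: $vC_n$ is generated by cactus elements $s_{[p,q]}$ and by the symmetric group $S_n$. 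A cactus generator $s_{[p,q]}$ is realized by a loop based at a flower where $z_p, \ldots, z_q$ share one petal while the remaining points lie on separate petals; along this loop the operadic decomposition forces the monodromy to act on only the factor $V(\lambda_p) \otimes \cdots \otimes V(\lambda_q)$ through a homogeneous Gaudin subalgebra, and \cite{hkrw} identifies this action with the cactus commutor on $B(\lambda_p) \otimes \cdots \otimes B(\lambda_q)$. A permutation generator $\sigma_i \in S_n$ is realized by a loop swapping two petals at the separated flower, and $S_n$-equivariance of the assignment $(z_1, \ldots, z_n) \mapsto \ma_\chi(z_1, \ldots, z_n)$ combined with the diagonal action at the limit identifies its monodromy with the naive set-theoretic swap in $B(\lambda_1) \times \cdots \times B(\lambda_n)$.

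For parts (2) and (3), I would exploit the family over $\overline{\mathcal F}_n$ interpolating between trigonometric and inhomogeneous Gaudin, together with the deformation retractions $\overline{\mathcal F}^{split}_n, \overline{\mathcal F}^{comp}_n \simeq \overline F_n(\br)$ from \cite[Theorems 9.23, 9.24]{iklpr}. Theorem \ref{thmA} provides simple spectrum throughout these real forms for generic parameters, so the eigenline covering extends continuously across each fibrewise compactification, and its restriction to the trigonometric parameter space is homotopy equivalent to its restriction to $\overline F_n(\br)$. Any loop in $\overline M_{n+2}^{split}$ or $\overline M_{n+2}^{comp}$ is therefore homotopic inside the corresponding $\overline{\mathcal F}_n$ to a loop in $\overline F_n(\br)$, and the monodromy factors through the $vC_n$-action. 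For surjectivity in (2), I would identify explicit generators: the cactus generators $s_{[p,q]}$ of $vC_n$ with $[p,q] \subset \{1, \ldots, n\}$ lift to the preimage $\widetilde{C_n}$ of $S_n$ in $C_{n+1}$, while the symmetric generators are in the image of the surjection $\widetilde{C_n} \twoheadrightarrow S_n$. For (3), one additionally shows that the affine generator of $\widetilde{AC}_n$ is homotopic inside $\overline{\mathcal F}^{comp}_n$ to a loop in $\overline F_n(\br)$ realizing a cyclic rotation of petals, which by part (1) acts as the cyclic permutation of tensor factors.

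The principal obstacle lies in part (1), in the step identifying the cactus monodromy on a sub-petal with a crystal commutor.  To apply \cite{hkrw} verbatim one needs the homogeneous Gaudin subalgebra to act on a genuine tensor product of irreducible $\fg$-modules, but at a petal carrying only the marked points $z_p, \ldots, z_q$ the residual factor at the attaching node is a Verma-type module rather than an irreducible.  Reconciling this will require an extension of the Halacheva-Kamnitzer-Rybnikov-Weekes correspondence allowing one Verma factor, likely by leveraging the identification of Section \ref{se:introVerma} between Verma Hom-spaces and weight spaces in tensor products of irreducibles.  A secondary obstacle is the continuous extension of the eigenline covering in parts (2) and (3): simple spectrum is only proved pointwise in Theorem \ref{thmA}, and to conclude that the covering extends across $\overline{\mathcal F}^{split}_n$ and $\overline{\mathcal F}^{comp}_n$ one needs either a uniform non-collision statement along the family or a generic-locus argument combined with a continuity extension of the monodromy representation.
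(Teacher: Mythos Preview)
The statement you are attempting to prove is presented in the paper as Conjecture~\ref{conj:monodromy}, not as a theorem. The paper explicitly says ``In a future work, we will use these ideas to prove the following conjectures'' immediately before stating it. There is therefore no proof in the paper to compare your proposal against.

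That said, your strategy for parts (2) and (3) is exactly the mechanism the paper itself flags: the deformation retractions of $\overline{\mathcal F}^{split}_n$ and $\overline{\mathcal F}^{comp}_n$ onto $\overline F_n(\br)$ from \cite[Theorems 9.23, 9.24]{iklpr} force the monodromy to factor through $vC_n$, once one has simple spectrum along the family. Your ``secondary obstacle'' about extending the eigenline covering is genuine and is presumably part of what the promised future work must address.

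For part (1), your principal obstacle is misdiagnosed. The Verma module issue you raise belongs to the trigonometric setting, not the inhomogeneous one: at a petal of a cactus flower curve carrying marked points $z_p,\ldots,z_q$, the relevant subalgebra is the inhomogeneous Gaudin algebra $\ma_\chi(z_p,\ldots,z_q)$ acting on the finite-dimensional $V(\lambda_p)\otimes\cdots\otimes V(\lambda_q)$ (see the description in Section~\ref{secinhom}), with no Verma factor. The real issue is that invoking \cite{hkrw} requires a \emph{homogeneous} Gaudin algebra, not an inhomogeneous one; your argument for the cactus generators is therefore circular, since identifying the monodromy of $\ma_\chi$ on a sub-petal with a crystal commutor is precisely the content of part (1) for smaller $n$. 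A more promising route, suggested by the paper's Example after the definition of $\ma_\chi(C)$, is to base the cactus loops at the zero-section stratum $\overline M_{n+1}\subset\widetilde M_{n+1}\subset\overline F_n$, where $\ma_\chi(C)=\Delta(\ma_\chi)\otimes_Z\ma(C_1)$ splits off a genuine homogeneous Gaudin factor to which \cite{hkrw} applies directly.
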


\subsection{Howe duality and relation to the XXX model.} Let $\fg=\fsl_N$ and $\lambda_i=l_i\omega_1$, i.e. $V(\lambda_i)=S^{l_i}\bc^N$ is the $l_i$-th symmetric power of the tautological representation $\bc^N$. Let $\mu=(m_1,\ldots,m_N)$ be an integral weight of $\fgl_N$ that occurs in the weight decomposition of $V(\ul)=\bigotimes\limits_{i=1}^n V(\lambda_i)$ (in particular, we have $\mu_j\in\bz_{\ge0}$). On the other hand, consider the irreducible representations $V(\mu_j)$ of $\fgl_n$ with $\mu_j=m_j\omega_1$, i.e. $V(\mu_j)=S^{m_j}\bc^n$. Then the weight $\lambda:=(l_1,\ldots,l_n)$ occurs in the weight decomposition of $V(\underline{\mu}):=\bigotimes\limits_{j=1}^N V(\mu_j)$. 

Howe's duality identifies the weight space $V(\ul)_\mu$ with the weight space $V(\underline{\mu})_\lambda$. Namely, both are the $(\mu,\lambda)$-weight space in the decomposition of the $\fgl_N\oplus\fgl_n$-module $S^\bullet(\bc^N\otimes\bc^n)$.

The trigonometric Gaudin algebra gets switched with the XXX Heisenberg system under this identification. More precisely, let $\theta=(u_1,\ldots,u_N)\in \fh\subset\fgl_N$. Then we can regard the space $V(\ul)_\mu$ as the space of states for the trigonometric Gaudin model, i.e. study the action of the commutative subalgebra $\ma_\theta^{trig}(z_1,\ldots,z_n)\subset U(\fgl_N)^{\otimes n}$ on this space. On the other hand, we can regard the same space as $V(\underline{\mu})_\lambda$, which is the $\lambda$-weight space in the tensor product module $V(\mu_1,u_1)\otimes\ldots\otimes V(\mu_N,u_N)$ over the Yangian $Y(\fgl_n)$ with the highest weight $\mu_j$ and the evaluation parameters $u_j$. This module is acted on by the commutative \emph{Bethe subalgebra} $B(\operatorname{diag}(\uz))\subset Y(\fgl_n)$ with parameter the diagonal $n\times n$-matrix with entries $z_1,\ldots,z_n$. According to \cite{mtv} the image of $\ma_\theta^{trig}(\uz)$ in the operators on this vector space coincides with the image of $B(\operatorname{diag}(\uz))$ under Howe duality.  Thus Theorem \ref{th:intromain1}(1) matches with the description of the compactification of the parameter space for Bethe subalgebras in the Yangian from \cite{ir2}.   A similar statement holds with the symmetric powers replaced by exterior powers as well, see \cite{tu,u}. 

This means that in two particular cases (of symmetric or exterior powers of the standard representation of $\fgl_N$), we can describe the fibers of the corresponding covering $\mathcal{E}_\theta^{trig}$ of $\overline{M}_{n+2}^{comp}(\br)$ in two different ways: as the sets of eigenlines for the trigonometric Gaudin subalgebras in $U\fgl_N^{\otimes n}$ and as those for Bethe subalgebras in the Yangian $Y(\fgl_n)$. On the latter one, we have a combinatorial structure of a Kirillov-Reshetikhin (KR) crystal described in \cite{kmr}. We expect the following to be true:

\begin{conj}
    The $\widetilde{AC_n}$ action by the monodromy of the covering $\mathcal{E}_\theta^{trig}$ over $\overline{M}_{n+2}^{comp}(\br)$ comes from the inner $\widetilde{AC_n}$ action on the corresponding KR crystal by partial Sch\"utzenberger involutions.
\end{conj}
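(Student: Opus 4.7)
The plan is to transfer the conjecture to the Yangian side via Howe duality, where the $\widetilde{AC}_n$-action by monodromy of Bethe subalgebras is amenable to combinatorial analysis. Recall that for $V(\lambda_i)=S^{l_i}\bc^N$ (or $\Lambda^{l_i}\bc^N$, using \cite{tu,u}), Howe duality gives a canonical isomorphism $V(\ul)_\mu\cong V(\underline{\mu})_\lambda$ and, by \cite{mtv}, the image of $\ma_\theta^{trig}(\uz)$ in $\End(V(\ul)_\mu)$ coincides with the image of the Bethe subalgebra $B(\operatorname{diag}(\uz))\subset Y(\fgl_n)$. Consequently, over the open stratum of $\overline M_{n+2}^{comp}(\br)$, the covering $\mathcal{E}_\theta^{trig}$ of trigonometric Gaudin eigenlines is canonically isomorphic, as an $S_n$-equivariant étale cover, to the Bethe eigenline covering of \cite{ir2, kmr}.

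The first main step is to extend this pointwise identification to an isomorphism of coverings over the entire compactification $\overline M_{n+2}^{comp}(\br)$. Both sides admit parallel compactifications of their parameter spaces, ours by Theorem \ref{th:intromain1}(1) and the Yangian side by \cite{ir2}, and the problem reduces to matching limits of $\ma_\theta^{trig}(\uz)$ under Howe duality with the corresponding limits of $B(\operatorname{diag}(\uz))$. This passes through the operadic description of limit subalgebras as products indexed by irreducible components of stable curves and requires verifying that Howe duality intertwines these operadic decompositions on both sides. Once this is established, the monodromy of $\widetilde{AC}_n=\pi_1^{S_n}(\overline M_{n+2}^{comp}(\br))$ on $\mathcal{E}_\theta^{trig}$ is identified, by transport of structure, with its monodromy on the Bethe covering.

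The final step is to invoke the combinatorial description of the Bethe monodromy from \cite{kmr}, which encodes the eigenlines as vertices of a KR crystal structure on $V(\underline{\mu})_\lambda$ in such a way that the monodromy of $\widetilde{AC}_n$ becomes the inner action by partial Sch\"utzenberger involutions. Combining with the previous step yields the conjecture. The main obstacle lies in the extension across the boundary of $\overline M_{n+2}^{comp}(\br)$: limit subalgebras on the Gaudin and Bethe sides are defined through independent filtrations by finite-dimensional subspaces, while the Howe duality map relating them is fixed at the level of the ambient vector space. One must therefore check that, fibrewise along each degeneration, the two families of commutative subalgebras in $\End(V(\ul)_\mu)$ actually coincide in the limit and not merely generically; equivalently, that the locus in $\overline M_{n+2}^{comp}$ over which the two limits agree is a closed subset containing the open stratum, hence everything.
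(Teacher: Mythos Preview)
The statement you are attempting to prove is stated in the paper as a \emph{conjecture}, not a theorem; the paper gives no proof and explicitly says ``We plan to address this Conjecture in a subsequent joint paper with Vasily Krylov.'' So there is no proof in the paper to compare your proposal against.

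That said, your outline is exactly the route the authors are signalling: transport the question to the Yangian side via the Mukhin--Tarasov--Varchenko/Howe duality, use the compactification result of \cite{ir2} to match boundary strata, and then invoke the KR-crystal description of the Bethe eigenlines from \cite{kmr}. Two cautions. First, the paper only says that \cite{kmr} endows the set of Bethe eigenlines with a KR crystal structure; it does not assert that \cite{kmr} already identifies the $\widetilde{AC}_n$-monodromy on the Bethe side with the inner action by partial Sch\"utzenberger involutions. That identification is part of what remains to be proved, not an available black box, so your ``final step'' is not yet a citation but a substantial piece of work. Second, the obstacle you flag---that Howe duality is only set up on the open stratum and must be shown to intertwine the operadic limit descriptions on both compactifications---is a genuine gap, and your closedness argument (``the locus where the limits agree is closed and contains the open stratum'') is circular as written: you need the map from $\overline M_{n+2}$ to subalgebras on the Bethe side to be \emph{a priori} continuous in the same topology in which the trigonometric family is continuous, which is precisely what is in question.

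In short: your strategy is the intended one, but what you have is a roadmap, not a proof; the paper itself does not claim more.
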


We plan to address this Conjecture in a subsequent joint paper with Vasily Krylov.

\subsection{Relation to quantum cohomology of slices in the affine Grassmannian}

One motivation for studying trigonometric Gaudin algebras is that we expect them to be related to the quantum cohomology of affine Grassmannian slices.  To explain this relation, we begin by recalling some definitions (see \cite{KWWY} for more details).  

We work in the affine Grassmannian $ \Gr := G^\vee((t))/ G^\vee[[t]]$, where $ G^\vee $ is the Langlands dual group.  To each dominant weight of $G$, we have the finite dimensional spherical Schubert variety $ \Gr^\lambda \subset \Gr$ and its closure $ \overline \Gr^\lambda$.  Given a sequence $ \ul = (\lambda_1, \dots, \lambda_n) $, then we consider the convolution of these spherical Schubert varieties $ \Gr^\ul := \Gr^{\lambda_1} \tilde \times \cdots \tilde \times \Gr^{\lambda_n}$ and its closure $ \overline \Gr^\ul$.  We have the convolution morphism $ m_\ul : \overline \Gr^\ul \rightarrow \Gr$ whose image is $ \overline \Gr^{\lambda_1 + \dots + \lambda_n}$.  

Furthermore, we also consider the transversal slices $\Gr_\mu \subset \Gr$, for $ \mu$ dominant.  We will be interested in the intersections $ \overline \Gr^\lambda_\mu := \overline \Gr^\lambda \cap \Gr_\mu$ and the preimages $ \overline \Gr^\ul_\mu := m_\ul^{-1}(\Gr_\mu) $.  

For any $ \lambda $ and $ \mu$, the affine Grassmannian slice $ \overline \Gr^\lambda_\mu$ is an affine Poisson variety which is usually singular.  If $ \lambda = \lambda_1 + \cdots + \lambda_n$ is a sum of minuscule dominant weights, then $ \Gr^\ul = \overline \Gr^\ul $ is smooth and $ \Gr^\ul_\mu $ is a symplectic resolution of $ \overline \Gr^\lambda_\mu$.

 These affine Grassmannian slices and their resolutions have been studied extensively in recent years, see the survey paper \cite{KamBLMS}, because they are connected to the geometric Satake correspondence and because they are symplectic dual to Nakajima quiver varieties.

For now, let us assume that  $\lambda_1, \dots, \lambda_n$ are minuscule.  
The resolved affine Grassmannian slice $ \mathrm{Gr}^\ul_\mu $ has an action of $ T^\vee \times \bc^\times$, where $T^\vee \subset G^\vee $ is the maximal torus and $\C^\times $ acts by loop rotation.  We will now give a Langlands dual description of $ H^*_{T^\vee \times \C^\times}(\Gr^\ul_\mu) $, following the work of Ginzburg-Riche \cite{gr}.  

Consider the Rees algebra $ U_\hbar \fg $ and the algebra $ S_\hbar(\fh) := S(\fh) \otimes \C[\hbar] $.  Let $ \bM[\mu] := U_\hbar \fg \otimes_{U_\hbar \fb_+} S_\hbar \fh$ be the shifted universal Verma module, where $ [\mu] $ indicates that $ x \in \fh \subset \fb_+ $ acts by $ x + \hbar \mu(x) $ on $S_\hbar(\fh)$.  This is a $ U_\hbar \fg, S_\hbar (\fh) $-bimodule.  We will be interested in tensor products
$$
\bM \otimes V(\ul) := \bM \otimes V(\lambda_1) \otimes \cdots \otimes V(\lambda_n)
$$
which we regard as a $ U_\hbar \fg $ module using the coproduct $ U_\hbar \fg \rightarrow U_\hbar \fg \otimes U \fg^{\otimes n} $ defined $ x \mapsto x^{(0)} + \sum_{i=1}^n \hbar x^{(i)}$ for $ x \in \fg$.  

\begin{thm} \label{th:GR}
    There is a canonical isomorphism of $ S_\hbar(\fh)$-modules
    \begin{equation} \label{eq:GR}
H^*_{T^\vee \times \C^\times}(\Gr^\ul_\mu) \cong \Hom_{U_\hbar\fg\otimes S(\fh)}(\bM[\mu], \bM \otimes V(\ul)),
    \end{equation}
\end{thm}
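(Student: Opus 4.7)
The plan is to bootstrap from three ingredients: the Ginzburg-Riche description of $H^*_{G^\vee \times \C^\times}(\Gr)$ as (a completion of) $U_\hbar \fg$, the geometric Satake realisation of $T^\vee \times \C^\times$-equivariant cohomology of spherical Schubert varieties as representations of $\fg$, and equivariant (hyperbolic) localization to the attracting set cut out by $\Gr_\mu$. None of these is individually new; the main work is bookkeeping the two distinct roles of $\fh$, one as $T^\vee$-equivariant parameter and one as the highest weight of the universal Verma.

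First I would recall Ginzburg-Riche in the form convenient here: for a minuscule dominant $\lambda$, $H^*_{T^\vee \times \C^\times}(\Gr^\lambda)$, viewed as a module for $H^*_{G^\vee \times \C^\times}(\Gr)$ via convolution, is canonically isomorphic to $\bM \otimes V(\lambda)$ as a $U_\hbar \fg \otimes S_\hbar(\fh)$-bimodule. Iterating this along the convolution diagram $\Gr^\ul = \Gr^{\lambda_1} \tilde\times \cdots \tilde\times \Gr^{\lambda_n}$, I would prove the canonical isomorphism
\[
H^*_{T^\vee \times \C^\times}(\Gr^\ul) \;\cong\; \bM \otimes V(\lambda_1) \otimes \cdots \otimes V(\lambda_n),
\]
with the $U_\hbar \fg$-action given by the asymmetric coproduct $x \mapsto x^{(0)} + \sum_{i=1}^n \hbar x^{(i)}$ appearing in the statement. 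The asymmetry between the base and fibre factors reflects the fact that loop rotation acts trivially on the outermost ``base-point'' direction recording the universal Verma but nontrivially on each successive convolution direction.

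Next I would incorporate the transversal slice. By construction $\Gr_\mu$ is the attracting variety in $\Gr$ for a generic cocharacter of $T^\vee$ centred at the fixed point $[t^\mu]$, so $\Gr^\ul_\mu = m_\ul^{-1}(\Gr_\mu)$ is the attracting set for the corresponding cocharacter on $\Gr^\ul$. Hyperbolic restriction then identifies $H^*_{T^\vee \times \C^\times}(\Gr^\ul_\mu)$ with the subspace of $H^*_{T^\vee \times \C^\times}(\Gr^\ul)$ on which the $S_\hbar(\fh)$-action is shifted by $\mu$; under the identification of the previous paragraph this subspace is precisely $\Hom_{U_\hbar \fg \otimes S(\fh)}(\bM[\mu], \bM \otimes V(\ul))$, yielding \eqref{eq:GR}.

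I expect the main obstacle to be the careful matching of conventions: identifying loop rotation with the parameter $\hbar$ in exactly the way that forces the asymmetric coproduct, and identifying the $T^\vee$-equivariant parameters along $\Gr_\mu$ with the shift $\bM[\mu]$ rather than some other twist of the universal Verma (the latter amounts to pinning down the normalisation of the class $[t^\mu] \in H^*_{T^\vee}(\Gr_\mu)$). Once these normalisations are fixed, the three inputs—Satake, convolution compatibility, and hyperbolic restriction—combine along essentially established lines.
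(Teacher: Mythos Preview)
Your proposal has two genuine gaps. First, the transversal slice $\Gr_\mu$ is \emph{not} an attracting set for any cocharacter of $T^\vee$; the $T^\vee$-attractors in $\Gr$ are the semi-infinite orbits $N^\vee((t))\cdot t^\mu$, not the slices. What contracts $\Gr_\mu$ to the point $t^\mu$ is loop rotation (the $\C^\times$-factor of $A=T^\vee\times\C^\times$), so the mechanism you need is not hyperbolic localization for $T^\vee$ but rather the $A$-equivariant contractibility of $\Gr_\mu$ to $t^\mu$. Second, your intermediate isomorphism $H^*_A(\Gr^\ul)\cong \bM\otimes V(\ul)$ cannot hold as stated: $\Gr^\ul$ is compact, so the left side is a finite-rank $S_\hbar(\fh)$-module of rank $\dim V(\ul)$, whereas $\bM$ is free of infinite rank over $S_\hbar(\fh)$ (by PBW, $\bM\cong U_\hbar\fn_-\otimes S_\hbar(\fh)$). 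The Ginzburg--Riche input you want is not a description of $H^*_A(\Gr^\ul)$ but of the \emph{costalk} at a fixed point.

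The paper's proof proceeds differently and avoids both issues. Let $\CF^{\ul}=(m_{\ul})_*\underline{\C}_{\Gr^{\ul}}[\dim\Gr^{\ul}]$, the perverse sheaf on $\Gr$ corresponding to $V(\ul)$ under geometric Satake, and let $i_\mu:\{t^\mu\}\hookrightarrow\Gr$. Ginzburg--Riche \cite[Theorem 2.2.4]{gr} gives directly
\[
H^*_A(i_\mu^!\CF^{\ul})\;\cong\;\Hom_{U_\hbar\fg\otimes S(\fh)}(\bM[\mu],\,\bM\otimes V(\ul)).
\]
Then one relates the left side to $H^*_A(\Gr^{\ul}_\mu)$: smoothness of $\Gr^{\ul}$ gives $(j^{\ul}_\mu)^!\underline{\C}\cong\underline{\C}$ up to shift, base change along $m_{\ul}$ turns this into $H^*_A(\Gr_\mu,\,j_\mu^!\CF^{\ul})$, and finally the $A$-equivariant contractibility of $\Gr_\mu$ to $t^\mu$ (via loop rotation) identifies this with $H^*_A(i_\mu^!\CF^{\ul})$. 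No hyperbolic localization, and no intermediate identification of $H^*_A(\Gr^{\ul})$ with an infinite-rank object.
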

In this theorem, the left hand side of (\ref{eq:GR}) acquires the $S_\hbar(\fh)$-module structure using $ S_\hbar(\fh) \cong H^*_{T^\vee\times\bc^\times}(pt)$.  

\begin{proof}
Let $ A := T^\vee \times \C^\times$.
    
    Let $ \CF^\ul := (m_{\ul})_* \underline{\C}_{\Gr^\ul}[\dim \Gr^\ul]$ be the push-forward of the shifted constant sheaf.  This is a perverse sheaf on $ \Gr$ which corresponds to $ V(\ul) $ under the geometric Satake correspondence.  Let $ i_\mu $ be the inclusion of the point $ t^\mu$ in $ \Gr$.  By \cite[Theorem 2.2.4]{gr}, we have a canonical isomorphism 
$$ H^*_A(i_\mu^! \CF^\ul) \cong  \Hom_{U_\hbar\fg\otimes S(\fh)}(\bM[\mu], \bM \otimes V(\ul))$$
On the other hand, let $ j_\mu : \Gr_\mu \rightarrow \Gr $ and $ j^\ul_\mu : \Gr^\ul_\mu \rightarrow \Gr^\ul $ be the inclusions.  Using the smoothness of $ \Gr^\ul$ and base change, we find that (ignoring cohomological shifts)
$$
H^*_A(\Gr^\ul_\mu) \cong H^*_A(\Gr^\ul_\mu, (j^\ul_\mu)^! \C_{\Gr^\ul}) \cong H^*_A(\Gr_\mu, j_\mu^! \CF^\ul) 
$$
Since $ \Gr_\mu$ is $ A$-equivariantly contractible to the point $ t^\mu$, we find that $ H^*_A(\Gr_\mu, j_\mu^! \CF^\ul)  \cong H^*_A(i_\mu^! \CF^\ul) $ which completes the proof.
\end{proof}

The Gaudin algebra $ \ma(0, z_1, \dots, z_n) \subset U_\hbar \fg \otimes U \fg^{\otimes n} $ acts on $ \bM \otimes V(\ul) $.  This action is closely related to the trigonometric Gaudin model.  More precisely, if we specialize at a generic point $ \theta \in \fh \cong \fh^*$ and at $ \hbar = 1$, then there is an isomorphism (as in Proposition \ref{thm1}) 
\begin{equation} \label{eq:HomtoV}
\Hom_{U_\hbar\fg\otimes S(\fh)}(\bM[\mu], \bM \otimes V(\ul)) \otimes_{S_\hbar(\fh)} \C_{\theta, 1} \cong V(\ul)_\mu
\end{equation}
and the action of $ \ma(0,z_1, \dots, z_n)$ on the left hand side becomes the action of $ \ma^{trig}_\theta(z_1, \dots, z_n) $ on $ V(\ul)_\mu$ (this is a slight variant of section \ref{se:introVerma}).  

 The left hand side of (\ref{eq:GR}) admits an action of the quantum cohomology ring $QH^*_{T^\vee\times\bc^\times}(\mathrm{Gr}^\ul_\mu)$ which depends on a quantum parameter in $ H^2(\mathrm{Gr}^\ul_\mu, \bc^\times)_{reg}$, which is the complement to the divisors defined by the K\"ahler roots, see \cite[Section 3.1.4]{Ok}.  For resolved affine Grassmannian slices, this space of quantum parameters is very easy to understand, since there is a map $ ((\bc^\times)^n \setminus \Delta)/ \Cx \rightarrow H^2(\mathrm{Gr}^\ul_\mu, \bc^\times)_{reg} $ which is usually an isomorphism (it is always surjective by \cite[Prop 3.4.7]{Dan}), so we interpret the quantum parameter as a point $ \uz \in ((\bc^\times)^n \setminus \Delta)/ \Cx = M_{n+2}$. 

Danilenko \cite{Dan} studied the quantum cohomology of affine Grassmanians, under the assumption that $ G $ is simply-laced.  For generic $ \theta$, he identified $ H^*_{\theta, 1}(\mathrm{Gr}^\ul_\mu) $ with $ V(\ul)_\mu$ by sending the stable basis to the tensor product basis.  Under this identification (which is compatible with the isomorphisms (\ref{eq:GR}), (\ref{eq:HomtoV})), he proved that quantum multiplication by divisors is given by the trigonometric Gaudin Hamiltonians. 

This leads us to the following conjecture. 

\begin{conj}\label{conj:danilenko}
Let $G$ be simply-laced.  Let $ \ul $ be a sequence of minuscule weight and let $ \mu $ be another dominant weight.  For any $ \uz \in M_{n+2}$, the equivariant quantum cohomology ring $ QH^*_{T^\vee\times\bc^*}(\mathrm{Gr}^\ul_\mu)$ at the quantum parameter $ \uz $ is equal, under the identification (\ref{eq:GR}), to the image of $ \ma(0, z_1, \dots, z_n) $ acting on $\Hom_{U_\hbar\fg\otimes S(\fh)}(\bM[\mu], \bM \otimes V(\ul)) $.  In particular, for generic $ \theta$, $QH^*_{\theta, 1}(\mathrm{Gr}^\ul_\mu) $ is equal to the image of $ \ma^{trig}_\theta(\uz)$ acting on $ V(\ul)_\mu$.
\end{conj}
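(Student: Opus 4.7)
The plan is to upgrade Danilenko's identification of divisor quantum multiplication with the trigonometric Gaudin Hamiltonians to an equality of the full commutative subalgebras, first via a maximality and simple-spectrum argument at the specialization level, and then via a flat-family argument to extend the equality to all parameters.

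Fix a generic $\theta \in \fh$ and specialize at $\hbar=1$, so that \eqref{eq:HomtoV} reduces the problem to comparing two commutative subalgebras of $\End(V(\ul)_\mu)$. The ring $QH^*_{\theta,1}(\Gr^\ul_\mu)$ has dimension $\dim H^*(\Gr^\ul_\mu) = \dim V(\ul)_\mu$ by geometric Satake (using that the entries of $\ul$ are minuscule), hence it is a maximal commutative subalgebra of $\End(V(\ul)_\mu)$. By Theorem~\ref{thmA}(2), for appropriate $\theta$ and $\uz$ in the compact real form, $\ma^{trig}_\theta(\uz)$ acts with simple spectrum, so its image is also maximal commutative of the same dimension. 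I would then invoke the elementary fact that a maximal commutative subalgebra with simple spectrum is generated by any subset of its elements that separates the joint eigenlines. For generic $\uz$ and $\theta$, the trigonometric Gaudin Hamiltonians $H^{trig}_{i,\theta}$ separate the eigenlines in $V(\ul)_\mu$; by Danilenko's theorem they coincide with quantum multiplication by divisor classes. Thus the subalgebra they generate equals both $QH^*_{\theta,1}(\Gr^\ul_\mu)$ and the image of $\ma^{trig}_\theta(\uz)$, establishing the conjecture for generic parameters.

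To extend to arbitrary $\uz \in M_{n+2}$, I would use flat families. By Theorem~\ref{th:intromain1}(1), the assignment $\uz \mapsto \ma^{trig}_\theta(\uz)$ extends to a flat family of commutative subalgebras over $\overline M_{n+2}$; taking images in $\End(V(\ul)_\mu)$ defines a morphism from $M_{n+2}$ to the Hilbert scheme of commutative subalgebras of $\End(V(\ul)_\mu)$ of the expected dimension. Quantum multiplication defines another such morphism via the classifying map $M_{n+2} \to H^2(\Gr^\ul_\mu, \bc^\times)_{reg}$. The two morphisms agree on a dense open, hence agree everywhere by separatedness of the Hilbert scheme. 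Finally, the genericity assumption on $\theta$ is removed by working universally: both sides of the conjectural equality are $S_\hbar(\fh)$-subalgebras of $\End_{S_\hbar(\fh)}\bigl(\Hom_{U_\hbar\fg\otimes S(\fh)}(\bM[\mu], \bM\otimes V(\ul))\bigr)$, their equality is a closed condition on $\mathrm{Spec}\,S_\hbar(\fh)$, and it is already known on a Zariski dense open subset.

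The main obstacle I anticipate is the previous step: verifying that the image of $\ma(0,z_1,\dots,z_n)$ in $\End(V(\ul)_\mu)$ really does form a family of constant rank $\dim V(\ul)_\mu$ over all of $M_{n+2}$. At degenerate $\uz$ where the trigonometric Bethe ansatz may be incomplete, the image could a priori drop rank, obstructing the Hilbert-scheme comparison. Overcoming this likely requires an independent upper bound on the image dimension, for instance via an oper-theoretic description of the spectrum in the spirit of \cite{ffry, efk}, matching the dimension of $QH^*_{\theta,1}(\Gr^\ul_\mu)$ uniformly in $\uz$.
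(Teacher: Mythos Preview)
The statement you are attempting to prove is labeled a \emph{Conjecture} in the paper, and the paper does not supply a proof. The only remarks the paper makes are: (i) in the special case $G=SL_n$, $\lambda_i=\omega_{k_i}$ with $\sum k_i=n$, $\mu=(1,\dots,1)$ (so $\Gr^\ul_\mu$ is a partial flag variety), the conjecture follows from Gorbounov--Rimanyi--Tarasov--Varchenko together with bispectral duality; and (ii) more generally, the specialization at generic $\theta$ and $\hbar=1$ ``should follow from Danilenko's work'', without further detail. So there is no proof in the paper to compare your proposal against; your proposal is essentially an attempt to flesh out remark (ii) and then extend it.

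As an attack on the conjecture, your strategy is reasonable, but there is a real gap. You assert that for generic $\theta$ and $\uz$ the quadratic trigonometric Gaudin Hamiltonians $H^{trig}_{i,\theta}$ alone separate the eigenlines in $V(\ul)_\mu$. This is not proved anywhere in the paper: Theorem~\ref{thmA} gives simple spectrum for the \emph{full} trigonometric Gaudin subalgebra $\ma^{trig}_\theta(\uz)$, not for its quadratic part. Equivalently, you are implicitly assuming that quantum multiplication by divisor classes generates $QH^*_{\theta,1}(\Gr^\ul_\mu)$ as an algebra; this is known in some cases (e.g., cotangents of flag varieties) but is not established in the paper for general resolved affine Grassmannian slices, and is itself close to what you are trying to prove. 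Without this, the two maximal commutative subalgebras could a priori be distinct, sharing the quadratic Hamiltonians but diverging at higher degree. Your flat-family and closedness extension arguments are fine in outline, but they presuppose the generic-parameter equality, which rests on this unverified point.
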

  Suppose that $ G = SL_n$, $ \lambda_i = \omega_{k_i}$ with $ \sum k_i = n  $ and $ \mu = (1, \dots, 1)$, then $ \mathrm{Gr}^\ul_\mu$ is a partial flag variety for $ SL_n$.  In this case, the conjecture follows from the work of Gorbounov-Rimanyi-Tarasov-Varchenko \cite{GRTV} along with the above-mentioned bispectral duality.  
More generally, for the specialization at generic $\theta$ and $\hbar=1$, the conjecture should follow from Danilenko's work.

From Theorem \ref{thmA} and Conjecture~\ref{conj:danilenko}, we see that for generic $\theta$ and for any $C\in\overline M_{n+2} ^{comp}$ the corresponding subalgebra  $\mathcal{A}_{\theta}^{trig}(C)$ acts with simple spectrum on any weight space, so the corresponding quantum cohomology ring is semisimple.

\subsection{Cohomology of resolved affine Grassmannian slices}
Following the philosophy of taking limit subalgebras, Conjecture~\ref{conj:danilenko} also gives the following description of the \emph{classical} cohomology ring of $\mathrm{Gr}^\ul_\mu$. Note that $\uz=(z_1,\ldots,z_n)$ is a collection of nonzero complex numbers up to dilation, so the actual quantum parameters are the ratios $\frac{z_{i+1}}{z_{i}}$. So to obtain the classical cohomology ring we consider a limit $ \lim_{t \to 0} \ma(\uz(t))$ where $\lim_{t\to 0} \frac{z_{i+1}(t)}{z_{i}(t)}=0$ for all $i=1,\ldots,n-1$. By Theorem \ref{th:intromain0}, the compactified parameter space for these Gaudin algebras is $ \overline M_{n+2}$.  Taking this limit corresponds to the \emph{caterpillar} curve $C\in \overline{M}_{n+2}$ which has $ n$ components $C_1, \dots, C_n$ arranged linearly with $ z_0, z_1 \in C_1, z_2 \in C_2, \dots, z_{n-1} \in C_{n-1} $ and $ z_n, z_{n+1} \in C_n$. 

By Proposition \ref{pr:limitGaudin}, the corresponding limit Gaudin subalgebra $ \ma(C)$ is generated by certain diagonally embedded two point Gaudin subalgebras.  However, when this algebra acts on $ \bM \otimes V(\ul)$, since all $V(\lambda_i)$ are minuscule (and so, all the weight multiplicities in $V(\lambda_i)$ are at most $1$), the tensor product of $V(\lambda_i)$ with a generic Verma module is a multiplicity-free sum of Verma modules that are the joint eigenspaces for the diagonally embedded centre of the universal enveloping algebra. This means that the action of each of the above two point Gaudin subalgebras is generated by the diagonally embedded centre $ Z = Z(U_\hbar \fg) $.  This leads to the following conjecture.

\begin{conj}
    Let $ \ul $ be a sequence of minuscule weight and let $ \mu $ be another dominant weight.  The equivariant cohomology ring $ H^*_{T^\vee\times\bc^\times}(\mathrm{Gr}^\ul_\mu)$ equals, under the identification (\ref{eq:GR}), to the image of
    $$ \Delta^{\{0\}, \{1\}}(Z) \Delta^{\{0,1\}, \{2\}}(Z) \cdots \Delta^{\{0,\dots, n-1\}, \{n\}}(Z) $$ acting on $\Hom_{U_\hbar\fg\otimes S(\fh)}(\bM[\mu], \bM \otimes V(\ul)) $.  In particular, for generic $ \theta$, $H^*_{\theta, 1}(\mathrm{Gr}^\ul_\mu) $ is equal to the image of $ \ma^{trig}_\theta(C)$ acting on $ V(\ul)_\mu$.
\end{conj}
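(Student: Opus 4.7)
The plan is to deduce this statement conditionally on Conjecture~\ref{conj:danilenko} by passing to the classical limit of quantum cohomology. Since the effective quantum parameters on $\Gr^\ul_\mu$ are the ratios $z_{i+1}/z_i$ for $i=1,\dots,n-1$, the classical cohomology ring is obtained by letting all of these ratios tend to zero simultaneously, i.e.\ by evaluating the family $\ma(0,\uz(t))$ as $\uz(t)$ approaches the \emph{caterpillar} limit point $C \in \overline M_{n+2}$ described above. By the existence and continuity of the family of Gaudin subalgebras across $\overline M_{n+2}$ (Theorem~\ref{th:intromain0}), together with Conjecture~\ref{conj:danilenko} applied on the open locus $M_{n+2}$, it suffices to identify $\ma(C)$ as an operator algebra on $\Hom_{U_\hbar\fg\otimes S(\fh)}(\bM[\mu],\bM\otimes V(\ul))$.

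By Proposition~\ref{pr:limitGaudin}, $\ma(C)$ is generated by two-point Gaudin subalgebras plugged into each node of $C$ via the operadic gluing dictated by the nodal structure. The two-point Gaudin subalgebra $\ma(w_1,w_2)\subset(U\fg\otimes U\fg)^\fg$ is independent of the points and is generated by $Z(U\fg)$ acting on each tensor factor and on the diagonal. Plugging such an algebra into the $k$-th node of the caterpillar, where the left cluster contains the tensor factors $\{0,\dots,k-1\}$ and the right cluster contains $\{k,\dots,n\}$, yields three diagonally embedded copies of $Z$: on $\{0,\dots,k-1\}$, on $\{k,\dots,n\}$ and on the total $\{0,\dots,n\}$. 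The minuscule hypothesis on the $V(\lambda_i)$ guarantees that central characters on tensor products of minuscule representations are determined by Harish--Chandra polynomials in the highest weights, and combined with the universality of $\bM$ this should force the right-cluster and total-cluster central actions on the Hom space to be subsumed by those of the initial-segment diagonal centres $\Delta^{\{0,\dots,k-1\},\{k\}}(Z)$ (which equal $\Delta^{\{0,\dots,k\}}(Z)$ by coassociativity) for $k=1,\dots,n$. Assembling these contributions yields the stated description.

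The main obstacle, beyond the conditional dependence on Conjecture~\ref{conj:danilenko}, is the rigorous verification that the right-cluster and total-cluster generators at each node are redundant on the Hom space once all the initial-segment centres are present. This step requires simultaneously exploiting $S_\hbar(\fh)$-compatibility of $\Hom_{U_\hbar\fg\otimes S(\fh)}(\bM[\mu],-)$, Harish--Chandra's description of central characters (so that the total-cluster action factors through a scalar depending on $\theta+\mu$), and the specific decomposition structure of tensor products of minuscule representations (so that the right-cluster action is reconstructed from the left-cluster data). A fully unconditional proof would presumably demand either a geometric realisation of the initial-segment diagonal centres as characteristic classes of tautological bundles on the iterated convolution varieties mapping to $\Gr^\ul_\mu$, or an extension of Danilenko's work \cite{Dan} beyond the simply-laced case whose classical limit can be read off directly.
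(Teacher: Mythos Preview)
The statement you are attempting to prove is a \emph{conjecture} in the paper, not a theorem; the paper does not prove it and does not claim to. What the paper provides is a heuristic motivation in the paragraph immediately preceding the conjecture, and your proposal is essentially an elaboration of that same motivation: take the caterpillar limit $C\in\overline M_{n+2}$ corresponding to $z_{i+1}/z_i\to 0$, invoke Proposition~\ref{pr:limitGaudin} to describe $\ma(C)$ as a product of diagonally embedded two-point Gaudin subalgebras, and use the minuscule hypothesis so that each such two-point algebra acts through the centre $Z$. The paper states exactly this reasoning and then says ``This leads to the following conjecture,'' making clear that it is not regarded as a proof.

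Your proposal is honest about the conditional dependence on Conjecture~\ref{conj:danilenko} and about the remaining gap, namely the redundancy of the right-cluster and total-cluster central contributions at each node. That is an accurate assessment, and it is precisely why the paper leaves the statement as a conjecture rather than a theorem. In particular, even granting Conjecture~\ref{conj:danilenko}, one still needs to know that passing to the $q\to 0$ limit on the quantum cohomology side really matches, as operator algebras on the Hom space, the limit Gaudin subalgebra on the algebraic side; the paper does not supply this step, and neither does your argument. So there is nothing to compare: you have reproduced and slightly expanded the paper's motivating discussion, correctly flagged its incompleteness, and the paper agrees with you that it is incomplete.
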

Here we use a limit trigonometric algebra $ \ma^{trig}(C)$ which does not admit a very simple description, as explained in section \ref{comp}.

We believe that this conjecture holds even in the case where $ \lambda_i$ are not minuscule and $ \overline \Gr^\ul_\mu$ is not smooth.  More precisely, now let $ \ul$ be any sequence of dominant weights.  As in Theorem \ref{th:GR}\footnote{If the reader examines this proof carefully, they will notice that we need to know that $ (j^\ul_\mu)^! IC_{\overline \Gr^\lambda} \cong IC_{\overline \Gr^\ul_\mu}$, up to cohomological shift.  This follows from the proof of Lemma 8.4.1 in \cite{gr}.}, we have an identification
        \begin{equation} \label{eq:GR2}
IH^*_{T^\vee \times \C^\times}(\Gr^\ul_\mu) \cong \Hom_{U_\hbar\fg\otimes S(\fh)}(\bM[\mu], \bM \otimes V(\ul)),
    \end{equation}
The left hand side of (\ref{eq:GR2}) has an action of $ H^*_{T^\vee\times\bc^\times}(\mathrm{Gr}^\ul_\mu)$ and so we can consider it as a subalgebra of the endomorphism ring of the left hand side. 

\begin{conj} \label{co:cohom2}
    Let $ \ul $ be any sequence of dominant weight and let $ \mu $ be another dominant weight.  The equivariant cohomology ring $ H^*_{T^\vee\times\bc^\times}(\mathrm{Gr}^\ul_\mu)$ (as a subalgebra of the endomorphism ring of the equivariant intersection homology) equals, under the identification (\ref{eq:GR2}), to the image 
    $$ \Delta^{\{0\}, \{1\}}(Z) \Delta^{\{0,1\}, \{2\}}(Z) \cdots \Delta^{\{0,\dots, n-1\}, \{n\}}(Z) $$ in $\Hom_{U_\hbar\fg\otimes S(\fh)}(\bM[\mu], \bM \otimes V(\ul)) $.  
\end{conj}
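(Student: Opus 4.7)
The plan is to reduce Conjecture \ref{co:cohom2} to the preceding minuscule conjecture via a smooth resolution and the decomposition theorem. Assume first that $G$ is simply-laced. For each $\lambda_i$ pick a decomposition $\lambda_i = \nu_{i,1}+\cdots+\nu_{i,k_i}$ into minuscule dominant weights, and set $\ul' = (\nu_{1,1},\dots,\nu_{n,k_n})$. Grouping the tensor factors into the prescribed blocks gives a proper convolution morphism $m\colon \Gr^{\ul'}_\mu \to \overline\Gr^\ul_\mu$ from a smooth variety; by the decomposition theorem $IC_{\overline\Gr^\ul_\mu}$ is a direct summand of $m_*\underline\C_{\Gr^{\ul'}_\mu}[\dim]$, so pullback realizes $IH^*_A(\overline\Gr^\ul_\mu)$ as a direct summand of $H^*_A(\Gr^{\ul'}_\mu)$, and the endomorphism algebra acting on the former is a quotient of the one acting on the latter.

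On the algebraic side, projecting each block onto its Cartan summand $V(\nu_{i,1})\otimes\cdots\otimes V(\nu_{i,k_i}) \twoheadrightarrow V(\lambda_i)$ produces a $\fg$-equivariant surjection $V(\ul')\twoheadrightarrow V(\ul)$ and hence a surjection $\Hom_{U_\hbar\fg\otimes S(\fh)}(\bM[\mu], \bM\otimes V(\ul'))\twoheadrightarrow \Hom_{U_\hbar\fg\otimes S(\fh)}(\bM[\mu], \bM\otimes V(\ul))$ which, under (\ref{eq:GR2}) and Theorem \ref{th:GR}, matches the geometric pushforward. Crucially, under this identification the \emph{coarse} nested diagonals $\Delta^{\{0,\dots,i-1\},\{i\}}(Z)$ for $\ul$ correspond to precisely those fine nested diagonals for $\ul'$ that cut between two consecutive blocks, while the fine diagonals that cut \emph{within} a block do not come from the coarsening.

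Granting the preceding minuscule conjecture applied to $\ul'$, the image of the full fine nested diagonal of $Z$ on the $\ul'$ Hom space equals $H^*_A(\Gr^{\ul'}_\mu)$. Restricting to the $V(\ul)$-summand on the algebraic side and to the $IC_{\overline\Gr^\ul_\mu}$-summand on the geometric side gives the desired equality \emph{provided} the actions of the inner (within-block) diagonals of $Z$ on the $V(\ul)$-summand already lie in the image of the coarse diagonals. This is the main obstacle: one must verify that after projection to the Cartan component of a tensor product of minuscule factors, $Z$ acting through any within-block coproduct is expressible via the outer coproducts. I would attack this by combining the factorization of the convolution morphism (blocks do not mix across the boundary of a single $\overline\Gr^{\lambda_i}$ fibre) with a Harish--Chandra-type description of the action of $Z$ on Cartan components of tensor products of minuscule representations, or alternatively by first establishing the non-minuscule version of Conjecture \ref{conj:danilenko} (the equivariant quantum cohomology analogue) and then taking the caterpillar limit as sketched in the discussion preceding Conjecture \ref{co:cohom2}. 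The non-simply-laced case requires replacing minuscule decompositions by Bott--Samelson-type refinements together with a corresponding extension of the preceding conjecture to smooth non-minuscule convolutions, which is a separate input that does not follow from the present strategy.
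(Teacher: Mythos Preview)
This statement is a \emph{conjecture} in the paper; the authors do not prove it. They only remark that they expect it to follow from the Bezrukavnikov--Finkelberg description of $G^\vee\times\bc^\times$-equivariant intersection cohomology of spherical Schubert varieties \cite{BF}, and defer this to future work. So there is no proof in the paper to compare against, and your proposal should be read as a strategy toward an open problem rather than a reconstruction of an existing argument.

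Your strategy has a basic structural gap already at the first step. You assume that for simply-laced $G$ each $\lambda_i$ can be written as a sum of minuscule dominant weights. This is false in general: $E_8$ has no minuscule dominant weights at all, and in types $D_n$, $E_6$, $E_7$ many dominant weights (e.g.\ $\omega_2$ in $D_4$) are not nonnegative integer combinations of the minuscule ones. So the reduction to the preceding minuscule conjecture via a minuscule refinement $\ul'$ simply does not exist outside type $A$. You allude to Bott--Samelson-type refinements for the non-simply-laced case, but in fact you already need such refinements (or some other smooth resolution with a compatible algebraic description) in the simply-laced case, and then you are no longer feeding into the minuscule conjecture but into something strictly more general that is not available.

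Even in type $A$, where minuscule refinements do exist, the ``main obstacle'' you flag is genuine and you do not resolve it: you need the within-block nested diagonals of $Z$ to act on the Cartan summand $V(\lambda_i)\subset V(\nu_{i,1})\otimes\cdots\otimes V(\nu_{i,k_i})$ through operators already in the image of the coarse (between-block) diagonals. Your two suggested attacks are both conditional---one on a Harish--Chandra-type statement you do not prove, the other on the non-minuscule version of Conjecture~\ref{conj:danilenko}, which is itself open. So as written the proposal reduces one conjecture to a chain of other unproven statements, and for most simply-laced types it does not get off the ground. The paper's expected route via \cite{BF} is quite different in spirit: it would give a direct identification of the equivariant cohomology ring with the image of $Z$ under iterated coproducts, without passing through a smooth resolution or the minuscule case.
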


We expect that the above conjecture follows from the results of \cite{BF} describing the $G^\vee\times\bc^\times$-equivariant intersection cohomology of spherical Schubert varieties; we plan to return to this in a subsequent work.

A similar relation between inhomogeneous Gaudin subalgebras and spherical Schubert varieties was recently observed by Tamas Hausel in \cite{ha}, following \cite{ffry}.  Hausel considers the Rees algebra with respect to the first tensor factor of the two point Gaudin algebra $ \Rees \ma(0, 1) \subset (U_\hbar \fg \otimes U \fg)^\fg$.  This algebra contains $ (U_\hbar\fg)^\fg \otimes 1 \cong S_\hbar(\fh)^W \cong H_{G^\vee \times \C^\times}(pt)$ and also $ \Delta(Z(U \fg))$.  For any dominant weight $ \lambda$, we can consider $ (U_\hbar \fg \otimes U \fg)^\fg \rightarrow (U_\hbar \fg \otimes \End V(\lambda))^\fg $ (this latter space is natural from the viewpoint of the derived Satake correspondence \cite{BF}).

Hausel states the following result.

\begin{thm} \label{th:Hausel}
\begin{enumerate}
    \item 
The equivariant cohomology ring $ H^*_{G^\vee \times \Cx}(\overline \Gr^\lambda)$ is isomorphic to the image of $ \Delta(Z(U \fg)) $ acting on $ (U_\hbar \fg \otimes \End V(\lambda))^\fg $ as a $S_\hbar(\fh)^W$-algebra.
\item The equivariant intersection homology $ IH^*_{G^\vee \times \Cx}(\overline \Gr^\lambda)$ is isomorphic to the image of $ \Rees \ma(0,1) $ acting on $ (U_\hbar \fg \otimes \End V(\lambda))^\fg $ as a $(U_\hbar\fg)^\fg$-module.
\end{enumerate}
\end{thm}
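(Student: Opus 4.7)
The plan is to combine the derived Satake equivalence of Bezrukavnikov--Finkelberg \cite{BF} with the Verma/slice identification of Ginzburg--Riche (Theorem \ref{th:GR2}), specialized to the single-factor case $n=1$. The natural map
$$(U_\hbar\fg \otimes U\fg)^\fg \longrightarrow (U_\hbar\fg \otimes \End V(\lambda))^\fg,$$
sending $\Delta(Z(U\fg))$ and $\Rees\ma(0,1)$ to the images in the statement, can be analyzed via its induced action on the space $\bigoplus_\mu \Hom_{U_\hbar\fg\otimes S(\fh)}(\bM[\mu], \bM \otimes V(\lambda))$ given by post-composition. By Theorem \ref{th:GR2} applied at $n=1$, this space is identified, as a $S_\hbar(\fh)$-module, with $\bigoplus_\mu IH^*_{T^\vee \times \Cx}(\overline\Gr^\lambda_\mu)$; passing to Weyl invariants (i.e.\ from $T^\vee$- to $G^\vee$-equivariance) then gives an identification with $IH^*_{G^\vee \times \Cx}(\overline\Gr^\lambda)$ as an $S_\hbar(\fh)^W$-module.

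For part (1), the key computation is that a central element $z \in Z(U\fg)$, acting via the coproduct on $\bM \otimes V(\lambda)$, acts on the $\mu$-summand $\Hom(\bM[\mu], \bM \otimes V(\lambda))$ as multiplication by the difference of Harish-Chandra character values $\chi_\mu(z) - \chi_0(z)$, which is a specific polynomial in the equivariant parameters. Under the derived Satake identification, this scalar matches the restriction of the corresponding equivariant characteristic class of $\overline\Gr^\lambda$ to the $T^\vee$-fixed component at $t^\mu$ under equivariant localization. Thus the image of $\Delta(Z(U\fg))$ acts in the same way as the pullback $H^*_{G^\vee\times\Cx}(\overline\Gr^\lambda)$, and since the latter is generated over $S_\hbar(\fh)^W$ by exactly these fixed-point polynomials \cite{BF}, the images agree as $S_\hbar(\fh)^W$-subalgebras of the endomorphism ring.

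For part (2), the Rees algebra $\Rees\ma(0,1)$ contains $\Delta(Z(U\fg))$ together with the quadratic Casimir $\hbar\,\Omega^{(01)}$ and the higher Feigin--Frenkel--Reshetikhin Hamiltonians coming from the centre at critical level. Under the above identifications, these additional generators correspond to convolution classes in $IH^*_{G^\vee \times \Cx}(\overline\Gr^\lambda)$ that are not pullbacks of polynomials from the fixed locus. The containment $\Rees\ma(0,1) \subseteq IH^*$ follows because the Gaudin Hamiltonians pairwise commute, preserve the weight decomposition, and their matrix coefficients on each Hom space are polynomial in the equivariant parameters, so they define well-defined $S_\hbar(\fh)^W$-module endomorphisms of $IH^*$. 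Equality is then obtained by a rank comparison over $S_\hbar(\fh)^W$, using the parity vanishing for equivariant IC sheaves on spherical Schubert varieties established in \cite{BF} to ensure freeness on both sides.

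The main obstacle is the surjectivity in part (2): showing that $\Rees\ma(0,1)$ is large enough to generate all of $IH^*_{G^\vee\times\Cx}(\overline\Gr^\lambda)$, not just a proper submodule. In the minuscule case, $IH^* = H^*$ and part (1) already suffices; for general dominant $\lambda$, a natural approach is to factorize $\overline\Gr^\lambda$ as the image under $m_\ul$ of a convolution $\Gr^\ul$ of minuscule factors, and then propagate the result using the operadic degeneration structure on $\overline M_{n+1}$ (Theorem \ref{th:intromain0}) to relate multi-point Gaudin algebras to two-point ones, picking up the required convolution classes in $IH^*$ in the limit. A more conceptual route would identify $\Rees \ma(0,1)$ directly with a Yoneda-Ext algebra of the spherical IC sheaf with itself via derived Satake, recasting the content of \cite{BF} in the language of two-point Gaudin algebras.
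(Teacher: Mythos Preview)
The paper does not prove Theorem \ref{th:Hausel}; it is stated there as a result announced by Hausel in \cite{ha}, with no argument given. So there is no ``paper's own proof'' to compare against. What you have written is an outline of a plausible strategy, not a proof, and you yourself flag the essential gap.

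Several concrete issues. First, your reference to ``Theorem \ref{th:GR2}'' is to an equation label, not a theorem; the relevant statement is Theorem \ref{th:GR} together with the extension (\ref{eq:GR2}) to intersection homology. Second, your claim in part (1) that a central element $z\in Z(U\fg)$ acts on $\Hom(\bM[\mu],\bM\otimes V(\lambda))$ by the \emph{scalar} $\chi_\mu(z)-\chi_0(z)$ is not correct as stated: $\Delta(z)$ acts on the target $\bM\otimes V(\lambda)$, and while the resulting endomorphism of the Hom-space commutes with the $S_\hbar(\fh)$-action, it is not in general multiplication by a scalar (indeed, if it were, part (1) would be saying that $H^*_{G^\vee\times\Cx}(\overline\Gr^\lambda)$ acts by scalars on each $IH^*_{T^\vee\times\Cx}(\overline\Gr^\lambda_\mu)$, which is false when the slice is positive-dimensional). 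What is true is that $z$ acts on the \emph{source} $\bM[\mu]$ by a scalar, but that only tells you how $z^{(0)}$ behaves, not $\Delta(z)$. You need a genuine argument identifying the image of $\Delta(Z(U\fg))$ with the cohomology ring, and the localization heuristic you sketch does not do this.

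Third, and most seriously, you explicitly concede that surjectivity in part (2) is unproved. The ``rank comparison over $S_\hbar(\fh)^W$'' you invoke would require knowing in advance that the image of $\Rees\ma(0,1)$ is a free $S_\hbar(\fh)^W$-module of the correct rank, which is essentially equivalent to what you are trying to prove. The suggested workaround via factorization through minuscule convolutions and the operadic structure of $\overline M_{n+1}$ is not a proof either: limit Gaudin subalgebras at caterpillar points are products of \emph{two-point} algebras, so this route is circular. Your final sentence --- that one should identify $\Rees\ma(0,1)$ with a Yoneda algebra via derived Satake --- is the actual content of the theorem, not a method for proving it.

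In short: this is a reasonable sketch of why the statement is plausible, but it is not a proof, and the paper does not claim to supply one either.
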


\subsection{Ring structure on intersection cohomology of slices in the affine Grassmannian}
In \cite{mp}, McBreen and Proudfoot conjectured (and proved in the hypertoric case) that there is a natural ring structure on the equivariant intersection homology of conical symplectic singularities which admit symplectic resolutions.  More precisely, given a conical symplectic resolution $ Y \rightarrow X$ admitting a Hamiltonian $ T $ action, McBreen-Proudfoot define a procedure for specializing the equivariant quantum cohomology ring $QH^*_{T \times \Cx}(Y) $ at the quantum parameter $ q = 1$.  After quotienting the result by the annihilator of $ \hbar$, they reach an algebra which they conjecture provides a ring structure to the equivariant intersection homology of $ X $.

We consider their conjecture with $ Y = \Gr^{\ul}_\mu $ and $ X = \overline \Gr^\lambda_\mu$, where $ \lambda = \lambda_1 + \dots + \lambda_n $ and all $ \lambda_i $ are minuscule.  To simplify notation, we fix $ \theta \in \fh$ generic and $ \hbar = 1$.  Then conjecture~\ref{conj:danilenko} gives a description of the equivariant quantum cohomology ring $ QH^*_{\theta, 1}(\mathrm{Gr}^\ul_\mu)$ as the image of $\ma^{trig}_\theta(\uz)$ in $ \operatorname{End} V(\ul)_\mu$, with $\uz$ being the quantum parameter.  The McBreen-Proudfoot specialization of the quantum parameter at $1$ corresponds to $z_1=\ldots=z_n$.  

 This specialization corresponds to the divisor in the parameter space $\overline M_{n+2}$ consisting of two-component curves with the marked points $0,\infty$ on one component and all the other points $z_1,\ldots, z_n$ on the other component attached to the first one at $1$. The corresponding subalgebras are the products of $\ma_\theta^{trig}(1)\subset U\fg$ diagonally embedded into $U\fg^{\otimes n}$ and the homogeneous Gaudin subalgebra $\ma(w_1,\ldots,w_n)\subset (U\fg^{\otimes n})^\fg$. The piece of this subalgebra independent of the $w$'s, i.e. the diagonal $\ma_\theta^{trig}(1)$, is the desired McBreen-Proudfoot specialization. Under the geometric Satake correspondence, we may identify (for generic $ \theta$) the intersection homology with the weight space $ IH_{\theta,1}(\overline \Gr^\lambda_\mu) \cong V(\lambda)_\mu $, regarded as a subspace of $ V(\ul)_\mu$. This subspace is stable under $\ma_\theta^{trig}(1)$ and is cyclic as a $\ma_\theta^{trig}(1)$-module for generic $\theta$ (see Theorem \ref{thmA}). So it is natural to expect the following.

\begin{conj}
    The equivariant intersection homology ring $IH_{\theta,1}(\overline \Gr^\lambda_\mu)$ in the sense of McBreen and Proudfoot is the image of $\ma_\theta^{trig}(1)$ acting on $V(\lambda)_\mu$.
\end{conj}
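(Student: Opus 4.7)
The plan is to combine Conjecture~\ref{conj:danilenko} with the compactification Theorem~\ref{th:intromain1}(1) and the simple-spectrum Theorem~\ref{thmA}(2), tracking how the two steps of the McBreen-Proudfoot specialization interact with the parameter space $\overline M_{n+2}$. Assuming Conjecture~\ref{conj:danilenko}, the equivariant quantum cohomology $QH^*_{\theta,1}(\mathrm{Gr}^\ul_\mu)$ coincides, for generic $\theta$, with the image of $\ma^{trig}_\theta(\uz)$ in $\End V(\ul)_\mu$. The first step of the McBreen-Proudfoot construction, namely specialization of the quantum parameter $q=1$, corresponds in our parameterization to the locus $z_1=\cdots=z_n$ in $(\bc^\times)^n/\bc^\times$, which is a point on a boundary divisor of $\overline M_{n+2}$ whose generic curve is a two-component nodal rational curve: one component carries $0$ and $\infty$, the other carries all $z_i$, and they are glued at~$1$.

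By Theorem~\ref{th:intromain1}(1) and its operadic description of limit subalgebras, the limit of $\ma^{trig}_\theta(\uz)$ at this boundary point is generated by two commuting pieces: the diagonal $\ma^{trig}_\theta(1)\subset U\fg$ embedded in $U\fg^{\otimes n}$ via the iterated coproduct, and the homogeneous Gaudin algebra $\ma(w_1,\ldots,w_n)\subset (U\fg^{\otimes n})^{\fg}$ attached to the second component. Since $\ma(w_1,\ldots,w_n)$ lies in the $\fg$-invariants, it preserves the decomposition of $V(\ul)_\mu$ into $\fg$-isotypic components and acts on each multiplicity space $\Hom_{\fg}(V(\nu),V(\ul))$. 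Via the decomposition theorem for $m_{\ul}$ and geometric Satake, the subspace $IH_{\theta,1}(\overline{\Gr}^{\lambda}_\mu)\cong V(\lambda)_\mu\subset V(\ul)_\mu$ corresponds to the canonical Satake line in $\Hom_{\fg}(V(\lambda),V(\ul))$, on which $\ma(w_1,\ldots,w_n)$ necessarily acts by scalars, independently of the parameters $w_i$.

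The second McBreen-Proudfoot step, the quotient by the annihilator of $\hbar$, should be realized geometrically as the restriction of the action to this Satake subspace: on $\bM\otimes V(\ul)$, the $\hbar$-adic filtration coming from the Rees structure filters out all isotypic components of highest weight strictly less than $\lambda$, leaving $V(\lambda)$ as the classical limit. Combining the two steps, the McBreen-Proudfoot specialization is realized as the image of the diagonal $\ma^{trig}_\theta(1)$ acting on $V(\lambda)_\mu$. Finally, Theorem~\ref{thmA}(2) guarantees that $\ma^{trig}_\theta(1)$ acts on $V(\lambda)_\mu$ with simple spectrum for generic $\theta$, so this image is a maximal commutative subalgebra of $\End V(\lambda)_\mu$ of dimension $\dim V(\lambda)_\mu=\dim IH^*(\overline{\Gr}^{\lambda}_\mu)$, which is the correct rank for a flat deformation of the intersection cohomology.

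The main obstacle is the precise identification of the McBreen-Proudfoot annihilator-of-$\hbar$ quotient with the restriction to the Satake subspace $V(\lambda)_\mu\subset V(\ul)_\mu$. Unlike the first specialization step, which is transparent in the Gaudin language as a limit in the parameter space, the second step is a more delicate algebraic construction whose geometric content we have only sketched heuristically; making it rigorous will require working throughout with the Rees algebra $\Rees\ma$ and carefully tracking how the isotypic filtration on $\bM\otimes V(\ul)$ interacts with the $\hbar$-adic filtration, likely via a compatibility statement between the decomposition theorem and the critical-level construction of $\ma$. Once this compatibility is established, the simple-spectrum statement of Theorem~\ref{thmA}(2) closes the argument.
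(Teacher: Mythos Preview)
The statement you are attempting to prove is stated in the paper as a \emph{conjecture}, not as a theorem; the paper does not supply a proof. What the paper does provide is a paragraph of heuristic motivation immediately preceding the conjecture, and your proposal follows essentially the same line of reasoning: the specialization $q=1$ corresponds to the boundary divisor in $\overline M_{n+2}$ where $z_1=\cdots=z_n$, the limit trigonometric Gaudin subalgebra factors as the diagonal $\ma_\theta^{trig}(1)$ times a homogeneous Gaudin piece $\ma(w_1,\ldots,w_n)$, and the $w$-independent part is the diagonal $\ma_\theta^{trig}(1)$, which stabilizes and acts cyclically on $V(\lambda)_\mu\subset V(\ul)_\mu$.

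The genuine gap you yourself flag --- matching the McBreen--Proudfoot ``quotient by the annihilator of $\hbar$'' step with restriction to the Satake summand $V(\lambda)_\mu$ --- is real, and is precisely why the paper leaves this as a conjecture rather than a theorem. Your sketch of how to approach it (via the Rees algebra and an isotypic filtration on $\bM\otimes V(\ul)$) is plausible but not carried out. Note also that your entire argument is conditional on Conjecture~1.11 (the identification of $QH^*_{\theta,1}$ with the image of $\ma_\theta^{trig}(\uz)$), which is itself open in the paper; so even with the filtration argument completed, you would obtain only an implication between two conjectures, not a proof. One further point: you assert that $\ma(w_1,\ldots,w_n)$ acts by scalars on the Satake line in $\Hom_\fg(V(\lambda),V(\ul))$ ``independently of the $w_i$''; this is not automatic and would need justification, since the Gaudin algebra generally acts nontrivially on multiplicity spaces.
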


Note the similarity with Theorem \ref{th:Hausel}(2).  Also, we note that in \cite[Theorem 3.1(4)]{ha}, Hausel presents a similar conjecture for the non-equivariant intersection homology of $ \overline \Gr^\lambda_\mu$.

\subsection{Wall-crossing for symplectic singularities}
Let $ Y $ any conical symplectic resolution with an action of a Hamiltonian torus $ T$.  According to the general philosophy of Bezrukavnikov and Okounkov, we expect that the monodromy of the joint eigenlines of the operators of quantum multiplication in $H_{T \times \bc^\times}^*(Y)$ is given by the combinatorial counterpart of \emph{wall-crossing functors} between the derived categories $\mathcal{O}$ of modular representations of the quantization of the underlined symplectic singularity. Namely, Losev \cite{Losev2} has studied wall-crossing functors that correspond to moving the parameter of the quantum deformation of the coordinate ring of a symplectic singularity $Y$ across the walls of a periodic hyperplane arrangement in $ H^2(Y, \mathbb{R}) $.  He proved that these wall-crossing functors are  perverse equivalences, and thus give some distinguished bijections between the sets of simple objects of the corresponding categories (see \cite[section 10.1]{Losev2}).  When $ Y = \mathrm{Gr}^\ul_\mu $, we expect that these bijections generate the action of the extended affine cactus group $ \widetilde{AC}_n $ (see \cite{HLLY} for a related result). Moreover, we expect a bijection between the set of simple objects in category $\mathcal{O}$ and the set of eigenlines for the quantum cohomology algebra, which intertwines this affine cactus group action and the monodromy of eigenvectors for trigonometric Gaudin algebras (see Conjectures \ref{conj:monodromy} and \ref{conj:danilenko}).

\subsection{Organization} The paper is organized as follows. Section 2 contains some general background which will be useful.  In section 3, we discuss the different moduli spaces of curves and their relation to commutative subspaces of holonomy Lie algebra. In section 4 we recall the results about the homogeneous Gaudin model. In section 5 we define and study the trigonometric Gaudin model. In section 6 we discuss the inhomogeneous Gaudin model and describe its parameter space. In section 7 we include the trigonometric and inhomogeneous Gaudin models into one family. In section 8, we discuss the spectra of Gaudin models and give the sufficient conditions for the simple spectrum in terms of real forms of the moduli spaces of curves.

\subsection{Acknowledgements.} 
We thank Alexander Braverman, Pavel Etingof, Iva Halacheva, Tamas Hausel, Leo Jiang, Yu Li, Vasily Krylov, and Alexei Oblomkov for many comments and suggestions. We also thank the referee for their thoughtful feedback and constructive remarks, which helped improve the manuscript. The work of L.R. was supported by the Fondation Courtois. The work was accomplished during L.R.'s stay at Harvard University and MIT. L.R. is grateful to Harvard University (especially Dennis Gaitsgory) and MIT (especially Roman Bezrukavnikov and Pavel Etingof) for their hospitality. The research of A.I. is supported by the MSHE project No. FSMG-2024-0048.

\section{Notation and background}

\subsection{Simple Lie algebras}
Let  $\fg$ be a simple Lie algebra of rank $ r $.  Fix a triangular decomposition $ \fg = \fn_+ \oplus \fh \oplus \fn_-$ and let $ p = \dim \fn_+$ be the number of positive roots.  (The results of this paper also apply when $ \fg $ is semisimple, but some statements would need modification, specifically those concerning the quadratic parts of the subalgebras.)

Let $\Phi_1, \dots, \Phi_r$ be free generators of $S(\fg)^{\fg}$, where $ r = \rk \fg$. We assume that all $ \Phi_l$ are homogeneous and set $ d_l = \deg \Phi_l$. Fix a $\fg$-invariant non-degenerate symmetric bilinear form $(\cdot,\cdot)$ on $\fg$ and let $\{x_a\}, \{x^a\},\ a=1,\ldots,\dim\fg$ be dual bases of $\fg$ with respect to this bilinear form.
 Moreover, we assume that $ d_1 = 2 $, and that $ \Phi_1 $ is the quadratic Casimir corresponding to this bilinear form.   
 
There is a (non-canonical) isomorphism $ Z(U \fg) \cong S(\fg)^{\fg}$.  For each $ l = 1, \dots, r$, we will later choose $ \widetilde \Phi_l \in Z(U \fg)$ such that $ \widetilde \Phi_l \in U \fg^{(d_l)}$ and under the isomorphism $ \gr U \fg \cong S(\fg)$, $ \widetilde \Phi_l $ is mapped to $ \Phi_l $.  Here and throughout the paper, $ U \fg^{(k)} $ denotes the $k$th filtered part of $ U\fg $ with respect to the PBW filtration.

We have 
$$
\Phi_1 = \sum_a x_a x^a \in S^2(\fg) \quad \widetilde \Phi_1 = \sum_a x_a x^a \in U \fg
$$

We also represent the bilinear form by $ \Omega \in \fg \otimes \fg $.  As in the introduction, we decompose $ \Omega = \Omega_+ + \Omega_0 + \Omega_-$ so that 
$$ \Omega = \sum_a x_a \otimes x^a \quad \Omega_- = \sum_c e^c \otimes e_c $$
where $ \{e_c \} \subset \{x_a \} $ denotes a basis for $ \fn_+ $ with corresponding dual basis $ \{e^c \} $ for $ \fn_-$.

We will use $ \omega $ for the element $ \widetilde \Phi_1 \in U \fg$, and we have a similar decomposition $ \omega = \omega_- + \omega_0 + \omega_+ $ where $ \omega_- = \sum e^c e_c \in \fn_- \fn_+$.

 We also note the identity $ \sum_{l = 1}^r d_l = r + p $, which can be deduced from the Chevalley-Solomon formula.
 
\subsection{Set partitions and embeddings of tensor products} \label{se:Delta}
Throughout this paper, $ n $ will be a natural number and we write $ [n] := \{1, \dots, n\}$.

For each finite set $ S $, let $ p(S) $ denote the set of pairs $ (i,j)$ of distinct elements of $ S $.  We will abuse notation by abbreviating $ (i,j) $ to $ ij$.  Similarly, we write $ t(S) $ for the set of triples $ (i,j,k) $ (abbreviated to $ ijk$) of distinct elements of $ S $.

An \emph{ordered set partition} of $ [n] $ is a sequence $ \CB = (B_1, \dots, B_r)$ of subsets of $ [n] $ such that $ B_1 \sqcup \cdots \sqcup B_r = [n] $. 

Given an ordered set partition $ \CB = (B_1, \dots, B_r)$ of $ [n]$, we define a diagonal embedding of Lie algebras
\begin{gather*} \Delta^{\CB}: \fg^{\oplus r} \rightarrow  \fg^{\oplus n} \\
\Delta^{\CB}(x_1, \dots, x_r) = (y_1, \dots, y_n) \text{ where } y_k = x_j \text{ if } k \in B_j 
\end{gather*}
This extends to a map of the universal enveloping algebras $$ \Delta^{\CB} : U \fg ^{\otimes r} = U (\fg^{\oplus r}) \rightarrow U(\fg^{\oplus n}) = U \fg^{\otimes n}$$  Note that when $ \CB = \{ \{1, 2\} \} $, then $ \Delta^\CB : U\fg \rightarrow U \fg^{\otimes 2}$ is the trivial partition of $ [2]$, then this is the usual coproduct.  More generally, $ \Delta^{\{[n]\}} : U \fg \rightarrow U \fg^{\otimes n}$ is the usual iterated coproduct, which we write simply as $ \Delta$.

For any algebra $ A$, $ a \in A$, and $ 1 \le i \le n$, let $a^{(i)} = 1 \otimes \cdots \otimes a \otimes \cdots \otimes 1 \in A^{\otimes n} $, where we insert $ a $ in the $i$th tensor factor.  The map $ \Delta^{\CB} : U \fg^{\otimes r} \rightarrow U \fg^{\otimes n}$ is characterized by
$$
\Delta^\CB(x^{(j)}) = \sum_{k \in B_j} x^{(k)}, \quad \text{ for } x \in \fg.
$$

Given distinct indices $ I = ( i_1, \dots, i_k) $, we define an injective algebra morphism $$ j_I^n : A^{\otimes k} \rightarrow A^{\otimes n } \quad a_1 \otimes \cdots \otimes a_k \mapsto a_1^{(i_1)} \cdots a_k^{(i_k)}$$

\subsection{Forests}
In this paper, we will use some combinatorics of planar binary forests.  A \emph{tree} is a connected graph without cycles.  A \emph{forest} is a disjoint union of trees.  Given two vertices $ v, w $ of a tree, there is a unique embedded edge path connecting them: we call this \emph{the path} between $v$ and $w$.  A \emph{rooted tree} is a tree with a distinguished vertex, called the \emph{root}, contained in exactly one edge, and with no vertices contained in exactly two edges. Given a non-root vertex $v$ of a rooted tree $\tau$,
the unique edge containing $v$ that lies on the path between $v$ and the root is \emph{descending at $v$}. The remaining edges containing $v$ are \emph{ascending at $v$}. A vertex with no ascending edges is a \emph{leaf}. A vertex $v$  is \emph{above} a vertex $ w$, if the path between $v$ and passes through $w$.  This defines a partial order on the set of vertices.  The \emph{meet} of two leaves $i, j$ is the maximal vertex $ v $ below $ i $ and $ j $.  

A vertex of a rooted forest is \emph{internal} if it is neither a leaf, nor a root.  A \emph{binary tree} is a rooted tree in which every internal vertex is contained in exactly three edges (one descending and two ascending).  A \emph{planar binary tree} is a binary tree such that for each internal vertex, one of the ascending edges is designated as the ``left'' edge and the other is designated as the ``right'' edge.  

A \emph{planar binary forest} is an ordered list $ \tau = (\tau_1, \dots, \tau_m) $ of planar binary trees.  Finally a planar binary forest is \emph{labelled} by a set $ S $ if we are given a bijection from $ S $ to the set of leaves of $ \tau$.  Note that if $ \tau $ consists of $ m $ trees, then $\tau$ determines an ordered set partition of $ S$ with $ m $ parts.  From now on, every planar binary forest will be labelled by $[n]$.

A planar binary forest $ \tau $ also determines a total order on $ [n]$ where $ i < j $ if either $ i,j $ lie in different trees $ \tau_a, \tau_b $ with $ a < b $, or $i,j $ lie in the same tree and $ i $ is on the left branch of the meet of $ i,j$.  Given any internal vertex $ v $ of $ \tau$, there is a unique pair of leaves $ p, q $ with $ p < q$ consecutive in the total order whose meet is $ v $; more specifically $ p $ is the rightmost (greatest) leaf on the left branch above $ v $ and $ q $ is the leftmost (least) leaf on the right branch above $ v $.  The following observation will be very useful to us.

\begin{lem} \label{le:biject}
    Let $ \tau $ be a planar binary tree.  There is a bijection from non-leaf vertices of $ \tau$ to leaves of $ \tau$ defined by $ v \mapsto p $ where
    $$ p := \begin{cases} \text{the rightmost leaf on the left branch above $ v $, } \quad \text{ if $ v $ is internal} \\
    \text{the rightmost leaf of $ \tau$, } \quad \text{ if $ v $ is the root}     
    \end{cases}
    $$
    This extends in the obvious way to planar binary forests.  
\end{lem}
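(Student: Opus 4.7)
The plan is to first reduce to the case of a single tree (since the forest case is immediate by applying the tree case to each component), and then exhibit an explicit inverse to the map $v \mapsto p(v)$. A preliminary sanity check is a dimension count: in a binary tree with the paper's conventions (root of degree $1$, internal vertices of degree $3$, leaves of degree $1$), summing degrees gives $L + 3I + 1 = 2(L + I)$, so $L = I + 1$, which equals the number of non-leaf vertices (internal vertices plus the root). Thus for a map of finite sets of equal size, it suffices to prove either injectivity or surjectivity.

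Next I would construct a candidate inverse $\phi : \{\text{leaves}\} \to \{\text{non-leaves}\}$ as follows. Given a leaf $\ell$, walk along the unique descending path from $\ell$ toward the root. If every step along this path enters its lower endpoint through that vertex's right-ascending edge (where the root, having only one ascending edge, trivially satisfies this), set $\phi(\ell) = $ root. Otherwise, let $\phi(\ell) = v$, where $v$ is the first vertex on the descending path such that the edge just traversed is the \emph{left}-ascending edge of $v$.

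The main verification is that $p \circ \phi = \Id$ on leaves (equivalently that $\phi$ is well-defined and produces a preimage). If $\phi(\ell) = $ root, then the path from $\ell$ to the root uses right-ascending edges throughout, so $\ell$ is the rightmost leaf of $\tau$, which is precisely $p(\text{root})$. If $\phi(\ell) = v$ is an internal vertex, then $\ell$ lies in the left subtree of $v$ (because we entered $v$ via its left-ascending edge) and, by minimality of $v$, every higher step on the path from $\ell$ to $v$ entered its lower endpoint via a right-ascending edge. Translating this into upward language: starting from the left child of $v$ and repeatedly taking the right branch leads exactly to $\ell$, so $\ell$ is the rightmost leaf on the left branch above $v$, i.e. $\ell = p(v)$.

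I do not expect any serious obstacle; the statement is essentially combinatorial book-keeping and the bijection is forced by the degree count. The only small subtlety is the convention that the root has degree $1$ (so the root's unique ascending edge is not labelled left or right), which must be handled as a separate case in the definition of $\phi$ but causes no real difficulty once the count $L = I+1$ has been established.
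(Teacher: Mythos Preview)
Your argument is correct. The paper states this lemma as an ``observation'' and does not supply a proof, so there is nothing to compare against; your approach via the degree count $L=I+1$ together with the explicit inverse (walk down from a leaf and stop at the first vertex entered through its left-ascending edge, or at the root if none) is exactly the natural way to justify it.
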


\subsection{Families of subalgebras} \label{se:Family}
In this paper, we will study schemes which parameterize families of subalgebras of a given algebra.  We will now make this notion precise.  We begin with subspaces of a vector space.

Let $k $ be a commutative ring and let $ W $ be a free finite-rank $ k $ module. Let $ X $ be a scheme over $ k $.  A \emph{family of $r$-dimensional subspaces} of $ W $ parametrized by $X $ is a rank $ r$ locally free sheaf $ \mathcal V \subset \CO_X \otimes_k W $ such that $ (\CO_X \otimes_k W) / \mathcal V $ is locally free.  Equivalently, we have a morphism of $k$-schemes $ X \rightarrow\Gr(r, W)$.  In particular, for each closed point $ p \in X$, we have a dimension $ r $ subspace $$ \mathcal V(p) := \CV_p / \mathfrak m_p \CV_p  \subset k(p) \otimes_k W, \text{ where }  k(p) = \CO_{X, p}/ \mathfrak m_p. $$ 
If $ k = \C$, then $k(p) = \C$ as well, and $ \CV(p)$ is a subspace of $ W $.  On the other hand, we will later consider an example where $ k = \C[\hbar]$ is a polynomial ring, in which case $ X \rightarrow \mathbb A^1$ and $ \CV(p) $ is a subspace of the specialization $ \C[\hbar]/(\hbar - a) \otimes_{\C[\hbar]} W$ (where $ a $ is the image of $ p $ in $ \mathbb A^1$). The family is called \emph{faithful} if the morphism $ X \rightarrow \Gr(r, W)$ is an embedding.

Now, let $ R $ be a (not necessarily commutative, not necessarily finite) $k$-algebra.  A \emph{family of subalgebras of $ R $ parameterized by $ X $} is a locally free sheaf of subalgebras $ \CA \subset \CO_X \otimes_k R $.  
In this case, for each closed point $ p \in X$, we get a subalgebra $ \CA(p) := \CA_p / \mathfrak m_p \CA_p \subset k(p) \otimes_k R $, where $ k(p) = \CO_{X,p}/ \mathfrak m_p$.  

In order to bring finite-dimensional geometry into play, we now assume that $ R $ is equipped with an increasing filtration $ R^{(1)} \subset R^{(2)} \subset \cdots$, where each $ R^{(n)} $ is a free $k$-module of finite rank.  We assume that this filtration is exhaustive, i.e. $ \cup R^{(n)} = R $.   We say that $ \CA $ is \textit{compatible with the filtration}, if we are given locally free sheaves $ \CA^{(n)} \subset \CO_X \otimes_k R^{(n)} $, such that $ \cup \CA^{(n)} = \CA $.  We do not, however, demand that $ \CA^{(n)} = \CA \cap \CO_X \otimes_k R^{(n)}$.  We say that $\CA $ is a \emph{flat family} if the  quotients $ \bigl( \CO_X \otimes_k R^{(n)} \bigr) / \CA^{(n)}$ are locally free for all $ n $ (note that the definition of flatness depends on the filtration).    Let $r_n$ denote the rank of $ \CA^{(n)}$.  In this case, we obtain a morphism $X \rightarrow\Gr(r_n, R^{(n)}) $ to a Grassmannian.

Note that $ \CA^{(n)} $ is flat if and only if for each closed point $ p \in X$, the map $$ \CA^{(n)}(p) \rightarrow k(p) \otimes_k R^{(n)}$$
is injective.  As the left hand side is a $ k(p)$ vector space of dimension $ r_n$, we can test for flatness by checking if the dimension of the image of this map is $ r_n$.

Assume that we have a flat family of subalgebras parametrized by a scheme $ X$.  For each $ n$, we have the morphism $ X \rightarrow\Gr(r_n, R^{(n)})$, which we use to construct a morphism $ X \rightarrow \prod_{k \le n}\Gr(r_k, R^{(k)})$.  We define $ X_n $ to be the image of $ X $ in this product.  Note that we have surjective morphisms $ X_{n+1} \rightarrow X_n$ for each $ n $.   We say that this parametrization is \textit{faithful} if $X $ is the projective limit of this system of schemes.  Note that if $ X \rightarrow X_n $ is an isomorphism for some $ n $, then $ X \rightarrow X_k $ is automatically an isomorphism for all $k \ge n $ and $ X $ is the projective limit.  Note also that if we have a faithful parametrization, then the subalgebras $ \CA(p), \CA(q) $ indexed by distinct points of $p, q \in X $ are different.

Suppose that $ X, \CA_X $ is a flat family of subalgebras with compatible flat filtrations.  We say that another such family of subalgebras $ Y, \CA_Y$ is a \textit{compactification} of the family $ \CA_X$ if
\begin{enumerate}
    \item $Y $ is a compactification of $ X $.  This means that $ Y $ is proper over $k $ and we are given an embedding of $X $ into $ Y $ as an open dense subscheme.
    \item The restriction of $ \CA_Y$ to $ X $ equals $\CA_X$ and for each $ n $, the restriction of $ \CA_Y^{(n)} $ to $ X $ equals $ \CA_X^{(n)}$.
\end{enumerate}

In particular this means that the map $ X \rightarrow \mathrm{Gr}(r_n, R^{(n)}) $ factors through the map $ Y \rightarrow \mathrm{Gr}(r_n, R^{(n)})$.  Since $ Y $ is proper and $ X $ is dense in $ Y$, the image of $ Y $ in $ \mathrm{Gr}(r_n, R^{(n)}) $ is the closure of the image of $ X$.  

Suppose that $  X, \CA_X $ and $ Y, \CA_Y $ are as above and they are both faithful parametrizations.  Then $ Y, \CA_Y $ is uniquely determined by $ X, \CA_X $ (both the scheme and the family of filtered subalgebras), so it is natural to call $ Y, \CA_Y$ \textit{the compactification} of $ X, \CA_X$.  We note that the compactification can depend of the choice of filtration.

\subsection{Algebras over $ \C[\hbar]$}
In this paper, we will often work with algebras over $ \C[\hbar]$.  Given an algebra $ R $ over $ \C[\hbar]$, and $ \varepsilon \in \C$, we let $ R(\varepsilon) := R \otimes_{\C[\hbar]} \C[\hbar]/(\hbar - \varepsilon)$ be the specialization at $ \hbar = \varepsilon$.  Similarly, if $ f : R \rightarrow S $ is a map of $ \C[\hbar]$-algebras, we let $ f(\varepsilon) : R(\varepsilon) \rightarrow S(\varepsilon) $ be the map between their specializations.   Similarly, for any scheme $ X $ defined over $ \mathbb A^1$, we write $ X(\varepsilon) $ for the fibre over $ \varepsilon \in \C$.

We will also need the Rees construction.  Suppose that $ R $ is a $ \mathbb N$-filtered algebra with filtered pieces $ R^{(k)}$, we let $ \Rees(R) = \oplus_{k \in \mathbb N} \hbar^k R^{(k)} \subset \C[\hbar]\otimes R $.  We have $ \Rees(R)(\varepsilon) \cong R$ for $ \varepsilon\ne 0$, while $ \Rees(R)(0) = \gr R = \oplus_{k \in \mathbb N} R^{(k)}/ R^{(k-1)}$.

\subsection{Quantum Hamiltonian reduction}
\label{se:QHR}

Let $A$ be an algebra and $J \subset A$ be a left ideal. Consider the normalizer of $J$ inside $A$, the biggest subspace of $A$ where $J$ is a two-sided ideal:
$$N_A(J) = \{ a \in A \, | \, Ja \subset J \}.$$
We call the algebra $N_A(J) / J$ the {\em quantum Hamiltonian reduction} of $A$ with respect to $J$.  Note that there is a natural algebra isomorphism $ N_A(J)/J \cong \operatorname{End}_A(A/J)^{op} $.  Thus for any $A$-module $M$, both the subspace of \textit{$J$-invariants} $M^J:=\{m\in M\ |\ Jm=0\}$ and \textit{$J$-coinvariants}, $M_J:=M/JM $ are naturally $N_A(J) / J$-modules.

The following special case will be particularly useful to us.  Suppose that $\fg $ is a Lie algebra and that $ \Phi : \fg \rightarrow A $ satisfies $ \Phi([x,y]) = [\Phi(x), \Phi(y)]$ for all $x,y \in \fg $ ($\Phi$ is called a {\em quantum comoment map}).  Suppose also that we are given a character $ \theta : \fg \rightarrow \bc $.  Then we define $ J \subset A $ to be the left ideal generated by $ \{ \Phi(x) - \theta(x) : x \in \fg \}$.

\begin{prop}
In the above setting, we have an isomorphism 
$$N_A(J) /J  \cong (A / J)^\fg = \{ [a] \in A/J : \Phi(x)a - a \Phi(x) \in J  \text{ for all } x \in \fg \}$$
\end{prop}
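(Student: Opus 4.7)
The plan is to reduce the proposition to a direct characterisation of $N_A(J)$ and then identify that description with the $\fg$-invariants on $A/J$. The crucial observation for the first part is that, since $J$ is the left ideal generated by $\{\Phi(x)-\theta(x):x\in\fg\}$ and $J$ is closed under left multiplication by $A$, the condition $Ja\subset J$ holds if and only if $(\Phi(x)-\theta(x))a\in J$ for every $x\in\fg$. Using that the scalar $\theta(x)$ commutes with $a$, I would expand
$$
(\Phi(x)-\theta(x))a \;=\; a(\Phi(x)-\theta(x))+[\Phi(x),a].
$$
The first summand lies in $J$ because $J$ is a left ideal containing $\Phi(x)-\theta(x)$; hence the condition collapses to $[\Phi(x),a]\in J$ for all $x\in\fg$. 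Passing to the quotient then identifies $N_A(J)/J$ set-theoretically with $\{[a]\in A/J:\Phi(x)a-a\Phi(x)\in J\ \forall x\in\fg\}$, which is the set on the right-hand side of the statement.

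To match this set with $(A/J)^{\fg}$, I would equip the left $A$-module $A/J$ with the adjoint $\fg$-action $x\cdot[a]:=[\,\Phi(x)a-a\Phi(x)\,]$; its invariants are tautologically the set produced above. The only content left is verifying that this action is well-defined, i.e.\ that $[\Phi(x),J]\subset J$. For a generator $j=b(\Phi(y)-\theta(y))\in J$, the quantum moment map identity $[\Phi(x),\Phi(y)]=\Phi([x,y])$ gives
$$
[\Phi(x),j] \;=\; [\Phi(x),b]\,(\Phi(y)-\theta(y))+b\,\Phi([x,y]).
$$
The first term lies in $A\cdot J\subset J$; the second equals $b\bigl(\Phi([x,y])-\theta([x,y])\bigr)\in J$ because $\theta$ is a character, so $\theta([x,y])=0$. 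Thus the $\fg$-action descends to $A/J$.

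Combining the two steps yields the bijection claimed in the proposition; it is automatically an isomorphism of algebras since $N_A(J)$ is a subalgebra of $A$ in which $J$ becomes two-sided by the very definition of the normaliser. The only place the hypotheses on $\Phi$ and $\theta$ enter non-trivially is the character property $\theta\bigl([\fg,\fg]\bigr)=0$, which is exactly what forces $J$ to be stable under the adjoint $\fg$-action; this is the only subtle point, and I do not anticipate any genuine obstacle beyond it.
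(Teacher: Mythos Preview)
Your proof is correct and follows essentially the same approach as the paper: both arguments reduce the normaliser condition $Ja\subset J$ to $(\Phi(x)-\theta(x))a\in J$ for all $x\in\fg$, and then use $a(\Phi(x)-\theta(x))\in J$ to recast this as $[\Phi(x),a]\in J$. You add one genuinely extra step the paper leaves implicit, namely the verification (via $\theta([\fg,\fg])=0$) that the adjoint $\fg$-action on $A/J$ is well defined, so that the set on the right really deserves the notation $(A/J)^{\fg}$.
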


\begin{proof}
There is a natural injective map $ N_A(J)/J \rightarrow A/J$.  In fact, the image of this map is contained in $ (A/J)^\fg$ since if $ a \in N_A(J)$ and $ x \in \fg$, then $a (\Phi(x) - \theta(x)) , (\Phi(x) - \theta(x))a \in J $.   Hence $ \Phi(x)a - a \Phi(x) \in J $ as desired.  

So it suffices to show that its image is exactly $(A/J)^\fg$.  Let $ [a] \in (A/J)^\fg$, let $ b \in A $ and let $ x \in \fg $.  Then $ \Phi(x) a - a \Phi(x) \in J $ which implies that $ (\Phi(x) - \theta(x))a - a (\Phi(x) - \theta(x)) \in J$. Multiplying by $ b $ and using that $ba( \Phi(x) -\theta(x))\in J $, we conclude that $ b(\Phi(x) - \theta(x)) a \in J$.  
  Since every element of $J $ is a sum of terms of the form $ b (\Phi(x) - \theta(x))$, we conclude that $ a \in N_A(J)$ as desired.
\end{proof}

We will be mostly concerned with the following situation.  Let $ \widetilde \fg $ be another Lie algebra with $ \fg \subset \widetilde \fg$.  Set $ A = U \widetilde \fg $ and let $ \Phi : \fg \rightarrow A $ be the obvious embedding.  Suppose we are given a complementary subalgebra $ \fk \subset \widetilde \fg$, so that $ \widetilde \fg = \fg \oplus \fk $ as a vector space (but we do not assume that $ \fk$ nor $ \fg $ are ideals).    
\begin{prop} \label{pr:HamReductionToU}
The natural map $ U \fk \rightarrow A $ gives a vector space isomorphism $ U \fk \rightarrow A/J$.  Inverting this isomorphism and restricting to $ N_A(J)$ gives an injective algebra morphism $ N_A(J)/J \rightarrow U \fk $.  The associated graded of this morphism is the morphism $ (S(\widetilde \fg)/ \langle \fg \rangle)^\fg \rightarrow S(\fk) $ which comes from the projection $ \widetilde \fg \rightarrow \fk$.
\end{prop}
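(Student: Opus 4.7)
The plan is to establish all three claims via a PBW decomposition of $A = U\widetilde\fg$ with a basis that lists $\fk$ before $\fg$, after which everything becomes essentially routine up to one technical point about associated gradeds. First, I would invoke PBW to obtain a vector-space isomorphism $U\fk \otimes U\fg \to A$, so that $A$ is free as a left $U\fk$-module on $U\fg$. Since $\theta$ is a character, it extends uniquely to an algebra homomorphism $\epsilon_\theta \colon U\fg \to \bc$ whose kernel $I_\theta$ is the (automatically two-sided) ideal generated by $\{x-\theta(x) : x \in \fg\}$. Under the PBW identification I would then verify $J = U\fk \otimes I_\theta$: one inclusion holds because each generator $x-\theta(x)$ of $J$ lies in $I_\theta$, and the other because $U\fk \cdot I_\theta$ is spanned by elements $ug(x-\theta(x))$ with $u \in U\fk$, $g \in U\fg$, $x \in \fg$, which lie in the left ideal $A \cdot \{x-\theta(x) : x \in \fg\} = J$. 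Quotienting gives $A/J \cong U\fk \otimes (U\fg/I_\theta) \cong U\fk$, proving claim~(1); injectivity of the composite $N_A(J)/J \to U\fk$ in claim~(2) is immediate from injectivity of $N_A(J)/J \hookrightarrow A/J$.

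To see that the induced map is an algebra morphism, I would take $a, b \in N_A(J)$, write $a = u+j$ and $b = v+j'$ with $u,v \in U\fk$ and $j,j' \in J$, and expand $ab = uv + uj' + jv + jj'$. The terms $uj'$ and $jj'$ lie in $J$ because $J$ is a left ideal, while $jv = jb - jj' \in J$ because $b \in N_A(J)$ forces $Jb \subset J$. Hence $ab \equiv uv \pmod{J}$, so multiplication in $N_A(J)/J$ matches multiplication in $U\fk$ under the identification, completing claim~(2).

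For claim~(3), I would pass to the PBW filtration. Each generator $x - \theta(x)$ has degree one with symbol $x$, so the previous argument at the filtered level yields $A^{(n)} = U^{(n)}\fk \oplus (J \cap A^{(n)})$. Hence $\gr J = \langle \fg \rangle \subset S(\widetilde\fg)$ and $\gr(A/J) \cong S(\widetilde\fg)/\langle \fg \rangle \cong S(\fk)$, the second isomorphism induced by the projection $\widetilde\fg \twoheadrightarrow \fk$. The adjoint $\fg$-action on $A$ preserves $J$ (since $[x,y-\theta(y)] = [x,y] - \theta([x,y]) \in \fg \subset J$, using that $\theta$ vanishes on commutators), and the previous proposition identifies $N_A(J)/J$ with $(A/J)^\fg$. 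The only nontrivial point, which I expect to be the main obstacle, is that taking $\fg$-invariants must commute with passage to the associated graded in order to get $\gr((A/J)^\fg) = (S(\widetilde\fg)/\langle \fg \rangle)^\fg$. This follows from exactness of the invariants functor on finite-dimensional modules over a reductive Lie algebra, which applies here since $\fg$ is simple in the applications of interest.
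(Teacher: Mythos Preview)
Your proof is correct and follows the same approach as the paper: both rest on the PBW decomposition $A = U\fk \oplus J$, with the paper stating only that single sentence and leaving everything else implicit, while you spell out the details.

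One small caution on your final paragraph. You justify the commutation of $\fg$-invariants with associated graded by saying ``$\fg$ is simple in the applications of interest.'' Be careful: in this paper the symbol $\fg$ in Proposition~\ref{pr:HamReductionToU} is a placeholder, and the proposition is also invoked with $\fg$ replaced by $\fn_+$ or $\fb_-$ (e.g.\ in the construction of the trigonometric Gaudin algebra), which are not reductive. However, the \emph{associated graded} clause of the proposition is only actually used in Proposition~\ref{pr:iota(A)}, where the subalgebra being reduced by is the simple $\fg$ itself, so your reductivity argument does cover the case where it matters. In the nilpotent applications the paper computes the associated graded by a separate explicit formula (Proposition~\ref{pr:psiab}) rather than by appealing to this clause. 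So your proof is sound for all uses in the paper, but it would be more accurate to say that the invariants-commute-with-$\gr$ step holds whenever the reducing subalgebra acts semisimply, and to note that this is the only case in which the clause is invoked.
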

\begin{proof}
    By the PBW theorem, we have a vector space decomposition $ U \fk \oplus J = A$ and so the result follows. 
\end{proof}

Assume now that $ \fk$ is an ideal.  In particular, this gives us an action of $ \fg$ on $ \fk$ and on $ U\fk$.  Then we can upgrade the previous Proposition as follows.

\begin{prop} \label{pr:HamReductiontoU2}
The morphism $ N_A(J)/J \rightarrow U\fk$ from Proposition \ref{pr:HamReductionToU} is an algebra isomorphism $ N_A(J)/J \cong (U\fk)^\fg$.
\end{prop}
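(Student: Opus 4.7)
The plan is to leverage Proposition \ref{pr:HamReductionToU} together with the preceding proposition identifying $N_A(J)/J$ with $(A/J)^\fg$, and to combine these with the PBW decomposition $A = U\fk \oplus J$ (which underlies the vector space isomorphism $U\fk \to A/J$ in Proposition \ref{pr:HamReductionToU}).

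First I would note that since $\fk$ is an ideal in $\widetilde{\fg}$, the adjoint action of $\fg \subset U\widetilde{\fg}$ preserves the subalgebra $U\fk$: for $x \in \fg$ and $u \in U\fk$, the commutator $[\Phi(x), u]$ computed inside $U\widetilde{\fg}$ lies in $U\fk$, and it agrees on $\fk$ with the Lie bracket, hence on all of $U\fk$ with the natural derivation action of $\fg$. Thus $(U\fk)^\fg$ is unambiguous and equals the set of $u \in U\fk$ with $[\Phi(x), u] = 0$ for all $x \in \fg$.

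Next I would combine this observation with the preceding proposition. For $u \in U\fk$, the class $[u] \in A/J$ lies in $(A/J)^\fg$ iff $[\Phi(x), u] \in J$ for every $x \in \fg$. The previous step shows $[\Phi(x), u] \in U\fk$, while PBW gives $U\fk \cap J = 0$, so the condition $[\Phi(x), u] \in J$ is equivalent to $[\Phi(x), u] = 0$. Hence $[u] \in (A/J)^\fg$ iff $u \in (U\fk)^\fg$, which identifies the image of the injective algebra map $N_A(J)/J \hookrightarrow U\fk$ from Proposition \ref{pr:HamReductionToU} with $(U\fk)^\fg$; the fact that it is an algebra map is already part of the earlier proposition.

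The main point to verify carefully — and the only nontrivial obstacle — is that the $\fg$-action on $A/J$ used in the preceding proposition is well-defined, i.e.\ that $[\Phi(x), J] \subseteq J$. This is a short bracket computation using that $\theta:\fg\to\bc$, being a Lie algebra character, vanishes on $[\fg,\fg]$: for $j = b(\Phi(y) - \theta(y)) \in J$, one expands $[\Phi(x), j]$ and rewrites $\Phi([x,y])$ as $(\Phi([x,y]) - \theta([x,y])) + \theta([x,y])$, so that every term stays inside $J$. With this in place, all the remaining ingredients — the direct sum $A = U\fk \oplus J$, the $\fg$-stability of $U\fk$, and the algebra-isomorphism statement $N_A(J)/J \cong (A/J)^\fg$ — are already established, and the identification $N_A(J)/J \cong (U\fk)^\fg$ follows immediately.
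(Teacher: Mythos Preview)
Your proof is correct and follows essentially the same approach as the paper's. The paper's one-line argument simply observes that the vector space isomorphism $U\fk \to A/J$ is $\fg$-equivariant and hence restricts to an isomorphism on invariants; your proof spells this out explicitly (via $[\Phi(x),u] \in U\fk \cap J = 0$) and adds the verification that $[\Phi(x),J]\subseteq J$, but the underlying idea is identical.
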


\begin{proof}
    We simply observe that the isomorphism $ U\fk \rightarrow A/J $ is $ \fg $-equivariant and thus restricts an isomorphism between the invariants.
\end{proof}

\section{Moduli spaces of curves and commutative subspaces of Lie algebras}
\subsection{The Deligne-Mumford space and the cactus flower space}
We review the construction of the moduli space of cactus flower curves, following our paper \cite{iklpr}.
Let $$ M_{n+1} = (\C^n \setminus \Delta) / \Cx \ltimes \C = ((\Cx)^{n-1} \setminus \Delta) / \Cx $$
 be the space of $ n $ distinct points on $ \C $ up to affine linear transformations.  This space has a natural compactification  $ \overline M_{n+1} $, the Deligne-Mumford space of genus 0 stable curves with $ n+ 1$ marked points.  A point in $ \overline M_{n+1} $ is given by a \emph{cactus curve} $(C, \uz) $, a possibly reducible curve, carrying $n+1 $ marked points $z_1, \dots, z_{n+1} $, each of whose components is a projective line with at least 3 nodal or marked points, and whose components form a tree.  The embedding $ M_{n+1} \rightarrow \overline M_{n+1} $ is given by $ (z_1, \dots, z_n) \mapsto (\PP^1, (z_1, \dots, z_n, z_{n+1} = \infty))$

Given a triple of distinct indices $ i,j,k \in \{1, \dots, n\}$, there is a map $ \mu_{ijk} : M_{n+1} \rightarrow \PP^1 $ given by
$$
\mu_{ijk}(z_1, \dots, z_n) = \frac{z_i - z_k}{z_i - z_j}
$$

This ratio $\mu_{ijk} $ extends to a map $ \overline M_{n+1} \rightarrow \PP^1$ and we can use these maps to embed $ \overline M_{n+1} $ inside a product of projective lines.  

 The following result is due to Aguirre-Felder-Veselov \cite[Theorem A.2]{afv}, building on earlier work of Gerritzen-Herrlich-van der Put \cite{GHP}.
\begin{thm} \label{th:embedP1}
	The maps $ \mu_{ijk} $ embed $ \overline M_{n+1} $ as the subscheme of $ (\PP^1)^{t([n])} $  defined by
	\begin{align*}
		\mu_{ijk} \mu_{ikj} = 1 \quad \mu_{ijk} + \mu_{jik} = 1  \quad \mu_{ijk}\mu_{ilj} = \mu_{ilk}
	\end{align*}
for distinct $ i, j, k, l$.  (Here $ t([n]) $ is the set of triples, see \S \ref{se:Delta}.)
\end{thm}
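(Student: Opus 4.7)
The plan has three main stages. First, I would extend each ratio $\mu_{ijk}$ to a morphism $\overline M_{n+1}\to\PP^1$. For any triple of distinct indices $i,j,k$, there is a forgetful morphism $\overline M_{n+1}\to\overline M_{0,4}$ that remembers only the marked points $z_i,z_j,z_k,z_{n+1}$; composing with the canonical isomorphism $\overline M_{0,4}\cong\PP^1$ (the cross-ratio normalized so that $(z_i,z_j,z_k,z_{n+1})\mapsto(\infty,1,0,\mu_{ijk})$) gives the desired extension, and a direct check confirms that on $M_{n+1}$ it recovers $(z_i-z_k)/(z_i-z_j)$.

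Second, I would verify the three families of relations. On $M_{n+1}$ they reduce to immediate algebraic identities among ratios:
\begin{gather*}
\mu_{ijk}\mu_{ikj}=\frac{(z_i-z_k)(z_i-z_j)}{(z_i-z_j)(z_i-z_k)}=1, \\
\mu_{ijk}+\mu_{jik}=\frac{z_i-z_k}{z_i-z_j}+\frac{z_j-z_k}{z_j-z_i}=1, \\
\mu_{ijk}\mu_{ilj}=\frac{(z_i-z_k)(z_i-z_j)}{(z_i-z_j)(z_i-z_l)}=\frac{z_i-z_k}{z_i-z_l}=\mu_{ilk}.
\end{gather*}
Since $M_{n+1}$ is dense in $\overline M_{n+1}$ and the target is separated, the relations persist on $\overline M_{n+1}$. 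Thus the morphism $\Phi=\prod\mu_{ijk}$ factors through the closed subscheme $Z\subset(\PP^1)^{t([n])}$ cut out by these equations.

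The main substance is showing that $\Phi:\overline M_{n+1}\to Z$ is an isomorphism, and this is the step I expect to be hardest. For injectivity on $\C$-points, the key observation is that the combinatorial type of a cactus curve $(C,\underline z)$ is encoded by the incidence pattern of $0,1,\infty$ among the $\mu_{ijk}$: roughly, $\mu_{ijk}\in\{0,1,\infty\}$ detects when $z_i,z_j,z_k$ distribute non-generically across the components of $C$, and once this pattern is fixed, the remaining finite values record the ordinary cross-ratios on each $\PP^1$-component, from which the curve is reconstructed up to isomorphism. Surjectivity onto $Z$ requires the reverse construction: starting from a tuple $(\mu_{ijk})$ satisfying the relations, I would use the locations of $0,1,\infty$ values to partition $\{z_1,\dots,z_{n+1}\}$ into groups lying on the same component, organize these components into a tree using the transitivity-like relation $\mu_{ijk}\mu_{ilj}=\mu_{ilk}$, and build a stable cactus curve whose ratios reproduce the given tuple. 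Finally, since $\overline M_{n+1}$ is proper and smooth and $Z$ is separated, an elementary tangent-space/dimension count combined with the bijection on closed points upgrades $\Phi$ to a closed embedding and then to an isomorphism onto $Z$. Alternatively, one can invoke the Gerritzen--Herrlich--van der Put description of $\overline M_{n+1}$ in terms of cross-ratio coordinates and reduce the reconstruction to their setting.
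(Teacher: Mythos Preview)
The paper does not prove this theorem; it simply states it with attribution to Aguirre--Felder--Veselov \cite[Theorem~A.2]{afv}, noting that it builds on earlier work of Gerritzen--Herrlich--van der Put \cite{GHP}. So there is no proof in the paper to compare against.

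Your outline is a reasonable sketch of how the argument in those references goes: the extension of $\mu_{ijk}$ via forgetful maps to $\overline M_{0,4}$, the verification of the relations on the dense locus, and the reconstruction of the dual tree from the $\{0,1,\infty\}$-pattern of the coordinates are exactly the ingredients used there. The step you correctly flag as the hardest --- surjectivity of $\Phi$ onto the subscheme $Z$ and the upgrade from a bijection on points to a scheme isomorphism --- is where the actual work lies, and your sketch does not yet carry this out (for instance, you would need to show $Z$ is reduced, or else argue more carefully than a point bijection plus properness). Your final sentence, deferring to the Gerritzen--Herrlich--van der Put cross-ratio presentation, is in effect what the paper does by citation.
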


There is a natural stratification of $\overline M_{n+1}$, indexed by $[n]$-labelled, rooted trees.  Each stratum is a moduli space of nodal curves of the prescribed combinatorial type, which means we record the tree formed by the irreducible components and the labels of the marked points on each component. All these strata are locally closed subsets of $\overline M_{n+1}$ (they can be defined by imposing some $\mu_{ijk} = 0$ and some $\mu_{ijk} \not = 0$). The closure of each stratum is a union of strata of the same form. The dimension of a stratum is $n - k$, where $k$ is the number of internal vertices of the tree.

\subsection{Moduli space of cactus flower curves} \label{se:cactusflower}
\begin{figure}
	\includegraphics[trim=0 80 60 40, clip,width=\textwidth]{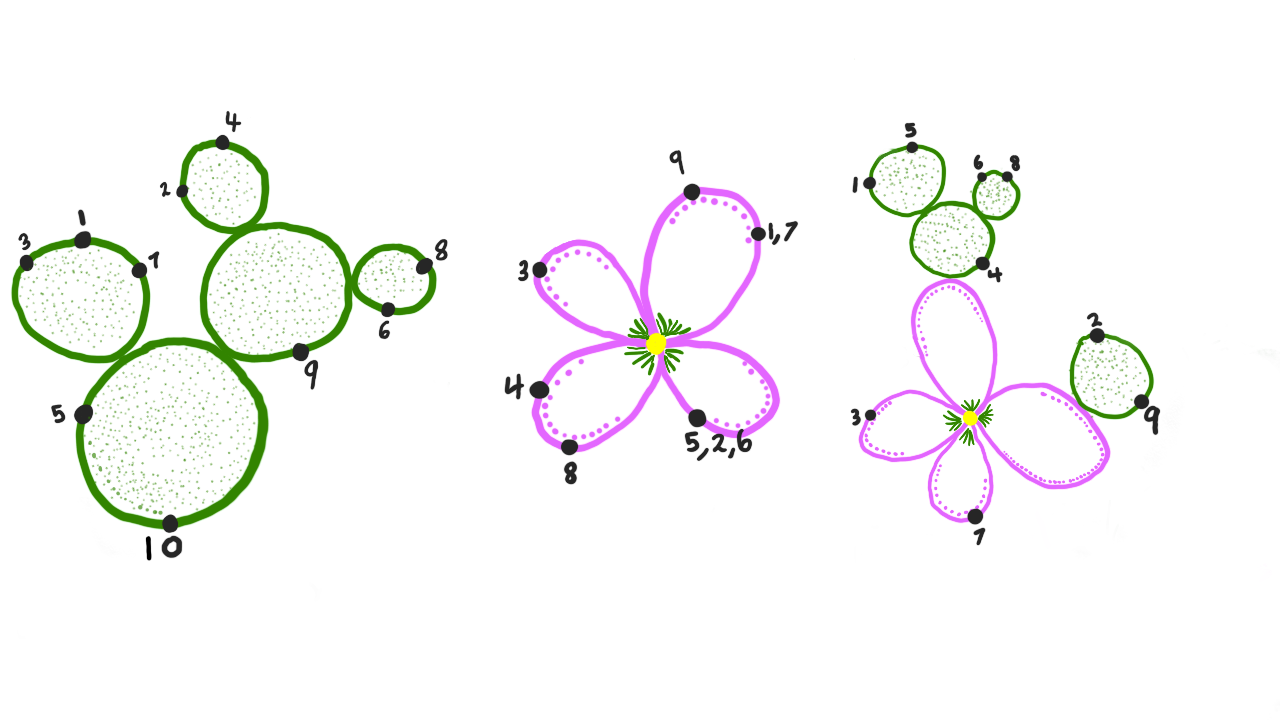}
	\caption{A point of $ \overline M_{9+1}$ (a cactus curve), a point of $ \overline \ft_9$ (a flower curve), and a point of $ \overline F_9$ (a cactus flower curve).}
\end{figure}

The map $ M_{n+1} = (\C^n \setminus \Delta) / \Cx \ltimes \C \hookrightarrow \PP(\ft_n) $, where $ \ft_n = \C^n / \C$, extends to a map $ \overline M_{n+1} \rightarrow \PP(\ft_n)$ given by collapsing all components not containing the marked point $n+1$.  Thus, the moduli space $ \overline M_{n+1} $ carries a line bundle $ \widetilde M_{n+1} \rightarrow \overline M_{n+1} $ given by pulling back the tautological line bundle from $ \PP(\ft_n) $.  A point of $ \widetilde M_{n+1} $ is given by a \emph{framed cactus curve}, which is a cactus curve $ (C, \uz) $, along with a non-zero tangent vector $ a \in T_{z_{n+1}} C $ at the ``infinity'' marked point, which we call the \emph{distinguished} point (see \cite[Remark 4.8]{iklpr}). Note that this non-zero tangent vector has the effect of reducing the automorphism group of the component of $ C $ containing $ z_{n+1} $. In this way we get an embedding
$$ F_n := (\C^n \setminus \Delta) / \C \rightarrow \widetilde M_{n+1}$$
given by $ (z_1, \dots, z_n) \mapsto (\PP^1, (z_1, \dots, z_n, z_{n+1} = \infty), a = 1) $.   In our recent paper \cite{iklpr}, we defined a compactification $ \overline F_n $ of $ F_n $ which contains $ \widetilde M_{n+1} $ as an open subscheme.  A point of $ \overline F_n $ is a \emph{cactus flower curve}, which is a curve $ C = C_1 \cup \dots \cup C_m $, where each $ C_j $ is a framed cactus curve; these cactus curves are glued together at their distinguished point, and altogether carry $n $ distinct marked points.

More precisely, we construct $ \overline F_n $ by starting with the matroid Schubert variety $ \overline \ft_n $, \cite[\S 3.2]{iklpr}.  On $ \ft_n = \C^n/\C $, we have the natural functions $ \delta_{ij} = z_i - z_j $, which collectively give a linear embedding $ \ft_n \hookrightarrow \C^{p([n])} $,  where $ p([n]) $ is the set of pairs, see \S \ref{se:Delta}.  Then  $ \overline \ft_n $ is the closure of $ \ft_n $ inside $ (\PP^1)^{p([n])} $.  

More explicitly, $ \overline \ft_n $ is the subscheme of $(\PP^1)^{p([n])} $ defined by the equations 
$$
\delta_{ij} + \delta_{jk} = \delta_{ik} \qquad \delta_{ij} + \delta_{ji} = 0
$$
for distinct $ i,j,k$.  A point of $ \overline \ft_n $ can be visualized as a \emph{flower curve}, $ C = C_1 \cup \dots \cup C_m $, where each $ C_j $ is a framed projective line, these are glued together at their distinguished point, and altogether carry $n $ (not necessarily distinct) marked points.  

Equivalently, we can set $ \nu_{ij} = \delta_{ij}^{-1} $.  In these coordinates, the defining equations of $ \overline \ft_n $ become
\begin{equation} \label{eq:ftnd}
\nu_{ij} \nu_{jk} = \nu_{ik} \nu_{jk} + \nu_{ij} \nu_{ik} \qquad \nu_{ij} + \nu_{ji} = 0
\end{equation}

The matroid Schubert variety is stratified in two ways by set partitions $ \CS $ of $ [n] $.  Given set partitions $ \CS, \CB $, we have the strata
\begin{gather*}
	V_\CS = \{ \delta \in \overline \ft_n: \delta_{ij} = \infty \text{ if and only if $i \nsim_\CS j $} \} \\
	V^\CB = \{ \delta \in \overline \ft_n: \delta_{ij} = 0 \text{ if and only if $i \sim_\CB j $} \} \\
\end{gather*}

For any set partition $ \CS $, we define an open subscheme $ U_\CS $ containing $ V_\CS $ by
$$
U_\CS = \{ \delta \in \overline \ft_n : \delta_{ij} \ne 0 \text{ if $ i \nsim_\CS j$}, \delta_{ij} \ne \infty \text{ if $ i \sim_\CS j$} \}
$$ 
There is a morphism $ U_\CS \rightarrow \prod_{k=1}^m \ft_{S_k} $ and we define $ \tU_\CS := U_\CS \times_{\prod_k \ft_{S_k}} \prod_k \widetilde M_{S_k +1}$.  Here we use $ \ft_S = \C^S/\C $ and $ \widetilde M_{S +1} $ for the versions of these spaces where the marked points are labelled by the finite set $ S$.

More explicitly, $ \widetilde U_\CS $ is the subscheme of $ (\nu, \mu^1, \dots, \mu^m) \in (\PP^1)^{p([n])} \times \prod_k (\PP^1)^{t(S_k)}$ such that
\begin{itemize}
	\item $ \nu_{ij} $ satisfy the ``non-vanishing'' conditions given in the definition of $ U_\CS $
	\item all the equations from (7) of \cite{iklpr} hold, whenever they make sense.
\end{itemize}

In \cite[\S 6]{iklpr}, we define $ \overline F_n $ by gluing together the schemes $ \tU_\CS $ and we proved the following.
\begin{thm}
    $\overline F_n $ is proper and reduced.  It contains $ F_n$ as a dense subset and is covered by the open subsets $ \{ \widetilde U_\CS \} $.
\end{thm}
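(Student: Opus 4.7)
The plan is to verify the three assertions (cover, density of $F_n$, and properness/reducedness) by exploiting the explicit description of each chart $\widetilde U_\CS$ as a closed subscheme of a product of projective lines, together with the compatibility of the gluing data. Since the construction was set up in \cite{iklpr}, most of the structural work goes into organizing the verification rather than introducing new ideas.

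First, the covering claim is immediate from the construction: $\overline F_n$ is defined as the colimit (gluing) of the $\widetilde U_\CS$ along the pairwise intersections, so the image of each $\widetilde U_\CS$ in $\overline F_n$ is an open subset and together they cover. For density of $F_n$, I would identify $F_n$ with an open subset of the chart $\widetilde U_{\{[n]\}}$ corresponding to the one-block partition, namely the locus where all $\nu_{ij}$ (equivalently all $\delta_{ij}$) are finite and nonzero and $\mu^1 \in M_{n+1}$ corresponds to the actual cross-ratios. Then I would show, chart by chart, that any point in $\widetilde U_\CS$ lies in the closure of $F_n$: explicitly, given a point with some $\nu_{ij} = 0$ or $\infty$, one can write down a one-parameter family in $F_n$ (scaling differences $z_i - z_j$ by suitable powers of $t$) whose limit as $t \to 0$ realizes the prescribed flower-curve combinatorics. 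This is essentially the content of following the limit of coordinates $\delta_{ij}$ and cross-ratios $\mu_{ijk}$ under scaling.

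Next, reducedness reduces to checking reducedness of each $\widetilde U_\CS$, since reducedness is local. Each such chart is a fiber product over $\prod_k \ft_{S_k}$ of an open subscheme $U_\CS \subset \overline \ft_n$ with the framed Deligne-Mumford spaces $\prod_k \widetilde M_{S_k + 1}$. The matroid Schubert variety $\overline \ft_n$ is reduced (indeed, cut out in $(\PP^1)^{p([n])}$ by the explicit equations (\ref{eq:ftn}), a known result of Ardila-Boocher), and each $\widetilde M_{S_k + 1}$ is smooth (being a line bundle over a smooth Deligne-Mumford space). The fiber product over the smooth base $\prod_k \ft_{S_k}$ preserves reducedness (the map $\prod_k \widetilde M_{S_k+1} \to \prod_k \ft_{S_k}$ is flat where it is defined, or more directly, one can write down coordinates on $\widetilde U_\CS$ and check the defining equations generate a radical ideal).

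The main obstacle is properness. To handle it, I would verify the valuative criterion: given a DVR $R$ with fraction field $K$ and a $K$-point of $\overline F_n$, extend it to an $R$-point. After choosing a chart $\widetilde U_\CS$ containing the generic point, all coordinates $\nu_{ij}$ and $\mu^k_{abc}$ are $K$-valued points of $\PP^1$, hence automatically extend to $R$-points of $\PP^1$ by properness of $\PP^1$. The subtlety is that the resulting limit may exit $\widetilde U_\CS$ (some $\nu_{ij}$ landing at $0$ or some $\mu^k$ becoming a boundary point of $\widetilde M_{S_k+1}$), so one must identify the set partition $\CS'$ of the limit by recording which $\nu_{ij}$ are $0$ (refining $\CS$) and then combining with the Deligne-Mumford degenerations on each component to land in a new chart $\widetilde U_{\CS'}$. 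This combinatorial bookkeeping (and the verification that the transition maps between charts are compatible with taking such limits) is the only genuinely nontrivial part; once organized, it gives both properness and the fact that the image of $F_n$ meets every stratum, completing the density argument.
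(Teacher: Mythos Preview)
The paper does not prove this theorem; it is imported from the companion paper \cite{iklpr} (note the sentence immediately preceding the statement: ``In \cite[\S 6]{iklpr}, we define $\overline F_n$ by gluing together the schemes $\tU_\CS$ and we proved the following''). So there is no in-paper proof to compare your proposal against.

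That said, your outline is a reasonable independent strategy and hits the expected structural points: local charts for reducedness, the valuative criterion for properness, and explicit one-parameter degenerations for density. Two places deserve more care. First, your reducedness argument invokes flatness of $\prod_k \widetilde M_{S_k+1} \to \prod_k \ft_{S_k}$, but $\widetilde M_{S_k+1}$ is a line bundle over $\overline M_{S_k+1}$, not over $\ft_{S_k}$, and the map to $\ft_{S_k}$ collapses the boundary; you would need to argue directly with the explicit equations in $(\PP^1)^{p([n])} \times \prod_k (\PP^1)^{t(S_k)}$ rather than via abstract fiber-product preservation. Second, the valuative-criterion step you correctly flag as ``the only genuinely nontrivial part'' is where essentially all the content lies: showing that the limit point, with its refined partition $\CS'$ and its degenerate $\mu$-coordinates, actually satisfies the defining equations of some $\widetilde U_{\CS'}$ amounts to reconstructing much of the stratification theory of \cite{iklpr}. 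Your sketch is an honest plan, but the real work is in that combinatorial verification, which you have only named rather than carried out.
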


The space $ \overline F_n $  comes with a map $ \overline F_n \rightarrow \overline \ft_n $, such that the fibre over a point $ \delta \in V^\CB $ is the product $ \overline M_{B_1 + 1} \times \cdots \times \overline M_{B_m+1} $ of Deligne-Mumford spaces; this corresponds to collapsing a cactus flower curve to a flower curve.


\subsection{Degeneration} \label{se:deg}
In \cite{iklpr}, we also defined a degeneration of $ \overline M_{n+2} $ to $ \overline F_n $.  This degeneration compactifies a degeneration of $ M_{n+2} $ to $ F_n $, which in turn is built on a degeneration of the multiplicative group $ \Cx $ to the additive group $ \C $.  

We  recall the group scheme $ \BG \rightarrow \mathbb A^1 $ studied in \cite{iklpr}. We let 
$$ \BG= \{(z; \varepsilon) \in \C^2 : 1 -  \varepsilon z \ne 0 \} $$
with multiplication given by $ (z_1; \varepsilon) (z_2; \varepsilon) = (z_1 + z_2 - \varepsilon z_1 z_2; \varepsilon) $.  It is a group scheme over $ \mathbb A^1 $ with $ \BG(\varepsilon) \cong \Cx$ for $ \varepsilon \ne 0 $ (given by $ z \mapsto 1 - \varepsilon z$) and $ \BG(0) \cong \C$. 

We will work with $$ \BG^n \setminus \Delta  = \{(z_1, \dots, z_n; \varepsilon) : 1 - \varepsilon z_i \ne 0, z_i \ne z_j \} $$ 
Here the products of $ \BG$ are taken over $ \mathbb A^1$; that is why there is only one $ \varepsilon$ coordinate in $ \BG^n$.  We let $ \mathcal F_n  = (\BG^n \setminus \Delta) / \BG$ and $ \Cf_n = \BG^n/\BG$.

In particular, the coordinate ring of $ \CF_n $ is given by
$$
\C[\mathcal F_n] = \C[\hbar, u_1, \dots, u_n, (1 - \hbar u_i)^{-1}, (u_i - u_j)^{-1}]^\BG $$
and it contains the invertible elements
$$
\nu_{ij} = \frac{ 1- \hbar u_j}{u_i - u_j}
$$

We have isomorphisms $ \mathcal F_n(\varepsilon) \cong M_{n+2} = ((\C^\times)^n \setminus \Delta) / \C^\times$ for $ \varepsilon \ne 0 $ and $ \mathcal F_n(0) \cong F_n = (\C^n \setminus \Delta) / \C$, defined by
$$
(z_1, \dots, z_n; \varepsilon) \mapsto (1 - \varepsilon z_1, \dots, 1 - \varepsilon  z_n) \quad (z_1, \dots, z_n; 0) \mapsto (z_1, \dots, z_n)
$$

In \cite{iklpr}, we defined fibrewise compactifications $ \overline \Cf_n $ and $ \overline \CF_n $  which fit into the diagram 
 \begin{equation} \label{eq:maindiagram}
	\begin{tikzcd}
		\overline M_{n+2} \ar[r] \ar[d] & \overline \CF_n \ar[d]  & \ar[l] \ar[d] \overline F_n \\
		\overline T_n \ar[r] \ar[d] & \overline \Cf_n \ar[d] & \ar[l] \overline \ft_n \ar[d] \\
		\{\varepsilon \ne 0 \} \ar[r] & \BA^1 & \ar[l] \{0 \}
	\end{tikzcd}
\end{equation}
  Here $ \overline T_n $ is the Losev-Manin space, which is the same as the toric variety associated to the permutahedron.

The variety $ \overline \Cf_n $ is defined as the subscheme of $ (\nu,\varepsilon) \in (\PP^1)^{p([n])} \times \BA^1 $  defined by the equations
\begin{equation} \label{eq:ftn}
\varepsilon \nu_{ik} + \nu_{ij} \nu_{jk} = \nu_{ik} \nu_{jk} + \nu_{ij} \nu_{ik} \qquad \nu_{ij} + \nu_{ji} = \hbar
\end{equation}
for all distinct $ i,j,k$.

\subsection{An open affine cover} \label{se:Wtau}

We constructed  $ \overline \CF_n $ in \cite{iklpr} as follows.  First, for each set partition $ \CS$, deforming $ U_S$, we defined the open affine subscheme $ \CU_\CS \subset \overline \Cf_n$,
$$ 
\CU_\CS =  \{ (\nu, \varepsilon) \in \overline \Cf_n: \nu_{ij} \ne \infty \text{ if $ i \nsim_\CS j $}, \ \nu_{ij} \ne 0, \varepsilon \text{ if $i \sim_\CS j) $}  \}
$$

Then we constructed $ \tCU_\CS \subset (\bp^1)^{p([n])} \times \prod_k (\bp^1)^{t(S_k)} \times \BA^1 $ a deformed version of $ \tU_\CS$.  Finally, we defined $ \overline \CF_n $ by glueing these together.  These schemes $ \tCU_\CS$ are not in general affine.

Given any planar binary forest $ \tau$, in \cite[\S 6.5]{iklpr} we defined open affine subschemes $ W_\tau \subset  \overline F_n $ and $ \CW_\tau \subset \overline \CF_n$ (in fact, these do not depend on the planar structure on the forest, but for later use in this paper the planar structure will be important).  The forest $ \tau $ indexes a dimension 0 stratum of $ \overline F_n$, corresponding to the cactus flower curve whose components are labelled by the non-leaf vertices of $ \tau$, and, roughly speaking, $ \CW_\tau$ is the set of curves which can be degenerated to this curve.  More precisely, we let $ \CS $ be the set partition of $ [n] $ given by $\tau$ and then we define
$$
\CW_\tau = \{(\nu, \mu) \in \tCU_\CS : \mu_{ijk} \ne \infty \text{ if the meet of $ i,k $ is above the meet of $ i, j$ in $\tau $} \}
$$

The following functions on $ \CF_n$ extend to regular functions on $ \CW_\tau$
\begin{itemize}
\item $\nu_{ij} = \frac{1 - \hbar z_j}{z_i - z_j} $ for $ i, j $ in different trees,
\item $ \delta_{ij} = \frac{z_i - z_j}{1 - \hbar z_j} $ for $ i , j $ in the same tree,
\item $ \delta_{ij} \nu_{kl} = \frac{(z_i - z_j)(1- \hbar z_l)}{(z_k - z_l)(1-\hbar z_j)}$ for $ i,j, k, l $ in the same tree, such that the meet of $ i,j $ is weakly above the meet of $ k,l$ (see \cite[Lemma 6.16]{iklpr}). 
\end{itemize}

\begin{figure}
	\includegraphics[trim=50 280 100 40, clip,width=0.7\textwidth]{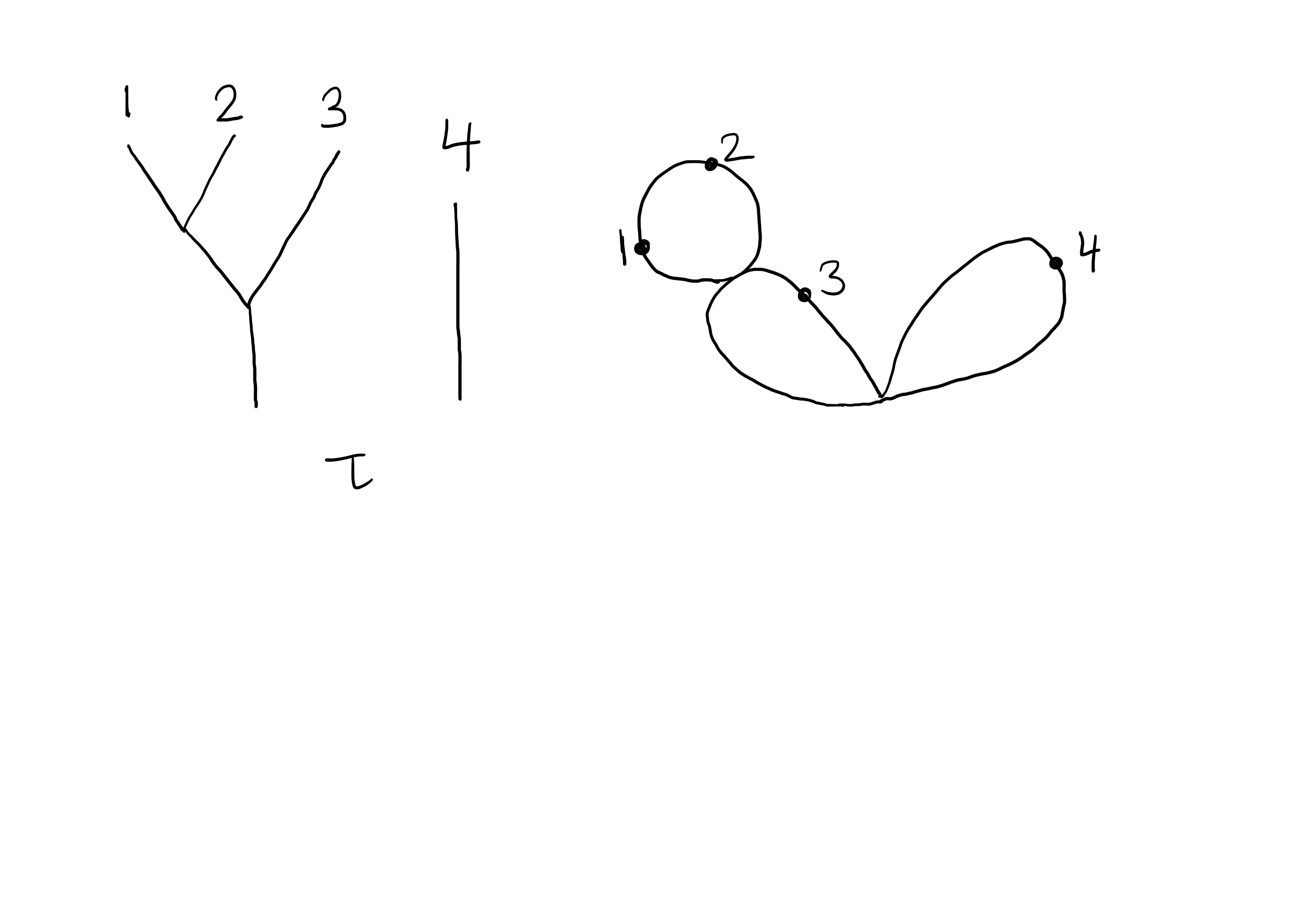}
	\caption{A planar binary forest $ \tau$ and the corresponding cactus flower curve.  On $ \CW_\tau$, we have the following regular functions: $ \nu_{14}, \nu_{24}, \nu_{34},$ $ \delta_{12}, \delta_{13}, \delta_{23},$ $ \mu_{142}, \mu_{143}, \mu_{243}, \mu_{132}$.} \label{fig:tau}
\end{figure}

Now suppose that $i,j,k \in t([n]) $ such that either
\begin{itemize}
    \item $i,j$ are in the same tree and $k $ is from a different tree, or
    \item $i,j,k $ are in the same tree and the meet of $ i,j $ is weakly above the meet of $ i,k$
\end{itemize}
By a slight abuse of notation, we will write $ \mu_{ikj}$ for the regular function on $ \CW_\tau$ which extends $ \frac{z_i - z_j}{z_i - z_k} = \delta_{ji} \nu_{ki}$.  We note the relation $ \mu_{ijk} \nu_{ki} = \nu_{ji} $.  See Figure \ref{fig:tau} for an example.

\subsection{Commutative subalgebras of the Drinfeld-Kohno Lie algebra} \label{se:afv}

Let $ \fs_n $ denote the Drinfeld-Kohno Lie algebra (also known as the holonomy Lie algebra associated to the type $A_{n-1}$ root hyperplane arrangement).   It is the Lie algebra generated by $t_{ij} = t_{ji}$, for $ i \ne j \in \{1, \ldots n\}$ with relations 
\begin{equation} 
[t_{ij}, t_{kl}] = 0 \text{ for $i,j,k,l $ distinct} \quad [t_{ij}, t_{ik}] = -[t_{ij}, t_{jk}]  \text{ for $i,j,k$ distinct}
\end{equation}
This Lie algebra is graded, where all generators have degree 1 and we will be concerned with the first graded piece $ \fs_n^1 \subset \fs_n$, which has basis $ \{ t_{ij} \}$.  A subspace $ V \subset \fs_n^1$ is called \emph{commutative} if $ [x,y] = 0 $ for all $ x,y \in V$. 
Given $ z_1, \dots, z_n \in \bc $ distinct, we can define
$$h_i(\uz) = \sum_{j \ne i} \frac{t_{ij}}{z_i - z_j} \in \fs_n^1$$
These vectors satisfy $[h_i(\uz), h_j(\uz)] = 0 $ and $ \sum_i h_i(\uz) = 0 $.  Thus, they span an $n-1 $-dimensional commutative subspace $  Q(\uz) \subset \fs_n^1$.  It is easy to see that $ Q(\uz) $ only depends on the image of $ \uz $ in $ M_{n+1} $.  We let $ N_n = \{ Q(\uz) : \uz \in M_{n+1} \} \subset\Gr(n-1, \fs_n^1) $ be the set of all subspaces constructed in this manner.  Let $ \overline N_n $ be the closure of this locus.

\begin{rem}
Since commutativity is a closed condition, every $ V \in \overline N_n $ is commutative.  In fact, Aguirre-Felder-Veselov \cite{afv} showed that every maximal commutative subspace of $ \fs_n^1 $ has dimension $ n -1 $ and is contained in $ \overline N_n$.  Thus, we can regard $ \overline N_n $ as the space of all maximal commutative subspaces of $ \fs_n^1$.
\end{rem}

The following is the main result of \cite{afv}.
\begin{thm} \label{th:AFV}
    The map $ M_{n+1} \rightarrow N_n $ defined by $ \uz \mapsto Q(\uz)$
    extends to an isomorphism $ \overline M_{n+1} \cong \overline N_n$.
\end{thm}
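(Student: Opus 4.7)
The plan is to construct an explicit morphism $\Phi : \overline{M}_{n+1} \to \Gr(n-1,\fs_n^1)$ extending $\uz \mapsto Q(\uz)$, show it is injective on points, and conclude it is a closed immersion whose image is $\overline{N}_n$.

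For the construction, I would start on $M_{n+1}$ by rescaling the spanning vectors of $Q(\uz)$: replace $h_i(\uz)$ by $\tilde h_i(\uz) := \prod_{j\ne i}(z_i-z_j)\,h_i(\uz)$, so the coefficient of $t_{ij}$ in $\tilde h_i$ is $\prod_{k\ne i,j}(z_i-z_k)$. Picking $\tilde h_1,\dots,\tilde h_{n-1}$ as a basis, the Plücker coordinates of $Q(\uz)$ in $\Gr(n-1,\fs_n^1)$ are homogeneous polynomials of fixed multidegree in these coefficients; after dividing by a suitable monomial, all projective ratios of Plücker coordinates become polynomials in the quantities $\mu_{ijk}(\uz) = (z_i-z_k)/(z_i-z_j)$. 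By Theorem \ref{th:embedP1}, the $\mu_{ijk}$ extend to regular functions $\overline{M}_{n+1}\to\PP^1$ satisfying the relations listed there, so the Plücker ratios extend to $\overline{M}_{n+1}$; the Plücker relations hold on the dense open $M_{n+1}$ and therefore everywhere, yielding a morphism $\Phi:\overline{M}_{n+1}\to\Gr(n-1,\fs_n^1)$ extending $\uz\mapsto Q(\uz)$.

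Next I would analyze $\Phi$ on each boundary stratum, indexed by an $[n]$-labelled rooted tree $\tau$. A point of such a stratum is a cactus curve $(C,\uz)$ with components $C_1,\dots,C_r$, and I claim that $\Phi(C,\uz)$ decomposes as a direct sum $\bigoplus_{k=1}^r Q_{C_k}$, where each $Q_{C_k}$ is the homogeneous Gaudin subspace attached to $C_k$ inside the Lie subalgebra generated by the $t_{ij}$ whose labels lie on $C_k$ (together with the ``virtual'' node label inherited from the gluing structure). This identification is obtained by inspecting which $\mu_{ijk}$ become $0$ or $\infty$ on the stratum and reading off which coefficients of the $\tilde h_i$ vanish in the rescaled limit; the combinatorics of the cactus tree ensures that the dimensions of the $Q_{C_k}$ add up exactly to $n-1$. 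Injectivity of $\Phi$ then follows: given $V\in\overline{N}_n$, the ``support pattern'' of $V$ (which pairs $(i,j)$ appear with nonzero coefficient for some vector in $V$) recovers the partition into components and, recursively, the tree; on each component, the non-vanishing ratios of coefficients recover the surviving $\mu_{ijk}$ and hence the point of the corresponding Deligne--Mumford factor.

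Finally, since $\overline{M}_{n+1}$ is proper and $\Phi$ is injective, $\Phi$ is a closed immersion; by construction its image contains $N_n$ and is closed, so it contains $\overline{N}_n$, and conversely $\Phi(\overline{M}_{n+1})\subseteq\overline{N}_n$ because $N_n$ is dense in the image. The smoothness of $\overline{M}_{n+1}$ promotes this set-theoretic equality to a scheme isomorphism onto the reduced scheme $\overline{N}_n$. The main obstacle is the second step: writing the Plücker coordinates cleanly in terms of the $\mu_{ijk}$ and, more importantly, carrying out the stratum-by-stratum limit computation that identifies $\Phi$ on a boundary point with the expected direct sum of smaller Gaudin subspaces. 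This combinatorial analysis — matching labelled rooted trees with commutative subspace degenerations in $\fs_n^1$ — is what gives the theorem its substance and is precisely the step carried out in \cite{afv}.
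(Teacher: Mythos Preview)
The paper does not supply its own proof of this theorem: it is stated as ``the main result of \cite{afv}'' and only quoted. What the paper \emph{does} record, immediately after the statement, is the key mechanism of the \cite{afv} proof: for each triple $ijk$ one projects $Q$ to $\spann(t_{ij},t_{ik},t_{jk})\cong\BC^3$ via $q_{ijk}$, observes that the image is always a $2$-plane containing $(1,1,1)$, and reads off $\mu_{ijk}$ via the isomorphism $J$. This gives a \emph{morphism} $\overline N_n\to(\PP^1)^{t([n])}$ landing in $\overline M_{n+1}$, i.e.\ an actual inverse. The paper's own proof of the analogous Theorem~\ref{th:FnGnIso} follows exactly this template.

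Your proposal instead tries to avoid constructing an inverse, and this is where it breaks. The inference ``$\overline M_{n+1}$ is proper and $\Phi$ is injective, hence $\Phi$ is a closed immersion'' is false: the normalization of a cuspidal curve is proper, bijective, from a smooth source, yet not a closed immersion. Likewise ``smoothness of $\overline M_{n+1}$ promotes the set-theoretic equality to a scheme isomorphism'' is not a valid step; smoothness of the source only tells you that $\overline M_{n+1}$ dominates the \emph{normalization} of $\overline N_n$, and you have no a priori control on the singularities of $\overline N_n$. To close this gap you would need either to check that $d\Phi$ is injective at every boundary point (unramifiedness), or --- much more efficiently --- to use the projections $q_{ijk}$ as above to write down an inverse morphism. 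That is exactly what \cite{afv} does, and it bypasses your entire stratum-by-stratum injectivity analysis.

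A secondary issue: in your first step the $\mu_{ijk}$ are $\PP^1$-valued, not regular functions on $\overline M_{n+1}$, so ``polynomials in the $\mu_{ijk}$'' does not directly define a morphism; you would have to pass to an open cover (say the strata charts indexed by planar binary trees, as in \S\ref{se:Wtau}) and argue locally. This is fixable but not as written.
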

In particular, for any $ C \in \overline M_{n+1}$, we may consider the subspace $ Q(C) \subset \fs_n^1$.

Using the coordinates $ \mu_{ijk} $ on $ \overline M_{n+1}$, it is possible to describe $ Q(C) $.  Given any $ V \in \overline N_n $, the projection onto the subspace spanned by $ t_{ij}, t_{jk}, t_{ik} $ is always 2-dimensional and contains $ t_{ij} + t_{jk} + t_{ik} $.  Therefore it defines a point in $ \PP^1 $ which is our coordinate $ \mu_{ijk}$.  

More precisely, define an isomorphism 
\begin{gather*} J : \PP^1 \rightarrow \{W \in\Gr(2, \BC^3) : (1,1,1) \in W \}  \\
J(\tfrac{a}{b}) := \{ (x,y,z) \in \BC^3 : bx - ay + (a-b)z = 0 \}
\end{gather*}
for $\tfrac{a}{b} = [a,b] \in \PP^1$.  Then given any triple $ ijk \in t([n]) $, we define $q_{ijk}: \fs_n^1 \rightarrow \BC^3 $ by 
\begin{equation}
\begin{gathered} \label{eq:qijk}
 q_{ijk}(t_{ij}) = e_1,\ q_{ijk}(t_{ik}) = e_2,\ q_{ijk}(t_{jk}) = e_3, \\ q_{ijk}(t_{ab}) = 0 \text{ if $ \{a,b\} \not\subset \{i,j,k\}$ }
 \end{gathered}
 \end{equation}
  From the proof of \cite[Theorem 2.5]{afv}, for any $ C \in \overline M_{n+1}$ with coordinates $ \mu \in (\PP^1)^{t([n])}$, we have
$$ 
Q(C) = \bigcap_{ijk \in t([n])} q_{ijk}^{-1} \left( J(\mu_{ijk}) \right)
$$

\subsection{Commutative subalgebras of the inhomogeneous Drinfeld-Kohno Lie algebra} \label{se:Fnfrn}
We will now prove an analog of Theorem \ref{th:AFV} for $ \overline F_n $.

Let $ \fr_n $ to be the Lie algebra with generators $ \{ t_{ij} : \{i,j\} \in \binom{[n]}{ 2} \} \cup \{ u_i : i \in [n] \} $ and relations
\begin{gather*}
[t_{ij}, t_{kl} ] = 0 \text{ for $ i,j,k,l $ distinct } \quad [t_{ij}, t_{ik}] = -[t_{ij} , t_{jk} ]  \text{ for $ i,j,k $ distinct } \\
 [t_{ij}, u_k] = 0 \text{ for $ i,j,k$ distinct } \quad [t_{ij}, u_i ] = -[t_{ij}, u_j]  \quad  [u_i, u_j] = 0 \text{ for $i,j$ distinct }
\end{gather*}
As before, this is graded and we will be interested in commutative subspaces of the first graded piece $ \fr_n^1$, which has a basis given by the generators.

\begin{rem}
    The definition of this Lie algebra looks a bit unnatural, but it has a generalization to any hyperplane arrangement which we will study in a future work.
\end{rem}

For $ z_1, \dots, z_n \in \bc $ distinct, we define 
\begin{equation} \label{eq:Hifr}
h_i^0(\uz) = u_i + \sum_{j \ne i} \frac{1}{z_i-z_j} t_{ij}
\end{equation}
Let $ Q(\uz) $ be the span of these vectors.  This subspace depends only on the image of $ \uz$ in $ F_n $ and thus we have  a map $ F_n \rightarrow \Gr(n, \fr_n^1) $ defined by $ \uz \mapsto Q(\uz) $.  Note that $ Q(\uz) $ always contains $ \sum_i u_i = \sum_i h_i^0(\uz) $ which is central in $ \fr_n $.

\begin{lem}
$Q(\uz) $ is a commutative subspace.
\end{lem}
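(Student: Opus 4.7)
The plan is a direct expansion of the commutator $[h_i^0(\uz), h_j^0(\uz)]$, grouping terms by their total degree in the generators $\{u_k\}$ and $\{t_{kl}\}$. Writing
\[
h_i^0(\uz) = u_i + \sum_{k \ne i} \frac{t_{ik}}{z_i - z_k},
\qquad
h_j^0(\uz) = u_j + \sum_{l \ne j} \frac{t_{jl}}{z_j - z_l},
\]
the bracket splits into three pieces: the pure $uu$-term $[u_i, u_j]$, the mixed $ut$-terms, and the pure $tt$-terms.

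The pure $uu$-term vanishes by the relation $[u_i, u_j] = 0$. The pure $tt$-term is exactly the expression that shows up in the classical Drinfeld--Kohno/homogeneous Gaudin calculation showing that $h_i(\uz) = \sum_{k\ne i} t_{ik}/(z_i - z_k)$ mutually commute in $\fs_n^1$; it vanishes by a residue argument using the infinitesimal braid relation $[t_{ab}, t_{ac}] + [t_{ab}, t_{bc}] = 0$ together with $[t_{ab}, t_{cd}] = 0$ for disjoint indices. I would simply cite this standard calculation.

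The only new point is the mixed term. Expanding and discarding those brackets that vanish by $[t_{kl}, u_m]=0$ for $\{k,l\}\cap\{m\}=\emptyset$, the only surviving contributions come from $l=i$ in the first part and $k=j$ in the second, yielding
\[
\frac{[u_i, t_{ji}]}{z_j - z_i} + \frac{[t_{ij}, u_j]}{z_i - z_j}
= \frac{1}{z_i - z_j}\bigl([t_{ij}, u_i] + [t_{ij}, u_j]\bigr),
\]
which vanishes by the defining relation $[t_{ij}, u_i] = -[t_{ij}, u_j]$. This completes the verification.

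There is no real obstacle here; the proof is a routine one-line application of the relations in $\fr_n$. The only minor subtlety is to keep track of signs (the antisymmetry $t_{ij} = t_{ji}$ and the sign flip coming from swapping indices in denominators) when collecting the mixed terms, but the relation $[t_{ij}, u_i + u_j] = 0$ is tailored exactly so that this cancellation goes through.
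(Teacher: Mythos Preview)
Your proof is correct and follows essentially the same approach as the paper: a direct expansion of $[h_i^0(\uz), h_j^0(\uz)]$ checked against the defining relations of $\fr_n$. The paper organizes the check by basis elements of the degree-$2$ part (giving as the sample computation exactly your mixed-term cancellation $\tfrac{1}{z_i-z_j}+\tfrac{1}{z_j-z_i}=0$), while you organize by $uu$/$ut$/$tt$ type and cite the classical Drinfeld--Kohno computation for the $tt$ part; these are the same calculation with different bookkeeping.
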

\begin{proof}
We must check that $ [h_i^0(\uz), h_j^0(\uz)] = 0 $.  In particular, for every basis element of $ \fs_{n}^2 $ (which is $ \binom{n}{3} + \binom{n}{2}$ dimensional), we check that the corresponding coefficient is 0.  For example, we can consider the coefficient of $ [t_{ij}, u_i] = -[t_{ij}, u_j]  $ which is $\frac{1}{z_i - z_j} + \frac{1}{z_j - z_i}  = 0 $. The remaining coefficients are checked in a similar manner.
\end{proof}

Let $ G_n \subset \Gr(n, \fr_n^1)  $ be the image of $ F_n $ and let $ \overline G_n $ be its closure in $ \Gr(n, \fr_n^1) $.  Note that every $ V \in \overline G_n $ is commutative since being commutative is a closed condition.
\begin{rem}
It is not clear if all maximal commutative subspaces of $ \fr_n^1 $ lie in $ \overline G_n $.  Note that the analogous statement for holonomy Lie algebras outside type A is false, as shown in \cite{afv2}.
\end{rem}

We will now extend $ F_n \rightarrow G_n $  to a map $ \overline F_n \rightarrow \overline G_n$.  

Let $ \tau $ be a planar rooted binary forest and recall the open affine subscheme $ W_\tau \subset \overline F_n $ from section \ref{se:Wtau}.  For $ C \in W_\tau$, we will define a collection of vectors $h_v^0(C) $ depending on the internal vertices of $ \tau $ and then we will define $$ Q(C) = \spann (h_v^0(C) : v \text{ is an internal vertex of } \tau ) $$

If $ v $ is not a root, then let $ p, q $ be the consecutive leaves whose meet is $ v $ (i.e. $ p $ is the rightmost leaf in the left branch above $ v $ and $ q $ is the leftmost leaf in the right branch above $ v $) and we define 
\begin{equation} \label{eq:Hv1}
h_v^0(C) = \sum_i \delta_{p q} u_i + \sum_{i,j} \delta_{p q} \nu_{ij} t_{ij}
\end{equation} 
where $ i $ ranges over all leaves in the left branch above $ v $ and $ j $ ranges over all other leaves in the forest.

If $ v $ is a root, then we define
\begin{equation} \label{eq:Hv2}
h_v^0(C) = \sum_i u_i + \sum_{i,j} \nu_{ij} t_{ij} 
\end{equation}
where $i $ ranges over the leaves in the tree of $ v $ and $ j $ ranges over the leaves in other trees.

\begin{lem} \label{le:VtauVz}
If $ C = (\PP^1, z_1, \dots, z_n, z_{n+1} = \infty) \in F_n $, then $ Q(C) = Q(\uz) $.
\end{lem}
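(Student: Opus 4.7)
The plan is to pick a convenient planar binary forest $\tau$ with $C \in W_\tau$, evaluate the generators $h_v^0(C)$ using formulas (\ref{eq:Hv1})–(\ref{eq:Hv2}) at the point $(\PP^1, z_1, \ldots, z_n, \infty)$, and then identify each $h_v^0(C)$ as a multiple of a partial sum of $h_i^0(\underline{z})$'s. Since $C$ is smooth, we take $\tau$ to be any single planar binary tree with $n$ leaves labelled by $[n]$; the set partition is $\mathcal S = \{[n]\}$, and at $\hbar = 0$ we have $\delta_{ij} = z_i - z_j$ and $\nu_{ij} = (z_i - z_j)^{-1}$, so the open conditions defining $W_\tau$ (namely $\delta_{ij} \ne \infty$ for all $i,j$ in the one tree, and the $\mu_{ikj}$ conditions) are all satisfied on $F_n$.

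Next, I would specialize formulas (\ref{eq:Hv1}) and (\ref{eq:Hv2}) at this $C$. Since there is only one tree, the root contribution collapses to $h_r^0(C) = \sum_{i \in [n]} u_i$. For each internal vertex $v$ with associated consecutive leaves $p < q$, using $\delta_{pq}\nu_{ij} = (z_p - z_q)/(z_i - z_j)$, I obtain
\[
h_v^0(C) = (z_p - z_q)\Biggl[\sum_{i \in L(v)} u_i + \sum_{\substack{i \in L(v) \\ j \notin L(v)}} \frac{t_{ij}}{z_i - z_j}\Biggr].
\]
The key observation is that within $\sum_{i \in L(v)} h_i^0(\underline{z})$, the double sum over $i,j \in L(v)$ with $i \ne j$ cancels in pairs because $t_{ij} = t_{ji}$ and $\tfrac{1}{z_i - z_j} + \tfrac{1}{z_j - z_i} = 0$. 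Hence $\sum_{i \in L(v)} h_i^0(\underline{z})$ equals the bracket above, giving $h_v^0(C) = (z_p - z_q)\sum_{i \in L(v)} h_i^0(\underline{z})$; similarly $h_r^0(C) = \sum_{i=1}^n h_i^0(\underline{z})$. This immediately proves $Q(C) \subseteq Q(\underline{z})$.

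For the reverse inclusion, since the $h_i^0(\underline{z})$ are linearly independent (by the identity matrix appearing as their $u_i$-coefficients), it suffices to show that the indicator vectors $\{\chi_{L(v)}\}_{v \text{ internal}} \cup \{\chi_{[n]}\}$ span $\mathbb{Q}^n$. I would prove this by induction on $n$: for $n \geq 2$, the root has a unique adjacent vertex $v_0$ which splits $\tau$ into a left subtree with leaf set $L$ and a right subtree with leaf set $R$, so the internal vertices of $\tau$ other than $v_0$ are partitioned between the two subtrees. In a linear relation, projecting onto $\mathbb{Q}^R$ isolates the right-subtree contributions together with the restriction of $\chi_{[n]}$ (which becomes $\chi_R$), and the inductive hypothesis applied to the right subtree with an added root forces those coefficients to vanish; projecting onto $\mathbb{Q}^L$ and using the relation $\chi_{[n]} - \chi_L = \chi_R$ together with the inductive hypothesis for the left subtree then forces the remaining coefficients to vanish as well. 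The main (mild) obstacle is this combinatorial linear independence step, but it is cleanly handled by the $L \sqcup R$ decomposition.
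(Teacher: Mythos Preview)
Your proof is correct and follows essentially the same route as the paper: both compute $h_v^0(C) = \delta_{pq}\sum_{i\in L(v)} h_i^0(\uz)$ for internal $v$ (and $h_r^0(C)=\sum_i h_i^0(\uz)$ for the root) via the same pairwise cancellation, giving $Q(C)\subseteq Q(\uz)$. The only difference is in the invertibility step: the paper observes that, under the bijection of Lemma~\ref{le:biject} (non-leaf vertex $v\mapsto p_v$ = rightmost leaf on the left branch), every $i\in L(v)$ satisfies $i\le p_v$ in the planar order and $p_v\in L(v)$, so the change-of-basis matrix is upper triangular with nonzero diagonal entries $\delta_{p_vq_v}$; this is shorter than your inductive argument on the $L\sqcup R$ split (in which, incidentally, the relation $\chi_{[n]}-\chi_L=\chi_R$ is not actually needed once you have already deduced $c_{\text{root}}=0$ from the projection to $\mathbb{Q}^R$).
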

\begin{proof}
Let $ v $ be an internal vertex.  Then $ h_v^0(C) = \delta_{pq} \sum_i h_i^0(\uz) $ where $ i $ ranges over  all leaves in the left branch above $ v $, and $ p, q $ are the consecutive leaves meeting at $ v $.

Similarly, if $ v $ is a root, then $ h_v^0(C) = \sum_i h_i^0(\uz) $ where $ i $ ranges over all the leaves in the tree of $ v$.

Since $ \delta_{pq} = z_p - z_q $ are all non-zero, this is an upper triangular change of basis with respect to the partial order on the vertices and the bijection between vertices and leaves, which takes a vertex $ v $ to the rightmost leaf in the left branch above $ v $ (see Lemma \ref{le:biject}).
\end{proof}

\begin{lem} \label{le:linindepVC}
For any $ C \in W_\tau  $, $ \{ h_v^0(C) \}$ is linearly independent and thus $ \dim Q(C) = n $.
\end{lem}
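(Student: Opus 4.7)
The plan is to compute the $n\times n$ coefficient matrix of $\{h_v^0(C)\}_v$ relative to a carefully chosen collection of $n$ basis vectors of $\fr_n^1$, and to show it is block lower triangular with $1$'s on the diagonal. Using the bijection of Lemma \ref{le:biject}, I assign to each non-leaf vertex $v$ of $\tau$ a \emph{distinguished basis vector}
\[
e_v := \begin{cases} u_{p_v} & \text{if $v$ is a root,} \\ t_{p_v q_v} & \text{if $v$ is internal,} \end{cases}
\]
where $q_v$ denotes the leftmost leaf on the right branch above $v$. Since the $p_v$ are pairwise distinct, these give $n$ distinct basis elements of $\fr_n^1$. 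Let $A=(A_{v,w})$ be the $n\times n$ matrix whose $(v,w)$-entry is the coefficient of $e_w$ in $h_v^0(C)$; it suffices to show $\det A = 1$.

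I split $A$ into four blocks according to the root/internal partition of non-leaf vertices. The ``root rows, internal columns'' block vanishes: by \eqref{eq:Hv2}, $h_v^0(C)$ for a root $v$ only involves $t_{ij}$ with $i$ in the leaf set $T(v)$ of the tree of $v$ and $j\notin T(v)$, whereas for internal $w$ both $p_w,q_w$ lie in a single tree. The ``internal rows, root columns'' block vanishes: by \eqref{eq:Hv1}, the coefficient of $u_{p_w}$ in $h_v^0(C)$ for internal $v$ equals $\delta_{p_v q_v}$ times the indicator $[p_w\in L(v)]$, where $L(v)$ is the set of leaves on the left branch above $v$; but the rightmost leaf $p_w$ of its own tree lies on the rightmost descending path and is not contained in any $L(v)$. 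Finally, the root-root block is the identity, since for roots $w\ne v$ in different trees $p_w\notin T(v)$, while for $v=w$ the coefficient of $u_{p_v}$ in \eqref{eq:Hv2} is $1$.

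The content of the lemma sits in the internal-internal block. The diagonal entry $A_{v,v}$ equals $\delta_{p_v q_v}\nu_{p_v q_v}$; by the explicit $\delta\nu$-product formula recalled in section \ref{se:Wtau} (with $(i,j)=(k,l)=(p_v,q_v)$, so the meet condition is trivially satisfied), this regular function on $W_\tau$ is identically $1$. For $v\ne w$ both internal, the off-diagonal entry $A_{v,w}$ is nonzero only when exactly one of $p_w,q_w$ lies in $L(v)$. A short case analysis, using that $L(v)=[a_v,p_v]$ is a contiguous interval in the planar left-to-right order of the leaves and that $q_w=p_w+1$, forces either $p_w=p_v$ (impossible for $w\ne v$ by the bijection of Lemma \ref{le:biject}) or $q_w=a_v$, in which case $p_w=a_v-1<p_v$.

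Therefore, ordering internal vertices by increasing $p_v$ makes the internal-internal block lower triangular with $1$'s on the diagonal. Combined with the identity root-root block and the vanishing cross blocks, $A$ is unit lower triangular, so $\det A=1$ and $\{h_v^0(C)\}_v$ is linearly independent. I expect the only substantive subtlety to be verifying $\delta_{p_v q_v}\nu_{p_v q_v}\equiv 1$ on $W_\tau$, which follows directly from the explicit regular formula in section \ref{se:Wtau}; the rest is combinatorics of planar binary forests.
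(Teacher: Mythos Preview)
Your proof is correct and follows essentially the same strategy as the paper's: both assign to each non-leaf vertex $v$ a distinguished basis vector (the paper implicitly uses $t_{p_vq_v}$ for internal $v$) and argue that the resulting coefficient matrix is triangular with $1$'s on the diagonal. The paper's proof is a two-line sketch---it records that $t_{p_vq_v}$ has coefficient $1$ in $h_v^0$ and coefficient $0$ in $h_{v'}^0$ for $v'$ \emph{below} $v$ in the tree partial order, and declares this sufficient. Your block decomposition and the total order by $p_v$ make explicit exactly the details the paper suppresses: the root case, the incomparable case, and why $\delta_{p_vq_v}\nu_{p_vq_v}=1$ on all of $W_\tau$. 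In fact your off-diagonal analysis for the internal block shows the sharper fact that $A_{v,w}\ne 0$ forces $w$ strictly below $v$ in the tree order, which is a bit more than what the paper states but exactly what is needed for triangularity.
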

\begin{proof}
Again we consider the natural partial order on vertices and the bijection between vertices and leaves.  If $ p,q $ are consecutive leaves whose meet is $ v $, then $ t_{p q} $ appears in $ h_v^0 $ with coefficient 1 and does not appear in any $ h_{v'}^0 $ for $ v' $ below $ v $.  This implies the linear independence.
\end{proof}


Combining together these lemmas and using that  $ \overline F_n $ is a compactification of $ F_n$, we conclude the following.
\begin{prop} \label{pr:FnGnsurject}
For any $ C \in \overline F_n $, $ Q(C) $ is an $ n$-dimensional subspace of $ \fr_n^1 $ containing $ \sum u_i$.  The map $ C \mapsto Q(C) $ defines a surjective morphism $ \overline F_n \rightarrow \overline G_n $.
\end{prop}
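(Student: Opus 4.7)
The plan is to define $Q(C)$ locally on each affine chart $W_\tau$ of the open cover $\{W_\tau\}$ of $\overline F_n$, to glue the resulting morphisms into a single global morphism $\overline F_n \to \Gr(n, \fr_n^1)$ via a density argument, and then to verify the containment of $\sum_i u_i$ and surjectivity by separate (easier) arguments.

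First, on a fixed chart $W_\tau$, I would check that the coefficients $\delta_{pq}$, $\delta_{pq}\nu_{ij}$, and $\nu_{ij}$ appearing in (\ref{eq:Hv1}) and (\ref{eq:Hv2}) are regular functions on $W_\tau$; this is precisely what is recorded in section \ref{se:Wtau}. Hence $C \mapsto h_v^0(C)$ is a regular map $W_\tau \to \fr_n^1$ for each internal vertex $v$. By Lemma \ref{le:linindepVC} the resulting vectors are pointwise linearly independent, so they span a rank-$n$ subbundle of $W_\tau \times \fr_n^1$, and I obtain a morphism $f_\tau : W_\tau \to \Gr(n, \fr_n^1)$.

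Next, to glue $\{f_\tau\}$ into a global morphism it suffices to check $f_\tau = f_{\tau'}$ on each overlap $W_\tau \cap W_{\tau'}$. Since $F_n$ is dense in $\overline F_n$, the intersection $F_n \cap W_\tau \cap W_{\tau'}$ is dense in $W_\tau \cap W_{\tau'}$, and on that dense open Lemma \ref{le:VtauVz} shows that both $f_\tau$ and $f_{\tau'}$ agree with the map $\uz \mapsto Q(\uz)$. Because $\Gr(n, \fr_n^1)$ is separated and $\overline F_n$ is reduced, the equality propagates to the full overlap. This is the step I expect to be the main conceptual point: one could alternatively write explicit change-of-basis matrices between $\{h_v^0\}_\tau$ and $\{h_v^0\}_{\tau'}$ on overlaps, but the separatedness-plus-density argument bypasses that combinatorial bookkeeping.

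For the containment $\sum_i u_i \in Q(C)$: summing (\ref{eq:Hv2}) over all root vertices of $\tau$, each leaf $i \in [n]$ contributes $u_i$ exactly once, while each pair $\{i,j\}$ of leaves lying in distinct trees contributes $\nu_{ij}t_{ij} + \nu_{ji}t_{ji} = (\nu_{ij} + \nu_{ji})t_{ij}$, which vanishes by the defining relation $\nu_{ij} + \nu_{ji} = 0$ on $\overline \ft_n$ (equation (\ref{eq:ftn})). Hence $\sum_v h_v^0(C) = \sum_i u_i$, which therefore lies in $Q(C)$. Finally, for surjectivity, the image of $\overline F_n$ in $\Gr(n, \fr_n^1)$ is closed since $\overline F_n$ is proper, and it contains $G_n$ by Lemma \ref{le:VtauVz}, hence contains $\overline G_n$; the reverse inclusion is automatic from density of $F_n$ in $\overline F_n$ and continuity of the global morphism.
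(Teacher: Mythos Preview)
Your proof is correct and follows essentially the same approach as the paper: define the morphism on each $W_\tau$ via Lemma \ref{le:linindepVC}, glue using density of $F_n$ and separatedness of the target (the paper phrases this as ``any extension of $F_n \to \Gr(n,\fr_n^1)$ to $W_\tau \cap W_{\tau'}$ is unique''), and conclude surjectivity from properness of $\overline F_n$. Your explicit verification that $\sum_i u_i \in Q(C)$ by summing the root contributions and invoking $\nu_{ij}+\nu_{ji}=0$ is a nice direct argument; the paper leaves this implicit (it holds on the dense open $F_n$ and containment of a fixed vector is a closed condition on the Grassmannian).
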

\begin{proof}
From Lemma \ref{le:linindepVC}, we get a morphism $ W_\tau \rightarrow\Gr(n, \fr_n^1) $.  By Lemma \ref{le:VtauVz}, this morphism extends our previous map $ F_n \rightarrow\Gr(n, \fr_n^1)$. Because $ F_n $ is dense in $ \overline F_n $, it is dense in each $W_\tau \cap W_{\tau'} $.  Hence any extension of $ F_n \rightarrow\Gr(n, \fr_n^1) $ to $ W_\tau \cap W_{\tau'} $ is unique.  Thus, these morphisms $ W_\tau \rightarrow \Gr(n, \fr_n^1) $ glue to a morphism $ \overline F_n \rightarrow \Gr(n, \fr_n^1) $.  

Now the statement follows from the fact that $ \overline F_n $ is a compactification of $ F_n $ and hence its image in $ \Gr(n, \fr_n^1) $ is closed and contains $ G_n $.
\end{proof}

\begin{ex} \label{eg:QC}
The space $ \overline F_n $ contains the ``maximal flower point'', where $ \delta_{ij} = \infty $ for all $i,j$.  This point lies in  $W_\tau $ for any planar binary forest with $ n$ trees.  In this case, every internal vertex is a root $ v $ and we have $ h_v = u_i $ where $ i $ is the label on the leaf.  The corresponding subspace $ Q(C) $ is $ \spann(u_1, \dots, u_n) $. 

On the other extreme, we can consider the stratum of $ F_n$, where $ \delta_{ij} = 0 $ for all $i,j$.  This corresponds to cactus flower curves with one petal and where that petal contains no marked point.  This stratum is isomorphic to $ \overline M_{n+1} $.  Given $ C \in V^{[n]} $, let $ C' $ be its image in $ \overline M_{n+1}$.  The point $ C$ lies in $ W_\tau $ for some tree $ \tau $.   So (\ref{eq:Hv2}) implies that if $ v $ is the root of this tree, then $ h_v^0(C) = u_1 + \cdots + u_n$.  Comparing (\ref{eq:Hv1}) with the definition of $ Q(C') \subset \fs_n^1 $ from section \ref{se:afv}, shows that 
 $$ Q(C) = \C( u_1 + \dots + u_n) \oplus Q(C') $$
 \end{ex}

\begin{thm} \label{th:FnGnIso}
The morphism $ \overline F_n \rightarrow \overline G_n $ is an isomorphism.
\end{thm}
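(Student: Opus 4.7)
My plan is to construct an explicit inverse morphism $\psi\colon \overline G_n \to \overline F_n$, adapting the projection-based method of Aguirre--Felder--Veselov recalled in Section \ref{se:afv}. The key initial observation is that every $V \in \overline G_n$ contains the central element $\sum_k u_k$: on the open dense $G_n$ we have $\sum_i h_i^0(\uz) = \sum_k u_k$ since the $t_{ij}$-coefficients cancel in pairs, and this inclusion persists on the closure.

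For each pair $\{i,j\} \subset [n]$, let $p_{ij}\colon \fr_n^1 \to \spann(u_i, u_j, t_{ij})$ be the coordinate projection. Since every $V \in \overline G_n$ is $n$-dimensional and contains $\sum_k u_k$, a direct inspection shows that $p_{ij}(V)$ is a $2$-dimensional subspace containing $u_i + u_j$; any such plane is cut out by a unique equation $a(u_i - u_j) + c\, t_{ij} = 0$, giving a point $[a:c] \in \PP^1$. On $F_n$, a short computation using (\ref{eq:Hifr}) shows that this point equals $[\nu_{ij} : -1]$, thereby recovering the pair of coordinates $(\nu_{ij}, \delta_{ij})$. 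Similarly, for each triple $ijk$, the AFV projection $q_{ijk}$ from (\ref{eq:qijk}), extended to $\fr_n^1$ by declaring $q_{ijk}(u_l) = 0$ for all $l$, sends $V$ to a $2$-dimensional subspace of $\C^3$ containing $(1,1,1)$; this gives the coordinate $\mu_{ijk} \in \PP^1$ exactly as in Theorem \ref{th:AFV}.

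Assembling these projections produces a morphism $\widetilde \psi \colon \overline G_n \to (\PP^1)^{p([n])} \times (\PP^1)^{t([n])}$, and the next step is to check that its image lies in $\overline F_n$, viewed as a subscheme of this product through the embeddings from Theorem \ref{th:embedP1} and equation (\ref{eq:ftn}), glued along the charts $\widetilde U_\CS$. Since all these are closed conditions on the target, it suffices to verify the required relations among $\nu_{ij}, \delta_{ij}, \mu_{ijk}$ on the dense subset $G_n$, where they reduce to elementary identities among the scalars $\frac{1}{z_i - z_j}$ and $z_i - z_j$. This produces the desired morphism $\psi \colon \overline G_n \to \overline F_n$.

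To conclude, by the explicit formulas on $F_n$ (Lemma \ref{le:VtauVz} together with the projection computations above), both compositions $\psi \circ \phi$ and $\phi \circ \psi$ equal the identity on the dense opens $F_n \subset \overline F_n$ and $G_n \subset \overline G_n$, respectively. Since both schemes are reduced and separated, these extend to identities on the full schemes, making $\phi$ and $\psi$ mutually inverse isomorphisms. I expect the main obstacle to lie in verifying that $\widetilde \psi$ factors through $\overline F_n$ along the deeper boundary strata; in particular, one must match the projections against the mixed functions $\mu_{ikj} = \delta_{ji}\nu_{ki}$ that appear in the definition of the charts $\CW_\tau$, which requires careful bookkeeping across the open cover $\{\widetilde U_\CS\}$ used to glue $\overline F_n$.
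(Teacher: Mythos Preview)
Your overall strategy matches the paper's: construct an inverse using the coordinate projections $p_{ij}$ and $q_{ijk}$. But there is a genuine gap in the treatment of the triple projections.

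You assert that for every $V \in \overline G_n$ and every triple $ijk$, the projection $q_{ijk}(V)$ is a $2$-dimensional subspace of $\C^3$. This is false. Take the maximal flower point of $\overline F_n$ (Example \ref{eg:QC}): there $Q(C) = \spann(u_1,\dots,u_n)$, and since $q_{ijk}(u_l)=0$ for all $l$, we get $q_{ijk}(Q(C)) = 0$. So $\mu_{ijk}$ is simply not defined at this point, and your map $\widetilde\psi$ to $(\PP^1)^{p([n])}\times(\PP^1)^{t([n])}$ does not exist as a morphism. This is not a bookkeeping issue across charts; the coordinate you are trying to extract does not exist globally. Relatedly, $\overline F_n$ is not embedded in $(\PP^1)^{p([n])}\times(\PP^1)^{t([n])}$; it is glued from charts $\widetilde U_\CS \subset (\PP^1)^{p([n])}\times\prod_k(\PP^1)^{t(S_k)}$, where only the $\mu_{ijk}$ with $i,j,k$ in a common part $S_k$ appear. (Your justification that $q_{ijk}(V)$ contains $(1,1,1)$ is also off: the central element $\sum_k u_k$ is killed by $q_{ijk}$, so it contributes nothing here.)

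The paper handles this by a two-step construction. First, the pair projections $p_{ij}$ \emph{are} globally well-behaved (Lemma \ref{le:pijV}), and they assemble into a morphism $p\colon \overline G_n \to \overline\ft_n$ compatible with $\overline F_n \to \overline\ft_n$ (Lemma \ref{le:deltaGn}). Second, one works locally: over each open $U_\CS \subset \overline\ft_n$, one applies $q_{ijk}$ \emph{only} for triples lying in a single part $S_r$, and there one can prove (Lemma \ref{le:qJ}) that $q_{ijk}(Q(C))$ is indeed $2$-dimensional and equals $J(\mu_{ijk}(C))$. This yields a morphism $p^{-1}(U_\CS) \to (\PP^1)^{p([n])}\times\prod_r(\PP^1)^{t(S_r)}$ through which the embedding of $\widetilde U_\CS$ factors, proving $\widetilde U_\CS \to p^{-1}(U_\CS)$ is an isomorphism. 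Your proposal can be repaired along exactly these lines, but the key missing insight is that the $\nu$-coordinates must be read off first, so that on each stratum you know \emph{which} triples $ijk$ admit a well-defined $\mu_{ijk}$.
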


In order to prove this result, we will now construct a morphism in the reverse direction.  As in section \ref{se:afv}, we will consider projections onto the three dimensional subspaces spanned by the basis vectors.  However, unlike in section \ref{se:afv}, these projections will not necessarily produce 2-dimensional subspaces.  For example, as discussed above in Example \ref{eg:QC}, the maximal flower point gives $ \spann(u_1, \dots, u_n) $ and this projects to the 0 subspace in $ \spann(t_{ij}, t_{ik}, t_{jk}) $.  This is closely related to the fact that the functions $ \mu_{ijk} $ are not defined on the whole of $ \overline F_n $.  For this reason, we will begin by concentrating on the following projections.  For each $ ij \in p([n]) $, define $ p_{ij} : \fr_n^1 \rightarrow \BC^3 $ by $$ p_{ij}(u_i) = e_1, \ p_{ij}(u_j) = e_2,\ p_{ij}(t_{ij}) = e_3 $$ and $ p_{ij}$ maps all other basis vectors to 0.

Define an isomorphism $ K : \PP^1 \rightarrow \{ W \in \Gr(2,\C^3) : (1,1,0) \in W \} $ by 
$$
K(\tfrac{a}{b}) := \{ (x,y,z) \in \BC^3 : ax - ay - bz = 0 \} $$


\begin{lem} \label{le:pijV}  Fix $ ij \in p([n])$.
For any $ Q \in \overline G_n$, we have the following
\begin{enumerate}
\item $ (1,1,0) \in p_{ij}(Q) $
\item $ \dim p_{ij}(Q) = 2 $ 
\item If $ Q = Q(C) $ for $ C \in \overline F_n $, then $ p_{ij}(Q) = K(\nu_{ij}(C))$
\end{enumerate}
\end{lem}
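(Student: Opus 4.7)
The plan is to verify all three statements on the dense subset $F_n$ by direct computation, then use density and closedness to extend (1) and the inclusion in (3) to all of $\overline G_n$, and finally pin down the dimension by a local argument in the charts $W_\tau$.

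\textbf{Step 1 (direct computation on $F_n$).} For $\uz \in F_n$, the subspace $Q(\uz)$ is spanned by $h_k^0(\uz) = u_k + \sum_{l \neq k} (z_k - z_l)^{-1} t_{kl}$. Since $p_{ij}$ kills every $u_k$ for $k \notin \{i,j\}$ and every $t_{kl}$ with $\{k,l\} \neq \{i,j\}$, only $h_i^0$ and $h_j^0$ contribute, yielding $p_{ij}(h_i^0(\uz)) = e_1 + \nu_{ij}(\uz)\, e_3$ and $p_{ij}(h_j^0(\uz)) = e_2 - \nu_{ij}(\uz)\, e_3$. These two vectors are linearly independent, sum to $(1,1,0)$, and (by matching defining equations) span the plane $K(\nu_{ij}(\uz))$. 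So (1), (2), (3) all hold on $G_n$.

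\textbf{Step 2 (passing to the closure).} The conditions ``$(1,1,0) \in p_{ij}(Q)$'' and ``$p_{ij}(Q) \subset K(\eta)$'' are closed conditions on $\Gr(n, \fr_n^1)$ and on $\Gr(n, \fr_n^1) \times \PP^1$ respectively, cut out by polynomial equations. They hold on the dense images of $F_n$, hence on the full closures. Combined with the surjectivity $\overline F_n \to \overline G_n$ of Proposition \ref{pr:FnGnsurject}, this yields $\C(1,1,0) \subset p_{ij}(Q(C)) \subset K(\nu_{ij}(C))$ for every $C \in \overline F_n$, establishing (1) and the inclusion in (3).

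\textbf{Step 3 (dimension via local charts).} Since $K(\nu_{ij}(C))$ is a $2$-plane containing $(1,1,0)$, Step 2 gives $\dim p_{ij}(Q(C)) \in \{1, 2\}$, and both (2) and the equality in (3) reduce to showing the dimension is exactly $2$. I work on a chart $W_\tau \ni C$ using the explicit basis $\{h_v^0(C)\}_v$ of $Q(C)$ from Section~\ref{se:Fnfrn}. If $i, j$ lie in different trees of $\tau$, the root-vertex contributions project to $(1, 0, \nu_{ij}(C))$ and $(0, 1, -\nu_{ij}(C))$, which are always independent. If $i, j$ lie in the same tree with meet $v^*$ (necessarily an internal, non-root vertex, since the root has only one incident edge), the root projects to $(1,1,0)$ while $p_{ij}(h_{v^*}^0(C)) = (\delta_{pq}(C),\, 0,\, \delta_{pq}(C)\nu_{ij}(C))$, where $p, q$ are the consecutive leaves meeting at $v^*$; both coordinates are regular on $W_\tau$ by the rules of Section~\ref{se:Wtau}, since meet$(p,q) = $ meet$(i,j)$. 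The \textbf{main obstacle} is to ensure this second vector is nonzero at every point of $W_\tau$, i.e.\ that $(\delta_{pq},\, \delta_{pq}\nu_{ij})$ never vanishes simultaneously. This is automatic when $p=i$ and $q=j$ (since then $\delta_{pq}\nu_{ij} = 1$); in the general case I would argue either by exploiting the full basis $\{h_v^0\}$ (the $n$-dimensional space $Q(C)$ of Lemma~\ref{le:linindepVC} cannot collapse uniformly to $\C(1,1,0)$ under $p_{ij}$), or by switching to a chart $W_{\tau'}$ with a more favorable planar structure at $C$, exploiting the detailed coordinate description of the $W_\tau$'s.
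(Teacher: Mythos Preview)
Your Steps~1 and~2 are fine and constitute a reasonable alternative to the paper's approach for parts (1) and the upper bound in (2)/(3). Where the paper gets $\dim p_{ij}(Q) \le 2$ in one line from commutativity (identifying $\C^3 \cong \fr_2^1$ so that any commutative image is at most $2$-dimensional), you obtain it via the closed inclusion $p_{ij}(Q(C)) \subset K(\nu_{ij}(C))$; both are valid, though the commutativity argument is cleaner and applies to all of $\overline G_n$ without invoking the surjection from $\overline F_n$.

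The genuine gap is in Step~3. You correctly isolate the issue---showing $\delta_{pq}\nu_{ij}$ does not vanish on $W_\tau$ when the meet of $p,q$ equals the meet of $i,j$---but neither of your proposed resolutions works as stated. The first (``the $n$-dimensional space cannot collapse to $\C(1,1,0)$'') is simply false: an $n$-dimensional subspace can certainly project to a line under a rank-$3$ map. The second (switching charts so that $p=i$, $q=j$) would require showing that for every $C \in \overline F_n$ there is a planar binary forest $\tau$ with $C \in W_\tau$ in which $i,j$ are the consecutive leaves at their meet, which is not obvious and would need its own argument about the cover $\{W_\tau\}$.

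The paper closes this gap by invoking \cite[Lemma~6.16]{iklpr}, which asserts precisely that $\delta_{pq}\nu_{ij}$ is invertible on $W_\tau$ whenever the meet of $i,j$ is weakly above (in particular, equal to) the meet of $p,q$. This is a nontrivial statement about the coordinate ring $\C[W_\tau]$, not something you can read off from the definitions in Section~\ref{se:Wtau} alone. With that lemma in hand, the vector $(\delta_{pq}, 0, \delta_{pq}\nu_{ij})$ is visibly independent of $(1,1,0)$ and lies in $K(\nu_{ij})$, finishing the proof.
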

\begin{proof}
First, we know that $ \sum u_k \in Q $, so $ (1,1,0) \in p_{ij}(Q) $.

Next, we know that $ p_{ij}(Q) \subset \BC^3 $ is a commutative subspace (where we identify $\BC^3 = \fr_2^1$, so  $ [e_1,e_3] = -[e_2,e_3], \ [e_1,e_2] = 0 $), which implies that $ \dim p_{ij}(Q) \le 2 $.  

Finally, we will now find another vector in $ p_{ij}(Q) $ by an explicit computation.  Since $ Q = Q(C) $ for some $ C \in \overline F_n $, we need to find a vector $ v \in Q(C)$ such that $ p_{ij}(v) $ does not lie in the span of $ u_i + u_j $.  Suppose that $ C \in W_\tau $ for some planar binary forest $ \tau $.  We have two cases depending on whether $ i, j $ lie in the same tree in $ \tau $.  

If they are in the same tree, then let $ v $ be their meet. Assume without loss of generality that $i$ lies in the left branch above $ v $.  Then using (\ref{eq:Hv1}), we have $ p_{ij}( h_v^0(C)) = (\delta_{pq}, 0, \delta_{pq} \nu_{ij} )$ where $p,q $ are the consecutive leaves meeting at $ v$.  On $ W_\tau $, $\delta_{pq} \nu_{ij} $ is non-zero (by \cite[Lemma 6.16]{iklpr}) and so $ p_{ij}( h_v^0(C)) $ does not lie in the span of $ (1,1,0)$, but does lie in $ K(\nu_{ij})$.

On the other hand, if $ i, j$ do not lie in the same tree, then let $ v $ be the root of the tree containing $ i $.  Then using (\ref{eq:Hv2}), we have $ p_{ij}(h_v^0(C)) = (1 ,0, \nu_{ij} ) $ which again does not lie in the span of $(1,1,0) $, but does lie in $ K(\nu_{ij}) $.
\end{proof}

In this way, we get a map $ \nu_{ij} : \overline G_n \rightarrow \PP^1 $ by $ Q \mapsto K^{-1}(p_{ij}(Q)) $.
\begin{lem} \label{le:deltaGn}
These maps $ \nu_{ij} $ combine together to give a map $ p :  \overline G_n \rightarrow \overline \ft_n$, such that the usual map $ \overline F_n \rightarrow \overline \ft_n $ factors as $ \overline F_n \rightarrow \overline G_n \rightarrow \overline \ft_n$.
\end{lem}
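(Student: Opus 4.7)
The plan is to bundle the $\PP^1$-valued coordinates $\nu_{ij}$ constructed in Lemma \ref{le:pijV} into a single morphism $\nu \colon \overline G_n \to (\PP^1)^{p([n])}$, show that its image lies in the closed subscheme $\overline \ft_n \subset (\PP^1)^{p([n])}$, and then read off the desired factorization from Lemma \ref{le:pijV}(3).

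Packaging the individual $\nu_{ij}$ produces the morphism $\nu$ at once. To show that the image of $\nu$ is contained in $\overline \ft_n$, I will invoke the surjective morphism $\overline F_n \twoheadrightarrow \overline G_n$ from Proposition \ref{pr:FnGnsurject}. By Lemma \ref{le:pijV}(3), for every $C \in \overline F_n$ the subspace $p_{ij}(Q(C))$ equals $K(\nu_{ij}(C))$, so the composite $\overline F_n \to \overline G_n \xrightarrow{\nu_{ij}} \PP^1$ coincides with the pullback along the natural map $\overline F_n \to \overline \ft_n$ of the $ij$-th coordinate on $\overline \ft_n$. Assembling these over all pairs $ij$ gives a commutative square
\[
\begin{tikzcd}
\overline F_n \arrow[r, two heads] \arrow[d] & \overline G_n \arrow[d, "\nu"] \\
\overline \ft_n \arrow[r, hook] & (\PP^1)^{p([n])}.
\end{tikzcd}
\]
Since the top arrow is surjective and the bottom is a closed embedding, the image of $\nu$ is set-theoretically contained in $\overline \ft_n$. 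Endowing $\overline G_n$ with its reduced induced scheme structure (it is the Zariski closure of $G_n$ inside a Grassmannian), this set-theoretic containment upgrades to a morphism of schemes $p \colon \overline G_n \to \overline \ft_n$ completing the square.

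The factorization of the natural map $\overline F_n \to \overline \ft_n$ as $\overline F_n \to \overline G_n \xrightarrow{p} \overline \ft_n$ is then automatic: both sides agree after composing with the monomorphism $\overline \ft_n \hookrightarrow (\PP^1)^{p([n])}$, namely they both coincide with the diagonal of the commuting square.

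The main subtlety is the step upgrading the set-theoretic image containment to a scheme-theoretic factorization; this relies on working with the reduced induced structure on $\overline G_n$, and I expect this to be the only point needing care. A self-contained alternative, avoiding any mention of reducedness, is to verify directly on the dense open $G_n \subset \overline G_n$ that the coordinates $\delta_{ij}(Q(\underline z)) = \nu_{ij}(Q(\underline z))^{-1}$ evaluate to $\pm(z_i - z_j)$ (the sign determined by the convention in the isomorphism $K$) via a short computation with $h_i^0(\underline z)$ from (\ref{eq:Hifr}); the defining relations $\delta_{ij} + \delta_{jk} = \delta_{ik}$ and $\delta_{ij} + \delta_{ji} = 0$ of $\overline \ft_n$ are then visibly satisfied on $G_n$, and hence on all of $\overline G_n$ by continuity, which again yields the morphism $p$.
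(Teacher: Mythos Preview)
Your proposal is correct and follows the same line as the paper, which simply says the lemma ``follows immediately from Lemma~\ref{le:pijV}(3).'' You have spelled out what that one-liner means: combine the maps $\nu_{ij}$ into $\nu:\overline G_n\to(\PP^1)^{p([n])}$, use the surjection $\overline F_n\twoheadrightarrow\overline G_n$ together with Lemma~\ref{le:pijV}(3) to see that $\nu$ agrees with the composite $\overline F_n\to\overline\ft_n\hookrightarrow(\PP^1)^{p([n])}$, and conclude that $\nu$ factors through $\overline\ft_n$. Your care about the reduced structure on $\overline G_n$ (needed to promote the set-theoretic containment to a morphism) and your alternative argument via the dense open $G_n$ are both legitimate ways to fill in a step the paper leaves implicit; neither constitutes a different approach.
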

\begin{proof}
This follows immediately from Lemma \ref{le:pijV}(3).
\end{proof}

Fix a set partition $ \CS $ of $ [n] $.  We have the open subset $ U_\CS \subset \overline \ft_n $ and its preimage $ \tU_\CS \subset \overline F_n $.  Fix a triple $ ijk \in t(S_r) $ for some $ r$ and consider the projection $ q_{ijk} : \fr^1_n \rightarrow \C^3 $ defined as in (\ref{eq:qijk}), but also with $ q_{ijk}(u_l) =  0 $ for all $ l$.

\begin{lem} \label{le:qJ}
Let $ C \in \tU_\CS $ and let  $ Q = Q(C) $. Then $ q_{ijk}(Q) = J(\mu_{ijk}(C))$. 
\end{lem}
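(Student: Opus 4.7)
The plan is to verify the lemma first on the dense open subset $F_n \cap \tU_\CS$ and then extend to all of $\tU_\CS$ by a morphism-agreement argument, using that both sides define morphisms to $\Gr(2,\C^3)$.

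For the dense subset, fix $\uz \in F_n \cap \tU_\CS$, so $Q(C) = Q(\uz)$ is spanned by the vectors $h_l^0(\uz) = u_l + \sum_{m \ne l} (z_l - z_m)^{-1} t_{lm}$. Since $q_{ijk}$ kills every $u_l$ and every $t_{xy}$ with $\{x,y\} \not\subset \{i,j,k\}$, only $q_{ijk}(h_i^0(\uz))$, $q_{ijk}(h_j^0(\uz))$, $q_{ijk}(h_k^0(\uz))$ can be nonzero. Their sum vanishes by the same cancellation as in \cite{afv}, and any two remain linearly independent, so they span a $2$-dimensional subspace. A direct plug-in then shows this subspace is cut out by the linear form $(z_i - z_j)x - (z_i - z_k)y + (z_j - z_k)z = 0$, which is precisely $J(\mu_{ijk}(\uz))$ since $\mu_{ijk}(\uz) = (z_i - z_k)/(z_i - z_j)$.

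To extend to all of $\tU_\CS$, I would cover it by the open affines $W_\tau \cap \tU_\CS$ indexed by planar binary forests $\tau$ refining $\CS$. Since $ijk \in t(S_r)$, the leaves $i,j,k$ lie in a single tree $\tau_r$ of $\tau$. Let $v_1$ be the meet of whichever two of $\{i,j,k\}$ meet lowest in $\tau_r$ (say $i$ and $j$), and let $v_2$ be the meet of $v_1$ with the remaining leaf. Using formulas (\ref{eq:Hv1})--(\ref{eq:Hv2}), the images $q_{ijk}(h_{v_1}^0(C))$ and $q_{ijk}(h_{v_2}^0(C))$ are concrete $\C^3$-vectors whose entries are products of the regular functions $\delta_{pq}$ and $\nu_{xy}$ from section~\ref{se:Wtau}, and all other internal vertices contribute trivially (their left branches either contain all or none of $\{i,j,k\}$, modulo a symmetric analysis for the root case). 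The non-vanishing conditions defining $W_\tau \cap \tU_\CS$ keep the two images linearly independent on the whole chart. Consequently $C \mapsto q_{ijk}(Q(C))$ defines a morphism $\tU_\CS \rightarrow \Gr(2,\C^3)$; matching the morphism $C \mapsto J(\mu_{ijk}(C))$ on the dense subset $F_n \cap \tU_\CS$ and using separatedness of $\Gr(2,\C^3)$, the two coincide everywhere on $\tU_\CS$.

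The main obstacle is the combinatorial bookkeeping in the last step: one must separately handle the possible positions of $i,j,k$ inside $\tau_r$, namely which pair meets at $v_1$, whether $v_2$ is an internal vertex or the root of $\tau_r$, and which of the two branches at $v_2$ contains the third leaf. In each case the explicit coefficients of $q_{ijk}(h_{v_1}^0(C))$ and $q_{ijk}(h_{v_2}^0(C))$ must be shown not to vanish, which I would deduce from the defining non-vanishing conditions of $\tU_\CS$ together with \cite[Lemma 6.16]{iklpr}. Once linear independence is established throughout the chart, the equality $q_{ijk}(Q(C)) = J(\mu_{ijk}(C))$ on $F_n$ forces the same equality on all of $W_\tau \cap \tU_\CS$, and the charts $W_\tau$ cover $\tU_\CS$.
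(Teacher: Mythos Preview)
Your overall strategy is close to the paper's: both work on the charts $W_\tau$, locate the two meets $v_1,v_2$ of $\{i,j,k\}$ in the tree, and use the explicit vectors $q_{ijk}(h_{v_1}^0(C))$ and $q_{ijk}(h_{v_2}^0(C))$ together with \cite[Lemma~6.16]{iklpr} to get linear independence. The paper is slightly more direct: rather than checking the identity on $F_n$ and extending by density, it verifies for every $C$ that the two explicit vectors already satisfy the defining equation of $J(\mu_{ijk}(C))$ (via $\mu_{ijk}\nu_{ik}=\nu_{ij}$), so no continuity argument is needed.

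There is, however, a genuine gap in your extension step. The claim that ``all other internal vertices contribute trivially (their left branches either contain all or none of $\{i,j,k\}$)'' is false. For example, any vertex $v$ strictly between $v_1$ and $v_2$ has exactly $\{i,j\}$ in one of its branches, so $q_{ijk}(h_v^0(C))=(0,\delta_{pq}\nu_{ik},\delta_{pq}\nu_{jk})\ne 0$; likewise vertices between $v_1$ and the leaf $i$ produce $(\delta_{pq}\nu_{ij},\delta_{pq}\nu_{ik},0)$. Consequently your argument only gives $\dim q_{ijk}(Q(C))\ge 2$, and you have not shown that $C\mapsto q_{ijk}(Q(C))$ lands in $\Gr(2,\C^3)$, so the density argument cannot yet be applied. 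The fix is short: the projection $q_{ijk}$ is the degree-one part of a Lie algebra homomorphism $\fr_n\to\fs_3$ (send $t_{ij},t_{ik},t_{jk}$ to the generators and everything else to zero), so commutativity of $Q(C)$ forces $q_{ijk}(Q(C))$ to be a commutative subspace of $\fs_3^1$, hence of dimension at most $2$. This is exactly the bound the paper invokes in its last sentence. With it in place your two vectors give equality $\dim=2$ and the remainder of your argument goes through. (A small side remark: your description of $v_1,v_2$ is inverted; in a binary tree two of the three pairwise meets of $i,j,k$ coincide and are the \emph{lower} one, so you want $v_1$ to be the unique higher meet and $v_2$ the common lower one, as in the paper.)
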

\begin{proof}
As in the proof of Lemma 6.17 from \cite{iklpr}, we can find a planar binary forest $ \tau $ such that $ C \in W_\tau $ and the partition of $ [n] $ corresponding to the leaves of $ \tau $ is exactly $ \CS $.  So $ \tau $ has trees $ \tau_1, \dots, \tau_m $ such that the leaves of $ \tau_r $ are labelled $ S_r $.  In particular, all $ i,j,k$ lie on the same tree.

Without loss of generality, assume that $ i < j < k $ in the partial order defined by $ \tau $.  Also, assume that the meet of $i,j $ lies above the meet of $ i,k$.  Let $ v_1 $ be the meet of $i,j$ and $ v_2 $ be the meet of $ i,k$ (which coincides with the meet of $ j,k$). We know that $ v_1 \ne v_2 $, since $ \tau $ is a binary forest.  Then $ i$ is in the left branch above $v_1$ and $ i,j $ are in the left branch above $ v_2 $.  Let $p_1, q_1 $ be the consecutive leaves meeting at $ v_1 $ and $ p_2, q_2 $ be the consecutive leaves meeting at $ v_2 $.  Let
\begin{gather*}
w_1 := q_{ijk}(h_{v_1}^0(C)) = (\delta_{p_1 q_1} \nu_{ij}, \delta_{p_1 q_1} \nu_{ik}, 0) \\ w_2 := q_{ijk}(h_{v_2}^0(C)) = (0, \delta_{p_2 q_2} \nu_{ik}, \delta_{p_2 q_2} \nu_{jk})
\end{gather*}
Recall that $$ J(\mu_{ijk}) = \bigl\{(x,y,z) : x - \mu_{ijk} y + (\mu_{ijk} - 1)z = 0 \bigr\}$$
Since $ \mu_{ijk} \nu_{ik} = \nu_{ij}$, we see that $ w_1 \in J(\mu_{ijk})$.  Also, $$ -\mu_{ijk} \nu_{ik} + (\mu_{ijk} -1) \nu_{jk} = -\nu_{ij} - \mu_{jik}\nu_{jk} = -\nu_{ij} - \nu_{ji} = 0 $$
and so $w_2 \in  J(\mu_{ijk})$.  On the other hand,  $ \delta_{p_1 q_1} \nu_{ij} \ne 0 $ and $ \delta_{p_2 q_2} \nu_{ik} \ne 0 $ by \cite[Lemma 6.16]{iklpr}, so $ w_1, w_2 $ are linearly independent.  

Thus, we have found two linearly independent elements of $ q_{ijk}(Q) $ which lie in $ J(\mu_{ijk}(C))$, and so $ J(\mu_{ijk}(C)) \subseteq q_{ijk}(Q)$.  On the other hand, since $ Q $ is commutative, so is $ q_{ijk}(Q) $ and so $ q_{ijk}(Q) $ is at most 2-dimensional.
\end{proof}

\begin{proof}[Proof of Theorem \ref{th:FnGnIso}]
The surjective map $ \overline F_n \rightarrow \overline G_n $ restricts to a surjective map $ \tU_\CS \rightarrow p^{-1}(U_\CS) \subset \overline G_n $ by Lemma \ref{le:deltaGn}. By construction, we have an embedding $ \tU_\CS \rightarrow (\PP^1)^{p([n])} \times \prod_r (\PP^1)^{t(S_r)} $. By Lemma \ref{le:qJ}, we see that there is a morphism $ p^{-1}(U_\CS) \rightarrow (\PP^1)^{p([n])} \times \prod_r (\PP^1)^{t(S_r)}$ such that the composition $ \tU_\CS \rightarrow p^{-1}(U_\CS) \rightarrow (\PP^1)^{p([n])} \times \prod_r (\PP^1)^{t(S_r)} $ is this embedding.  Thus, we conclude that $ \tU_\CS \rightarrow p^{-1}(U_\CS) $ is an isomorphism and thus $ \overline F_n \rightarrow \overline G_n $ is an isomorphism. 
    
\end{proof}

\subsection{Degeneration} \label{se:CFnfrrn}
There is a one-parameter family of Lie algebras degenerating the Drinfeld-Kohno Lie algebra $ \fs_{n+1} $ into $\fr_n$.  We define $ \frr_n $ to be the $ \C[\hbar] $-Lie algebra with the same set of generators as $ \fr_n $ but relations
\begin{gather*}
[t_{ij}, t_{kl} ] = 0 \text{ for $ i,j,k,l $ distinct } \quad [t_{ij}, t_{ik}] = -[t_{ij} , t_{jk} ]  \text{ for $ i,j,k $ distinct } \\
 [t_{ij}, u_k] = 0 \text{ for $ i,j,k$ distinct } \quad [u_i, t_{ij} ] = -[u_j, t_{ij}]  \quad  [u_i, u_j] = \hbar [u_i, t_{ij}] \text{ for $i,j$ distinct }
\end{gather*}
For $ \varepsilon \ne 0 $, we have an isomorphism $ \frr_n(\varepsilon) \cong \fs_{n+1} $ given $ t_{ij} \mapsto t_{ij} $ and $ u_i \mapsto -\varepsilon t_{i \, 0} $.  On the other hand, there is an obvious isomorphism $ \frr_n(0) \cong \fr_n$.

 We consider a grading on $ \frr_n $ where all generators have degree 1 (and $ \hbar $ has degree 0) and as before we study the first graded piece $ \frr_n^1 $.  We will regard $ \frr_n^1 $ as a $ \C[\hbar] $-module, and thus as a vector bundle of $ \BA^1 $.  We consider the relative Grassmannian $ \Gr_{\BA^1}(n, \frr_n)$, a proper scheme over $ \BA^1 $.  A $ \C $-point of $ \Gr_{\BA^1}(n, \frr_n) $ is a pair $ (V, \varepsilon) $ where $ \varepsilon \in \C $ and $ V \in \Gr(n, \frr_n(\varepsilon)) $.  

Define $ h^\hbar_i \in \CO(\CF_n) \otimes_{\C[\hbar]} \frr_n $ for $ i =1, \dots, n $ by
$$ h^\hbar_i := u_i - \sum_{j \ne i} \nu_{ji} t_{ij} $$

For any $ (\uz; \varepsilon) \in \CF_n $, we can consider the evaluation map $ \CO(\CF_n) \rightarrow \C $ defined by $ (\uz; \varepsilon) $ and extend to a linear map $  \CO(\CF_n) \otimes_{\C[\hbar]} \frr_n \rightarrow \frr_n(\varepsilon) $.  We write $ h^\varepsilon_i(\uz)$ for the image of $ h^\hbar_i $ under this latter map.
\begin{lem} \label{le:Hvareps}
Suppose that $ \varepsilon \ne 0 $.  Then for $(\uz; \varepsilon) \in \CF_n $, $ h^\varepsilon_i(\uz) = (1-\varepsilon z_i) h_i(\varepsilon^{-1}, z_1, \dots, z_n) $ under the isomorphism $ \frr_n(\varepsilon) \cong \fs_{n+1} $.
\end{lem}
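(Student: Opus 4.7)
The plan is a direct computation in $\fs_{n+1}$. First I would unpack the left-hand side: the regular function $\nu_{ij}$ evaluates to $\frac{1 - \varepsilon z_j}{z_i - z_j}$ at the point $(\uz; \varepsilon) \in \CF_n$, so
\[
h^\varepsilon_i(\uz) \;=\; u_i + \sum_{j \ne i} \frac{1 - \varepsilon z_j}{z_i - z_j}\, t_{ij} \;\in\; \frr_n(\varepsilon).
\]
Then I apply the isomorphism $\frr_n(\varepsilon) \cong \fs_{n+1}$, which sends $u_i \mapsto \varepsilon t_{i0}$ and $t_{ij} \mapsto t_{ij}$, to obtain an explicit element of $\fs_{n+1}^1$ expressed in the basis $\{t_{i0}\} \cup \{t_{ij} : j \in [n] \setminus \{i\}\}$.

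Next I would expand $(1 - \varepsilon z_i)\, h_i(\varepsilon^{-1}, z_1, \ldots, z_n)$ using the standard definition of the homogeneous Gaudin hamiltonian in $\fs_{n+1}$, with the point $\varepsilon^{-1}$ labelled by the ``new'' index $0$:
\[
h_i(\varepsilon^{-1}, \uz) \;=\; \frac{t_{i0}}{z_i - \varepsilon^{-1}} + \sum_{\substack{j = 1 \\ j \ne i}}^{n} \frac{t_{ij}}{z_i - z_j}.
\]
The key algebraic identity is
\[
(1 - \varepsilon z_i)\cdot \frac{1}{z_i - \varepsilon^{-1}} \;=\; \frac{\varepsilon(1 - \varepsilon z_i)}{\varepsilon z_i - 1} \;=\; -\varepsilon,
\]
which after multiplying through by $(1 - \varepsilon z_i)$ converts the pole at $z_i = \varepsilon^{-1}$ into a constant coefficient in front of $t_{i0}$.

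The final step is to match coefficients of each basis element $t_{i0}$ and $t_{ij}$ (for $j \in [n]\setminus\{i\}$) on both sides. This is a short exercise in rational functions of $z_i$ and $\varepsilon$: the coefficient of $t_{i0}$ is handled by the displayed identity above, and the coefficient of $t_{ij}$ for $j\ne 0$ is matched between $(1-\varepsilon z_i)\cdot\frac{1}{z_i-z_j}$ and $\frac{1-\varepsilon z_j}{z_i-z_j}$, consistent with the chosen normalization of the isomorphism. I do not anticipate a conceptual obstacle; the whole argument is bookkeeping. The only delicate point is tracking signs carefully and keeping the special index $0$ (attached to the ``new'' marked point $\varepsilon^{-1}$) separate from the indices $1, \dots, n$, which is precisely what links the normalization $u_i \mapsto \varepsilon t_{i0}$ to the multiplicative factor $(1 - \varepsilon z_i)$ in the statement.
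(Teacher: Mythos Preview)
Your approach---direct computation in $\fs_{n+1}$ followed by coefficient matching---is exactly what the paper does; its proof is a single displayed line carrying out the same manipulation.

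There is, however, a gap in your final step. With the paper's stated definition $\nu_{ij}=\frac{1-\varepsilon z_j}{z_i-z_j}$ and the isomorphism $t_{ij}\mapsto t_{ij}$, $u_i\mapsto\varepsilon t_{i0}$, the coefficients you are trying to match are \emph{not} equal: the left-hand side gives $\frac{1-\varepsilon z_j}{z_i-z_j}$ in front of $t_{ij}$ while the right-hand side gives $\frac{1-\varepsilon z_i}{z_i-z_j}$, and these differ by $\varepsilon$; similarly your own identity yields $-\varepsilon$ for the $t_{i0}$ coefficient on the right against $+\varepsilon$ on the left. The isomorphism fixes $t_{ij}$, so there is no ``normalization'' available to absorb this---your sentence ``consistent with the chosen normalization of the isomorphism'' is not an argument but a placeholder for one.

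What is really going on is that the paper's own proof writes $\nu_{ij}$ as $\frac{1-\varepsilon z_i}{z_i-z_j}$ (with $z_i$ rather than $z_j$ in the numerator), which is the convention under which the $t_{ij}$ coefficients do match; there is an evident typographical inconsistency in the paper between the definition of $\nu_{ij}$ and its use here (and a residual sign on the $t_{i0}$ term). Since the lemma is only invoked to identify the \emph{spans} $Q^\varepsilon(\uz)$ and $Q(\varepsilon^{-1},\uz)$, this does not affect anything downstream. But you should either adopt the paper's in-proof convention and then the computation is clean, or write the computation honestly with the stated conventions and note the discrepancy---do not assert equality of coefficients that are visibly unequal.
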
 

\begin{proof}
We have
\begin{equation*}
    \begin{gathered}
        h^\varepsilon_i(\uz) = u_i - \sum_{j\ne i} \frac{1 - \varepsilon z_i}{z_j - z_i} t_{ij} = (1- \varepsilon z_i) \Bigl( \frac{-\varepsilon}{1- \varepsilon z_i } t_{i \, 0} - \sum_{j\ne i} \frac{1}{z_j -z_i} t_{ij} \Bigr) \\ = (1- \varepsilon z_i) \Bigl(\frac{1}{z_i - \varepsilon^{-1}} t_{i \, 0} + \sum_{j \ne i} \frac{1}{z_i - z_j} t_{ij}\Bigr)
    \end{gathered}
\end{equation*} 
and so the result follows.
\end{proof}

On the other hand, it is clear that if we set $ \hbar = 0 $, we recover $ h_i^0(\uz) $ and so we deduce the following result.

\begin{corol} \label{co:hepsind}
For any $ ( \uz; \varepsilon) \in \CF_n $, $ \{ h^\varepsilon_i(\uz) : i =1, \dots, n \} $ are linearly independent in $ \frr^1_n(\varepsilon) $. 
\end{corol}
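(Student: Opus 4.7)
The plan is that linear independence is a direct consequence of reading off coefficients in the natural basis of $\frr_n^1(\varepsilon)$, so there is essentially no work once the setup is understood. First, I would establish that $\frr_n^1$ is a free $\C[\hbar]$-module with basis $\{u_i : i \in [n]\} \cup \{t_{ij} : \{i,j\} \in \binom{[n]}{2}\}$; this holds because $\frr_n$ is generated in degree one, all the defining relations live in degree $\geq 2$, and $\hbar$ has degree $0$. Consequently, after specialization at any $\varepsilon \in \C$, the same set of vectors is a $\C$-basis of $\frr_n^1(\varepsilon)$, so we may test linear independence by examining coordinates in this basis.

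Next, I would unpack $h_i^\hbar := u_i + \sum_{j \neq i} \nu_{ij}\, t_{ij}$ under the evaluation map induced by $(\uz;\varepsilon) \in \CF_n$ to obtain
\[
h_i^\varepsilon(\uz) \;=\; u_i + \sum_{j \neq i} \nu_{ij}(\uz;\varepsilon)\, t_{ij} \;\in\; \frr_n^1(\varepsilon).
\]
The decisive observation is that the coefficient of the basis vector $u_k$ in $h_i^\varepsilon(\uz)$ equals $\delta_{ik}$. Therefore the $u$-block of the coefficient matrix of $h_1^\varepsilon(\uz), \dots, h_n^\varepsilon(\uz)$ is the $n \times n$ identity matrix, so any relation $\sum_i c_i h_i^\varepsilon(\uz) = 0$ forces $c_k = 0$ for each $k$ by reading off the $u_k$-coordinate.

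As a cross-check in the case $\varepsilon \neq 0$, one could instead route the argument through Lemma \ref{le:Hvareps}, which identifies $h_i^\varepsilon(\uz)$ with $(1-\varepsilon z_i)\, h_i(\varepsilon^{-1},\uz) \in \fs_{n+1}$, where the scalar is nonzero by the very definition of $\CF_n$. Then Theorem \ref{th:AFV} identifies the span of $h_0,\ldots,h_n$ as the $n$-dimensional subspace $Q(\varepsilon^{-1},\uz)$ whose unique linear relation is $\sum_{i=0}^n h_i = 0$, so any $n$ of these vectors are linearly independent. There is really no obstacle in either approach: the claim is a one-line consequence of the identity-matrix structure already visible in the defining formula for $h_i^\hbar$.
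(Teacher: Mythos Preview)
Your main argument is correct and in fact more direct than the paper's. The paper does not write out a proof of the corollary; it simply says the result follows from Lemma~\ref{le:Hvareps} for $\varepsilon \ne 0$ together with the observation that specializing $\hbar = 0$ recovers the vectors $h_i^0(\uz)$ of~(\ref{eq:Hifr}). In other words, the paper argues by splitting into the two cases $\varepsilon \ne 0$ and $\varepsilon = 0$ and invoking linear independence already established in $\fs_{n+1}^1$ and $\fr_n^1$ respectively. Your ``cross-check'' paragraph is precisely the paper's argument in the $\varepsilon \ne 0$ case (though you do not need the full strength of Theorem~\ref{th:AFV}; the elementary fact that the only relation among $h_0,\dots,h_n$ is $\sum h_i = 0$ suffices).

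Your primary argument, reading off the $u_k$-coordinates in the basis $\{u_i\} \cup \{t_{ij}\}$ of $\frr_n^1(\varepsilon)$, handles both cases uniformly and avoids any appeal to the separate structures $\fs_{n+1}$ and $\fr_n$. The justification that $\frr_n^1$ is free over $\C[\hbar]$ on the generators is sound: all defining relations are commutators of generators, hence sit in degree~$2$, so nothing touches the degree-$1$ piece. This is a cleaner route than the paper's case split; the paper's approach, on the other hand, has the virtue of tying the corollary back to the geometric picture (the AFV subspace $Q(\varepsilon^{-1},\uz)$ and its degeneration), which is the point of the surrounding section.
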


For $ (\uz, \varepsilon) \in \CF_n$, we let $ Q^\varepsilon(\uz) := \spann(H^\varepsilon_i(\uz)) \subset \frr^1_n(\varepsilon)$.  For $ \varepsilon \ne  0$, under the isomorphism $\frr_n(\varepsilon) \cong \fs_{n+1}$, by Lemma \ref{le:Hvareps} we have 
$$ Q^\varepsilon(\uz) = Q(\varepsilon^{-1}, z_1, \dots, z_n) = Q(0, 1-\varepsilon z_1, \dots, 1 - \varepsilon z_n) $$
and it is immediate that $ Q^0(\uz) = Q(\uz) \subset \fr^1_n$.

Let $ \CQ $ be the $\CO(\CF_n) $ submodule of $ \CO(\CF_n) \otimes_{\C[\hbar]} \frr^1_n $ generated by $ \{ H^\varepsilon_i(\uz) : i =1, \dots, n \} $.  Corollary \ref{co:hepsind} shows that $ \CQ $ is locally free and its quotient is also locally free.  Thus, it defines a map $ \CF_n \rightarrow \Gr_{\BA^1}(n, \frr^1_n)$, extending our previous maps as follows.

 \begin{equation} \label{eq:newdiagram}
	\begin{tikzcd}
		\CF_n(\varepsilon) \cong M_{n+2} \ar[r] \ar[d] &  \CF_n \ar[d]  & \ar[l] \ar[d]  F_n \\
		Gr(n, \fs^1_{n+1}) \ar[r] \ar[d] & \Gr_{\BA^1}(n, \frr^1_n) \ar[d] & \ar[l] \Gr(n, \fr^1_n) \ar[d] \\
		\{\varepsilon \ne 0 \} \ar[r] & \BA^1 & \ar[l] \{0 \}
	\end{tikzcd}
\end{equation}

Now as before, we let $ \CG_n \subset \Gr_{\BA^1}(n, \frr^1_n)  $ be the image of $ \CF_n $ and we consider the closure $ \overline \CG_n$.  

We will now define a morphism $ \overline \CF_n \rightarrow \overline \CG_n $ over $ \BA^1$.  For every binary planar forest $ \tau $ and any internal vertex $ v $ of $ \tau $, we define $ h_v^\hbar \in \CO(\CW_\tau) \otimes_{\C[\hbar]} \frr^1_n $ as follows
\begin{equation*}
h_v^\hbar = \left\{ \begin{aligned} &\sum_{i \, \text{left}} \delta_{pq} u_i -  \delta_{pq} \hbar \sum_{i \ne j \, \text{left}} t_{ij} - \sum_{i \, \text{left}\,  j \, \text{other}} \delta_{pq} \nu_{ji} t_{ij} &\text{ if $ v $ is not a root }\\
 &\sum_{i \,\text{tree}} u_i - \hbar \sum_{i\ne j \, \text{tree}} t_{ij} - \sum_{i \, \text{tree} \, j \, \text{other}}  \nu_{ji} t_{ij} &\text{ if $v $ is a root}
\end{aligned} \right.
\end{equation*}
In the first equation, ``$ i \, \text{left} $'' means that $ i $ is in the left branch above $ v$.  In the second equation, ``$ i \, \text{tree} $'' means that $ i $ is in the tree above $ v $.  Finally ``$ j\, \text{other} $'' means that $ j$ lies elsewhere in the forest.

The analog of Lemma \ref{le:VtauVz} holds with the same proof; note that $ \hbar $ appears in the above formulas for $ h_v^\hbar$ since $ \nu_{ij} + \nu_{ji} = \hbar$.

By the same argument as in Lemma \ref{le:linindepVC}, these generate a rank $n$ subvector bundle of $\CO(\CW_\tau) \otimes_{\C[\hbar]} \frr^1_n $ and thus we get a map $ \CW_\tau \rightarrow \Gr_{\BA^1}(n, \frr^1_n) $.  By the same argument as in the proof of Proposition \ref{pr:FnGnsurject}, these glue to a surjective morphism $ \overline \CF_n \rightarrow \overline \CG_n $ (since $ \overline \CF_n $ is proper over $\BA^1$).

\begin{thm} \label{th:CFCG}
The morphism $ \overline \CF_n \rightarrow \overline \CG_n $ is an isomorphism.
\end{thm}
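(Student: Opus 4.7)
The plan is to adapt the proof of Theorem \ref{th:FnGnIso} to the relative setting over $\BA^1$.  Since the map $\overline \CF_n \to \overline \CG_n$ is already known to be surjective and both schemes are proper over $\BA^1$, it suffices to show that on the open cover $\{\tCU_\CS\}$ of $\overline \CF_n$ the map is a closed immersion; I will do this by recovering each of the embedding coordinates of $\tCU_\CS$ from the image point in $\overline \CG_n$.

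First, I would introduce relative analogs of the projections from Sections \ref{se:afv} and \ref{se:Fnfrn}.  For each pair $ij \in p([n])$, define a $\C[\hbar]$-linear map $\widetilde p_{ij} : \frr_n^1 \to \C[\hbar]^3$ sending $u_i \mapsto e_1$, $u_j \mapsto e_2$, $t_{ij} \mapsto e_3$ and all other basis elements to $0$.  A direct check of the defining relations shows that this is the degree-$1$ restriction of a $\C[\hbar]$-Lie algebra homomorphism $\frr_n \to \frr_2$; in particular, the image of a commutative subspace is commutative.  Similarly, for each triple $ijk \in t([n])$, define $\widetilde q_{ijk} : \frr_n^1 \to \C[\hbar]^3$ by (\ref{eq:qijk}) together with $\widetilde q_{ijk}(u_l) = 0$ for all $l$; this is the degree-$1$ restriction of a $\C[\hbar]$-Lie algebra homomorphism $\frr_n \to \frr_3$ landing in the Drinfeld-Kohno part.

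Next I would establish the relative analog of Lemma \ref{le:pijV}: for every closed point $(Q, \varepsilon) \in \overline \CG_n$, the image $\widetilde p_{ij}(Q)$ is two-dimensional in $\frr_2^1(\varepsilon)$ and contains $(1,1,0)$, and when $(Q, \varepsilon) = (Q^\varepsilon(\uz), \varepsilon)$ with $(\uz; \varepsilon) \in \CF_n$ the image coincides with $K(\nu_{ij}(\uz; \varepsilon))$.  The upper bound $\dim \le 2$ is uniform in $\varepsilon$ since $\frr_2^1(\varepsilon)$ is never abelian.  The lower bound, as well as the identification with $K(\nu_{ij})$, comes from evaluating the explicit formulas (\ref{eq:Hv1}) and (\ref{eq:Hv2}) for $h_v^\hbar$ on any $\CW_\tau$ containing $C$, using the $\hbar$-uniform non-vanishing statements from \cite[Lemma 6.16]{iklpr} exactly as in the original proof.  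Packaging via $K^{-1}$ yields a morphism $\overline \CG_n \to \overline \Cf_n$ factoring the projection $\overline \CF_n \to \overline \Cf_n$ of (\ref{eq:maindiagram}).  A parallel argument for $\widetilde q_{ijk}$ gives the relative version of Lemma \ref{le:qJ}: over the preimage of $\CU_\CS$ in $\overline \CG_n$, the coordinate $\mu_{ijk}$ of $\tCU_\CS$ is recovered as $J^{-1}(\widetilde q_{ijk}(Q(C)))$ for any $ijk \in t(S_r)$.

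Combining these reconstructions produces all the projective coordinates of the embedding $\tCU_\CS \hookrightarrow (\PP^1)^{p([n])} \times \prod_r (\PP^1)^{t(S_r)} \times \BA^1$ from the point $(Q(C), \varepsilon) \in \overline \CG_n$, so the map $\tCU_\CS \to \overline \CG_n$ is a closed immersion.  Being also surjective onto the preimage of $\CU_\CS$, it is an isomorphism, and gluing over $\{\tCU_\CS\}$ yields the theorem.  The main technical obstacle is the uniformity in $\hbar$ of the fibrewise two-dimensionality of $\widetilde p_{ij}(Q)$, which at $\varepsilon = 0$ cannot be concluded from commutativity of the image alone (many more $2$-subspaces containing $(1,1,0)$ are commutative in $\fr_2^1$), but is forced by the explicit $h_v^\hbar$ computation on $\CW_\tau$.
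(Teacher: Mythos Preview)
Your approach is correct but takes a different route from the paper.  The paper's proof is much shorter: it simply observes that the surjection $\overline{\CF}_n \to \overline{\CG}_n$ restricts, on each fibre over $\varepsilon \in \BA^1$, to a known isomorphism --- Theorem~\ref{th:AFV} (Aguirre--Felder--Veselov) for $\varepsilon \ne 0$ and Theorem~\ref{th:FnGnIso} for $\varepsilon = 0$ --- and concludes that the total map is an isomorphism.  Your argument instead reruns the proof of Theorem~\ref{th:FnGnIso} uniformly in~$\hbar$, reconstructing the $(\nu,\mu)$-coordinates of $\tCU_\CS$ directly from the relative projections $\widetilde p_{ij}$ and $\widetilde q_{ijk}$.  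The paper in fact acknowledges your route in a parenthetical remark: ``This could also be proven without using Theorem~\ref{th:AFV} by an argument similar to the proof of Theorem~\ref{th:FnGnIso}.''  The trade-off is that the paper's argument is a two-line reduction to results already in hand, while yours is self-contained (it does not invoke Theorem~\ref{th:AFV} as a black box) and makes explicit why the coordinates extend over the whole family.  One small point: your closing remark about the ``main technical obstacle'' is slightly off --- the upper bound $\dim \widetilde p_{ij}(Q) \le 2$ \emph{does} follow from commutativity alone at every $\varepsilon$, since $\frr_2^1(\varepsilon)$ is never abelian; what genuinely requires the explicit $h_v^\hbar$ computation is the \emph{lower} bound $\dim \ge 2$ and the identification $\widetilde p_{ij}(Q) = K(\nu_{ij})$.
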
 

\begin{proof}
We proceed similar to the proof of Theorem \ref{th:FnGnIso}.

First Lemmas \ref{le:pijV}, and \ref{le:deltaGn} generalize to the degeneration in a straightforward way.

For Lemma \ref{le:qJ}, we again define $w_1 = q_{ijk}(h^\hbar_{v_1}(C)), w_2 = q_{ijk}(h^\hbar_{v_2}(C)) $ as in the proof, and we find that
$$
w_1 = -(\nu_{ji}, \nu_{ki}, 0) \quad w_2 = -(\hbar, \nu_{ki}, \nu_{kj})
$$
We observe that 
$$
\nu_{ji} - \mu_{ijk} \nu_{ki} = 0 
$$
so $ w_1 \in J(\mu_{ijk}) $.  On the other hand,
$$
\hbar - \mu_{ijk}\nu_{ki} + (\mu_{ijk} - 1) \nu_{kj} = \hbar - \nu_{ji} - \mu_{jik} \nu_{kj} = \hbar - \nu_{ji} - \nu_{ij} = 0 
$$
so also $ w_2 \in J(\mu_{ijk})$.  Thus Lemma \ref{le:qJ} generalizes as well.

Hence we can follow the same argument as the proof of Theorem \ref{th:FnGnIso} and deduce the result.


\end{proof}

\section{Homogeneous Gaudin model.}
\label{homgaudin}
In this section, we recall known results concerning on homogeneous Gaudin subalgebras.

\subsection{Universal Gaudin subalgebras.} \label{se:universalGaudin}

Consider the first congruence subalgebra $t^{-1}\fg[t^{-1}]$ in the current algebra $\fg[t^{-1}]$. For any $x\in\fg,\ k\in\mathbb{Z}_{\ge1}$, let $x[-k]:=xt^{-k}\in t^{-1}\fg[t^{-1}]$. Define an algebra homomorphism 
$$i_{-1}: S(\fg) \to S(t^{-1} \fg[t^{-1}]), \quad x \mapsto x[-1] \text{ for } x \in \fg.$$ 
Let $\partial_t, t \partial_t$ be the derivations of $t^{-1}\fg[t^{-1}]$ given by
$$\partial_t (x[-k]) = -k x[-k-1], \quad t\partial_t (x [-k]) = -k x[-k]$$
which can be extended to derivations $\partial_t, t \partial_t$ of $S(t^{-1} \fg[t^{-1}])$ and $U(t^{-1} \fg[t^{-1}])$.
The derivation $ t \partial_t$ integrates to the \emph{loop rotation} action of $ \Cx $ on $ U(t^{-1} \fg[t^{-1}])$ where $ x[-k] $ has weight $ -k$.

Denote by $A_{\fg}$ the subalgebra of $S(t^{-1} \fg[t^{-1}])$ generated by all $\partial_t^k i_{-1}(\Phi_l), k \in \bz_{\geq 0}, l = 1, \dots, r$. It is well-known that this subalgebra is free and a maximal Poisson-commutative subalgebra of $S(t^{-1} \fg[t^{-1}])^{\fg}$, see \cite{r3}. Equivalently, the subalgebra $A_{\fg}\subset S(t^{-1} \fg[t^{-1}])$ can be described in the following way: consider the linear map $\delta:\fg\to t^{-1} \fg[t^{-1}][[u]]$ defined by $$ \delta(x)=\sum\limits_{k=0}^\infty x[-k-1]u^k.
$$
This extends to a $\fg$-equivariant homomorphism $\delta:S\fg\to S(t^{-1} \fg[t^{-1}])[[u]]$.

\begin{prop}\label{pr:ClassicalGenerators}
    The subalgebra $A_{\fg}\subset S(t^{-1} \fg[t^{-1}])$ is generated by all the Fourier coefficients of $\delta(\Phi_l), l = 1, \dots, r$.
\end{prop}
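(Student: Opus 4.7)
The strategy is to exhibit an explicit relationship between the two generating sets: I claim that the $u^k$-coefficient of $\delta(\Phi_l)$ is a nonzero scalar multiple of $\partial_t^k i_{-1}(\Phi_l)$, which will immediately imply that the two families generate the same subalgebra.

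The starting point is a direct calculation on $\fg$. From $\partial_t(x[-j]) = -j\, x[-j-1]$ one gets by induction $\partial_t^k(x[-1]) = (-1)^k k!\, x[-k-1]$ for $x \in \fg$, so the generating series $\delta(x) = \sum_{k \ge 0} x[-k-1]\, u^k$ can be rewritten as
\[
\delta(x) \;=\; \sum_{k \ge 0} \frac{(-u)^k}{k!}\, \partial_t^k(x[-1]) \;=\; e^{-u\partial_t}\bigl(i_{-1}(x)\bigr),
\]
where $e^{-u\partial_t}\colon S(t^{-1}\fg[t^{-1}]) \to S(t^{-1}\fg[t^{-1}])[[u]]$ is interpreted coefficient-wise in $u$, so its $u^k$-coefficient is just the operator $\frac{(-1)^k}{k!}\partial_t^k$.

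Next I would upgrade this to a statement on all of $S\fg$. Since $\partial_t$ is a derivation, iterating the Leibniz rule yields $e^{-u\partial_t}(ab) = e^{-u\partial_t}(a)\,e^{-u\partial_t}(b)$ as formal series in $u$, so $e^{-u\partial_t}$ is a homomorphism of commutative $\C$-algebras (from $S(t^{-1}\fg[t^{-1}])$ to $S(t^{-1}\fg[t^{-1}])[[u]]$). Composing with the algebra homomorphism $i_{-1}$ gives an algebra homomorphism $e^{-u\partial_t}\circ i_{-1}\colon S\fg \to S(t^{-1}\fg[t^{-1}])[[u]]$. As the paper already notes that $\delta$ is the unique algebra homomorphism extending $x\mapsto \sum x[-k-1]u^k$ on $\fg$, the identity verified on generators upgrades to
\[
\delta(f) \;=\; e^{-u\partial_t}\bigl(i_{-1}(f)\bigr) \qquad \text{for all } f \in S\fg.
\]
Applying this to $f=\Phi_l$ and extracting the $u^k$-coefficient gives $[u^k]\,\delta(\Phi_l) = \frac{(-1)^k}{k!}\, \partial_t^k\, i_{-1}(\Phi_l)$, so the Fourier coefficients of $\delta(\Phi_l)$ and the elements $\partial_t^k i_{-1}(\Phi_l)$ differ only by the nonzero scalars $\frac{(-1)^k}{k!}$. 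Hence the two families generate the same subalgebra, which is $A_\fg$ by definition.

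I do not anticipate any serious obstacle: the entire argument is formal bookkeeping once one spots the identity $\delta = e^{-u\partial_t}\circ i_{-1}$. The one point that deserves care is that $\partial_t$ is not locally nilpotent on $S(t^{-1}\fg[t^{-1}])$, so $e^{-u\partial_t}$ should not be read as an endomorphism of $S(t^{-1}\fg[t^{-1}])$ itself; but interpreted coefficient-by-coefficient in $u$ it is perfectly well defined with codomain $S(t^{-1}\fg[t^{-1}])[[u]]$, which is exactly where $\delta$ also lands.
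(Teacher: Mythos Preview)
Your proof is correct and matches the paper's argument essentially verbatim: both identify $\delta = e^{-u\partial_t}\circ i_{-1}$ by checking it on generators of $S\fg$ and using that each side is an algebra homomorphism, then read off the coefficient identity $[u^k]\,\delta(\Phi_l)=\frac{(-1)^k}{k!}\partial_t^k i_{-1}(\Phi_l)$. Your closing remark about $\partial_t$ not being locally nilpotent is a welcome clarification that the paper leaves implicit.
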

\begin{proof}
    Consider the automorphism $\exp(-u\partial_t)$ of $S(t^{-1} \fg[t^{-1}])[[u]]$; it maps $x[-1]$ to $\sum\limits_{k=0}^\infty x[-k-1]u^k=\delta(x)$ for $ x \in \fg$.   Since these elements generate $ S(\fg)$, we conclude that $ \delta(y) = \exp(-u\partial_t) i_{-1}(y)$ for any $ y \in S(\fg)$.
    
    So we have $$\delta(\Phi_l)=\exp(-u\partial_t)i_{-1}(\Phi_l) = \sum\limits_{k=0}^\infty \frac{(-1)^k}{k!}\partial_t^k i_{-1}(\Phi_l)u^k.$$
\end{proof}

The following result is fundamental for us.  It is proven using the centre of the affine Lie algebra at the critical level.

\begin{thm}[\cite{r3, fr}]\label{th:A-invariance-der}
\begin{enumerate}
    \item There exists a unique lifting $\ma_{\fg} \subset U(t^{-1} \fg[t^{-1}])^{\fg}$ of the subalgebra $A_{\fg} \subset S(t^{-1} \fg[t^{-1}])^{\fg}$.
    \item There exist elements $S_l \in U(t^{-1} \fg[t^{-1}])$, for $ l = 1, \ldots, r$, such that \begin{enumerate}
        \item for all $ l $, $ (t \partial_t)(S_l) = -d_l S_l $,
        \item the elements $\partial_t^k S_l, k \in \bz_{ \geq0}$ pairwise commute, 
        \item these elements generate $ \ma_\fg $.
    \end{enumerate}
    \item We have $ S_l \in U(t^{-1}\fg[t^{-1}])^{(d_l)} $ (the $d_l$ filtered part with respect to the PBW filtration) and $ S_l$ lifts $ i_{-1}(\Phi_l) \in S(t^{-1}\fg[t^{-1}])$.
    \item The subalgebra $\ma_{\fg}$ is the centralizer of $S_1$ and is a maximal commutative subalgebra of $U(t^{-1} \fg[t^{-1}])^\fg$. 
\end{enumerate}
\end{thm}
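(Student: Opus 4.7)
The proof plan is to reduce everything to the Feigin-Frenkel theorem on the center of the affine Kac-Moody algebra at the critical level. Let $\widehat{\fg}_{\mathrm{crit}}$ denote the affine Lie algebra at level $-h^\vee$, and let $\mathfrak{z}(\widehat{\fg}) \subset U(t^{-1}\fg[t^{-1}])$ be the image of the Feigin-Frenkel center under the PBW identification with the vacuum module. This is a commutative $\fg[[t]]$-invariant subalgebra, stable under the derivations $\partial_t$ and $t\partial_t$. The Feigin-Frenkel structure theorem (in the form made explicit by Frenkel and Chervov-Molev) produces Segal-Sugawara vectors $S_1,\ldots,S_r$ with $S_l \in U(t^{-1}\fg[t^{-1}])^{(d_l)}$, whose PBW symbols are $i_{-1}(\Phi_l)$, and such that the derivatives $\partial_t^k S_l$ freely generate $\mathfrak{z}(\widehat{\fg})$ as a polynomial algebra. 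I would \emph{define} $\ma_\fg := \mathfrak{z}(\widehat{\fg})$.

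With this definition, parts (2)(b), (2)(c), and (3) are immediate: the $\partial_t^k S_l$ pairwise commute as elements of a commutative algebra, they generate $\ma_\fg$ by construction, and the filtration degree and symbol are as stated. The loop-rotation weight condition (2)(a) follows because $t\partial_t$ acts semisimply with integer weights on $U(t^{-1}\fg[t^{-1}])$ and preserves $\mathfrak{z}(\widehat{\fg})$; since $\partial_t$ shifts weight by $-1$ and $i_{-1}(\Phi_l)$ has weight $-d_l$, we can choose homogeneous lifts $S_l$ of weight $-d_l$.

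For the symbol identification needed in (1), observe that by Proposition \ref{pr:ClassicalGenerators} the elements $\partial_t^k i_{-1}(\Phi_l)$ generate $A_\fg$, so $\gr \ma_\fg \supseteq A_\fg$. Since $\ma_\fg$ is commutative, $\gr \ma_\fg$ is Poisson-commutative inside $S(t^{-1}\fg[t^{-1}])^\fg$, and $A_\fg$ is known to be a \emph{maximal} Poisson-commutative subalgebra (by the result of Rybnikov cited before the theorem), so $\gr \ma_\fg = A_\fg$. The uniqueness of the lifting is then a standard induction on the PBW filtration: for any other commutative lifting $\ma'$, an element $y \in \ma'$ has symbol lying in $A_\fg = \gr \ma_\fg$, so subtracting an appropriate element of $\ma_\fg$ strictly lowers the filtration degree of $y$, giving $\ma' \subseteq \ma_\fg$, and equality of associated gradeds forces $\ma' = \ma_\fg$.

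For part (4), the same lifting induction shows that $\ma_\fg$ is maximal commutative in $U(t^{-1}\fg[t^{-1}])^\fg$, using Poisson-maximality of $A_\fg$ at the symbol level. To show $\ma_\fg$ equals the centralizer of $S_1$, the main technical obstacle — and the step I expect to require real work — is the classical Poisson-geometric statement that the Poisson centralizer of $i_{-1}(\Phi_1) = \sum_a x_a[-1]x^a[-1]$ inside $S(t^{-1}\fg[t^{-1}])^\fg$ coincides with $A_\fg$. This is not purely formal: it relies on a detailed analysis of how the quadratic Segal-Sugawara element generates the Hamiltonian flow corresponding to $\partial_t$ on $t^{-1}\fg[t^{-1}]^*$ together with the $\fg$-coadjoint action, and is the content of the Chervov-Molev / Rybnikov argument. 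Granting this Poisson statement, the quantum statement follows by exactly the same filtration induction: any $y \in U(t^{-1}\fg[t^{-1}])^\fg$ with $[y, S_1] = 0$ has symbol Poisson-commuting with $i_{-1}(\Phi_1)$, hence lying in $A_\fg = \gr \ma_\fg$, and one proceeds by induction on the filtration degree.
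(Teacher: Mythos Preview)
The paper does not give a self-contained proof of this theorem; it is quoted from \cite{r3, fr} with only the remark that it ``is proven using the centre of the affine Lie algebra at the critical level.'' Your outline is precisely the strategy of those references: construct $\ma_\fg$ as the Feigin--Frenkel center, extract Segal--Sugawara vectors $S_l$, and reduce the remaining assertions to the classical Poisson statement that $A_\fg$ is the Poisson centralizer of $i_{-1}(\Phi_1)$ in $S(t^{-1}\fg[t^{-1}])^\fg$.

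There is, however, one genuine gap. Your filtration induction for the \emph{uniqueness} in (1) does not run as written: given $y \in \ma'$ and $z \in \ma_\fg$ with the same symbol, the difference $y-z$ has lower PBW degree but lies only in $\ma' + \ma_\fg$, not in $\ma'$, so the inductive hypothesis does not apply to it. (Contrast this with your induction for (4), which \emph{does} work, because the property ``commutes with $S_1$'' is preserved when one subtracts an element of $\ma_\fg$.) The correct route, and the actual argument in \cite{r3}, is to deduce uniqueness from (4): since $\fg$ is simple we have $(t^{-1}\fg[t^{-1}])^\fg = 0$, so every $\fg$-invariant element of PBW degree $\le 2$ lifting $i_{-1}(\Phi_1)$ equals $S_1$ up to a constant; hence any commutative lifting $\ma' \subset U(t^{-1}\fg[t^{-1}])^\fg$ contains $S_1$, giving $\ma' \subseteq Z(S_1) = \ma_\fg$, and equality of associated gradeds then forces $\ma' = \ma_\fg$.
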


In particular, we will take $$S_1 = \sum_{a=1}^{\dim \fg}
x^a [-1] x_a[-1].$$

There is another version of the universal Gaudin algebra as a subalgebra of $U(\fg[t^{-1}])$, whose definition we review following \cite{ir}. The construction is parallel to the construction of $\ma$. Instead of $ i_{-1} $, we simply embed $ \Phi_l \in S(\fg)$ into $ S (\fg[t]) $ in the obvious way, and we use the derivation $ D : S(\fg[t]) \rightarrow S(\fg[t]) $ given by $$D (x[m]) = (m+1) x[m+1] \text{ for $ x \in \fg[t]$.}$$ 
Then we let $\tilde A_{\fg}$ be the algebra generated by all $ D^k \Phi_l, k \in \bz_{\ge 0}, l = 1, \dots, r$. It has a similar description to that of $\ma_\fg$ from Proposition~\ref{pr:ClassicalGenerators}. Consider the linear map $\tilde{\delta}:\fg\to\fg[t][[u]]$ defined by $$ \tilde{\delta}(x)=\sum\limits_{k=0}^\infty x[k]u^k.
$$
This extends to a $\fg$-equivariant homomorphism $\tilde \delta:S\fg\to S(\fg[t])[[u]]$.  The following result is parallel to Proposition \ref{pr:ClassicalGenerators} and the proof is identical.

\begin{prop}\label{pr:ClassicalGeneratorsTilde}
    The subalgebra $\tilde{A}_{\fg}\subset S(\fg[t])$ is generated by all the Fourier coefficients of $\tilde{\delta}(\Phi_l)$, for $ l = 1, \dots, r$.
\end{prop}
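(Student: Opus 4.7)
The plan is to mimic the proof of Proposition \ref{pr:ClassicalGenerators} by exhibiting an automorphism of $S(\fg[t])[[u]]$ that interpolates between the generators $D^k\Phi_l$ and the Fourier coefficients of $\tilde\delta(\Phi_l)$. The natural candidate is $\exp(uD)$, viewed as an automorphism of $S(\fg[t])[[u]]$. The first step is to verify convergence: since $D$ raises the $t$-degree by one, for any fixed element of $S(\fg[t])$ the series $\sum_{k\geq 0}\frac{u^k}{k!}D^k$ makes sense in $S(\fg[t])[[u]]$, and as $D$ is a derivation, $\exp(uD)$ is a well-defined algebra automorphism.

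Next, I would compute the image of a generator $x[0]\in \fg \subset \fg[t]$ under this automorphism. A straightforward induction using $D(x[m])=(m+1)x[m+1]$ shows $D^k(x[0]) = k!\, x[k]$, so
\[
\exp(uD)(x[0]) \;=\; \sum_{k=0}^\infty \frac{u^k}{k!}\,D^k(x[0]) \;=\; \sum_{k=0}^\infty x[k]\, u^k \;=\; \tilde\delta(x).
\]
Since $\exp(uD)$ is an algebra homomorphism and $S(\fg)$ is generated by $\fg$ sitting in degree zero (via $x\mapsto x[0]$), this identity extends to $\exp(uD)(y) = \tilde\delta(y)$ for every $y\in S(\fg)$. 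In particular,
\[
\tilde\delta(\Phi_l) \;=\; \exp(uD)(\Phi_l) \;=\; \sum_{k=0}^\infty \frac{u^k}{k!}\,D^k\Phi_l.
\]

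Reading off Fourier coefficients, the $u^k$-coefficient of $\tilde\delta(\Phi_l)$ equals $\tfrac{1}{k!}D^k\Phi_l$. Hence the Fourier coefficients of $\tilde\delta(\Phi_l)$ for $l=1,\dots,r$ and $k\geq 0$ span (even generate as an algebra) exactly the same subalgebra of $S(\fg[t])$ as the elements $D^k\Phi_l$, which by definition is $\tilde A_\fg$. This concludes the proof.

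There is essentially no obstacle: the argument is a verbatim transcription of the one for Proposition \ref{pr:ClassicalGenerators}, with $\partial_t$ (which lowers the $t$-power) replaced by $D$ (which raises it), and with the sign in the exponential flipped accordingly. The only small bookkeeping point is verifying the formula $D^k(x[0])=k!\,x[k]$, which is the analog of $\partial_t^k(x[-1])=(-1)^k k!\, x[-k-1]$ used implicitly in the earlier proof.
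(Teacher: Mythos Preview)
Your proof is correct and is exactly the argument the paper intends: the paper simply states that the proof is identical to that of Proposition \ref{pr:ClassicalGenerators}, and your computation with $\exp(uD)$ in place of $\exp(-u\partial_t)$ is precisely that transcription.
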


By \cite{ir}, the subalgebra $\tilde A_{\fg} $ is the Poisson centralizer in $S(\fg[t])^{\fg}$ of the element $ \Gamma := \sum_{a} x^{a}[0] x_{a}[1]=\sum_{a} x^{a}[1] x_{a}[0] $.

Additionally, it is well-known that there exists a unique lifting of $\tilde A_{\fg}$ to $U(\fg[t])^{\fg}$, which we denote by $\tilde \ma_{\fg}$. One can prove existence using the center of the critical level \cite{ffre}, \cite{fr}, or using Bethe subalgebras in Yangians, \cite{ir}. Below we will consider $\tilde \ma_{\fg}$ as a subalgebra of $U(\fg[t^{-1}])$ by identifying $ U(\fg[t]) $ with $ U(\fg[t^{-1}]) $ via the change of variables $t \to t^{-1}$.

\subsection{Some properties of universal Gaudin subalgebras.}




Let $\sigma$ be the antilinear automorphism of $U(t^{-1} \fg[t^{-1}])$ induced by the Cartan involution on $\fg$.
\begin{prop}\label{pr:A-invariance-sigma} 
    $\mathcal{A}_{\fg}$ is stable under $\sigma$.
\end{prop}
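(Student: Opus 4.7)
The plan is to invoke Theorem~\ref{th:A-invariance-der}(4), which characterizes $\ma_{\fg}$ as the centralizer of the single element $S_1$ inside $U(t^{-1}\fg[t^{-1}])^{\fg}$. It therefore suffices to establish two things: (a) $\sigma$ preserves the subalgebra of $\fg$-invariants, and (b) $\sigma(S_1)=S_1$. Granted both, then for any $y\in\ma_\fg$ we have $[S_1,\sigma(y)]=\sigma([S_1,y])=0$, while (a) places $\sigma(y)$ in the invariants, so $\sigma(y)\in\ma_\fg$.

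Point (a) is structural: since the Cartan involution $\omega:\fg\to\fg$ is a Lie algebra automorphism, its extension $\sigma$ to $U(t^{-1}\fg[t^{-1}])$ intertwines adjoint actions by the twist formula $\sigma\circ\mathrm{ad}(x)=\mathrm{ad}(\omega(x))\circ\sigma$ for $x\in\fg$. Because $\omega$ is a bijection on $\fg$, the condition $[x,y]=0$ for all $x\in\fg$ is equivalent to the same condition for $\sigma(y)$, so $\sigma$ stabilizes the invariants.

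Point (b) reduces to the fact that the Cartan involution preserves the chosen invariant bilinear form $(\cdot,\cdot)$. A direct check on a Chevalley basis ($\omega(e_\alpha)=-f_\alpha$, $\omega(f_\alpha)=-e_\alpha$, $\omega(h_\alpha)=-h_\alpha$) confirms this, and in the antilinear convention the relation reads $(\omega(x),\omega(y))=\overline{(x,y)}$, which still implies that $\{\omega(x_a)\}$ and $\{\omega(x^a)\}$ form another pair of bases dual with respect to $(\cdot,\cdot)$, since Kronecker deltas are real. As the Casimir tensor $\Omega=\sum_a x_a\otimes x^a$ is independent of the choice of dual bases, we conclude
$$
\sigma(S_1)=\sum_a \omega(x^a)[-1]\,\omega(x_a)[-1]=\sum_a x^a[-1]\,x_a[-1]=S_1.
$$

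No step presents a genuine obstacle; the only bookkeeping concerns the distinction between a $\bc$-linear and $\bc$-antilinear Cartan involution, which affects the precise form-preservation identity but not the dual-basis conclusion used in the computation above. The antilinearity of $\sigma$ itself plays no role because the coefficients appearing in $S_1$ are integers.
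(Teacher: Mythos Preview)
Your proof is correct and follows the same approach as the paper: the paper's one-line argument is simply ``Since $S_1\in U(t^{-1}\fg[t^{-1}])$ is $\sigma$-invariant, so is its centralizer,'' and you have unpacked this by verifying that $\sigma$ fixes $S_1$ (via invariance of the bilinear form under the Cartan involution) and preserves the subalgebra of $\fg$-invariants, so that the characterization of $\ma_\fg$ as the centralizer of $S_1$ in $U(t^{-1}\fg[t^{-1}])^\fg$ from Theorem~\ref{th:A-invariance-der}(4) yields the result.
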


\begin{proof}
    Since $S_1\in U(t^{-1}\fg[t^{-1}])$ is $\sigma$-invariant, so is its centralizer.
\end{proof}

Consider the quantum Hamiltonian reduction setup from Section \ref{se:QHR} with the decomposition $ \fg[t^{-1}] = \fg \oplus t^{-1}\fg[t^{-1}]$.  Then by Proposition \ref{pr:HamReductiontoU2}, we obtain an isomorphism $$\left(U(\fg[t^{-1}])/U(\fg[t^{-1}])\fg\right)^\fg \xrightarrow{\cong} U(t^{-1}\fg[t^{-1}])^\fg $$

Now, consider the composite homomorphism $$\iota: U(\fg[t^{-1}])^\fg\to \left(U(\fg[t^{-1}])/U(\fg[t^{-1}])\fg\right)^\fg \rightarrow U(t^{-1}\fg[t^{-1}])^\fg.$$

\begin{prop}\label{pr:iota(A)} We have    $\iota(\widetilde{\ma_{\fg}})=\ma_{\fg}$.
\end{prop}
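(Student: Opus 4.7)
The plan is to combine a direct classical (Poisson) computation showing $\iota_0(\tilde{A_\fg}) = A_\fg$, where $\iota_0$ denotes the associated graded of $\iota$ with respect to the PBW filtration, with the uniqueness of the lift from Theorem~\ref{th:A-invariance-der}(1). At the classical level, $\iota_0: S(\fg[t^{-1}]) \to S(t^{-1}\fg[t^{-1}])$ is the ring homomorphism setting $x[0] = 0$ for $x\in\fg$. Under the substitution $t\mapsto t^{-1}$, $\tilde\delta$ sends $x\in\fg$ to $x[0] + u\,\delta(x)$, so by multiplicativity
\begin{equation*}
\iota_0\bigl(\tilde\delta(\Phi_l)\bigr) = \Phi_l\bigl(\iota_0(\tilde\delta(x_a))\bigr) = \Phi_l\bigl(u\,\delta(x_a)\bigr) = u^{d_l}\,\delta(\Phi_l).
\end{equation*}
Comparing Fourier coefficients and invoking Propositions~\ref{pr:ClassicalGenerators} and~\ref{pr:ClassicalGeneratorsTilde} yields $\iota_0(\tilde{A_\fg}) = A_\fg$. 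Since $\tilde\delta(\Phi_l)=\sum_m \tfrac{u^m}{m!}D^m\Phi_l$, the same identity shows that $\iota_0(D^m\Phi_l) = 0$ for $m<d_l$, while $\iota_0(D^m\Phi_l)$ is a non-zero scalar multiple of $\partial_t^{m-d_l}i_{-1}(\Phi_l)$ for $m\ge d_l$.

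Next I would lift this to the quantum level. By Propositions~\ref{pr:HamReductionToU} and~\ref{pr:HamReductiontoU2}, $\iota$ is a filtered algebra homomorphism on invariants with associated graded $\iota_0$, so $\iota(\widetilde{\ma_\fg})$ is a filtered commutative subalgebra of $U(t^{-1}\fg[t^{-1}])^\fg$. Fixing a filtered lift $\tilde S_l \in \widetilde{\ma_\fg}\cap U(\fg[t^{-1}])^{(d_l)}$ of $\Phi_l$ (with $\tilde S_1=\omega$), the element $\iota(D^m\tilde S_l)\in U(t^{-1}\fg[t^{-1}])^{(d_l)}$ has classical leading symbol $\iota_0(D^m\Phi_l) = c_{l,m}\,\partial_t^{m-d_l}i_{-1}(\Phi_l) = c_{l,m}\,\gr(\partial_t^{m-d_l}S_l)$ for $m\ge d_l$, with $c_{l,m}\ne 0$. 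In particular $\iota(D^2\tilde S_1)$ differs from a non-zero scalar multiple of $S_1$ by a $\fg$-invariant element of filtered degree at most $1$; since $\fg$ is simple, $t^{-1}\fg[t^{-1}]$ carries no nonzero $\fg$-invariants, so this difference is a scalar and hence $S_1\in\iota(\widetilde{\ma_\fg})$.

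Finally, since $\iota(\widetilde{\ma_\fg})$ is commutative and contains $S_1$, Theorem~\ref{th:A-invariance-der}(4) gives $\iota(\widetilde{\ma_\fg}) \subseteq \ma_\fg$, and in particular $\gr\iota(\widetilde{\ma_\fg}) \subseteq A_\fg$. On the other hand, the leading symbols computed above produce all the generators $\partial_t^{m-d_l}i_{-1}(\Phi_l)$ of $A_\fg$ inside $\gr\iota(\widetilde{\ma_\fg})$, so $\gr\iota(\widetilde{\ma_\fg}) = A_\fg$, and uniqueness of the lift (Theorem~\ref{th:A-invariance-der}(1)) yields $\iota(\widetilde{\ma_\fg}) = \ma_\fg$. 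The main obstacle is to secure a filtered generating family $\{D^k\tilde S_l\}\subset\widetilde{\ma_\fg}$ with each $\tilde S_l$ of the correct PBW degree $d_l$ lifting $\Phi_l$, i.e.\ the analog of Theorem~\ref{th:A-invariance-der}(2) for $\widetilde{\ma_\fg}$. This should come either from the affine critical-level construction that defines $\widetilde{\ma_\fg}$, or from an inductive filtered-lifting argument exploiting that $D$ preserves $\widetilde{\ma_\fg}$ (as it does $\tilde A_\fg$) and that any two $\fg$-invariant filtered lifts of $\Phi_l$ differ only in lower filtered degrees.
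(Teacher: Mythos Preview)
Your approach is essentially the paper's: compute at the classical level that $\gr\iota$ carries $\tilde A_\fg$ onto $A_\fg$, then invoke uniqueness of the lift (Theorem~\ref{th:A-invariance-der}(1)). The paper's proof is four lines and implicitly uses the maximal Poisson-commutativity of $A_\fg$ in $S(t^{-1}\fg[t^{-1}])^\fg$ to pass from $\gr\bigl(\iota(\widetilde{\ma_\fg})\bigr)\supseteq A_\fg$ to equality; you instead obtain the reverse inclusion via the centralizer characterization of $\ma_\fg$ (Theorem~\ref{th:A-invariance-der}(4)) after exhibiting $S_1\in\iota(\widetilde{\ma_\fg})$. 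Both routes are valid, and your classical computation (in particular the observation that the projection sends $\tilde\delta$ to $u\,\delta$, so that $\iota_0(D^m\Phi_l)=0$ for $m<d_l$) is a bit more precise than the paper's one-line assertion.

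The ``obstacle'' you flag at the end is not a real one, and once you see this your proof is complete. You do not need generators of $\widetilde{\ma_\fg}$ in the structured form $D^k\tilde S_l$, nor do you need $D$ to preserve $\widetilde{\ma_\fg}$. It suffices to take, for each pair $(l,k)$, \emph{any} element $T_{l,k}\in\widetilde{\ma_\fg}\cap U(\fg[t^{-1}])^{(d_l)}$ with $\gr T_{l,k}=D^k\Phi_l$; such lifts exist precisely because $\gr\widetilde{\ma_\fg}=\tilde A_\fg$, which is the content of ``$\widetilde{\ma_\fg}$ is a lifting of $\tilde A_\fg$''. Then $\iota(T_{l,k})$ has PBW-leading symbol $(\gr\iota)(D^k\Phi_l)$, and your symbol computation goes through unchanged. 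In particular the argument that $S_1\in\iota(\widetilde{\ma_\fg})$ only requires a lift of $D^2\Phi_1$, not $D^2$ applied to a lift of $\Phi_1$.
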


\begin{proof}
Consider $ \gr \iota : S(\fg[t^{-1}])^\fg \rightarrow S(t^{-1}\fg[t^{-1}])^\fg $.  By the last statement of Proposition \ref{pr:HamReductionToU}, this comes from the projection $ \fg[t^{-1}] \rightarrow t^{-1}\fg[t^{-1}]$.  As this projection takes $ \tilde \delta $ to $ \delta$, from Propositions~\ref{pr:ClassicalGenerators}~and~\ref{pr:ClassicalGeneratorsTilde} we conclude that $ \gr \iota(\widetilde A_\fg) = A_\fg$. On the other hand, the lifting of $A_{\fg}$ to $U(t^{-1}\fg[t^{-1}])^\fg$ is unique. 
\end{proof}

\subsection{Classical homogeneous Gaudin algebra}

Let $z \in \bc^\times$.  There is a Lie algebra map $ t^{-1} \fg[t^{-1}] \rightarrow \fg$ given by $ x[-m] \mapsto z^{-m} x$.  This extends to algebra homomorphisms $$\ev_{z}: S(t^{-1}\fg[t^{-1}]) \to S\fg, \quad \ev_z : U (t^{-1}\fg[t^{-1}]) \rightarrow U \fg. $$ 

\begin{rem} \label{re:looprotate}
    Note that for $ s \in \Cx, z \in \Cx, a \in U (t^{-1}\fg[t^{-1}])$ we have $ \ev_{sz}(a) = \ev_z(sa) $ where on the right hand side we use the loop rotation action of $ \Cx$ on $ U(t^{-1}\fg[t]) $.  Thus if $ a $ has weight $ -m $ for this action, then $ \ev_{sz}(a) = s^{-m} \ev_z(a)$.
\end{rem}

\begin{rem} \label{re:SlPhil}
    Recall that $ \widetilde \Phi_l \in Z(U \fg) $ denotes a central generator lifting $ \Phi_l \in S(\fg)^\fg$.  From the properties of $ S_l$, we can see that $ \ev_1(S_l) $ is such a central generator, so it is natural to fix this choice of $ \widetilde \Phi_l := \ev_1(S_l)$.  By the previous remark, we have $ \ev_z(S_l) = z^{-d_l} \widetilde \Phi_l$.
\end{rem}

Now, let $ z_1, \dots, z_n \in \bc^{\times}$ with $z_i \not = z_j$ for $i \not = j$.
Let $\ev_{z_1, \ldots, z_n} = \ev_{z_1} \otimes \ldots \otimes \ev_{z_n}: S (t^{-1} \fg[t^{-1}]) ^{\otimes n} \rightarrow S(\fg)$.  

We define $ A(z_1, \dots, z_n)$ to be the image of $ A $ under $$ ev_{z_1, \dots, z_n} \circ \Delta :S(t^{-1}\fg[t^{-1}]) \rightarrow S(\fg^{\oplus n}) $$

To understand this image better, we note that the linear map $ ev_{z_1, \dots, z_n} \circ \Delta : t^{-1}\fg[t^{-1}] \rightarrow \fg^{\oplus n}$ can be dualized to a linear map $ (ev_{z_1, \dots, z_n} \circ \Delta)^* : \fg^{\oplus n} \rightarrow \fg[[t]]$ where we use the bilinear form to identify $ \fg $ and $ \fg^*$ and extend in the usual way to a pairing between $t^{-1} \fg[t^{-1}]$ and $ \fg[[t]]$.  This map is given by
$$(ev_{z_1, \dots, z_n} \circ \Delta)^* = \sum_{i=1}^n  \frac{1}{z_i-t} x^{(i)}$$
Here and below, we write $ x^{(i)} : \fg^{\oplus n} \rightarrow \fg $ for the $i$th coordinate function.

Recall from section \ref{se:universalGaudin}, for each $ l = 1, \dots, r$ we defined $ \delta(\Phi_l) \in S(t^{-1}\fg[t^{-1}])[[u]] = \CO(\fg[[t]])[[u]]$.  So now, we define $ \Phi_l(u;z_1, \dots, z_n) \in \CO(\fg^{\oplus n})[[u]] $ by pulling back $ \delta(\Phi_l)$ via the map $ (ev_{z_1, \dots, z_n} \circ \Delta)^*$.  This $ \Phi_l$ is actually a rational function of $ u $ and we have 
$$
\Phi_l(u;z_1, \dots, z_n) = \Phi_l \left(\sum_{i=1}^n \frac{1}{z_i-u} x^{(i)} \right)
$$
where we regard $ \Phi_l \in \CO(\fg)^\fg$.  

Note that $ \Phi_l(u;z_1, \dots, z_n)$ has a pole of order $ d_l$ at each point $ z_1, \dots, z_n$ and we consider the Laurent expansion at these points. 
$$ \Phi_l(u;z_1, \dots, z_n) = \sum_{m=1}^{d_l}  \Phi_{l,i}^m (u - z_i)^{-m} + \CO(\fg^{\oplus n})[[u - z_i]] $$
where $  \Phi_{l,i}^m \in \CO(\fg^{\oplus n})$.  The leading coefficients of this Laurent expansion are easy to understand; we have $ \Phi_{l,i}^{d_l} = \Phi_l(x^{(i)})$.

\begin{prop}\cite[Proposition~1]{cfr} \label{pr:GenCommGaudin}
$A(z_1, \dots, z_n)$ is a polynomial ring with generators $$ \{  \Phi_{l,i}^m : m = 1\dots, d_l, i = 1, \dots, n-1, l = 1, \dots, r  \} \sqcup \{ \Phi_{l,n}^{d_l} : l = 1, \dots, r\}.$$
\end{prop}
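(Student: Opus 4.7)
The plan is to use Proposition \ref{pr:ClassicalGenerators} to identify the generators of $A(\uz)$ with Laurent coefficients of the rational function $\Phi_l(u;\uz)$, then use its behavior at infinity to reduce to the stated generating set, and finally invoke a dimension count to obtain algebraic independence.

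\textbf{Step 1: Generators from partial fractions.} By Proposition \ref{pr:ClassicalGenerators}, $A$ is generated by the Fourier coefficients (in $u$) of $\delta(\Phi_l)$ for $l=1,\dots,r$. Under $\ev_{\uz}\circ\Delta$, the series $\delta(\Phi_l)$ maps to the Taylor expansion at $u=0$ of $\Phi_l(u;\uz)=\Phi_l\bigl(\sum_i x^{(i)}/(z_i-u)\bigr)$, so $A(\uz)$ is generated by these Taylor coefficients. The latter is a rational function in $u$ of order $-d_l$ at infinity with poles of order $d_l$ exactly at $z_1,\dots,z_n$, and its unique partial fraction decomposition is $\sum_{i,m}\Phi_{l,i}^m(u-z_i)^{-m}$. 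Inside each homogeneous component of $S(\fg^{\oplus n})$, the Taylor coefficients at $0$ and the coefficients $\Phi_{l,i}^m$ are finite $\bc$-linear combinations of one another, so $A(\uz)$ is equally generated by the $\{\Phi_{l,i}^m\}$.

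\textbf{Step 2: Eliminate $\Phi_{l,n}^m$ for $m<d_l$.} The condition $\Phi_l(u;\uz)=O(u^{-d_l})$ at $u=\infty$ gives $d_l-1$ linear relations: for $k=1,\dots,d_l-1$, the vanishing of the $u^{-k}$-coefficient at infinity reads
\[
\sum_{i=1}^n\sum_{m=1}^k \binom{k-1}{m-1} z_i^{k-m}\Phi_{l,i}^m=0.
\]
Viewed as a system in the unknowns $(\Phi_{l,n}^1,\dots,\Phi_{l,n}^{d_l-1})$, its coefficient matrix is triangular with $1$'s on the diagonal, so the relations express each $\Phi_{l,n}^m$ with $m<d_l$ as a $\bc$-linear combination of the elements $\{\Phi_{l,i}^{m'}:i<n\}\cup\{\Phi_{l,n}^{d_l}\}$, reducing the generating set to the one in the statement.

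\textbf{Step 3: Algebraic independence.} The proposed set has cardinality $\sum_l\bigl((n-1)d_l+1\bigr)=(n-1)(r+p)+r$, using $\sum_l d_l=r+p$. On the other hand, the classical completeness theorem for Gaudin subalgebras (see e.g.\ \cite{r3}) identifies the transcendence degree of $A(\uz)\subset S(\fg^{\oplus n})$ for distinct $z_i$ with the same number. Hence the listed generators are algebraically independent, so $A(\uz)$ is a polynomial algebra on them. The main obstacle is precisely this last step: the partial-fraction bookkeeping in Steps 1--2 is routine, but excluding hidden polynomial relations among the $\Phi_{l,i}^m$ genuinely requires input from the completeness of the classical Gaudin system. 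If one did not wish to appeal to \cite{r3}, the fallback is a direct Jacobian calculation: verify at a single well-chosen point of $M_{n+1}\times\fg^{\oplus n}$ that the differentials of the chosen $\Phi_{l,i}^m$ are linearly independent, which then propagates to the generic point.
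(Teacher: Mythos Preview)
Your Steps 1 and 2 are correct and match the standard argument: the Taylor coefficients of $\Phi_l(u;\uz)$ at $u=0$ and the partial-fraction coefficients $\Phi_{l,i}^m$ span the same $\bc$-space, and the $d_l-1$ vanishing conditions at $u=\infty$ eliminate exactly the $\Phi_{l,n}^m$ with $m<d_l$ by a triangular system. (Minor point: those relations never involve $\Phi_{l,n}^{d_l}$, so it need not appear on the right-hand side.)

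The paper does not give its own proof here; it simply invokes \cite{cfr}, whose argument for the quantum algebra carries over verbatim. The one place where your write-up deviates from \cite{cfr} is Step 3. In \cite{cfr} the algebraic independence is established directly by a differential/Jacobian computation at a suitable point, i.e.\ exactly your ``fallback''. Your primary route, quoting the transcendence degree of $A(\uz)$ from completeness of the classical Gaudin system, is logically sound \emph{provided} the source you cite proves that transcendence degree without itself relying on an explicit set of free generators. The reference \cite{r3} treats the universal algebra $A_\fg\subset S(t^{-1}\fg[t^{-1}])$ and its maximality via the Poisson centralizer of a single quadratic element; passing from that to the transcendence degree of the evaluated algebra $A(\uz)$ still requires an argument (surjectivity of the evaluation in the right degree count, or equivalently the image having the expected dimension), which in practice is again a rank-of-differentials check. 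So your shortcut is not wrong, but it leans on a fact whose cleanest proofs go through the very Jacobian calculation you list as backup. If you want a self-contained argument, promote the Jacobian computation to the main line; that is what \cite{cfr} does.
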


\begin{rem}
Proposition~1 of \cite{cfr} is the quantum version of the above Proposition, namely, it states the same for the Gaudin subalgebras $\ma(z_1,\ldots,z_n)\subset U(\fg)^{\otimes n}$, but the proof is the same. 
\end{rem}

\begin{rem}
    At the beginning of this section, we assumed that $ z_i \ne 0 $ for all $i$.  This is in fact unnecessary and $ A(z_1, \dots, z_n) $ is well-defined for all $ z_1, \dots, z_n$, pairwise distinct, see below Proposition \ref{cfr2}.
\end{rem}

\subsection{Definition of the Gaudin subalgebra}

As above, let $ z_1, \dots, z_n \in \bc^{\times}$ with $z_i \not = z_j$ for $i \not = j$.

Let $\ev_{z_1, \ldots, z_n} = \ev_{z_1} \otimes \ldots \otimes \ev_{z_n}: U (t^{-1} \fg[t^{-1}]) ^{\otimes n} \rightarrow U\fg$.  Recall the coproduct  $\Delta:  U(t^{-1}\fg[t^{-1}]) \to U(t^{-1}\fg[t^{-1}])^{\otimes n}$ from Section \ref{se:Delta}.

\begin{defn}
The homogeneous Gaudin subalgebra $\ma(z_1, \ldots, z_n)$ is the image of $\ma_{\fg}$ in $U\fg^{\otimes n}$ under map $\ev_{z_1, \ldots, z_n} \circ \Delta$.
\end{defn}

In fact, $ \ma(z_1, \dots, z_n) $ lies in $ (U \fg^{\otimes n})^\fg$, the invariants with respect to the diagonal $ \fg$ action.

The following proposition shows that it is possible to use the second version of the universal Gaudin subalgebra as well.  By a slight abuse of notation, we write $ ev_z : U \fg[t] \rightarrow U\fg$ for the obvious extension of the above $ ev_z$.
\begin{prop}\label{pr:AA}
    The subalgebra $\mathcal{A}(z_1,\ldots,z_n)$ is the image of $\tilde{\mathcal{A}}_{\fg}$ under the evaluation map $ev_{z_1^{-1},\ldots,z_n^{-1}} \circ \Delta$.
\end{prop}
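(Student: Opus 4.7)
The plan is to reduce to the classical (associated graded) level, carry out the key computation there via an explicit rational-function identity, and then lift to the quantum level using Proposition~\ref{pr:iota(A)} together with uniqueness of the commutative lift.

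At the classical level, $\tilde{A}_\fg$ is generated by the Fourier coefficients in $u$ of the generating series $\tilde\delta(\Phi_l) = \Phi_l\bigl(\sum_{k \ge 0} x[k]\,u^k\bigr)$ (Proposition~\ref{pr:ClassicalGeneratorsTilde}). Under the identification $S(\fg[t]) \cong S(\fg[t^{-1}])$ via $xt^k \leftrightarrow xt^{-k}$ and with the evaluation convention $\ev_{z^{-1}}(xt^{-k}) = z^k x$, the composite $\ev_{z_1^{-1},\ldots,z_n^{-1}} \circ \Delta$ sends this generating series to $\Phi_l(H(u))$, where
\[
H(u) \;:=\; \sum_{i=1}^n \frac{x^{(i)}}{1 - z_i u}.
\]
The key observation is that $F(1/u) = -u\, H(u)$, where $F(v) := \sum_i x^{(i)}/(z_i - v)$ is the generating series whose Laurent coefficients at the $z_i$ generate $A(z_1,\ldots,z_n)$. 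Combined with the homogeneity $\Phi_l(cY) = c^{d_l}\Phi_l(Y)$, this yields
\[
\Phi_l(H(u)) \;=\; (-1)^{d_l}\, u^{-d_l}\, F_l(1/u), \qquad F_l(v) := \Phi_l(F(v)),
\]
so that the Taylor coefficients at $u = 0$ of $\Phi_l(H(u))$ are, up to signs, Laurent coefficients of $F_l(v)$ at $v = \infty$. Since $F_l$ is a rational function whose only poles are at $z_1,\ldots,z_n$, its Laurent coefficients at $\infty$ span the same subspace of $S(\fg^{\oplus n})$ as its pole coefficients $\Phi_{l,i}^m$ at the $z_i$ (they are related by an invertible Vandermonde-type linear transformation, for distinct $z_i$). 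By Proposition~\ref{pr:GenCommGaudin} the latter generate $A(z_1,\ldots,z_n)$, which establishes the classical equality.

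For the quantum lift, both $\mathcal{A}(z_1,\ldots,z_n)$ and $\ev_{z_1^{-1},\ldots,z_n^{-1}}\circ\Delta(\tilde{\mathcal{A}}_\fg)$ are commutative subalgebras of $(U\fg^{\otimes n})^\fg$ whose associated graded subalgebras both equal $A(z_1,\ldots,z_n)$. Combining this with Proposition~\ref{pr:iota(A)}—which yields $\mathcal{A}(z_1,\ldots,z_n) = \ev_{z_1,\ldots,z_n}\circ\Delta(\iota(\tilde{\mathcal{A}}_\fg))$—and the uniqueness of the commutative lift of the maximal Poisson-commutative subalgebra $A(z_1,\ldots,z_n)$ (in the spirit of Theorem~\ref{th:A-invariance-der}(1)), the two quantum subalgebras coincide. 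The main obstacle is precisely this quantum lifting: one must justify uniqueness of the commutative lift, or alternatively verify directly via a PBW filtration argument that the discrepancy $a - \iota(a) \in U(\fg[t^{-1}])\,\fg$ contributes under $\ev\circ\Delta$ only elements already in $\mathcal{A}(z_1,\ldots,z_n)$.
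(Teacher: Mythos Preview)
Your classical computation is sound: the identity $H(u)=-u^{-1}F(1/u)$ and the confluent-Vandermonde argument do show that the Fourier coefficients of $\tilde\delta(\Phi_l)$, evaluated at the $z_i$, span exactly the same subspace of $S(\fg^{\oplus n})$ as the pole coefficients $\Phi_{l,i}^m$, so the image of $\tilde A_\fg$ is $A(z_1,\ldots,z_n)$.

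The gap is the quantum step, and you have identified it yourself without resolving it. The uniqueness you invoke ``in the spirit of Theorem~\ref{th:A-invariance-der}(1)'' is a statement about the \emph{universal} subalgebra $A_\fg\subset S(t^{-1}\fg[t^{-1}])^\fg$; it does not descend to a uniqueness statement for commutative lifts of $A(z_1,\ldots,z_n)$ inside $(U\fg^{\otimes n})^\fg$. Two distinct commutative filtered subalgebras can have the same associated graded, so equality of $\gr$ alone does not force equality. Your alternative route via the discrepancy $a-\iota(a)\in U(\fg[t^{-1}])\,\fg[0]$ is also not completed: under $\ev_{\uz}\circ\Delta$ this discrepancy lands in $U\fg^{\otimes n}\cdot\Delta^n(\fg)$, and there is no reason a priori that the resulting correction terms lie in $\mathcal A(z_1,\ldots,z_n)$. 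Nor is it clear that the two candidate subalgebras centralize one another, which would be the natural way to exploit the maximal-commutativity of $\mathcal A(z_1,\ldots,z_n)$ from Theorem~\ref{th:homGaudin}(2).

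For comparison, the paper does not attempt a self-contained argument here: it simply invokes \cite[Proposition~4.10]{kmr} (the $\chi=0$ case). That external result gives the quantum identification directly, bypassing the lift-uniqueness issue entirely. If you want to make your approach self-contained, you would need either a direct proof that $\ev_{\uz^{-1}}\circ\Delta$ and $\ev_{\uz}\circ\Delta\circ\iota$ agree on $\tilde{\mathcal A}_\fg$ (not just on $\gr\tilde{\mathcal A}_\fg$), or a genuine uniqueness-of-lift statement for $A(z_1,\ldots,z_n)$ in $(U\fg^{\otimes n})^\fg$---neither of which is available in the paper.
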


\begin{proof}
    This is a particular case of \cite[Proposition 4.10]{kmr} with $\chi=0$.
\end{proof}
It is clear that $\ma(z_1, \ldots, z_n)$ is commutative since it is the image of a commutative algebra.
Using the elements $S_l$, it is possible to write the subalgebra $\ma(z_1, \ldots, z_n)$ more explicitly. Let $u$ be a variable and $z_1, \ldots, z_n \in \bc$ such that $z_i \not = z_j$ for $i \not = j$. Consider the homomorphism of algebras
\begin{align*}\ev_{u-z_1, \ldots, u-z_n} \circ \Delta: U(t^{-1} \fg[t^{-1}]) &\to U\fg^{\otimes n}(u) \\
x[-m] &\mapsto \sum_{i=1}^n  (u-z_i)^{-m} x^{(i)}  \quad x \in \fg
\end{align*}
and consider the rational $U\fg^{\otimes n}$-valued function of $ u $
$$S_l(u; z_1, \ldots, z_n) = \ev_{u-z_1, \ldots, u-z_n} \circ \Delta(S_l) \in U \fg^{\otimes n}((u))$$

Note that this function is well-defined except at the points  $z_1, \ldots, z_n$ where it has poles of order $ d_l$.  We consider the Laurent expansion of $ S_l$.

$$ S_l(u;z_1, \dots, z_n) = \sum_{m=1}^{d_l} S_{l,i}^m (u - z_i)^{-m} + U \fg^{\otimes n}[[z_i - u]] $$
where $ S_{l,i}^m \in U\fg^{\otimes n}$.   As $ \ev_{z_1,\ldots, z_n} $ preserves the PBW filtration, we see  that $ S_{l,i}^m \in (U \fg^{\otimes n})^{(d_l)}$.

By Remark \ref{re:SlPhil}, we have that $ S_{l,i}^{d_l} = \widetilde \Phi_l^{(i)} $ for all $ l, i$.

\begin{prop}{\cite{cfr}}
\label{cfr2}
The homogeneous Gaudin subalgebra $\ma(z_1, \ldots, z_n)$ is the subalgebra generated by all $S_l(u; z_1, \ldots, z_n)$, for all $u \not = z_i$.
\end{prop}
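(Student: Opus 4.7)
Since $\ma_\fg$ is generated by $\{\partial_t^k S_l : k \ge 0,\ 1 \le l \le r\}$ by Theorem \ref{th:A-invariance-der}, and $\ma(z_1, \dots, z_n) = \psi(\ma_\fg)$ where $\psi := \ev_{z_1, \dots, z_n} \circ \Delta$, we know that $\ma(z_1, \dots, z_n)$ is generated by the elements $\psi(\partial_t^k S_l)$. My plan is to exhibit each value $S_l(u_0; z_1, \dots, z_n)$ as an element of $\ma(z_1, \dots, z_n)$, and conversely to recover each generator $\psi(\partial_t^k S_l)$ as a $\bc$-linear combination of such values.

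For the easy inclusion, note that for any $u_0 \ne z_i$, the element $S_l(u_0; z_1, \dots, z_n) = \ev_{u_0 - z_1, \dots, u_0 - z_n} \circ \Delta(S_l)$ lies in the image $\ma(u_0 - z_1, \dots, u_0 - z_n)$ of $\ma_\fg$, which coincides with $\ma(z_1, \dots, z_n)$ by the translation and scaling invariances recalled in the introduction (apply $a = -1$, $b = u_0$). The reverse inclusion rests on the intertwining identity
$$\ev_{z_1 - u, \dots, z_n - u} \circ \Delta \;=\; \ev_{z_1, \dots, z_n} \circ \Delta \circ \exp(-u \partial_t)$$
of algebra homomorphisms $U(t^{-1}\fg[t^{-1}]) \to U\fg^{\otimes n}[[u]]$. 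Both sides are algebra maps (since $\partial_t$ is a derivation and the evaluations and $\Delta$ are algebra homomorphisms), so it suffices to verify this on the generators $x[-m]$, where it reduces to the expansion $(z_i - u)^{-m} = \sum_k \binom{m+k-1}{k} u^k z_i^{-m-k}$.

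Applying this identity to $S_l$ and converting $\ev_{u - z_1, \dots, u - z_n}$ into $\ev_{z_1 - u, \dots, z_n - u}$ using that $S_l$ has loop weight $-d_l$ (so that the two evaluations differ by $(-1)^{d_l}$ via Remark \ref{re:looprotate}), I obtain
$$S_l(u; z_1, \dots, z_n) \;=\; \sum_{k=0}^\infty \frac{(-1)^{d_l+k}}{k!}\, \psi(\partial_t^k S_l) \, u^k,$$
an identity of formal power series in $u$ which converges in a neighborhood of $u = 0$ (since $z_i \ne 0$ guarantees analyticity there). Thus each generator $\psi(\partial_t^k S_l)$ of $\ma(z_1, \dots, z_n)$ is, up to a sign and a factorial, the $k$-th Taylor coefficient at $0$ of the $U\fg^{\otimes n}$-valued function $u \mapsto S_l(u; z_1, \dots, z_n)$. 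To extract these Taylor coefficients from the values at specific points, I observe that this function is rational with poles only at $z_1, \dots, z_n$ (of order bounded by $d_l$) and vanishes at infinity (each $x[-m]$ maps to $O(u^{-m})$), so by partial fractions it takes values in a finite-dimensional $\bc$-subspace of $U\fg^{\otimes n}$; a Vandermonde-type argument applied componentwise then shows that the Taylor coefficients at $0$ lie in the $\bc$-span of the values at any infinite set of regular points, in particular the set $\bc \setminus \{z_1, \dots, z_n\}$. Hence every $\psi(\partial_t^k S_l)$ lies in the subalgebra generated by $\{S_l(u_0; z_1, \dots, z_n) : u_0 \ne z_i\}$, completing the proof. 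The main content is the intertwining identity; everything else is elementary linear algebra together with the symmetry argument for the easy inclusion.
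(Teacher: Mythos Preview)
Your proof is correct. The paper does not supply its own proof of this proposition, deferring instead to \cite{cfr}, so there is nothing to compare line-by-line. That said, your key step---the intertwining identity $\ev_{z_1-u,\dots,z_n-u}\circ\Delta = \psi\circ\exp(-u\partial_t)$---is exactly the mechanism the paper uses in the proof of the classical analogue, Proposition~\ref{pr:ClassicalGenerators}, where $\exp(-u\partial_t)$ is used to identify $\delta(\Phi_l)$ with $\sum_k \frac{(-1)^k}{k!}\partial_t^k i_{-1}(\Phi_l)u^k$. So your argument is precisely the quantum version of that computation, carried through the evaluation map. The easy inclusion via the affine invariance of $\ma(\uz)$ and the final step recovering Taylor coefficients from values (using that $S_l(u;\uz)$ lies in a finite-dimensional space of rational functions) are both fine and standard.
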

We will be particularly interested in the quadratic generators

$$H_i(\uz) := S_{1,i}^1 = \sum_{j \not  =i}  \frac{\Omega^{(ij)}}{z_i - z_j}.
$$
From the theorem below we know that there are $2n-1$ algebraically independent generators of degree $2$ in $\ma(z_1, \ldots, z_n)$.  Note that $\sum H_i = 0$  so we can consider $H_1, \ldots, H_{n-1}$ and $n$ quadratic Casimirs, $ \Omega^{(1)}, \dots, \Omega^{(n)}$ as a complete list of these generators.

The following properties of homogeneous Gaudin subalgebras are well-known (for the maximality property -- see \cite{hkrw} or \cite{y}).

\begin{thm}{\cite{cfr,r2}} \label{th:homGaudin}
Let $ z_1, \dots, z_n $ be distinct.
\begin{enumerate}
\label{m1}
    \item The subalgebra $\ma(z_1, \ldots, z_n)$ is invariant under dilations and additive translations, i.e. for any $c \in \bc$ 
    $$\ma(z_1, \ldots, z_n) = \ma(z_1 +c, \ldots, z_n +c)$$ and for any $d \in \bc^{\times}$ 
    $$\ma(z_1, \ldots, z_n) = \ma(d  z_1, \ldots, d  z_n)$$
    \item $\ma(z_1, \ldots, z_n)$ is a maximal commutative subalgebra of $(U\fg^{\otimes n})^{\fg}$.

    \item The subalgebra $\ma(z_1, \ldots, z_n)$ is polynomial ring with 
     $(n-1)(p+r) + r$
   generators.  One possible choice of free generators is $$\{ S_{l,i}^m : m = 1\dots, d_l, i = 1, \dots, n-1, l = 1, \dots, r  \} \sqcup \{ S_{l,n}^{d_l} : l = 1, \dots, r\}.$$
   \item The Hilbert-Poincar\'e series of $\ma(z_1, \ldots, z_n)$ doesn't depend on $z_1, \ldots, z_n$ and equals
    $$\prod_{l=1}^r \frac{1}{(1-t^{d_l})^{(n-1)d_l+1}}.$$
    
\end{enumerate}
\end{thm}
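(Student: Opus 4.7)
The plan is to treat the four claims in turn, leveraging Theorem \ref{th:A-invariance-der} and Proposition \ref{pr:GenCommGaudin}. For part (1), translation invariance follows from the identity
$$ S_l(u;\, z_1 + c, \ldots, z_n + c) \;=\; S_l(u - c;\, z_1, \ldots, z_n), $$
so the Laurent coefficients at the shifted points agree with those at the original points after the substitution $u \mapsto u - c$. For dilation invariance, Theorem \ref{th:A-invariance-der}(2) supplies generators $\partial_t^k S_l$ of $\ma_\fg$ that are loop-rotation eigenvectors: since $t\partial_t$ acts by $-d_l$ on $S_l$ and decreases weight by $1$ on application of $\partial_t$, each $\partial_t^k S_l$ has weight $-(d_l + k)$. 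By Remark \ref{re:looprotate}, the evaluation at $d\uz$ therefore differs from the evaluation at $\uz$ by the nonzero scalar $d^{-(d_l+k)}$, so the image subalgebra is unchanged.

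For parts (3) and (4), I would transfer the classical structure of Proposition \ref{pr:GenCommGaudin} to the quantum setting. Since $S_l \in U(t^{-1}\fg[t^{-1}])^{(d_l)}$ lifts $i_{-1}(\Phi_l)$ and the evaluation maps preserve the PBW filtration, the principal symbol in $S(\fg)^{\otimes n}$ of each $S_{l,i}^m \in (U\fg^{\otimes n})^{(d_l)}$ is precisely the corresponding classical generator $\Phi_{l,i}^m$ of $A(\uz)$. The algebraic independence of the latter elements (Proposition \ref{pr:GenCommGaudin}) therefore forces algebraic independence of their quantum lifts in $\ma(\uz)$, and the surjection $\ma_\fg \twoheadrightarrow \ma(\uz)$ coming from the definition shows that these lifts already generate $\ma(\uz)$ (all remaining Laurent coefficients being expressible via the classical relations, e.g.\ $\sum_i S^1_{l,i} = 0$, whose quantizations lie in the same filtered pieces). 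The count in part (3) is then $(n-1) \sum_l d_l + r = (n-1)(r+p) + r$ by the Chevalley-Solomon identity recorded in section 2.1, and the Hilbert-Poincar\'e series (4) follows by recording that for each $l$ there are exactly $(n-1)d_l + 1$ polynomial generators in filtration degree $d_l$.

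The maximality claim (2) is the substantive part, and I would follow the argument of \cite{hkrw} (or \cite{y}). The strategy is to establish that $\ma(\uz)$ acts with simple spectrum on the space of singular vectors of weight $\mu$ in $V(\lambda_1) \otimes \cdots \otimes V(\lambda_n)$, for generic $\uz$ and generic dominant $\ul, \mu$, using the Feigin-Frenkel-Reshetikhin realization of $\ma(\uz)$ via monodromy-free $G^\vee$-opers on $\PP^1$. Once this is available, any $x \in (U\fg^{\otimes n})^\fg$ commuting with $\ma(\uz)$ must preserve every joint $\ma(\uz)$-eigenline and hence act by a scalar on it; varying $\ul$ and $\mu$ over a Zariski-dense family of representations, a Jacobson-density argument then forces $x \in \ma(\uz)$. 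This oper-theoretic simple-spectrum input is where the real difficulty lies; claims (1), (3), (4) are essentially formal consequences of the universal structure already reviewed.
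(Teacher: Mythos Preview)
The paper does not give its own proof of this theorem; it simply records it as known and cites \cite{cfr,r2} (and \cite{hkrw}, \cite{y} specifically for the maximality statement). So the comparison is to the literature rather than to an argument in the paper.

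Your treatment of (1), (3), (4) is sound and essentially the standard one: translation invariance is clear from the dependence on $u-z_i$ alone; dilation invariance follows from loop-rotation homogeneity of the $\partial_t^k S_l$; and the passage to associated graded, combined with Proposition~\ref{pr:GenCommGaudin}, yields both the free generation and the Hilbert--Poincar\'e series. One small point worth tightening: to conclude that the listed $S_{l,i}^m$ \emph{generate} $\ma(\uz)$ (and not just an independent subset), the cleanest route is to observe that $\gr\ma(\uz)=A(\uz)$---the inclusion $\supseteq$ because the principal symbols of the $S_{l,i}^m$ already generate $A(\uz)$, and $\subseteq$ because $\ma(\uz)$ is a homomorphic image of $\ma_\fg$, so $\gr$ of the image is contained in the image of $\gr$. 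Then a Hilbert-series comparison finishes the job; the appeal to ``classical relations whose quantizations lie in the same filtered pieces'' is unnecessary.

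For (2), your proposed route has a genuine gap. From simple spectrum (or even cyclicity) on each $V(\ul)^{\fn_+}_\mu$ you can indeed deduce that any $x$ in the centralizer of $\ma(\uz)$ acts on that particular space as multiplication by some element $a_{\ul,\mu}\in\ma(\uz)$. But these elements depend on $(\ul,\mu)$, and ``Jacobson density'' does not assemble them into a single $a\in\ma(\uz)$: Jacobson density controls the image of the \emph{whole} ring in $\End(V)$, not membership in a prescribed subalgebra. What you would still need is that the centralizer has the same size as $\ma(\uz)$, and this is exactly the hard step. The arguments in \cite{hkrw} and \cite{y} do not proceed through representations at all; they pass to the associated graded and prove that $A(\uz)$ is already a \emph{maximal} Poisson-commutative subalgebra of $S(\fg^{\oplus n})^{\fg}$ (by transcendence-degree/codimension-of-symplectic-leaf arguments, or in \cite{y} via a bi-Hamiltonian pencil). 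Maximality then lifts to $\ma(\uz)$ because the centralizer of $\ma(\uz)$ in $(U\fg^{\otimes n})^{\fg}$ has its associated graded contained in the Poisson centralizer of $A(\uz)$. If you want to keep a representation-theoretic flavour, you would need an additional ingredient---for instance, identifying the joint spectrum with $\operatorname{Spec}\ma(\uz)$ via the opers description and arguing that $x$ defines a regular function there---but that is substantially more than a density argument.
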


Here the \textit{Hilbert-Poincar\'e series} of an $ \mathbb N$-filtered vector space $ V$ is defined to be the Hilbert-Poincar\'e series of its associated graded, so $ \sum_{k=0}^\infty \dim V^{(k)}/ V^{(k-1)} t^k$.  We use the filtration on $ \ma(z_1, \ldots, z_n)$ coming from intersecting with the PBW filtration on $ U \fg^{\otimes n}$. 

It turns out that the homogeneous Gaudin model has ``hidden'' invariance under the action of entire $PGL_2(\bc)$. But in order to see it is necessary to consider the image of the Gaudin subalgebra in $\left(U\fg^{\otimes(n+1)}/U\fg^{\otimes(n+1)}\Delta(\fg)\right)^\fg$. 

Consider our quantum Hamiltonian reduction setup from Section \ref{se:QHR} with $ \fg^{\oplus n+1} = \Delta(\fg) \oplus (0 \oplus \fg^{\oplus n})$ and note that $ 0 \oplus \fg^{\oplus n} $ is an ideal.  Then by Proposition \ref{pr:HamReductiontoU2}, we obtain an isomorphism $\left(U\fg^{\otimes n+1}/U\fg^{\otimes n +1} \Delta(\fg)\right)^\fg \rightarrow (U\fg^{\otimes n})^\fg$.

Let us define the composition homomorphism
$$\iota_0: (U\fg^{\otimes n+1})^\fg \to \left(U\fg^{\otimes n+1}/U\fg^{\otimes n +1} \Delta(\fg)\right)^\fg \rightarrow (U\fg^{\otimes n})^\fg.$$

\begin{thm}\label{projective-invariance}
Let $ z_0, \dots, z_n \in \C$ be distinct.
    \begin{enumerate}
        \item $\iota_0(\mathcal{A}(z_0,z_1,\ldots,z_n))=\mathcal{A}((z_1-z_0)^{-1},\ldots,(z_n-z_0)^{-1}).$
        \item The image of $\mathcal{A}(z_0,z_1,\ldots,z_n)$ in $ (U \fg^{\otimes n})^\fg $ is invariant under simultaneous projective transformations of the parameters $z_i$.
        \end{enumerate}
\end{thm}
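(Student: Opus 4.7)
My plan is to prove Part (1) first and deduce Part (2) from it combined with Theorem \ref{th:homGaudin}(1). For Part (2), given any Möbius transformation $z \mapsto (az+b)/(cz+d)$ sending $z_i$ to $z_i'$, a short direct calculation gives $(z_i' - z_0')^{-1} = \alpha + \beta(z_i - z_0)^{-1}$ for scalars $\alpha = \frac{c(cz_0+d)}{ad-bc}$, $\beta = \frac{(cz_0+d)^2}{ad-bc}$ independent of $i$. By translation and dilation invariance from Theorem \ref{th:homGaudin}(1), $\ma((z_1'-z_0')^{-1}, \ldots, (z_n'-z_0')^{-1}) = \ma((z_1-z_0)^{-1}, \ldots, (z_n-z_0)^{-1})$, and applying Part (1) to both sides yields the claimed invariance.

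For Part (1), I first use translation invariance to reduce to $z_0 = 0$, so the goal becomes $\iota_0(\ma(0, z_1, \ldots, z_n)) = \ma(z_1^{-1}, \ldots, z_n^{-1})$. The crux is the Möbius substitution $v = 1/u$ applied to the generating function $\Phi_l(u; 0, z_1, \ldots, z_n) = \Phi_l\bigl(\sum_{i=0}^n \tfrac{x^{(i)}}{z_i - u}\bigr)$, whose Laurent coefficients generate $A(0, z_1, \ldots, z_n)$ by Proposition \ref{pr:GenCommGaudin}. Using the moment-map relation $x^{(0)} \equiv -\sum_{i \ge 1} x^{(i)} \pmod{\Delta(\fg)}$ together with the homogeneity of $\Phi_l$ of degree $d_l$, a direct computation gives
\[
\Phi_l(u; 0, z_1, \ldots, z_n) \;\equiv\; (-v^2)^{d_l}\,\Phi_l(v; z_1^{-1}, \ldots, z_n^{-1}) \pmod{\Delta(\fg)}.
\]
Laurent coefficients at $u = z_i$ (respectively $u = 0$) for $i \ge 1$ correspond under $u \leftrightarrow 1/v$ to those at $v = z_i^{-1}$ (respectively $v = \infty$); since $(-v^2)^{d_l}$ is a nonzero scalar at each finite $v = z_i^{-1}$, both sides generate the same subalgebra in $(S\fg^{\otimes n+1}/\langle\Delta(\fg)\rangle)^\fg$. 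This establishes the classical identity $\iota_0^{\mathrm{cl}}(A(0, z_1, \ldots, z_n)) = A(z_1^{-1}, \ldots, z_n^{-1})$.

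To pass to the quantum level, I observe that $\iota_0$ preserves the PBW filtration with $\gr \iota_0 = \iota_0^{\mathrm{cl}}$, so $\iota_0(\ma(0, z_1, \ldots, z_n))$ is a commutative subalgebra of $(U\fg^{\otimes n})^\fg$ whose associated graded is $A(z_1^{-1}, \ldots, z_n^{-1}) = \gr \ma(z_1^{-1}, \ldots, z_n^{-1})$; in particular the two subalgebras share the Hilbert--Poincaré series of Theorem \ref{th:homGaudin}(4). To promote this to an actual equality of subalgebras, I plan to lift the classical Möbius identity to the universal quantum setting using Proposition \ref{pr:iota(A)}: rewriting $\ma(0, z_1, \ldots, z_n)$ as the image of $\tilde\ma_\fg$ under $\iota$ followed by $\mathrm{ev}_{0,z_1,\ldots,z_n} \circ \Delta$, the comparison with the direct evaluation of $\tilde\ma_\fg$ at $(z_1^{-1}, \ldots, z_n^{-1})$ reduces modulo $\Delta(\fg) \cdot U\fg^{\otimes n+1}$ to the very same classical manipulation. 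Combined with maximality of $\ma(z_1^{-1}, \ldots, z_n^{-1})$ in $(U\fg^{\otimes n})^\fg$ (Theorem \ref{th:homGaudin}(2)) and the matching Hilbert series, this forces equality.

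The main obstacle will be the rigorous lift of the classical Möbius identity to the noncommutative setting, since the image of the generator $S_l$ under $\mathrm{ev} \circ \Delta$ involves ordering-sensitive products within individual tensor factors. I plan to overcome this by working inside the universal Gaudin algebras $\ma_\fg$ and $\tilde\ma_\fg$, whose uniqueness as quantum lifts (Theorem \ref{th:A-invariance-der}(1)) together with the Hamiltonian reduction compatibility of Proposition \ref{pr:iota(A)} reduces the quantum identity to its classical counterpart on the associated graded.
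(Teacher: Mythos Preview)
Your derivation of Part (2) from Part (1) via the affine relation $(z_i'-z_0')^{-1} = \alpha + \beta(z_i - z_0)^{-1}$ is correct and matches the paper's argument.

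For Part (1), your classical computation via $v = 1/u$ is correct and shows $\iota_0^{\mathrm{cl}}(A(0,\uz)) = A(\uz^{-1})$ transparently. But the quantum lift has a genuine gap. Knowing that $\iota_0(\ma(0,\uz))$ and $\ma(\uz^{-1})$ are commutative subalgebras of $(U\fg^{\otimes n})^\fg$ with the same associated graded does \emph{not} force them to be equal: two filtered commutative subalgebras can have identical $\gr$ without coinciding (e.g.\ $\C[ef]$ and $\C[ef+h]$ inside $U(\fsl_2)$). Maximality of $\ma(\uz^{-1})$ and matching Hilbert series only yield equality once you have an inclusion in one direction, which you never establish. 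Your sentence ``reduces modulo $\Delta(\fg)\cdot U\fg^{\otimes n+1}$ to the very same classical manipulation'' conflates the Hamiltonian-reduction quotient with the PBW associated graded and is not an argument.

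You correctly identify Proposition~\ref{pr:iota(A)} as the key, but you do not carry it through. The paper's route---which is what your final paragraph gestures toward---dispenses with the classical computation entirely. By Proposition~\ref{pr:AA}, $\ma(z_0,\ldots,z_n)$ is the image of $\tilde\ma_\fg\subset U(\fg[t^{-1}])^\fg$ under an evaluation that carries the degree-zero copy $\fg[0]$ onto the diagonal $\Delta(\fg)\subset\fg^{\oplus n+1}$. Because the two moment maps match under evaluation, functoriality of quantum Hamiltonian reduction gives a commuting square intertwining the universal $\iota$ of Proposition~\ref{pr:iota(A)} with $\iota_0$; combined with $\iota(\tilde\ma_\fg)=\ma_\fg$ this yields the identity directly at the quantum level. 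Once that square is written down, your M\"obius identity on $S\fg^{\otimes n+1}$ becomes its associated-graded shadow and is no longer needed for the proof.
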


\begin{proof}
    The first assertion follows from Proposition~\ref{pr:AA} and from the similar property for the universal algebras $\mathcal{A}$ and $\tilde{\mathcal{A}}$. The second assertion follows from the invariance properties of $\mathcal{A}(z_0,z_1,\ldots,z_n)$. Namely, $\mathcal{A}(z_0,z_1,\ldots,z_n)$ is stable under simultaneous affine transformations $z_i\mapsto az_i+b$, and, on the other hand, from the first assertion we know that its image in $U\fg^{\otimes n}$ is $\mathcal{A}((z_1-z_0)^{-1},\ldots,(z_n-z_0)^{-1})$, so it is stable under simultaneous projective transformations of $z_1,\ldots,z_n$ that preserve $z_0$. Altogether, these transformations generate $PGL_2(\bc)$.
\end{proof}

\subsection{Compactification of the family of homogeneous Gaudin subalgebras.} \label{se:compactGaudin}
By Theorem \ref{th:homGaudin}, the set of homogeneous Gaudin subalgebras is naturally parameterized by the space $M_{n+1}  = (\C^n \setminus \Delta) / B$, where $ B = \C^\times \rtimes \C$ is the group of affine linear transformations. 

In particular, by considering the quadratic part we get a map $ M_{n+1} \rightarrow \Gr(2n-1, (U \fg)^{(2)})$.  This map actually factors through the map $ M_{n+1} \rightarrow \Gr(n-1, \fs^1_n) $ defined in section \ref{se:afv}.  It is well-known (see for example \cite[Prop 1.5]{Enriques}) that there is a Lie algebra morphism
\begin{equation} \label{eq:gamma}
\gamma : \fs_n \rightarrow U \fg^{\otimes n} \quad t_{ij} \mapsto \Omega^{(ij)}
\end{equation}

The following result follows from Theorem \ref{th:homGaudin}. 
\begin{lem} \label{le:gamma}
    The restriction of $ \gamma$ to $ \fs_n^1$ is injective and for any $ \uz \in M_{n+1}$, we have $ A(\uz) \cap (U \fg)^{(2)} = \gamma(Q(\uz)) \oplus \spann( \omega^{(i)} : i = 1, \dots, n)$
\end{lem}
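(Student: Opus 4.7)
The plan is to combine the explicit polynomial-ring description of $\ma(\uz)$ from Theorem \ref{th:homGaudin}(3) with a symbol-level computation for $\gamma$.

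First I would handle the injectivity of $\gamma|_{\fs_n^1}$. The space $\fs_n^1$ has basis $\{t_{ij}\}_{1\le i<j\le n}$, and the image $\gamma(t_{ij}) = \Omega^{(ij)} = \sum_a x_a^{(i)} x^{a,(j)}$ lies in the PBW-filtered piece $(U\fg^{\otimes n})^{(2)}$. I would apply the symbol map $(U\fg^{\otimes n})^{(2)} \to S^2(\fg^{\oplus n})$ and observe that the symbol of $\Omega^{(ij)}$ is a nonzero element of the subspace $\fg^{(i)} \cdot \fg^{(j)}$ corresponding to the tensor slots $i,j$. Since these subspaces are linearly independent in $S^2(\fg^{\oplus n})$ for distinct unordered pairs $\{i,j\}$, the $\Omega^{(ij)}$ are linearly independent in $U\fg^{\otimes n}$.

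For the second assertion, my plan is to intersect the free generating set from Theorem \ref{th:homGaudin}(3) with PBW-filtration degree $2$. Only the generators with $d_l = 2$, i.e.\ the $l=1$ family, lie in filtration degree $2$: these are $S^1_{1,i} = H_i(\uz)$ for $i = 1,\ldots, n-1$ and $S^{2}_{1,i} = \omega^{(i)}$ for $i = 1,\ldots, n$ (the latter by Remark \ref{re:SlPhil}). All other listed generators have $d_l \ge 3$. Since $\ma(\uz)$ is a polynomial ring in the full list of free generators, its associated graded (with respect to the induced filtration) is again a polynomial ring in their symbols of the same degrees, so any nontrivial product of two or more generators has PBW-filtration degree at least $4$. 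Consequently,
\[
\ma(\uz) \cap (U\fg^{\otimes n})^{(2)} = \spann(H_1(\uz), \ldots, H_{n-1}(\uz)) \oplus \spann(\omega^{(i)} : i=1,\ldots,n),
\]
up to the constant line, which we absorb into the filtration. Finally, by the definition of $\gamma$ we have $H_i(\uz) = \gamma(h_i(\uz))$, where the $h_i(\uz)$ span $Q(\uz)$ subject to the single relation $\sum_i h_i(\uz) = 0$; hence $\spann(H_1(\uz),\dots,H_{n-1}(\uz)) = \gamma(Q(\uz))$, which yields the claimed decomposition.

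The main (minor) obstacle is the pedantic verification that no cancellation lowers the PBW-degree of a product of generators. This is dispatched directly by the freeness statement in Theorem \ref{th:homGaudin}(3): algebraic independence of the generators in $\ma(\uz)$ passes to algebraic independence of their symbols in the associated graded, ruling out any degree drop and ensuring that the degree-$\le 2$ part is exactly the linear span of the degree-$2$ generators (plus scalars).
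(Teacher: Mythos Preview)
Your overall strategy matches the paper's: both derive the lemma from Theorem~\ref{th:homGaudin}, and your treatment of injectivity via the symbol map is clean and correct. The identification of the degree-$2$ generators and the final matching $\spann(H_1(\uz),\dots,H_{n-1}(\uz))=\gamma(Q(\uz))$ are also fine.

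There is, however, a genuine gap in your ``no degree drop'' paragraph. You claim that algebraic independence of the generators $S^m_{l,i}$ in the filtered algebra $\ma(\uz)$ forces algebraic independence of their symbols in $\gr\ma(\uz)$. This implication is false in general: for instance, in $\C[x,y]$ with the total-degree filtration, $a=x$ and $b=y+x^2$ are algebraically independent, but their symbols $x$ and $x^2$ are not, and indeed $a^2-b$ drops from filtration degree~$2$ to degree~$1$. So Theorem~\ref{th:homGaudin}(3) alone does not dispatch the obstacle you identify. What does dispatch it is either Theorem~\ref{th:homGaudin}(4), whose Hilbert--Poincar\'e series is by definition that of $\gr\ma(\uz)$ and lets you read off $\dim\bigl(\ma(\uz)^{(2)}/\C\bigr)=2n-1$ directly, or Proposition~\ref{pr:GenCommGaudin}, which asserts that the \emph{symbols} $\Phi^m_{l,i}$ themselves are algebraically independent in $S(\fg)^{\otimes n}$. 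Either citation closes the gap with no further work; just replace your appeal to part~(3) by one of these.
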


Thus from Theorem \ref{th:AFV}, we see that the compactified parameter space for the quadratic parts of homogeneous Gaudin algebras is $ \overline M_{n+1}$.  In fact, this is the complete parameter space.

\begin{thm}{\cite{r}}
\begin{enumerate}
\item The compactified parameter space for homogeneous Gaudin subalgebras is $\overline M_{n+1}$.
\item The homogeneous Gaudin subalgebras form a flat family of subalgebras, i.e. the Hilbert-Poincar\'e series of $\ma(C)$ is the same for each $C \in \overline M_{n+1}$.
\end{enumerate}
\end{thm}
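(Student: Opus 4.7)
The plan is to show that the map $\uz \mapsto \ma(\uz)$ from $M_{n+1}$ to subalgebras of $U\fg^{\otimes n}$ (filtered by the PBW filtration) extends uniquely to a faithful flat family over $\overline M_{n+1}$, and that the extension has the constant Hilbert-Poincar\'e series computed in Theorem \ref{th:homGaudin}(4).

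First I would pin down the compactification using the quadratic part. By Lemma \ref{le:gamma} and the Aguirre-Felder-Veselov isomorphism (Theorem \ref{th:AFV}), the map sending $\uz \in M_{n+1}$ to $\ma(\uz) \cap (U\fg^{\otimes n})^{(2)} $ factors through $\overline M_{n+1} \cong \overline N_n \hookrightarrow \Gr(n-1, \fs_n^1)$ and hence extends to a morphism $\overline M_{n+1} \to \Gr(2n-1, (U\fg^{\otimes n})^{(2)})$. Since this extension already separates points of $\overline M_{n+1}$, once we verify that the higher-degree pieces also extend over $\overline M_{n+1}$, the compactification in part (1) is forced to be $\overline M_{n+1}$ itself.

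Next I would construct candidate limit subalgebras $\ma(C)$ for $C \in \overline M_{n+1}$ using the operadic structure. Given a stable curve $C$ with components $C_1, \dots, C_m$, each $C_\ell$ is a $\PP^1$ carrying a subset $B_\ell$ of marked or nodal points; attach a tensor factor of $U\fg$ to each such point to get $\ma(C_\ell) \subset U\fg^{\otimes |B_\ell|}$. Define $\ma(C)$ as the subalgebra of $U\fg^{\otimes n}$ generated by the images of all $\ma(C_\ell)$ under the diagonal embeddings $\Delta^{\CB}$ corresponding to collapsing each subtree of nodes. This is well-defined as a commutative subalgebra because distinct components share only node points, and near a node the Gaudin subalgebras commute with the diagonal $\fg$-action on the subtree beyond the node, so the images commute with each other.

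The technical heart is then to show that $\ma(C)$ is the flat limit of $\ma(\uz(t))$ as $\uz(t) \to C$. The strategy is to use the generators $S_l(u;\uz)$ together with the rescaling and translation invariance from Theorem \ref{th:homGaudin}(1), rescaling each Laurent coefficient $S_{l,i}^m$ by appropriate powers of the coordinates $\mu_{ijk}$ so that the resulting families extend regularly to $\overline M_{n+1}$ and specialize at $C$ to generators of $\ma(C)$. Concretely, one checks this locally in each stratum of $\overline M_{n+1}$: if one group of points collides at rate $\varepsilon \to 0$, rescaling by the appropriate power of $\varepsilon$ converts the $\ma$-generators attached to the colliding group into $\ma$-generators on the component bubbled off, while the remaining generators survive unchanged. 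For flatness/equality of Hilbert-Poincar\'e series, I would combine semicontinuity of fibre dimension (which gives that the series can only jump up at special fibres) with a direct count showing that the operadic $\ma(C)$ has exactly the expected number of algebraically independent generators in each filtered piece -- this uses Proposition \ref{pr:GenCommGaudin} applied to each $C_\ell$ and the fact that at every node, the two attached Gaudin subalgebras share only the diagonal copy of $Z(U\fg)$, so the generating function $\prod_l (1-t^{d_l})^{-(n-1)d_l - 1}$ is preserved under the operadic gluing.

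The main obstacle is the last sentence: showing that no unexpected relations appear among the operadic generators of $\ma(C)$, i.e.\ that the Hilbert-Poincar\'e series is genuinely constant rather than merely semicontinuous. This amounts to producing, at each $C$, enough algebraically independent elements; equivalently, showing that the morphism $\overline M_{n+1} \to \prod_k \Gr(r_k, (U\fg^{\otimes n})^{(k)})$ constructed from $C \mapsto \ma(C) \cap (U\fg^{\otimes n})^{(k)}$ lands in the locus where the fibre dimensions are constant. Once this is achieved, faithfulness and uniqueness of the compactification follow from the Aguirre-Felder-Veselov identification already at the quadratic level.
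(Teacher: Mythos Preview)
Your proposal follows essentially the same strategy as the proof in \cite{r}, which this paper summarizes and then generalizes in Section~\ref{sec6}: identify the quadratic part via Aguirre--Felder--Veselov, construct rescaled generators $s_{l,v,\tau}^m$ (indexed by vertices of planar binary trees) that extend regularly over the open affines $W_\tau \subset \overline M_{n+1}$, and then check algebraic independence on boundary strata inductively via the operadic factorization. The one substantive tool you gloss over is what makes the last step work: the fact that the product $\Delta^{\CB}(\ma(w_1,\dots,w_m)) \cdot \bigotimes_k \ma(C_k)$ is genuinely a tensor product over $Z = \Delta^{\CB}(Z(U\fg)^{\otimes m})$, with no further relations, relies on Knop's theorem \cite{kn} that $(U\fg^{\otimes p})^\fg$ is free as a $\Delta(Z(U\fg))$-module and that $U\fg \otimes_{Z(U\fg)} (U\fg^{\otimes p})^\fg \hookrightarrow U\fg^{\otimes p}$ (see the proofs of Lemmas~\ref{lem:restriction-codim1-strata-1sttype} and \ref{lem:restriction-codim1-strata-2ndtype} and Proposition~\ref{pr:twofiltrations} in this paper, which reproduce the argument from \cite{r} in the deformed setting). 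Without this freeness result, your sentence ``the two attached Gaudin subalgebras share only the diagonal copy of $Z(U\fg)$'' is an assertion rather than a proof.
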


In \cite{r}, the third author gave a recursive description of the limit subalgebras as follows.  Let $ C \in \overline M_{n+1}$ and let $ C_\infty $ be the component of $ C $ containing $ z_{n+1}$.  Identify $ C_\infty$ with $ \mathbb{CP}^1$ such that $ z_{n+1} $ is identified with $ \infty$.  Let $ w_1, \dots, w_m$ be the distinguished points of $ C_\infty$, other than $ \infty$.  For each $ k = 1, \dots, m$, let $ C_k $ be the (possibly reducible) curve attached at $ w_k$ and let $ B_k = \{ i \in [n] : z_i \in C_k\}$ be the labels of its marked points.  (If $ w_k = z_i$ is a marked point, then $C_k $ is empty and $ B_k  = \{i\}$.)  Note that $ \CB = (B_1, \dots, B_m)$ forms an ordered set partition of $ \{1, \dots, n\}$.

The limit Gaudin subalgebra $ \ma(C) $ is built out of two pieces: a diagonally embedded Gaudin subalgebra coming from $ C_\infty$ and a tensor product of (possibly limit) Gaudin subalgebras coming from the curves $ C_k$.

\begin{figure} 
	\includegraphics[trim=0 20 0 0, clip,width=\textwidth]{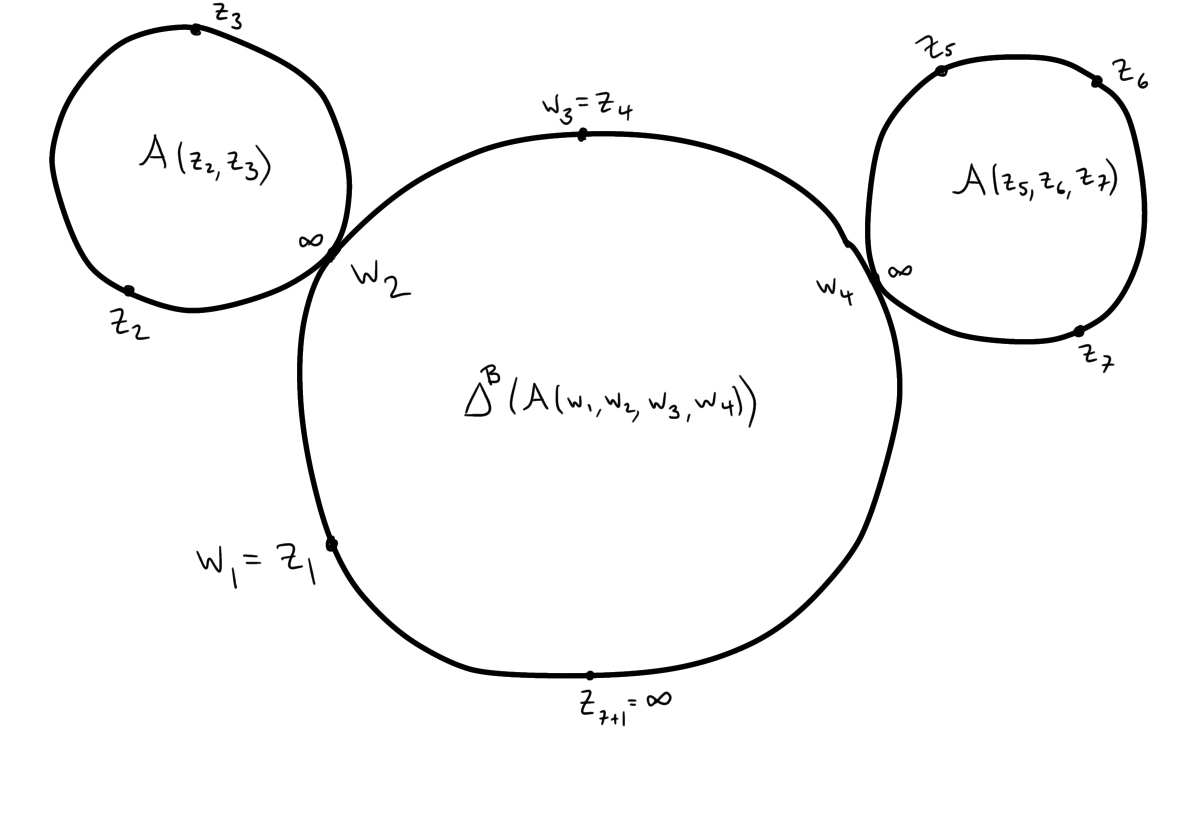}
	\caption{This is a curve $ C \in \overline M_{7+1}$.  We have $n = 7$, $m= 4 $, and $ \CB = (\{1\}, \{2,3\}, \{4\}, \{5,6,7\})$.  Each component contributes the algebra shown and the whole curve gives $ \ma(C) = \Delta^\CB(\ma(w_1,w_2,w_3,w_4))  \ma(z_2,z_3) \ma(z_5,z_6,z_7) $. }  \label{fig:Gaudin}
\end{figure}

From Section \ref{se:Delta}, we have the diagonal embedding
$$
\Delta^{\CB} : U \fg^{\otimes m} \rightarrow U \fg ^{\otimes n}$$
and we can consider $ \Delta^\CB(\ma(w_1, \dots, w_m)) \subset (U \fg^{\otimes n})^{\fg}$.

Since $ C$ is stable, $ m > 1$, and so each $ B_k $ is of size smaller than $ n$.  Thus, recursively, we have $ \ma(C_k) \subset (U \fg^{\otimes B_k})^{\fg}$, a tensor product indexed by $ B_k$.  We can collect together these index sets and identify $ \otimes_{k=1}^m U \fg^{\otimes B_k} = U \fg^{\otimes n}$ and thus we can consider $ \otimes_{k=1}^m \ma(C_k)$ as a subalgebra of $ (U \fg^{\otimes n})^\fg$.  
Let $ Z = \Delta^\CB(Z(U \fg)^{\otimes m})$.

\begin{prop} \label{pr:limitGaudin}
    With all the above notation, we have
    $$
    \ma(C) = \Delta^\CB(\ma(w_1, \dots, w_m)) \otimes_Z \bigotimes_{k=1}^m \ma(C_k)$$
\end{prop}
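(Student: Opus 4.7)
The plan is to argue by induction on $n$, choosing a one-parameter degeneration $\uz(t) \in M_{n+1}$ with $\uz(0)=C$, and extracting generators of the limit subalgebra at two distinct scales that recover the two factors on the right-hand side. By the affine invariance of $\ma$ (Theorem \ref{th:homGaudin}(1)), I would normalize coordinates so that $z_{n+1}=\infty$ is fixed, and so that for each cluster $B_k$ one has $z_i(t)=w_k+t\,\tilde z_i(t)$ for $i\in B_k$, where the limits $\tilde z_i(0)$ encode the marked points of $C_k$ (which, by induction, is a well-defined element of $\overline M_{|B_k|+1}$).

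The subalgebra $\ma(\uz(t))$ is generated by the Laurent coefficients of the rational functions $S_l(u;\uz(t))$ at $u=z_i(t)$, and I would analyze these at two scales. At the \emph{outer} scale, holding $u$ bounded away from each cluster, the limit $\lim_{t\to 0}S_l(u;\uz(t))$ computed inside $U\fg^{\otimes n}$ equals $S_l(u;w_1,\ldots,w_m)$ pushed forward via $\Delta^{\CB}$; its Laurent coefficients at $u=w_k$ therefore give exactly the generators of $\Delta^{\CB}(\ma(w_1,\ldots,w_m))$. At the \emph{inner} scale, substituting $u=w_k+t\tilde u$ and multiplying by $t^{d_l}$ kills all terms indexed by $j\notin B_k$ (they are $O(t)$), leaving a generating function whose Laurent coefficients (by the inductive hypothesis applied to $C_k$) generate $\ma(C_k)$ in the tensor factors indexed by $B_k$. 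Since limits of commuting families remain commuting, both collections lie in $\ma(C)=\lim_{t\to 0}\ma(\uz(t))$, and they agree on the overlap $Z=\Delta^{\CB}(Z(U\fg)^{\otimes m})$, yielding the inclusion $\supseteq$ of the claim.

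For the reverse inclusion, I would compare Hilbert--Poincar\'e series. By Theorem \ref{th:homGaudin}(4), $\ma(C)$ has the same series as a generic fibre, namely $\prod_l(1-t^{d_l})^{-(n-1)d_l-1}$. A direct count on the right-hand side shows that, after accounting for the overlap $Z$, the number of generators in degree $d_l$ is
$$\bigl[(m-1)d_l+1\bigr]+\sum_{k=1}^{m}\bigl[(|B_k|-1)d_l+1\bigr]-m \;=\; (n-1)d_l+1,$$
which matches the generic count from Theorem \ref{th:homGaudin}(3). Combined with the inclusion above and flatness of the family, this forces equality.

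The main technical obstacle is verifying that the ``top-degree'' Casimir generators of the inner algebras $\ma(C_k)$ are correctly identified, under the fibre product $\otimes_Z$, with the corresponding tensor-factor generators $\widetilde\Phi_l^{(k)}$ of $\ma(w_1,\ldots,w_m)$. This is the algebraic expression of the operadic gluing mentioned in Theorem \ref{th:intromain0}: the element $\Delta^{\CB}(\widetilde\Phi_l^{(k)})=\Delta^{B_k}(\widetilde\Phi_l)$ lies in both $\Delta^{\CB}(\ma(w_1,\ldots,w_m))$ (as the image of a top Casimir in the $k$-th factor) and in $\ma(C_k)$ (as an element of $\Delta^{B_k}(Z(U\fg))$), which is exactly what makes $\otimes_Z$ well-defined and compatible with both limits of generators. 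Tracking this identification carefully through the inductive step is what makes the Hilbert--Poincar\'e bookkeeping work.
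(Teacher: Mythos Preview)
Your approach is essentially the one taken in \cite{r} (the paper does not supply its own proof of this proposition, citing \cite{r} instead), and the two-scale limiting analysis you describe is exactly how the generators of the limit algebra are produced there; the analogous argument is spelled out in detail in Section~\ref{sec6} of the present paper for the trigonometric/inhomogeneous families (see in particular Lemmas~\ref{lem:restriction-codim1-strata-1sttype} and~\ref{lem:restriction-codim1-strata-2ndtype}).

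There is, however, a genuine gap in your outline. The Hilbert--Poincar\'e bookkeeping you describe only shows that the \emph{number} of generators on the right-hand side matches the generic count; it does not show that these generators remain algebraically independent once both collections are placed inside $U\fg^{\otimes n}$. Equivalently, you have not verified that the multiplication map
\[
\Delta^{\CB}(\ma(w_1,\dots,w_m))\otimes_Z\bigotimes_{k=1}^m\ma(C_k)\ \longrightarrow\ U\fg^{\otimes n}
\]
is injective. This is the substantive step, and in \cite{r} (as in the analogous arguments in Section~\ref{sec6} here) it is handled via Knop's theorem \cite{kn}: $(U\fg^{\otimes p})^\fg$ is free as a $\Delta(Z(U\fg))$-module, and the multiplication $U\fg\otimes_{Z(U\fg)}(U\fg^{\otimes p})^\fg\hookrightarrow U\fg^{\otimes p}$ is injective. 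Without this ingredient, your invocation of ``flatness of the family'' is circular: flatness over $\overline M_{n+1}$ in \cite{r} is \emph{deduced} from the algebraic independence of the glued generators, not assumed in advance, and Theorem~\ref{th:homGaudin}(4), which you cite, concerns only non-degenerate $\uz$. The ``main technical obstacle'' you flag---matching the top Casimirs $\Delta^{B_k}(\widetilde\Phi_l)$ across the gluing---is necessary for $Z$ to be the correct overlap, but it is not sufficient for injectivity of the product map; Knop's result is what closes the argument.
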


See Figure \ref{fig:Gaudin} for an example of this proposition.

\section{Trigonometric Gaudin model}

\subsection{Definition of trigonometric Gaudin subalgebras}
\label{se:deftrig}
Recall the triangular decomposition $ \fg = \fn_+ \oplus \fh \oplus \fn_-$.  Let $ \pi_+ : \fg \rightarrow \fn_+$ and $ \pi_- : \fg \rightarrow \fn_-$ be the corresponding projections.

Let $ \Delta(\fn_+)\subset \fg^{\oplus n+1} $ denote the diagonally embedded copy of $ \fn_+$.  We will study the quantum Hamiltonian reduction (Section \ref{se:QHR}) of $ U \fg^{\otimes n+1}$ by $ \Delta(\fn_+)$, where we choose the decomposition
\begin{equation} \label{eq:decomposeDeltafn}
    \fg^{\oplus n+1} = \Delta(\fn_+) \oplus (\fb_- \oplus \fg^{\oplus n}) 
\end{equation}

From Proposition \ref{pr:HamReductionToU}, this gives us an injective algebra homomorphism
$$\alpha:\left(U\fg^{\otimes n+1} / U\fg^{\otimes n+1} \Delta(\fn_+)\right)^{\fn_+} \to U\fb_- \otimes U\fg^{\otimes n}
$$

Note that $ \fn_- \subset \fb_- \oplus \fg^{\oplus n}$ is an ideal, and quotienting by this ideal gives a Lie algebra map $ \fb_- \oplus \fg^{\oplus n} \rightarrow \fh \oplus \fg^{\oplus n}$.  On the level of universal enveloping algebras, we get
$$\beta: U\fb_- \otimes U\fg^{\otimes n} \to U\fh \otimes U\fg^{\otimes n}.$$

There is also a natural inclusion homomorphism
$$ (U\fg^{\otimes n+1})^{\fg} \to (U\fg^{\otimes n+1})^{\fn_+},$$
which descends to a homomorphism 
$$\gamma:  (U\fg^{\otimes n+1})^{\fg} \to \left(U\fg^{\otimes n+1} / U\fg^{\otimes n+1} \Delta(\fn_+) \right)^{\fn_+}.$$

We now consider the composition of the above maps
$$\psi := \beta \circ \alpha \circ \gamma: (U\fg^{\otimes(n+1)})^{\fg} \to U\fh \otimes U\fg^{\otimes n}$$

We will now give a precise description of $ \psi $.  We begin with the following technical result.
\begin{lem}
 Let $u \in U \fn_+$.  Then $ u \otimes 1 - 1 \otimes \Delta^n(S(u)) \in U \fg^{\otimes n+1} \Delta^{n+1}(\fn_+) $ where $ S : U \fn_+ \rightarrow U \fn_+$ is the antipode map (the algebra anti-automorphism satisfying $S(x) = -x $ for $ x \in \fn_+)$.
\end{lem}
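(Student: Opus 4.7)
The plan is to prove the containment by induction on the PBW filtration degree of $u \in U\fn_+$. Write $J := U\fg^{\otimes n+1}\Delta^{n+1}(\fn_+)$ for the left ideal in question. The base case $u = 1$ is trivial. For $u = x \in \fn_+$, the identity $S(x) = -x$ gives $1 \otimes \Delta^n(S(x)) = -(x^{(2)} + \cdots + x^{(n+1)})$, so
$$
x \otimes 1 - 1 \otimes \Delta^n(S(x)) \;=\; x^{(1)} + x^{(2)} + \cdots + x^{(n+1)} \;=\; \Delta^{n+1}(x) \;\in\; J.
$$
Equivalently, this base case can be rewritten as the identity $1 \otimes \Delta^n(S(x)) = x^{(1)} - \Delta^{n+1}(x)$, which will be the key relation in the inductive step.

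For the inductive step I would write a degree $d+1$ element as $xu$ with $x \in \fn_+$ and $u \in U\fn_+$ of degree $\le d$. Using the antipode identity $S(xu) = S(u)S(x)$, the base case rewriting above, and the fact that $x^{(1)}$ commutes with any element supported in tensor factors $2,\ldots,n+1$, one computes
\begin{align*}
xu \otimes 1 - 1 \otimes \Delta^n(S(xu))
&= xu \otimes 1 - (1 \otimes \Delta^n(S(u)))(x^{(1)} - \Delta^{n+1}(x)) \\
&= xu \otimes 1 - x \otimes \Delta^n(S(u)) + (1 \otimes \Delta^n(S(u)))\,\Delta^{n+1}(x) \\
&= (x \otimes 1)\bigl(u \otimes 1 - 1 \otimes \Delta^n(S(u))\bigr) + (1 \otimes \Delta^n(S(u)))\,\Delta^{n+1}(x).
\end{align*}
The first summand lies in $J$ by the inductive hypothesis (as $J$ is a left ideal), and the second lies in $J$ because $\Delta^{n+1}(x) \in \Delta^{n+1}(\fn_+)$. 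This completes the induction.

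There is no substantive obstacle; the argument is essentially a bookkeeping exercise with the antipode and the diagonal embedding. The content of the statement is simply that in the quotient $U\fg^{\otimes n+1}/J$, the relation $\Delta^{n+1}(x) \equiv 0$ for $x \in \fn_+$ can be promoted, by the Hopf structure, from generators to arbitrary elements of $U\fn_+$: any element placed in the first tensor factor can be moved across to the last $n$ factors at the cost of applying the antipode.
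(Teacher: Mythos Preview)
Your proof is correct and follows essentially the same approach as the paper: induction on the PBW filtration degree, with the inductive step splitting $xu\otimes 1 - 1\otimes \Delta^n(S(xu))$ into a left-multiple of the inductive hypothesis plus a right-multiple of $\Delta^{n+1}(x)$. The only cosmetic difference is that the paper first reduces to the case $n=1$ (observing that the general case follows by applying $\Delta^n$ to the second factor), whereas you carry out the computation directly for arbitrary $n$; the algebra is identical either way.
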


\begin{proof}
First, it suffices to consider the case $ n = 1$, since the general case follows by applying the diagonal embedding on the second factor.  

Now, we proceed by induction on the PBW filtration on $ U \fn_+$ with base case when $ u \in \C$.  Then assuming the result holds for $ u \in U \fn_+$, let $ x\in \fn_+ $, and consider the following
\begin{align*}
xu \otimes 1 - 1 \otimes S(xu) &= xu \otimes 1 + 1 \otimes x S(u) \\ &= (x \otimes 1)( u \otimes 1 - 1 \otimes S(u)) + (1 \otimes S(u))\Delta(x).
\end{align*}
As the result holds for $ u$, we conclude that $ xu \otimes 1 - 1 \otimes S(xu) $ lies in $ U \fg^{\otimes 2} \Delta(\fn_+)$ as desired.
\end{proof}

\begin{prop} \label{pr:psiab}
Let $ u \in (U \fg \otimes U \fg^{\otimes n})^\fg$.  Write $$ u = \sum_k a_{k,-} a_{k,0} a_{k,+} \otimes b_k$$ where $ a_{k,-} \in U \fn_- , a_{k,0} \in U \fh, a_{k,+} \in U \fn_+, b_k \in U \fg^{\otimes n}$.  Then 
$$
\psi(u) = \sum_k \epsilon(a_{k,-}) a_{k,0} \otimes b_k \Delta^n(S(a_{k,+}))$$ where $ \epsilon : U \fn_- \rightarrow \C $ is the usual counit.  
\end{prop}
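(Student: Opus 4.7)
The plan is to reduce everything to the preceding Lemma, which says $x \otimes 1 \equiv 1 \otimes \Delta^n(S(x)) \pmod{J}$ for $x \in U\fn_+$, where $J := U\fg^{\otimes n+1}\Delta(\fn_+)$. By linearity it suffices to treat a single summand $a_- a_0 a_+ \otimes b$ with $a_- \in U\fn_-$, $a_0 \in U\fh$, $a_+ \in U\fn_+$, and $b \in U\fg^{\otimes n}$.

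The first step is to compute $\alpha \circ \gamma$ on this summand. I will factor in $U\fg^{\otimes n+1}$ as
$$a_- a_0 a_+ \otimes b = (a_- a_0 \otimes b)(a_+ \otimes 1),$$
and then invoke the Lemma to replace $(a_+ \otimes 1)$ by $(1 \otimes \Delta^n(S(a_+)))$ modulo $J$. Because $J$ is a left ideal, multiplying on the left by $(a_- a_0 \otimes b)$ preserves the congruence, giving
$$a_- a_0 a_+ \otimes b \equiv a_- a_0 \otimes b\,\Delta^n(S(a_+)) \pmod{J}.$$
The right-hand side lies in $U\fb_- \otimes U\fg^{\otimes n} = U(\fb_- \oplus \fg^{\oplus n})$, so by the uniqueness of the PBW representative in Proposition~\ref{pr:HamReductionToU} it is precisely $\alpha \circ \gamma(a_- a_0 a_+ \otimes b)$.

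The second step is to apply $\beta$. Since $\beta$ is induced by the Lie-algebra projection $\fb_- \twoheadrightarrow \fh$ that kills $\fn_-$, it acts as $a_- a_0 \otimes c \mapsto \epsilon(a_-) a_0 \otimes c$ for every $c \in U\fg^{\otimes n}$. Composing with the formula from step one and summing over $k$ yields the asserted identity for $\psi(u)$.

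The only subtlety worth flagging is the bookkeeping of multiplication order in the second tensor slot: the Lemma is applied \emph{after} we pull $(a_- a_0 \otimes b)$ out on the left, so $\Delta^n(S(a_+))$ lands to the \emph{right} of $b$, which accounts for the order $b_k\,\Delta^n(S(a_{k,+}))$ in the statement rather than the reverse. Beyond this, the argument is a direct unpacking of the definitions; no use is made of the $\fg$-invariance of $u$ (that hypothesis is only needed in order for $u$ to lie in the domain of $\psi$ in the first place).
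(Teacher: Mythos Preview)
Your argument is correct and follows exactly the same route as the paper's proof, which also invokes the preceding Lemma to obtain the congruence $a_{k,-} a_{k,0} a_{k,+} \otimes b_k \equiv a_{k,-} a_{k,0} \otimes b_k \Delta^n(S(a_{k,+})) \pmod{J}$ and then says ``the result follows.'' Your version simply unpacks the ``result follows'' step by making explicit the factorization, the left-ideal property of $J$, the PBW uniqueness from Proposition~\ref{pr:HamReductionToU} identifying $\alpha\circ\gamma$, and the action of $\beta$.
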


\begin{proof}
By the previous lemma
    $$
    a_{k,-} a_{k,0} a_{k,+} \otimes b_k= a_{k,-} a_{k,0} \otimes b_k \Delta^n(S(a_{k,+})) + U \fg^{\otimes n+1} \Delta(\fn_+)
    $$
    and so the result follows.
\end{proof}

Let $\theta \in \fh \simeq \fh^{*}$. 
Composing $ \psi $ with evaluation at $ \theta$ on the first tensor factor in $ U\fh \otimes U\fg^{\otimes n}$ (using $U \fh = S \fh = \CO(\fh^*) $) gives us the map $$\psi_\theta:(U\fg^{\otimes(n+1)})^{\fg} \to (U\fg^{\otimes n})^\fh $$

The following computation follows from Proposition \ref{pr:psiab}.

\begin{lem} \label{le:thetarho}
For each $ l$, $ \psi_\theta(\tilde \Phi^{(0)}_l) = \Phi_l(\theta + \rho)$ where $ \rho $ is the half sum of the positive roots.
\end{lem}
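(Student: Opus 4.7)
The plan is to apply Proposition~\ref{pr:psiab} directly to $u = \tilde\Phi_l^{(0)}$ and then recognize the result as a central character computation. First I would write $\tilde\Phi_l \in Z(U\fg)$ in triangular PBW form $\tilde\Phi_l = \sum_k a_{k,-}a_{k,0}a_{k,+}$, with $a_{k,-} \in U\fn_-$, $a_{k,0} \in U\fh$, $a_{k,+} \in U\fn_+$. Since $\tilde\Phi_l^{(0)}$ has trivial entry in the remaining $n$ tensor factors, Proposition~\ref{pr:psiab} yields
$$
\psi(\tilde\Phi_l^{(0)}) = \sum_k \epsilon(a_{k,-})\, a_{k,0} \otimes \Delta^n(S(a_{k,+})).
$$

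The next step is to exploit centrality. As $\tilde\Phi_l$ commutes with $\fh$, it is of weight $0$ under $\ad\fh$, so in the PBW expansion one has $\mathrm{wt}(a_{k,-}) + \mathrm{wt}(a_{k,+}) = 0$ for every surviving term. On the other hand, $\epsilon(a_{k,-}) \neq 0$ only when $a_{k,-} \in \bc \subset U\fn_-$, i.e.\ when $\mathrm{wt}(a_{k,-}) = 0$; combined with the weight constraint, this forces $a_{k,+} \in \bc$ as well. Hence only the pure-Cartan part of $\tilde\Phi_l$ contributes, giving $\psi(\tilde\Phi_l^{(0)}) = z_0 \otimes 1^{\otimes n}$, where $z_0 \in U\fh$ is the projection of $\tilde\Phi_l$ onto $U\fh$ in the PBW decomposition $U\fg = U\fh \oplus (\fn_- U\fg + U\fg\, \fn_+)$.

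Evaluating at $\theta$ produces the scalar $\psi_\theta(\tilde\Phi_l^{(0)}) = z_0(\theta) \cdot 1^{\otimes n}$. To identify $z_0(\theta)$ with $\Phi_l(\theta + \rho)$, observe that $z_0(\theta)$ is precisely the eigenvalue with which $\tilde\Phi_l$ acts on the highest-weight vector $v_\theta$ of the Verma module $M(\theta)$, since $\tilde\Phi_l v_\theta = z_0(\theta) v_\theta$ by construction of $z_0$. This is the central character of $M(\theta)$ applied to $\tilde\Phi_l$. By the Harish-Chandra isomorphism with the standard $\rho$-shift, for the specific normalization of the lift $\tilde\Phi_l = \ev_1(S_l)$ coming from the Feigin--Frenkel center (Remark~\ref{re:SlPhil}), this central character is exactly $\Phi_l(\theta + \rho)$.

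The main obstacle is the last identification: confirming that the lift $\ev_1(S_l)$ is calibrated so that its central character reads off as $\Phi_l(\theta+\rho)$ without any additional scalar correction. For $l=1$ this can be checked by the direct calculation $\omega = \omega_0 + 2\omega_- + \sum_c[e_c,e^c]$ together with $\sum_c [e_c,e^c] \leftrightarrow 2\rho$ under the invariant form, producing the required $2(\theta,\rho)$ cross term of $(\theta+\rho,\theta+\rho)$; for higher $l$ one invokes the general structure of the Harish-Chandra homomorphism, since the generators $S_l$ of $\ma_\fg$ are built from the affine center at critical level in a way that is compatible with the shifted HC isomorphism.
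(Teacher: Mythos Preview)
Your approach is exactly the paper's: the paper records this lemma as a direct consequence of Proposition~\ref{pr:psiab}, and what you have written is the natural unpacking of that one-liner --- the weight-zero argument isolates the Harish-Chandra projection of $\tilde\Phi_l$, whose value at $\theta$ is the central character of $M(\theta)$. The normalization subtlety you flag at the end (that the specific lift $\tilde\Phi_l=\ev_1(S_l)$ be calibrated to $\Phi_l$ under the $\rho$-shifted Harish-Chandra map) is the only delicate point, and the paper does not elaborate on it either.
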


Let $z_i \not = z_j$ for $i \not = j$ and $z_i \not = 0$ for all $i$.

\begin{defn} \label{def:trigGaudin}
The trigonometric Gaudin subalgebra $\ma_{\theta}^{trig}(z_1, \ldots, z_n)$ is the image of the homogeneous Gaudin subalgebra $\ma(0,z_1, \ldots, z_n)$ under the map $\psi_{\theta}$.
\end{defn}

Note that $\ma_{\theta}^{trig}(z_1, \ldots, z_n)$ is a commutative subalgebra (as the homomorphic image of a commutative subalgebra).
To write the quadratic elements of $\ma_{\theta}^{trig}(z_1, \ldots, z_n)$, we begin with the quadratic generators $H_i, i = 1, \ldots, n$ of the homogeneous Gaudin subalgebra $\ma(0, z_1, \ldots, z_n)$.  Then for any $ x\in \fn_+ $ such that $ x^{(0)} $  appears in $ H_i$,  we write $x^{(0)} = \Delta(x) - \sum_{j=1}^n x^{(j)}$.  After that, we bring the diagonal terms to the right and annihilate them. Finally, we evaluate $\theta \in \fh^*$ on the terms from $\fb_-^{(0)}$.

Following this procedure, we get the following quadratic Hamiltonians of the trigonometric Gaudin subalgebra $\ma_{\theta}^{trig}(z_1, \ldots, z_n)$: 
$$H_{i,\theta}^{trig} = \psi_\theta(H_i) = \sum_{j \ne i } \frac{\Omega^{(ij)}}{z_i - z_j} + \frac{\theta^{(i)}}{z_i} - \sum_j \frac{\Omega_-^{(ij)}}{z_i}$$
for $i=1, \ldots, n$.  In this formula, $ \Omega^{(ii)} $ should be interpreted as $ \omega^{(i)}$.

From the theorem below it follows that there are $2n$ algebraically independent generators of degree 2 in $\ma_{\theta}^{trig}(z_1, \ldots, z_n)$.  In fact, these quadratic trigonometric Hamiltonians together with the quadratic Casimirs $ \{H_{i,\theta}^{trig} : i = 1, \dots, n \} \cup \{ \omega^{(i)} : i =1, \dots, n \} $ form such a set of generators.

Recall the elements $ S_{l,i}^m \in \CA(0,z_1, \dots, z_n) $.  We let $ S_{l,i}^{trig, m} := \psi_\theta(S_{l,i}^m)$.  Equivalently, these are the coefficients of the principal parts of the Laurent expansions of $ \psi_\theta(S_l(u; 0, z_1, \dots, z_n))) $ at the points $ u = z_1, \dots, z_n$.  We have $ S^{trig, 1}_{1,i} =H_{i, \theta}^{trig} $.

Let $\theta \in \fh$ be arbitrary.
\begin{thm} 
\label{size} Let $ \theta \in \fh $ and let $ z_1, \dots, z_n \in \bc^\times$ be distinct.
\begin{enumerate}
\item The subalgebra $\ma_{\theta}^{trig}(z_1, \ldots, z_n)$ is invariant under dilation, i.e. for any $d \in \bc^{\times}$ 
    $$\ma_{\theta}^{trig}(z_1, \ldots, z_n) = \ma_{\theta}^{trig}(d z_1, \ldots, d  z_n)$$

    \item The subalgebra $\ma_{\theta}^{trig}(z_1, \ldots, z_n)$ is a polynomial algebra with
     $n (p+r)$ generators.
One possible choice of free generators is $$\{ S_{l,i}^{trig, m} : l = 1, \dots, r,\ i = 1, \dots, n ,\ m = 1, \dots, d_l\}.$$

\item The subalgebra $ \ma_\theta^{trig}(z_1, \dots, z_n)$ is of maximal transcendence degree for a commutative subalgebra of $(U\fg^{\otimes n})^{\fh}$.
    \item The Hilbert-Poincar\'e series of $\ma_{\theta}^{trig}(z_1, \ldots, z_n)$ is independent of $z_1, \ldots, z_n$ and $\theta $, and equals
    $$\prod_{l=1}^r \frac{1}{(1-t^{d_l})^{nd_l}}.$$
    \end{enumerate}
\end{thm}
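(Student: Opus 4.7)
Part (1) is immediate from Theorem 4.3(1): since the dilation $z_i \mapsto d z_i$ fixes the extra marked point $0$, we have $\ma(0, d z_1, \dots, d z_n) = \ma(0, z_1, \dots, z_n)$, and applying $\psi_\theta$ to both sides (which does not depend on $z$) yields the trigonometric dilation invariance.

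For parts (2)--(4), I would first rework the generating set of $\ma(0, z_1, \dots, z_n)$ from Theorem 4.3(3). By the symmetry of the homogeneous Gaudin algebra under permutations of its markings, one may move the ``exceptional index'' from $n$ to $0$, yielding the free generating set $\{S_{l,i}^m : l\in[r],\, i\in[n],\, m\in[d_l]\} \cup \{S_{l,0}^{d_l} : l\in[r]\}$. By Lemma 5.2 the $r$ central elements $S_{l,0}^{d_l} = \widetilde\Phi_l^{(0)}$ are sent by $\psi_\theta$ to the scalars $\Phi_l(\theta + \rho)$. The remaining $n(p+r)$ elements $S_{l,i}^{trig, m} := \psi_\theta(S_{l,i}^m)$, $i \in [n]$, therefore generate $\ma_\theta^{trig}(z_1, \dots, z_n)$, and since each has PBW degree $d_l$ we immediately obtain the upper bound
$$
\operatorname{HP}\bigl(\ma_\theta^{trig}\bigr) \ \le\ \prod_{l=1}^r (1-t^{d_l})^{-n d_l}.
$$

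The core task is the matching lower bound, equivalently algebraic independence of the $S_{l,i}^{trig, m}$. The subtlety is that $\psi_\theta$ is not a filtered homomorphism: it sends the degree-$d_l$ element $\widetilde\Phi_l^{(0)}$ to a scalar. I would address this by introducing a Rees-type deformation in which $\theta$ is rescaled to $\hbar^{-1} \theta$, so that as $\hbar \to 0$ the map $\psi_\theta$ degenerates to a genuine classical Hamiltonian reduction $(S\fg^{\otimes n+1})^\fg \to S\fg^{\otimes n}$. Concretely, this classical reduction is restriction to the affine slice $\{(y^{(0)},\dots,y^{(n)}) : y^{(0)}|_{\fn_-}=0,\ y^{(0)}|_\fh=\theta,\ y^{(0)}|_{\fn_+} = -\sum_{i\ge 1} y^{(i)}|_{\fn_+}\} \subset (\fg^*)^{n+1}$, which is exactly the classical analogue of the setup of Section 5.1. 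Under this degeneration, the symbol of $S_{l,i}^{trig, m}$ for $i \in [n]$ becomes the $m$-th Laurent coefficient at $u = z_i$ of $\Phi_l\bigl(u^{-1} y^{(0)} + \sum_{j\ge 1}(u-z_j)^{-1} y^{(j)}\bigr)$, with $y^{(0)}$ substituted via the slice formula.

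Algebraic independence of these classical symbols can then be checked by a Jacobian computation at a generic point, parallel to the proof of Proposition 4.4. Once obtained, parts (2) and (4) follow at once by lifting, and part (3) follows from the standard Mishchenko--Fomenko upper bound $\trdeg \le \tfrac{n}{2}(\dim\fg + r) = n(p+r)$ on Poisson-commutative subalgebras of $S\fg^{\otimes n}$ (which also bounds commutative subalgebras of $(U\fg^{\otimes n})^{\fh}$ since this sits inside $U\fg^{\otimes n}$). The main obstacle is to execute the classical-limit step cleanly: one must verify that the Rees-deformed $\psi_\theta$ genuinely converges to the described slice restriction, and then carry out the Jacobian verification on the slice. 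Neither is automatic, but both are parallel to established arguments in the homogeneous case.
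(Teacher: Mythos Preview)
Your treatment of part (1), the reindexing of generators to put the exceptional index at $0$, and the upper bound on the Hilbert--Poincar\'e series are all correct and match the paper exactly.

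The gap is in your handling of the lower bound. Your stated worry---that $\psi_\theta$ is not a filtered homomorphism---is mistaken. From Proposition~\ref{pr:psiab} one sees at once that $\psi_\theta$ does respect the PBW filtrations: the fact that it sends the degree-$d_l$ element $\widetilde\Phi_l^{(0)}$ to a scalar only means that $\gr\psi_\theta$ annihilates $\Phi_l^{(0)}$, which is harmless since those $r$ elements are precisely the ones you have already discarded from the generating set. Consequently no Rees deformation is needed. The paper simply computes $\gr\psi_\theta$ directly (Lemma~\ref{cor1}) and finds that it is \emph{independent of $\theta$}, dual to the map
\[
\fg^{\oplus n}\longrightarrow \fg^{\oplus n+1}\sslash G,\qquad (x_1,\dots,x_n)\mapsto\bigl(-\textstyle\sum_i\pi_-(x_i),\,x_1,\dots,x_n\bigr).
\]
So the classical slice has $x^{(0)}=-\sum_i\pi_-(x_i)$ with \emph{zero} Cartan component, not $\theta$; your $\theta$-dependent slice does not arise from the ordinary associated graded, and the Rees rescaling you propose (which resembles the construction in Section~\ref{se:triginhom}, but that one keeps $U\fg^{\otimes n}$ quantum) does not land in the fully classical target you describe.

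With the correct slice, the paper proves algebraic independence of the symbols $\Phi_{l,i}^{trig,m}$ by a Jacobian check at $(f+h,f,\dots,f)$ for a principal $\fsl_2$ triple: one filters $\CO(\fg^{\oplus n})$ by degree in $x^{(1)}$, passes to leading terms, and invokes the identity $\spann\{d_{h+bf}\Phi_l:l,b\}=\fb_-$ from \cite{fft} to exhibit a span of dimension $n\dim\fb_-=n(p+r)$. Parts (2) and (4) then follow by lifting, and your Mishchenko--Fomenko argument for (3) is fine.
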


\subsection{Classical trigonometric Gaudin algebras}

In order to prove the above theorem, it will be necessary to study the associated graded of the trigonometric Gaudin algebras.

We define $ A^{trig}_\theta(z_1, \dots, z_n) := \gr \CA^{trig}_\theta(z_1, \dots, z_n)$ to be the \textit{classical trigonometric Gaudin algebra}.  It is a Poisson commutative subalgebra of $ S(\fg)^{\otimes n} \cong \CO(\fg^{\oplus n})$.  Equivalently $$ A^{trig}_\theta(z_1, \dots, z_n) = (\gr \psi_\theta) (A(0, z_1, \dots, z_n)) $$ 
where $ \gr \psi_\theta : S(\fg^{\oplus n+1})^{\fg} \rightarrow (S \fg)^{\otimes n}$ is the associated graded of $ \psi_\theta$.  

As before, we identify $ S(\fg^{\oplus n+1})^{\fg} = \CO(\fg^{\oplus n+1})^\fg$ and $ (S \fg)^{\otimes n} = \CO(\fg^{\oplus n})$.

\begin{lem}
\label{cor1}
    The map $ \gr \psi_\theta : \CO(\fg^{\oplus n+1})^\fg \rightarrow \CO(\fg^{\oplus n}) $ is dual to the map of schemes given by
$$ \fg^{\oplus n} \rightarrow \fg^{\oplus n+1} \sslash G \quad (x_1, \dots, x_n) \mapsto (- \sum \pi_-(x_i), x_1, \dots, x_n) .$$

In particular, it is independent of $ \theta$ and hence $ A^{trig}_\theta(z_1, \dots, z_n) $ is also independent of $ \theta$.
\end{lem}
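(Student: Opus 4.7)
The strategy is to first apply Proposition \ref{pr:psiab} to control the PBW filtration (eliminating the $\theta$-dependence at top degree), and then to unravel the composition $\gr \psi = \gr \beta_0 \circ \gr \alpha \circ \gr \gamma$ at the scheme level using the invariant form. From the formula $\psi_\theta(u) = \sum_k \epsilon(a_{k,-}) a_{k,0}(\theta) b_k \Delta^n(S(a_{k,+}))$ of Proposition \ref{pr:psiab}, the map $\psi_\theta$ preserves the PBW filtration, since $\deg b_k + \deg a_{k,+} \le \deg u$ while $\epsilon(a_{k,-}) a_{k,0}(\theta)$ is a scalar. A contribution to the top PBW degree forces $a_{k,-} = a_{k,0} = 1$, in which case $a_{k,0}(\theta) = 1$ regardless of $\theta$; this already proves the last assertion of the lemma, that $\gr \psi_\theta$ is independent of $\theta$.

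For the geometric identification I would use the invariant form to identify $S\fg^{\otimes k} \cong \CO(\fg^{\oplus k})$. Then an element $X \in \fn_+$ corresponds to the linear function $x \mapsto (X, x)$, so the ideal generated by $\Delta(\fn_+)$ cuts out the subvariety $Z := \{(x_0, \dots, x_n) \in \fg^{\oplus n+1} : \sum_i x_i \in \fb_+\}$, using $\fn_+^\perp = \fb_+$ with respect to the form. Hence $\gr \gamma$ corresponds to restriction of invariant functions to $Z$. The direct-sum decomposition $\fg^{\oplus n+1} = \Delta(\fn_+) \oplus (\fb_- \oplus \fg^{\oplus n})$ identifies $\gr \alpha$ with the canonical isomorphism $\CO(Z) \xrightarrow{\sim} S(\fb_- \oplus \fg^{\oplus n}) = \CO((\fb_- \oplus \fg^{\oplus n})^*)$, and $\gr \beta_0$ projects away $S(\fb_-)$ by setting all linear functionals on $\fb_-$ to zero.

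To trace the image in $\fg^{\oplus n+1}$, I would take $(x_1, \dots, x_n) \in \fg^{\oplus n}$ and compute the corresponding covector $\xi \in (\fb_- \oplus \fg^{\oplus n})^* \subset (\fg^*)^{n+1}$, which must satisfy $\xi_0|_{\fb_-} = 0$, $\xi_i = (x_i, \cdot)$ for $i \ge 1$, and (since $(\fb_- \oplus \fg^{\oplus n})^*$ is precisely the annihilator of $\Delta(\fn_+)$ in $(\fg^*)^{n+1}$) also $\xi_0|_{\fn_+} = -(\sum_{i \ge 1} x_i, \cdot)|_{\fn_+}$. Writing $\xi_0 = (\tilde x_0, \cdot)$ via the form, the first condition gives $\tilde x_0 \in \fb_-^\perp = \fn_-$; combined with the third, it forces $\tilde x_0 + \sum_{i \ge 1} x_i \in \fb_+$, so $\tilde x_0 = -\sum_{i \ge 1} \pi_-(x_i)$, recovering the stated section. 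The main obstacle is just careful bookkeeping with the form identifications and with the two natural direct-sum decompositions involved; no input beyond Proposition \ref{pr:psiab} is required.
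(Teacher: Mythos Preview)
Your argument is correct and uses the same ingredients as the paper's proof: Proposition~\ref{pr:psiab} to handle the filtration and kill the $\theta$-dependence, and the bilinear form to dualize. The only organizational difference is that the paper first computes $\gr\psi$ as a map into $\CO(\fh\oplus\fg^{\oplus n})$, dual to $(x_0,x_1,\dots,x_n)\mapsto (x_0-\sum\pi_-(x_i),x_1,\dots,x_n)$ (via the identity $(x,\pi_+(y))=(\pi_-(x),y)$), and then separately observes that $\gr$ of the evaluation map sends $\fh^{(0)}$ to $0$; you instead collapse these two steps by working with $\beta_0=\mathrm{eval}_\theta\circ\beta$ from the outset and tracing the annihilator of $\Delta(\fn_+)$ explicitly. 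Your identities $\fn_+^\perp=\fb_+$ and $\fb_-^\perp=\fn_-$ are exactly what the paper's identity encodes, so the two computations are equivalent.
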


\begin{proof}
From Proposition \ref{pr:psiab}, we can see that the map  $ \gr \psi :\CO(\fg^{\oplus n+1})^\fg \rightarrow \CO(\fh \oplus \fg^{\oplus n})  $ is dual to the map of schemes
$$
\fh \oplus \fg^{\oplus n} \rightarrow \fg^{\oplus n+1} \sslash G \quad (x_0, x_1, \dots, x_n) \mapsto (x_0 - \sum \pi_-(x_i), x_1, \dots, x_n) .
$$
(This uses that $ (x, \pi_+(y)) = (\pi_-(x), y) $ for $ x, y \in \fg$.)

So it suffices to study the associated graded of the evaluation map $ U(\fh) \otimes U \fg^{\otimes n} \rightarrow U \fg^{\otimes n}$.

    If $ x \in \fh$, then the image of $ x^{(0)}$ under the evaluation map is the value of bilinear form $ (x, \theta) \in \C $.  As $ x^{(0)} $ has filtered degree 1, we see that the associated graded of the evaluation map takes $ x^{(0)}$ to 0.  Thus the resulting map of schemes is $$ \fg^{\oplus n} \rightarrow \fh \oplus \fg^{\oplus n} \quad (x_1, \dots, x_n) \mapsto (0, x_1, \dots, x_n) .$$
    By composition, the result follows.
\end{proof}

For each $ l = 1, \dots, r $, we define $$ \Phi^{trig}_l(u; z_1, \dots, z_n) := \gr \psi_\theta(\Phi_l(u; 0, z_1, \dots, z_n)) \in \CO(\fg^{\oplus n})(u).$$

By the above lemma, we see that for $ (x_1, \dots, x_n) \in \fg^{\oplus n}$, we have
    $$\Phi^{trig}_l(u; z_1, \dots, z_n) =  \Phi_l \left( \sum_{i=1}^n \frac{1}{u-z_i} x^{(i)} - \sum_{i=1}^{n} \frac{1}{u} \pi_-(x^{(i)})  \right)$$

This rational function $ \Phi^{trig}_l(u) $ has poles of order $ d_l $ at the points $ 0, z_1,\ldots, z_n$.  For $ i = 1, \dots, n$, we consider the Laurent expansions
$$
\Phi^{trig}_l(u; z_1, \dots, z_n) = \sum_{m=1}^{d_l} \Phi^{trig, m}_{l,i} (u -z_i)^{-m} + \CO(\fg^{\oplus n})[[u-z_i]]
$$
where $ \Phi^{trig, m}_{l,i} \in \CO(\fg^{\oplus n})$.  Equivalently, $ \Phi^{trig,m}_{l,i} = \gr S_{l,i}^{trig, m} = (\gr \psi_\theta)( \Phi^{m}_{l,i}$).

\begin{thm}
    The algebra $A^{trig}_\theta(z_1, \dots, z_n) $ is a polynomial algebra with generators $ \Phi^{trig, m}_{l,i}$ for $ l = 1, \dots, r$, $ i =1, \dots, n$, and $ m = 1, \dots,  d_l $.
\end{thm}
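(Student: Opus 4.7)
The plan is to identify generators of $A(0, z_1, \dots, z_n)$ via Proposition~\ref{pr:GenCommGaudin}, compute their images under $\gr\psi_\theta$, and verify that the non-trivial images are algebraically independent. Applying Proposition~\ref{pr:GenCommGaudin} to the homogeneous Gaudin algebra at the $n+1$ points $0, z_1, \dots, z_n$ with $z_0 = 0$ as the distinguished point, $A(0, z_1, \dots, z_n)$ is a polynomial algebra with free generators $\{\Phi^m_{l,i} : l = 1,\dots, r,\ i = 1,\dots, n,\ m = 1,\dots, d_l\}$ together with the extra generators $\{\Phi^{d_l}_{l,0} : l = 1,\dots, r\}$.

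By the very definition of the $\Phi^{trig,m}_{l,i}$, we have $(\gr\psi_\theta)(\Phi^m_{l,i}) = \Phi^{trig,m}_{l,i}$ for $i \geq 1$. For the extra generators at $i = 0$, Lemma~\ref{cor1} identifies $\gr\psi_\theta$ with pullback along the embedding $(x^{(1)}, \dots, x^{(n)}) \mapsto (-\sum_i \pi_-(x^{(i)}), x^{(1)}, \dots, x^{(n)})$, so $\Phi^{d_l}_{l,0} = \Phi_l(x^{(0)})$ is sent to $\Phi_l(-\sum_i \pi_-(x^{(i)}))$. Since $-\sum_i \pi_-(x^{(i)}) \in \fn_-$ is always nilpotent, and since by Kostant's theorem every non-constant $G$-invariant polynomial on $\fg$ vanishes on the nilpotent cone, we get $(\gr\psi_\theta)(\Phi^{d_l}_{l,0}) = 0$. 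Thus $A^{trig}_\theta(z_1, \dots, z_n)$ is generated by the $n(r+p)$ elements $\Phi^{trig,m}_{l,i}$ with $i \geq 1$.

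It remains to prove these generators are algebraically independent in $\CO(\fg^{\oplus n})$, equivalently the dominance of the morphism $\Psi : \fg^{\oplus n} \to \BA^{n(r+p)}$ defined by $x \mapsto (\Phi^{trig,m}_{l,i}(x))$. Setting $v = u - z_i$ in the generating function,
\[
\Phi^{trig}_l(u; z_1, \dots, z_n) \;=\; v^{-d_l}\,\Phi_l\bigl(x^{(i)} + v\,R_i(z_i + v)\bigr),
\]
where $R_i(u) := \sum_{j \ne i} x^{(j)}/(u - z_j) - \sum_j \pi_-(x^{(j)})/u$ is regular at $u = z_i$. Hence $\Phi^{trig,m}_{l,i}$ is the $(d_l - m)$-th Taylor coefficient of $\Phi_l(x^{(i)} + v R_i(z_i+v))$ in $v$; in particular $\Phi^{trig,d_l}_{l,i} = \Phi_l(x^{(i)})$ is the Casimir at site $i$. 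For fixed $i$ and generic $x^{(j)}$ with $j \ne i$, the element $R_i(z_i) \in \fg$ is generic, and these Taylor coefficients reproduce the classical shift-of-argument (Mishchenko--Fomenko) subalgebra of $\CO(\fg)$ at site $i$ with shift $R_i(z_i)$; by the Mishchenko--Fomenko--Bolsinov theorem this is a polynomial algebra on $r + p$ generators.

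The main obstacle is combining these $n$ local shift-of-argument subalgebras into one polynomial algebra of rank $n(r+p)$, since the subleading Laurent coefficients at site $i$ do involve all the $x^{(j)}$'s. The top-degree pieces $\Phi_l(x^{(i)})$ already furnish $nr$ manifestly independent Casimirs (they depend on pairwise disjoint tensor factors), and independence of the remaining $np$ generators is verified by computing the Jacobian of $\Psi$ at a sufficiently generic point $(x^{(1)}, \dots, x^{(n)})$: choosing the $x^{(i)}$'s so that each $R_i(z_i)$ is regular semisimple and perturbing in all directions, the differentials of $\Phi^{trig,m}_{l,i}$ split into a block-triangular matrix whose diagonal blocks coincide with the non-degenerate shift-of-argument Jacobians of the previous paragraph, the off-diagonal contributions being of lower order in a PBW-type filtration refining the decomposition by tensor factors. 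Full rank of $\Psi$ then follows by standard triangularity, completing the proof that $A^{trig}_\theta$ is a polynomial ring in the asserted generators.
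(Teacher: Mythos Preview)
Your generation argument is correct and essentially matches the paper's, with the pleasant bonus that you explicitly invoke Kostant's theorem to show $\Phi_l(-\sum_i\pi_-(x^{(i)}))=0$, whereas the paper simply asserts the vanishing.

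The independence argument, however, has a genuine gap. First, your identification of the site-$i$ Taylor coefficients with a Mishchenko--Fomenko subalgebra is not accurate: the shift $R_i(z_i+v)$ is a nontrivial power series in $v$, so the coefficients of $\Phi_l(x^{(i)}+vR_i(z_i+v))$ are not the MF generators for the fixed shift $R_i(z_i)$; moreover $R_i$ itself depends on $x^{(i)}$ through the term $-\pi_-(x^{(i)})/u$, so even the diagonal block of your would-be Jacobian is contaminated. Second, and more seriously, the phrase ``block-triangular\ldots the off-diagonal contributions being of lower order in a PBW-type filtration refining the decomposition by tensor factors'' is an assertion, not an argument: you never specify the filtration, never check that the leading terms have the claimed form, and never verify that the diagonal blocks are invertible at your (unspecified) generic point. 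Without these details the proof is a sketch at best.

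The paper handles independence by a concrete computation rather than an appeal to genericity: it introduces the filtration $\deg x^{(1)}=1$, $\deg x^{(i)}=0$ for $i\ge2$, takes leading terms with respect to it, and evaluates their differentials at the explicit point $(f+h,f,\dots,f)$ with $(e,h,f)$ a principal $\fsl_2$-triple. A lemma from \cite{fft} (that $\spann_{l,b} d_{h+bf}\Phi_l=\fb_-$) then shows directly that the span of these differentials contains $\fb_-\oplus0$ and surjects onto $\fb_-^{\oplus n-1}$ under projection, giving the required rank $n\dim\fb_-=n(r+p)$. The key difference is that the paper chooses a \emph{specific} filtration and a \emph{specific} point adapted to the triangular decomposition, which makes the ``leading term'' structure genuinely triangular and computable; your proposal gestures at such a structure but does not produce one.
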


\begin{proof}
To see that these generate, we note that $ A(0, z_1, \dots, z_n)$ is generated by $  \Phi_{l,i}^m$ for $ i =1, \dots, n $ along with $ \Phi_{l,0}^{d_l} $ but this last generator is sent to 0 by $ \gr \psi_\theta$.

So it suffices to check that the generators $ \{ \Phi^{trig, m}_{l,i} \}$ for $ l = 1, \dots, r$, $ i =1, \dots, n$, and $ m = 1, \dots, d_l $ are algebraically independent.

In order to prove this, we consider the following filtration on $\CO(\fg^{\oplus n})$:
$$\deg x^{(1)} = 1, \quad \deg x^{(i)} = 0, \quad \text{ for } 2 \leq i \leq n, \, x \in \fg.$$ 
We will prove that the leading terms of $ \{ \Phi^{trig, m}_{l,i} \} $ with respect to this filtration are algebraically independent.

Let $ (e,h,f) $ be a principal $ \fsl_2$-triple.  In order to prove the algebraic independence of the leading terms $ lt(\Phi^{trig,m}_{l,i})$ of $ \{ \Phi^{trig, m}_{l,i} \} $, it suffices to show that their differentials are linearly independent at the point $ (f+h, f, \dots, f) \in \fg^{\oplus n} $.  In particular, we will show that these differentials span a subspace of dimension $n (r +p)$.

More precisely, let $$ V = \spann_{m,i,l} \bigl( d_{f+h, f, \dots, f}(lt(\Phi^{trig,m}_{l,i})) \bigr) \subset \fg^{\oplus n}$$  We will show that $ V $ contains $ \fb_- \oplus 0 $ and that the image of $ V $ in $ \fg^{\oplus n-1}$ (projecting to the last $ n-1$ summands) is $ \fb_-^{\oplus n-1}$.  Thus $ \dim V = n \dim \fb_- $ as desired.


First, we note that leading term of $\Phi_{l,1}^{trig, m} $ is the coefficient of $ (u-z_1)^{-m}$ in the Laurent expansion at $ z_1$ of 
$$\Phi_l \bigl( \tfrac{ 1}{u-z_1} x^{(1)}  - \tfrac{1}{u} \pi_-(x^{(1)}) \bigr).$$

The subalgebra generated by such coefficients coincides with the subalgebra spanned by all 
$p_{l,a} := \Phi_l(x^{(1)} + a \pi_-(x^{(1)})) \in \CO(\fg^{\oplus n})$ for  $a \in \bc$.  This follows from the fact that a rational function with one pole is uniquely determined by the principal part of its Laurent expansion at this pole.  We can regard $ p_{l,a}$ as the composition of $ \Phi_l$ with the linear map $ \fg^{\oplus n} \rightarrow \fg $ given by $ (x_1, \dots, x_n) \mapsto x_1 +a \pi_-(x_1)$.

So, by the chain rule, we have
$$
d_{f + h, f, \dots, f}(p_{l,a}) = \tau_a(d_{(1+a)f + h}(\Phi_l))
$$
where $ \tau_a : \fg \rightarrow \fg^{\oplus n}$ is the linear map given by $ x \mapsto x^{(1)} + a\pi_+(x)^{(1)}$.

Now, we will appeal to the following result.

\begin{lem}{\cite[Proof of Theorem 3.11]{fft}} \label{lem:fft}
We have
$$ \spann \bigl( d_{h + bf}(\Phi_l) : l \in 1, \dots, r, b \in \C \bigr) = \fb_-$$
\end{lem}

Combined with the above calculation, this shows that $$ \{ d_{f+h, f, \dots, f} (lt(\Phi^{trig, m}_{l,1})) : l = 1, \dots, r, m = 1, \dots, d_l \}$$ spans $ \fb_- \oplus 0 \subset \fg^{\oplus n}$.


Now fix some $i \ge 2$ and consider the leading term $ lt(\Phi_{l,i}^{trig, m})$.  This equals the coefficient of $(u - z_i)^{-m}$ in the Laurent expansion of   

$$\Phi_j\Bigl( x^{(1)} \tfrac{1}{u-z_1} + x^{(i)} \tfrac{ 1}{u-z_i} - \pi_-(x^{(1)}) \tfrac{1}{u}\Bigr).$$

As before these subalgebras are spanned by all $ p^i_{l, a,b} \in \CO(\fg^{\oplus n}) $ defined by
$$p^i_{l, a,b} := \Phi_l\bigl(x^{(1)} + a  x^{(i)} + b  \pi_-(x^{(1)})\bigr).$$
As before, we can regard $ p^i_{l,a,b}$ as the composition of $ \Phi_l$ with the linear map $ \fg^{\oplus n} \rightarrow \fg $ defined by $ (x_1, \dots, x_n) \mapsto x_1 + a  x_i + b  \pi_-(x_1)$.  Thus we can compute the differential using the chain rule and conclude
$$
d_{f+h, f, \dots, f} p^i_{l,a,b} = \tau_{a,b}(d_{(1+a +b) f + h}(\Phi_l))
$$
where $ \tau_{a,b} : \fg \rightarrow \fg^{\oplus n} $ is given by $ \tau_{a,b} = x^{(1)} + a x^{(i)} + b \pi_-(x)^{(1)}$.  
Projecting to $\fg^{\oplus n-1}$ and applying Lemma \ref{lem:fft}, we conclude that the projection of $ V $ contains $ \fb_-^{(i)}$.  Combining over all $ i$, we conclude that $ V $ has the desired properties and this concludes the proof.


\end{proof}

Now, we are in a position to prove Theorem \ref{size}.

\begin{proof}[Proof of Theorem \ref{size}]
The first statement follows from the fact that a homogeneous Gaudin subalgebra $\ma(0,z_1, \ldots, z_n)$ is invariant under dilations.

The elements $ S^{trig, m}_{l,i}$ generate the subalgebra since they are the images of the generators of $\ma(0,z_1, \dots, z_n) $, with the exception of $ S_{l,0}^{d_l} = \widetilde \Phi^{(0)}_l  $ which is sent to a scalar by Lemma \ref{le:thetarho}.  Monomials in these generators are linearly independent, since this is true after taking associated graded.  Thus, they generate a polynomial algebra.  The Hilbert-Poincar\'e series follows from the associated graded.

\end{proof}

\begin{rem}
Note that subalgebra $\ma_{\theta}^{trig}(z_1, \ldots, z_n)$ is not a maximal commutative subalgebra of $U\fg^{\otimes n}$. Indeed, we do not have any linear elements inside $\ma_{\theta}^{trig}(z_1, \ldots, z_n)$ and hence $\Delta^n(\fh)$ doesn't belong to this subalgebra. However, $\Delta^n(\fh)$ commutes with $\ma_{\theta}^{trig}(z_1, \ldots, z_n)$,  since it lies in $ (U \fg^{\otimes n})^{\fh}$
\end{rem}


\subsection{Definition from representations}
\label{4.3}
Let $ \theta \in \fh $ and $ z_1, \dots, z_n \in \mathbb C^\times $ be distinct, as above. 
 Since $ \ma^{trig}_\theta(z_1, \dots, z_n)$ is a subalgebra of $ U \fg^{\otimes n}$, it acts on any $n$-fold tensor product $$V(\ul) : = \bigotimes_{i=1}^n V(\lambda_i), $$ of irreducible finite-dimensional $\fg$-modules with highest weights $\lambda_1, \dots, \lambda_n$.  It is a natural question to describe this action.

Let $M(\theta) = U\fg \otimes_{U \fb} \C v_\theta$ be the Verma module with highest weight $ \theta $ (once again, we are identifying $ \fh$ and $ \fh^*$). We denote by $M^\vee(\theta)$ the contragredient dual of $M(\theta)$ and by $s_\theta:M(\theta)\to M^\vee(\theta)$ the (unique up to proportionality) homomorphism given by the Shapovalov pairing. $M^\vee(\theta)$ has the same character as $M(\theta)$ and is cofree with $1$ co-generator as a $U(\fn_+)$-module. Let $ \alpha^\vee_\theta : M^\vee(\theta) \rightarrow \C $ be the (unique up to proportionality) nonzero linear map sending all the weight vectors with the weights different from $\theta$ to 0. Similarly, let $ \alpha_\theta=\alpha^\vee_\theta\circ s_\theta : M(\theta) \rightarrow \C $ be the linear map sending $ v_\theta$ to 1 and all the weight vectors of other weights to 0.

The following proposition is a version of \cite[Lemma 4.1.1]{gr}.

\begin{prop}\footnote{We thank the referee for pointing out the natural generality of this statement and for suggesting this argument.}
\label{thm1}
Let $ V$ be any finite-dimensional representation of $ \fg$ and let $ \mu $ be an integral weight.  Then the linear map
$$
\pi^\vee_{\theta}: \Hom_{\fg}(M(\theta + \mu), M^\vee(\theta)\otimes V) \rightarrow V_\mu
$$
given by $ \varphi \mapsto (\alpha^\vee_{\theta} \otimes \Id)(\varphi(v_{\theta+\mu})) $ is an isomorphism.
\end{prop}

\begin{proof} Since $M(\theta)$ is a $\fg$-module induced from the $1$-dimensional $\fb$-module $\mathbb{C}v_\theta$, we have, for any $\fg$-module $L$, $\Hom_{\fg}(M(\theta), L)=\Hom_{\fb}(\mathbb{C}v_\theta, L)$. Next, $M^\vee(\theta)$ is $U(\fn_+)$-cofree with one generator of the weight $\theta$, i.e. it is the graded dual $\fb$-module of the induced from the $1$-dimensional $\fh$-module $\mathbb{C}v_\theta$. So, for any finite-dimensional $\fb$-modules $K,L$ that are semisimple with respect to $\fh$, we have $\Hom_\fb(K,M^\vee(\theta)\otimes L)=\Hom_\fb(K\otimes\mathbb{C}v_\theta^*,L)$. So we get \[\Hom_{\fg}(M(\theta + \mu), M^\vee(\theta)\otimes V)=\Hom_\fb(\mathbb{C}v_{\theta+\mu},M^\vee(\theta)\otimes V
)=\Hom_\fh(\mathbb{C}v_{\theta+\mu}\otimes \mathbb{C}v_\theta^*,V)=V_\mu.\]

This establishes that $  \Hom_{\fg}(M(\theta + \mu), M^\vee(\theta)\otimes V) $ and $ V_\mu $ have the same dimension. On the other hand, suppose that $ \varphi \in \Hom (M(\theta + \mu), M(\theta)\otimes V)$ and $(\alpha^\vee_{\theta} \otimes \Id)(\varphi(v_{\theta+\mu})) = 0$.  Then $ \varphi(v_{\theta + \mu}) \in M^\vee(\theta) \otimes V $ is a singular vector (annihilated by $ \fn_+$) and so the maximal non-zero weight component of its projection to $ M^\vee(\theta) $ is also a singular vector. As $ M^\vee(\theta) $ is $U(\fn_+)$-cofree, it has no singular vector of weight less than $ \theta$.  Thus $ \varphi = 0 $.

\end{proof}

\begin{corol}
Let $ V$ be any finite-dimensional representation of $ \fg$ and let $ \mu $ be an integral weight.  There is a linear map
$$
\pi_{\theta}: \Hom_{\fg}(M(\theta + \mu), M(\theta)\otimes V) \rightarrow V_\mu
$$
given by $ \varphi \mapsto (\alpha_{\theta} \otimes \Id)(\varphi(v_{\theta+\mu})) $.
This map is an isomorphism for generic $\theta$.
\end{corol}

\begin{proof} This follows from Proposition~\ref{thm1} since for generic $\theta$ we have $M(\theta)=M^\vee(\theta)$. 

An alternate (and a bit more elementary) proof is as follows. For generic $ \theta$ we have $ M(\theta) \otimes V \cong \oplus_\nu M(\theta + \nu) \otimes V_\nu$, where the direct sum is over the weights of $ V$.  Also for generic $ \theta$, each $ M(\theta + \nu) $ is irreducible and so $ \Hom(M(\theta + \mu), M(\theta + \nu)) $ is 0 unless $ \mu = \nu$, in which case it is one dimensional. This shows that $  \Hom_{\fg}(M(\theta + \mu), M(\theta)\otimes V) $ and $ V_\mu $ have the same dimension. The argument proving that $\pi_{\theta}$ is an isomorphism is the same as in Proposition~\ref{thm1}. 
\end{proof}

\begin{corol}
Let $ \theta $ be integral and sufficiently dominant.  Then the statement of Proposition \ref{thm1} holds true if $ M(\theta + \mu) $ and $ M(\theta)$ are replaced by  $V(\theta + \mu)$ and $ V(\theta)$.
\end{corol}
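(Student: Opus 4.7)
The plan is to show $\pi_\theta$ is an isomorphism by combining a dimension count with an explicit construction of a right inverse. Here $\alpha_\theta:V(\theta)\to\C$ is defined exactly as in the Verma case (send $v_\theta$ to $1$ and other weight vectors to $0$), which makes sense since $v_\theta \in V(\theta)$ is still the generating highest weight vector.

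For the dimension count, I would invoke Steinberg's formula for the tensor product multiplicity,
$$[V(\theta)\otimes V : V(\theta+\mu)] \;=\; \sum_{w \in W} (-1)^{\ell(w)} \dim V_{w(\theta+\mu+\rho)-(\theta+\rho)}.$$
For $\theta$ sufficiently dominant (precisely, $\theta+\rho$ far enough inside the dominant Weyl chamber compared to the diameter of the weight polytope of $V$), each $w\ne e$ pushes the argument outside the weight support of $V$ and so contributes $0$; only the identity term survives and gives $\dim V_\mu$. Since $V(\theta)\otimes V$ is completely reducible, $\dim\Hom_\fg(V(\theta+\mu), V(\theta)\otimes V)=\dim V_\mu$ as well. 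So both sides of $\pi_\theta$ have the same dimension, and it suffices to prove surjectivity.

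For surjectivity I would recycle Proposition~\ref{thm1}. For generic $\theta' \in \fh^*$, the inverse of the Verma version of $\pi$ sends $u \in V_\mu$ to a singular vector
$$\tilde w_u(\theta') \;=\; v_{\theta'}\otimes u + \sum_{I\ne 0} c_I(\theta')\, F_I v_{\theta'} \otimes u_I \;\in\; M(\theta')\otimes V,$$
where $\{F_I\}$ is a PBW basis of $U\fn_-$ (only finitely many $F_I$ contribute, cut off by the height of $V$), $u_I\in V$, and each $c_I(\theta')$ is a rational function of $\theta'$. The conditions $e_j \tilde w_u(\theta')=0$ are polynomial identities in $\theta'$ and the $c_I$, so they persist wherever the $c_I$ are regular. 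The denominators of the $c_I$ are Shapovalov-type linear forms $\langle \theta'+\rho-\delta, \alpha^\vee\rangle$, with $\alpha$ a positive root and $\delta$ a positive-root-sum whose height is bounded by that of $V$. For $\theta$ integral and sufficiently dominant all these pairings are strictly positive, so $\tilde w_u(\theta)\in M(\theta)\otimes V$ is a well-defined singular vector, and its image $w_u$ along the quotient $M(\theta)\twoheadrightarrow V(\theta)$ is a singular vector in $V(\theta)\otimes V$ of weight $\theta+\mu$ whose $v_\theta$-component is still $u$. To promote $w_u$ to $\varphi_u\in\Hom_\fg(V(\theta+\mu), V(\theta)\otimes V)$, I would check that the $\fg$-map $M(\theta+\mu)\to V(\theta)\otimes V$ sending $v_{\theta+\mu}\mapsto w_u$ descends. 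This is automatic: for each simple root $\alpha_i$, $w_u$ is $e_i$-invariant with $\alpha_i^\vee$-weight $n_i:=\langle \theta+\mu,\alpha_i^\vee\rangle$, so inside the finite-dimensional $\fg$-module $V(\theta)\otimes V$ it generates an irreducible $\mathfrak{sl}_2^{(i)}$-submodule of dimension $n_i+1$; hence $f_i^{\,n_i+1} w_u=0$, and these are exactly the relations cutting $V(\theta+\mu)$ out of $M(\theta+\mu)$. By construction $\pi_\theta(\varphi_u)=u$, which yields surjectivity.

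The chief technical obstacle is the regularity claim in the middle step: one must ensure that ``sufficiently dominant'' actually cancels every pole of every $c_I$. Because only finitely many $I$ enter (bounded by the weight multiplicities of $V$), and each denominator vanishes only on a single hyperplane $\langle \theta+\rho,\alpha^\vee\rangle=k$ for a positive root $\alpha$ and a bounded positive integer $k$, ``$\theta$ sufficiently dominant'' becomes a finite list of explicit strict inequalities, readily compatible with $\theta$ integral and deep in the interior of the dominant chamber.
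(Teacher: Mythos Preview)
The paper states this corollary without proof; the intended argument is simply to rerun the proof of Proposition~\ref{thm1} with Verma modules replaced by irreducibles. Your Steinberg dimension count is correct and plays the same role as the decomposition $M(\theta)\otimes V\cong\bigoplus_\nu M(\theta+\nu)\otimes V_\nu$ used there. Where you diverge is in choosing to prove \emph{surjectivity} rather than \emph{injectivity}: you build singular vectors by tracking the rational $\theta'$-dependence of the Verma-module inverse, locate its poles, specialise, push forward, and check descent to $V(\theta+\mu)$. This works, but the injectivity half of Proposition~\ref{thm1} transports directly and is in fact \emph{easier} for $V(\theta)$: since $V(\theta)$ is irreducible it has no singular vector below $v_\theta$ for \emph{any} $\theta$, so the argument that the top $V(\theta)$-weight component of a nonzero $\varphi(v_{\theta+\mu})$ must lie over $v_\theta$ goes through unconditionally, with ``sufficiently dominant'' needed only for the dimension count. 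Your route thus turns a two-line observation into a substantial construction and introduces the regularity obstacle you yourself flag; the exact Shapovalov-type form of the denominators you assert is not proved, though only the finiteness of the pole set (which you do argue) is actually required.
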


Consider the action of $\ma(0,z_1, \ldots, z_n)$ on $M(\theta) \otimes V(\ul)$.  Since the Gaudin algebra commutes with the diagonal $ \fg$ action, there is an algebra homomorphism $ \ma(0,z_1, \dots, z_n) \rightarrow \operatorname{End}_\fg(M(\theta) \otimes V(\ul))$ and thus $ \ma(0, z_1, \dots, z_n)$ acts on $ \Hom_\fg(M(\theta + \mu), M(\theta) \otimes V(\ul))$ for any $ \mu$.  Via Proposition \ref{thm1}, we have an isomorphism $ \Hom_\fg(M(\theta + \mu), M(\theta) \otimes V(\ul)) \cong V(\ul)_\mu $ and thus an action of $ \ma(0, z_1, \dots, z_n)$ on $V(\ul)_\mu$.

\begin{thm} \label{th:actiontensorprod}
This action of $\ma(0,z_1, \ldots, z_n)$ on $V(\ul)_{\mu}$ coincides (via $ \psi_\theta$) with the action of $\ma^{trig}_{\theta}(z_1, \ldots, z_n)$ on $V(\ul)_{\mu}$.
\end{thm}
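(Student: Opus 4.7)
The plan is to verify, for each $u \in \ma(0, z_1, \ldots, z_n) \subset (U\fg^{\otimes(n+1)})^\fg$ and each $w \in V(\ul)_\mu$, the identity $(\alpha_\theta \otimes \Id)(u \cdot s_w) = \psi_\theta(u) \cdot w$, where $s_w \in M(\theta) \otimes V(\ul)$ is the singular vector of weight $\theta + \mu$ corresponding to $w$ under the isomorphism $\pi_\theta$ of Proposition \ref{thm1}. Because $u$ commutes with the diagonal action of $\fg$, the vector $u \cdot s_w$ is again a singular vector of weight $\theta + \mu$, so its preimage under $\pi_\theta$ is exactly $(\alpha_\theta \otimes \Id)(u \cdot s_w) \in V(\ul)_\mu$; hence this identity is precisely what is needed.

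For the computation, I would expand $u$ in PBW form on the zeroth tensor factor as $u = \sum_k a_{k,-}^{(0)} a_{k,0}^{(0)} a_{k,+}^{(0)} b_k$ with $a_{k,\pm} \in U\fn_\pm$, $a_{k,0} \in U\fh$, and $b_k \in U\fg^{\otimes n}$. Singularity of $s_w$ gives $\Delta(\fn_+) \cdot s_w = 0$, so the action of $u$ on $s_w$ depends only on the class of $u$ modulo $J := U\fg^{\otimes(n+1)} \Delta(\fn_+)$. The lemma preceding Proposition \ref{pr:psiab} gives $a_{k,+}^{(0)} \equiv 1 \otimes \Delta^n(S(a_{k,+})) \bmod J$, and since $a_{k,+}^{(0)}$ and $b_k$ act on disjoint tensor factors (hence commute), we obtain
$$u \cdot s_w = \sum_k \bigl(a_{k,-}^{(0)} a_{k,0}^{(0)}\bigr) \cdot \bigl(b_k\, \Delta^n(S(a_{k,+}))\bigr) \cdot s_w.$$

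Now apply $\alpha_\theta \otimes \Id$, writing $s_w = v_\theta \otimes w + (\text{strictly lower-weight terms in } M(\theta))$. For a weight vector $m \in M(\theta)_{\theta - \eta}$ with $\eta \geq 0$, the vector $a_{k,-} a_{k,0} m$ lies in $M(\theta)_{\theta - \eta - \mathrm{wt}(a_{k,-})}$; since both $\eta$ and $\mathrm{wt}(a_{k,-})$ are non-negative sums of positive roots, this weight equals $\theta$ only when both vanish, i.e. only from the $v_\theta \otimes w$ summand of $s_w$ and only for those $k$ with $a_{k,-}$ scalar. In all cases $\alpha_\theta(a_{k,-} v_\theta) = \epsilon(a_{k,-})$ and $a_{k,0} v_\theta = a_{k,0}(\theta) v_\theta$, so
$$ (\alpha_\theta \otimes \Id)(u \cdot s_w) = \sum_k \epsilon(a_{k,-})\, a_{k,0}(\theta)\, b_k\, \Delta^n(S(a_{k,+})) \cdot w,$$
which is precisely $\psi_\theta(u) \cdot w$ by the formula of Proposition \ref{pr:psiab} followed by evaluation of the $U\fh$-factor at $\theta$. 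No serious obstacle is expected: the argument just unwinds the construction of $\psi_\theta$ via quantum Hamiltonian reduction, paired with the definition of $\pi_\theta$. The only point requiring attention is that the ordering of $b_k$ and $\Delta^n(S(a_{k,+}))$ in the final expression agrees with the one in Proposition \ref{pr:psiab}; this is automatic because $b_k$ is moved past the commuting zeroth-factor element $a_{k,+}^{(0)}$ before the latter is replaced modulo $J$.
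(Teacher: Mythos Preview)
Your proof is correct and follows essentially the same approach as the paper's own proof. Both arguments reduce to showing that the square with horizontal arrows $\alpha_\theta \otimes \Id$ and vertical arrows given by the action of $x$ (resp.\ $\psi_\theta(x)$) commutes, by using that $\Delta(\fn_+)$ annihilates singular vectors and that $\alpha_\theta$ picks out only the highest-weight component of $M(\theta)$; the only cosmetic difference is that the paper phrases the reduction via the abstract splitting $x = x' + x''$ with $x'' \in U\fb_- \otimes U\fg^{\otimes n}$, whereas you carry out the same reduction explicitly through the PBW expansion and the formula of Proposition~\ref{pr:psiab}.
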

\begin{proof}
By the universal property of Verma modules, we can identify $$ \Hom_\fg(M(\theta + \mu), M(\theta) \otimes V(\ul)) \cong (M(\theta) \otimes V(\ul))_{\theta + \mu}^{\fn_+} $$ via the map $ \varphi \mapsto \varphi(v_{\theta + \mu})$.

Recall the map $\psi: (U\fg^{\otimes n+1})^{\fg} \to U\fh \otimes U\fg^{\otimes n}$. Let $x \in (U\fg^{\otimes n+1})^{\fg}$. We claim that the following diagram is commutative:
\[ \begin{diagram}
\node{(M(\theta)\otimes V(\ul))^{\fn_+}_{\theta + \mu} }
\arrow{e,t}{\alpha_{\theta} \otimes id}
\arrow{s,r}{x \cdot}
\node{ V(\ul)}
\arrow{s,r}{\psi_\theta(x) \cdot} \\
\node{(M(\theta)\otimes V(\ul))^{\fn_+}_{\theta + \mu}} \arrow{e,t}{\alpha_{\theta} \otimes id}
\node{V(\ul)} 
\end{diagram}\]

Let $ v \in (M(\theta) \otimes V(\ul))^{\fn_+}$.

First, we can write $ x = x' + x'' $ where $ x' \in U \fg^{\otimes n+1} \Delta(\fn_+) $ and $ x'' \in U \fb_- \otimes U \fg^{\otimes n}$.   Since $ v$ is a singular vector, $ x' \cdot v = 0 $ and thus $ x \cdot v = x'' \cdot v$.  So following the diagram down and right, we reach $ (\alpha_\theta \otimes id)(x'' \cdot v)$.  

Now, if $ u \in M(\theta) $ and $ y \in U \fb_-$, then $ \alpha_\theta(y \cdot u) = \theta(y) \alpha_\theta(u)$, where by a slight abuse of notation, we use $ \theta $ for the composition $ U \fb_- \rightarrow U \fh \xrightarrow{\theta} \C $.  Thus  since $ x'' \in U \fb_- \otimes U \fg^{\otimes n}$, we have $(\alpha_\theta \otimes id)(x'' \cdot v) = (\theta \otimes id)(x'') \cdot (\alpha_\theta \otimes id)(v)$.

Since $ \psi_\theta(x) = (\theta \otimes id)(x'')$, the result follows. 
\end{proof}

The following proposition is a version of Ado's theorem:
\begin{prop}
Let $x \in (U\fg)^{\otimes n}$ be an element such that $x$ acts by zero on any $V(\ul)$. Then $x = 0$.
\end{prop}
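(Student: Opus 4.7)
The plan is to reformulate the statement as the faithfulness of a single module and then reduce inductively to the classical $n=1$ case. First I would set $V := \bigoplus_\lambda V(\lambda)$ as a $U\fg$-module, where $\lambda$ runs over dominant integral weights. Then the $(U\fg)^{\otimes n}$-module $V^{\otimes n}$ decomposes as $\bigoplus_{\ul} V(\ul)$, so the hypothesis on $x$ is precisely that $x$ annihilates $V^{\otimes n}$. The proposition is therefore equivalent to the claim that $V^{\otimes n}$ is a faithful $(U\fg)^{\otimes n}$-module.

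Next I would reduce to $n=1$ using the following elementary lemma: if $M$ is faithful over an algebra $A$ and $N$ is faithful over an algebra $B$, then $M \otimes_\C N$ is faithful over $A \otimes_\C B$. This follows because the structure map factors as $A \otimes B \to \End(M) \otimes \End(N) \to \End(M \otimes N)$, where the first arrow is injective (tensor product of injective maps over a field is injective) and the second is the standard injection of the tensor product of endomorphism algebras into the endomorphisms of a tensor product. Applying this $n-1$ times, faithfulness of $V$ over $U\fg$ yields faithfulness of $V^{\otimes n}$ over $(U\fg)^{\otimes n}$.

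It remains to treat the base case: $\bigoplus_\lambda V(\lambda)$ is a faithful $U\fg$-module. This is a classical fact. The cleanest way I would argue is via the Peter--Weyl type decomposition $\bigoplus_\lambda V(\lambda) \otimes V(\lambda)^* \cong \CO(G)$, where $G$ is the connected complex algebraic group with Lie algebra $\fg$; then $U\fg$ acts on $\CO(G)$ as the algebra of left-invariant algebraic differential operators, and this action is faithful because a nonzero algebraic differential operator on the smooth irreducible variety $G$ must be non-vanishing on some regular function. Equivalently, one can invoke the residual finite-dimensionality of $U\fg$ for semisimple $\fg$. The base case is the only substantive input: the reduction in the previous paragraph is routine linear algebra, so the main (and essentially only) obstacle is this classical faithfulness statement, which is well-known and standard.
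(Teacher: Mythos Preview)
Your proof is correct. The paper does not actually prove this proposition; it is stated without argument, preceded only by the remark that it ``is a version of Ado's theorem,'' so there is nothing to compare against beyond noting that your reduction to $n=1$ via the faithfulness-under-tensor lemma and your Peter--Weyl argument for the base case supply exactly the standard justification the paper omits. One minor point: citing ``residual finite-dimensionality of $U\fg$'' as an alternative for the base case is a restatement of what is to be shown rather than an independent argument, so the Peter--Weyl route is the one doing the work; to make that argument literally match, take $G$ simply connected so that every dominant integral weight appears in $\CO(G)$ (though any connected $G$ with $\operatorname{Lie}(G)=\fg$ already suffices, since your hypothesis is that $x$ kills every $V(\lambda)$).
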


By this proposition and Theorem \ref{th:actiontensorprod} one can {\it define} the trigonometric Gaudin subalgebra $\ma_{\theta}^{trig}(z_1, \ldots, z_n)$  for {\it generic}  $\theta \in \fh^*$ as the unique subalgebra in $U\fg^{\otimes n}$ that acts as above on any $V(\ul)$.
 
\subsection{Universal trigonometric Gaudin subalgebras.}
\label{4.4}

The trigonometric Gaudin subalgebras can be described universally. Namely, there are two versions of a \emph{universal trigonometric Gaudin subalgebra}, $\ma^{trig}_\theta\subset U(\hat{\fn}_+)$ and  $\widetilde{\ma}^{trig}_\theta\subset U(\hat{\fb}_-)$ where  $\hat{\fn}_+:=\fn_+ + t^{-1}\fg[t^{-1}]$ and $\hat{\fb}_-:=\fb_- + t^{-1}\fg[t^{-1}]$.

First, we write $ \fg[t^{-1}] = \fb_- \oplus \hat \fn_+$.  We consider the quantum Hamiltonian reduction of $ U(\fg[t^{-1}]) $ by $ \fb_- $ with the character $\theta:\fb_-\to\bc$.  By the general theory, we obtain an injective algebra homomorphism  $$(U(\fg[t^{-1}])/U(\fg[t^{-1}])(\{x-\theta(x)\ |\ x\in\fb_-\}))^{\fb_-} \rightarrow U \hat \fn_+.$$ 
The subalgebra $U(\fg[t^{-1}])^\fg$ of $\fg$-invariants in $U(\fg[t^{-1}])$ naturally maps to this Hamiltonian reduction.  By composition, we have a homomorphism $U(\fg[t^{-1}])^\fg\to U(\hat{\fn}_+)$ and we let  $\ma_\theta^{trig}\subset U(\hat{\fn}_+)$ be the image of $\widetilde{\ma} \subset U(\fg[t^{-1}])^\fg$ under this homomorphism.

Similarly, we can write $ \fg[t^{-1}] = \fn_+ \oplus \hat \fb_- $ and consider the quantum Hamiltonian reduction by $\fn_+$. We obtain an algebra homomorphism
$$ (U(\fg[t^{-1}])/U(\fg[t^{-1}])\fn_+)^{\fn_+} \rightarrow U \hat \fb_-$$
Again by composition, we get a homomorphism $U(\fg[t^{-1}])^\fg\to U(\hat{\fb}_-)$.  We let  $\widetilde{\ma}^{trig}\subset U(\hat{\fb})$ be the image of $\widetilde{\ma} \subset U(\fg[t^{-1}])^\fg$ under the homomorphism $i$; it is naturally a commutative subalgebra. 

Next, any $\theta\in \fh \cong \fh^* $ determines a character $ev_{\infty}^\theta:\hat{\fb}_-\to\bc$ by annihilating $\fn_-+t^{-1}\fg[t^{-1}]$ and applying $\theta$ to $\fh$. So we get a family of subalgebras $$ \widetilde{\ma}^{trig}_\theta:= (ev_{\infty}^\theta\otimes\Id)\circ\Delta (\widetilde{\ma}^{trig})\subset U(\hat{\fb}_-).
$$

For any $z\in\bc^\times$, we have the restriction of the homomorphism $ev_{z}:\fg[t^{-1}]\to\fg$ of evaluation at $z$ to $\hat{\fn}_+$ and $\hat{\fb}_-$. This gives rise to the homomorphisms $ev_{z_1,\ldots,z_n}:U(\hat{\fn}_+)\to U\fg^{\otimes n}$ and $ev_{z_1,\ldots,z_n}:U(\hat{\fb}_-)\to U\fg^{\otimes n}$ for any collection of nonzero complex numbers $z_1,\ldots,z_n$. 

\begin{prop}
    We have $ev_{z_1^{-1},\ldots,z_n^{-1}}(\widetilde{\ma}^{trig}_\theta)=ev_{z_1,\ldots,z_n}(\ma^{trig}_\theta)=\ma^{trig}_\theta(z_1,\ldots,z_n)$.
\end{prop}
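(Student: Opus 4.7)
The plan is to prove both equalities by establishing compatible commutative diagrams that relate the universal Hamiltonian reductions from Section~\ref{4.4} to the finite-dimensional reduction $\psi_\theta$. For the second equality $\ev_{z_1,\ldots,z_n}(\ma^{trig}_\theta) = \ma^{trig}_\theta(z_1,\ldots,z_n)$, I would verify the commutativity of
\begin{center}
\begin{tikzcd}
U(\fg[t^{-1}])^\fg \ar[r] \ar[d] & U\hat{\fn}_+ \ar[d, "\ev_{z_1,\ldots,z_n}"] \\
(U\fg^{\otimes(n+1)})^\fg \ar[r, "\psi_\theta"] & U\fg^{\otimes n}
\end{tikzcd}
\end{center}
in which the top horizontal is the universal $\fb_-$-Hamiltonian reduction $U(\fg[t^{-1}])^\fg \to U\hat{\fn}_+$ defining $\ma^{trig}_\theta$, and the left vertical is the composition of the $(n+1)$-fold coproduct with $\ev_0 \otimes \ev_{z_1} \otimes \cdots \otimes \ev_{z_n}$. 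The top horizontal maps $\widetilde{\ma}$ to $\ma^{trig}_\theta$ by definition; the left vertical maps $\widetilde{\ma}$ to $\ma(0, z_1, \ldots, z_n)$ by Proposition~\ref{pr:AA}. Once commutativity is established, taking images gives $\ev_{z_1, \ldots, z_n}(\ma^{trig}_\theta) = \psi_\theta(\ma(0, z_1, \ldots, z_n)) = \ma^{trig}_\theta(z_1, \ldots, z_n)$, the desired equality.

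To establish commutativity, I would trace both paths via PBW. Given $x \in U(\fg[t^{-1}])^\fg$, decompose $x = \sum_i a_i b_i$ with $a_i \in U\hat{\fn}_+$ and $b_i \in U\fb_-$ using $\fg[t^{-1}] = \hat{\fn}_+ \oplus \fb_-$. Then the top-right path sends $x$ to $\sum_i \ev_{z_1,\ldots,z_n}(a_i) \cdot \theta(b_i) \in U\fg^{\otimes n}$, where $\theta$ is extended to a character of $U\fb_-$ by zero on $\fn_-$. On the other side, Proposition~\ref{pr:psiab} gives a parallel PBW description of $\psi_\theta$: it applies $\theta$ on the $U\fh$-part of the first tensor factor, annihilates the $U\fn_-$-part, and moves the $U\fn_+$-part to the remaining factors via the antipode. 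The two descriptions match because the Lie algebra morphism $\fg[t^{-1}] \to \fg^{\oplus(n+1)}$ underlying the left vertical arrow carries the constants $\fb_- \subset \fg$ to $\Delta(\fb_-)$, so the $\fb_-$-character $\theta$ in the universal picture and the $\theta$ applied to $\fh^{(0)}$ in the finite-dimensional picture coincide.

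For the first equality $\ev_{z_1^{-1},\ldots,z_n^{-1}}(\widetilde{\ma}^{trig}_\theta) = \ma^{trig}_\theta(z_1,\ldots,z_n)$, I would apply the same strategy to the $\fn_+$-version of the universal Hamiltonian reduction, with the shift by $\theta$ realized by the algebra automorphism $(\ev_\infty^\theta \otimes \Id) \circ \Delta$ of $U\hat{\fb}_-$. The inversion $z_i \leftrightarrow z_i^{-1}$ reflects the change of variables $t \leftrightarrow t^{-1}$ identifying $\widetilde{\ma}_\fg \subset U(\fg[t])$ with $\widetilde{\ma} \subset U(\fg[t^{-1}])$, consistent with Proposition~\ref{pr:AA}. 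The main technical obstacle is that $\psi_\theta$ is not itself a quantum Hamiltonian reduction by a single Lie subalgebra, since $\Delta(\fn_+) + \fb_-^{(0)} \subset \fg^{\oplus(n+1)}$ is not closed under the bracket; verifying commutativity therefore requires a two-stage PBW accounting (first by $\Delta(\fn_+)$, then by $\fb_-^{(0)}$ with character $\theta$) rather than a direct appeal to the general framework of Section~\ref{se:QHR}.
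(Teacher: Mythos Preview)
Your strategy is sound in outline but genuinely different from the paper's, and there is one point where your diagram chase is more delicate than you acknowledge.

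The paper does not set up an $(n+1)$-point diagram at all. Instead it passes to an \emph{$(n+2)$-point} Gaudin algebra: choosing $w_\infty,w_0,w_1,\ldots,w_n\in\bc^\times$ that are $PGL_2$-equivalent to $\infty,0,z_1,\ldots,z_n$, both evaluated universal algebras are identified with the image of $\ma(w_\infty,w_0,\ldots,w_n)\subset(U\fg^{\otimes(n+2)})^\fg$ under a single composite map that first reduces by the diagonal $\fg$ (Theorem~\ref{projective-invariance}), then uses the complement $\fn_+\oplus\fb_-\oplus\fg^{\oplus n}$, then applies the characters $0$ on $\fn_+$ and $\theta$ on $\fb_-$. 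The two ``extra'' factors absorb $\infty$ and $0$ symmetrically, so the $\fn_+$- and $\fb_-$-reductions live in \emph{different} tensor slots and the compatibility is automatic. This buys you both equalities at once and sidesteps any evaluation at $0$ or $\infty$.

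In your approach two things need more care. First, Proposition~\ref{pr:AA} is stated for nonzero evaluation points, so your claim that the left vertical sends $\widetilde{\ma}$ to $\ma(0,z_1,\ldots,z_n)$ needs a separate (easy) extension. Second, and more substantially, your top horizontal is Hamiltonian reduction by the \emph{constants} $\fb_-\subset\fg[t^{-1}]$, which after evaluation becomes reduction by $\Delta(\fb_-)$; but $\psi_\theta$ is reduction by $\Delta(\fn_+)$ followed by evaluation of $\theta$ on $\fh^{(0)}$, not on $\Delta(\fh)$. Your remark that ``$\fb_-\mapsto\Delta(\fb_-)$, so the characters coincide'' does not close this gap: one must actually check that on $\fg$-invariants the two-stage reduction by $\Delta(\fn_+)$ then $\fb_-^{(0)}$ agrees with the one-stage reduction by $\Delta(\fb_-)$. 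You flag this obstacle at the end, but the PBW bookkeeping you propose (left-multiplying by $U\hat\fn_+$ versus right-multiplying by $U\fn_+^{(0)}$ with an antipode, as in Proposition~\ref{pr:psiab}) is precisely where the argument has to be written out rather than asserted. The paper's $(n+2)$-point trick is designed exactly to avoid this comparison.
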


\begin{proof}
    Take any $(n+2)$-tuple $w_\infty,w_0,w_1,\ldots,w_n\in\bc^\times$that is $PGL_2$-equivalent to $\infty,0,z_1,\ldots,z_n$. Then both $ev_{z_1^{-1},\ldots,z_n^{-1}}(\widetilde{\ma}^{trig}_\theta)$ and $ev_{z_1,\ldots,z_n}(\ma^{trig}_\theta)$ are the images of $\ma(w_\infty,w_0,w_1,\ldots,w_n)\subset (U\fg\otimes U\fg\otimes U\fg^{\otimes n})^\fg$ under the following composite homomorphism
    \begin{multline*}
        (U\fg\otimes U\fg\otimes U\fg^{\otimes n})^\fg\to (U\fg\otimes U\fg\otimes U\fg^{\otimes n}/\Delta^{(n+2)}(\fg))^\fg\to\\ \to U(\fn_+)\otimes U\fb_-\otimes U\fg^{\otimes n} \to U\fg^{\otimes n},
    \end{multline*}
    where the first arrow is the natural projection to the quantum Hamiltonian reduction by the diagonal $\fg$, the second one embeds it to the universal enveloping of the complement subalgebra $\fn_+\oplus\fb_-\oplus\fg^{\oplus n}$, and the last arrow takes the $0$ character on $\fn_+$ and $\theta$ on $\fb_-$.

\end{proof}

\begin{thm} \begin{enumerate}
\item The subalgebra $\widetilde{\ma}^{trig}_{\theta}\subset U(\hat{\fb}_-)$ contains the quadratic $\widetilde{\Gamma}_{\theta}^{trig}:=\theta[-1]+\sum\limits_{a}h_a[-1]h^a[0]+\sum\limits_{a}e_a[-1]f_a[0]$. Moreover the centralizer of $\widetilde{\Gamma}_{\theta}^{trig}$ in $U(\hat{\fb}_-)$ is generated by  $\widetilde{\ma}^{trig}_{\theta}$ and $\fh[0]$.
    \item The subalgebra $\ma^{trig}_{\theta}\subset U(\hat{\fn}_+)$ contains the quadratic $\Gamma_{\theta}^{trig}:=\theta[-1]+\sum\limits_{a}f_a[-1]e_a[0]$. Moreover $\ma^{trig}_{\theta}$ is the centralizer of $\Gamma_{\theta}^{trig}$ in $U(\hat{\fn}_+)$.
    \end{enumerate}
\end{thm}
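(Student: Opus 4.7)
The theorem splits into two essentially parallel parts, each asserting (a) that a specified quadratic element lies in the universal trigonometric Gaudin algebra, and (b) that the algebra coincides with the centralizer of this element (augmented by $\fh[0]$ in part (1)). My plan is to handle (a) by direct computation of Hamiltonian reduction images, and (b) by descending to the classical limit and then lifting via the PBW filtration.

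\textbf{Quadratic containment.} The universal algebra $\widetilde{\ma}$ contains the quadratic element $\tilde{\Gamma} = \sum_a x^a[0]x_a[-1]$, the quantum lift of the classical generator from Section 4.1. For part (2), I would compute the image of $\tilde{\Gamma}$ under the Hamiltonian reduction $U(\fg[t^{-1}])^\fg \to U(\hat{\fn}_+)$. Expanding in a Chevalley-type basis,
$\tilde{\Gamma} = \sum_i h^i[0]h_i[-1] + \sum_\alpha f^\alpha[0] e_\alpha[-1] + \sum_\alpha e_\alpha[0] f^\alpha[-1]$, and using the PBW decomposition $\fg[t^{-1}] = \fb_- \oplus \hat{\fn}_+$ to push every $\fb_-$-factor to the right of its $\hat{\fn}_+$-partner, the resulting commutator corrections sum to a multiple of $\rho[-1]$. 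Evaluating the remaining $\fb_-$-factors via the character $\theta$ produces $(\theta - c\rho)[-1] + \sum_\alpha f_\alpha[-1] e_\alpha[0]$, which agrees with $\Gamma^{trig}_\theta$ up to the standard $\rho$-shift in the parametrization of $\theta$. Part (1) is analogous in $U(\hat{\fb}_-)$, followed by $(\ev_\infty^\theta \otimes \Id)\circ \Delta$; a direct coproduct calculation shows this operation contributes precisely the linear $\theta[-1]$ summand of $\widetilde{\Gamma}^{trig}_\theta$.

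\textbf{Centralizer identity.} The essential input is the known fact that $\widetilde{\ma}$ is the centralizer of $\tilde{\Gamma}$ in $U(\fg[t^{-1}])^\fg$: the classical Poisson statement is quoted in Section 4.1 from \cite{ir}, and the quantum version follows from the standard uniqueness lift, as in Theorem \ref{th:A-invariance-der}. The inclusions of $\widetilde{\ma}^{trig}_\theta$ and $\fh[0]$ into the centralizer of $\widetilde{\Gamma}^{trig}_\theta$ inside $U(\hat{\fb}_-)$ are immediate: the first because $\widetilde{\ma}^{trig}_\theta$ is commutative and contains $\widetilde{\Gamma}^{trig}_\theta$, the second by a direct check that $[\fh[0], \widetilde{\Gamma}^{trig}_\theta]=0$. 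For the reverse inclusion I would pass to the associated graded and identify, via a Hamiltonian reduction argument for Poisson algebras, the Poisson centralizer of the symbol of $\widetilde{\Gamma}^{trig}_\theta$ in $S(\hat{\fb}_-)$ with the image of the Poisson centralizer $\tilde{A}_\fg$ of $\tilde{\Gamma}$, together with $\fh[0]$. The quantum statement then follows by the standard inductive argument on the PBW filtration: given $y$ in the quantum centralizer, its principal symbol lies in the classical centralizer, so one subtracts an element of $\widetilde{\ma}^{trig}_\theta \cdot U(\fh[0])$ with matching symbol and iterates. Part (2) is the same but without the $\fh[0]$ summand, since $\fh \subset \fb_-$ has already been evaluated via $\theta$.

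\textbf{Main obstacle.} The delicate step is the classical identification of the Poisson centralizer after Hamiltonian reduction. The naive reduction argument works cleanly when $\theta$ is regular enough for the moment map preimage to be smooth and the reduction transverse; the statement of the theorem makes no such restriction, so one must either argue that the identification is stable under specialization from generic $\theta$ or establish it uniformly. If this regularity issue is genuinely obstructive, I would fall back on the Feigin-Frenkel description of $\widetilde{\ma}$ through the centre of the affine Lie algebra at critical level: the trigonometric reduction corresponds to a quantum Drinfeld-Sokolov-type reduction of this centre, whose image is characterized by a vertex-algebraic uniqueness property that directly implies the claimed centralizer identity.
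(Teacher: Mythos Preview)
Your quadratic containment step is correct and matches the paper: both simply observe that $\widetilde{\Gamma}^{trig}_\theta$ and $\Gamma^{trig}_\theta$ are the images of $\tilde\Gamma\in\widetilde{\ma}$ under the respective reduction maps.

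The centralizer step, however, takes a different route from the paper and has a genuine gap. Your plan is to transport the known identification ``$\widetilde{\ma}$ is the centralizer of $\tilde\Gamma$ in $U(\fg[t^{-1}])^\fg$'' through Hamiltonian reduction. The problem is that this identification lives in the \emph{$\fg$-invariants}, whereas the centralizer you must compute lives in all of $U(\hat{\fb}_-)$ (resp.\ $U(\hat\fn_+)$). The reduction map of Proposition~\ref{pr:HamReductionToU} is only an embedding $N_A(J)/J\hookrightarrow U(\hat{\fb}_-)$, not an isomorphism, because $\hat{\fb}_-$ is not an ideal of $\fg[t^{-1}]$. So even a perfect understanding of the Poisson centralizer in the reduced algebra does not bound the Poisson centralizer in $S(\hat{\fb}_-)$: an element of $S(\hat{\fb}_-)$ commuting with the symbol need not lie in the image of the reduction, and there is no mechanism in your outline for ruling this out. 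Your diagnosis of the obstacle as a regularity issue for $\theta$ is off target; in part~(2) the principal symbol of $\Gamma^{trig}_\theta$ is $\sum_a f_a[-1]e_a[0]$, independent of $\theta$, so the classical statement you need is $\theta$-free and the difficulty persists for every $\theta$.

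The paper avoids this entirely. Instead of transporting a known centralizer, it directly bounds the Poincar\'e series of the Poisson centralizer in $S(\hat{\fb}_-)$ (resp.\ $S(\hat\fn_+)$) by a degeneration argument in the style of \cite{r3}: one writes down an explicit one-parameter family of Poisson automorphisms $\varphi_s$ under which (a rescaling of) the symbol of the quadratic element tends to $\theta[-1]$ (resp.\ $f[-1]$ for $f$ principal nilpotent), while the candidate subalgebra tends to $S\fh[t^{-1}]$ (resp.\ $S(t^{-1}\fz_\fg(f)[t^{-1}])$). These limits are exactly the Poisson centralizers of $\theta[-1]$ and $f[-1]$, which one computes by hand; semicontinuity of centralizer size under such a limit then gives the required upper bound. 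The inclusion the other way and the lift through the PBW filtration are as you describe. Your fallback via the Feigin--Frenkel center would require substantially more machinery and is not what the paper does.
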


\begin{proof}
    Observe that both $\widetilde{\Gamma}_{\theta}^{trig}$, $\Gamma_{\theta}^{trig}$ are the images of $\Gamma\in \widetilde{\ma}\subset U(\fg[t^{-1}])$. To describe the centralizers of $\widetilde{\Gamma}_{\theta}^{trig}$, $\Gamma_{\theta}^{trig}$ we just have to repeat the proof of \cite[Theorem~1]{r3}. Namely, the proof breaks into the following steps:
    \begin{enumerate}
        \item We have to show that the centralizer is not bigger than expected. So it is sufficient to see it in the classical limit, i.e. we have to check that the Poisson centralizers of the leading terms (with respect to the PBW filtration) of $\widetilde{\Gamma}_{\theta}^{trig}$, $\Gamma_{\theta}^{trig}$ in $S\hat{\fb}$, $S\hat{\fn}$ are $\gr \widetilde{\ma}^{trig}_{\theta}\cdot S\fh[0]$ and $\ma^{trig}_{\theta}$, respectively. Moreover, it suffices to check that the Hilbert-Poincar\'e series of the centralizers in question are not greater than those expected. Slightly abusing notations, we denote the leading terms of $\widetilde{\Gamma}_{\theta}^{trig}$, $\Gamma_{\theta}^{trig}$ in $S\hat{\fb}$, $S\hat{\fn}$ by the same letters $\widetilde{\Gamma}_{\theta}^{trig}$ and $\Gamma_{\theta}^{trig}$.
        \item To any regular $\eta\in\fh$, we assign a $1$-parametric family $\widetilde{\varphi}_s$ of Poisson automorphisms of $S\hat{\fb}$ such that $$\lim\limits_{s\to\infty}s^{-1}\widetilde{\varphi}_s(\widetilde{\Gamma}^{trig}_{\theta})=\eta[-1] \text{ and } \lim\limits_{s\to\infty}\widetilde{\varphi}_s(\gr \widetilde{\ma}^{trig}_{\theta}\cdot S\fh[0])=S\fh[t^{-1}].$$ Namely, we define $\widetilde{\varphi}_s(h_a[0]):=h_a[0]+s(\eta,h_a)$ and $\widetilde{\varphi}_s(e_a[0])=e_a[0];\ \widetilde{\varphi}_s(e_a[k])=e_a[k];\ \widetilde{\varphi}_s(f_a[k])=f_a[k];\ \widetilde{\varphi}_s(h_a[k])=h_a[k]$ for all $k<0$. 
        
        Similarly, to the principal nilpotent element $f\in\fn_-\subset\fg$, we assign the family $\varphi_s$ of Poisson automorphisms of $S\hat{\fn}$ defined as $\varphi_s(e_a[0]):=e_a[0]+s(f,e_a)$ and $\varphi_s(e_a[k])=e_a[k];\ \varphi_s(f_a[k])=f_a[k];\ \varphi_s(h_a[k])=h_a[k]$ for all $k<0$. This family of automorphisms satisfies the properties $\lim\limits_{s\to\infty}s^{-1}\varphi_s(\Gamma^{trig}_{\theta})=e[-1]$ and $\lim\limits_{s\to\infty}\varphi_s(\gr \ma^{trig}_{\theta})=S(t^{-1}\fz_\fg(e)[t^{-1}])$, where $e\in\fn_+\subset\fg$ is the opposite principal nilpotent element.
        \item It is suffices to check that the above limit subalgebras are Poisson centralizers of the limits of the quadratic elements, i.e. of $\eta[-1]$ and $e[-1]$, respectively. Indeed, this implies that $\widetilde{\varphi}_s(\gr\widetilde{\mathcal{A}}^{trig}_\theta)$ is the Poisson centralizer of $\widetilde{\varphi}_s(\widetilde{\Gamma_\theta^{trig}})$ for $s$ large enough, and hence for any $s\in\mathbb{C}$ since $\widetilde{\varphi}_s$ is a Poisson automorphism. Similarly, for $\gr\mathcal{A}_\theta^{trig}$.  
        \item By the argument entirely similar to \cite[Lemma 4]{r3}, $S\fh[t^{-1}]$ is exactly the Poisson centralizer of $\eta[-1]$ in $S\hat{\fb}$ and $S(t^{-1}\fz_\fg(e)[t^{-1}])$ is exactly the Poisson centralizer of $e[-1]$ in $S\hat{\fn}$, so the Hilbert-Poincar\'e series of the centralizers are not bigger than the expected. 
        
        Let us reproduce this argument for completeness. Choose a basis $\{x_a\}$ in $\fg$ compatible with $\mathrm{ad}_\eta$ (respectively, with $\mathrm{ad}_e$), i.e. an eigenbasis for $\mathrm{ad}_\eta$ and a Jordan basis for $\mathrm{ad}_e$, respectively. Multiplying its elements by negative powers of $t$, we get bases in $\hat \fb_-$ (respectively, in $\hat \fn_+$). Take any ordering of the basis $\{x_a\}$ such that for $[\eta,x_a]=0,\ [\eta,x_b]\ne0$ (respectively, $[e,x_a]=0,\ [e,x_b]\ne0$) we have $x_a<x_b$. In the second case, we need additionally $[e,x_b]<x_b$. Extend these orders to the bases $\{x_a[k]\}$ in such a way that $x_a[k]<x_b[m]$ if $x_a<x_b$ or if $x_a=x_b$ and $k>m$. This determines the lexicographic ordering on the monomials of $\{x_a[k]\}$ preserved by $\mathrm{ad}_{\eta[-1]}$ (respectively, by $\mathrm{ad}_{e[-1]}$). Now it remains to notice that for any element not from $S(\fh[t^{-1}])$ (respectively, not from $S(t^{-1}\fz_\fg(e)[t^{-1}])$), its leading term is mot annihilated by $\mathrm{ad}_{\eta[-1]}$ (respectively, by $\mathrm{ad}_{e[-1]}$).    
    \end{enumerate}
    
\end{proof}

Now we can compare our trigonometric Gaudin subalgebras with those introduced by Molev and Ragoucy in \cite{mr} in type $A$. Namely, in \cite{mr}, some higher Hamiltonians in $U(\hat \fb_+)$ are constructed (i.e. the roles of positive and negative roots are switched, compared to ours).

\begin{corol}
    The higher trigonometric Gaudin Hamiltonians in type A from \cite[Section 2]{mr} lie in $\tilde \ma^{trig}_0 \subset U(\hat \fb_+)$.
\end{corol}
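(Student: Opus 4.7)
The two subalgebras live in ``opposite'' Borel subalgebras but are interchanged by the Cartan involution $\sigma: U(\fg[t^{-1}]) \to U(\fg[t^{-1}])$ swapping $e_a \leftrightarrow f_a$. Let $\widetilde{\ma}^{trig,+}_0 := \sigma(\widetilde{\ma}^{trig}_0) \subset U(\hat{\fb}_+)$; this is the subalgebra on the right-hand side of the corollary statement. By the preceding theorem applied on the $\fb_+$-side (which is symmetric in $\fn_\pm$), its product with $U(\fh[0])$ is the centralizer in $U(\hat{\fb}_+)$ of the quadratic element
\[
\sigma(\widetilde{\Gamma}^{trig}_0) \;=\; \sum_a h_a[-1] h^a[0] + \sum_a f_a[-1] e_a[0].
\]
Thus it suffices to verify: (i) the quadratic Molev-Ragoucy Hamiltonian coincides with $\sigma(\widetilde{\Gamma}^{trig}_0)$ up to a constant and an element of $U(\fh[0])$, and (ii) the higher Molev-Ragoucy Hamiltonians lie in $(U(\hat{\fb}_+))^{\fh[0]}$ without a $U(\fh[0])$-summand that falls outside $\widetilde{\ma}^{trig,+}_0$ itself.

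For step (i), this is a direct comparison of the explicit trigonometric quadratic Hamiltonian written in \cite[\S 2]{mr} (in $\fgl_N$ with the standard trigonometric $r$-matrix) with our definition, after applying $\sigma$ to swap positive and negative roots. For step (ii), the Molev-Ragoucy Hamiltonians are constructed as coefficients of a quantum trace of a power of the trigonometric $L$-operator, and their classical limits lie in the image of $S(t^{-1}\fg[t^{-1}])^\fg$ under the projection $S(\hat{\fb}_+)\to S(\hat{\fb}_+)/S(\hat{\fb}_+)\fh[0]^+$; hence after restricting to $\fh[0]$-weight zero the leading symbols land in $\gr \widetilde{\ma}^{trig,+}_0$. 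Combined with commutativity of the MR family (which by step (i) forces each MR Hamiltonian into the centralizer $\widetilde{\ma}^{trig,+}_0 \cdot U(\fh[0])$), step (ii) then pins each MR Hamiltonian in $\widetilde{\ma}^{trig,+}_0$.

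Alternatively, and perhaps more robustly, one can argue via evaluation: by the preceding proposition, the image of $\widetilde{\ma}^{trig,+}_0$ under $ev_{z_1,\ldots,z_n}$ is precisely $\ma^{trig}_0(z_1,\ldots,z_n) \subset U\fg^{\otimes n}$, and by \cite{mtv} the MR Hamiltonians evaluate to operators that act on every weight space of every tensor product $V(\underline{\lambda})_\mu$ through the same algebra. Since the assignment $x \mapsto (ev_{z_1,\ldots,z_n}(x))_{n, z_i}$ is injective on $U(\hat{\fb}_+)$ (by an Ado-type argument and Theorem \ref{th:actiontensorprod}), the inclusion $\text{MR Hamiltonians} \subset \widetilde{\ma}^{trig,+}_0$ follows. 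The main obstacle will be step (i) --- matching the MR normalization of the trigonometric $r$-matrix and spectral parameter with our loop-algebra conventions --- together with a careful check in step (ii) that no spurious $U(\fh[0])$-terms accumulate in the higher MR Hamiltonians at subleading orders.
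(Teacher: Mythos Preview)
Your core idea—place the Molev--Ragoucy Hamiltonians inside the centralizer of the quadratic element and invoke the preceding theorem—is exactly the paper's argument. The paper's proof is one sentence: the MR higher Hamiltonians form a universal commutative subalgebra of $U(\hat\fb_+)$ that contains $\Gamma_0^{trig}$; since (with the $\pm$ roles switched, as noted immediately before the corollary) the preceding theorem identifies $\widetilde{\ma}^{trig}_0$ via this centralizer, the inclusion follows.

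Where you differ is in how much scaffolding you erect. The explicit Cartan-involution bookkeeping, the two-step programme (i)--(ii), and the alternative evaluation argument through $\ma^{trig}_0(z_1,\dots,z_n)$ are all absent from the paper; it simply treats the switch of Borels as a parenthetical remark and does not separate the quadratic check from the higher ones. You are also more careful than the paper about the extra factor $U(\fh[0])$ in the centralizer (part (1) of the theorem gives $\widetilde{\ma}^{trig}_0 \cdot \fh[0]$, not $\widetilde{\ma}^{trig}_0$ alone). The paper does not comment on this; your concern is legitimate, though your proposed resolution in step (ii)—via classical limits landing in a certain projected image—is stated too loosely to be convincing as written, and the evaluation-injectivity route would need a sharper statement than ``Ado-type argument'' to be a proof. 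In short: same approach, but you are doing more work (and worrying about a real subtlety) than the paper does.
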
 
\begin{proof}
    The higher Hamiltonians described in \cite{mr} come from a universal commutative subalgebra in $U(\hat \fb_+)$ that contains $\Gamma_{0}^{trig}$.
\end{proof}







\subsection{Compactification of the family of trigonometric Gaudin subalgebras}
\label{comp}
We will now consider the closure of the family $\ma_{\theta}^{trig}(z_1, \ldots, z_n)$ with fixed $\theta \in \fh$. Since we have defined trigonometric Gaudin subalgebras as the images of Gaudin subalgebras, it is not surprising that the limit trigonometric Gaudin subalgebras are images of the limit Gaudin subalgebras.

Let $ C \in \overline M_{n+2}$, a genus 0 nodal curve with $ n+2$ marked points $ z_0, \dots, z_{n+1}$.  From section \ref{se:compactGaudin}, we have a (limit) Gaudin subalgebra $ \ma(C) \subset (U \fg^{\otimes n+1})^\fg$ and we define $ \ma_{\theta}^{trig}(C) = \psi_{\theta}(\ma(C))$, where $ \psi_\theta : (U \fg^{\otimes n+1})^\fg \rightarrow U \fg^{\otimes n}$ was constructed in section \ref{se:deftrig}.

We endow $ \ma_\theta^{trig}(C) $ with the filtration coming from intersecting with the PBW filtration of $ U \fg^{\otimes n} $.

\begin{thm}
\label{trigcomp}
The subalgebras $ \ma_\theta^{trig}(C)$ form a flat family of subalgebras faithfully parametrized by $ \overline M_{n+2}$.  Moreover, this is the compactification of the family of trigonometric Gaudin algebras.
\end{thm}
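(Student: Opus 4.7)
The plan is to prove the theorem in three stages: flatness of the family, faithful parametrization, and identification as the compactification.

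For flatness, I would exploit the known flatness of $\{\ma(C)\}_{C \in \overline M_{n+2}}$ from Section \ref{se:compactGaudin}. Since $\psi_\theta$ is a fixed filtered linear map, the image dimensions $\dim \ma_\theta^{trig}(C)^{(k)}$ are lower semi-continuous in $C$ and achieve the value given by Theorem \ref{size}(4) for generic $C$. To obtain a matching upper bound at boundary points, it suffices to produce $r$ algebraically independent elements of $\ma(C) \cap \ker \psi_\theta$. The central elements $\widetilde\Phi_l^{(0)}$ are contained in $\ma(C)$ for every $C$ (either by maximality of $\ma$, or explicitly via the decomposition of Proposition \ref{pr:limitGaudin}), and Lemma \ref{le:thetarho} gives $\psi_\theta(\widetilde\Phi_l^{(0)}) = \Phi_l(\theta + \rho)$, so $\widetilde\Phi_l^{(0)} - \Phi_l(\theta + \rho) \in \ma(C) \cap \ker \psi_\theta$ for $l = 1, \ldots, r$. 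These generate a polynomial subalgebra with Hilbert-Poincar\'e series $\prod_l (1 - t^{d_l})^{-1}$; dividing the series of $\ma(C)$ from Theorem \ref{th:homGaudin}(4) by this factor yields exactly the series from Theorem \ref{size}(4), so semi-continuity forces equality throughout.

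For faithful parametrization, I would analyze the quadratic layer and invoke the Aguirre-Felder-Veselov theorem. By Lemma \ref{le:gamma}, the degree-$2$ piece of $\ma(C) \subset U\fg^{\otimes n+1}$ is $\gamma(Q(C)) \oplus \spann(\omega^{(i)} : i = 0, \ldots, n)$, where $\gamma(t_{ij}) = \Omega^{(ij)}$ and $Q(C) \subset \fs_{n+1}^1$ is the $n$-dimensional commutative subspace from Theorem \ref{th:AFV}. A direct computation via Proposition \ref{pr:psiab} (consistent with the explicit formula for $H_{i,\theta}^{trig}$) gives $\psi_\theta(\Omega^{(ij)}) = \Omega^{(ij)}$ for distinct $i, j \geq 1$ and $\psi_\theta(\Omega^{(0j)}) = \theta^{(j)} - \sum_k \Omega_-^{(jk)}$ for $j \geq 1$. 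The $\theta^{(j)}$-terms render these $\binom{n+1}{2}$ images linearly independent in $U\fg^{\otimes n}$, so $\psi_\theta \circ \gamma$ is an injective linear map on $\fs_{n+1}^1$. The induced morphism of Grassmannians is a closed embedding, and composing with the AFV isomorphism $\overline M_{n+2} \xrightarrow{\sim} \overline N_{n+1}$ produces a closed embedding $\overline M_{n+2} \hookrightarrow \Gr$ already at filtered level $k = 2$, which is enough for faithful parametrization in the sense of Section \ref{se:Family}.

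Finally, the restriction of the family to $M_{n+2}$ agrees with Definition \ref{def:trigGaudin} by construction, and $M_{n+2}$ is an open dense subscheme of the proper scheme $\overline M_{n+2}$; hence this flat, faithful family is the compactification per Section \ref{se:Family}. The main challenge lies in the flatness step: verifying that the displayed $r$ elements span $\ma(C) \cap \ker \psi_\theta$ to the full expected rank in every filtered degree at every $C$. Tracking this through the recursive decomposition of Proposition \ref{pr:limitGaudin} (where $\psi_\theta$ acts as a trigonometric reduction on the component carrying $0$ and leaves the other components intact, thereby producing a decomposition of $\ma_\theta^{trig}(C)$ into trigonometric and homogeneous pieces as claimed in Theorem \ref{trigcomp}) is the technical heart of the argument.
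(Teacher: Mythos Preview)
Your faithful-parametrization argument via the quadratic layer and the Aguirre--Felder--Veselov theorem is exactly the paper's approach (Lemma~\ref{le:injective} and Theorem~\ref{thm:trigiso}); the paper's injectivity proof is slightly cleaner than your $\theta^{(j)}$-term argument because it projects to $\fg^{\otimes\{i,j\}}$ and observes that $\Omega^{(ij)}, \Omega_+^{(ij)}, \Omega_-^{(ij)}$ are linearly independent, which works even for $\theta=0$.

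The flatness argument, however, has its semi-continuity direction backwards. Since $\{\ma(C)^{(k)}\}$ is a vector bundle over $\overline M_{n+2}$ and $\psi_\theta$ is a fixed linear map, the function $C\mapsto\dim\psi_\theta(\ma(C)^{(k)})$ is lower semi-continuous in the sense that it can only \emph{drop} on closed subsets. Thus the inequality $\dim\psi_\theta(\ma(C)^{(k)})\le$ (generic value) is automatic at boundary points; what you must prove is the reverse inequality, i.e.\ that the kernel does \emph{not} grow. Exhibiting the $r$ elements $\widetilde\Phi_l^{(0)}-\Phi_l(\theta+\rho)$ in the kernel bounds the kernel from below, hence the image from above---the direction you already have for free. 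So your argument as written establishes nothing new.

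You correctly identify the real difficulty in your last paragraph, but the recursive decomposition you invoke (Proposition~\ref{pr:limitGaudin}) only settles the ``easy'' boundary strata where $z_0$ and $z_{n+1}$ lie on the same component (this is Corollary~\ref{co:trig-easy-limit}). When they lie on different components, the paper explicitly observes that $\psi_\theta(\Delta^\CB(\ma(0,w_1,\dots,w_m)))$ and $\psi_\theta(j_{B_0}^{n+1}(\ma(C_0)))$ are \emph{neither} homogeneous nor trigonometric Gaudin algebras, so the recursion does not close up. The paper handles this via Lemma~\ref{sizelemma}: for caterpillar curves it proves algebraic independence of an explicit generating set by taking leading terms with respect to an auxiliary filtration and checking linear independence of differentials at the point $(f+h,f,\dots,f)$. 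Proposition~\ref{pr:HPsame} then reduces the general case to this one. This hands-on lower bound on the image is precisely the missing ingredient in your outline.
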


\begin{proof}
    By Proposition \ref{pr:HPsame}, we see the Hilbert-Poincar\'e series of $ \CA^{trig}_\theta(C)$ is the same for any $ C \in \overline M_{n+2} $.  Thus, we have a flat family and (as explained in section \ref{se:Family}), we obtain maps $ \overline M_{n+2} \rightarrow \Gr(r_k, (U \fg^{\otimes n})^{(k)}$ for each $ k \in \bn $.  By Theorem \ref{thm:trigiso}, this map is an isomorphism when $ k = 2$.  Thus the parameterization is faithful.  
\end{proof}

Using Proposition \ref{pr:limitGaudin}, we will now give a description of the subalgebra $ \ma_\theta^{trig}(C)$.  As in Proposition \ref{pr:limitGaudin}, we let $ C_\infty$ be the component of $ C $ containing $ z_{n+1}$, we identify $ C_\infty $ with $ \CP^1 $ in such a way that $ z_{n+1} $ is identified with $ \infty$, and we let $ C_0, \dots, C_m$ be the curves attached at points $ w_0, \dots, w_m$ with corresponding labels $ B_0, \dots, B_m$.  Note that the index set for the marked points of $ C $ is $ \{0, \dots, n+1 \} $ so that $ \CB = (B_0, \dots, B_m) $ is an ordered set partition of $ \{0,\ldots, n\}$.  We choose this ordering so that $ 0 \in B_0$.  We further assume (this is possible by using an automorphism of $ \CP^1$) that $ w_0 = 0$.

Applying Proposition \ref{pr:limitGaudin}, we see that
$$  \ma_\theta^{trig}(C) = \psi_\theta \Bigl( \Delta^\CB(\ma(0, w_1, \dots, w_m)) \otimes_Z \bigotimes_{k=0}^m \ma(C_k) \Bigr)$$
and thus we should calculate $ \psi_\theta(\Delta^\CB(\ma(0, w_1, \dots, w_m)) $ and $ \psi_\theta( j_{B_k}^{n+1} (\ma(C_k))) $, where $ j_{B_k}^{n+1} : U \fg^{\otimes B_k} \rightarrow U \fg^{\otimes n+1}$ denotes the inclusion of these tensor factors, as defined in Section \ref{se:Delta}.

There are two easy cases.
\begin{lem}
  \begin{enumerate}
      \item Suppose that $ k > 0$ (i.e. $ z_0 $ does not lie on $ C_k$).  Then $ \psi_\theta(j_{B_k}^{n+1}(\ma(C_k))) = j_{B_k}^n(\ma(C_k))$.
      \item Suppose that $ B_0 = \{0 \}$ (i.e. $ z_0 $ lies on $C_\infty$).  Then $$ \psi_\theta(\Delta^\CB(\ma(0, w_1, \dots, w_m))) = \Delta^{\CB'}(\ma^{trig}(w_1, \dots, w_m)).$$
      where $\CB'= (B_1, \dots, B_m)$.
  \end{enumerate}  
\end{lem}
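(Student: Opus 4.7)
The plan is to verify both parts by directly applying Proposition \ref{pr:psiab}, which provides the explicit formula for $\psi$ in terms of the PBW decomposition of the $0$-th tensor factor. In each case, the computation reduces to tracking how the diagonal embeddings $j_{B_k}^{n+1}$ and $\Delta^{\CB}$ interact with this decomposition; no new ideas are required beyond a careful compatibility check between iterated coproducts.

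For part (1), the hypothesis $k>0$ combined with $0\in B_0$ forces $0\notin B_k$, so the $0$-th tensor factor of $j_{B_k}^{n+1}(x)$ equals $1$ for every $x\in\ma(C_k)$. I would first verify that $j_{B_k}^{n+1}(\ma(C_k))\subset (U\fg^{\otimes(n+1)})^{\fg}$: since $\ma(C_k)\subset (U\fg^{\otimes B_k})^{\fg}$ and the tensor factors outside $B_k$ carry the trivial representation of the diagonal $\fg$, this is immediate, so $\psi_\theta$ applies. The PBW decomposition of the $0$-th factor of $j_{B_k}^{n+1}(x)$ is trivial ($a_-=a_0=a_+=1$), so Proposition \ref{pr:psiab} yields
$$\psi_\theta(j_{B_k}^{n+1}(x)) = \epsilon(1)\,\theta(1)\,j_{B_k}^n(x)\,\Delta^n(S(1)) = j_{B_k}^n(x),$$
which gives part (1).

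For part (2), the condition $B_0=\{0\}$ means that $\Delta^{\CB}:U\fg^{\otimes(m+1)}\to U\fg^{\otimes(n+1)}$ sends the first tensor factor into the $0$-th tensor factor of the target, while distributing the other $m$ factors across the blocks $B_1,\ldots,B_m$ via $\Delta^{\CB'}:U\fg^{\otimes m}\to U\fg^{\otimes n}$. Writing $u=\sum_k a_{k,-}a_{k,0}a_{k,+}\otimes b_k\in\ma(0,w_1,\ldots,w_m)$ with the first factor PBW-decomposed and $b_k\in U\fg^{\otimes m}$, we have
$$\Delta^{\CB}(u)=\sum_k (a_{k,-}a_{k,0}a_{k,+})^{(0)}\cdot \Delta^{\CB'}(b_k).$$
Applying Proposition \ref{pr:psiab} directly gives
$$\psi_\theta(\Delta^{\CB}(u))=\sum_k \epsilon(a_{k,-})\,\theta(a_{k,0})\,\Delta^{\CB'}(b_k)\cdot\Delta^n(S(a_{k,+})).$$

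The decisive ingredient is the identity $\Delta^n=\Delta^{\CB'}\circ \Delta^m$ as algebra maps $U\fg\to U\fg^{\otimes n}$, which is immediate on generators $x\in\fg$ (both send $x$ to $\sum_{i=1}^n x^{(i)}$) and extends to all of $U\fg$ by the universal property. Using this identity, we can pull $\Delta^{\CB'}$ outside the sum:
$$\psi_\theta(\Delta^{\CB}(u))=\Delta^{\CB'}\Bigl(\sum_k \epsilon(a_{k,-})\,\theta(a_{k,0})\,b_k\cdot\Delta^m(S(a_{k,+}))\Bigr)=\Delta^{\CB'}(\widetilde\psi_\theta(u)),$$
where $\widetilde\psi_\theta:(U\fg^{\otimes(m+1)})^\fg\to U\fg^{\otimes m}$ denotes the analog of $\psi_\theta$ for the $(m+1)\to m$ tensor-factor setup. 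Then Definition \ref{def:trigGaudin} gives $\widetilde\psi_\theta(\ma(0,w_1,\ldots,w_m))=\ma_\theta^{trig}(w_1,\ldots,w_m)$, which concludes part (2). The only nonobvious point in the whole argument is the coproduct compatibility $\Delta^n=\Delta^{\CB'}\circ\Delta^m$; otherwise the proof is pure index bookkeeping, so I anticipate no genuine obstacle.
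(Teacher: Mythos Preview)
Your proof is correct and is essentially a detailed unwinding of the paper's argument: for (1) the paper just observes that the composite $(U\fg^{\otimes n})^{\fg}\xrightarrow{a\mapsto 1\otimes a}(U\fg^{\otimes n+1})^{\fg}\xrightarrow{\psi_\theta}U\fg^{\otimes n}$ is the identity, and for (2) it asserts without further detail that the square with horizontal arrows $\Delta^{\CB},\Delta^{\CB'}$ and vertical arrows $\psi_\theta$ commutes. Your explicit computation via Proposition~\ref{pr:psiab} together with the coassociativity identity $\Delta^n=\Delta^{\CB'}\circ\Delta^m$ is precisely what verifies this commutativity, so the two approaches coincide.
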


\begin{proof}
    For (1), we note that the map $$ (U \fg^{\otimes n})^{\fg} \xrightarrow{ a\mapsto 1 \otimes a} (U \fg^{\otimes n+1})^\fg \xrightarrow{\psi_\theta} U \fg^{\otimes n}$$ is the identity.

    For (2), we note that the following diagram commutes
$$
\begin{tikzcd}
 (U \fg^{\otimes m+1})^{\fg} \ar[r,"\Delta^\CB"] \ar[d,"\psi_\theta"] & (U \fg^{\otimes n+1})^\fg \ar[d,"\psi_\theta"] \\
 U \fg^{\otimes m} \ar[r, "\Delta^{\CB'}"] & U \fg^{\otimes n}
\end{tikzcd}
$$

\end{proof}

\begin{figure} 
	\includegraphics[trim=0 60 0 40, clip,width=\textwidth]{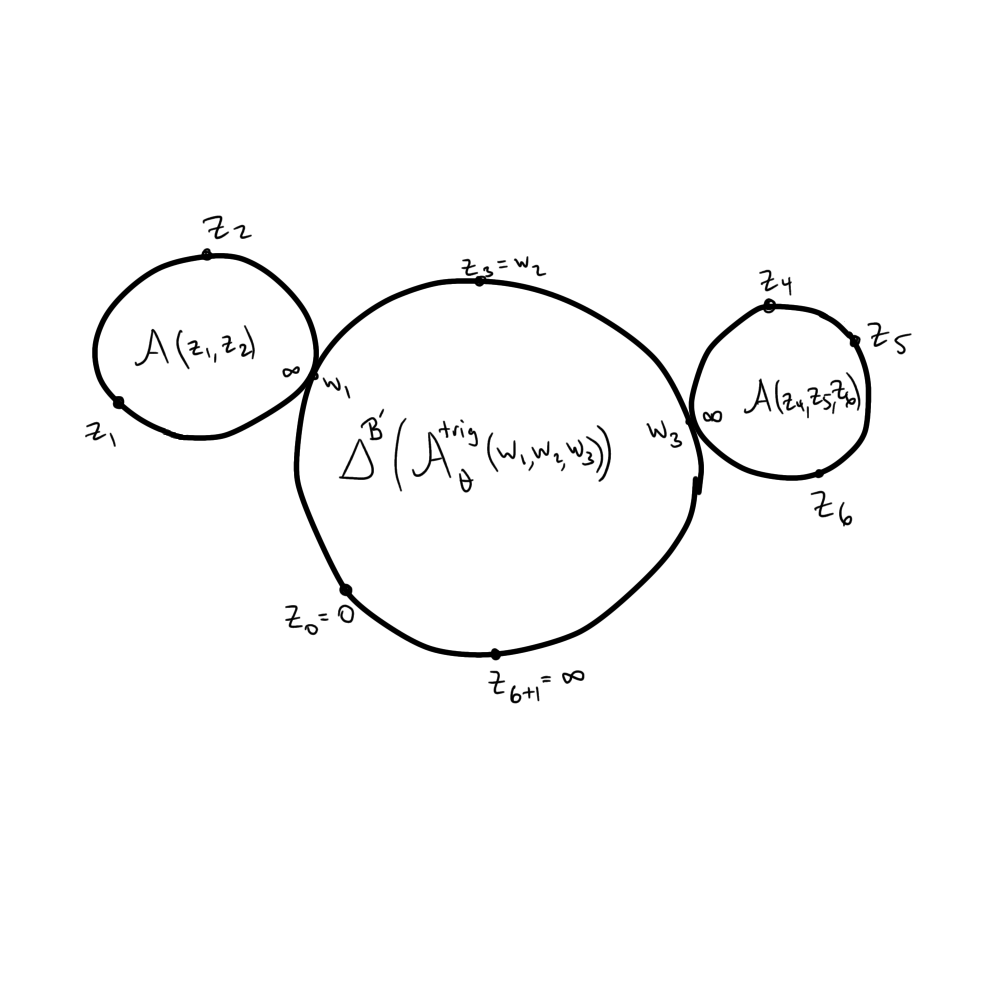}
	\caption{This is a curve $ C \in \overline M_{6+2}$.  We have $n = 6$, $m= 4 $, and $ \CB = (\{0\}, \{1,2\}, \{3\}, \{4,5,6\})$.  Each component contributes the algebra shown and the whole curve gives $ \ma^{trig}_\theta(C) = \Delta^{\CB'}(\ma^{trig}_\theta(w_1, w_2, w_3)) \ma(z_1, z_2) \ma(z_4, z_6, z_6) $. } \label{fig:TrigGaudin1}
\end{figure}

The lemma leads immediately to the following description of the limit trigonometric Gaudin subalgebra in the case where $z_0$ and $z_{n+1}$ lie on the same component, which is illustrated in Figure \ref{fig:TrigGaudin1}.
\begin{corol}
\label{co:trig-easy-limit}
If $ z_0 \in C_\infty$, then
$$ \ma^{trig}_\theta(C) = \Delta^{\CB'}(\ma^{trig}_\theta(w_1, \dots, w_m)) \otimes_{Z'} \bigotimes_{k=1}^m \ma(C_k)$$
where $ Z' = \Delta^{\CB'}(Z(U\fg^{\otimes m}))$.  Moreover, its Hilbert-Poincar\'e series is the same as that for the trigonometric Gaudin subalgebras.
\end{corol}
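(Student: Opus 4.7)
The plan is to apply the algebra homomorphism $\psi_\theta$ directly to the limit formula for $\ma(C)$ from Proposition \ref{pr:limitGaudin} and then compare Hilbert--Poincar\'e series. Under the hypothesis $z_0 \in C_\infty$, the ordered set partition satisfies $B_0 = \{0\}$, so the attached curve $C_0$ is empty and $\ma(C_0) = \C$. Hence Proposition \ref{pr:limitGaudin} specializes to
\[
\ma(C) = \Delta^\CB(\ma(0, w_1, \ldots, w_m)) \otimes_Z \bigotimes_{k=1}^m j_{B_k}^{n+1}(\ma(C_k)), \quad Z = \Delta^\CB(Z(U\fg)^{\otimes(m+1)}).
\]

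Next, I will apply $\psi_\theta$ factor by factor using the preceding lemma. Part (2) identifies the image of the first factor as $\Delta^{\CB'}(\ma^{trig}_\theta(w_1,\ldots,w_m))$, and Part (1) identifies $\psi_\theta(j_{B_k}^{n+1}(\ma(C_k))) = j_{B_k}^n(\ma(C_k))$ for each $k \geq 1$. For the shared central subalgebra, Lemma \ref{le:thetarho} sends the $k=0$ tensor factor of $Z$ into scalars, while each factor for $k \geq 1$ maps identically onto $\Delta^{B_k}(Z(U\fg))$; thus $\psi_\theta(Z) = Z'$. Since $\ma^{trig}_\theta(C) = \psi_\theta(\ma(C))$ is the subalgebra of $U\fg^{\otimes n}$ generated by the images of the two tensor factors, these identifications yield a surjective algebra homomorphism from the abstract tensor product $\Delta^{\CB'}(\ma^{trig}_\theta(w_1,\ldots,w_m)) \otimes_{Z'} \bigotimes_{k=1}^m j_{B_k}^n(\ma(C_k))$ onto $\ma^{trig}_\theta(C)$.

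To upgrade this surjection to an isomorphism I will compare Hilbert--Poincar\'e series. Combining Theorem \ref{size}(4) for the first factor, Theorem \ref{th:homGaudin}(4) for each $\ma(C_k)$, and the elementary series of $Z'$, a short exponent tally using $\sum_{k=1}^m |B_k| = n$ shows that the source has HP series $\prod_l (1-t^{d_l})^{-nd_l}$, which by Theorem \ref{trigcomp} equals the HP series of $\ma^{trig}_\theta(C)$. A filtered surjection of algebras with identical Hilbert--Poincar\'e series is automatically an isomorphism, so both claims of the corollary follow simultaneously.

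The main obstacle I foresee lies in the identification $\psi_\theta(Z) = Z'$. One must check that the $k=0$ factor of $Z$ contributes only scalars under $\psi_\theta$, with no subtle interaction with the other tensor factors, and that what remains is exactly $Z'$ rather than merely a subalgebra containing it. This reduces to combining Lemma \ref{le:thetarho} with the block decomposition of $\Delta^\CB$ as $\Delta^{\{0\}}$ on the $k=0$ strand and $\Delta^{\CB'}$ on the remaining strands; it is precisely here that the hypothesis $z_0 \in C_\infty$ is used in an essential way, and it must be handled carefully to avoid phantom relations between the two generating pieces that would otherwise spoil the HP series count.
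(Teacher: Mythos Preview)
Your argument has a circular dependence. You invoke Theorem \ref{trigcomp} to conclude that the Hilbert--Poincar\'e series of $\ma^{trig}_\theta(C)$ equals the generic one, but the proof of Theorem \ref{trigcomp} rests on Proposition \ref{pr:HPsame}, whose proof in turn cites precisely this corollary to establish that the Hilbert--Poincar\'e series is correct on the locus where $z_0\in C_\infty$. So you cannot use Theorem \ref{trigcomp} here.

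Without that step, your surjection from the abstract tensor product onto $\ma^{trig}_\theta(C)$ gives only an upper bound on the Hilbert--Poincar\'e series of the target, and the argument does not close. What is actually needed is the \emph{injectivity} of the multiplication map
\[
\Delta^{\CB'}(\ma^{trig}_\theta(w_1,\ldots,w_m)) \otimes_{Z'} \bigotimes_{k=1}^m j_{B_k}^n(\ma(C_k)) \longrightarrow U\fg^{\otimes n}.
\]
This is exactly the content of Knop's theorem (used later in Lemma \ref{lem:restriction-codim1-strata-1sttype}): $(U\fg^{\otimes B_k})^\fg$ is free over $\Delta^{B_k}(Z(U\fg))$, and the multiplication map $U\fg \otimes_{Z(U\fg)} (U\fg^{\otimes B_k})^\fg \hookrightarrow U\fg^{\otimes B_k}$ is injective. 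Applied iteratively to the blocks, this shows that the product of the two generating subalgebras inside $U\fg^{\otimes n}$ is genuinely a tensor product over $Z'$, so its Hilbert--Poincar\'e series is the product of the factor series divided by that of $Z'$. Your exponent tally then finishes the job without any appeal to Theorem \ref{trigcomp}. This is precisely what the paper's one-line proof is doing, with the freeness left implicit.
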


\begin{proof}
    Indeed, the algebra on the left is the product of the two on the right since it is so for homogeneous Gaudin algebras. Next, according to \cite{kn}  (see \cite[Lemma 3.10]{r} for more details) the product of $\Delta^{\CB'}(U\fg^{\otimes m})$ and the subalgebra of invariants $(U\fg^{\otimes n})^{\Delta^{\CB'}\fg}$ inside $U\fg^{\otimes n}$ is the tensor product over $Z'$ and both are free $Z'$-modules. Since $\Delta^{\CB'}(\ma^{trig}_\theta(w_1, \dots, w_m))\subset \Delta^{\CB'}(U\fg^{\otimes m})$, $\bigotimes_{k=1}^m \ma(C_k)\subset (U\fg^{\otimes n})^{\Delta^{\CB'}\fg}$ and both contain $Z'$, the first assertion follows. Finally, since both are free $Z'$-modules, the statement about Hilbert-Poincar\'e series follows by multiplying the Hilbert-Poincar\'e series of the tensor factors and dividing by the Hilbert-Poincar\'e series of $ Z'$.
\end{proof}

Unfortunately, when $ z_0$ and $ z_{n+1} $ do not lie on the same component (equivalently $ z_0 \notin C_\infty$) both $ \psi_\theta(\Delta^\CB(\ma(0, w_1, \dots, w_m)) $ (the contribution from $C_\infty$) and $ \psi_\theta( j_{B_0}^{n+1}( \ma(C_0))$ are not homogeneous or trigonometric Gaudin algebras (even up to diagonal embedding). To see this, consider a two-component curve $ C = C_0 \cup C_\infty$ with both identified with $ \CP^1$ in such a way that the intersection point is $ \infty \in C_0$ and $ 0 = w_0\in C_\infty$.  Also assume that $ z_0, z_1, \dots, z_{n-m}$ lie on $C_0$ with $ z_0 = 0$, and that $ z_{n-m+1} = w_1, \dots, z_n = w_m, z_{n+1}$ lie on $ C_\infty$ with $z_{n+1} = \infty$. (See Figure \ref{fig:TrigGaudin2} for an example.)

\begin{figure} 
	\includegraphics[trim=10 50 10 55, clip,width=0.8\textwidth]{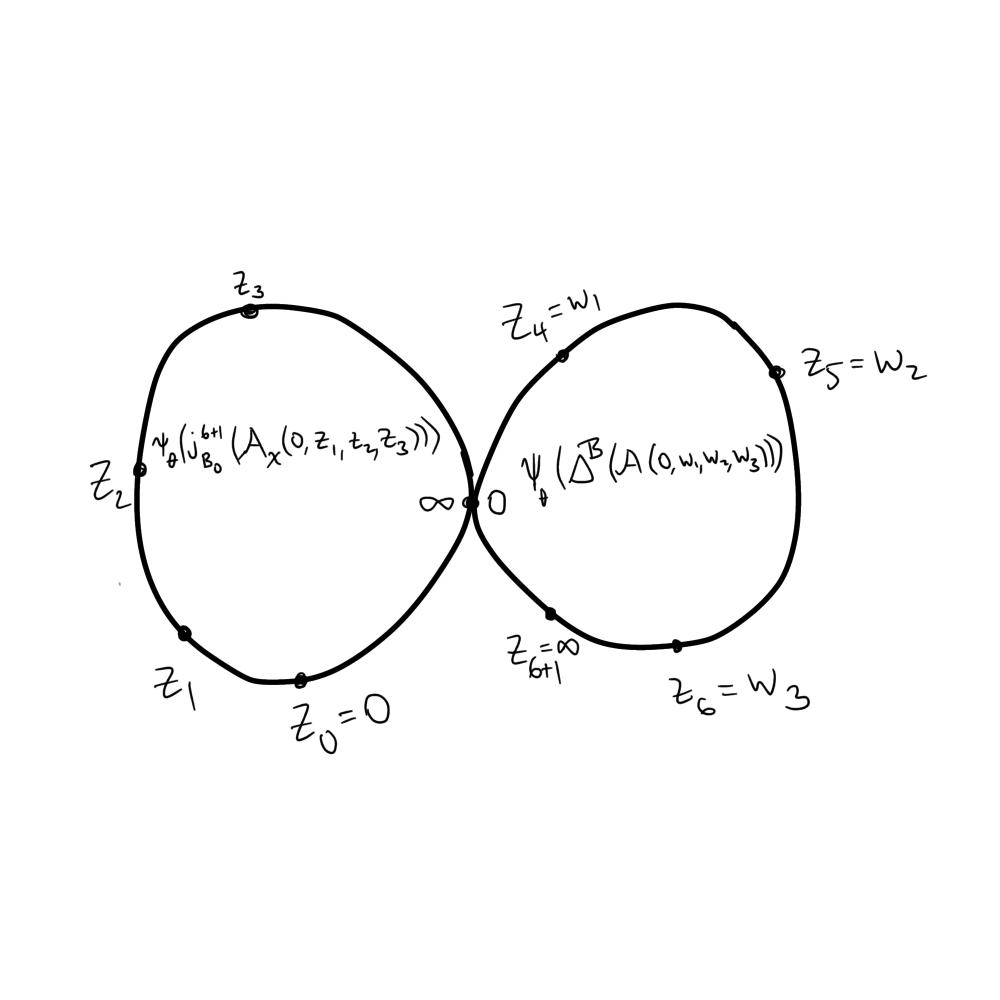}
	\caption{This is a curve $ C \in \overline M_{6+2}$ where $ z_0 $ does not lie on the same component as $ z_{6+1}$.  We have $n = 6$, $m= 4 $, and $ \CB = (\{0,1,2,3\}, \{4\} , \{5\},\{6\})$.  Each component contributes the algebra shown and the whole curve gives $ \ma^{trig}_\theta(C) = \psi_\theta(j_{\CB}^{6+1} \ma(0,z_1, z_2, z_3))) \psi_\theta( \Delta^\CB( \ma(0, z_4, z_5, z_6)) $. } \label{fig:TrigGaudin2}
\end{figure}

An easy computation, using the procedure explained after Definition \ref{def:trigGaudin}, yields the following.
\begin{lem}
    In this setting, the subalgebra $ \psi_\theta(\Delta^\CB(\ma(0, w_1, \dots, w_m)))$ contains
    $$
    \psi_\theta(\Delta^\CB(H_i)) = \sum_{\substack{j=n-m+1 \\ j \ne i}}^n \frac{\Omega^{(ij)}}{z_i - z_j} + \sum_{j=1}^{n-m} \frac{\Omega^{(ij)}}{z_i} + \frac{\theta^{(i)}}{z_i} - \sum_{j=1}^n \frac{\Omega^{(ij)}_-}{z_i} 
    $$
    for $ i = n-m+1, \dots, n$, while the subalgebra $ \psi_\theta(j_{B_0}^n(\ma(0, z_{1}, \dots, z_{n-m}))$ contains 
    $$
    \psi_\theta(H_i) =  \sum_{\substack{j=1 \\ j \ne i}}^{n-m} \frac{\Omega^{(ij)}}{z_i -z_j} + \frac{\theta^{(i)}}{z_i} - \sum_{j=1}^n \frac{\Omega^{(ij)}_-}{z_i}
    $$
    for $ i = 1, \dots, n-m$.
\end{lem}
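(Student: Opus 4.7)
The plan is to verify both formulas by direct computation, using the explicit formula for $\psi_\theta$ supplied by Proposition~\ref{pr:psiab}. The crucial observation is that, both in $\Delta^\CB(H_i^{C_\infty})$ and in $j^{n+1}_{B_0}(H_i^{C_0})$, the only summand touching the $0$th tensor factor has the form $\frac{\Omega^{(i,0)}}{z_i}$; every other term is untouched by $\psi_\theta$. So the entire computation reduces to evaluating $\psi_\theta(\Omega^{(i,0)})$ once and reassembling.

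First I would expand the diagonal embedding explicitly. The Hamiltonian at $w_k$ in $\ma(0,w_1,\dots,w_m)$ uses tensor positions $\{0,1,\dots,m\}$; writing $i := n-m+k \in \{n-m+1,\dots,n\}$, the map $\Delta^\CB$ sends $\Omega^{(k,0)}$ to $\sum_{\ell=0}^{n-m}\Omega^{(i,\ell)}$ and $\Omega^{(k,k')}$ with $k'\ge 1$ to $\Omega^{(i, n-m+k')}$. Isolating the single term $\frac{\Omega^{(i,0)}}{z_i}$ that involves the $0$th slot already produces the sums $\sum_{\ell=1}^{n-m}\frac{\Omega^{(i,\ell)}}{z_i}$ and $\sum_{j\ne i,\,j\ge n-m+1}\frac{\Omega^{(i,j)}}{z_i - z_j}$ appearing in the claimed formula, since these pieces pass through $\psi_\theta$ unchanged.

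Next I would compute $\psi_\theta(\Omega^{(i,0)})$ by decomposing along the weight of the $0$th slot: $\Omega^{(i,0)} = \Omega_+^{(i,0)} + \Omega_0^{(i,0)} + \Omega_-^{(i,0)}$, feeding each piece through Proposition~\ref{pr:psiab}. The $\Omega_+$-term has its $0$th slot in $\fn_-$ and is killed by the counit $\epsilon:U\fn_-\to\C$; the $\Omega_0$-term becomes $\sum_h h^* \otimes h^{(i)}$, and evaluation at $\theta$ produces $\theta^{(i)}$ because $\{h\},\{h^*\}$ are $(\cdot,\cdot)$-dual bases of $\fh$. The only delicate step is the $\Omega_-$-term $\sum_c (e^c)^{(i)}(e_c)^{(0)}$: its $0$th slot $e_c \in \fn_+$ sits on the right in PBW order, so Proposition~\ref{pr:psiab} contributes $\sum_c (e^c)^{(i)} \Delta^n(S(e_c))$, and after using $S(e_c) = -e_c$ and reindexing the sum over $c$ one recovers $-\sum_{j=1}^n \Omega_-^{(i,j)}$. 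Adding the three contributions gives $\psi_\theta(\Omega^{(i,0)}) = \theta^{(i)} - \sum_{j=1}^n \Omega_-^{(i,j)}$, and dividing by $z_i$ and adjoining the previously accounted-for terms produces the first formula.

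The second formula follows from the identical computation applied to $j^{n+1}_{B_0}(H_i^{C_0})$ for $i \in \{1,\dots,n-m\}$: the inclusion $j^{n+1}_{B_0}$ only relabels positions within the common range $\{0,\dots,n-m\}$, so it acts as the identity on formulas, and once again only $\frac{\Omega^{(i,0)}}{z_i}$ is affected by $\psi_\theta$. The main point to be careful about throughout is the $\Omega_-$ calculation, where one must track the antipode and correctly reassemble $\Delta^n(e_c)$ into the $\Omega_-^{(i,j)}$ on the right; this is exactly the manipulation already carried out in Section~\ref{se:deftrig} to derive the trigonometric quadratic Hamiltonians in the single-component case, so no genuine new obstacle arises.
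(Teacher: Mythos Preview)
Your proposal is correct and follows exactly the approach the paper indicates: the paper simply says this is ``an easy computation, using the procedure explained after Definition~\ref{def:trigGaudin}'', and your use of Proposition~\ref{pr:psiab} to evaluate $\psi_\theta(\Omega^{(i,0)}) = \theta^{(i)} - \sum_{j=1}^n \Omega_-^{(ij)}$ is precisely that procedure made explicit. The only cosmetic point is that your labeling of the $\Omega_\pm$ pieces uses the convention $\Omega^{(i,0)}$ rather than $\Omega^{(0,i)}$, which swaps the roles of $\Omega_+$ and $\Omega_-$ relative to the paper's convention, but the computation and final answer are unaffected.
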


Consider a \emph{caterpillar} curve $C\in \overline{M}_{n+2}$, i.e. a curve $ C = C_1 \cup \dots \cup C_l$, where each $ C_k $ is a projective line, and where each pair $C_k, C_{k+1} $ meet transversely at a single point (with no other intersections), such that $ z_0 \in C_1 $ and $ z_{n+1} \in C_l$. Let $C_\infty=C_l$ be the component containing $z_{n+1}$ and $C_0$ be the union of all other components. Let $m+1$ be the number of marked points on $C_\infty$. We can assume without loss of generality that $z_{n-m+1},\ldots,z_{n+1}\in C_\infty$. Then the (homogeneous) Gaudin subalgebra $\ma(C)\subset U\fg^{\otimes(n+1)}$ is the product of $\Delta^{[0,n-m],n-m+1,n-m+2,\ldots,n}(\ma(C_\infty))$ and $\ma(C_0)\otimes1^{\otimes m}$. Slightly abusing notations, we denote the first factor simply by $\ma(C_\infty)$, and the second simply by $\ma(C_0)$.  
\begin{lem}\label{sizelemma}
The Hilbert-Poincar\'e series of $\psi_\theta(\ma(C))=\psi_{\theta}(\ma(C_{\infty})) \cdot \psi_{\theta}(\ma(C_0))$ coincides with that of a generic trigonometric Gaudin subalgebra in $U\fg^{\otimes n}$.
\end{lem}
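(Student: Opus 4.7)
The plan is to reduce to the classical (associated graded) setting and to verify the Hilbert--Poincar\'e series by an explicit transcendence--degree count, in the spirit of the proof of Theorem~\ref{size}.

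By Lemma~\ref{cor1}, the associated graded map $\gr\psi_\theta$ does not depend on $\theta$ and is the pullback along $(x_1,\ldots,x_n)\mapsto(-\sum_i\pi_-(x_i), x_1,\ldots,x_n)$. Hence the Hilbert--Poincar\'e series of $\psi_\theta(\ma(C))$ equals that of the classical Poisson subalgebra $A^{trig}(C):=\gr\psi_\theta(A(C))\subset \CO(\fg^{\oplus n})$, and the claim reduces to showing that $A^{trig}(C)$ is a polynomial algebra of transcendence degree $n(p+r)$ with Poincar\'e series $\prod_{l=1}^r(1-t^{d_l})^{-nd_l}$. By Proposition~\ref{pr:limitGaudin} applied to the caterpillar, $A(C)=A(C_\infty)\cdot A(C_0)$ inside $\CO(\fg^{\oplus(n+1)})^\fg$, with the two factors sharing only the diagonal Casimirs $Z_w\cong S(\fg)^\fg$ at the node $w$ joining $C_\infty$ to the sub-caterpillar $C_0$. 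Combined with Proposition~\ref{pr:GenCommGaudin}, this yields an explicit generating set for $A(C)$, and, after performing the substitution above, for $A^{trig}(C)$.

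Counting by degree one finds $n d_l$ candidate generators of degree $d_l$ (the ``$\infty$'' generator $\widetilde\Phi_l^{(0)}$ associated to $z_{n+1}$ being killed classically by the substitution, in the spirit of Lemma~\ref{le:thetarho}). The crux is their algebraic independence. I would follow the filtration argument of Theorem~\ref{size}: introduce the grading $\deg x^{(1)}=1$, $\deg x^{(i)}=0$ for $i\ge 2$, pass to the leading symbols, and evaluate the corresponding differentials at $(f+h,f,\ldots,f)\in\fg^{\oplus n}$, where $(e,h,f)$ is a principal $\fsl_2$-triple. Applying Lemma~\ref{lem:fft} factor by factor, these differentials should span $\fb_-^{\oplus n}$, a space of dimension $n(p+r)$, giving algebraic independence and hence the desired Hilbert--Poincar\'e series.

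The main obstacle is bookkeeping in the differential computation: the caterpillar produces shared Casimirs at each node, and one must verify that the leading terms arising from $A(C_\infty)$ and from $A(C_0)$ contribute to disjoint blocks of $\fb_-^{\oplus n}$. A practical way to organize this is induction on the number of caterpillar components $l$: the base case $l=1$ is Theorem~\ref{size} itself, and each inductive step analyzes a single newly introduced node, reweighting the first-factor filtration onto the newly introduced tensor factor so that the argument from Theorem~\ref{size} adapts directly. An independent sanity check is provided by lower semi-continuity of image dimension in the flat family $\ma(C)$ over $\overline M_{n+2}$, which gives the inequality $\dim\psi_\theta(\ma(C))^{(k)}\le \dim \ma^{trig}_\theta(z_1,\ldots,z_n)^{(k)}$ in each filtered degree; the differential argument supplies the matching lower bound.
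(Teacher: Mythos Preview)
Your strategy is essentially the paper's: pass to the associated graded via Lemma~\ref{cor1}, show that only the lower bound on the Hilbert--Poincar\'e series needs work (the upper bound being semicontinuity, as you note), produce explicit degree-$d_l$ generators from the principal parts of $\Phi_l$ at the marked points, and establish their algebraic independence by singling out one tensor factor, taking leading terms, and checking linear independence of differentials at a carefully chosen point using Lemma~\ref{lem:fft}.

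The one substantive difference is the choice of distinguished factor and test point. You lift the choice from Theorem~\ref{size} verbatim: filter by $\deg x^{(1)}=1$ and evaluate at $(f+h,f,\ldots,f)$. In the caterpillar, however, $z_1$ sits on $C_0$ while the marked points $z_{n-m+1},\ldots,z_n$ sit on $C_\infty$; with your filtration the $C_\infty$ generators only see $x^{(1)}$ through the regular term $(\pi_0+\pi_+)(x^{(1)})/u$, and the blocks do not separate as cleanly as you suggest. You anticipate this and propose to fix it by induction on the number of components, reweighting the filtration to a new factor at each step --- that would work, but the details are not free. The paper instead makes a single choice adapted to the caterpillar: it filters by $\deg x^{(n)}=1$ (so the distinguished factor lies on $C_\infty$) and evaluates at $(h,\ldots,h,\underbrace{f,\ldots,f}_{m-1},f+h)$, with $h$ at the $C_0$ positions and $f$ at the $C_\infty$ positions. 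With this point the leading-term analysis splits cleanly: the $C_\infty$ differentials already span $0^{\oplus(n-m)}\oplus\fb_-^{\oplus m}$, and the $C_0$ differentials, projected modulo $0^{\oplus(n-m)}\oplus\fg^{\oplus m}$, span $\fb_-^{\oplus(n-m)}$. No induction is needed.

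In short: your plan is correct and close to the paper's, but if you keep your filtration on the first factor you will indeed have to carry out the inductive reweighting you mention; the paper's choice of the last factor and the caterpillar-adapted test point $(h,\ldots,h,f,\ldots,f,f+h)$ makes the argument go through in one shot.
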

\begin{proof}
Consider the associated graded subalgebra $\gr \psi_\theta(\ma(C))\subset S\fg^{\otimes n}$. We have to show that this subalgebra has at least the same number of algebraically independent generators of the same degrees as for a generic trigonometric Gaudin subalgebra. The subalgebra $\gr \psi_{\theta}(\ma(C_{\infty}))$ contains coefficients of the principal parts at $z_i\in C_\infty$ of 
\begin{equation}\label{eq:c-infty}
    \Phi_l\left(\sum_{i=1}^{n-m} \frac{ x^{(i)}}{u} + \sum_{i=n-m+1}^n \frac{x^{(i)}}{u - z_i} - \pi_-\left(\frac{\sum_{i=1}^n x^{(i)}}{u}\right) \right).
\end{equation}
 
Now let us consider the set of components $C_1, \ldots C_k$ of $C_0$, where we count them from the one containing $z_0$.
The subalgebra $\psi_{\theta}(\ma(C_0))$ contains the coefficients of principal parts at all the points $z_i\in C_k$ of  \begin{equation}\label{eq:c-0}\Phi_l\left(\sum_{i \in B_k} \dfrac{x^{(i)}}{u - z_i} - \sum_{i \in \cup_{r=1}^{k-1} B_r}\dfrac{x^{(i)}}{u}  - \pi_-\left(\frac{\sum_{i=1}^n x^{(i)}}{u}\right) \right).
\end{equation}
Here $l=1, \ldots, k$.

We use the same idea as in the proof of Theorem \ref{size} to show that the above coefficients of (\ref{eq:c-infty}) and (\ref{eq:c-0}) are algebraically independent, and this would be sufficient. Namely, we consider the filtration on $S(\fg)^{\otimes n}$ where the $n$-th component has degree $1$ and all the others have degree $0$, take the leading terms with respect to this filtration, and show that the differentials of these leading terms at the point $y=(h, \ldots, h, \underbrace{f, \ldots, f}_{m-1}, f+h)$ are linearly independent. 

Indeed, for the generators coming from (\ref{eq:c-infty}) we have the leading terms given by the coefficients of the principal part of 
$$\Phi_j\left( \frac{ x^{(i)}}{u-z_i}+\frac{ x^{(n)}}{u-z_n}- \pi_-\left(\frac{ x^{(n)}}{u}\right)\right)$$
at the points $z_i$ with $i=n-m+1,\ldots,n-1$ and 
$$\Phi_j\left( \frac{ x^{(n)}}{u-z_n} - \pi_+\left(\frac{ x^{(n)}}{u}\right)\right)$$
at the point $z_n$. Due to the same reason as in Theorem~\ref{size}, their differentials at $y$ span $0^{\oplus(n-m)}\oplus\mathfrak{b}_-^{\oplus m}$.

For generators  coming from (\ref{eq:c-0}) we have the leading terms given by the coefficients of the principal parts of 
$$\Phi_j\left( \frac{ x^{(i)}}{u-z_i} - \pi_-\left(\frac{ x^{(n)}}{u}\right)\right)$$ at $z_i$ for $i=1,\ldots,n-m$. This means that the linear span of such leading terms is the span of $\Phi_j\left(a x^{(i)} + b \pi_-(x^{(n)})\right)$ for all $a,b\in\mathbb{C}$. Fix $i$ and consider the linear span of
$$d_y\Phi_j\left(a x^{(i)} + b  \pi_-(x^{(n)})\right).$$
It lies in $\fg^{(i)}\oplus\fg^{(n)}$ and as an element of $\fg^{(i)}\oplus\fg^{(n)}$ it is equal to $$d_{(h,f+h)} \Phi_j\left(a  x^{(i)} + b  \pi_-(x^{(n)})\right).$$ Consider the projection of the span of such differentials for all $\Phi_j$ to the first summand. This is the same as  considering $x^{(n)}$ not as a variable but as a parameter and evaluating it at $x^{(n)}=h+f$. This is the subspace of 
$\fg^{(i)}$ spanned by all
$$d_h \Phi_j(a x + b f),$$ that is $\mathfrak{b}_-^{(i)}$. This means that the sum of the above space modulo $0^{\oplus(n-m)}\oplus\mathfrak{g}^{\oplus m}$ is $\mathfrak{b}_-^{\oplus(n-m)}$. 

So the linear span of differentials at $y$ of leading terms for both types of generators span a subspace in $\fg^{\oplus n}$ whose intersection with $0^{\oplus(n-m)}\oplus\fg^{\oplus m}$ contains $0^{\oplus(n-m)}\oplus\fb_-^{\oplus m}$ and whose quotient modulo $0^{\oplus(n-m)}\oplus\fg^{\oplus m}$ is $\mathfrak{b}_-^{\oplus(n-m)}$. So the dimension of this space is not less than $m\dim\fb_-$, which is the number of the generators we consider, hence the generators are algebraically independent. This completes the proof of the Lemma.

\end{proof}

 From this, we can see that any limit trigonometric Gaudin subalgebra has the same size.

\begin{prop} \label{pr:HPsame}
    For any $\theta\in\fh^*$ and any $C\in \overline{M}_{n+2}$, the Hilbert-Poincar\'e series of $\ma_\theta^{trig}(C)$ is the same as that for generic trigonometric Gaudin subalgebra in $U\fg^{\otimes n}$.
\end{prop}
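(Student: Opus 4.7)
The plan is to combine a semi-continuity argument with the caterpillar computation of Lemma \ref{sizelemma}.

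First, I would invoke that the homogeneous Gaudin subalgebras $\ma(C)\subset(U\fg^{\otimes n+1})^\fg$ form a flat family of filtered subalgebras parametrized by $\overline M_{n+2}$ in the sense of Section \ref{se:Family}, as recalled in Section \ref{se:compactGaudin} following \cite{r}. In particular, for each $k$ the filtered piece $\ma(C)^{(k)}$ is the fibre of a vector subbundle over $\overline M_{n+2}$. Applying the fixed linear map $\psi_\theta^{(k)}:(U\fg^{\otimes n+1})^{\fg,(k)}\to U\fg^{\otimes n,(k)}$ fibrewise produces a morphism of vector bundles over $\overline M_{n+2}$ whose fibrewise image at $C$ is exactly $\ma_\theta^{trig}(C)^{(k)}$. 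Since the rank of a morphism of vector bundles is lower semi-continuous on the base, and since at a generic point $C\in M_{n+2}$ this rank equals the $k$-th coefficient $r_k$ of the Hilbert–Poincar\'e series identified by Theorem \ref{size}, we obtain the upper bound
\[
\dim\ma_\theta^{trig}(C)^{(k)}\le r_k\qquad\text{for every }C\in\overline M_{n+2}\text{ and every }k.
\]

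For the matching lower bound, Lemma \ref{sizelemma} already yields $\dim\ma_\theta^{trig}(C')^{(k)}=r_k$ at any caterpillar $C'$. The plan is then to propagate the equality to an arbitrary $C$ by exhibiting a one-parameter family $B\to\overline M_{n+2}$ with $B$ an irreducible curve, whose generic fibre is $C$ and whose special fibre is a caterpillar $C'$. Applying the same lower semi-continuity to this family forces $\dim\ma_\theta^{trig}(C)^{(k)}\ge \dim\ma_\theta^{trig}(C')^{(k)}=r_k$, and combined with the previous step the equality $\dim\ma_\theta^{trig}(C)^{(k)}=r_k$ follows for all $k$, which is exactly the statement of the proposition.

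The main obstacle is constructing such a one-parameter family when the combinatorial type of $C$ is not chain-like. A chain-type $C$ admits an obvious further degeneration to a caterpillar by repeatedly breaking components with at least four special points, but a curve of, say, ``$Y$''-type has no caterpillar in the closure of its stratum inside $\overline M_{n+2}$. To handle such cases I would proceed inductively on $n$, invoking Corollary \ref{co:trig-easy-limit}: when $z_0\in C_\infty$, the corollary expresses $\ma_\theta^{trig}(C)$ as a tensor product over $Z'$ of a diagonally embedded trigonometric Gaudin subalgebra on $m<n$ points with homogeneous Gaudin subalgebras on the attached subcurves $C_k$. A direct manipulation of the Hilbert–Poincar\'e series of the factors (using the inductive hypothesis for the trigonometric factor on $m$ points, Theorem \ref{th:homGaudin}(4) for each $\ma(C_k)$, and $\prod_l(1-t^{d_l})^{-m}$ for $Z'$) collapses to $\prod_l(1-t^{d_l})^{-nd_l}$, matching the generic trigonometric value. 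When instead $z_0\not\in C_\infty$, one performs a further degeneration of the subtree containing $z_0$ into a chain along $C_\infty$, reducing to the previous case; the lower bound then propagates via the same semi-continuity argument, completing the proof.
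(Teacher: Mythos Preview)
Your upper bound via lower semi-continuity of the rank of the bundle map $\psi_\theta^{(k)}$ is correct and is essentially what the paper leaves implicit. The plan for the lower bound---degenerate $C$ to a point $C''$ where the bound is already known, then use semi-continuity again---is also the right shape. The gap is in the last paragraph, where you handle the case $z_0\notin C_\infty$.

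Concretely, take $n=4$ and the ``$Y$''-curve $C\in\overline M_{6}$ with a central component carrying three nodes, to which three leaves are attached carrying $\{z_0,z_1\}$, $\{z_2,z_3\}$, $\{z_4,z_5\}$ respectively. Here $C_\infty$ is the leaf containing $z_5$, so $z_0\notin C_\infty$, and $C$ already lies in a $0$-dimensional stratum: no further degeneration is possible at all. More generally, degenerating never merges components, so it can never move $z_0$ onto $C_\infty$; thus ``further degeneration reducing to the previous case $z_0\in C_\infty$'' cannot succeed. Nor does degenerating the subtree containing $z_0$ into a chain produce a global caterpillar, since branches off the $z_0$--$z_{n+1}$ path (like the $\{z_2,z_3\}$ leaf above) remain attached.

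The paper's fix goes in the \emph{opposite} direction. One takes $C_\infty$ to mean the union of all components on the path from $z_0$ to $z_{n+1}$; this is always a caterpillar in some $\overline M_{m+2}$. One then \emph{smooths} this path to a single $\mathbb P^1$, obtaining a curve $C'$ that has $z_0$ and $z_{n+1}$ on the same component (so Corollary~\ref{co:trig-easy-limit} applies to $C'$) and that specializes to $C$. Passing to the limit $C'\to C$ yields the explicit formula
\[
\ma_\theta^{trig}(C)=\Delta^{\CB'}\bigl(\psi_\theta(\ma(C_\infty))\bigr)\otimes_{Z'}\bigotimes_{k=1}^m\ma(C_k),
\]
and Lemma~\ref{sizelemma}, applied to the caterpillar $C_\infty\in\overline M_{m+2}$, shows that $\psi_\theta(\ma(C_\infty))$ has the same Hilbert--Poincar\'e series as a generic trigonometric Gaudin subalgebra on $m$ points. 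The tensor-product computation you wrote down then gives the lower bound directly at $C$, with no further semi-continuity needed. So the missing ingredient is precisely this smoothing of the $z_0$--$z_{n+1}$ path together with the resulting explicit limit formula.
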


\begin{proof} Consider arbitrary $C\in\overline{M}_{n+2}$. We have to show that the Hilbert-Poincar\'e series of $\ma_\theta^{trig}(C)=\psi_\theta(\ma(C))$ is not smaller than that for generic $C$. 

Denote by $C_\infty$ the union of its components between the one containing $z_0$ and the one containing $z_{n+1}$. By construction, $C_\infty$ is a caterpillar curve. Let $m$ be the number of special points on $C_\infty$ (i.e. either marked points apart from $z_0,z_{n+1}$, or intersection points with the components outside $C_\infty$). So the original $C$ is constructed by attaching some stable rational curves $C_1,\ldots,C_m$ to $C_\infty$ at the special points (the $C_k$'s corresponding to the marked points are elements of $\overline{M}_{2}$, i.e. are just points). We can view $C_\infty\in \overline{M}_{m+2}$ as a limit of a family of non-degenerate curves $C'_\infty\in M_{m+2}$, i.e. $C'_\infty$ is just $\mathbb{P}^1$ with the marked points $0,w_1,\ldots,w_m,\infty$. Then the original curve $C$ is the limit of the family of curves $C'$ obtained by attaching $C_1,\ldots,C_m$ to $C'_\infty$ at the points $w_1,\ldots,w_m$.

From Corollary \ref{co:trig-easy-limit} we have the following description of a subalgebra corresponding to a curve $C'$:
$$ \ma^{trig}_\theta(C') = \Delta^{\CB'}(\ma^{trig}_\theta(w_1, \dots, w_m)) \otimes_{Z'} \bigotimes_{k=1}^m \ma(C_k)$$
where $ Z' = \Delta^{\CB'}(Z(U\fg^{\otimes m}))$. So, in particular, Hilbert-Poincar\'e series of $\ma^{trig}_\theta(C')$ is the same as that for generic $\ma^{trig}_\theta(z_1,\ldots,z_n)$.

Next, passing to the limit $C'\to C$, we get 
$$ \ma^{trig}_\theta(C)=\psi_\theta(\ma(C)) = \Delta^{\CB'}(\psi_\theta(\ma(C_\infty))) \otimes_{Z'} \bigotimes_{k=1}^m \ma(C_k).$$

According to Lemma \ref{sizelemma}, $\psi_\theta(\ma(C_\infty))$ has the same Hilbert-Poincar\'e series as $\ma^{trig}_\theta(w_1, \dots, w_m)$, so the Hilbert-Poincar\'e series of $\ma^{trig}_\theta(C)$ is not smaller than that of $\ma^{trig}_\theta(C')$, hence not smaller than that for generic $\ma^{trig}_\theta(z_1,\ldots,z_n)$.

\end{proof}

Since we have defined a flat filtration on the family of limit trigonometric subalgebras, we obtain maps from $ \overline M_{n+2} $ to Grassmannians $ \Gr(r_k, (U \fg^{\otimes n})^{(k)}$) for each $ k$.  In particular, we will consider the quadratic part.

\begin{thm} \label{thm:trigiso}
    The map \begin{align*} 
    \overline M_{n+2} &\rightarrow \Gr(2n, (U \fg^{\otimes n})^{(2)}) \\
    C &\mapsto \ma^{trig}_\theta(C) \cap (U \fg^{\otimes n})^{(2)}
    \end{align*} is an isomorphism onto its image.
\end{thm}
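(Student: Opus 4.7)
The plan is to factor the map through the Aguirre--Felder--Veselov identification of Theorem \ref{th:AFV} and reduce the theorem to a linear injectivity statement.  By construction $\ma^{trig}_\theta(C) = \psi_\theta(\ma(C))$, where $\ma(C) \subset (U\fg^{\otimes n+1})^{\fg}$ is the homogeneous Gaudin algebra for the $(n{+}1)$-pointed curve (indexing $\{0,1,\ldots,n\}$, with $z_0 = 0$ and $z_{n+1} = \infty$ implicit).  An extension of Lemma \ref{le:gamma} to $\overline M_{n+2}$, justified by the flatness of the homogeneous family in section \ref{se:compactGaudin}, identifies
$$
\ma(C) \cap (U\fg^{\otimes n+1})^{(2)} = \gamma(Q(C)) \oplus \spann(\omega^{(i)} : 0 \le i \le n),
$$
where $Q(C) \subset \fs_{n+1}^1$ is the commutative subspace given by Theorem \ref{th:AFV}.

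Applying $\psi_\theta$ and using Proposition \ref{pr:psiab} together with Lemma \ref{le:thetarho}, one computes $\psi_\theta(\omega^{(0)}) \in \mathbb{C}$, $\psi_\theta(\omega^{(i)}) = \omega^{(i)}$ for $i \ge 1$, and on the $\gamma$-image
$$
\psi_\theta(\Omega^{(ij)}) = \Omega^{(ij)} \ \ (1 \le i \ne j \le n),\qquad \psi_\theta(\Omega^{(0j)}) = \theta^{(j)} - \sum_{k=1}^n \Omega_-^{(jk)},
$$
with the convention $\Omega_-^{(jj)} := \omega_-^{(j)}$.  Setting $\tilde\gamma_\theta := \psi_\theta \circ \gamma$ and $V_0 := \spann(\omega^{(1)},\ldots,\omega^{(n)})$ (a fixed subspace, modulo constants), we obtain
$$
\ma^{trig}_\theta(C) \cap (U\fg^{\otimes n})^{(2)} = \tilde\gamma_\theta(Q(C)) + V_0.
$$

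The key technical step is to show that $\tilde\gamma_\theta : \fs_{n+1}^1 \to (U\fg^{\otimes n})^{(2)}$ is injective and that $\tilde\gamma_\theta(\fs_{n+1}^1) \cap V_0 = 0$.  I pass to the associated graded $S^2(\fg^{\oplus n})$ and exploit the root-space decomposition.  Suppose
$$
\sum_{1 \le i < j \le n} a_{ij}\Omega^{(ij)} + \sum_{j=1}^n c_j\!\left(\theta^{(j)} - \sum_{k=1}^n \Omega_-^{(jk)}\right) \in V_0.
$$
For generic $\theta$, the PBW degree-$1$ part $\sum c_j \theta^{(j)}$ must vanish, which immediately gives $c_j = 0$, and then linear independence of $\{\Omega^{(ij)}\}$ in off-diagonal tensor positions forces $a_{ij} = 0$.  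For arbitrary $\theta$ (notably $\theta = 0$) I use the decomposition $\Omega^{(jk)} = \Omega_-^{(jk)} + \Omega_0^{(jk)} + \Omega_-^{(kj)}$ (the last identity coming from commutation of distinct tensor factors) and compare the $\fn_-^{(j)}\cdot\fn_+^{(k)}$- and $\fh^{(j)}\cdot\fh^{(k)}$-components for distinct $j,k$: these yield $a_{jk} = c_j = c_k$ and then $a_{jk} = 0$, whence all $c_j = 0$ from the diagonal contribution (using that $\omega_-^{(j)}$ is not a scalar multiple of $\omega^{(j)}$ in $S^2\fg^{(j)}$).

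Combined with flatness (Theorem \ref{trigcomp}), the dimension of $\ma^{trig}_\theta(C) \cap (U\fg^{\otimes n})^{(2)}$ is constant and equals $\dim(\tilde\gamma_\theta(Q(C)) + V_0) = 2n$, so the containment is an equality.  The map then factors as
$$
\overline M_{n+2} \xrightarrow[\sim]{\textup{Thm \ref{th:AFV}}} \overline N_{n+1} \hookrightarrow \Gr(n, \fs_{n+1}^1) \xrightarrow{V \mapsto \tilde\gamma_\theta(V) + V_0} \Gr(2n, (U\fg^{\otimes n})^{(2)}),
$$
where each arrow is a closed immersion (the last by the injectivity and transversality just established), yielding the desired isomorphism onto the image.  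The main obstacle is the non-generic case of Step 3: at $\theta = 0$ the linear term $\theta^{(j)}$ vanishes, so injectivity cannot be read off PBW degree $1$ and one must rely entirely on the fine $\Omega_-$-structure in the associated graded.
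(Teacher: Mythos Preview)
Your proof is correct and follows essentially the same route as the paper: both define $\tilde\gamma_\theta = \psi_\theta \circ \gamma : \fs_{n+1}^1 \to (U\fg^{\otimes n})^{(2)}$, establish its injectivity (and transversality to the span of the $\omega^{(i)}$), and then factor the map through the Aguirre--Felder--Veselov isomorphism of Theorem \ref{th:AFV}. Two small points: the paper's injectivity argument (Lemma \ref{le:injective}) is uniform in $\theta$ --- it simply projects to the off-diagonal component $\fg^{(i)}\otimes\fg^{(j)}$ and observes that $\Omega^{(ij)}, \Omega_-^{(ij)}, \Omega_+^{(ij)}$ are linearly independent there --- so your case split on generic versus arbitrary $\theta$ is unnecessary; and your appeal to ``flatness (Theorem \ref{trigcomp})'' should really cite Proposition \ref{pr:HPsame} directly, since the proof of Theorem \ref{trigcomp} itself invokes Theorem \ref{thm:trigiso}.
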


In order to prove this theorem, we will make use of Theorem \ref{th:AFV}, due to Aguirre-Felder-Veselov.   For this purpose, define a Lie algebra map 
    $ \gamma_\theta^1: \fs_{n+1} \to (U\fg^{\otimes n})^\fh$
    given on generators by
    \begin{align*} t_{ij} &\mapsto \Omega^{(ij)}\\
    t_{0i} &\mapsto - \sum_{j=1}^n \Omega^{(ij)}_- + \theta^{(i)} \end{align*} 
In fact, $ \gamma_\theta^1 = \psi_\theta \circ \gamma $ where $ \gamma : \fs_{n+1}^1 \rightarrow (U \fg^{\otimes n})^\fg$ is defined in (\ref{eq:gamma}), and so it is clear that it is a Lie algebra map.

\begin{lem} \label{le:injective}
\begin{enumerate}
    \item  The restriction of $\gamma_\theta^1 $ to $ \fs_{n+1}^1$ is injective.
    \item For $ z_1, \dots, z_n \in \C^\times$ distinct, 
    $$\gamma_\theta^1(h_i(0,z_1, \dots, z_n)) = H_{i,\theta}^{trig}(z_1, \dots, z_n)$$
    \item For any $ C \in \overline M_{n+2}$, we have
    $$
    \ma_\theta^{trig}(C) \cap (U \fg^{\otimes n})^{(2)} = \gamma_\theta(C) \oplus \spann (  \omega^{(i)} : i = 1, \dots, n)
    $$
\end{enumerate}    
\end{lem}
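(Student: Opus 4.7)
For part (1), I will establish linear independence of the images $\{\gamma_\theta^1(t_{ij}) : 0 \le i < j \le n\}$ by isolating a single distinguishing coordinate. The element $\omega_-^{(j)} := \Omega_-^{(jj)} \in U\fg^{(j)}$ appears in $\gamma_\theta^1(t_{0j})$ with coefficient $-1/z_j \cdot$ (wait, simply coefficient $-1$ in $\gamma_\theta^1(t_{0j})$) and in none of the other basis images (which are either $\Omega^{(ij)}$ with $1 \le i < j$, or involve $\Omega_-^{(jk)}$ with $k \ne j$). Hence any linear relation forces all coefficients attached to the ``$t_{0j}$'' generators to vanish, after which the statement reduces to the injectivity of $\gamma|_{\fs_n^1}$ already recorded in Lemma \ref{le:gamma}. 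Part (2) is a direct substitution: expanding
\[
h_i(0, z_1, \dots, z_n) = \frac{t_{0i}}{z_i} + \sum_{\substack{j=1 \\ j \ne i}}^n \frac{t_{ij}}{z_i - z_j}
\]
and applying the defining formulas of $\gamma_\theta^1$ reproduces $H_{i,\theta}^{trig}$ term by term.

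For part (3), I first treat the generic case $C = (\mathbb{P}^1; 0, z_1, \dots, z_n, \infty) \in M_{n+2}$. By Lemma \ref{le:gamma} applied to the Gaudin subalgebra $\ma(0, z_1, \dots, z_n)$,
\[
\ma(C) \cap (U\fg^{\otimes n+1})^{(2)} = \gamma(Q(C)) \oplus \spann(\omega^{(i)} : 0 \le i \le n).
\]
I then apply $\psi_\theta$, using three facts: $\psi_\theta \circ \gamma = \gamma_\theta^1$ by construction; $\psi_\theta(\omega^{(0)}) = \Phi_1(\theta+\rho) \in \bc$ is a scalar by Lemma \ref{le:thetarho}; and $\psi_\theta(\omega^{(i)}) = \omega^{(i)}$ for $i \ge 1$, since these elements already lie in the last $n$ tensor factors and are untouched by the quantum Hamiltonian reduction. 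This yields the inclusion $\gamma_\theta^1(Q(C)) + \spann(\omega^{(i)} : 1 \le i \le n) \subseteq \ma_\theta^{trig}(C) \cap (U\fg^{\otimes n})^{(2)}$. The sum is direct because every element of $\gamma_\theta^1(\fs_{n+1}^1)$ is free of $\omega_+^{(i)}$-contributions (only $\Omega_-$'s, $\Omega$'s with distinct indices, and linear terms $\theta^{(j)}$ appear), whereas each $\omega^{(i)}$ carries a non-zero $\omega_+^{(i)}$-component. Combined with part (1) this produces exactly $2n$ linearly independent elements, matching the quadratic dimension of $\ma_\theta^{trig}(C)$ from Theorem \ref{size}, and hence the inclusion is an equality.

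The extension to arbitrary $C \in \overline M_{n+2}$ proceeds by morphism uniqueness. Both sides define morphisms $\overline M_{n+2} \to \Gr(2n, (U\fg^{\otimes n})^{(2)})$: the right-hand side is the composition of the Aguirre--Felder--Veselov isomorphism of Theorem \ref{th:AFV} with the fixed linear map $\gamma_\theta^1$ and the operation of adjoining the constant subspace $\spann(\omega^{(i)})$; the left-hand side is a morphism thanks to the flatness of the family in degree $2$, which is provided by Proposition \ref{pr:HPsame}. Since these two morphisms coincide on the dense open $M_{n+2}$ and $\overline M_{n+2}$ is reduced and irreducible, they coincide on all of $\overline M_{n+2}$. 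The main obstacle is the direct-sum verification and dimension bookkeeping in the generic case — specifically, ensuring that $\psi_\theta$ only discards a scalar coming from $\omega^{(0)}$ without eroding any genuinely quadratic content; the limit step is then formal once Proposition \ref{pr:HPsame} is in hand.
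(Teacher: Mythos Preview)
Your argument is essentially correct and follows the paper's approach: both reduce (3) to the open locus $M_{n+2}$ via density and flatness (Proposition~\ref{pr:HPsame}). For (1) you use a slightly different projection than the paper --- you isolate the single-slot term $\omega_-^{(j)}$ and then fall back on the injectivity of $\gamma$ from Lemma~\ref{le:gamma}, whereas the paper projects to the off-diagonal summand $\fg^{(i)}\otimes\fg^{(j)}$ of $S^2(\fg^{\oplus n})$ and uses the linear independence of $\Omega^{(ij)},\Omega_+^{(ij)},\Omega_-^{(ij)}$ there. Your route is a little shorter; the paper's has the advantage of handling all generators uniformly without invoking a prior lemma.

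One point needs repair. In the direct-sum verification for (3) you claim that elements of $\gamma_\theta^1(\fs_{n+1}^1)$ are ``free of $\omega_+^{(i)}$-contributions,'' but in $U\fg$ one has $\omega_+-\omega_-=\sum_c[e_c,e^c]\in\fh$, so $\omega_+^{(i)}$ and $\omega_-^{(i)}$ differ only by a degree-$1$ term and there is no well-defined $\omega_+$-coefficient that vanishes on $\omega_-^{(i)}$. The easy fix is to project instead to the summand $S^2(\fh^{(i)})$ of the associated graded $S^2(\fg^{\oplus n})$: there $\omega^{(i)}\mapsto\omega_0^{(i)}\ne 0$, while every $\Omega_-^{(jk)}$ (including $\omega_-^{(j)}$) and every off-diagonal $\Omega^{(jk)}$ maps to zero. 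Alternatively, the directness is automatic by dimension count once you invoke Theorem~\ref{size}: the left-hand side has dimension $2n$ with spanning set $\{H_{i,\theta}^{trig}\}\cup\{\omega^{(i)}\}$, and since each of your two summands has dimension at most $n$, their sum being all of it forces the sum to be direct.
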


\begin{proof}
    In order to prove the injectivity, we will show that $ \{ \gamma_\theta(t_{ij}) : 0 \le i < j \le n \}$ is linearly independent.
    
    First, we note that $ \gamma_\theta^1(\fs_{n+1}^1) $ lies in the second filtered piece $ (U \fg^{\otimes n})^{(2)}$ and we consider the composition
    $$ \fs_{n+1}^1 \xrightarrow{\gamma_\theta^1} (U \fg^{\otimes n})^{(2)} \rightarrow S^2 (\fg^{\oplus n})  $$
Next, fix $ 1 \le i<j \le n$ consider the projection $ S^2(\fg^{\oplus n}) \rightarrow \fg^{\otimes \{i,j\}}$.  The only basis vectors of $ \fs_{n+1}^1$ which map to non-zero elements in $\fg^{\otimes \{i,j\}} $ are $ t_{ij}, t_{0i}, t_{0j} $ which are sent to 
$$
\Omega^{(ij)}, \quad \Omega^{(ij)}_+, \quad \Omega^{(ij)}_-
$$
As these are linearly independent, if there is a linear combination of the vectors $ \{\gamma_\theta^1(t_{ab}) : 0 \le a < b \le n \} $ which equals 0, then the coefficients of $ \gamma_\theta^1(t_{ij}), \gamma_\theta^1(t_{0i}), \gamma_\theta^1(t_{0j}) $ must all be 0.  Since $ i, j$ were arbitrary, this shows the desired linear independence. 

The second statement follows from the computation of quadratic elements in trigonometric Gaudin algebras (see the discussion before Theorem \ref{size}).
    For the third statement, we know that both sides of this equation are well-defined morphisms $ \overline M_{n+2} \rightarrow \Gr(2n, (U \fg^{\otimes n})^{(2)})$.  Thus, it suffices to check their equality on  $ M_{n+2}$ where it follows from the second statement.  (Or we can appeal to Lemma \ref{le:gamma} and use composition with $ \psi_\theta$.)
\end{proof}

\begin{proof}[Proof of Theorem \ref{thm:trigiso}]
    From the previous lemma, the map  $ \overline M_{n+2} \rightarrow \Gr(2n, (U \fg^{\otimes n})^{(2)}) $ factors as
    $$ \overline M_{n+2} \rightarrow \Gr(n, \fs_{n+1}^1) \rightarrow \Gr(2n, (U \fg^{\otimes n})^{(2)}) $$
    where the second arrow is given by
    $$ Q\mapsto \gamma_\theta^1(Q) \oplus \spann ( \omega^{(i)} : i = 1, \dots, n)$$
    Since $ \gamma_\theta^1$ is injective, this second arrow is a closed embedding.
    As the first arrow is an isomorphism, we conclude that the composition is an isomorphism onto its image.
\end{proof}

\subsection{Case $n=1$}
In the case $n=1$ the family of trigonometric Gaudin $\ma_{\theta}^{trig}(z) \subset U\fg$ subalgebras does not depend on $z \in \bc^{\times}$. Note that the transcendence degree of these subalgebras is the same as that for shift of argument subalgebras, i.e. inhomogeneous Gaudin subalgebras for $n=1$. However, these algebras have different Hilbert-Poincar\'e series. Namely, in the inhomogeneous case, for each degree $ d$ central generator, we have generators of $ \ma_\chi$ of degrees $1,2,\ldots,d$. On the other hand, in the trigonometric case, for each degree $d$ central generator, we have $d$ generators of $ \ma_\theta^{trig}$, all of degree $d$. For example for $\fg = \fsl_n$ there are two generators of degree 2, three generators of degree 3, $\ldots$, $n$ generators of degree $n$.
Due to Lemma \ref{cor1} we see that
$\gr \ma^{trig}_{\theta} \subset S(\fg)$ does not depend on $\theta$. Following \cite{py}, one can describe this subalgebra (plus Cartan elements) as the Poisson-commutative subalgebra arising from the decomposition $\fg = \fn_+ \oplus \fb_-$.

\section{Inhomogeneous Gaudin model}

\subsection{Inhomogeneous Gaudin subalgebras}
\label{inhom}
We now recall the inhomogeneous Gaudin model, following the work of the third author \cite{r2}.

To begin, we define a Lie algebra morphism $$ \ev_\infty : t^{-1} \fg[t^{-1}] \rightarrow t^{-1} \fg[t^{-1}] / t^{-2} \fg[t^{-1}] \cong \fg_{ab}$$ where $ \fg_{ab} $ denotes the vector space $ \fg$ with the trivial Lie bracket.  We have $ \ev_\infty(x[-1]) = x$ for $ x \in \fg $ and $ \ev_\infty(x[-m]) = 0 $ for $ m > 1$.  This extends to an algebra homomorphism $\ev_\infty: U(t^{-1} \fg[t^{-1}]) \rightarrow U \fg_{ab} = S(\fg)$. 

The following result follows from Theorem \ref{th:A-invariance-der}(3).
\begin{lem}
For $ l = 1, \dots, r$, we have $ev_\infty(S_l)=\Phi_l$.
\end{lem}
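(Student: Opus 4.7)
The plan is to use two structural facts about $S_l$ given in Theorem~\ref{th:A-invariance-der}: it lies in PBW filtered degree $d_l$ with symbol $i_{-1}(\Phi_l)$, and it has loop-rotation weight $-d_l$. The key observation is that $\ev_\infty$ is homogeneous with respect to the loop-rotation grading (where $x[-m]$ has weight $-m$), so that $\ev_\infty(S_l)$ still has weight $-d_l$ in $S(\fg)$ (giving each $x\in\fg$ weight $-1$).

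First I would verify that $\ev_\infty$ is well-defined as an algebra map by checking that $t^{-2}\fg[t^{-1}]$ is a Lie ideal in $t^{-1}\fg[t^{-1}]$: indeed, $[x[-a],y[-b]] = [x,y][-a-b]$ with $a+b \ge 2$ whenever $a,b\ge 1$, so the quotient is abelian and $\ev_\infty$ extends to $U(t^{-1}\fg[t^{-1}]) \to S(\fg)$.

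Next, write $S_l = L + R$ with $L$ a PBW-degree $d_l$ lift of $i_{-1}(\Phi_l)$ and $R\in U(t^{-1}\fg[t^{-1}])^{(d_l-1)}$. For the leading piece, $i_{-1}(\Phi_l) \in S(t^{-1}\fg[t^{-1}])$ is $\Phi_l$ with each $x\in\fg$ replaced by $x[-1]$, and any lift $L$ to $U(t^{-1}\fg[t^{-1}])$ is a sum of (ordered) monomials in $x[-1]$'s whose image in $S(\fg)$ does not depend on the choice of ordering (since $\fg_{ab}$ is abelian); hence $\ev_\infty(L) = \Phi_l$.

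For the remainder $R$, use loop-rotation homogeneity: since $S_l$ has weight $-d_l$ and $L$ has weight $-d_l$, so does $R$. Any PBW monomial $x_{i_1}[-m_1]\cdots x_{i_k}[-m_k]$ appearing in $R$ satisfies $k \le d_l-1$ and $m_1+\cdots+m_k = d_l$, so by pigeonhole some $m_j \ge 2$. Such monomials are killed by $\ev_\infty$, hence $\ev_\infty(R)=0$ and $\ev_\infty(S_l) = \Phi_l$. There is no serious obstacle here; the only thing to be careful about is keeping the PBW and loop-rotation filtrations separate in the pigeonhole step.
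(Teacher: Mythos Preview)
Your proof is correct and is essentially the argument the paper has in mind: the paper simply states that the lemma ``follows from Theorem~\ref{th:A-invariance-der}(3)'' without spelling out the details. Your careful decomposition $S_l = L + R$ and pigeonhole on the remainder is exactly the right way to make this precise; note that you are (correctly and necessarily) also invoking part (2)(a) of that theorem for the loop-rotation weight, which the paper's terse citation of only (3) glosses over.
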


Let $ z_1, \dots, z_n \in \C^{\times}$  such that $z_i \not = z_j$ for $i \not = j$.  We have the algebra homomorphism
$$ \ev_{\infty,z_1, \ldots, z_n} \circ \Delta^{(n)}: U(t^{-1} \fg[t^{-1}]) \to S(\fg) \otimes U\fg^{\otimes n}$$
\begin{defn}
    We define the \emph{semi-universal inhomogeneous Gaudin algebra} $ \ma(\infty, z_1, \dots, z_n) \subset S(\fg) \otimes U\fg^{\otimes n} $ to be the image of $ \ma $ under $\ev_{\infty,z_1, \ldots, z_n} \circ \Delta^{(n)}$.
\end{defn}

Now, as before, let $u$ be a variable and consider the algebra homomorphism
$$\ev_{\infty,u-z_1, \ldots, u-z_n} \circ \Delta^{(n)}: U(t^{-1} \fg[t^{-1}]) \to S(\fg) \otimes U\fg^{\otimes n}(u)$$

For $ l = 1, \dots, r$, consider the rational $S(\fg) \otimes U\fg^{\otimes n}$-valued function
$$S_l(u; \infty, z_1, \ldots, z_n) = \ev_{\infty, u-z_1, \ldots, u-z_n} \circ \Delta(S_l) \in S(\fg) \otimes U \fg^{\otimes n}(u)$$
Note that this function has poles of order $d_l$ at $z_1, \ldots, z_n$.

As before, we form the Laurent expansions
$$ S_l(u; \infty, z_1, \dots, z_n) = \sum_{m=1}^{d_l} S_{l,i}^{\infty, m} (u - z_i)^{-m} + S(\fg) \otimes U \fg^{\otimes n}[[z_i - u]] $$
where $ S_{l,i}^{\infty, m} \in S(\fg) \otimes U \fg^{\otimes n}$, $ m = 1, \dots, d_l$, and $ i = 1, \dots, n$.


\begin{prop}\label{m2} \cite{r2, cfr}
\begin{enumerate}
 \item The subalgebra $\ma(\infty, z_1, \ldots, z_n)$ is invariant under additive translations of the parameters, i.e. for any $c \in \bc$ 
    $$\ma(\infty,z_1, \ldots, z_n) = \ma(\infty, z_1 +c, \ldots, z_n +c).$$ 
    \item The subalgebra $\ma(\infty, z_1, \ldots, z_n)$ is a polynomial algebra with
    $n(r + p) + r$ generators.
    One possible choice of free generators is $S_{l,i}^{\infty, m}$ for $ l = 1, \dots, r$, $ m = 1, \dots, d_l$ and $ i = 1, \dots, n$ along with $ \Phi_l^{(0)} \in S(\fg) \otimes U \fg^{\otimes n}$ which equals the value of $ S_l(u; \infty, z_1, \dots, z_n) $ at $ u = \infty$.
    \item It is a maximal transcendence degree subalgebra of $(S(\fg) \otimes U\fg^{\otimes n})^{\fg}$.
     \item The Hilbert-Poincar\'e series does not depend on $z_1, \ldots, z_n$ and is 
    $$\prod_{l=1}^{r} \frac{1}{(1-t^{d_l})^{nd_l+1}}.$$
    \end{enumerate}
\end{prop}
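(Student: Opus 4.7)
The plan is to follow the template of Theorem \ref{size} for the trigonometric case, treating $\ma(\infty,z_1,\ldots,z_n)$ as essentially a Gaudin-type subalgebra associated to $n+1$ marked points where the zeroth point ``at infinity'' is handled by $\ev_\infty$. For (1), the generating function $S_l(u;\infty,z_1,\ldots,z_n) = \ev_{\infty,u-z_1,\ldots,u-z_n}\circ\Delta(S_l)$ depends only on the differences $u-z_i$ (the $\ev_\infty$-contribution is constant in $u$), so the simultaneous shift $u\mapsto u-c$, $z_i\mapsto z_i-c$ preserves it. Since $\ma(\infty,z_1,\ldots,z_n)$ is generated by Laurent coefficients of this function at the $z_i$ together with its value at $u=\infty$, the subalgebra is also preserved.

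For generation in (2), I would invoke Proposition \ref{pr:ClassicalGenerators} together with its quantum lift in Theorem \ref{th:A-invariance-der}: $\ma$ is generated by the $\partial_t^k S_l$, and applying $\ev_{\infty,u-z_1,\ldots,u-z_n}\circ\Delta$ converts these Fourier coefficients into exactly the principal parts of $S_l(u;\infty,\ldots)$ at each $z_i$ plus the value at $u=\infty$, which equals $\widetilde\Phi_l^{(0)}$ by the identity $\ev_\infty(S_l)=\Phi_l$. For algebraic independence, I would pass to the associated graded subalgebra $A(\infty,z_1,\ldots,z_n)\subset S(\fg^{\oplus(n+1)})^\fg$, which is generated by Laurent coefficients of
\[
\Phi_l\!\left(x^{(0)}+\sum_{i=1}^n \frac{x^{(i)}}{u-z_i}\right)
\]
at $u=z_i$ together with the constant terms at $u=\infty$, namely $\Phi_l(x^{(0)})$. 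Mirroring the proof of Theorem \ref{size}, I would introduce the filtration on $S(\fg^{\oplus(n+1)})$ assigning degree $1$ to $x^{(1)}$ and degree $0$ to all other $x^{(i)}$, take leading terms, and verify linear independence of their differentials at a test point of the form $(y_0,f+h,f,\ldots,f)$ with $y_0\in\fg$ regular. The $\fb_-^{\oplus n}$ contribution from the $i\geq 1$ generators is controlled by Lemma \ref{lem:fft}, exactly as in the trigonometric case, while the additional $r$ independent directions come from $\{d_{y_0}\Phi_l\}_{l=1}^{r}$ in the zeroth slot, contributed by the $\Phi_l^{(0)}$. This exhibits $n(p+r)+r$ algebraically independent generators of the stated degrees, establishing (2) and, by taking associated graded, the Hilbert--Poincar\'e formula (4).

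Finally, (3) follows since the maximum transcendence degree of a commutative subalgebra in $(S\fg\otimes U\fg^{\otimes n})^\fg$ agrees with the corresponding maximum for $(U\fg^{\otimes(n+1)})^\fg$---both reduce to the same statement about Poisson-commutative subalgebras in $(S\fg^{\oplus(n+1)})^\fg$---and this maximum equals $n(p+r)+r$ by Theorem \ref{th:homGaudin}(3) applied with $n$ replaced by $n+1$; our subalgebra achieves this count by (2). The main obstacle I anticipate is the algebraic-independence step of (2): the differential calculation must simultaneously track the $\fb_-$-directions in each factor $\fg^{(i)}$, $i\geq 1$, coming from the Gaudin-type Laurent coefficients, and the full $r$-dimensional direction in $\fg^{(0)}$ coming from the $\Phi_l^{(0)}$, and the test point must be chosen so that both contributions are simultaneously of generic rank with no accidental overlap.
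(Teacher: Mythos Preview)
Your treatment of (1), (3), (4), and the generation part of (2) is fine, and in fact more detailed than the paper's own one-line proof, which simply remarks that everything except algebraic independence is already in \cite{r2}, and that algebraic independence follows by the same argument as in \cite[Proposition~1]{cfr} for the homogeneous case.

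There is, however, a genuine gap in your proposed algebraic-independence argument. The filtration with $\deg x^{(1)}=1$ and $\deg x^{(i)}=0$ for $i\ne1$, copied from Theorem~\ref{size}, does not carry over. In the trigonometric case it works because both $\tfrac{x^{(1)}}{u-z_1}$ and $\tfrac{\pi_-(x^{(1)})}{u}$ are linear in $x^{(1)}$, so the $i=1$ leading terms are polynomials in $x^{(1)}$ alone of degree $d_l$ and Lemma~\ref{lem:fft} applies directly. In the inhomogeneous case the extra term is $x^{(0)}$, which sits in degree $0$. For $n\ge2$ and $m<d_l$, the leading term of $\Phi_{l,1}^{\infty,m}$ has $x^{(1)}$-degree $d_l-1$ and is a linear combination of the $n$ polarizations $\phi_l\bigl((x^{(1)})^{d_l-1},x^{(j)}\bigr)$ for $j\in\{0,2,\dots,n\}$; together with $\Phi_l(x^{(1)})$ these span at most an $(n+1)$-dimensional space. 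As soon as some $d_l>n+1$ the $d_l$ leading terms for that $l$ are forced to be linearly (hence algebraically) dependent, and the argument collapses.

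The repair is to swap the roles of the $0$th and $1$st slots: take $\deg x^{(0)}=1$ and $\deg x^{(i)}=0$ for $i\ge1$, with test point $(h,f,\dots,f)$. Then the leading term of $\Phi_{l,i}^{\infty,m}$ is, up to a constant, the coefficient of $t^{m}$ in $\Phi_l(x^{(0)}+tx^{(i)})$, uniformly for every $i=1,\dots,n$, and Lemma~\ref{lem:fft} gives $\fb_-^{(i)}$ in each slot; the $\Phi_l^{(0)}$ contribute the remaining $r$ independent directions in $\fg^{(0)}$. This is exactly the Mishchenko--Fomenko shift-of-argument computation and is presumably what the argument of \cite{cfr} amounts to when transported to the present setting.
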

\begin{proof} The only statement that is not contained in \cite{r2} is that generators are algebraically independent. But the proof of Proposition 1 of \cite{cfr} shows this, as the proof is the same as for ordinary homogeneous Gaudin subalgebras.     
\end{proof}

\begin{defn}
The inhomogeneous Gaudin subalgebra $\ma_{\chi}(z_1, \ldots, z_n)$ is the image of the subalgebra  $\mathcal{A}(\infty,z_1, \ldots, z_n) $ under the map $\chi \otimes id$.
\end{defn}
Below we only consider subalgebras where $\chi \in \fh$.
We summarize known properties of inhomogeneous Gaudin subalgebras in the following Theorem. 
\begin{thm} \cite{hkrw, r2} \label{th:inhomproperties}
\begin{enumerate}
    \item The subalgebra $\ma_{\chi}(z_1, \ldots, z_n)$ is invariant under  additive translations, i.e. for any $c \in \bc$
        $$\ma_{\chi}(z_1, \ldots, z_n) = \ma_{\chi}(z_1 +c, \ldots, z_n +c).$$
        Also for any $c \in \bc^{\times}$ we have
        $$\ma_{\chi}(z_1, \ldots, z_n) = \ma_{c^{-1} \chi}(c z_1, \ldots, c z_n  )$$

    \item  For $\chi \in \fh$, the subalgebra $\ma_{\chi}(z_1, \ldots, z_n)$ is a free commutative algebra. If $\chi \in \fh^{reg}$ then one possible choice of free generators is the principal part coefficients in the Laurent expansion of $ (\chi \otimes id)(S_l(u;z_1, \ldots, z_n)) $ at the points $z_1, \ldots, z_{n}$.    
    \item For $\chi \in \fh$, the subalgebra $\ma_{\chi}(z_1, \ldots, z_n)$ is a  maximal commutative and of maximal transcendence degree subalgebra of $(U\fg^{\otimes n})^{\fz_{\fg}(\chi)}$. The transcendence degree is equal to
$$ n (r+p) - \dim \fz'_{\fg}(\chi) + \rk \fz'_{\fg}(\chi).$$
In particular, for $\chi \in \fh^{reg}$, the transcendence degree is equal to
 $n (r+p)$.
    
    \item For $\chi \in \fh^{reg}$ the Hilbert-Poincar\'e series does not depend on $z_1, \ldots, z_n$ and equals
     $$\prod_{l=1}^{r} \frac{1}{(1-t^{d_l})^{(n-1)d_l+1}} \cdot \prod_{l=1}^{r} \prod_{j=1}^{d_l-1} \frac{1}{1-t^{j}}$$
\end{enumerate}

\end{thm}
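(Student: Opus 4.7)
The plan is to deduce every part of Theorem~\ref{th:inhomproperties} by specializing the corresponding property of the semi-universal algebra $\mathcal{A}(\infty,z_1,\ldots,z_n)\subset S(\fg)\otimes U\fg^{\otimes n}$ from Proposition~\ref{m2} via the homomorphism $\chi\otimes\id$, and by invoking the Poisson-geometric/Kostant-style arguments from \cite{r2} and \cite{hkrw}.

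For part (1), the translation invariance is immediate from Proposition~\ref{m2}(1). For the scaling relation, I would use the loop-rotation action $\rho_s$ on $U(t^{-1}\fg[t^{-1}])$, which by Theorem~\ref{th:A-invariance-der}(2) preserves $\mathcal{A}_\fg$. A short computation gives $\ev_{sz_i}=\ev_{z_i}\circ\rho_s$ while $\ev_\infty\circ\rho_s^{-1}=s\cdot\ev_\infty$ on the degree-one part (and $0$ on higher parts, which are killed by $\ev_\infty$ anyway). Composing with the coproduct (which commutes with $\rho_s$), this packages into the equality $(c^{-1}\chi\otimes\id)\circ\ev_{\infty,cz_1,\ldots,cz_n}\circ\Delta = (\chi\otimes\id)\circ\ev_{\infty,z_1,\ldots,z_n}\circ\Delta\circ\rho_c$ on $\mathcal{A}_\fg$, and since $\rho_c$ preserves $\mathcal{A}_\fg$, this produces the same subalgebra.

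For part (2), note first that under $\chi\otimes\id$ the generator $\Phi_l^{(0)}$ of the semi-universal algebra becomes the scalar $\Phi_l(\chi)$, so the image is generated by the $n(r+p)$ images of $S_{l,i}^{\infty,m}$. The main obstacle is verifying that for $\chi\in\fh^{reg}$ these images remain algebraically independent. Following the method of \cite{r2} (analogous to the proof of Theorem~\ref{size}), I would pass to associated graded and study the symbols in $S(\fg^{\oplus n})$; the independence of their differentials at a Kostant-type slice point $(f+h,f,\ldots,f)$ reduces, by the chain rule, to Lemma~\ref{lem:fft} combined with a routine extension argument to handle the shift-of-argument part. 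This in turn implies part (4): having identified the free generators and their PBW degrees, one gets $(n-1)d_l+1$ generators of degree $d_l$ (as in the homogeneous case, Theorem~\ref{th:homGaudin}(3)) together with, for each $l$, one additional generator in each degree $1,2,\ldots,d_l-1$ (from the lower Taylor coefficients produced by specializing the $S(\fg)$-slot at $\chi$), giving the stated product formula.

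For part (3), the maximal transcendence degree statement is Poisson-geometric: one checks the equality $n(r+p)-\dim\fz'_\fg(\chi)+\rk\fz'_\fg(\chi)=\tfrac12(\dim(\fg^{\oplus n})+\rk Z)$ where $Z$ is the relevant centralizer, so that $\gr\mathcal{A}_\chi(z_1,\ldots,z_n)$ has the maximal transcendence degree among Poisson-commutative subalgebras of $S(\fg)^{\otimes n}$ commuting with $\fz_\fg(\chi)$; this follows from counting dimensions of generic coadjoint orbits in $\fg^{\oplus n}$ sliced by the moment map for $\fz_\fg(\chi)$. Maximal commutativity is then proven, as in \cite{hkrw}, by showing that any element commuting with $\mathcal{A}_\chi(z_1,\ldots,z_n)$ must commute with the classical limit, and using that $\gr\mathcal{A}_\chi$ is a maximal Poisson-commutative subalgebra of $S(\fg)^{\otimes n,\fz_\fg(\chi)}$ (the latter being the core technical input of \cite{r2}). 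The main obstacle throughout is the algebraic-independence step in part (2); once this is available, parts (3) and (4) follow by standard graded-to-filtered lifting arguments.
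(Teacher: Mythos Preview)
The paper does not actually prove Theorem~\ref{th:inhomproperties}: it is stated with the citation \cite{hkrw,r2} and no proof is given, so there is no ``paper's own proof'' to compare against. Your proposal is a plausible reconstruction of the arguments in those references, and the overall strategy is sound: part~(1) via loop rotation, parts~(2)--(4) by passing to associated graded and checking algebraic independence at a Kostant slice, with maximality deduced from the Poisson-commutative picture.

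Two points where your sketch is slightly loose. First, in part~(1), the scaling argument is correct but you should state it as $\ev_\chi\circ\sigma_c=\ev_{c\chi}$ on $S(\fg)$ (so that $\ev_{c^{-1}\chi}\circ\sigma_c=\ev_\chi$), which is what makes the identity go through cleanly; your phrasing ``$s\cdot\ev_\infty$ on the degree-one part'' is right on Lie-algebra elements but one must then extend as an algebra map, where it becomes precomposition with the dilation $\sigma_c$ rather than a scalar. Second, in part~(4), the counting ``$(n-1)d_l+1$ generators of degree $d_l$ plus one in each degree $1,\ldots,d_l-1$'' gives the correct series, but you do not say which generators have which PBW degree in $U\fg^{\otimes n}$ after specialization. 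The honest way to get the stated series is to identify $\gr\ma_\chi(\uz)$ with the classical shift-of-argument subalgebra and read off the bidegrees there (this is what \cite{r2} does); your phrase ``lower Taylor coefficients produced by specializing the $S(\fg)$-slot at $\chi$'' is the right intuition for $n=1$ but needs a sentence explaining how it interacts with the residues at $z_1,\ldots,z_n$ for general $n$.
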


Note that residues of $S_1(u; z_1, \ldots, z_n)$ at $u = z_i$ give rise to $n$ quadratic inhomogeneous Gaudin Hamiltonians:
$$H_{i,\chi} = \sum_{j\ne i} \frac{\Omega^{(ij)}}{z_i - z_j} + \chi^{(i)} $$

There is another set of quadratic Hamiltonians called dynamical Hamiltonians. They arise in the span of
residues of $u^{d_l - 3} (\chi \otimes id)(S_l(u;z_1, \ldots, z_n)), l = 1, \ldots, r$ at $u=\infty$. For $\chi \in \fh^{reg}$ they have the form:

$$G_i = \sum_{\alpha>0} \frac{ \alpha(h_i) }{\alpha(\chi)} \Delta^n(x_{\alpha} x^{\alpha}) + \sum_j z_j h_i^{(j)}$$

Here $\alpha$ is an arbitrary positive root, $x_{\alpha}$ is the corresponding element in the Lie algebra $\fg$, and  $\{h_i\}_{i=1}^r$ is a basis of $\fh$.

\begin{rem}
In \cite{kmr} there is a construction of universal inhomogeneous Gaudin subalgebras i.e. for any $\chi \in \fg$ there is a subalgebra $\ma_{\chi} \subset U(\fg[t])$ such that $\ev_{z_1, \ldots, z_n}(\ma_{-\chi}) = \ma_{\chi}(z_1, \ldots, z_n)$.
\end{rem}

\subsection{Compactification of the family of inhomogeneous Gaudin subalgebras}
\label{secinhom}

We will now study the compactification of the family of inhomogeneous Gaudin subalgebras with fixed $ \chi \in \fg^{reg}$. This family will be parametrized by the space $ \overline F_n $ of cactus flower curves from section \ref{se:cactusflower}.  Recall that $ \overline F_n $ is a compatification of $ F_n = (\C^n \setminus \Delta) / \C$.  For $ (z_1, \dots, z_n) \in F_n $, we have the inhomogeneous Gaudin subalgebra $ \ma_\chi(z_1, \dots, z_n)$.

We begin by defining an algebra $ \ma(C) $ for each point $ C \in \overline F_n$.  

First, we begin with the special case where $ C \in \widetilde M_{n+1}$; this corresponds to cactus flower curves where the distinguished point lies on a unique component.  We identify this component with $ \CP^1$ in such a way that the distinguished point becomes $ \infty $.  We let $ w_1, \dots, w_m \in \CP^1$ be the other special points on this component.  For $ k = 1, \dots, m$, let $ C_k$ be the (possible reducible) nodal curve attached at $ w_k $, and let $ B_k \subset \{1, \dots, n\}$ be the labels of its marked points.  (As before, if $ w_k = z_i$ is a marked point, then $ C_k$ is empty, and $ B_k =\{i \}$.)  In other words, we apply Proposition 4.7 from \cite{iklpr} and write $$ C = ((C_1, \dots, C_m), (w_1, \dots, w_m)) \in \overline M_{B_1 + 1} \times \cdots \times \overline M_{B_m +1} \times F_m $$

The algebra $ \ma(C)$ is built out of two pieces: a diagonally embedded inhomogeneous Gaudin subalgebra and a tensor product of (possibly limit) Gaudin subalgebras coming from the curves $ C_k$.

As in section \ref{se:compactGaudin}, we have the subalgebras $ \ma(C_k) \subset (U\fg^{\otimes B_k})^\fg $ and we collect together these index sets and consider $ \otimes_{k=1}^m \ma(C_k)$ as a subalgebra of $ (U \fg^{\otimes n})^{\fg}$.  

Let $ \Delta^\CB : U \fg^{\otimes m} \rightarrow U \fg^{\otimes n}$ be the diagonal embedding and $ Z = \Delta^\CB(Z(U\fg)^{\otimes m})$.

We define
$$\ma_\chi(C) = \Delta^\CB(\ma_\chi(w_1, \dots, w_m)) \otimes_Z \bigotimes_{k=1}^m \ma(C_k) \subset U \fg^{\otimes n} $$

\begin{ex}
    Suppose that $ m = 1 $, so that $ C_1 \in \overline M_{n+1}$.  This corresponds to the case where $ C$ lies in the zero section $ \overline M_{n+1} \subset \widetilde M_{n+1} $ (and essentially equals $ C_1$).  In this case, we have $ \ma_\chi(C) = \Delta(\ma_\chi) \otimes_Z \ma(C_1) $, the tensor product of a diagonally embedded shift of argument subalgebra with a (possibly limit) homogeneous Gaudin subalgebra. 
\end{ex}

Now, we turn to the case where $C $ is a flower cactus curves with multiple petals.  Then we write $ C = C_1 \cup \dots \cup C_m$, where each $ C_k \in \widetilde M_{S_k+1}$ has marked points labelled $S_k$, and they are all joined at their distinguished points.  Then each $ \ma_\chi(C_k) \in U \fg^{\otimes S_k}$ is defined as above and we define
$$ \ma_\chi(C) = \bigotimes_{k=1}^m \ma_\chi(C_k) \in U \fg^{\otimes n}$$
See Figure \ref{fig:InhomGaudin} for an example.

\begin{figure} 
	\includegraphics[trim=0 60 0 40, clip,width=\textwidth]{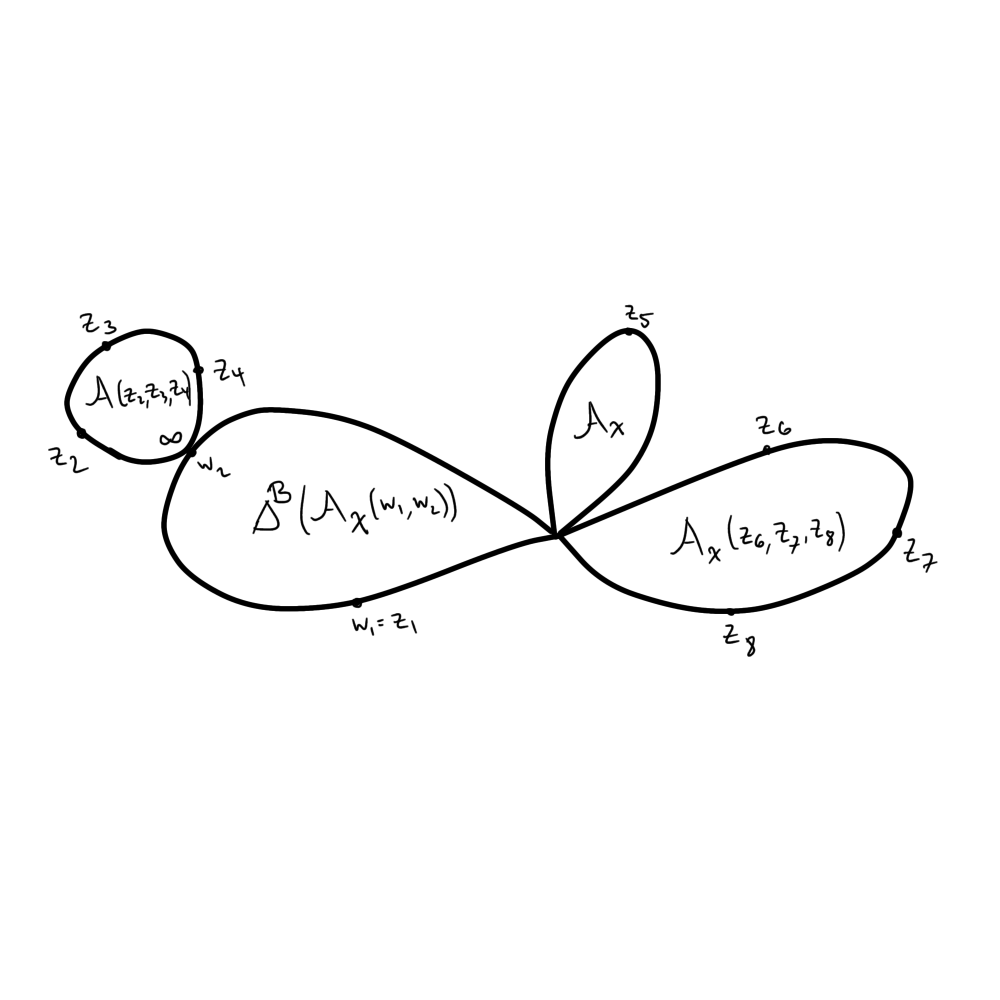}
	\caption{This is a curve $ C \in \overline F_8$.  We have $n = 8$, $m= 3 $, and $ \CS = (\{1,2,3,4\}, \{5\},  \{6,7,8\})$.  The curve $ C_1 \in F_4$ has $ \CB = (\{1\}, \{2,3,4\})$.  Each component contributes the algebra shown and the whole curve gives $ \ma_\chi(C) = \Delta^{\CB}(\ma_\chi(w_1, w_2))\ma(z_2, z_3, z_4)  \otimes \ma_\chi \otimes \ma_\chi(z_6, z_7,z_8) $. } \label{fig:InhomGaudin}
\end{figure}

\begin{ex} \label{eg:flower}
    Suppose that $ m = n $.  Then $ C $ is the maximal flower point of $ \overline F_n$.  Each $ C_k \in \widetilde M_{1+1}$, which is a single point, and thus 
    $\ma_\chi(C) = \ma_\chi^{\otimes n} $ is the tensor product of shift of argument subalgebras. 

    Note that the Hilbert-Poincar\'e series of this algebra with respect to the PBW filtration is
$$ \left(\prod_{l=1}^r \prod_{j=1}^{d_l} \frac{1}{1-t^j}\right)^n$$
which differs from the series for the non-limit inhomogeneous Gaudin subalgebras.  
\end{ex}



\begin{thm} \label{compinhom}
    This defines a family of subalgebras faithfully parametrized by $ \overline F_{n+1}$.  This family  comes with compatible flat filtrations and is the compactification of the family of inhomogeneous Gaudin subalgebras.  All the subalgebras in the family are polynomial algebras.
\end{thm}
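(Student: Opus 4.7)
The plan is to parallel Theorem \ref{trigcomp}: first, show that the Hilbert-Poincaré series of $\ma_\chi(C)$ is constant in $C \in \overline F_n$ to obtain flatness, then use Theorem \ref{th:FnGnIso} to identify the quadratic part of $\ma_\chi(C)$ with commutative subspaces of $\fr_n^1$, giving faithful parametrization. The polynomial ring property then follows by induction on the depth of the stratum containing $C$.

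The first and most delicate step is to fix the right filtration. As Example \ref{eg:flower} already shows, the PBW filtration on $\ma_\chi(C)$ does not produce a constant Hilbert-Poincaré series across $\overline F_n$. Instead, I would transport the filtration from the semi-universal inhomogeneous Gaudin algebra $\ma(\infty, z_1, \ldots, z_n)$, so that each generator $S^{\infty, m}_{l,i}$ is assigned filtration degree $d_l$ even if its image in $\ma_\chi(z_1, \ldots, z_n)$ drops in PBW degree after applying $\chi \otimes \Id$. On a boundary stratum with data $((C_1, \ldots, C_m), (w_1, \ldots, w_m))$, define the filtration inductively: each $\ma(C_k)$ carries its intrinsic PBW filtration, while the diagonally embedded $\ma_\chi(w_1, \ldots, w_m)$ carries the universal one. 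With this choice, the predicted constant Hilbert-Poincaré series is $\prod_{l=1}^r (1-t^{d_l})^{-nd_l}$.

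Flatness would then follow from the degeneration of Theorem \ref{th:familyCFn}: the total family over $\overline \CF_n$ interpolates between $\overline M_{n+2}$ and $\overline F_n$, and the trigonometric half is flat with the stated series by Theorem \ref{trigcomp} and Proposition \ref{pr:HPsame}, so upper semicontinuity of Hilbert-Poincaré series gives one inequality in the fiber at $\varepsilon = 0$. For the matching lower bound, I would compute directly on each stratum: the Hilbert-Poincaré series of $\Delta^{\CB}(\ma_\chi(w_1, \ldots, w_m)) \otimes_Z \bigotimes_k \ma(C_k)$ is the product of factor series divided by the series of $Z = Z(U\fg)^{\otimes m}$, and the identity $\sum_k (|B_k|-1) = n-m$ shows this product is the same on every stratum.

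For faithful parametrization, I would follow the model of Theorem \ref{thm:trigiso}. Define the Lie algebra morphism
$$\gamma_\chi^0 : \fr_n \to (U\fg^{\otimes n})^{\fz_\fg(\chi)}, \qquad t_{ij} \mapsto \Omega^{(ij)}, \quad u_i \mapsto \chi^{(i)},$$
whose defining relations reduce to $\fg$-invariance of $\Omega$ and commutation between distinct tensor factors. For $\chi \in \fh^{reg}$, the restriction to $\fr_n^1$ is injective: projection to the $ij$ tensor components separates $t_{ij}$, $u_i$, and $u_j$. A direct computation using \eqref{eq:Hifr} shows
$$\ma_\chi(C) \cap (U\fg^{\otimes n})^{(2)} = \gamma_\chi^0(Q(C)) \oplus \spann(\omega^{(i)} : i = 1, \ldots, n)$$
for every $C \in \overline F_n$, where $Q(C) \subset \fr_n^1$ is the subspace from Proposition \ref{pr:FnGnsurject}. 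Combined with Theorem \ref{th:FnGnIso}, this yields a closed embedding $\overline F_n \hookrightarrow \Gr(2n, (U\fg^{\otimes n})^{(2)})$ factoring through $\Gr(n, \fr_n^1)$, which establishes faithful parametrization. Polynomiality is inductive on stratum depth: the base cases are polynomial by Theorems \ref{th:inhomproperties}(2) and \ref{th:homGaudin}(3), each contains the relevant copy of $Z(U\fg)^{\otimes m}$ as a polynomial subring, and tensor products over this preserve polynomiality. The compactification statement is then automatic from faithfulness together with the density of $F_n$ in $\overline F_n$. The main obstacle throughout is the filtration issue in the first step: matching the filtration transported from $\ma$ with the one arising from the degeneration of trigonometric subalgebras along $\overline \CF_n$, so that the two bounds on Hilbert-Poincaré series actually coincide uniformly.
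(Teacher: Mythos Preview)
Your faithfulness argument via $\gamma_\chi^0$ and Theorem \ref{th:FnGnIso} is correct and is exactly the $\varepsilon = 0$ specialization of what the paper does in Corollary \ref{co:quad}. Your diagnosis of the filtration issue and the proposed fix (transport degree $d_l$ from the semi-universal algebra rather than use PBW) is also right, and matches the filtration the paper eventually defines at the end of Step~4.

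The flatness argument, however, has a real gap. You invoke ``the total family over $\overline \CF_n$'' and cite Theorem \ref{th:familyCFn}, but that theorem only constructs the family over the \emph{open} locus $\CF_n$; extending it to $\overline \CF_n$ is Theorem \ref{th:algCFn}, whose proof is the technical core of Section~\ref{sec6} and is logically equivalent to what you are trying to establish. Without it, your semicontinuity step has no algebraic family along which to degenerate toward a boundary point of $\overline F_n$. Your direct stratum-by-stratum Hilbert--Poincar\'e computation tells you the size of the explicitly defined $\ma_\chi(C)$, but it does not show that $\ma_\chi(C)$ is the limit of $\ma_\chi(\uz)$ as $\uz \to C$: for that you must produce generators of $\ma_\chi(\uz)$ that extend as regular functions across the boundary and specialize into the explicit product subalgebra. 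The paper flags exactly this obstruction right after stating Theorem \ref{compinhom}: one cannot argue directly as for Theorem \ref{trigcomp} because there is no compactified parameter space for $\ma(\infty, z_1, \ldots, z_n)$.

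The paper closes this gap by constructing, for each planar binary forest $\tau$, generators $s_{l,v,\tau}^m$ as sums of residues of $S_l$ after affine reparametrizations adapted to $\tau$, checking they are regular on the charts $\CW_\tau \subset \overline \CF_n$, and proving their algebraic independence at every boundary point by induction on codimension (Lemmas \ref{lem:restriction-codim1-strata-1sttype} and \ref{lem:restriction-codim1-strata-2ndtype}, using Knop's theorem to justify the tensor-product-over-$Z$ description). Restricting to $\varepsilon = 0$ then yields Theorem \ref{compinhom} as Corollary \ref{co:compinhom}. Your stratum computation and the tensor-product argument do appear inside that induction, but only after the generators have been built and shown to extend; they cannot replace that construction.
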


As we saw in Example \ref{eg:flower}, the Hilbert-Poincar\'e series of the naive filtration (given by intersecting with the PBW filtration in $ U \fg^{\otimes n}$) is not constant in this family.  Thus, we will need to be more careful when we define the filtration mentioned in this theorem.


We cannot prove Theorem \ref{compinhom} directly as in the case of trigonometric Gaudin subalgebras because we do not have a compactified parameter space for the subalgebras $\ma(\infty, z_1, \ldots, z_n)$.
We will prove Theorem \ref{compinhom} below, see Corollary~\ref{co:compinhom}.

\subsection{Conjectural closure for arbitrary $\chi \in \fh$}

A natural question is to describe the closure of the parameter space for subalgebras $\ma_\chi(z_1, \ldots, z_n)$ with fixed non-regular non-zero $\chi \in \fh$. These subalgebras are smaller than those for regular $ \chi$.  They are maximal commutative subalgebras in the subalgebra of invariants $(U\fg^{\otimes n})^{\fz_\fg(\chi)}$ with respect to the centralizer of $\chi$. Unlike the case of regular $\chi$, now in the limit corresponding to the flower point of $\overline{F}_n$, i.e. the limit  $\lim\limits_{\varepsilon\to0}\ma_\chi(\varepsilon^{-1}z_1,\ldots\varepsilon^{-1}z_n)$, we get a larger subalgebra than $\ma_\chi^{\otimes n}$ because this subalgebra has smaller transcendence degree than $\ma_\chi(z_1, \ldots, z_n)$. The example of subregular $\chi$ considered in \cite[Proposition 10.17]{hkrw} shows that we get the product of $\ma_\chi^{\otimes n}$ with the homogeneous Gaudin subalgebra $\ma(z_1,\ldots,z_n)\subset U\fz_\fg(\chi)^{\otimes n}$. Note that this homogeneous Gaudin subalgebra does depend on $z_1,\ldots z_n$, which means that the actual parameter space should be a non-trivial resolution of $\overline{F}_n$, in particular having $\overline{M}_{n+1}$ as the fiber over the flower point of $\overline{F}_n$.

We expect that the corresponding closure of the $(z_1, \ldots, z_n)$ parameter space is the Mau-Woodward compactification $\mathcal{Q}_n$ from \cite{mw} (see also \cite[\S 5]{iklpr}). 
Set-theoretically, Mau-Woodward space is the set of pairs
$(X,Y), X \in \overline F_n, Y \in \overline M_{m+1}$ where $m$ is the number of petals in $X$ (so the petals of $ X $ are attached not to a single central point but to the marked points of the additional nodal genus 0 stable curve). 


Let $\mathcal{B}$ be the partition of $\{1,\ldots,n\}$ into $m$ parts according to the petals of $X$. Let $\fg_1=\fz_\fg(\chi)$ be the centralizer of $\chi$ in $\fg$. We expect the following description of the extended family of subalgebras $\ma_\chi(z_1,\ldots,z_n)$:
\begin{conj}\label{co:MW}
    There is a family of subalgebras $\ma_\chi(X,Y)$ parameterized by $\mathcal{Q}_n$ that extends $\ma_\chi(z_1,\ldots,z_n)$. The subalgebra corresponding to the point $(X,Y)\in\mathcal{Q}_n$  is the product  $\ma_\chi(X)\cdot \Delta^{\mathcal{B}}\ma^{\fg_1}(Y)$. Here $\ma_\chi(X)$ is defined as the product of $\ma_\chi(X_i)$ over all petals in $X$ and $\ma^{\fg_1}(Y)$ is the Gaudin subalgebra in $U\fg_1^{\otimes m}$.
\end{conj}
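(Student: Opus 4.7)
The plan is to construct the family $\ma_\chi(X,Y)$ over $\CQ_n$ and then verify the three ingredients for a faithful flat compactification: commutativity, correct size, and actual realization as a limit of the non-limit subalgebras $\ma_\chi(z_1,\ldots,z_n)$.

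First, I would verify that the proposed subalgebra $\ma_\chi(X)\cdot \Delta^{\CB}\ma^{\fg_1}(Y)$ is commutative. The key point is that $\ma_\chi(X_i)\subset (U\fg^{\otimes S_i})^{\fg_1}$ for each petal $X_i$, and $\ma^{\fg_1}(Y)$ lies in $U\fg_1^{\otimes m}$; so $\Delta^{\CB}\ma^{\fg_1}(Y)$ lies in the diagonal $\fg_1$-action block by block and commutes with each petal factor. Commutativity within each petal follows from Theorem~\ref{compinhom} applied to $\fg$ and within $\Delta^{\CB}\ma^{\fg_1}(Y)$ from the homogeneous Gaudin theory. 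I would then compute the Hilbert--Poincar\'e series of the product, combining that of the petal algebras (given recursively by the $\overline F_{S_i}$ strata) with the known series of the homogeneous Gaudin family on $\fg_1$, and compare with the generic value from Theorem~\ref{th:inhomproperties}(3,4) for non-regular $\chi$.

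Second, I would identify $\ma_\chi(X,Y)$ as an honest limit. Given a one-parameter family $\uz(t)\in F_n$ degenerating toward $(X,Y)\in\CQ_n$, the Mau-Woodward coordinates decompose the approach into a ``petal formation'' rate (toward the corresponding flower point of $\overline F_n$) and a ``petal separation'' rate (resolved by $Y\in \overline M_{m+1}$). Expanding the universal generators $S_l(u;\infty,z_1(t),\ldots,z_n(t))$ and taking Laurent principal parts at each $z_i(t)$, the leading terms converge to the petal subalgebras $\ma_\chi(X_i)$, by Theorem~\ref{compinhom}. The new phenomenon is that certain subleading coefficients that were lost under the regular evaluation $\chi$ now survive upon projection to $U\fz_\fg(\chi)^{\otimes n}=U\fg_1^{\otimes n}$; these surviving terms are controlled by the relative scaling encoded in $Y$, and they recover exactly the generators of the homogeneous Gaudin subalgebra $\Delta^{\CB}\ma^{\fg_1}(Y)$ in $U\fg_1^{\otimes m}$. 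The model computation is \cite[Proposition 10.17]{hkrw} in the subregular case, and I would generalize it by inducting on the codimension of the $\fg_1$-stratum in $\fh$.

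Third, for flatness and faithfulness I would choose the filtration not as the naive PBW filtration on $U\fg^{\otimes n}$ (which fails already at the flower point, as Example~\ref{eg:flower} illustrates) but as the filtration induced from the universal inhomogeneous Gaudin algebra $\ma_\chi\subset U(\fg[t])$ of \cite{kmr}, pulled back through the relevant evaluation homomorphism. Flatness is then the Hilbert-series computation of Step 1. Faithfulness of the parametrization by $\CQ_n$ I would reduce to the quadratic level: the $X$-coordinate is recovered from the quadratic and dynamical Hamiltonians via the commutative-subalgebra description of $\overline F_n$ in $\fr_n$ given by Theorem~\ref{th:FnGnIso}; the auxiliary $Y$-coordinate is recovered from the quadratic part of the $\fg_1$-Gaudin piece via the Aguirre-Felder-Veselov isomorphism (Theorem~\ref{th:AFV}) applied to $\fs_{m+1}$ constructed from $\fg_1$.

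The main obstacle will be Step 2: precisely identifying which subleading terms survive in the degeneration and showing that they assemble into $\Delta^{\CB}\ma^{\fg_1}(Y)$ rather than some smaller or larger subalgebra. This is delicate because the $\fg_1$-pieces couple nontrivially across distinct petals through the iterated coproduct, and the correct bookkeeping seems to require reinterpreting $\CQ_n$ as a moduli space of framed flower curves enriched with an auxiliary stable tree recording the relative collision rates, and then performing the rescaled limit in the oper picture of \cite{ffre,ffry}; the matching of the two sides should follow from the fact that a $\fg^\vee$-oper whose residue is a non-regular $\chi$ reduces, in the appropriate limit, to a $\fg_1^\vee$-oper on the auxiliary curve $Y$.
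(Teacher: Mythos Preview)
The statement you are attempting to prove is labeled a \emph{Conjecture} in the paper and is not proven there. The paper only motivates it by the subregular example from \cite[Proposition 10.17]{hkrw} and the heuristic that the flower-point limit $\lim_{\varepsilon\to0}\ma_\chi(\varepsilon^{-1}z_1,\ldots,\varepsilon^{-1}z_n)$ for singular $\chi$ produces $\ma_\chi^{\otimes n}\cdot\ma^{\fg_1}(z_1,\ldots,z_n)$, which depends nontrivially on the $z_i$ and hence forces a resolution of the flower point by $\overline M_{n+1}$. There is no proof in the paper to compare your proposal against.

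As for your plan itself: it is a reasonable outline, and you correctly identify Step~2 as the crux. But note that even Step~1 is not straightforward: the Hilbert--Poincar\'e series computation for non-regular $\chi$ is not available in the paper (Theorem~\ref{th:inhomproperties}(4) is stated only for $\chi\in\fh^{reg}$), and the correct filtration to use is genuinely unclear---the paper's Remark after Corollary~\ref{co:compinhom} already warns that the PBW-induced filtration is the wrong one even in the regular case. Your suggestion to pull back the filtration from the universal algebra $\ma_\chi\subset U(\fg[t])$ is plausible but would need to be made precise. For Step~3, faithfulness via the quadratic part would require an analog of Theorem~\ref{th:FnGnIso} for $\CQ_n$, i.e.\ realizing $\CQ_n$ as a space of commutative subspaces of some extension of $\fr_n$ that also records the $\fg_1$-Gaudin data; no such result exists in the paper or the cited literature. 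In short, your proposal is a coherent research program, not a proof, and the paper's authors evidently regard the conjecture as open.
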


Moreover, we can use the above description of limits $\ma_\chi(z_1,\ldots,z_n)$ with not necessarily regular $\chi$ as a building block for the natural compactification of the whole parameter space of all possible $(z_1, \ldots, z_n, \chi)$.  First we note that the open part (corresponding to non-limit inhomogeneous Gaudin algebras) is $$ F_{n,\fg} := ((\C^n \setminus \Delta) / \C) \times \fh^{reg} / \Cx $$ 
where $ \Cx $ acts by scaling and inverse scaling on the two factors (see Theorem \ref{th:inhomproperties}).

Now, let $\chi(\varepsilon)=\chi_0+\varepsilon\chi_1$ with singular $\chi_0$ and consider the limit $$\lim\limits_{\varepsilon\to0}\ma_{\chi(\varepsilon)}(\varepsilon^{-1}z_1, \ldots, \varepsilon^{-1}z_n)$$ Then, again referring to the evidence from \cite[Proposition 10.17]{hkrw}, the limit subalgebra is the product of $\ma_{\chi_0}^{\otimes n}\subset (U\fg^{\fg_1})^{\otimes n}$ and $\ma_{\chi_1}\subset U\fg_1^{\otimes n}$ where $\fg_1=\fz_\fg(\chi_0)$. On the other hand, according to \cite{hkrw}, the natural closure of the parameter space of subalgebras $\ma_\chi\subset U\fg$ is the De Concini-Procesi wonderful compactification $M_{\fg}$ of $\mathbb{P}(\fh)^{reg}$.

Recall that a point of $M_\fg$ is a sequence $\chi_0,\chi_1,\ldots,\chi_l\in\fh$ such that 
\begin{enumerate}
\item $\chi_i\in\fg_i:=\mathfrak{z}_{\fg}(\chi_0,\ldots,\chi_{i-1})'$ (i.e. $\chi_i$ lies in the commutator subalgebra of the centralizer of all the previous elements);
\item $\mathfrak{z}_{\fg}(\chi_0,\ldots,\chi_{l})'=0$;
\item each of the $\chi_i$ is taken up to proportionality.
\end{enumerate}

Associated to such a point, we consider the $l$-times iterated cactus flower space.  A point in this space is a sequence
$(X_0, X_1, \ldots, X_l)$ where $X_0 \in \overline F_n, X_1 \in  \overline F_{k_1},  \dots, X_l \in \overline F_{k_l}$, where $k_i$ is the number of petals in $X_{i-1}$. It is natural to picture the elements of the iterated cactus flower space as follows: first, draw the cactus flower curve $X_l$ ; second, attach the petals  of $X_{l-1}$ to the marked points of $X_l$; third, attach the petals of $X_{l-2}$ to the marked points of $X_{l-1}$, and so on (see Figure \ref{fig:Iterated}).  

\begin{figure} \label{fig:Iterated}
	\includegraphics[trim=10 45 10 45, clip,width=\textwidth]{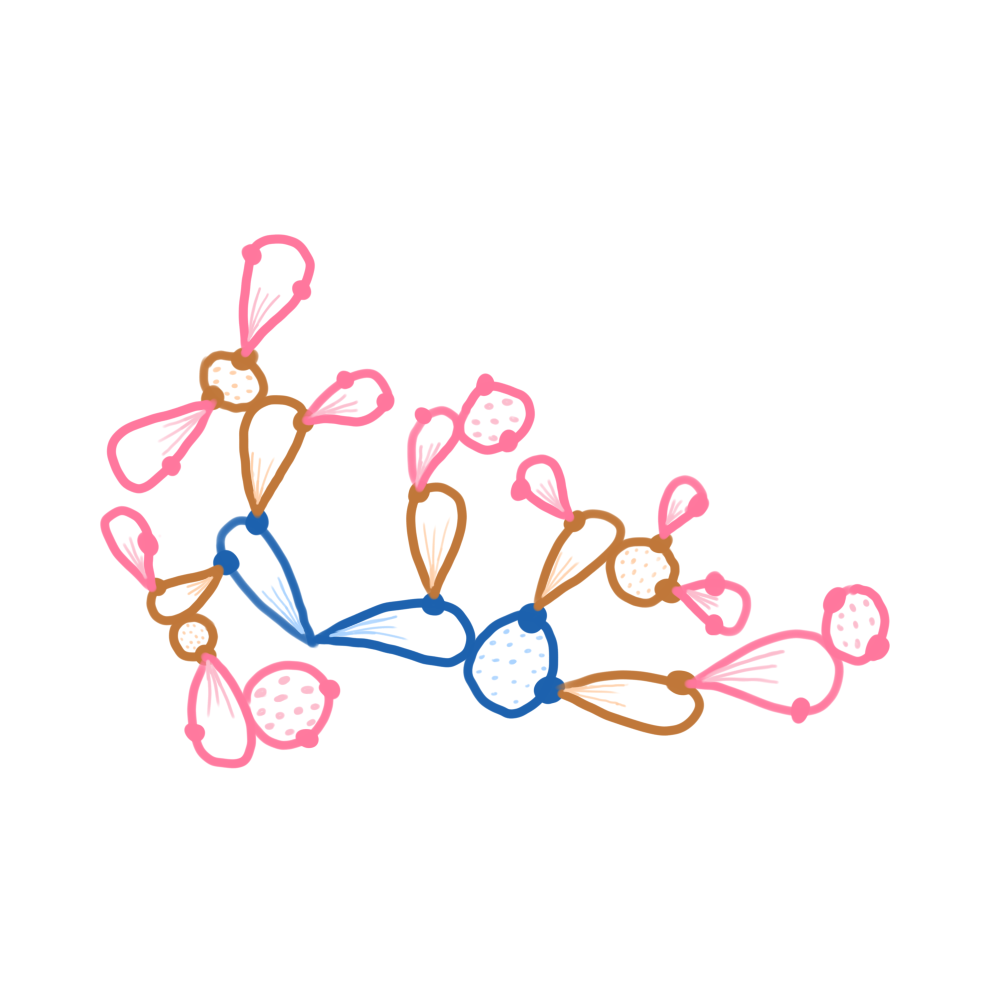}
	\caption{An iterated cactus flower.  This example represents $ (X_0, X_1, X_2) $ where $ X_2 \in \overline F_5 $ is drawn in blue, $ X_1 \in \overline F_{10} $ is drawn in brown, and $ X_0 \in \overline F_{19} $ is drawn in pink.}
\end{figure}

To a point $(\chi_0,\chi_1,\ldots,\chi_l)\in M_\fg$ and an element $(X_0, X_1, \ldots, X_l)$ in the iterated Mau-Woodward space, we assign the following data:
\begin{enumerate}
    \item for any $i=0,\ldots,l$, the subalgebra $\ma_{\chi_i}(X_i)\subset U\fg_i^{\otimes k_i}$.
    \item for any $i=1,\ldots,l$, the partition $\mathcal{B}_i$ of the set $\{1,\ldots,n\}$ according to the petals of $X_{i-1}$. 
\end{enumerate}

So, combining this with all the limits described above, we get the following conjectural description of the compactified parameter space $ \overline F_{n,\fg} $ for these subalgebras. 
\begin{conj}\begin{enumerate}
    \item There is a projective variety $ \overline F_{n,\fg}$ that maps onto $M_\fg$ with the fiber over $(\chi_0,\chi_1,\ldots,\chi_l)\in M_\fg$ being the $l$-times iterated cactus flower space.
    \item The variety $ \overline F_{n, \fg} $ is the compactified parameter space for the family $\ma_\chi(\uz)\subset U\fg^{\otimes n}$, where $ \chi, \uz $ are both allowed to vary.
    \item The subalgebra $\ma_{(\chi_0,\chi_1,\ldots,\chi_l)}((X_0, X_1, \ldots, X_l))$ corresponding to a point $$((\chi_0,\chi_1,\ldots,\chi_l),(X_0, X_1, \ldots, X_l))\in \overline F_n(\fg)$$ is the product of $\ma_{\chi_0}(X_0)\subset U\fg^{\otimes n}$ and $\Delta^{\mathcal{B}_i}\ma_{\chi_i}(X_i)\subset \Delta^{\mathcal{B}_i}U\fg_i^{\otimes k_i} \subset U\fg^{\otimes n}$ for all $i=1,\ldots,l$.
\end{enumerate}
\end{conj}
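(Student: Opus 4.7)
The plan is to build $\overline F_{n,\fg}$ by iteratively extending the fiber structure of the already-proved compactification $\overline F_n$ for regular $\chi$ (Theorem \ref{compinhom}), treating each stratum of the wonderful compactification $M_\fg$ separately and then assembling. The argument splits naturally into two subproblems: (a) Conjecture \ref{co:MW}, the description of the parameter space for fixed but possibly non-regular $\chi_0 \in \fh$; and (b) how the fiber varies as $\chi$ itself degenerates within $M_\fg$.

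For (a), I would work with the universal inhomogeneous Gaudin algebra $\ma_\chi \subset U(\fg[t])$ from \cite{kmr} and analyze its classical limit in $S(\fg)^{\otimes n}$. When $\chi_0$ is non-regular with centralizer $\fz_\fg(\chi_0)$, one expects the family to extend over the Mau--Woodward space $\mathcal{Q}_n$ with limit subalgebra at $(X,Y)$ given by $\ma_{\chi_0}(X)\cdot\Delta^{\CB}\ma^{\fg_1}(Y)$, matching the pattern established in \cite[Prop.~10.17]{hkrw} in the subregular case. The proof of flatness would reduce to computing the Hilbert--Poincar\'e series of the associated graded, using the compatibility between the Mishchenko--Fomenko shift-of-argument algebra for $\chi_0$ on $\fg$ and the classical Gaudin system for the smaller algebra $\fg_1$. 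Faithfulness over $\mathcal{Q}_n$ should follow from the fiberwise application of Theorem \ref{th:FnGnIso} to the petals of $X$, combined with Theorem \ref{th:AFV} applied to the auxiliary factor $Y \in \overline M_{m+1}$.

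For (b), I would induct on the depth $l$ of the stratum of $M_\fg$. At a point $(\chi_0, \ldots, \chi_l)$ one approaches along a curve $\chi(\varepsilon) = \chi_0 + \varepsilon\chi_1 + \cdots + \varepsilon^l \chi_l$ with simultaneously rescaled $\uz$, and takes the iterated limit. The inductive hypothesis applied to $\fg_1=\fz_\fg(\chi_0)'$ produces the inner factors of the iterated cactus flower curve and yields their commutativity, since each new factor lives in a smaller Levi whose commutator ideal centralizes the previous factors. Flatness of the global family requires a Rees-type filtration indexed by the sequence $(\varepsilon_0, \ldots, \varepsilon_l)$ of degeneration parameters, analogous to (but iterating) the $\BG$ construction of Section \ref{se:deg}. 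This should yield a constant Hilbert--Poincar\'e series on each stratum and, combined with the fiberwise faithfulness from (a), a faithful parametrization by $\overline F_{n,\fg}$.

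The main obstacle is (a). The key tool in the regular case was Theorem \ref{th:FnGnIso}, identifying $\overline F_n$ with commutative subspaces of the inhomogeneous holonomy Lie algebra $\fr_n$, but for non-regular $\chi_0$ no analogous clean presentation of the limit is available: the appearance of the auxiliary homogeneous Gaudin factor $\ma^{\fg_1}(Y)$ forces the relevant ``enhanced'' holonomy Lie algebra to combine the generators of $\fr_n$ with those of $\fs_{m+1}$ for $\fg_1$. Constructing this algebra, proving that its appropriate commutative subspaces are parametrized by $\mathcal{Q}_n$, and verifying the resulting flatness seem to be the core technical difficulties; once (a) is established, the iterated extension (b) should follow by a relatively formal bootstrapping argument.
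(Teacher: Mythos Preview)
The statement you are attempting to prove is explicitly labelled a \emph{Conjecture} in the paper, and the paper does not provide a proof. The surrounding text makes this clear: the section is titled ``Conjectural closure for arbitrary $\chi \in \fh$'', the statement is introduced as ``the following conjectural description'', and after stating it the authors only list supporting evidence (the case $n=1$ from \cite[Theorem~10.8]{hkrw}, a subregular $n=2$ case from \cite[Proposition~10.17]{hkrw}, and an expected relation to the Bottman--Oblomkov construction for $\fg=\fsl_m$). There is therefore no ``paper's own proof'' to compare your proposal against.

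What you have written is a reasonable sketch of a research program rather than a proof, and you correctly identify the principal obstacle: establishing Conjecture~\ref{co:MW} (your step~(a)) for non-regular $\chi_0$. Your suggestion to encode the relevant quadratic parts in an ``enhanced'' holonomy Lie algebra combining $\fr_n$ with $\fs_{m+1}$ for $\fg_1$ is a plausible line of attack, but no such algebra is constructed in the paper, nor is any analogue of Theorem~\ref{th:FnGnIso} for it available. Likewise, the iterated Rees-type filtration you propose in step~(b) is not developed anywhere in the paper beyond the single-parameter $\BG$-degeneration of Section~\ref{se:deg}. So while your decomposition into (a) and (b) is sensible and aligned with the authors' heuristics, both steps contain substantial open content that the paper does not resolve.
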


In particular, the parameter space arising in Conjecture~\ref{co:MW} is the subvariety in $\overline F_{n,\fg}$ given by fixing any regular $\chi_1$ and taking  the codimension $1$ stratum of the moduli space of $X_1$ corresponding to $\overline M_{m+1} \subset \overline F_m$. In particular, the corresponding subalgebras from Conjecture~6.11 are the ones from Conjecture~6.10 times the diagonal $\ma_{\chi_1}$ in $U\fg_1^{\otimes m}$.

For $n=1$ this Conjecture is \cite[Theorem 10.8]{hkrw}. Also, the particular case of this Conjecture with $n=2$ and $\fg_1$ being a root $\fsl_2$ is discussed in \cite[Proposition~10.17]{hkrw}. Moreover, we expect that for $\fg=\fsl_m$ all such spaces $\overline F_{n,\fg}$ are given by the construction of Bottman and Oblomkov \cite{bo} of a moduli space of marked vertical lines in $\mathbb{C}^2$ with $n-1$ vertical lines without marked points and another vertical line with $m$ marked points\footnote{We thank Alexei Oblomkov for pointing out this conjectural relation.}. 

Once the above Conjecture is established, it can be deduced from \cite{hkrw} that for the parameters in the real locus of $\overline F_{n,\fg}$, the corresponding subalgebras act with simple spectrum on any tensor product of finite-dimensional irreducible $\fg$-modules $V(\lambda_1)\otimes\ldots\otimes V(\lambda_n)$. Moreover, the spectrum of an inhomogeneous Gaudin subalgebra in such representation is naturally identified with the tensor product Kashiwara crystal $B(\lambda_1)\otimes\ldots\otimes B(\lambda_n)$. So, considering such spectra altogether, similarly to \cite{hkrw}, we get a covering of the real locus $\overline{\mathcal{Q}}_{n,\fg}(\mathbb{R})$ with the fiber $B(\lambda_1)\otimes\ldots\otimes B(\lambda_n)$, hence an action of the fundamental group of this real locus on the tensor product crystal. We expect that this fundamental group is generated by the inner Weyl-cactus groups and outer virtual cactus groups for all the Levi subalgebras in $\fg$ modulo all universal relations these groups obey altogether.

\section{Compactification of the families of trigonometric and inhomogeneous Gaudin subalgebras.}
\label{sec6}

\subsection{Degeneration of trigonometric to inhomogeneous} \label{se:triginhom}
Recall $ \CF_n := (\BG^n \setminus \Delta) / \BG $, as defined in section \ref{se:deg}.  Our goal in this section is to define a family of subalgebras of $ U \fg^{\otimes n}$ parametrized by $ \mathcal F_n$ and to prove the following result.

\begin{thm} \label{th:familyCFn} 
Assume that $ \chi \in \fh^{reg}$.
There exists a family of subalgebras of $ U \fg^{\otimes n} $ parametrized by $ \mathcal F_n $ such that for $(z_1, \dots, z_n; \varepsilon) \in \mathcal F_n$, we have
$$
\ma_\chi(z_1, \dots, z_n; \varepsilon) := \begin{cases} \ma_{\varepsilon^{-1} \chi}^{trig}(1 - \varepsilon z_1, \dots, 1 - \varepsilon z_n), \ \text{ if $ \varepsilon \ne 0 $} \\
\ma_\chi(z_1, \dots, z_n), \ \text{ if $\varepsilon = 0 $}
\end{cases}
$$
In particular, we have $ \lim\limits_{\varepsilon \to 0} \ma_{\varepsilon^{-1} \chi}^{trig}(1 - \varepsilon z_1, \dots, 1 - \varepsilon z_n) = \ma_\chi(z_1, \dots, z_n)$
\end{thm}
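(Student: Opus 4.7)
The plan is to construct the family via a Rees-theoretic degeneration of the universal Gaudin subalgebra, using affine invariance to reformulate the trigonometric Gaudin setup so that the $\varepsilon \to 0$ limit becomes transparent.

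First, by the translation/scaling invariance of homogeneous Gaudin subalgebras (Theorem~\ref{m1}), the affine transformation $w \mapsto (1-w)/\varepsilon$ sends the tuple $(\varepsilon^{-1}, z_1, \dots, z_n)$ to $(0, 1-\varepsilon z_1, \dots, 1-\varepsilon z_n)$, so that
$$\ma^{trig}_{\varepsilon^{-1}\chi}(1-\varepsilon z_1, \ldots, 1-\varepsilon z_n) = \psi_{\varepsilon^{-1}\chi}\bigl(\ma(\varepsilon^{-1}, z_1, \ldots, z_n)\bigr) \quad (\varepsilon \ne 0).$$
This makes the $\varepsilon \to 0$ limit tractable: the marked point $\varepsilon^{-1}$ tends to infinity and the character $\varepsilon^{-1}\chi$ diverges, but these two divergences balance through a Rees rescaling of the $0$th tensor factor.

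Second, I would introduce the Rees algebra $\mathcal{R} := \Rees_\varepsilon U\fg \otimes U\fg^{\otimes n}$ taken with respect to the PBW filtration on the $0$th tensor factor alone; this is flat over $\C[\varepsilon]$ with $\mathcal{R}(\varepsilon) \cong U\fg^{\otimes(n+1)}$ for $\varepsilon \ne 0$ and $\mathcal{R}(0) \cong S\fg \otimes U\fg^{\otimes n}$. The evaluation $ev_{\varepsilon^{-1}}: U(t^{-1}\fg[t^{-1}]) \to \Rees_\varepsilon U\fg$ defined by $x[-k] \mapsto \varepsilon^{k} x$ specializes at $\varepsilon = 0$ to the classical evaluation $ev_\infty$ used to define the semi-universal inhomogeneous Gaudin algebra in Section~\ref{inhom}. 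Combined with the usual evaluations at $z_1, \dots, z_n$ on the other factors, this lifts $\ma(\varepsilon^{-1}, z_1, \ldots, z_n)$ to a flat family of subalgebras in $\mathcal{R}$ whose fiber at $\varepsilon = 0$ is the semi-universal inhomogeneous Gaudin algebra $\ma(\infty, z_1, \dots, z_n)$ by Proposition~\ref{m2}. Simultaneously, the character $\varepsilon^{-1}\chi$ on $\fh^{(0)} \subset (U\fg)^{(1)}$ becomes, under the Rees rescaling $h^{(0)} \mapsto \varepsilon h^{(0)}$, the well-defined character $\chi$ at $\varepsilon = 0$, so that $\psi_{\varepsilon^{-1}\chi}$ extends to a $\C[\varepsilon]$-linear family $\psi^\varepsilon: \mathcal{R}^\fg \to U\fg^{\otimes n}$ whose specialization at $\varepsilon = 0$ coincides with $(\chi \otimes \id)$ precomposed with the natural inclusion.

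Applying $\psi^\varepsilon$ to the lifted universal Gaudin algebra produces the required family $\ma_\chi(\uz;\varepsilon) \subset U\fg^{\otimes n}$ over $\BG^n \setminus \Delta$, matching the trigonometric Gaudin algebra at $\varepsilon \ne 0$ by the above reformulation and the inhomogeneous Gaudin algebra $(\chi\otimes\id)(\ma(\infty, z_1, \ldots, z_n))$ at $\varepsilon = 0$ by definition. The family descends to $\CF_n = \BG^n\setminus\Delta /\BG$ because the $\BG$-action on parameters corresponds to multiplicative (resp.\ additive) translations for $\varepsilon \ne 0$ (resp.\ $\varepsilon = 0$), both of which preserve the respective subalgebras by Theorems~\ref{size}(1) and~\ref{th:inhomproperties}(1). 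Flatness is then established by comparing Hilbert-Poincaré series fiber by fiber: at $\varepsilon \ne 0$ via Theorem~\ref{size}(4) and at $\varepsilon = 0$ via Theorem~\ref{th:inhomproperties}(4) (here the hypothesis $\chi \in \fh^{reg}$ is essential), which agree after accounting for the Rees rescaling using $\sum_l d_l = r + p$.

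The main obstacle is the construction of $\psi^\varepsilon$ as a coherent flat family of algebra homomorphisms across $\varepsilon = 0$. Each ingredient diverges individually, but the Rees rescaling of the $0$th factor exactly compensates: the divergent character $\varepsilon^{-1}\chi$ becomes $\chi$ after pulling out the $\varepsilon$ from the rescaled $\fh$-direction, while the $\Delta(\fn_+)$-reduction defining $\psi$ degenerates compatibly into the evaluation at $\chi$ on $S\fg$. Verifying the algebraic consistency of this combined limit — and in particular, checking at the level of the quadratic Hamiltonians that the limit of $\psi_{\varepsilon^{-1}\chi}$ applied to $H_i^{(0,z_1,\ldots,z_n)}$ lies (up to scalar multiples of the central element $\Delta(\chi)$) inside $\ma_\chi(z_1,\ldots,z_n)$ — is the technical heart of the argument.
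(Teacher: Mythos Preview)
Your approach is essentially identical to the paper's: both use the Rees algebra $U_\hbar\fg \otimes U\fg^{\otimes n}$ on the $0$th factor so that the evaluation $\ev_{\varepsilon^{-1}}$ interpolates to $\ev_\infty$ at $\varepsilon=0$, and both observe that the divergent character $\varepsilon^{-1}\chi$ becomes the finite character $\chi$ once pulled through the Rees rescaling, yielding a $\C[\hbar]$-linear map $\chi\circ\Rees\psi$ whose image of $\mathbfcal A^\hbar(\uz)$ is the desired family.

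One caution: your final flatness argument via matching Hilbert--Poincar\'e series is not quite right as stated and in any case not needed here. The PBW Hilbert--Poincar\'e series of $\ma^{trig}_\theta(\uz)$ (Theorem~\ref{size}(4)) and of $\ma_\chi(\uz)$ for regular $\chi$ (Theorem~\ref{th:inhomproperties}(4)) are genuinely different, since the inhomogeneous side has generators of degrees $1,\dots,d_l$ rather than $d_l$ copies of degree $d_l$; the identity $\sum_l d_l = r+p$ only shows the transcendence degrees agree. The paper handles this later (\S7.2, 7.7) by using the filtration pushed forward from $\mathbfcal A^\varepsilon(\uz)$ through $\chi\circ\Rees\psi$, which is explicitly \emph{not} the PBW filtration at $\varepsilon=0$. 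For Theorem~\ref{th:familyCFn} itself the paper does not invoke flatness at all: it simply defines $\mathbfcal A_\chi := (\chi\circ\Rees\psi)(\mathbfcal A)$ and checks its specializations via Propositions~\ref{pr:mathbfcalA}, \ref{pr:grpsi2}, and~\ref{pr:psichi}.
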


The remainder of this subsection will be devoted to constructing this family.

Consider $ U \fg $ with its usual PBW filtration and let $ U_\hbar \fg = \Rees(U \fg) $.  Equivalently, $U_\hbar \fg$ is the algebra generated by $ x \in \fg$ and $ \hbar$ with the relations $ xy - yx = \hbar[x,y]$.  We write $ U_\varepsilon \fg $ for the specialization of $ U_\hbar \fg $ at $ \hbar = \varepsilon$.  For $ \varepsilon \ne 0 $, we have $ U_\varepsilon \fg \cong U \fg$ via the map $ x \mapsto \varepsilon x$, and we have $ U_0 \fg \cong S (\fg) $.

For $z_1, \dots, z_n \in \C $ non-zero, define a evaluation map by
    \begin{align*}
\ev^\hbar_{z_1, \dots, z_n} : t^{-1} \fg[t] &\rightarrow U_\hbar \fg \otimes U \fg^{\otimes n} \\
 x[-m] &\mapsto \hbar^{m-1} x^{(0)} + \sum_i z_i^{-m} x^{(i)}
\end{align*}

The following result is immediate.
\begin{prop}
    \begin{enumerate}
        \item This is a Lie algebra map (with respect to the commutator on the right hand side) and thus extends to an algebra map $ \ev^\hbar_\uz : U(t^{-1} \fg[t^{-1}]) \rightarrow  U_\hbar \fg \otimes U\fg^{\otimes n}  $.
        \item Let $\varepsilon \in \C$ such that $ 1 - \varepsilon z_i \ne 0 $ for all $ i$.  We can specialize at $ \hbar = \varepsilon$
        $$ U( t^{-1} \fg[t^{-1}]) \xrightarrow{\ev_\uz^\hbar}  U_\hbar \fg \otimes U\fg^{\otimes n} \rightarrow  U_\varepsilon \fg \otimes U\fg^{\otimes n}  $$
        After identifying $ U_0 \fg = S(\fg) $ and $U_\varepsilon \fg = U \fg$, for $\varepsilon \ne 0 $, this map equals $ \ev_{\infty, z_1, \dots, z_n} $ if $ \varepsilon = 0$ and $ \ev_{\varepsilon^{-1}, z_1, \dots,  z_n}$ otherwise.
    \end{enumerate}
\end{prop}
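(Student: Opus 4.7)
The plan is to verify part (1) by a direct bracket calculation in $U_\hbar\fg \otimes U\fg^{\otimes n}$ and deduce part (2) by unpacking the definitions of the two relevant evaluation homomorphisms under the isomorphisms $U_0\fg \cong S\fg$ and $U_\varepsilon \fg \cong U\fg$ for $\varepsilon \ne 0$. Both parts should be routine once the formulas are set up correctly.

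For part (1), I first check the Lie bracket on generators. On the source, the Lie algebra $t^{-1}\fg[t^{-1}]$ satisfies $[x[-m], y[-k]] = [x,y][-m-k]$, which $\ev^\hbar_\uz$ sends to $\hbar^{m+k-1}[x,y]^{(0)} + \sum_i z_i^{-m-k}[x,y]^{(i)}$. On the target, I compute the commutator
$$[\hbar^{m-1}x^{(0)} + \sum_i z_i^{-m} x^{(i)},\ \hbar^{k-1}y^{(0)} + \sum_j z_j^{-k} y^{(j)}].$$
All cross terms between the $0$-th tensor factor and the factors with $i \ge 1$ vanish because those tensor factors commute, and cross terms $[x^{(i)}, y^{(j)}]$ with $i \ne j \ge 1$ also vanish. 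The surviving contributions are $\hbar^{m-1}\hbar^{k-1}[x^{(0)}, y^{(0)}] = \hbar^{m+k-1}[x,y]^{(0)}$ (using the defining relation $[x,y] = \hbar[x,y]_\fg$ in $U_\hbar\fg$) and $\sum_i z_i^{-(m+k)}[x,y]^{(i)}$. These match, so $\ev^\hbar_\uz$ is a Lie algebra map and extends uniquely to an algebra homomorphism $U(t^{-1}\fg[t^{-1}]) \to U_\hbar\fg \otimes U\fg^{\otimes n}$ by the universal property.

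For part (2), I trace through what happens to the image of $x[-m]$ after specializing at $\hbar = \varepsilon$. If $\varepsilon = 0$, then $\hbar^{m-1} x^{(0)}$ specializes to $\delta_{m,1} x^{(0)} \in S\fg$, while $\sum z_i^{-m} x^{(i)}$ is unchanged. This is precisely $\ev_{\infty, z_1,\ldots,z_n} \circ \Delta^{(n+1)}(x[-m])$, since $\ev_\infty(x[-1]) = x$ and $\ev_\infty(x[-m]) = 0$ for $m > 1$. If $\varepsilon \ne 0$, then the element $\varepsilon^{m-1} x^{(0)} \in U_\varepsilon\fg$ corresponds, under the isomorphism $U_\varepsilon\fg \cong U\fg$ given by $x \mapsto \varepsilon x$, to $\varepsilon^m x^{(0)} \in U\fg$. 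On the other hand $\ev_{\varepsilon^{-1}}(x[-m]) = (\varepsilon^{-1})^{-m} x = \varepsilon^m x$, so the two formulas agree on the zeroth factor; the remaining terms $\sum_i z_i^{-m} x^{(i)}$ coincide with $\ev_{z_1,\ldots,z_n} \circ \Delta^{(n)}(x[-m])$ by definition. Since algebra homomorphisms are determined by their values on generators, the identifications claimed in part (2) follow.

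No step looks like a real obstacle; the only mild subtlety is keeping track of the factor of $\varepsilon$ introduced by the rescaling isomorphism $U_\varepsilon\fg \cong U\fg$, which is exactly what produces the shift $\hbar^{m-1} \leftrightarrow \varepsilon^m$ matching $\ev_{\varepsilon^{-1}}$.
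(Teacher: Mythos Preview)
Your proof is correct and is exactly the direct verification the paper has in mind; the paper simply records the proposition as ``immediate'' and gives no argument, so your bracket computation for part~(1) and your tracking of the specialization and the rescaling isomorphism $U_\varepsilon\fg\cong U\fg$, $x\mapsto \varepsilon x$, for part~(2) fill in precisely what was left to the reader.
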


From this proposition, we can define $\mathbfcal A^\hbar(\uz) := \ev^\hbar_\uz(\ma) \subset U_\hbar \fg \otimes U\fg^{\otimes n} $.  

Let $ (z_1, \dots, z_n; \varepsilon) \in \BG^n \setminus \Delta$. We can specialize $ \mathbfcal A^\hbar(\uz) $ to obtain $ \mathbfcal A^\varepsilon(\uz) \in U_\varepsilon \fg \otimes U\fg^{\otimes n} $ which we identify with $ U \fg^{\otimes n+1}$ for $ \varepsilon \ne 0 $ and $ S(\fg) \otimes U \fg^{\otimes n} $ for $ \varepsilon = 0 $.  Each specialized subalgebra is either a homogeneous Gaudin algebra or an inhomogeneous one.  From the previous Proposition, we immediately deduce the following result.

\begin{prop} \label{pr:mathbfcalA} We have
$$
\mathbfcal A^\varepsilon(z_1, \dots, z_n) = \begin{cases} \ma(0, 1 - \varepsilon z_1, \dots, 1 - \varepsilon z_n) = \ma(\varepsilon^{-1},  z_1, \dots, z_n) \text{ if $ \varepsilon \ne 0 $} \\
\ma(\infty; z_1, \dots, z_n) \text{ if $ \varepsilon = 0 $}
\end{cases}$$
In particular, applying what we already know about invariance of trigonometric Gaudin algebras under scaling (Theorem \ref{size}) and inhomogeneous Gaudin algebras under translation (Theorem \ref{th:inhomproperties}), we see that $ \mathbfcal A^\varepsilon(z_1, \dots, z_n) $ depends on $ (z_1, \dots, z_n; \varepsilon) $ only as an element of $ \CF_n = (\BG^n \setminus \Delta) / \BG$.
\end{prop}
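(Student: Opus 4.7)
The plan is to prove Proposition~\ref{pr:mathbfcalA} by first identifying the specialization $\mathbfcal A^\varepsilon(\uz)$ with a concrete subalgebra via the universal Gaudin algebra $\ma$, and then deducing $\BG$-invariance from the known invariance properties of homogeneous and inhomogeneous Gaudin subalgebras.

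First I would factor the evaluation homomorphism. The defining formula shows $\ev^\hbar_\uz = (\ev^\hbar_\infty \otimes \ev_{z_1} \otimes \cdots \otimes \ev_{z_n}) \circ \Delta^{(n+1)}$, where $\ev^\hbar_\infty : U(t^{-1}\fg[t^{-1}]) \to U_\hbar\fg$ is determined by $x[-m] \mapsto \hbar^{m-1} x$ (this is a Lie algebra map since $[\hbar^{m-1}x,\hbar^{k-1}y] = \hbar^{m+k-1}[x,y]$ in $U_\hbar\fg$, matching the image of $[x,y][-m-k]$). Specialising at $\hbar=0$ recovers the evaluation $\ev_\infty : U(t^{-1}\fg[t^{-1}]) \to S\fg$ used to define $\ma(\infty,z_1,\dots,z_n)$, because $\ev^0_\infty(x[-m])$ is $x$ for $m=1$ and $0$ for $m\ge 2$. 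This immediately gives $\mathbfcal A^0(\uz) = \ma(\infty,z_1,\dots,z_n)$.

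For $\varepsilon \neq 0$, I would track the isomorphism $U_\varepsilon\fg \cong U\fg$ coming from the Rees construction, under which $x\in U_\varepsilon\fg$ corresponds to $\varepsilon x \in U\fg$. Under this identification, the specialised map $\ev^\varepsilon_\infty$ sends $x[-m] \mapsto \varepsilon^{m-1}\cdot\varepsilon x = \varepsilon^m x$, which is exactly $\ev_{\varepsilon^{-1}}(x[-m])$. Hence $\mathbfcal A^\varepsilon(\uz) = \ma(\varepsilon^{-1}, z_1,\dots, z_n)$. The alternative description $\ma(0, 1-\varepsilon z_1,\dots, 1-\varepsilon z_n)$ then follows from the affine invariance of homogeneous Gaudin subalgebras (Theorem~\ref{th:homGaudin}(1)): the affine map $w\mapsto \varepsilon^{-1}(1-w)$ carries the tuple $(0,1-\varepsilon z_1,\dots,1-\varepsilon z_n)$ to $(\varepsilon^{-1}, z_1, \dots, z_n)$.

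The last step is the $\BG$-invariance assertion. The $\BG$-action on $\BG^n$ over $\mathbb A^1$ sends $(z_i;\varepsilon)$ to $(z + z_i - \varepsilon z z_i;\varepsilon)$, and a direct calculation gives $1-\varepsilon(z+z_i-\varepsilon z z_i) = (1-\varepsilon z)(1-\varepsilon z_i)$. Thus for $\varepsilon\neq 0$ the $\BG$-action rescales $(1-\varepsilon z_1,\dots,1-\varepsilon z_n)$ by the common factor $1-\varepsilon z$, and invariance of $\ma(0,\cdot)$ under dilation (Theorem~\ref{th:homGaudin}(1), the same property underlying the scaling invariance of trigonometric Gaudin algebras in Theorem~\ref{size}(1)) does the job. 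For $\varepsilon=0$ the action reduces to translation $z_i\mapsto z+z_i$, and invariance follows from the translation invariance of inhomogeneous Gaudin subalgebras (Proposition~\ref{m2}(1), see also Theorem~\ref{th:inhomproperties}(1)). There is no real obstacle here: the whole argument is a careful bookkeeping exercise, the only delicate point being the rescaling $x\mapsto \varepsilon x$ needed to match $U_\varepsilon\fg$ with $U\fg$ at $\varepsilon\neq 0$ so that the "$\hbar^{m-1}$" twist in $\ev^\hbar_\infty$ correctly produces the parameter $\varepsilon^{-1}$ in the homogeneous Gaudin subalgebra.
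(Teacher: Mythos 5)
Your proposal is correct and follows essentially the same route as the paper: the paper deduces the proposition immediately from the preceding specialization statement for $\ev^\hbar_\uz$ (itself stated as immediate), and you have simply written out the details of that specialization — the identification $x\mapsto\varepsilon x$ matching $U_\varepsilon\fg$ with $U\fg$, the affine change of variables relating $\ma(\varepsilon^{-1},\uz)$ to $\ma(0,1-\varepsilon z_1,\dots,1-\varepsilon z_n)$, and the factorization $1-\varepsilon(z+z_i-\varepsilon z z_i)=(1-\varepsilon z)(1-\varepsilon z_i)$ giving the $\BG$-invariance. All steps check out.
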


So $ \mathbfcal A^\varepsilon(\uz)$ is a family of algebras of $ (U_\hbar \fg \otimes U \fg^{\otimes n})^\fg$ parameterized by $ \CF_n$ (over the base ring $ \C[\hbar]$).

Recall that when constructing the trigonometric Gaudin algebras in section \ref{se:deftrig}, we defined an algebra homomorphism using Hamiltonian reduction by $\Delta(\fn_+)$.
\begin{equation} \label{eq:psichi}
\psi : (U \fg \otimes U \fg^{\otimes n})^\fg \rightarrow U(\fh) \otimes U \fg^{\otimes n}
\end{equation}

Define a filtration on $ U \fg \otimes U \fg^{\otimes n}$ by considering the PBW filtration on the first factor, i.e. $ (U \fg \otimes U \fg^{\otimes n})^m = U \fg^m \otimes U \fg^{\otimes n} $.  The Rees algebra of this filtration is $ U_\hbar \fg \otimes U \fg^{\otimes n}$.  On the RHS of (\ref{eq:psichi}) we also use the PBW filtration on the first factor.

\begin{prop} \label{pr:grpsi2}
    The map $ \psi $ preserves these filtrations.  The resulting map $ \gr \psi :  (S(\fg) \otimes U \fg^{\otimes n})^\fg \rightarrow  S(\fh) \otimes U \fg^{\otimes n} $ comes from the projection map $ S(\fg) \rightarrow S(\fh)$. 
\end{prop}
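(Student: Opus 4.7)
The plan is to prove both assertions directly from the explicit formula for $\psi$ given in Proposition~\ref{pr:psiab}, using the PBW decomposition $U\fg = U\fn_- \cdot U\fh \cdot U\fn_+$ in the first tensor factor.

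First I would verify filtration preservation. Given $u \in (U\fg \otimes U\fg^{\otimes n})^{\fg}$ of first-factor filtered degree $m$, write $u = \sum_k a_{k,-}\, a_{k,0}\, a_{k,+} \otimes b_k$ with $a_{k,-} \in U\fn_-$, $a_{k,0} \in U\fh$, $a_{k,+} \in U\fn_+$, $b_k \in U\fg^{\otimes n}$, and with the triangular PBW degrees satisfying $\deg a_{k,-} + \deg a_{k,0} + \deg a_{k,+} \le m$. By Proposition~\ref{pr:psiab},
$$ \psi(u) = \sum_k \epsilon(a_{k,-})\, a_{k,0} \otimes b_k\, \Delta^n(S(a_{k,+})), $$
and each summand has first-factor PBW degree $\deg a_{k,0} \le m$, so $\psi$ respects the filtration.

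Next I would compute $\gr \psi$. Under $\gr U\fg \cong S(\fg) \cong S(\fn_-) \otimes S(\fh) \otimes S(\fn_+)$, the symbol of $u$ in degree $m$ equals $\sum_k \bar a_{k,-} \cdot \bar a_{k,0} \cdot \bar a_{k,+} \otimes b_k$, taking only those $k$ with $\deg a_{k,-} + \deg a_{k,0} + \deg a_{k,+} = m$. For these $k$, there are two cases. If either $a_{k,-}$ or $a_{k,+}$ has positive PBW degree, then either $\epsilon(a_{k,-}) = 0$, or $\deg a_{k,0} < m$ and the contribution to $\psi(u)$ has first-factor degree strictly less than $m$; in both cases the contribution to $\gr^m \psi$ vanishes. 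If instead $a_{k,-}, a_{k,+} \in \C$, then $\epsilon(a_{k,-}) = a_{k,-}$ and $\Delta^n(S(a_{k,+})) = a_{k,+}$, so $\psi$ sends this summand to $a_{k,-} a_{k,+}\, a_{k,0} \otimes b_k$, whose symbol in $S^m(\fh) \otimes U\fg^{\otimes n}$ is $a_{k,-} a_{k,+}\, \bar a_{k,0} \otimes b_k$.

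Finally I would match this with the asserted projection. The natural projection $S(\fg) \to S(\fh)$, viewed through the algebra decomposition $S(\fg) = S(\fn_-) \otimes S(\fh) \otimes S(\fn_+)$, kills every summand $\bar a_{k,-} \bar a_{k,0} \bar a_{k,+}$ in which $\bar a_{k,-}$ or $\bar a_{k,+}$ has positive symmetric degree, and acts as the identity on the remaining summands (where $\bar a_{k,\pm}$ are scalars). This coincides exactly with what we computed above, proving that $\gr\psi$ is the tensor product of the projection $S(\fg) \to S(\fh)$ with the identity on $U\fg^{\otimes n}$. There is no real obstacle here: the main point is that the explicit formula in Proposition~\ref{pr:psiab} already sorts the three triangular pieces into precisely the behaviour—annihilation on $\fn_-$, retention on $\fh$, passage to the other tensor factors on $\fn_+$—required by the projection.
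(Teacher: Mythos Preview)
Your argument is correct and is essentially the same as the paper's: the paper simply says ``This follows immediately from the description of $\psi$ given in Proposition~\ref{pr:psiab},'' and you have written out the details of that implication by tracking PBW degrees through the formula $\psi(u) = \sum_k \epsilon(a_{k,-})\, a_{k,0} \otimes b_k\, \Delta^n(S(a_{k,+}))$. There is nothing to add.
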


\begin{proof}
This follows immediately from the description of $ \psi $ given in Proposition \ref{pr:psiab}.
\end{proof}

Since $ \psi $ preserves the filtrations, we apply the Rees functor to (\ref{eq:psichi}) and obtain
$$
\Rees \psi : (U_\hbar \fg \otimes U \fg^{\otimes n})^\fg \rightarrow  U_\hbar \fh \otimes U \fg^{\otimes n} 
$$

As before, we may specialize the map at any $ \varepsilon$.  From the general principles, when we specialize at $ \varepsilon \ne 0$, this recovers $ \psi$, and when we specialize at $ \varepsilon = 0$, we get $ \gr \psi$ as described in the previous proposition. 

Let $ \chi \in \fh $.  We can define a $\C[\hbar]$-algebra morphism $ \chi:  U_\hbar \fh \rightarrow \C[\hbar] $ given by $ x \mapsto \chi(x)$ for $ x \in \fh$.   We will now study the composition 
$$\chi \circ \Rees \psi : (U_\hbar \fg \otimes U \fg^{\otimes n})^\fg \rightarrow  \C[\hbar] \otimes U \fg^{\otimes n} $$

\begin{prop} \label{pr:psichi}
    For any $ \varepsilon \ne 0 $ we have a commutative diagram
$$
\begin{tikzcd}
    (U_\varepsilon \fg \otimes U \fg^{\otimes n})^\fg \ar[r,"\Rees \psi"] \ar[d] & U_\varepsilon \fh \otimes U \fg^{\otimes n} \ar[r,"\chi"] \ar[d] &  \C[\hbar]/ (\hbar - \varepsilon) \otimes U \fg^{\otimes n} \ar[d] \\
    (U \fg \otimes U \fg^{\otimes n})^\fg \ar[r,"\psi"] & U\fh \otimes U \fg^{\otimes n} \ar[r,"\varepsilon^{-1} \chi"] &  U \fg^{\otimes n}
\end{tikzcd}
$$
    where the first two vertical arrows are the canonical isomorphisms from the Rees construction and the third is the obvious isomorphism.
\end{prop}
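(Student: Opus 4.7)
The plan is to verify commutativity of each square separately. The left square is essentially the naturality of the Rees functor. By Proposition \ref{pr:grpsi2}, the map $\psi$ is filtered with respect to the PBW filtration on the first tensor factor, so applying $\Rees$ gives a well-defined $\C[\hbar]$-algebra morphism $\Rees \psi$ fitting into the left square at the level of Rees algebras. Specializing at $\hbar = \varepsilon \ne 0$, the canonical isomorphisms $(\Rees U\fg)(\varepsilon) \cong U\fg$ and $(\Rees U\fh)(\varepsilon) \cong U\fh$ intertwine $\Rees \psi$ with $\psi$: on an element of the form $\hbar^k \xi$ with $\xi$ of filtered degree $k$ in the first factor, both paths around the square produce $\varepsilon^k \psi(\xi)$.

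For the right square, the calculation is direct. The key input is the precise form of the canonical isomorphism $U_\varepsilon \fg \cong U\fg$: in the convention of section \ref{se:triginhom}, where $U_\hbar \fg$ is generated by $\fg$ and $\hbar$ with $xy - yx = \hbar[x,y]$, the relation $xy - yx = \varepsilon[x,y]$ in $U_\varepsilon \fg$ forces the isomorphism to $U\fg$ to send a generator $x \in \fg$ to $\varepsilon x$. Restricting to the Cartan, the specialization of $\chi \colon U_\hbar \fh \to \C[\hbar]$ at $\hbar = \varepsilon$ sends each $x \in \fh \subset U_\varepsilon \fh$ to the scalar $\chi(x)$. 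Transporting this along the canonical isomorphism $U_\varepsilon \fh \cong U\fh$ yields the map $U\fh \to \C$ which sends $\varepsilon x$ to $\chi(x)$, equivalently $y \in \fh$ to $\varepsilon^{-1}\chi(y)$. This is exactly the bottom arrow $\varepsilon^{-1}\chi$, so the right square commutes.

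Pasting the two squares together gives the claim. There is no genuine obstacle here; the substantive point is the $\varepsilon$-rescaling inherent in the canonical isomorphism from the Rees construction, which is precisely what converts $\chi$ on the trigonometric side into $\varepsilon^{-1}\chi$ on the inhomogeneous side, in perfect agreement with the scaling invariance $\ma_\chi(z_1, \ldots, z_n) = \ma_{a^{-1}\chi}(az_1, \ldots, az_n)$ already recorded for inhomogeneous Gaudin subalgebras.
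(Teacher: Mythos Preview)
Your proof is correct and follows essentially the same approach as the paper's own proof: the left square is attributed to the generalities of the Rees construction, and the right square is checked directly on generators $x \in \fh$ using the fact that the canonical isomorphism $U_\varepsilon \fh \cong U\fh$ sends $x \mapsto \varepsilon x$. Your presentation is somewhat more detailed, and the closing remark connecting the $\varepsilon$-rescaling to the scaling invariance of inhomogeneous Gaudin subalgebras is a nice touch not present in the paper.
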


\begin{proof}
  The first square commutes by the generalities of the Rees construction.  The second commutes because if $ x \in \fh$, then $ x^{(0)}$ is mapped to $ \chi(x) $ by going right and then down and $ \varepsilon^{-1} \chi(\varepsilon x)$ by going down and then right.  
\end{proof}

Let $ Z_\hbar(\chi) \subset U_\hbar \fg$ be the central ideal corresponding to $\chi$ under the Harish-Chandra isomorphism.  More precisely, it is the ideal generated by $ \widetilde \Phi_l - \Phi_l(\chi + \hbar \rho) $ for $ l = 1, \dots, r $.

\begin{lem} \label{le:psiinjective}
    The map $ \chi \circ \Rees \psi$ factors as
    $$(U_\hbar \fg \otimes U \fg^{\otimes n})^\fg \rightarrow (U_\hbar \fg/ Z_\hbar(\chi) \otimes U \fg^{\otimes n})^\fg \rightarrow \C[\hbar] \otimes U \fg^{\otimes n}
    $$ 
    When $ \chi $ is regular, then the second arrow is injective.

\end{lem}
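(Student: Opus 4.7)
For the factorization, I would first establish the Rees-theoretic analog of Lemma \ref{le:thetarho}: that
$$\Rees\psi(\widetilde\Phi_l^{(0)}) = \Phi_l(\cdot + \hbar\rho)\otimes 1 \in U_\hbar\fh\otimes U\fg^{\otimes n}.$$
Indeed, $\widetilde\Phi_l \in Z(U_\hbar\fg)$ has $\hbar$-deformed Harish-Chandra image $\Phi_l(\cdot + \hbar\rho) \in U_\hbar\fh$, and the calculation in Proposition \ref{pr:psiab} carries over verbatim to the Rees setting because $\psi$ and the Rees construction are compatible algebra homomorphisms. Evaluating the first factor at $\chi\in\fh$ then yields the scalar $\Phi_l(\chi + \hbar\rho)\in\C[\hbar]$, so $\chi\circ\Rees\psi$ annihilates each generator $\widetilde\Phi_l^{(0)} - \Phi_l(\chi+\hbar\rho)$ of $Z_\hbar(\chi)\otimes U\fg^{\otimes n}$. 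Since these generators are central and $\fg$-invariant in $U_\hbar\fg\otimes U\fg^{\otimes n}$, the Reynolds operator for the reductive $\fg$-action lets us write any element of $(Z_\hbar(\chi)\otimes U\fg^{\otimes n})\cap(U_\hbar\fg\otimes U\fg^{\otimes n})^\fg$ as $\sum_l u_l(\widetilde\Phi_l^{(0)} - \Phi_l(\chi+\hbar\rho))$ with $u_l\in(U_\hbar\fg\otimes U\fg^{\otimes n})^\fg$. Because $\chi\circ\Rees\psi$ is an algebra homomorphism, it maps this to zero, giving the desired factorization.

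For injectivity when $\chi \in \fh^{reg}$, my plan is to reduce to the classical limit $\hbar=0$ via $\C[\hbar]$-flatness. The target $\C[\hbar]\otimes U\fg^{\otimes n}$ is free, hence flat. For the source, I would argue that $U_\hbar\fg/Z_\hbar(\chi)$ is a flat $\C[\hbar]$-module — a standard fact about $\hbar$-quantizations of regular coadjoint orbits, whose classical limit is $\CO(G\cdot\chi)$ — and then note that tensoring over $\C$ with $U\fg^{\otimes n}$ preserves flatness, as does taking $\fg$-invariants (Reynolds operator, since $\fg$ is reductive). Consequently, injectivity over $\C[\hbar]$ follows from injectivity at $\hbar=0$.

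At $\hbar=0$, by Proposition \ref{pr:grpsi2}, the map becomes $\chi\circ\gr\psi:(S\fg\otimes U\fg^{\otimes n})^\fg\to U\fg^{\otimes n}$, $f\otimes u\mapsto f(\chi)u$ (evaluation at $\chi\in\fh\subset\fg$). Modding by $Z_0(\chi)=(\Phi_l-\Phi_l(\chi))$, the source becomes $((S\fg/Z_0(\chi))\otimes U\fg^{\otimes n})^\fg$. For $\chi\in\fh^{reg}$, Kostant's theorem identifies $\operatorname{Spec}(S\fg/Z_0(\chi))$ with the closed coadjoint orbit $G\cdot\chi\cong G/T$, where $T$ is the maximal torus with Lie algebra $\fh$. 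Frobenius reciprocity then provides a canonical isomorphism
$$\bigl((S\fg/Z_0(\chi))\otimes U\fg^{\otimes n}\bigr)^\fg \cong (U\fg^{\otimes n})^T = (U\fg^{\otimes n})^\fh,$$
under which the evaluation-at-$\chi$ map corresponds to the natural inclusion $(U\fg^{\otimes n})^\fh \hookrightarrow U\fg^{\otimes n}$, which is manifestly injective. The main obstacle in this plan is verifying the flatness of $U_\hbar\fg/Z_\hbar(\chi)$ over $\C[\hbar]$ as a deformation of $\CO(G\cdot\chi)$; as an alternative, one could check injectivity at a generic non-zero specialization $\hbar=\varepsilon$ via Proposition \ref{pr:psichi} combined with Duflo's theorem on annihilators of Verma modules with generic central character.
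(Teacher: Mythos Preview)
Your proposal is correct, and the factorization step matches the paper's (your Reynolds-operator detail is fine, and in fact makes explicit something the paper glosses over). One small point on the reduction to $\hbar=0$: flatness of the source alone does not give ``injective at $\hbar=0$ $\Rightarrow$ injective''; you also need the source to be $\hbar$-adically separated, so that $\ker\phi=\hbar\ker\phi$ forces $\ker\phi=0$. This follows once you know $U_\hbar\fg/Z_\hbar(\chi)$ is \emph{free} over $\C[\hbar]$ (e.g.\ via Kostant's freeness of $U_\hbar\fg$ over its center), which is stronger than the flatness you invoke. The paper packages this step as a lemma about $\mathbb N$-graded $\C[\hbar]$-modules with $\hbar$ in degree~1.

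Where your argument genuinely diverges is at $\hbar=0$. The paper takes a \emph{second} associated graded, now with respect to the PBW filtration on $U\fg^{\otimes n}$, to reduce to $(S\fg/Z_0(\chi)\otimes S\fg^{\otimes n})^\fg \to S\fg^{\otimes n}$, and then dualizes to the map of varieties $\fg^{n}\to p^{-1}(\chi)\times\fg^{n}\sslash G$, $(y_1,\dots,y_n)\mapsto(\chi,y_1,\dots,y_n)$, which has dense image for regular $\chi$. You instead use Kostant's theorem to identify $S\fg/Z_0(\chi)\cong\CO(G/T)$ and Frobenius reciprocity to get $(\CO(G/T)\otimes U\fg^{\otimes n})^G\cong(U\fg^{\otimes n})^\fh$, whereupon evaluation at $\chi$ becomes the tautological inclusion. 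This is cleaner and more conceptual; the paper's route is more concrete and closer in spirit to the explicit generator-counting used elsewhere in the paper.
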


We begin with the following simple result.
\begin{lem}
    Let $ A, B$ be two $ \mathbb N$-graded $ \C[\hbar]$-modules, where $ \hbar$ has degree 1.  Let $ \phi : A \rightarrow B $ be a graded $ \C[\hbar]$-module morphism.  Assume that $ \phi : A /\hbar A \rightarrow B / \hbar B $ is injective and that $ B $ is torsion-free as a $ \C[\hbar]$-module.  Then $ \phi $ is injective.

        Similarly, if $ A, B $ are two $\mathbb N $ filtered vector spaces and $ \phi : A \rightarrow B $ is a filtered linear map such that $ \gr \phi $ is injective, then $ \phi $ is injective.
\end{lem}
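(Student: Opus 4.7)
The plan is to prove both statements by a minimality argument; the graded version will rest on an analysis of $\hbar$-divisibility in $A$, and the filtered version will follow by the same trick applied to the filtration (or, equivalently, by applying the graded statement to the Rees construction, whose target is automatically $\hbar$-torsion-free).

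For the graded statement, I would suppose $\phi(a)=0$ for some nonzero $a\in A$ and derive a contradiction. Since $\phi$ is graded, every homogeneous component of $a$ must also be annihilated, so I may restrict to the case that $a$ is homogeneous of some degree $d$. The key observation is that, because $\hbar$ has positive degree, $\hbar^k A \cap A_d = \hbar^k A_{d-k}$ vanishes as soon as $k>d$. Hence there will be a maximal $k\ge 0$ with $a\in \hbar^k A$; writing $a=\hbar^k a'$ with $a'\in A_{d-k}\setminus \hbar A$, the class of $a'$ in $A/\hbar A$ is nonzero. The injectivity hypothesis on $A/\hbar A\to B/\hbar B$ will then force $\phi(a')\notin \hbar B$, and in particular $\phi(a')\ne 0$; since $B$ is $\hbar$-torsion-free, $\phi(a)=\hbar^k\phi(a')\ne 0$, which is the desired contradiction.

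For the filtered statement, the quickest route is direct: given a nonzero $a\in A$, I would pick the minimal $d$ with $a\in A^{(d)}$; the image $\bar a$ in $A^{(d)}/A^{(d-1)}$ is then nonzero, while $(\gr\phi)(\bar a)=\overline{\phi(a)}=0$ in $B^{(d)}/B^{(d-1)}$, contradicting the injectivity of $\gr \phi$. The argument is entirely formal, so there is no genuine obstacle; the one point worth noting is that the torsion-freeness hypothesis sits on $B$ rather than on $A$, which is precisely why the finiteness of the $\hbar$-divisibility chain in $A$ (coming from the positive degree of $\hbar$) must be invoked in order to extract the witness $a'$.
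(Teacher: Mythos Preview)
Your argument is correct and essentially the same as the paper's: both extract from a putative nonzero kernel element a witness $a'\notin \hbar A$ whose image would have to lie in $\hbar B$, and then use torsion-freeness of $B$ to reach a contradiction. The only cosmetic difference is that the paper picks a kernel element of \emph{minimal degree} and peels off one factor of $\hbar$, whereas you fix an arbitrary homogeneous kernel element and peel off the \emph{maximal} power of $\hbar$ at once; these are dual formulations of the same descent. One small expository point: in your filtered paragraph you should restate the hypothesis $\phi(a)=0$ before writing $(\gr\phi)(\bar a)=\overline{\phi(a)}=0$, so the reader sees where that equality comes from.
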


\begin{proof}
Suppose $ \phi $ is not injective.  Then there exists $ k \in \mathbb N$ and $ a \in A_k $ such that $ a \ne 0 $ and $ \phi(a) = 0 $; choose such $ a, k $ such that $ k$ is minimal.  Then $ \phi(a) = 0 $ in $ B / \hbar $, so by hypothesis, $ a \in \hbar A$. Thus we can find $ a' \in A_{k-1} $ such that $ a = \hbar a' $.  Since $ \phi(a) = \hbar \phi(a') $ and $ B $ is torsion-free, this implies that $ \phi(a') = 0 $, which contradicts the minimality of $ k$.  

The proof of the second statement is very similar.
\end{proof}

\begin{proof}[Proof of Lemma \ref{le:psiinjective}]
By Lemma \ref{le:thetarho}, we see that $$ \chi \circ (\Rees \psi)(\widetilde \Phi^{(0)}_l) = \Phi_l(\chi + \hbar \rho)$$
Thus, we deduce the desired factoring.

For the injectivity, by the previous lemma, it suffices to show that
\begin{equation} \label{eq:SchiU}
(S \fg/ Z_0(\chi) \otimes U \fg^{\otimes n})^\fg \rightarrow U \fg^{\otimes n}
\end{equation}
is injective, where $ Z_0(\chi) \subset S \fg $ is the ideal of $ p^{-1}(\chi) \subset \fg $, where $p : \fg \rightarrow \fg \sslash G \cong \fh \sslash W $ is the adjoint quotient map. We can easily compute (\ref{eq:SchiU}) using Proposition \ref{pr:psiab}.  In the notation of that proposition, we see that (\ref{eq:SchiU}) is given by
$$
u \mapsto \sum_k \epsilon(a_{k,-}) \epsilon(a_{k,+}) \chi(a_{k,0}) b_k 
$$
since it is the composition of $ \chi $ with the associated graded of $ \psi$ with respect to the PBW filtration on the first tensor factor.

Now, to prove the injectivity of (\ref{eq:SchiU}), it suffices to prove that its associated graded map is injective, with respect to the PBW filtration on $ U \fg^{\otimes n}$.  When we take this associated graded, we reach the algebra homomorphism
$$ 
(S \fg/ Z_0(\chi) \otimes S \fg^{\otimes n})^\fg \rightarrow S \fg^{\otimes n}
$$
dual to the map of schemes given by 
$$ \fg^n \rightarrow p^{-1}(\chi) \times \fg^n \sslash G  \quad   (y_1, \dots, y_n) \mapsto (\chi, y_1, \dots, y_n).$$  

When $ \chi $ is regular this map has dense image, hence the previous algebra map was injective and this completes the proof.
\end{proof}

For the remainder of this section, fix $ \chi \in \fh^{reg}$.  Using $\chi \circ \Rees \psi $, we can transfer the family of subalgebras $ \mathbfcal A$ to $ \C[\hbar] \otimes U \fg^{\otimes n}$.  We define $$ \mathbfcal A_\chi := (\chi \circ \Rees \psi)(\mathbfcal A) \subset \C[\CF_n] \otimes_{\C[\hbar]} \C[\hbar] \otimes U \fg^{\otimes n} = \C[\CF_n] \otimes U \fg^{\otimes n}$$
So we can regard $ \mathbfcal A_\chi$ in two ways:
\begin{enumerate}
    \item as a family of subalgebras of $ \C[\hbar] \otimes U\fg^{\otimes n}$ parametrized by $ \CF_n $, regarded as a family over the base ring $ \C[\hbar]$,
    \item as a family of subalgebras of $ U \fg^{\otimes n}$ parametrized by $ \CF_n$ over the base ring $ \C$.
\end{enumerate}
Using the second interpretation, we are now ready to prove Theorem \ref{th:familyCFn}.

\begin{proof}
Let $ (z_1, \dots, z_n; \varepsilon) \in \mathcal F_n$.  Suppose that $ \varepsilon \ne 0$.  Then Propositions \ref{pr:mathbfcalA} and \ref{pr:psichi}  show that $$ (\chi \circ \Rees \psi)\left(\mathbfcal A^\varepsilon(z_1, \dots, z_n)\right) = \ma_{\varepsilon^{-1} \chi}^{trig}(1 - \varepsilon z_1, \dots, 1 - \varepsilon z_n)$$
as desired.  

On the other hand, if $ \varepsilon = 0 $, then Propositions \ref{pr:mathbfcalA} and \ref{pr:grpsi2} combine to show that
$$
(\chi \circ \Rees \psi)\left(\mathbfcal A^0(z_1, \dots, z_n)\right) = \ma_{\chi}( z_1, \dots, z_n) $$

\end{proof}

\subsection{Quadratic parts of inhomogeneous/trigonometric Gaudin subalgebras}

From now on, we will regard $ \mathbfcal A_\chi $ as a family of subalgebras of $ \C[\hbar] \otimes U\fg^{\otimes n}$ parametrized by $ \CF_n $, regarded as a family over the base ring $ \C[\hbar]$.  Our goal is to study the compactification of this family.  For this purpose, we begin by defining filtrations.

For any $ (\uz, \varepsilon) \in \CF_n$, define a filtration on $ \mathbfcal A^\varepsilon(\uz) $ by intersecting with the PBW filtration, 
$$ \mathbfcal A^\varepsilon(\uz)^m := \mathbfcal A^\varepsilon(\uz) \cap (U_\varepsilon \fg \otimes U \fg^{\otimes n})^m $$ 
and define $ \mathbfcal A_\chi^\varepsilon(\uz)^m := (\chi \circ \Rees \psi)(\mathbfcal A^\varepsilon(\uz)^m) $.  When $ \varepsilon \ne 0 $, this coincides with intersection of $ \mathbfcal A^\varepsilon(\uz) $ with the PBW filtration on $ U \fg^{\otimes n} $ but when $ \varepsilon = 0 $ it does not.

We can study the quadratic parts $ \mathbfcal A^\varepsilon(\uz)^{(2)} $ using the Lie algebra $ \frr_n $ from section \ref{se:CFnfrrn}.  We define a $ \C[\hbar]$ linear map
\begin{gather*}
\gamma^\hbar : \frr_n \rightarrow (U_\hbar \fg \otimes (U \fg)^{\otimes n})^\fg \\
t_{ij} \mapsto \Omega^{(ij)} \quad u_i \mapsto \Omega^{(0i)}
\end{gather*}
and we define 
$$ \gamma_\chi^\hbar := \chi \circ \Rees \psi \circ \gamma^\hbar : \frr_n \rightarrow \C[\hbar] \otimes U \fg^{\otimes n} $$
We have $ \gamma^\hbar_\chi(t_{ij}) = \Omega^{(ij)}$ and $ \gamma^\hbar_\chi(u_i) = \chi^{(i)} + \hbar \sum_j \Omega_-^{(ij)}$.

\begin{lem}
  The maps $ \gamma^\hbar, \gamma^\hbar_\chi $ are both Lie algebra morphisms which restrict to injective maps $ \frr_n^1 \rightarrow (U_\hbar \fg \otimes U \fg^{\otimes n})^\fg $ and $ \frr_n^1 \rightarrow \C[\hbar] \otimes U \fg^{\otimes n}$. 
  \end{lem}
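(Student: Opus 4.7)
The plan is to verify the Lie algebra morphism property by checking each defining relation of $\frr_n$ on generators, and then to establish injectivity on $\frr_n^1$ by projecting onto pairs of tensor factors.

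For the Lie algebra property of $\gamma^\hbar$, the three relations among the $t_{ij}$'s are the usual Drinfeld-Kohno relations, and they hold as for the map $\gamma : \fs_n \to U\fg^{\otimes n}$ from (\ref{eq:gamma}); the $0$-th tensor factor (where the $U_\hbar$ structure lives) is not involved. The relation $[t_{ij}, u_k] = 0$ for $i,j,k$ distinct is immediate because the tensor factors $\{i,j\}$ and $\{0,k\}$ are disjoint. The relation $[u_i, t_{ij}] + [u_j, t_{ij}] = 0$ follows from a short computation: the $\fg$-invariance of $\Omega$ shows that $[\Omega^{(0i)}, \Omega^{(ij)}]$ and $[\Omega^{(0j)}, \Omega^{(ij)}]$ produce opposite expressions of the form $\pm \sum_{a,b} x_a^{(0)} x_b^{(i)} [x^a, x^b]^{(j)}$. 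The nontrivial relation $[u_i, u_j] = -\hbar[u_i, t_{ij}]$ is where the $\hbar$-deformation enters: the factor of $\hbar$ on the right arises from $[x_a^{(0)}, x_b^{(0)}] = \hbar[x_a, x_b]^{(0)}$ in $U_\hbar\fg$, after which another application of $\fg$-invariance of $\Omega$ identifies the two sides. The Lie algebra property of $\gamma^\hbar_\chi$ then follows because $\chi \circ \Rees \psi$ is an algebra homomorphism, hence a Lie algebra morphism on the underlying commutator Lie algebras.

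For the injectivity of $\gamma^\hbar$ on $\frr_n^1$, I will show that the images $\{\Omega^{(ij)}\}_{0 \le i < j \le n}$ are $\C[\hbar]$-linearly independent in $U_\hbar\fg \otimes U\fg^{\otimes n}$. Given a pair $ij$, construct a projection obtained by applying the counit to every tensor factor outside $\{i,j\}$ and then taking the PBW-degree-$1$ component on the factors of $\{i,j\}$ other than the $0$-th. This sends $\Omega^{(ij)}$ to the nonzero element $\sum_a x_a \otimes x^a \in \fg \otimes \fg$ (or in $U_\hbar\fg \otimes \fg$ when $i = 0$) and annihilates every other $\Omega^{(kl)}$, which isolates each basis vector.

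For the injectivity of $\gamma^\hbar_\chi$ on $\frr_n^1$, suppose
$$ \sum_{i<j} a_{ij}(\hbar)\, \Omega^{(ij)} + \sum_i b_i(\hbar)\Bigl(\chi^{(i)} + \hbar \sum_{j \ne i} \Omega^{(ij)}_-\Bigr) = 0 $$
in $\C[\hbar] \otimes U\fg^{\otimes n}$. The PBW-degree-$1$ component of this equation is $\sum_i b_i(\hbar)\, \chi^{(i)} = 0$, which forces $b_i(\hbar) = 0$ for all $i$, since $\chi \ne 0$ (as $\chi \in \fh^{reg}$) makes $\{\chi^{(i)}\}$ $\C[\hbar]$-linearly independent in $\fg^{\oplus n}$. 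The residual equation $\sum_{i<j} a_{ij}(\hbar)\, \Omega^{(ij)} = 0$ is then handled by the same projection argument (now omitting the $0$-th factor), yielding $a_{ij}(\hbar) = 0$. The main bookkeeping obstacle throughout is disentangling the $\hbar$-grading on $U_\hbar\fg$ from the PBW filtration on $U\fg^{\otimes n}$; everything is consistent because the projections used respect both structures simultaneously.
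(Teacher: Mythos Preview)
Your proof is correct. The Lie algebra verification is more detailed than the paper's (which simply calls it ``a standard computation''), and your handling of the deformed relation $[u_i,u_j]=-\hbar[u_i,t_{ij}]$ is fine.

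For injectivity, your route differs from the paper's. The paper proves only the injectivity of $\gamma^\hbar_\chi$ on $\frr_n^1$ (from which that of $\gamma^\hbar$ follows, since $\gamma^\hbar_\chi$ factors through $\gamma^\hbar$), and it does so by projecting to a pair of tensor factors $\{i,j\}$ with $1\le i<j\le n$ and observing that $u_i,\,t_{ij},\,u_j$ land on (essentially) $\hbar\Omega_-^{(ij)},\,\Omega^{(ij)},\,\hbar\Omega_+^{(ij)}$, which are $\C[\hbar]$-linearly independent; this is the same mechanism as in Lemma~\ref{le:injective}. You instead treat $\gamma^\hbar$ and $\gamma^\hbar_\chi$ separately, and for $\gamma^\hbar_\chi$ you use a PBW-degree-1 versus degree-2 split together with $\chi\ne 0$ to kill the $b_i$'s first. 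Your argument is slightly more elementary in that it avoids the $\Omega_\pm$ trichotomy, at the cost of invoking the hypothesis $\chi\in\fh^{reg}$ (which the paper's argument does not actually need at this step). The paper's approach is a touch more economical since one injectivity implies the other.

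One minor slip: in your displayed relation you wrote $\hbar\sum_{j\ne i}\Omega_-^{(ij)}$, but the paper's formula for $\gamma^\hbar_\chi(u_i)$ has $\hbar\sum_j\Omega_-^{(ij)}$ with the $j=i$ term interpreted as $\omega_-^{(i)}$. This does not affect your argument, since that extra term is also of PBW degree~$2$.
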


\begin{proof}
    A standard computation shows that $ \gamma^\hbar$ is a Lie algebra morphism, which implies that $ \gamma^\hbar_\chi$ is one as well.

    To see that they restrict to injective maps on the first graded piece, we fix some $ 1 \le i \ne j \le n$, and consider $$ \frr^1_n \xrightarrow{\gamma^\hbar_\chi} \C[\hbar] \otimes U \fg^{\otimes n} \rightarrow \C[\hbar] \otimes U \fg^{\otimes\{i,j\}}
    $$
    This image lies in the second filtered piece with respect to the PBW filtration on $ U \fg^{\otimes \{i,j\}} $, so we can consider the further map to $S^2(\fg^{\oplus \{i,j\}}) $.
    The resulting map is given by
    $$ u_i \mapsto \chi^{(i)} + \hbar \Omega_-^{(ij)} \quad t_{ij} \mapsto \Omega^{(ij)} \quad u_j \mapsto \chi^{(j)} + \hbar \Omega^{(ij)}_+ $$
    As these elements are linearly independent (over $\C[\hbar]$), injectivity follows by the same argument as in Lemma \ref{le:injective}.
\end{proof}

 Recall that for any point $ (\uz;\varepsilon) \in \CF_n $, we have $Q^\varepsilon(\uz) \subset \frr_n^1(\varepsilon)$, from section \ref{se:CFnfrrn}.

\begin{lem}
    For any $ (\uz, \varepsilon) \in \CF_n$, we have \begin{gather*} \mathbfcal A^\varepsilon(\uz)^{(2)} = \gamma^\varepsilon( Q^\varepsilon(\uz)) \oplus \spann(\omega^{(i)} : i = 0, \dots, n) \\ 
    \mathbfcal A^\varepsilon_\chi(\uz)^{(2)} = \gamma^\varepsilon_\chi( Q^\varepsilon(\uz)) \oplus \spann(\omega^{(i)} : i = 1, \dots, n) \end{gather*}
\end{lem}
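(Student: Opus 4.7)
The plan is to split by cases based on $\varepsilon$, and then deduce the second identity from the first via the map $\chi \circ \Rees\psi$.

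For $\varepsilon \neq 0$, the isomorphism $U_\varepsilon\fg \otimes U\fg^{\otimes n} \cong U\fg^{\otimes n+1}$ sending $x \mapsto \varepsilon x$ in the first factor identifies $\mathbfcal A^\varepsilon(\uz)$ with the homogeneous Gaudin subalgebra $\ma(\varepsilon^{-1}, z_1, \dots, z_n)$ by Proposition~\ref{pr:mathbfcalA}, respects the PBW filtration, and rescales the generators $\Omega^{(0i)}$ and $\omega^{(0)}$ by nonzero scalars. Under the Lie algebra isomorphism $\frr_n(\varepsilon) \cong \fs_{n+1}$ from Section~\ref{se:CFnfrrn}, the map $\gamma^\varepsilon$ corresponds to the standard embedding $\gamma : \fs_{n+1} \to U\fg^{\otimes n+1}$ from~\eqref{eq:gamma}, and $Q^\varepsilon(\uz)$ corresponds to $Q(\varepsilon^{-1}, z_1, \dots, z_n)$ by Lemma~\ref{le:Hvareps}. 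In this case the first identity becomes exactly Lemma~\ref{le:gamma} applied to $\ma(\varepsilon^{-1}, z_1, \dots, z_n)$.

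For $\varepsilon = 0$, I would compute the quadratic part of $\mathbfcal A^0(\uz) = \ma(\infty, z_1, \dots, z_n)$ directly. Applying $(\ev_{\infty, u-z_1, \dots, u-z_n} \circ \Delta)$ to $S_1 = \sum_a x^a[-1] x_a[-1]$ and expanding in Laurent series at $u = z_i$ yields the coefficients $S_{1,i}^{\infty, 2} = \omega^{(i)}$ and $S_{1,i}^{\infty, 1} = 2\gamma^0(h_i^0(\uz))$, together with the value $\Phi_1^{(0)} = \omega^{(0)}$ at $u = \infty$. These $2n+1$ generators span $\gamma^0(Q^0(\uz)) \oplus \spann(\omega^{(i)} : 0 \le i \le n)$; linear independence is immediate from the injectivity of $\gamma^0$ on $\fr_n^1$ (established in the preceding lemma) combined with the observation that distinct $\omega^{(i)}$ live in $S^2$ of distinct tensor factors. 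Matching the dimension $2n+1$ against the $t^2$-coefficient of the Hilbert--Poincar\'e series from Proposition~\ref{m2} proves the equality.

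To deduce the second identity, I apply the filtered map $\chi \circ \Rees\psi$ specialized at $\hbar = \varepsilon$, which carries $\mathbfcal A^\varepsilon(\uz)$ onto $\mathbfcal A^\varepsilon_\chi(\uz)$. By Lemma~\ref{le:thetarho}, the central element $\omega^{(0)} = \widetilde\Phi_1^{(0)}$ is sent to the scalar $\Phi_1(\chi + \varepsilon\rho)$, while a direct computation via Proposition~\ref{pr:psiab} sends $\Omega^{(0i)}$ to $\gamma^\varepsilon_\chi(u_i) = \chi^{(i)} + \varepsilon \sum_j \Omega_-^{(ij)}$; the elements $\omega^{(i)}$ (for $i \ge 1$) and $\gamma^\varepsilon(t_{ij}) = \Omega^{(ij)}$ are unchanged. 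Applied to the first identity, the $\omega^{(0)}$ term is absorbed into the constants and the remaining pieces assemble into the right-hand side of the second identity. The main technical input is the $\varepsilon = 0$ case, which is routine but requires care in the Laurent expansion; the $\varepsilon \ne 0$ case simply cites the known Aguirre--Felder--Veselov description of the quadratic part of homogeneous Gaudin subalgebras.
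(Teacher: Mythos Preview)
Your proof is correct and follows essentially the same approach as the paper's. The paper's argument is terse---it says the first identity ``follows immediately from Lemma~\ref{le:gamma}'' and deduces the second from the first using $(\chi \circ \Rees\psi)(\widetilde\Phi_1^{(0)}) = \Phi_1(\chi + \varepsilon\rho)$---and you have filled in the details that the paper elides, particularly the $\varepsilon = 0$ case (where Lemma~\ref{le:gamma} does not literally apply and one must repeat its argument using Proposition~\ref{m2} instead).
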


\begin{proof}
    The first statement follows immediately from Lemma \ref{le:gamma} and the second follows from the first using $ (\chi \circ \Rees \psi)(\widetilde \Phi_1^{(0)}) = \Phi_1(\chi + \varepsilon\rho)$.
\end{proof}

Now, the quadratic parts of these algebras define maps 
\begin{gather*} \CF_n \rightarrow \Gr_{\mathbb A^1}(2n+1, U_\hbar \fg \otimes U \fg^{\otimes n} ) \quad (\uz,\varepsilon) \mapsto \mathbfcal A^\varepsilon(\uz)^{(2)} \\
\CF_n \rightarrow \Gr_{\mathbb A^1}(2n, \C[\hbar] \otimes U \fg^{\otimes n} )
\quad (\uz,\varepsilon) \mapsto \mathbfcal A^\varepsilon_\chi(\uz)^{(2)} 
\end{gather*}

Combining Theorem \ref{th:CFCG} with the previous two Lemmas, we deduce the following corollary.
\begin{corol} \label{co:quad}
    These extend to closed embeddings $$ \overline \CF_n \rightarrow \Gr_{\mathbb A^1}(2n+1, U_\hbar \fg \otimes U \fg^{\otimes n} ) \quad \overline \CF_n \rightarrow \Gr_{\mathbb A^1}(2n, \C[\hbar] \otimes U \fg^{\otimes n} ) $$
\end{corol}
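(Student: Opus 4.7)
The plan is to factor the desired maps through the isomorphism $\overline \CF_n \cong \overline \CG_n \hookrightarrow \Gr_{\mathbb A^1}(n, \frr_n^1)$ provided by Theorem \ref{th:CFCG}. Once this is done, the remaining work is to show that the Lie algebra morphisms $\gamma^\hbar$ and $\gamma^\hbar_\chi$, together with the operation of adjoining the fixed subspace spanned by the quadratic Casimirs $\omega^{(i)}$, induce closed embeddings into the respective target relative Grassmannians.

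First, I would show that $\gamma^\hbar$ and $\gamma_\chi^\hbar$ induce closed embeddings $\Gr_{\mathbb A^1}(n, \frr_n^1) \hookrightarrow \Gr_{\mathbb A^1}(n, (U_\hbar \fg \otimes U \fg^{\otimes n})^{(2)})$ and $\Gr_{\mathbb A^1}(n, \frr_n^1) \hookrightarrow \Gr_{\mathbb A^1}(n, (\C[\hbar] \otimes U \fg^{\otimes n})^{(2)})$ of relative Grassmannians. The key input is the Lemma proved just before the Corollary, which establishes that these are $\C[\hbar]$-linear morphisms that remain injective on $\frr_n^1$ after any fibrewise specialization. Since $\frr_n^1$ is a free $\C[\hbar]$-module of finite rank, fibrewise injectivity upgrades to the condition that the cokernel is locally free over $\BA^1$, so the functoriality of the relative Grassmannian yields the desired closed embeddings.

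Next, I would compose with the ``add a fixed subspace'' morphism, sending $W \mapsto W + \spann(\omega^{(0)}, \ldots, \omega^{(n)})$ in the first case and $W \mapsto W + \spann(\omega^{(1)}, \ldots, \omega^{(n)})$ in the second. These maps are defined on the open sublocus where $W$ meets the fixed subspace trivially, and on that locus they are closed embeddings onto the locus of subspaces containing $\spann(\omega^{(i)})$. To see that the image of $\overline \CF_n$ lies in this open locus, I would check fibrewise that $\gamma^\varepsilon(\frr_n^1) \cap \spann(\omega^{(i)}) = 0$, which is immediate because $\gamma^\varepsilon$ sends the basis $\{t_{ij}, u_i\}$ to elements of the form $\Omega^{(ij)}$ and $\Omega^{(0i)}$ (or $\chi^{(i)} + \hbar \sum_j \Omega_-^{(ij)}$), all of which are linearly independent of the $\omega^{(k)}$ in every fibre. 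By the preceding Lemma identifying the quadratic parts, the resulting composition recovers exactly the maps $(\uz, \varepsilon) \mapsto \mathbfcal A^\varepsilon(\uz)^{(2)}$ and $(\uz, \varepsilon) \mapsto \mathbfcal A_\chi^\varepsilon(\uz)^{(2)}$.

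The main technical point requiring care will be ensuring that the cokernels of $\gamma^\hbar$ and $\gamma^\hbar_\chi$ really are locally free over $\BA^1$ --- in other words, that fibrewise injectivity persists uniformly, including in the special fibre $\varepsilon = 0$ where $\frr_n$ degenerates. This reduces to a rank count at every closed point of $\BA^1$, which is supplied by the injectivity statement of the preceding Lemma on each specialization together with the fact that both source and target of $\gamma^\hbar$ restricted to the quadratic piece are free $\C[\hbar]$-modules of known rank.
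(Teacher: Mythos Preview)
Your proposal is correct and follows essentially the same approach as the paper, which simply states that the corollary follows by combining Theorem~\ref{th:CFCG} with the two preceding Lemmas. You have unpacked exactly what that combination means: factor through the isomorphism $\overline\CF_n \cong \overline\CG_n \hookrightarrow \Gr_{\mathbb A^1}(n,\frr_n^1)$, then use the fibrewise injectivity of $\gamma^\hbar$ and $\gamma^\hbar_\chi$ (together with the fixed span of the $\omega^{(i)}$) to pass to the target Grassmannians; the technical point about locally free cokernels that you highlight is precisely what the paper's injectivity Lemma supplies but does not make explicit.
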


\subsection{The Main Theorem and plan of the proof.}
In section \ref{se:triginhom}, we constructed $ \mathbfcal A $, a family of subalgebras of $ U_\hbar \fg \otimes U \fg^{\otimes n}$ parametrized by $ \CF_n$, and $ \mathbfcal A_\chi$, a family of subalgebras of $ \C[\hbar] \otimes U \fg^{\otimes n}$ also parametrized by $ \CF_n$ (in both cases regarded over the base ring $ \C[\hbar]$).  Recall also that in \cite{iklpr}, we constructed a (relative) compactification $ \overline \CF_n $, see section \ref{se:deg}.

Here is the main theorem of this paper.

\begin{thm} \label{th:algCFn}
    These extend to flat families of subalgebras faithfully parametrized by $ \overline \CF_n$.  Thus $ \overline \CF_n $ is the compactified parameter space for these inhomogeneous/trigonometric subalgebras.
\end{thm}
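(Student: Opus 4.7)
The plan is to extend the families $\mathbfcal A$ and $\mathbfcal A_\chi$ to $\overline{\CF}_n$ by gluing together explicit constructions on the affine open cover $\{\CW_\tau\}$ indexed by planar binary forests, and then to exploit Corollary \ref{co:quad} for faithfulness. The construction on each $\CW_\tau$ generalizes the quadratic formulas (\ref{eq:Hv1}) and (\ref{eq:Hv2}): for each internal vertex $v$ of $\tau$ and each $l = 1,\dots,r$, we evaluate the universal generator $S_l \in U(t^{-1}\fg[t^{-1}])$ under a parametrized evaluation map, built from the regular functions $\delta_{pq}$, $\nu_{ij}$, and products $\delta_{ij}\nu_{kl}$ of Section \ref{se:Wtau}. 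Expanding in Laurent series at the $d_l$ marked points attached to $v$ produces elements $S_{l,v}^{\hbar,m} \in \CO(\CW_\tau) \otimes_{\C[\hbar]} U_\hbar\fg \otimes U\fg^{\otimes n}$ for $m = 1,\dots,d_l$. By construction these specialize, over any point of $\CF_n \subset \CW_\tau$, to the evaluation-map images generating $\mathbfcal A$; the images in $\C[\hbar]\otimes U\fg^{\otimes n}$ under $\chi \circ \Rees\psi$ give the generators of $\mathbfcal A_\chi$.

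Next I would check the gluing. On the overlap $\CW_\tau \cap \CW_{\tau'}$, each side is a locally free sheaf of subalgebras agreeing on the dense subset $\CF_n$; this forces agreement everywhere. This yields locally free sheaves of subalgebras $\mathbfcal A \subset \CO(\overline{\CF}_n) \otimes_{\C[\hbar]} U_\hbar\fg \otimes U\fg^{\otimes n}$ and $\mathbfcal A_\chi \subset \CO(\overline{\CF}_n) \otimes U\fg^{\otimes n}$. To establish flatness of the filtered pieces, I would verify that the Hilbert--Poincaré series is constant by passing to the associated graded algebras (with respect to the Rees-induced and PBW filtrations), reducing the problem to showing algebraic independence of the leading symbols $\gr S_{l,v}^{\hbar,m}$ at every point $C \in \overline{\CF}_n$. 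The approach is to adapt the differential-span computation from the proof of Theorem \ref{size} and Lemma \ref{sizelemma}: evaluate the differentials of the leading terms at a well-chosen point modelled on $(f+h, f, \dots, f)$, and show they span a subspace of $\fg^{\oplus(n+1)}$ (or $\fg^{\oplus n}$ in the inhomogeneous case) whose dimension matches the expected transcendence degree given by Theorems \ref{size} and \ref{th:inhomproperties}.

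For faithfulness I would invoke Corollary \ref{co:quad} directly. The morphism $\overline{\CF}_n \to \Gr_{\BA^1}(r_2, \,\cdot\,)$ defined by sending $C$ to $\mathbfcal A^{(2)}(C)$ factors through the Aguirre--Felder--Veselov type morphism $\overline{\CF}_n \to \overline{\CG}_n$ of Theorem \ref{th:CFCG}, which is an isomorphism, and this quadratic map is already a closed embedding. Since the full parametrizing map $\overline{\CF}_n \to \varprojlim_k \Gr(r_k, \,\cdot\,)$ dominates the quadratic one, it must also be a closed embedding, hence faithful. Density of $\CF_n$ in $\overline{\CF}_n$ and properness of $\overline{\CF}_n$ then imply that $\overline{\CF}_n$ is precisely the compactification in the sense of Section \ref{se:Family}. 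Restricting to the fibre over $\varepsilon = 0$ yields Corollary \ref{co:compinhom} and hence Theorem \ref{compinhom}.

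The main obstacle will be the flatness verification in Step 3 at the deepest boundary strata, in particular at the maximal flower point where the subalgebra degenerates to $\ma_\chi^{\otimes n}$ (Example \ref{eg:flower}) and the $\delta_{ij}$ all go to infinity. Here the forest $\tau$ has $n$ trivial trees and many generators $S_{l,v}^{\hbar,m}$ acquire leading terms supported in disjoint tensor factors; one must verify that after the relative rescalings encoded by the regular functions on $\CW_\tau$ and the $\hbar$-grading, the generators remain algebraically independent uniformly across the fibre. The combinatorial bookkeeping provided by Lemma \ref{le:biject} (the bijection between internal vertices and leaves) and the ``triangular'' structure witnessed in Lemmas \ref{le:VtauVz} and \ref{le:linindepVC} will be essential for organizing this argument in a way that simultaneously handles the trigonometric fibres over $\varepsilon \ne 0$ and the inhomogeneous fibre over $\varepsilon = 0$.
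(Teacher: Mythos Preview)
Your overall architecture is right: build generators on each $\CW_\tau$, glue, verify flatness, then invoke Corollary \ref{co:quad} for faithfulness exactly as the paper does. But two substantive points differ from the paper.

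First, your construction of the generators $S_{l,v}^{\hbar,m}$ is too vague to guarantee regularity on $\CW_\tau$. Saying ``evaluate $S_l$ under a parametrized evaluation map'' and then take Laurent coefficients does not by itself produce functions regular on all of $\CW_\tau$: the naive Laurent coefficients at $z_i$ involve expressions like $(z_i-z_j)^{-1}$ for all $j\ne i$, and many of these blow up at the boundary. The paper's key maneuver is to define $s_{l,v,\tau}^m$ as a \emph{sum of residues} over the leaves in the left branch above $v$ (after an affine change of variable determined by the distinguished pair $p,q$), and then to invoke the elementary Lemma~\ref{lem:resudue-sum}: $\sum_{i\in I}\Res_{w_i}f$ lies in $\C[w_i,(w_i-w_j)^{-1}]_{i\in I,\,j\notin I}$. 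It is precisely this cancellation of poles between equivalent indices that makes the generators extend to $\CW_\tau$. Without this residue-sum trick (or an equivalent) your Step~1 has a gap.

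Second, and more importantly, your flatness argument (Step~3) takes a genuinely different route. You propose to redo the differential-span computation of Theorem~\ref{size} and Lemma~\ref{sizelemma} at \emph{every} boundary point of $\overline\CF_n$. The paper instead proves two operadic restriction lemmas (Lemmas~\ref{lem:restriction-codim1-strata-1sttype} and \ref{lem:restriction-codim1-strata-2ndtype}): on a codimension-one stratum $A$, each $s_{l,v,\tau}^m$ becomes either $j_{\{0\}\cup I}^{n+1}(s_{l,v,\tau_I}^m)$ or $\Delta^{\CB}(s_{l,v,\tau_J}^m)$, matching the generators for the smaller families on the factors of $A$. Algebraic independence then follows by induction on codimension, using Knop's theorem that $(U\fg^{\otimes p})^\fg$ is free over $\Delta(ZU\fg)$ to conclude that the product subalgebra is the tensor product over the common central piece. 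This completely sidesteps the ``main obstacle'' you flag at the maximal flower point: the operadic decomposition reduces it to the $n=1$ case automatically, whereas your direct differential approach would have to be redone stratum by stratum and would become combinatorially intricate exactly where you anticipate trouble. Your method could in principle be pushed through, but it is substantially more laborious and you have not indicated how the evaluation point $(f+h,f,\dots,f)$ would be adapted to an arbitrary stratum, nor how the $\hbar$-degeneration interacts with it at $\varepsilon=0$.
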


As in the proof of Theorem \ref{trigcomp}, it suffices to show that the family extends to a flat family parametrized by $ \overline \CF_n$.  Subsequently, the faithfulness of the parametrization follows from Corollary \ref{co:quad}.

Since $ \mathbfcal A_\chi$ is the homomorphic image of $ \mathbfcal A$, it will suffice to construct the extension of $ \mathbfcal A$ and check that both $ \mathbfcal A, \mathbfcal A_\chi$ carry compatible flat filtrations.  More precisely, we will carry out the following plan which is similar to the proof of \cite[Theorem~3.13]{r}. 

\begin{itemize}
    \item[Step~1.]
    For each open affine $ \CW_\tau \subset \overline \CF_n$ (see section \ref{se:Wtau}), we find a collection of algebraically independent generators of $ \mathbfcal A^\varepsilon(z_1, \dots, z_n)$, which extend as regular functions over $ \CW_\tau $. 

    \item[Step~2.]
    Each $ \CW_\tau$ is a union of strata of $ \overline \CF_n$.  These strata are constructed in an ``operadic manner'' from smaller $ \overline \CF_r $ and $ \overline M_{k+1} $.  We prove that the restrictions of our generators to these strata are compatible with the generators of the Gaudin subalgebras over these strata.

    \item[Step~3.]
    By induction on the codimension of a stratum, we prove that our generators are algebraically independent at any boundary point as well, so the family $\mathbfcal{A}^\varepsilon(C)$ is flat.  
    \item[Step~4.]
    We show that  $\chi \circ \Rees \psi $ takes $r$ of the generators to constants and leaves the others algebraically independent. This implies that the family of subalgebras $\mathbfcal{A}^\varepsilon_\chi(C)$ is flat as well.
\end{itemize}

\subsection{Step~1. The generators}

We will need to consider residues of rational functions extensively in this section.  Let $\mathbb{C}(w,w_1,\ldots,w_N)$ be the field of rational functions of $N+1$ variables. Consider residue maps $\Res_{w_i}:\mathbb{C}(w,w_1,\ldots,w_N)\to \mathbb{C}(w_1,\ldots,w_N)$ taking a function $f\in\mathbb{C}(w,w_1,\ldots,w_N) $ (regarded as a one-variable function $f(w)$ over $\mathbb{C}(w_1,\ldots,w_N)$) to the residue of $f(w)$ at $w=w_i$. The following lemma will be quite useful for us.
    
\begin{lem}\label{lem:resudue-sum}
    Let $f\in\mathbb{C}[w,(w-w_1)^{-1},\ldots, (w-w_N)^{-1}]$ and $I\subset \{1,\ldots,N\}$. Then the sum of residues $\sum\limits_{i\in I} \Res_{w_i} f$ belongs to $\mathbb{C}[w_i,(w_i-w_j)^{-1}]_{i\in I,\ j\not\in I}$.
\end{lem}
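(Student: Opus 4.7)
The plan is to exploit the global residue theorem on $\mathbb{P}^1$: for any rational function $f$ of $w$ (viewed over the base field $\mathbb{C}(w_1,\ldots,w_N)$), the sum of residues at all points of $\mathbb{P}^1$ equals zero. Under our hypothesis $f\in\mathbb{C}[w,(w-w_1)^{-1},\ldots,(w-w_N)^{-1}]$ has finite poles only at $w=w_1,\ldots,w_N$, so
\[
\sum_{k=1}^N \Res_{w_k} f \;+\; \Res_\infty f \;=\; 0.
\]
I would split the sum over $k$ into the parts $k\in I$ and $k\notin I$ and rearrange:
\[
\sum_{i\in I} \Res_{w_i} f \;=\; -\sum_{j\notin I} \Res_{w_j} f \;-\; \Res_\infty f.
\]

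The next step is to analyse the pole structure of each side. A direct computation from the Laurent expansion of $f(w)$ at $w=w_k$ shows that $\Res_{w_k} f$ lies in $\mathbb{C}[w_1,\ldots,w_N,(w_k-w_l)^{-1}]_{l\neq k}$ — its only denominators come from differences $w_k-w_l$ arising when one clears factors $(w-w_l)^{-m}$ at $w=w_k$. Moreover, expanding $(w-w_l)^{-1}=\sum_{m\ge 0}w_l^m w^{-m-1}$ at $w=\infty$ shows $\Res_\infty f\in\mathbb{C}[w_1,\ldots,w_N]$ (pure polynomial, no denominators at all).

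Reading the left-hand side, $\sum_{i\in I}\Res_{w_i} f$ a priori lies in $\mathbb{C}[w_1,\ldots,w_N,(w_i-w_k)^{-1}]_{i\in I,\,k\neq i}$, so its denominators are factors $w_i-w_k$ with at least one index in $I$. Reading the right-hand side, the same element also lies in $\mathbb{C}[w_1,\ldots,w_N,(w_j-w_l)^{-1}]_{j\notin I,\,l\neq j}$, so its denominators are factors $w_j-w_l$ with at least one index in $I^c$. Intersecting these two rings inside $\mathbb{C}(w_1,\ldots,w_N)$ leaves exactly those denominators $w_i-w_j$ with $i\in I$ and $j\notin I$, which is the claim.

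There is no serious obstacle: the only thing to verify carefully is the two pole-location statements, and both are immediate from the explicit residue formulas together with the expansion at infinity. The argument is really just the partial-fraction/residue theorem packaged to track which variables appear in denominators.
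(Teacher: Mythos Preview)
Your approach is exactly the paper's: compute each residue locally, invoke the global residue theorem to flip the sum onto $I^c\cup\{\infty\}$, and then intersect the two resulting constraints on denominators. However, there is a genuine gap in the bound you record for a single residue. You place $\Res_{w_k}f$ in $\mathbb{C}[w_1,\ldots,w_N,(w_k-w_l)^{-1}]_{l\ne k}$, but the hypothesis is that $f\in\mathbb{C}[w,(w-w_1)^{-1},\ldots,(w-w_N)^{-1}]$ with \emph{constant} coefficients, so the Laurent expansion at $w=w_k$ (substituting $w=x+w_k$ and expanding $(w-w_l)^{-1}=(w_k-w_l)^{-1}(1-(w_k-w_l)^{-1}x)^{-1}$) actually lands in the smaller ring $\mathbb{C}[w_k,(w_k-w_l)^{-1}]_{l\ne k}$: the variable $w_l$ for $l\ne k$ appears \emph{only} through $(w_k-w_l)^{-1}$, never polynomially. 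This is the bound the paper records, and it is what the lemma needs.

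With your weaker bound, the intersection argument only yields membership in $\mathbb{C}[w_1,\ldots,w_N,(w_i-w_j)^{-1}]_{i\in I,\,j\notin I}$, which is strictly larger than the target ring $\mathbb{C}[w_i,(w_i-w_j)^{-1}]_{i\in I,\,j\notin I}$ (for instance $w_j$ with $j\notin I$ lies in the former but not the latter, as one sees by sending $w_j\to\infty$). And this distinction matters for the application: in the proof of the subsequent Proposition one substitutes $\hbar^{-1}\nu_{qp}$ for one of the $w_j$ with $j\notin I$, and that quantity is \emph{not} regular on $\mathcal{W}_\tau$. The fix is exactly to tighten your residue bound as above; once you know $\sum_{i\in I}\Res_{w_i}f\in\mathbb{C}[w_i,(w_i-w_j)^{-1}]_{i\in I,\,j\ne i}$ from the left-hand side, intersecting with your right-hand-side constraint (which kills the denominators $(w_i-w_{i'})$ for $i,i'\in I$) gives the stated ring.
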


\begin{proof}
    Let $x=w-w_i$ be the local coordinate at $w=w_i$. Then the Laurent expansion of $f(w)$ at $w_i$ is a polynomial expression of $w=x+w_i$, $\frac{1}{w-w_i}=x^{-1}$ and $$\frac{1}{w-w_j}=\frac{1}{x-(w_j-w_i)}=\frac{(w_i-w_j)^{-1}}{1-(w_i-w_j)^{-1}x}$$ so the residue of $f(w)$ at $w=w_i$ lies in $\mathbb{C}[w_i,(w_i-w_j)^{-1}]_{j\ne i}$. This means that 
    \begin{equation} \label{eq:res1}
    \sum\limits_{i\in I} \Res_{w_i} f(w) \in \mathbb{C}[w_i,(w_i-w_j)^{-1}]_{i\in I,\ j\ne i}
    \end{equation}
    
   On the other hand, the sum of residues of $f(w)$ at all $w_i$ and $\infty$ is $0$, so we have \begin{equation} \label{eq:res2}
   \sum\limits_{i\in I} \Res_{w_i} f(w) =-\Res_{\infty} f(w) -\sum\limits_{j\not\in I} \Res_{w_j} f(w)
   \end{equation}
Similar to (\ref{eq:res1}), we have
$$
   \sum\limits_{j \notin I} \Res_{w_j} f(w) \in \mathbb{C}[w_j,(w_i-w_j)^{-1}]_{j\notin I,\ i\ne j}
$$
Also, the Laurent expansion of $\frac{1}{w-w_k}=\frac{w^{-1}}{1-w_kw^{-1}}$ at $\infty$ lies in $ \C[w_k]((w^{-1})))$.  Thus, the right hand side of (\ref{eq:res2}) lies in $ \mathbb{C}[w_k,(w_i-w_j)^{-1}]_{j\notin I,\ i\ne j}$, so we deduce that $ \sum\limits_{i\in I} \Res_{w_i} f(w)$ lies in $ \C[w_i, (w_i - w_j)^{-1}]_{i \in I, j \notin I} $ as desired.
\end{proof}

Recall the rational functions $ S_l(u; \varepsilon^{-1}, \uz) \in U \fg \otimes U\fg^{\otimes n}(u)
$ for $l = 1, \ldots, r$.  The coefficients in the Laurent expansions of these functions generate $\mathbfcal{A}^{\varepsilon}(\uz)$.  We consider a universal version of these rational functions.  We define
$$ S_l(u; \hbar^{-1},\uz) :=   \ev_{u - \hbar^{-1}, u-z_1, \dots, u-z_n}(S_l) \in \mathbb{C}(u,\hbar,z_1,\ldots,z_n) \otimes_{\mathbb{C}[\hbar]} U_\hbar \fg \otimes U\fg^{\otimes n}$$ 
Note that $ U_\hbar \fg \otimes U\fg^{\otimes n} $ is free as a $ \C[\hbar] $ module.  Fixing an isomorphism $ U_\hbar \fg \otimes U\fg^{\otimes n} \cong \C[\hbar] \otimes U$ (such a $ U $ is non-canonically isomorphic to $ U \fg^{\otimes n+1}$), we can regard $ S_l(u; \hbar^{-1}, \uz) $ as a rational function of $ u, \hbar, z_1, \dots, z_n $ with  values in $U$.

Let $\tau$ be a binary rooted forest.  Let $v$ be an internal vertex of $\tau$.  Let $p$ be the rightmost leaf of the left branch growing from $ v$ and let $q$ be the leftmost leaf on the right branch growing from $v$. We make an affine change of the variable $u$ taking $z_p$ to $0$ and $z_q$ to $1$; the new variable is 
$$w = \frac{u-z_p}{z_q-z_p}.$$

\begin{lem} 
    Under the above coordinate change, 
    $$(z_q-z_p)^{d_l}S_l(u;\hbar^{-1},\uz) =S_l(w; \hbar^{-1}\nu_{qp}, \mu_{pq1}, \ldots, \mu_{pqn})$$
    In particular, it lies in $ \C(\CF_n) \otimes \C(w) \otimes U  $.
\end{lem}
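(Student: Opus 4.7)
My plan is to reduce the identity to a single scaling computation for $S_l$. First, I would record the two elementary transformations induced by the change of variable $w = (u-z_p)/(z_q-z_p)$. For each marked index $i$, one has
\begin{equation*}
u - z_i = (z_q - z_p)\Bigl(w - \tfrac{z_i - z_p}{z_q - z_p}\Bigr) = (z_q - z_p)(w - \mu_{pqi}),
\end{equation*}
which agrees with the function $\mu_{pqi}$ on $\CW_\tau$ from section \ref{se:Wtau}, and for the ``$\hbar^{-1}$ slot'',
\begin{equation*}
u - \hbar^{-1} = (z_q - z_p)\Bigl(w - \tfrac{\hbar^{-1} - z_p}{z_q - z_p}\Bigr) = (z_q - z_p)(w - \hbar^{-1}\nu_{qp}),
\end{equation*}
after rewriting $(\hbar^{-1} - z_p)/(z_q - z_p) = \hbar^{-1}(1-\hbar z_p)/(z_q-z_p) = \hbar^{-1}\nu_{qp}$.

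Next, I would invoke the loop-rotation homogeneity of $S_l$ from Theorem \ref{th:A-invariance-der}(2)(a): since $(t\partial_t)(S_l) = -d_l S_l$, the element $S_l$ decomposes as a sum of monomials $x_{a_1}[-m_1]\cdots x_{a_k}[-m_k]$ with $m_1 + \cdots + m_k = d_l$. The defining property $\ev_z(x[-m]) = z^{-m}x$ gives $\ev_{\alpha\zeta}(x[-m]) = \alpha^{-m}\ev_\zeta(x[-m])$ for any scalar $\alpha$; extending this multiplicatively across each weight-$(-d_l)$ monomial yields the key scaling identity
\begin{equation*}
\ev_{\alpha\zeta_0, \alpha\zeta_1, \ldots, \alpha\zeta_n}(S_l) = \alpha^{-d_l}\ev_{\zeta_0, \zeta_1, \ldots, \zeta_n}(S_l).
\end{equation*}

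Applying this identity with $\alpha = z_q - z_p$, $\zeta_0 = w - \hbar^{-1}\nu_{qp}$, and $\zeta_i = w - \mu_{pqi}$, and then plugging in the substitutions above, I obtain $S_l(u; \hbar^{-1}, \uz) = (z_q - z_p)^{-d_l} S_l(w; \hbar^{-1}\nu_{qp}, \mu_{pq1}, \ldots, \mu_{pqn})$, and the claimed formula follows by multiplying through by $(z_q-z_p)^{d_l}$. For the final assertion: both $\hbar^{-1}\nu_{qp}$ and each $\mu_{pqi}$ visibly lie in $\C(\CF_n)$ (whose function field contains $\hbar$, the $z_i$ and their reciprocals wherever defined), so $S_l(w; \hbar^{-1}\nu_{qp}, \mu_{pq1}, \ldots, \mu_{pqn})$ is manifestly a rational function of $w$ with coefficients in $\C(\CF_n) \otimes U$. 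The only subtle point is tracking the asymmetry between the ``$0$-th slot'' (landing in $U_\hbar\fg$) and the others (landing in $U\fg$) in the $\ev^\hbar$ convention, but the scaling identity is unaffected since the $\alpha^{-m_j}$ factors arise uniformly from the defining formula $\ev_z(x[-m]) = z^{-m}x$ in each slot. I do not anticipate any real obstacle beyond this bookkeeping.
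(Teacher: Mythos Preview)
Your proof is correct and follows essentially the same approach as the paper: both compute the effect of the affine change of variable on each argument $u-z_i$ and $u-\hbar^{-1}$, then invoke the loop-rotation homogeneity of $S_l$ (the paper cites Remark \ref{re:looprotate}, while you spell out the monomial argument explicitly) to extract the factor $(z_q-z_p)^{-d_l}$. Your remark about the ``asymmetry'' of the $0$-th slot is slightly off in that the definition of $S_l(u;\hbar^{-1},\uz)$ here uses the ordinary evaluation $\ev_{u-\hbar^{-1}}$ with $\hbar^{-1}$ treated as a rational function (not the modified $\ev^\hbar$), so there is in fact no asymmetry to worry about; but your conclusion is correct regardless.
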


\begin{proof}
Since $ S_l $ has degree $ d_l $ with respect to the loop rotation, by Remark \ref{re:looprotate}, we have
$$
(z_q - z_p)^{d_l} ev_{u -\hbar^{-1}, u-z_1, \dots, u - z_n}(S_l) = ev_{w_0, w_1, \dots, w_n}(S_l)
$$
where
$$
w_0 = (z_q - z_p)^{-1}(u - \hbar^{-1}) = w - \hbar^{-1}\frac{1 - \hbar z_p}{z_q - z_p}$$
and 
$$
w_i = (z_q - z_p)^{-1}(u-z_i) = w - \frac{z_p - z_i}{z_p - z_q}
$$
for $ i = 1, \dots, n$.
\end{proof}

For $ m = 1, \dots, d_l $, define $ s_{l,v,\tau}^m \in \C(\CF_n) \otimes U$ 
by $$ s_{l,v,\tau}^m = \sum_{i \in I} \Res_{\mu_{pqi}} w^{m-1} S_l(w; \hbar^{-1}\nu_{qp}, \mu_{pq1}, \dots, \mu_{pqn})$$
where $ I \subset [n]$ is the set of leaves on the left branch growing from the vertex $ v$.

Now, suppose that $ v $ is a root of $ \tau$. Let $ p $ be the rightmost leaf on the tree growing from $ v $. We make an affine change of the variable $u$ taking $z_p$ to $0$ and preserving $ \hbar^{-1}$; the new variable is 
$$w = \frac{u-z_p}{1 - \hbar z_p}$$

\begin{lem}
      Under the above coordinate change, 
    $$(1 - \hbar z_p)^{d_l}S_l(u;\hbar^{-1},z_1,\ldots,z_n) = S_l(w; \hbar^{-1}, \delta_{1p}, \ldots, \delta_{np})$$
    In particular, it lies in $ U \otimes \C(\CF_n) \otimes \C(w)$.
\end{lem}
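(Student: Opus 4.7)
The plan is to mirror the proof of the preceding lemma, only changing the scalar used for the loop-rotation rescaling. The key ingredient is Theorem~\ref{th:A-invariance-der}(2)(a), which says that $S_l$ has weight $-d_l$ for the loop rotation $t\partial_t$; combined with Remark~\ref{re:looprotate}, applied factorwise to the tensor expansion $\Delta(S_l) \in U(t^{-1}\fg[t^{-1}])^{\otimes(n+1)}$, one gets the global scaling identity
$$(\ev_{sy_0, sy_1, \dots, sy_n} \circ \Delta)(S_l) \;=\; s^{-d_l}\,(\ev_{y_0, y_1, \dots, y_n} \circ \Delta)(S_l)$$
for any scalar $s$ and any tuple $(y_0,\dots,y_n)$ (this is exactly the identity that made the previous lemma work; only the concrete tuple changes).

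I will apply this with $s = (1 - \hbar z_p)^{-1}$ to the tuple $(y_0, y_1, \dots, y_n) = (u - \hbar^{-1}, u - z_1, \dots, u - z_n)$. Substituting the inverse coordinate change $u = (1 - \hbar z_p)w + z_p$, a direct check gives
$$s(u - \hbar^{-1}) \;=\; (1 - \hbar z_p)^{-1}\!\bigl((1 - \hbar z_p)w - \hbar^{-1}(1 - \hbar z_p)\bigr) \;=\; w - \hbar^{-1},$$
$$s(u - z_i) \;=\; (1 - \hbar z_p)^{-1}\!\bigl((1 - \hbar z_p)w - (z_i - z_p)\bigr) \;=\; w - \delta_{ip},$$
where the definition $\delta_{ip} = (z_i - z_p)/(1 - \hbar z_p)$ is used on the second line. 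Plugging these into the scaling identity and using $s^{-d_l} = (1 - \hbar z_p)^{d_l}$ delivers the displayed equality
$(1 - \hbar z_p)^{d_l} S_l(u;\hbar^{-1},z_1,\dots,z_n) = S_l(w;\hbar^{-1},\delta_{1p},\dots,\delta_{np})$.

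For the final assertion, I observe that the right-hand side is by construction a rational function of $w$ whose coefficients are polynomials in $\hbar^{-1}$ and the $\delta_{ip}$, all of which belong to $\C(\CF_n)$ since $1 - \hbar z_p$ and the differences $z_i - z_j$ are invertible on the affine scheme $\CF_n$. Hence the expression lies in $U \otimes \C(\CF_n) \otimes \C(w)$, as claimed. The only obstacle is bookkeeping — correctly identifying $s(u - \hbar^{-1})$ with $w - \hbar^{-1}$ (this is what justifies keeping $\hbar^{-1}$ unchanged and selecting $s = (1 - \hbar z_p)^{-1}$ in the first place) and matching $s(u - z_i)$ with $w - \delta_{ip}$; there is no genuine algebraic difficulty beyond the homogeneity property already invoked in the previous lemma.
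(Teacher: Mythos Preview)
Your proof is correct and follows exactly the approach of the paper, whose own proof simply reads ``Same as the previous lemma.'' You have correctly identified the scaling factor $s=(1-\hbar z_p)^{-1}$, verified that $s(u-\hbar^{-1})=w-\hbar^{-1}$ and $s(u-z_i)=w-\delta_{ip}$, and invoked the loop-rotation homogeneity of $S_l$ exactly as in the preceding lemma.
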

\begin{proof}
    Same as the previous lemma.
\end{proof}

For $ m = 1, \dots, d_l $, define $ s_{l,v,\tau}^m \in U \otimes \C(\CF_n)  $ by 
$$ s_{l,v,\tau}^m := \sum_{i \in I} \Res_{\delta_{ip}} w^{m-1} S_l(w; \hbar^{-1}, \delta_{1p}, \dots, \delta_{np})$$
where  $I \subset [n] $ is the set of leaves on the tree growing from the vertex $v $.

In this way, we have defined a collection $ \{ s_{l,v,\tau}^m \}$ of rational functions on $ \CF_n $ with values in $ U $, with $v $ ranging over the non-leaf vertices of $ \tau$, $ l = 1 \dots, r $, and $ m = 1, \dots, d_l$, along with the $ r $ ``constant functions'' $ \widetilde \Phi_l^{(0)}$.
 This is a total of $ n(p+r) + r $ functions, which is the correct number to give generators for the Gaudin subalgebras in $ U \fg^{\otimes n+1}$.

\begin{prop}
    Every function $s_{l,v,\tau}^m $ defined above extends to a regular function on $ \CW_\tau$.
\end{prop}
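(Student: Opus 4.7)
My approach is to use the residue decomposition from Lemma~\ref{lem:resudue-sum} to reduce the claim to regularity of explicit building blocks on $\CW_\tau$, and then verify each building block extends regularly using the description of the coordinate ring of $\CW_\tau$ from section~3.4. I treat the internal vertex case first; the root case is analogous.

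Let $v$ be an internal vertex with $p, q$ the consecutive leaves meeting at $v$ and $I$ the set of left-branch leaves above $v$. Applying Lemma~\ref{lem:resudue-sum} (with $w_i = \mu_{pqi}$ and an ``outside pole'' at $w = \hbar^{-1}\nu_{qp}$) to the rational function $w^{m-1}S_l(w; \hbar^{-1}\nu_{qp}, \mu_{pq1}, \ldots, \mu_{pqn})$, I conclude that $s_{l,v,\tau}^m$ lies, over the function field of $\CW_\tau$, in the $U$-subalgebra generated by $\mu_{pqi}$ for $i \in I$, by $(\mu_{pqi} - \mu_{pqj})^{-1}$ for $i \in I, j \notin I$, and by $(\mu_{pqi} - \hbar^{-1}\nu_{qp})^{-1}$ for $i \in I$. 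The crucial point is that by the lemma, only \emph{differences} between ``inside'' and ``outside'' pole positions appear, so that $\mu_{pqj}$ (for $j \notin I$) and $\hbar^{-1}\nu_{qp}$ need not themselves extend regularly to $\CW_\tau$ — only the inverse differences do.

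I then verify regularity of each building block on $\CW_\tau$. The function $\mu_{pqi}$ for $i \in I$ extends by section~3.4, since the meet of $p, i$ lies weakly above the meet of $p, q = v$ (both leaves lie in the left subtree rooted at $v$). For the inverse differences $(\mu_{pqi} - \mu_{pqj})^{-1}$ with $j \notin I$, I split into sub-cases: if $j$ is in the right branch of $v$, I factor $\mu_{pqi} - \mu_{pqj} = -\mu_{pqj}\mu_{jpi}$ where both factors extend regularly (by the abuse-of-notation conventions of section~3.4, as meets of $j, p$ and $j, i$ coincide with $v$) and are units (checked at the maximal degeneration of $\tau$ where each evaluates to a nonzero constant like $\pm 1$); if $j$ lies in a different tree from $v$, then $\mu_{pqj}\to\infty$ on the corresponding boundary stratum, so $(\mu_{pqi}-\mu_{pqj})^{-1}$ extends as a regular function vanishing on that stratum, and I exhibit it explicitly as a product of $\delta$'s and $\nu$'s from the regular functions on $\CW_\tau$ (e.g.\ a combination like $\delta_{qp}\nu_{ij}$ together with appropriate correcting factors). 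The term $(\mu_{pqi}-\hbar^{-1}\nu_{qp})^{-1}$ is analyzed similarly, with $\nu_{qp}$ playing the role of the pole at the deformation parameter.

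The root vertex case runs in parallel, with the change of variable $w = (u-z_p)/(1-\hbar z_p)$ and with $\delta_{ip}$ (for $i$ in the tree of $v$) and $\hbar^{-1}$ taking the roles of $\mu_{pqi}$ and $\hbar^{-1}\nu_{qp}$. Regularity of $\delta_{ip}$ follows from same-tree membership, while the analysis of the inverse differences $(\delta_{ip}-\delta_{jp})^{-1}$ and $(\delta_{ip}-\hbar^{-1})^{-1}$ mirrors that of the internal case. The main obstacle throughout is handling the inverse-difference terms when the ``outside'' index $j$ lies in a different tree from $v$: in that regime the individual coordinates $\mu_{pqj}$ or $\delta_{jp}$ are not regular on $\CW_\tau$, and the nontrivial content of the argument is producing an explicit representative for the inverse difference as a product of the standard regular generators $\delta_{ab}\nu_{cd}$ of $\CO(\CW_\tau)$ described in section~3.4 (via Lemma~6.16 of~\cite{iklpr}), which is precisely the adaptation, to the $\hbar$-deformed setting, of the analogous step in [r, Theorem~3.13] for the homogeneous Gaudin family.
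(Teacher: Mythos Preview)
Your overall strategy is exactly the paper's: invoke Lemma~\ref{lem:resudue-sum} to reduce to the building blocks $\mu_{pqi}$, $(\mu_{pqi}-\mu_{pqj})^{-1}$, $(\mu_{pqi}-\hbar^{-1}\nu_{qp})^{-1}$ (internal case) and $\delta_{ip}$, $(\delta_{ip}-\delta_{jp})^{-1}$, $(\delta_{ip}-\hbar^{-1})^{-1}$ (root case), and then check each one extends to $\CW_\tau$. The difference is entirely in how you carry out this last check, and there your version is both more roundabout and incomplete.

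Your case split for $(\mu_{pqi}-\mu_{pqj})^{-1}$ omits the case where $j$ lies in the same tree as $v$ but is not above $v$; there your factorization $-\mu_{pqj}\mu_{jpi}$ breaks down because $\mu_{pqj}$ is not regular on $\CW_\tau$ (the meet of $p,j$ is strictly below $v=\text{meet}(p,q)$). Even in the right-branch case, asserting that $\mu_{pqj}$ and $\mu_{jpi}$ are units on all of $\CW_\tau$ needs more than evaluation at one stratum. The paper sidesteps all of this by simplifying each building block directly in the $z$-coordinates and recognizing the result among the standard regular functions of section~3.4. For instance $(\mu_{pqi}-\mu_{pqj})^{-1}=(z_p-z_q)/(z_i-z_j)$, which one sees is regular uniformly in $j\notin I$ (it equals $\mu_{ijq}-\mu_{ijp}$, and both terms satisfy the meet criterion whether $j$ is in the right branch, below $v$, or in another tree); likewise $(\mu_{pqi}-\hbar^{-1}\nu_{qp})^{-1}=\hbar(\delta_{pi}-\delta_{qi})$, regular since $i,p,q$ share a tree; and in the root case $(\delta_{ip}-\delta_{jp})^{-1}=\nu_{jp}(\mu_{ijp}-1)$ and $(\delta_{ip}-\hbar^{-1})^{-1}=-\hbar(1-\hbar\delta_{pi})$. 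These one-line identities replace your sub-case analysis and the unexecuted ``analyzed similarly'' steps.
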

\begin{proof}
Suppose that $ v $ is an internal vertex.  From Lemma \ref{lem:resudue-sum} we see that $$s_{l,v,\tau}^m \in U \otimes \C[\mu_{pqi},  (\mu_{pqi} - \mu_{pqj})^{-1},  (\mu_{pqi} - \hbar^{-1} \nu_{qp})^{-1}]_{i \in I, j \notin I}$$ 
where $ I $ is the set of leaves on the left branch above $ v $.  So it remains to check that $\mu_{pqi},  (\mu_{pqi} - \mu_{pqj})^{-1},  (\mu_{pqi} - \hbar^{-1} \nu_{qp})^{-1}$ are all regular functions on $ \CW_\tau$ (for $ i \in I$ and $j \notin I$).

First, note $\mu_{pqi} $ is regular on $ \CW_\tau$, since the meet of $ p $ and $i$ is above the meet of $ p $ and $ q$.  On the other hand 
$$ (\mu_{pqi} - \mu_{pqj})^{-1} = \frac{z_p - z_q}{z_i - z_j}  $$ 
is also regular on $ \CW_\tau$, since the meet of $ i $ and $ j$ is below vertex $ v $ (or $ j $ lies in a different tree).  Finally, 
$$
(\mu_{pqi} - \hbar^{-1} \nu_{qp})^{-1} = \frac{z_q - z_p}{\hbar^{-1} - z_i} = \hbar(\delta_{pi} - \delta_{qi})
$$
which is also regular on $ \CW_\tau$, since $ i, p, q $ all lie in the same tree.

Now suppose that $ v $ is a root. Again from Lemma \ref{lem:resudue-sum}, we see that 
$$
s_{l,v,\tau}^m \in  \C[\delta_{ip}, (\delta_{ip} - \delta_{jp})^{-1}, (\delta_{ip} - \hbar^{-1})^{-1}]_{i \in I, j \notin I} \otimes U
$$
where $ I $ is the set of leaves of the tree rooted at $ v$.  First $ \delta_{ip}$ are regular on $ \CW_\tau$ since $ i,p $ are on the same tree.  Second, we have
$$
(\delta_{ip} - \delta_{jp})^{-1} = \frac{ 1- \hbar z_p}{z_i - z_j} = \frac{1 - \hbar z_p}{z_j - z_p} \frac{ z_j - z_p}{z_i - z_j} = \nu_{pj}(\mu_{ijp} -1)
$$
which is also regular on $ \CW_\tau$ (since $ j $ lies in a different tree than $i,p$).  Finally 
$$
(\delta_{ip} - \hbar^{-1})^{-1} = - \hbar( 1- \hbar \delta_{pi})
$$
is also regular on $ \CW_\tau$ (here we use that $ (1 - \hbar \delta_{ip})(1- \hbar \delta_{pi}) = 1$).

Thus in both cases we conclude that $ s_{l,v,\tau}^m \in U \otimes \C[\CW_\tau] $ as desired.
\end{proof}

\begin{thm} \label{th:genCFn}
    For any point $ (\uz; \varepsilon) \in \CF_n$, the elements $ \{ s_{l,v,\tau}^m \} \cup \{ \widetilde \Phi_l^{(0)} \} $ freely generate $ \mathbfcal A^\varepsilon(\uz)$.
\end{thm}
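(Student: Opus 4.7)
The plan is to express each proposed generator $s_{l,v,\tau}^m$ explicitly in terms of the standard Laurent coefficient generators of $\mathbfcal{A}^\varepsilon(\uz)$, and then verify that the resulting change of basis is invertible on all of $\CF_n$.

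By Proposition \ref{pr:mathbfcalA}, $\mathbfcal{A}^\varepsilon(\uz)$ equals $\ma(\varepsilon^{-1}, z_1, \ldots, z_n)$ when $\varepsilon \ne 0$ and $\ma(\infty, z_1, \ldots, z_n)$ when $\varepsilon = 0$. By Theorem \ref{th:homGaudin}(3) and Proposition \ref{m2}(2), in either case this is a free polynomial ring on
\begin{equation*}
\{ S_{l,i}^m : 1 \le l \le r,\ 1 \le i \le n,\ 1 \le m \le d_l \} \ \cup \ \{ \widetilde{\Phi}_l^{(0)} : 1 \le l \le r \},
\end{equation*}
a total of $n(p+r)+r$ generators, matching the cardinality and PBW degrees of our proposed set. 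Since both sets share the same Hilbert--Poincar\'e series $\prod_{l=1}^r (1-t^{d_l})^{-(nd_l+1)}$, it is enough to prove that $\{s_{l,v,\tau}^m\} \cup \{\widetilde{\Phi}_l^{(0)}\}$ generates $\mathbfcal{A}^\varepsilon(\uz)$; algebraic independence then follows automatically.

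To establish generation, I would first rewrite $s_{l,v,\tau}^m$ in the original variable $u$ and Taylor-expand $(u - z_p)^{m-1}$ about $u = z_i$. Inverting the coordinate change $w = (u-z_p)/(z_q-z_p)$ and using $\Res_{u = z_i}(u - z_i)^{k-1} S_l(u;\hbar^{-1},\uz) = S_{l,i}^k$, one obtains for an internal vertex $v$
\begin{equation*}
s_{l,v,\tau}^m \;=\; (z_q - z_p)^{d_l - m}\sum_{i \in I_v}\sum_{k=1}^{m}\binom{m-1}{k-1}(z_i - z_p)^{m-k}\,S_{l,i}^{k},
\end{equation*}
and the parallel formula with $(z_q-z_p)$ replaced by $(1-\hbar z_p)$ for a root (with $I_v$ the leaves of the tree containing $v$). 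Next, I use the bijection $v \mapsto p_v$ of Lemma \ref{le:biject} to relabel the standard generators as $\{S_{l,p_{v'}}^k\}$ and order the non-leaf vertices of $\tau$ via $v \le v'$ iff $p_v \le p_{v'}$ in the total order on leaves induced by $\tau$. The crucial combinatorial observation is: for $v \ne v'$, if $p_{v'} \in I_v$ then $v' < v$. Indeed, $p_v$ is by construction the maximal leaf of $I_v$ in the leaf order (rightmost on the left branch above $v$ for internal $v$; rightmost of $v$'s tree for a root), so any $p_{v'} \in I_v$ with $p_{v'} \ne p_v$ satisfies $p_{v'} < p_v$. It follows that, for each fixed $l$, the $n d_l \times n d_l$ transition matrix $M^l$ with entries $[M^l]_{(v,m),(v',k)} =$ coefficient of $S_{l,p_{v'}}^k$ in $s_{l,v,\tau}^m$ is lower block-triangular with respect to the vertex ordering. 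Its diagonal block (at $v = v'$) receives contributions only from the $i = p_v$ summand, where $(z_{p_v} - z_p)^{m-k} = 0^{m-k}$ forces $k = m$; hence this diagonal block reduces to $\operatorname{diag}\!\bigl((z_q - z_p)^{d_l - m}\bigr)_{m=1}^{d_l}$ for internal $v$ and to $\operatorname{diag}\!\bigl((1 - \hbar z_p)^{d_l - m}\bigr)_{m=1}^{d_l}$ for a root. Both sets of diagonal entries are nonzero on $\CF_n$ because $z_p \ne z_q$ and $1 - \hbar z_p \ne 0$ by the definition of $\BG^n \setminus \Delta$. Adjoining the untouched central generators $\widetilde{\Phi}_l^{(0)}$, which contribute a trivial identity block, yields an invertible total change of basis, proving that $\{s_{l,v,\tau}^m\} \cup \{\widetilde{\Phi}_l^{(0)}\}$ generates $\mathbfcal{A}^\varepsilon(\uz)$ for every $(\uz;\varepsilon) \in \CF_n$.

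The main delicate point will be the planar combinatorics behind the implication $p_{v'} \in I_v \Rightarrow v' \le v$; the verification is elementary once one unpacks the definitions of $p_v$, $I_v$, and the total order on leaves from $\tau$, but it must be handled uniformly for both internal and root vertices, and in the forest setting where leaves from different trees are separated by the leaf order so that the support condition forces $v$ and $v'$ to lie in the same tree before the single-tree argument applies.
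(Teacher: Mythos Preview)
Your argument is correct and follows essentially the same approach as the paper: both invoke Proposition~\ref{pr:mathbfcalA} together with Theorem~\ref{th:homGaudin}(3) and Proposition~\ref{m2}(2) to get the standard free generators $\{S_{l,i}^{k}\}\cup\{\widetilde\Phi_l^{(0)}\}$, use the bijection of Lemma~\ref{le:biject} between non-leaf vertices and leaves, and then observe that the linear change of generators from $\{S_{l,p_{v'}}^{k}\}$ to $\{s_{l,v,\tau}^{m}\}$ is triangular with nonzero diagonal. The paper states this triangularity in one sentence; you have unpacked it by writing out the explicit transition formula, identifying the diagonal block via $p_v=\max I_v$, and checking that the diagonal entries $(z_q-z_p)^{d_l-m}$ (resp.\ $(1-\hbar z_p)^{d_l-m}$) are units on $\CF_n$. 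Your treatment is thus a fleshed-out version of the paper's proof, with no essential difference in strategy.
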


\begin{proof}
    Suppose that $ \varepsilon \ne 0$.  Then by Prop \ref{pr:mathbfcalA}, $ \mathbfcal A^\varepsilon(\uz) = \ma(\varepsilon^{-1}, \uz) $ under the identification $ U_\varepsilon \fg \otimes U \fg^{\otimes n} = U \fg^{n+1}$.  Thus, by Theorem~\ref{m1}, $\mathbfcal{A}^\varepsilon(\uz)$ is freely generated by $S_{l,i}^m$ for $i=1,\ldots,n$, $l=1,\ldots, r$ and $m=1,\ldots,d_l$, along with $ S_{l,0}^{d_l} = \widetilde \Phi_l^{(0)}$.
     
     Let $ a \in \Cx, b \in \C$.  Since $ S_{l,i}^m$ is a coefficient in the Laurent expansion of $ S_l(u;z_1, \dots, z_n)$ at $ z_i$, we see that (up to a non-zero constant) $ S_{l,i}^m$ equals the residue 
     $$
     \Res_{w_i} w^{m-1} S_l(w; w_0,w_1, \ldots, w_n)
     $$
     where $ w = au + b$,  $w_0 = a \varepsilon^{-1} + b$, and $ w_i = a z_i + b$.

Now, recall the bijection between non-leaf vertices and leaves given by Lemma \ref{le:biject}.  By the definition of $ s_{l,v,\tau}^m $ and the above observation, we conclude that our collection $ \{ s_{l,v,\tau}^m  \}$ (indexed by non-leaf vertices) is related to the original generators $ \{ S_{l,i}^m\} $ (indexed by leaves) by an upper triangular linear transformation with non-zero diagonal entries.  Thus, our new collection $ \{s_{l,v,\tau}^m  \}$ is also a set of free generators.

The case $ \varepsilon = 0 $ is similar using Prop \ref{m2} instead of Theorem \ref{m1}.
\end{proof}

\subsection{Step 2. Operadic nature of the generators}
In \cite[\S 6.3]{iklpr}, we considered a stratification of $ \overline \CF_n$ indexed by pairs of set partitions; on each stratum certain of the $ \nu_{ij} $ coordinates are equal to 0 and others are equal to $ \infty$ (as $\delta_{ij} = \nu_{ij}^{-1}$, this is equivalent to setting some $ \delta_{ij} = 0$).

Let $ A $ be a codimension~1 stratum of $ \overline \CF_n$ such that $ A \cap \CW_\tau \ne \emptyset$.  From \cite[Remark 6.13]{iklpr},  we have the following 2 possibilities: \begin{enumerate}
    \item $A= \overline \CF_I \times_{\mathbb{A}^1} \overline \CF_J $, determined by a partition $[n]=I\sqcup J$ compatible the forest $\tau$, by putting $\nu_{ji}=0$ for all $i\in I,\ j\in J$;
    \item $A= \overline M_{I+1} \times \overline \CF_{J \cup \{I\}} $ determined by a subset $I\subset [n]$ with $ J = [n] \setminus I$, compatible with the forest $\tau$, by putting $\delta_{ij}= \infty $ for all $i, j \in I$.
\end{enumerate}

Let $A$ be a codimension~$1$ stratum of the first type. It corresponds to a partition of  $[n]$ into two subsets $I$ and $J$ such that the leaves of each tree from the forest $\tau$ lie entirely  either in $I$ or in $J$; this splits the forest $\tau$ into the union of two subforests $\tau_I$ and $\tau_J$. 
 Let $ n_I = |I|, n_J = |J|$.  The stratum $A$ is determined by the equations $\nu_{j i} = 0$ (and hence $ \nu_{ij} = \hbar $) for any $i\in I,\ j\in J$.   We have an isomorphism $ A \cong \overline \CF_I \times_{\BA^1} \overline \CF_J$ (by \cite[Prop. 6.12]{iklpr}) which we will write as $ C \mapsto (C_I, C_J) $.  A generic point of this stratum can be viewed a pair $ (\uz^I; \varepsilon), (\uz^J; \varepsilon)$.  Equivalently, when $ \varepsilon \ne 0 $, we can think of this point as a two component curve $C_I \cup C_J$ with $C_I$ containing marked points $ z_0 = \varepsilon^{-1} $ and $ z_i $ for $ i \in I$ and $ C_J $ containing $ z_j $ for $ j \in J $ along with $ z_{n+1} = \infty$.  The two curves are glued along the point $ \infty \in C_I $ and $ \varepsilon^{-1} \in C_J $.  (When $ \varepsilon = 0 $, we have a flower curve with two petals.)
 
 Moreover, $ A \cap \CW_\tau \cong \CW_{\tau_I} \times_{\BA^1} \CW_{\tau_J}$.  Thus, the inclusion $ A \cap \CW_\tau \subset \CW_\tau$ gives a restriction morphism
$$ \C[\CW_\tau] \rightarrow  \C[\CW_{\tau_I}] \otimes_{\C[\hbar]} \C[\CW_{\tau_J}]  $$
and thus a morphism
$$
\C[\CW_\tau] \otimes_{\C[\hbar]} U_\hbar \fg \otimes U \fg^{\otimes n} \rightarrow \C[\CW_{\tau_I}] \otimes_{\C[\hbar]} \C[\CW_{\tau_J}] \otimes_{\C[\hbar]} U_\hbar \fg \otimes U \fg^{\otimes n}
$$
For any vertex $ v \in \tau $, our generators $ s_{l,v,\tau}^m $ live in the left hand side, and by restricting them to $ A $, we obtain elements in the right hand side.

On the other hand, for any vertex $ v \in \tau_I$, we have $$ s_{l, v,\tau_I}^m \in \C[\CW_{\tau_I}] \otimes_{\C[\hbar]} U_\hbar \fg \otimes U \fg^{\otimes I}$$ and similarly for $ v \in \tau_J$.


Let $\mathcal{B}$ be the partition of $\{0,1,\ldots,n\}$ into the parts $\{0\}\cup I, \{j_1\},\ldots, \{j_{n_J}\}$.

\begin{lem}\label{lem:restriction-codim1-strata-1sttype}
Restricted to the stratum $ A \cap \CW_\tau$, we have
$$
s_{l,v,\tau}^m = \begin{cases}
    j_{\{0\} \cup I}^{n+1}(s_{l,v,\tau_I}^m) \quad \text{ if $v$ is in the subforest $\tau_I$} \\ 
    \Delta^{\mathcal{B}}(s_{l,v,\tau_J}^m) \quad \text{ otherwise.} 
\end{cases}
$$
Moreover the generators $ \{ s_{l,v,\tau}^m \} \cup \{ \widetilde \Phi^{(0)}_l \}$ freely generate the subalgebra $$j_{ \{0\} \cup I}^{n+1}(\mathbfcal{A}^{\varepsilon}(C_I))\otimes_{\Delta^{\mathcal{B}}(ZU\fg\otimes1^{\otimes q})}\Delta^{\mathcal{B}}(\mathbfcal{A}^{\varepsilon}(C_J))$$
\end{lem}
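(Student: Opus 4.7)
The plan is to address the two assertions of the lemma separately: first the restriction formulas for the generators $s_{l,v,\tau}^m$, then the free generation statement. The restriction formulas will follow from a careful analysis of the coordinates $\mu_{pqi}$ and $\hbar^{-1}\nu_{qp}$ along the stratum $A$, while the free generation will follow from the operadic description of homogeneous Gaudin subalgebras (Proposition \ref{pr:limitGaudin}) combined with an inductive application of Theorem \ref{th:genCFn} to the strictly smaller subforests $\tau_I$ and $\tau_J$.

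For the restriction formula, the key input is that along $A$ we have $\nu_{ji}=0$ (equivalently, in a one-parameter family degenerating to $A$, one has $z_j \to \infty$ for $j\in J$ while $z_i$ for $i\in I$ and $\hbar^{-1}$ remain bounded). When $v \in \tau_I$, so $p,q \in I$, a direct computation shows $\mu_{pqj} \to \infty$ for every $j \in J$. Since the evaluation map sending $x[-m]$ to $(w-\mu_{pqj})^{-m} x^{(j)}$ gives zero when $\mu_{pqj}=\infty$, the $j$-th tensor factor drops out of $S_l(w;\hbar^{-1}\nu_{qp},\mu_{pq1},\ldots,\mu_{pqn})$, and the residue sum defining $s_{l,v,\tau}^m$ collapses to the corresponding sum for $\tau_I$, inserted via $j^{n+1}_{\{0\}\cup I}$. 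When $v \in \tau_J$, so $p,q \in J$, parametrizing the degeneration by $z_j = r\tilde z_j$ with $r \to \infty$ gives $\mu_{pqi} \to \hbar^{-1}\nu_{qp}$ for every $i \in I$: the zeroth argument and the $I$-indexed arguments coincide on the stratum. The combined contribution
$$(w-\hbar^{-1}\nu_{qp})^{-m}x^{(0)} + \sum_{i \in I}(w-\mu_{pqi})^{-m}x^{(i)} \longrightarrow (w-\hbar^{-1}\nu_{qp})^{-m}\Delta^{\{0\}\cup I}(x)$$
realizes the diagonal embedding into the $\{0\}\cup I$ positions, while the residues at the remaining $\mu_{pqk}$ for $k \in J$ above $v$ reproduce the formula for $s_{l,v,\tau_J}^m$, yielding $\Delta^{\mathcal{B}}(s_{l,v,\tau_J}^m)$.

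For the free generation, I would apply Proposition \ref{pr:limitGaudin} to the two-component nodal curve $C = C_I \cup C_J \in \overline M_{n+2}$, taking $C_J$ as the component containing $z_{n+1}=\infty$ with one attached subcurve $C_I$ (at a point $w_1 \in C_J$) and $n_J$ direct marked points $\{z_j\}_{j \in J}$. This yields the operadic decomposition $\mathcal A(C) = \Delta^{\mathcal{B}}(\mathcal A(C_J)) \otimes_Z \mathcal A(C_I)$ with shared center $Z = \Delta^{\{0\}\cup I}(Z(U\fg))$, which is exactly the stated tensor product for $\mathbfcal A^\varepsilon(C)$. Inductively applying Theorem \ref{th:genCFn} to $\tau_I$ and $\tau_J$ (each on strictly fewer leaves) furnishes free generators $\{s_{l,v,\tau_I}^m\}\cup\{\widetilde\Phi_l^{(0)}\}$ and $\{s_{l,v,\tau_J}^m\}\cup\{\widetilde\Phi_l^{(0)}\}$ of $\mathbfcal A^\varepsilon(C_I)$ and $\mathbfcal A^\varepsilon(C_J)$ respectively. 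The restriction formulas of the first assertion match these precisely with the restrictions of our generators; the $r$ diagonal Casimirs $\Delta^{\{0\}\cup I}(\widetilde\Phi_l) \in Z$ appear both as $\Delta^{\mathcal{B}}(\widetilde\Phi_l^{(0)})$ in the right factor and as diagonal invariants inside the image of $\mathbfcal A^\varepsilon(C_I)$, so they are identified in the tensor product over $Z$, giving the expected count $(n_I+n_J)(p+r) + 2r - r = n(p+r)+r$.

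The main technical obstacle is the second case of the restriction computation: rigorously handling the collision $\mu_{pqi} = \hbar^{-1}\nu_{qp}$ for $i \in I$ in the universal rational function $S_l(w;\cdot)$. One must argue this as a genuine limit of evaluation maps in a family, verifying that the merging of poles at distinct points into a higher-order pole at a common point collapses to the diagonal embedding—an interchange of residue and limit that should be justified via a parametrized computation on a one-dimensional slice transverse to $A$ inside $\CW_\tau$. Once this collision is controlled, algebraic independence on the stratum follows by a dimension/Hilbert series argument: the generators produce a polynomial subalgebra of the correct rank, matching the transcendence degree of the target tensor product, so they must be algebraically independent.
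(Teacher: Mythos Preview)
Your restriction formulas are correct and match the paper's computation. However, you overcomplicate the ``collision'' case for $v \in \tau_J$. There is no limit to take and no interchange of residue and limit to justify: the functions $s_{l,v,\tau}^m$ are already regular on $\CW_\tau$ (this was the previous Proposition), so restriction to the closed substratum $A$ is simply evaluation. On $A$ one has the exact identity $\nu_{ip}=\hbar$ for $i\in I$, $p\in J$, hence $\mu_{pqi}=\hbar^{-1}\nu_{qp}$ on the nose, and the evaluation homomorphism satisfies
\[
\ev_{w-\hbar^{-1}\nu_{qp},\,w-\mu_{pq1},\ldots,\,w-\mu_{pqn}}=\Delta^{\CB}\circ \ev_{w-\hbar^{-1}\nu_{qp},\,w-\mu_{pqj_1},\ldots,\,w-\mu_{pqj_{n_J}}}
\]
as an identity of algebra maps $U(t^{-1}\fg[t^{-1}])\to U_\hbar\fg\otimes U\fg^{\otimes n}$. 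Your ``main technical obstacle'' is therefore a non-issue.

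Your argument for free generation has a genuine gap. Proposition~\ref{pr:limitGaudin} is a statement about homogeneous Gaudin subalgebras over $\overline M_{n+1}$; for $\varepsilon=0$ the point $C$ lies in $\overline F_n$, not in any $\overline M_{n+2}$, so the proposition does not apply. Moreover, invoking Theorem~\ref{th:genCFn} ``inductively'' is circular here: that theorem is proved only for non-degenerate points of $\CF_n$, and the present lemma is precisely the inductive step needed to extend it to boundary strata. What the paper does instead is use Knop's theorem \cite{kn} (cf.\ \cite[Lemma~3.10]{r}) directly: $(U\fg^{\otimes I})^\fg$ is free as a $\Delta(ZU\fg)$-module, and the multiplication map
\[
U\fg\otimes (U\fg^{\otimes I})^\fg \longrightarrow U\fg\otimes_{ZU\fg}(U\fg^{\otimes I})^\fg \hookrightarrow U\fg^{\otimes I}
\]
is injective on the tensor product over the center. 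This forces the product $j_{\{0\}\cup I}^{n+1}(\mathbfcal A^\varepsilon(C_I))\cdot \Delta^{\CB}(\mathbfcal A^\varepsilon(C_J))$ to be the tensor product over $\Delta^{\CB}(ZU\fg\otimes 1^{\otimes J})$, and free generators of the two factors (supplied by Theorem~\ref{th:genCFn} for the smaller forests, together with the $\widetilde\Phi_l^{(0)}$) then automatically give free generators of the whole, after discarding the $r$ redundant diagonal Casimirs living in the shared center. Your ``dimension/Hilbert series argument'' does not supply this; Knop's freeness result is the essential input.
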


\begin{proof}
Let $ v $ be an internal vertex of $ \tau $. As before, let $ p,q $ be the consecutive vertices whose meet is $ v$. Then $ s_{l,v,\tau}^m$ is defined using the rational function
$$
S_l(w; \hbar^{-1} \nu_{qp}, \mu_{pq1}, \dots, \mu_{pqn}):= \ev_{w - \hbar^{-1} \nu_{qp}, w - \mu_{pq1}, \dots, w 
- \mu_{pqn}}(S_l)
$$

    Assume that $ v \in \tau_I$ be an internal vertex.   
Now, on $ A$, for $j \in J$, we have $ \mu_{pqj} = \infty$, since $ \mu_{pqj} = \nu_{jp}^{-1} \delta_{qp}^{-1}$ and $ \nu_{jp} = 0 $ on $ A$.  Thus
$$
\ev_{w - \hbar^{-1}\nu_{qp}, w - \mu_{pq1}, \dots, w 
- \mu_{pqn}} = j_{\{0\} \cup I}^{n+1} \circ \ev_{w - \hbar^{-1} \nu_{qp}, w - \mu_{p q i_1}, \dots, \mu_{p q i_{n_I} } }
$$
where $ \{i_1, \dots, i_{n_I} \} = I $.  

Thus $$S_l(w; \hbar^{-1} \nu_{qp}, \mu_{pq1}, \dots, \mu_{pqn}) = j_{\{0\} \cup I}^{n+1} S_l(w; \hbar^{-1} \nu_{qp}, \mu_{p  q i_1}, \dots, \mu_{p q i_{n_I}}) $$ and hence on $ A$, $s_{l,v,\tau}^m = j_{\{0\} \cup I}^{n+1}(s_{l,v,\tau_I}^m)$.

The case where $ v$ is a root of $ \tau_I $ is similar.

Now assume that $ v \in \tau_J $.  Let $ i \in I$.  Then on $ A$, $ \nu_{ip} = \hbar $ and so $\mu_{pqi} = \hbar^{-1} \nu_{qp}  $ is independent of $ i $.  Thus, 
$$
\ev_{w - \hbar^{-1} \nu_{qp}, w - \mu_{pq 1}, \dots, w 
- \mu_{pqn}} = \Delta^{\CB} \circ \ev_{w - \hbar^{-1} \nu_{qp}, w - \mu_{p q j_1 }, \dots, w 
- \mu_{p q j_{n_J} }}
$$
Thus, 
$$S_l(w; \hbar^{-1} \nu_{qp}, \mu_{pq1}, \dots, \mu_{pqn}) =  \Delta^{\CB} S_l(w; \hbar^{-1} \nu_{qp}, \mu_{pq j_1 }, \dots, \mu_{p q j_{n_J} }) $$
and hence on $ A $, $ s_{l,v,\tau}^m = \Delta^{\CB}(s_{l,v,\tau_J}^m)$ as desired.  Again the case where $ v $ is a root of $ \tau_J $ is similar.

Now it remains to show that the above generators stay algebraically independent. This follows from the particular case of Knop's theorem \cite{kn} (see \cite[Lemma 3.10]{r} for more details), namely $[U\fg^{\otimes I}]^\fg$ is a free $\Delta (ZU\fg)$-module and that the homomorphism $\Delta \cdot\Id:U\fg\otimes [U\fg^{\otimes I}]^\fg\to U\fg^{\otimes I}$ factors as $$U\fg\otimes [U\fg^{\otimes I}]^\fg\to U\fg\otimes_{ZU\fg} [U\fg^{\otimes p}]^\fg \hookrightarrow U\fg^{\otimes p}$$ 
This implies that the product of $j_{\{0\} \cup I}^{n+1}(\mathbfcal{A}^{\varepsilon}(C_I))$ and $\Delta^{\mathcal{B}}(\mathbfcal{A}^{\varepsilon}(C_J))$ is in fact the tensor product $$j_{\{0\} \cup I}^{n+1}(\mathbfcal{A}^{\varepsilon}(C_I))\otimes_{\Delta^{\mathcal{B}}(ZU\fg\otimes1^{\otimes J})}\Delta^{\mathcal{B}}(\mathbfcal{A}^{\varepsilon}(C_J))$$ 
On the other hand, the elements $j_{\{0\} \cup I}^{n+1}(s_{l,v,\tau_I}^m)$ for $v $ in the subforest $\tau_I$, along with $ \widetilde \Phi^{(0)}_l $, freely generate the first factor while  $\Delta^{\mathcal{B}}(s_{l,v,\tau_J}^m)$ for $v$ in the subforest $\tau_J$, along with $ \Delta^{\mathcal{B}}(\widetilde \Phi^{(0)}_l) \in \Delta^{\mathcal{B}}(ZU\fg\otimes1^{\otimes J}) $ freely generate the second factor.  In collecting the generators, we do not use $ \Delta^{\mathcal{B}}(\widetilde \Phi^{(0)}_l)$ which corresponds to the fact that they live in the algebra over which we are forming the tensor product.

\end{proof}

Finally, let $A$ be a codimension~$1$ stratum of the second type. It corresponds to a subset $I \subset [n]$ which are the leaves of a subtree $\tau_I$ of a tree from the forest $\tau$. Let $J = [n]\setminus I$. Let $\tau_{J \cup \{I\}}$ be the forest obtained from $\tau$ by contracting $\tau_I$.  Note that $ \tau_{J \cup \{I\}} $ has one leaf corresponding to the subtree $\tau_I$, so that the leaves of $ \tau_{J \cup \{I\}} $ are in bijection with $ J \cup \{ I \} $, where $ J = [n] \setminus I $.

The stratum $A$ is determined by the equations $ \delta_{ij} = 0 $ for any $ i, j \in I $.  This implies that $\mu_{ikj} = 0$ for any $i,j\in I,\ k\not\in I$. 

We have an isomorphism $ A \cong \overline M_{I+1} \times \overline \CF_{J \cup \{I\}}$ (by \cite[Prop. 6.12]{iklpr}), which we will write as $ C \mapsto (C_I, C_{J \cup \{I\}}) $. Generically, a point of this stratum can be viewed a pair $ \uz^I, (\uz^{J \cup \{I\}}, \varepsilon)$.  Equivalently, (when $ \varepsilon \ne 0 $) we can think of this point as a two component curve $C_I \cup C_{J \cup \{I\}}$ with $C_I$ containing marked points $ z_i $ for $ i \in I$ and $ C_{J \cup \{I\}} $ containing $ z_j $ for $ j \in J $ along with $ z_0 = \varepsilon^{-1}, z_{n+1} = \infty $.  The two curves are glued along the point $ \infty \in C_I $ and $ z_I \in C_J $.  

Moreover, $ A \cap \CW_\tau \cong U_{\tau_I} \times \CW_{\tau_{J \cup \{I\}}}$, where $ U_{\tau_I} \subset \overline M_{I+1} $ is the stratum constructed in section 2.3 of \cite{r}.  In the proof of Theorem 3.13 of \cite{r}, the third author constructed generators $ s_{l,v,\tau_I}^m$ for the Gaudin subalgebra $ \ma(C_I)$.

Thus, the inclusion $ A \cap \CW_\tau \subset \CW_\tau$ gives a restriction morphism
$$ \C[\CW_\tau] \rightarrow  \C[U_{\tau_I}] \otimes \C[\CW_{\tau_{J \cup \{I\}}}]  $$
and thus a morphism
$$
\C[\CW_\tau] \otimes_{\C[\hbar]} U_\hbar \fg \otimes U \fg^{\otimes n} \rightarrow \C[U_{\tau_I}] \otimes \C[\CW_{\tau_{J \cup \{I\}}}] \otimes_{\C[\hbar]} U_\hbar \fg \otimes U \fg^{\otimes n}
$$
For any vertex $ v \in \tau $, our generators $ s_{l,v,\tau}^m $ live in the left hand side and once again, we wish to determine their images in the right hand side.

Let $\mathcal{B}$ be the partition of $\{0,1,\ldots,n\}$ into the parts $\{0\},\{j_1\},\ldots,\{j_{n_J} \}, I$.

\begin{lem}\label{lem:restriction-codim1-strata-2ndtype}
Restricted to the stratum $ A \cap \CW_\tau$, we have
$$
s_{l,v,\tau}^m = \begin{cases}
    j_{\{0\} \cup I}^{n+1}(s_{l,v,\tau_I}^m) \quad \text{ if $v$ is in the subforest $\tau_I$} \\
    \Delta^{\mathcal{B}}(s_{l,v, \tau_{J \cup \{I\}}}^m) \quad \text{ otherwise.} 
\end{cases}
$$
 They freely generate the subalgebra $$j_I^{n+1}(\mathcal{A}(C_I))\otimes_{\Delta^{\mathcal{B}}(ZU\fg)}\Delta^{\mathcal{B}}(\mathbfcal{A}^{\varepsilon}(C_{J\cup \{I\}}))$$ 
     Here, the first factor in this tensor product is a homogeneous Gaudin subalgebra while the second factor is a trigonometric one for $\varepsilon\ne0$ and a universal inhomogeneous one for $\varepsilon=0$.
\end{lem}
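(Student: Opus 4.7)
The plan is to adapt the proof of Lemma~\ref{lem:restriction-codim1-strata-1sttype} to this second type of codimension-one stratum, where all marked points indexed by $I$ collide into a single limiting point. The defining equations of $A\cap\mathcal W_\tau$ are $\delta_{ij}=0$ for $i,j\in I$, and from the coordinate definitions in Section~\ref{se:Wtau} one reads off the limits $\delta_{qp}\to 0$, $\hbar^{-1}\nu_{qp}\to\infty$ and $\mu_{pqj}\to\infty$ for $p,q\in I$ and $j\notin I$, while $\mu_{pqi}$ remains finite for $p,q,i\in I$ and provides coordinates on the $\overline M_{I+1}$-factor, and $\mu_{ikj}\to 0$ for $i,j\in I$, $k\notin I$. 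The calculation will then proceed case-by-case on whether $v$ lies in the contracted subforest $\tau_I$ or not.

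For $v\in\tau_I$ an internal vertex with consecutive leaves $p,q\in I$, consider the rational function $S_l(w;\hbar^{-1}\nu_{qp},\mu_{pq1},\dots,\mu_{pqn})$ used to define $s_{l,v,\tau}^m$. The arguments $\hbar^{-1}\nu_{qp}$ and $\mu_{pqj}$ for $j\notin I$ all tend to infinity on $A$; since the evaluation homomorphism sends $x[-m]$ to a sum of $(w-\alpha)^{-m}x^{(\ast)}$ terms that vanish as $\alpha\to\infty$, the limit reduces to the evaluation only in the $I$-tensor slots at the parameters $\mu_{pqi}$, $i\in I$. Summing residues at $w=\mu_{pqi}$ for $i$ in the left branch above $v$ (all in $I$) gives precisely $j_I^{n+1}(s_{l,v,\tau_I}^m)$, where $s_{l,v,\tau_I}^m$ is the homogeneous Gaudin generator for $C_I\in\overline M_{I+1}$ constructed in~\cite{r}. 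The $\hbar$-deformed extra point effectively drops out, which is why the limiting factor is purely homogeneous, not trigonometric or inhomogeneous. The case where $v$ is a root of a tree of $\tau_I$ is analogous via the $\delta$-coordinate presentation.

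For $v$ a vertex of $\tau'=\tau_{J\cup\{I\}}$ not in $\tau_I$, with consecutive leaves $p_\tau,q_\tau$ in $\tau$, the relationship between the planar binary structures of $\tau$ and $\tau'$ under contraction of $\tau_I$ determines a consistent identification of the relevant parameters via Lemma~\ref{le:biject}: on $A$ all $\mu_{p_\tau q_\tau i}$ for $i\in I$ coincide with a single value $\mu_{p_\tau q_\tau I}$ serving as the coordinate for the contracted leaf $I$, and the other auxiliary parameters $\hbar^{-1}\nu_{q_\tau p_\tau}$ and $\mu_{p_\tau q_\tau j}$ for $j\in J$ are unchanged. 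Grouping the $I$-indexed evaluation terms and rewriting via $\Delta^{\mathcal B}$, which sends the tensor slot indexed by $I$ in $\tau'$ to $\sum_{i\in I}x^{(i)}$, identifies the restriction of $s_{l,v,\tau}^m$ with $\Delta^{\mathcal B}(s_{l,v,\tau'}^m)$. The root case is symmetric.

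Finally, to upgrade these pointwise identities to the claimed description of the subalgebra and to verify free algebraic independence of the listed generators, I invoke Knop's theorem exactly as in the proof of Lemma~\ref{lem:restriction-codim1-strata-1sttype}: freeness of $(U\fg^{\otimes I})^\fg$ over $\Delta(ZU\fg)$ forces the product $j_I^{n+1}(\mathcal A(C_I))\cdot\Delta^{\mathcal B}(\mathbfcal A^\varepsilon(C_{J\cup\{I\}}))$ to coincide with the tensor product over $\Delta^{\mathcal B}(ZU\fg)$, and Theorem~\ref{th:genCFn} applied to $\tau_{J\cup\{I\}}$ together with the generating result of~\cite{r} applied to $\tau_I$ ensures both factors are free polynomial algebras on our generators. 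The main difficulty is the combinatorial bookkeeping of case~2: carefully tracking the planar-binary structure of $\tau'$ relative to $\tau$ under contraction of $\tau_I$ so that the limits of the residue sums in $\tau$ match the residue sums in $\tau'$ with the contracted leaf $I$ playing its distinguished role; once this bookkeeping is in place, everything else is a direct parallel of the first-type stratum analysis.
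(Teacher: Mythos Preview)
Your proposal is correct and follows essentially the same approach as the paper, which simply states that the proof is entirely similar to that of Lemma~\ref{lem:restriction-codim1-strata-1sttype}. You have supplied the details that the paper omits: the coordinate limits $\hbar^{-1}\nu_{qp}\to\infty$ and $\mu_{pqj}\to\infty$ for $p,q\in I$, $j\notin I$ in case~1, the coalescence $\mu_{p_\tau q_\tau i}\to\mu_{p_\tau q_\tau I}$ for all $i\in I$ in case~2, and the Knop freeness argument for algebraic independence.
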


\begin{proof} 
The proof is entirely similar to that of the previous Lemma.

\end{proof}

\subsection{Step~3. The family $\mathbfcal{A}^\varepsilon(C)$ is flat over $\overline{\mathcal{F}}_n$} 

Applying Lemmas \ref{lem:restriction-codim1-strata-1sttype} and \ref{lem:restriction-codim1-strata-2ndtype} inductively over the stratification immediately yields the following result.
\begin{prop}
    For any $ (C; \varepsilon) \in  \CW_\tau$, the elements $ \{ s_{l,v,\tau}^m \} \cup \{\widetilde \Phi^{(0)}_l\} $ are algebraically independent in $ U_\varepsilon \fg \otimes U \fg^{\otimes n}$.
\end{prop}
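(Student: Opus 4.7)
The plan is to prove the proposition by strong induction on $n$, with the inductive hypothesis being that for every $(C';\varepsilon') \in \CW_{\tau'} \subset \overline{\CF}_{n'}$ with $n' < n$ and every chart $\CW_{\tau'}$, the corresponding generating set $\{s_{l,v,\tau'}^m\} \cup \{\widetilde{\Phi}_l^{(0)}\}$ is algebraically independent. The base case $n=1$ is immediate, since $\overline{\CF}_1$ consists of a single point and the generators restrict to free generators of either a shift-of-argument subalgebra or a trigonometric Gaudin subalgebra in $U\fg$ (which are polynomial by Theorems \ref{size} and \ref{th:inhomproperties}).

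For the inductive step, fix $(C;\varepsilon) \in \CW_\tau$ and consider the (unique) stratum of $\overline{\CF}_n$ containing it. If this stratum is the open locus $\CF_n$, algebraic independence is exactly Theorem \ref{th:genCFn}. Otherwise $(C;\varepsilon)$ lies in the closure of a codimension-one stratum $A$, which is of one of the two types classified before Lemma \ref{lem:restriction-codim1-strata-1sttype}: either $\overline{A} \cong \overline{\CF}_I \times_{\BA^1} \overline{\CF}_J$ with $|I|, |J| < n$, or $\overline{A} \cong \overline{M}_{I+1} \times \overline{\CF}_{J \cup \{I\}}$ with both factor sizes strictly smaller than $n$. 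In either case, writing $(C;\varepsilon)$ as a pair of points in these smaller spaces and applying the appropriate restriction lemma, each generator $s_{l,v,\tau}^m$ is expressed either as $j_{\{0\}\cup I}^{n+1}$ of a generator $s_{l,v,\tau_I}^m$ indexed by a vertex of $\tau_I$, or as $\Delta^{\CB}$ of a generator indexed by a vertex of the other subforest. By the inductive hypothesis, each group is algebraically independent in its own tensor factor.

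To assemble these into global algebraic independence, one invokes the Knop-type freeness argument already cited in the proof of Lemma \ref{lem:restriction-codim1-strata-1sttype} (\cite[Lemma 3.10]{r}): $(U\fg^{\otimes S})^\fg$ is a free module over $\Delta(ZU\fg)$, and the multiplication map $U\fg \otimes (U\fg^{\otimes S})^\fg \to U\fg^{\otimes (1+|S|)}$ factors as an injection through $U\fg \otimes_{ZU\fg} (U\fg^{\otimes S})^\fg$. This guarantees that the product of the two restricted subalgebras identified in the restriction lemmas is genuinely the tensor product over the diagonally embedded copy of $ZU\fg$, so that after removing the $r$ ``central'' generators that are duplicated on the two sides, the remaining family is algebraically independent. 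The expected generator count matches precisely: $n(p+r) + r$ on the original chart decomposes as $|I|(p+r) + r$ and $|J|(p+r) + r$ with $r$ central generators identified across.

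The main technical obstacle is this last bookkeeping step, namely verifying that exactly the right $r$-dimensional piece of overlap between the two sides is absorbed by the Knop tensor-product identification, in both the type-one case (where the split is trigonometric/inhomogeneous $\otimes$ trigonometric/inhomogeneous) and the type-two case (where the split is homogeneous $\otimes$ trigonometric/inhomogeneous). Once this is checked, Lemmas \ref{lem:restriction-codim1-strata-1sttype} and \ref{lem:restriction-codim1-strata-2ndtype} provide the restriction formulas, the inductive hypothesis provides independence on each factor, and the freeness result provides the combination, completing the induction.
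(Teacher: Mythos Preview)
Your argument is correct and matches the paper's one-line proof, which simply says to apply Lemmas~\ref{lem:restriction-codim1-strata-1sttype} and~\ref{lem:restriction-codim1-strata-2ndtype} inductively over the stratification; you have merely written out that induction (the paper phrases it as induction on the codimension of the stratum rather than on $n$, but the two are interchangeable). One small correction to your base case: $\overline{\CF}_1$ is $\BA^1$ rather than a single point, and $\mathbfcal{A}^\varepsilon(z_1)$ lives in $U_\varepsilon\fg \otimes U\fg$ (a two-point homogeneous or semi-universal inhomogeneous Gaudin algebra), not in $U\fg$ alone---but since $\overline{\CF}_1 = \CF_1$ has no boundary, the case $n=1$ is covered directly by Theorem~\ref{th:genCFn}, so this slip is harmless.
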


For any $ (\varepsilon, C) \in \overline \CW_\tau$, we let $ \mathbfcal A^\varepsilon(C) \subset U_\varepsilon \fg \otimes U \fg^{\otimes n}$ be the subalgebra generated by these elements.  Again applying Lemmas \ref{lem:restriction-codim1-strata-1sttype} and \ref{lem:restriction-codim1-strata-2ndtype} inductively implies that the algebra $ \mathbfcal A^\varepsilon(C)$ is independent of the choice of the forest $ \tau $.  

Equivalently, we begin with the subalgebras
$$
\C[\{ s_{l,v,\tau}^m \} \cup \{\widetilde \Phi^{(0)}_l\}  ] \subset \C[\CW_\tau] \otimes_{\C[\hbar]}  U_\hbar \fg \otimes U \fg^{\otimes n}
$$
and the above Lemmas show that these glue together to form the subalgebra
$$
\mathbfcal A \subset \C[\overline \CF_n] \otimes_{\C[\hbar]}  U_\hbar \fg \otimes U \fg^{\otimes n}
$$

For any $ (C; \varepsilon) \in  \overline \CF_n $, we define a filtration on $ \mathbfcal A^\varepsilon(C)$ as follows. 

\begin{prop} \label{pr:twofiltrations}
    The following two procedures define the same filtration on $\mathbfcal A^\varepsilon(C)$:
\begin{enumerate}
    \item Regarding $\mathbfcal A^\varepsilon(C) $ as a polynomial ring in the generators $ \{ s_{l,v,\tau}^m \} \cup \{\widetilde \Phi^{(0)}_l\} $ (for some $ \tau $ such that $(C;\varepsilon) \in \CW_\tau$), and then setting $ \deg s_{l,v,\tau}^m = d_l $ and $ \deg \widetilde \Phi^{(0)}_l = d_l$.
    \item Intersecting $\mathbfcal A^\varepsilon(C)$ with the PBW filtration on $ U_\varepsilon \fg \otimes U \fg^{\otimes n}$.
\end{enumerate}
\end{prop}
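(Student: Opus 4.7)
First, I would establish the inclusion of filtration (1) in filtration (2). The universal Gaudin generators $S_l$ lie in the $d_l$-th piece of the PBW filtration of $U(t^{-1}\fg[t^{-1}])$ by Theorem \ref{th:A-invariance-der}(3). The evaluation map $\ev^\hbar_{u-\hbar^{-1}, u-z_1, \ldots, u-z_n}$, the affine change of variable $u \mapsto w$, and the extraction of residue coefficients from Laurent expansions are all $\C$-linear operations in $u$ (or $w$) with coefficients in $\CO(\CW_\tau)$, so they preserve the PBW filtration on $U_\hbar\fg \otimes U\fg^{\otimes n}$. Hence each $s_{l,v,\tau}^m$ lies in the $d_l$-th PBW piece, and the same holds trivially for $\widetilde\Phi_l^{(0)}$. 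Because the PBW filtration is submultiplicative, any polynomial of total generator-degree at most $k$ lies in the $k$-th PBW piece, giving filtration (1) $\subseteq$ filtration (2).

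Second, I would match the Hilbert-Poincar\'e series of the two filtrations. Under filtration (1), $\mathbfcal A^\varepsilon(C)$ is a polynomial algebra with $nd_l + 1$ free generators of degree $d_l$ for each $l$ (the $n$ non-leaf vertices of $\tau$ contribute $n d_l$ generators $s_{l,v,\tau}^m$, plus the constant $\widetilde\Phi_l^{(0)}$), so
\[
HP^{(1)}(t) = \prod_{l=1}^r \frac{1}{(1-t^{d_l})^{nd_l + 1}}.
\]
For filtration (2), I would argue that $\gr_{(2)}\mathbfcal A^\varepsilon(C)$ is a polynomial algebra in the PBW symbols $\sigma(s_{l,v,\tau}^m)$ and $\sigma(\widetilde\Phi_l^{(0)})$, each of degree $d_l$, which would give the same HP series. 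Since $\sigma(S_l) = i_{-1}(\Phi_l)$, these symbols are the classical analogues produced by the same residue recipe with $\Phi_l$ in place of $S_l$. At a generic point $(\uz;\varepsilon) \in \CF_n$ they are the classical generators of a homogeneous Gaudin subalgebra on $n+1$ points (resp.\ of $A(\infty, \uz)$ when $\varepsilon = 0$), hence algebraically independent in $S(\fg)^{\otimes n+1}$ by Proposition \ref{pr:GenCommGaudin} (resp.\ Proposition \ref{m2}). To extend algebraic independence over all of $\overline\CF_n$, I would induct on the codimension of the stratum containing $C$: Lemmas \ref{lem:restriction-codim1-strata-1sttype} and \ref{lem:restriction-codim1-strata-2ndtype} (applied to PBW symbols) split the collection of generators on a codim-1 stratum into two independent sets coming from smaller Gaudin subalgebras, for which the inductive hypothesis gives the desired algebraic independence; the classical tensor-product structure (together with Knop's freeness result already invoked in the proof of those lemmas) propagates independence to the combined system.

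Third, the conclusion follows: filtration (1) $\subseteq$ filtration (2) and $HP^{(1)} = HP^{(2)}$, so by induction on $k$ from $\dim \mathbfcal A^{(i), k} - \dim \mathbfcal A^{(i), k-1} = [t^k]HP^{(i)}(t)$ we get $\dim \mathbfcal A^{(1),k} = \dim \mathbfcal A^{(2), k}$, forcing equality of the two filtrations at every level. The hard part will be controlling the PBW symbols at boundary strata of $\overline\CF_n$, where the inductive decomposition of Lemmas \ref{lem:restriction-codim1-strata-1sttype} and \ref{lem:restriction-codim1-strata-2ndtype} produces a tensor product involving a shared copy of central elements $\widetilde\Phi_l^{(0)}$; one must check carefully that taking classical limits commutes with this decomposition and that no unexpected relations are introduced, so that algebraic independence really does propagate.
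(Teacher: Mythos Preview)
Your proposal is correct and follows essentially the same approach as the paper: the key step in both is proving that the PBW leading terms of $\{s_{l,v,\tau}^m\}\cup\{\widetilde\Phi_l^{(0)}\}$ are algebraically independent in $S(\fg)^{\otimes n+1}$, first on the open locus $\CF_n$ via Proposition~\ref{pr:GenCommGaudin} (and Proposition~\ref{m2} at $\varepsilon=0$), then on boundary strata by induction on codimension using Lemmas~\ref{lem:restriction-codim1-strata-1sttype} and~\ref{lem:restriction-codim1-strata-2ndtype} together with Knop's freeness result. You spell out the logical wrapper (inclusion of filtrations plus equality of Hilbert--Poincar\'e series forces equality) more explicitly than the paper does, but the mathematical content is the same.
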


\begin{proof}
    It suffices to show that the leading terms of the generators $ \{ s_{l,v,\tau}^m \} \cup \{\widetilde \Phi^{(0)}_l\} $ with respect to the PBW filtration are algebraically independent elements of $S(\fg)^{\otimes n+1}$. Note that for $(C;\varepsilon) \in \CF_n$ and $ \varepsilon\ne0$, this follows from Proposition \ref{pr:GenCommGaudin}, similar to the proof of Theorem \ref{th:genCFn}.  When $ \varepsilon = 0 $, the argument is similar.

    Next, for degenerate $C$, we proceed by induction on the codimension of a stratum. According to Lemmas~\ref{lem:restriction-codim1-strata-1sttype}~and~\ref{lem:restriction-codim1-strata-2ndtype}, the restrictions of $\{ s_{l,v,\tau}^m \}$ are the generators of the same form for each of the components, embedded into $ U_\hbar \fg \otimes U \fg^{\otimes n}$ according to the subtree or subforest embedding. So the algebraic independence of the leading terms of $ \{ s_{l,v,\tau}^m \} \cup \{\widetilde \Phi^{(0)}_l\} $ follows from the argument entirely similar to that in the proof of Lemma~\ref{lem:restriction-codim1-strata-1sttype}. Namely, by \cite{kn}, $[S(\fg)^{\otimes p}]^\fg$ is a free $\Delta^{[p]}(S(\fg)^\fg)$-module and that the homomorphism $\Delta^{[p]}\cdot\Id:S(\fg)\otimes [S(\fg)^{\otimes p}]^\fg\to S(\fg)^{\otimes p}$ factors as $S(\fg)\otimes [S(\fg)^{\otimes p}]^\fg\to S(\fg)\otimes_{S(\fg)^\fg} [S(\fg)^{\otimes p}]^\fg \hookrightarrow S(\fg)^{\otimes p}$. This implies that the associated graded of the subalgebra generated by $ \{ s_{l,v,\tau}^m \} \cup \{\widetilde \Phi^{(0)}_l\} $ is the tensor product of those for smaller $n$ over their common subalgebra freely generated by some of these generators. Iterating this argument, we see that the leading terms of $ \{ s_{l,v,\tau}^m \} \cup \{\widetilde \Phi^{(0)}_l\} $ are algebraically independent for any $(C;\varepsilon)\in\mathcal{W}_\tau$.   
\end{proof}

Note that the above equivalence of two definitions of the filtration implies that it is independent of the choice of $ \tau$. From the algebraic independence of the generators, we immediately conclude the following.

\begin{corol}
    $\mathbfcal{A}^\varepsilon(C)$ has constant Hilbert-Poincar\'e series with respect to this filtration.  Thus this forms a flat family of subalgebras parametrized by $ \overline \CF_n$.
\end{corol}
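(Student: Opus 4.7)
The plan is to deduce the corollary from Proposition~\ref{pr:twofiltrations}, which already does the hard work: it shows that at every point $(C;\varepsilon)\in\overline{\CF}_n$ the generators $\{s_{l,v,\tau}^m\}\cup\{\widetilde\Phi_l^{(0)}\}$ are algebraically independent and that the two filtrations on $\mathbfcal A^\varepsilon(C)$ (the ``grading by generators'' and the PBW-intersection filtration) coincide. So my job is really just to count.

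First I would observe that the number and degrees of the generators are independent of both $C$ and the choice of forest $\tau$: for any binary planar forest $\tau$ with $n$ leaves, Lemma~\ref{le:biject} gives a bijection between non-leaf vertices and leaves, so there are exactly $n$ non-leaf vertices. Thus for each $(C;\varepsilon)\in\CW_\tau$ we have exactly $n d_l$ generators $s_{l,v,\tau}^m$ of degree $d_l$ (indexed by a non-leaf vertex $v$ and $m=1,\dots,d_l$) for each $l$, together with one generator $\widetilde\Phi_l^{(0)}$ of degree $d_l$ for each $l$. By Proposition~\ref{pr:twofiltrations}, these are algebraically independent, so $\mathbfcal A^\varepsilon(C)$ is a polynomial ring on them, and the Hilbert-Poincar\'e series with respect to the filtration of the Proposition (equivalently, the PBW-intersection filtration) is
\[
\prod_{l=1}^r \frac{1}{(1-t^{d_l})^{n d_l + 1}},
\]
independent of $(C;\varepsilon)$. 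This matches the Hilbert-Poincar\'e series of the semi-universal inhomogeneous Gaudin algebra from Proposition~\ref{m2} and of $\ma(0,z_1,\dots,z_n)$, as expected from Proposition~\ref{pr:mathbfcalA}.

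Next I would translate ``constant Hilbert-Poincar\'e series'' into flatness in the sense of section~\ref{se:Family}. Let $r_k$ denote the common rank of the degree-$\le k$ part. The proposition gives, over each open $\CW_\tau$, a $\C[\CW_\tau]$-submodule $\mathbfcal A^{(k)}\subset \C[\CW_\tau]\otimes_{\C[\hbar]}(U_\hbar\fg\otimes U\fg^{\otimes n})^{(k)}$ generated by the degree-$\le k$ monomials in our generators; since the specializations at every closed point of $\CW_\tau$ have dimension $r_k$ (by the constancy of the Hilbert series), this submodule is locally free of rank $r_k$ with locally free quotient, by the flatness criterion recalled in section~\ref{se:Family}. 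These submodules glue on overlaps $\CW_\tau\cap\CW_{\tau'}$ because they agree on the dense open $\CF_n\subset\overline\CF_n$ and the ambient module is torsion-free over an integral base, giving a global locally free subsheaf $\mathbfcal A^{(k)}\subset\CO_{\overline\CF_n}\otimes_{\C[\hbar]}(U_\hbar\fg\otimes U\fg^{\otimes n})^{(k)}$.

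Finally, taking the union $\mathbfcal A=\bigcup_k\mathbfcal A^{(k)}$ yields the desired flat family of filtered subalgebras, and specializing at any $(C;\varepsilon)\in\overline\CF_n$ recovers $\mathbfcal A^\varepsilon(C)$ together with its filtration. The only subtlety to watch is that the gluing works, i.e.\ that the locally defined generators on distinct $\CW_\tau$'s produce the same subspaces on intersections; but this is guaranteed because the subalgebra $\mathbfcal A^\varepsilon(C)$ itself is independent of $\tau$ (as noted right after Proposition~\ref{pr:twofiltrations}, via Lemmas~\ref{lem:restriction-codim1-strata-1sttype} and \ref{lem:restriction-codim1-strata-2ndtype}), and density of $\CF_n$ in $\overline\CF_n$ forces the two locally free subsheaves to coincide wherever they are both defined. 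No step should present a genuine obstacle once Proposition~\ref{pr:twofiltrations} is in hand; the main point is just the bookkeeping that the number and degrees of generators depend only on $n$.
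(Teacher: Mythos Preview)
Your proposal is correct and follows the same approach as the paper, which simply states ``From the algebraic independence of the generators, we immediately conclude the following'' without further detail. You have spelled out explicitly the counting argument (via Lemma~\ref{le:biject}), the resulting Hilbert--Poincar\'e series, and the flatness/gluing bookkeeping that the paper leaves implicit; the gluing of the $\mathbfcal A$ over the charts $\CW_\tau$ is in fact already recorded in the paper just before Proposition~\ref{pr:twofiltrations}, so your caution there is well-placed but unnecessary.
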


\subsection{Step 4. The family $\mathbfcal{A}^\varepsilon_\chi(C)$ is flat}
Let $ \chi \in \fh^{reg}$.

In the previous section we construct the family of subalgebras 
$$\mathbfcal{A} \subset \C[\overline \CF_n] \otimes_{\C[\hbar]} (U_\hbar \fg \otimes U \fg^{\otimes n})^{\fg}$$  
Recall from section \ref{se:triginhom}, we have the homomorphism $$ \chi \circ \Rees \psi : (U_\hbar \fg \otimes U \fg^{\otimes n})^\fg \rightarrow \C[\hbar] \otimes U \fg^{\otimes n}$$  
Thus, we define 
$$
\mathbfcal{A}_\chi  := (\chi \circ \Rees \psi)( \mathbfcal{A}) \subset \C[\overline \CF_n] \otimes_{\C[\hbar]} \C[\hbar] \otimes  U \fg^{\otimes n}
$$
In particular for any $ (C; \varepsilon) \in \overline \CF_n$, we obtain a subalgebra $ \mathbfcal{A}_\chi^\varepsilon(C) \subset U \fg^{\otimes n}$.  

By construction, if $ \varepsilon \ne 0$, then $ \mathbfcal{A}_\chi^\varepsilon(C) $ is the (limit) trigonometric Gaudin algebra $ \ma_{\varepsilon^{-1} \chi}(C)$.

\begin{lem}
Fix a planar binary forest $ \tau$.  The set
$ \{ (\chi \circ \Rees \psi)(s_{l, \tau, v}^m) \} $ is algebraically independent in $ \C[\hbar] \otimes U \fg^{\otimes n}$.  They generate the subalgebra $ \mathbfcal{A}_\chi^\varepsilon(C)$ for any $ (C; \varepsilon) \in \CW_\tau$.
\end{lem}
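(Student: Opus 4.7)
The proof naturally splits into a generation step and an algebraic independence step. For generation, I would invoke the proposition preceding this lemma, which asserts that $\{s_{l,v,\tau}^m\} \cup \{\widetilde\Phi_l^{(0)}\}$ are free polynomial generators of $\mathbfcal A^\varepsilon(C)$, together with Lemma~\ref{le:thetarho} which gives $(\chi \circ \Rees \psi)(\widetilde\Phi_l^{(0)}) = \Phi_l(\chi + \hbar\rho) \in \C[\hbar]$. Since these $r$ generators are sent to scalars in $\C[\hbar]$, the image $\mathbfcal A_\chi^\varepsilon(C) = (\chi \circ \Rees \psi)(\mathbfcal A^\varepsilon(C))$ is generated by the remaining $n(p+r)$ elements $(\chi \circ \Rees \psi)(s_{l,v,\tau}^m)$.

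For algebraic independence, the cleanest route is to first handle the generic case by specialization. Over the dense open locus $\CW_\tau \cap \CF_n$ with $\varepsilon \ne 0$, Theorem~\ref{th:familyCFn} identifies $\mathbfcal A_\chi^\varepsilon(C)$ with a trigonometric Gaudin subalgebra $\ma^{trig}_{\varepsilon^{-1}\chi}(1-\varepsilon z_1, \dots, 1-\varepsilon z_n)$, which by Theorem~\ref{size}(2) is a polynomial algebra on exactly $n(p+r)$ generators. Since we have $n(p+r)$ generating elements (matching the count: $\tau$ has $n$ non-leaf vertices, and for each $l$ there are $d_l$ values of $m$, so $n \sum_l d_l = n(p+r)$), they must be algebraically independent at each such generic point.

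To extend to all of $\CW_\tau$, I would use Lemma~\ref{le:psiinjective}: a hypothetical polynomial relation $P\bigl((\chi \circ \Rees \psi)(s_{l,v,\tau}^m)\bigr) = 0$ would produce an element $P(s_{l,v,\tau}^m) \in \mathbfcal A^\varepsilon(C) \cap \ker(\chi \circ \Rees \psi)$, and by Lemma~\ref{le:psiinjective} (using $\chi \in \fh^{reg}$) this kernel is the ideal of $(U_\hbar\fg \otimes U\fg^{\otimes n})^\fg$ generated by the central elements $\widetilde\Phi_l^{(0)} - \Phi_l(\chi + \hbar\rho)$. Inside the polynomial ring $\mathbfcal A^\varepsilon(C)$, where the $\widetilde\Phi_l^{(0)}$ are algebraically independent from the $s_{l,v,\tau}^m$, any polynomial in the $s_{l,v,\tau}^m$ alone that lies in the ideal generated by $\{\widetilde\Phi_l^{(0)} - \Phi_l(\chi + \hbar\rho)\}$ must vanish, yielding $P=0$. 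The hard part will be justifying that $\mathbfcal A^\varepsilon(C) \cap \ker(\chi \circ \Rees \psi)$ agrees with the ideal of $\mathbfcal A^\varepsilon(C)$ (rather than just with the larger ideal in the ambient invariants) generated by $\{\widetilde\Phi_l^{(0)} - \Phi_l(\chi + \hbar\rho)\}$; this should follow either from Knop's freeness of $(U_\hbar\fg \otimes U\fg^{\otimes n})^\fg$ over its center $Z(U_\hbar\fg)$, or alternatively by an upper-semicontinuity argument using the flatness of $\mathbfcal A$ over $\overline{\CF}_n$ established in Step~3, which propagates the generic algebraic independence to every point of $\CW_\tau$.
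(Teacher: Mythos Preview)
Your approach is correct and follows the same route as the paper. Both prove generation by observing that the $\widetilde\Phi_l^{(0)}$ are sent to scalars (Lemma~\ref{le:thetarho}), and both prove algebraic independence by factoring $\chi\circ\Rees\psi$ through $(U_\hbar\fg/Z_\hbar(\chi)\otimes U\fg^{\otimes n})^\fg$ and invoking the injectivity of the second arrow from Lemma~\ref{le:psiinjective}. The paper is more direct than your plan: it skips the detour through generic points and Theorem~\ref{size}, and for your ``hard part'' simply notes that since $\{s_{l,v,\tau}^m\}\cup\{\widetilde\Phi_l^{(0)}\}$ are algebraically independent with the latter freely generating $(U_\hbar\fg)^\fg$, the base-change description $U_\hbar\fg/Z_\hbar(\chi)=U_\hbar\fg\otimes_{Z(U_\hbar\fg)}\C[\hbar]$ shows the $s$'s stay independent in the quotient. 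Your semicontinuity alternative would not work (the dimension of the image $\mathbfcal A_\chi^\varepsilon(C)^{(p)}$ can only drop at special points, which is the wrong inequality); your Knop-type freeness suggestion is closer to what underlies the paper's base-change remark.
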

\begin{proof}
By Lemma \ref{le:psiinjective}, the homomorphism $\chi \circ \Rees \psi : (U_\hbar \fg \otimes U\fg^{\otimes n})^\fg\to \C[\hbar] \otimes U\fg^{\otimes n}$ factors as follows: 
$$
(U_\hbar \fg \otimes U \fg^{\otimes n})^\fg \rightarrow (U_\hbar \fg/ Z_\hbar(\chi) \otimes U \fg^{\otimes n})^\fg \rightarrow \C[\hbar] \otimes U \fg^{\otimes n}
    $$ 
Since $ \{ s_{l, \tau, v}^m \} \cup \{ \widetilde \Phi_l^{(0)} \} $ are algebraically independent in $ (U_\hbar \fg \otimes U\fg^{\otimes n})^\fg $ and the latter set freely generates $ (U_\hbar \fg)^\fg $, we see that $ \{  s_{l, \tau, v}^m \}$ remain algebraically independent in $ (U_\hbar \fg/ Z_\hbar(\chi) \otimes U \fg^{\otimes n})^\fg $ (here we use that $ U_\hbar \fg / Z_\hbar(\chi) = U_\hbar \fg \otimes_{Z(U_\hbar \fg)} \C[\hbar]$). 

On the other hand, by Lemma \ref{le:psiinjective}, the homomorphism $(U_\hbar \fg/ Z_\hbar(\chi) \otimes U \fg^{\otimes n})^\fg \rightarrow \C[\hbar] \otimes U \fg^{\otimes n}$ is injective, so thus we conclude the algebraic independence in $ \C[\hbar] \otimes U \fg^{\otimes n} $.

The last assertion follows immediately.

\end{proof}

Define the filtration on $\mathbfcal{A}^\varepsilon_\chi(C)$ by either of the following procedures (whose equivalence follows from Proposition \ref{pr:twofiltrations}).
\begin{enumerate}
    \item Regarding $\mathbfcal A^\varepsilon_\chi(C) $ as a polynomial ring in the generators $ \{ (\chi \circ \Rees \psi)(s_{l,v,\tau}^m) \} $ (for some $ \tau $ such that $(C; \varepsilon) \in \CW_\tau$), and then setting $$ \deg (\chi \circ \Rees \psi)(s_{l,v,\tau}^m) = d_l $$
    \item Defining $\mathbfcal A_\chi^\varepsilon(C)^{(p)} = (\chi \circ \Rees \psi)(\mathbfcal A^\varepsilon(C)^{(p)})$
\end{enumerate}

From the algebraic independence of the generators, we immediately conclude the following which completes the proof of Theorem \ref{th:algCFn}. 
\begin{corol}
 $\mathbfcal{A}^\varepsilon_\chi(C)$ has constant Hilbert-Poincar\'e series with respect to this filtration.  Thus this forms a flat family of subalgebras parametrized by $ \overline \CF_n$.
\end{corol}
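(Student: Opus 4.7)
The proof will be very short, because essentially all the work has been done in the preceding lemma and in the analogous step for the un-reduced family $\mathbfcal{A}$. My plan is as follows.

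First, fix any $(C;\varepsilon) \in \overline{\CF}_n$ and choose a planar binary forest $\tau$ with $(C;\varepsilon) \in \CW_\tau$. By the lemma immediately preceding the corollary, the collection $\{(\chi \circ \Rees\psi)(s_{l,v,\tau}^m)\}$, indexed by $l \in \{1,\dots,r\}$, non-leaf vertices $v$ of $\tau$, and $m \in \{1,\dots,d_l\}$, is algebraically independent in $U\fg^{\otimes n}$ and generates $\mathbfcal{A}^\varepsilon_\chi(C)$. Hence $\mathbfcal{A}^\varepsilon_\chi(C)$ is a polynomial algebra in these generators, and the filtration defined in (1) of the paragraph preceding the corollary makes each generator homogeneous of degree $d_l$. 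Counting: a planar binary forest with $n$ leaves has exactly $n$ non-leaf vertices (by Lemma \ref{le:biject}, which bijects non-leaf vertices with leaves), so for each $l$ there are exactly $n d_l$ generators of degree $d_l$. Consequently the Hilbert-Poincar\'e series equals
$$
\prod_{l=1}^r \frac{1}{(1-t^{d_l})^{n d_l}},
$$
which is independent of $(C;\varepsilon)$ and agrees with the series of a generic trigonometric Gaudin subalgebra computed in Theorem \ref{size}(4). This proves constancy of the Hilbert-Poincar\'e series.

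Next, to upgrade the pointwise statement to flatness of the family, I will work on the open affine cover $\{\CW_\tau\}$ of $\overline{\CF}_n$. Over $\CW_\tau$, the functions $(\chi \circ \Rees\psi)(s_{l,v,\tau}^m) \in \C[\CW_\tau]\otimes U\fg^{\otimes n}$ are regular (because $s_{l,v,\tau}^m$ are regular on $\CW_\tau$ and $\chi \circ \Rees \psi$ is $\C[\hbar]$-linear), they lie in $\mathbfcal{A}_\chi$, and at every closed point of $\CW_\tau$ they specialize to an algebraically independent set of generators. Therefore on $\CW_\tau$ the sheaf $\mathbfcal{A}_\chi$ is the polynomial $\CO(\CW_\tau)$-subalgebra in these generators, and each filtered piece $\mathbfcal{A}_\chi^{(k)}$ is a free $\CO(\CW_\tau)$-module of rank equal to the coefficient of $t^k$ in the series above. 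The two descriptions of the filtration given before the corollary — via generator degrees, or as the image of the PBW filtration from $\mathbfcal{A}^\varepsilon(C)$ — agree by the same algebraic-independence argument used in Proposition \ref{pr:twofiltrations} (since $\chi \circ \Rees\psi$ is filtration-preserving and the $\widetilde{\Phi}_l^{(0)}$ go to scalars). This intrinsic definition of the filtration guarantees that the local pictures on different $\CW_\tau$ agree on overlaps, so the $\mathbfcal{A}_\chi^{(k)}$ glue to locally free coherent sheaves on $\overline{\CF}_n$ of the claimed rank. Thus $\mathbfcal{A}_\chi$ is a flat family of filtered subalgebras in the sense of Section \ref{se:Family}.

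The only point that required genuine input was the algebraic independence in the preceding lemma (which in turn relied on regularity of $\chi$ via Lemma \ref{le:psiinjective}); given that, the flatness and computation of the Hilbert-Poincar\'e series are purely formal consequences of the polynomial-ring structure and of the local description on each $\CW_\tau$. I do not foresee any further obstacle: the faithfulness of the parameterization $\overline{\CF}_n \hookrightarrow \Gr_{\BA^1}(2n,\C[\hbar]\otimes U\fg^{\otimes n})$ needed to complete Theorem \ref{th:algCFn} is already supplied by the closed embedding on quadratic parts in Corollary \ref{co:quad}.
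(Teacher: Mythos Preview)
Your proposal is correct and follows exactly the approach the paper intends: the corollary is stated as an immediate consequence of the algebraic independence in the preceding lemma, and you have spelled out precisely those formal consequences (polynomial-ring structure, generator count via Lemma~\ref{le:biject}, local freeness on each $\CW_\tau$, and gluing). Your computation of the explicit Hilbert--Poincar\'e series $\prod_{l=1}^r (1-t^{d_l})^{-n d_l}$ and its match with Theorem~\ref{size}(4) is a nice sanity check that the paper leaves implicit.
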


Moreover, we also complete the proof of Theorem \ref{compinhom}.
\begin{corol} \label{co:compinhom}
For any $ C \in \overline F_n$, the subalgebra $ \mathbfcal A^0_\chi(C) $ coincides with the subalgebra $\ma_\chi(C) $ constructed in section \ref{secinhom}.  Thus $ \ma_\chi(C) $ forms a flat family of subalgebras faithfully parametrized by $ \overline F_n$.
\end{corol}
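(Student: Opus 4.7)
The two assertions are of different natures. The faithful flatness of $\{\ma_\chi(C)\}_{C\in\overline F_n}$ will follow formally from Theorem~\ref{th:algCFn} and Corollary~\ref{co:quad}, once we know the identification $\mathbfcal A^0_\chi(C)=\ma_\chi(C)$. Indeed, restricting the family $\mathbfcal A_\chi$ over $\overline\CF_n$ to the fiber $\overline\CF_n(0)\cong\overline F_n$ produces a flat family of subalgebras faithfully parametrized by $\overline F_n$; faithfulness is inherited since the quadratic-part map $\overline\CF_n\hookrightarrow \Gr_{\BA^1}(2n,\C[\hbar]\otimes U\fg^{\otimes n})$ restricts to a closed embedding on the fiber over $\varepsilon=0$. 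Thus the substance of the corollary is to match, point by point, the limit subalgebra $\mathbfcal A^0_\chi(C)$ coming from degeneration of trigonometric algebras with the subalgebra $\ma_\chi(C)$ built recursively in Section~\ref{secinhom}.

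The plan is to prove this identification by induction on the codimension of the stratum of $\overline F_n$ containing $C$. The base case $C\in F_n$ is precisely Theorem~\ref{th:familyCFn} together with Proposition~\ref{pr:psichi}. For the inductive step I will use the two families of codimension~$1$ walls in $\overline F_n$, read off the fiber $\varepsilon=0$ of the codimension~$1$ strata of $\overline\CF_n$ described before Lemmas~\ref{lem:restriction-codim1-strata-1sttype} and~\ref{lem:restriction-codim1-strata-2ndtype}. A wall of the first type ($\nu_{ji}=0$ with $i\in I$, $j\in J$) at $\varepsilon=0$ corresponds, geometrically, to splitting a cactus flower curve into two cactus flower curves sharing the distinguished point, i.e.\ to the product decomposition $\overline F_I\times\overline F_J$. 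On this stratum, Lemma~\ref{lem:restriction-codim1-strata-1sttype} (specialized to $\varepsilon=0$) gives
\[
\mathbfcal A^0_\chi(C)=j_I^{n}(\mathbfcal A^0_\chi(C_I))\cdot j_J^{n}(\mathbfcal A^0_\chi(C_J)),
\]
which matches the multi-petal clause $\ma_\chi(C)=\bigotimes_k\ma_\chi(C_k)$ of the Section~\ref{secinhom} definition.

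A wall of the second type ($\delta_{ij}=0$ for $i,j\in I$) corresponds to attaching a (possibly reducible) stable rational curve $C_I\in\overline M_{I+1}$ at a single point $w=z_I$ of a cactus flower curve $C_{J\cup\{I\}}\in\overline F_{J\cup\{I\}}$. Lemma~\ref{lem:restriction-codim1-strata-2ndtype} at $\varepsilon=0$ identifies
\[
\mathbfcal A^0_\chi(C)=j_I^{n}(\ma(C_I))\otimes_{\Delta^{\mathcal B}(ZU\fg)}\Delta^{\mathcal B}(\mathbfcal A^0_\chi(C_{J\cup\{I\}})),
\]
which is precisely the recursive single-petal clause of the definition in Section~\ref{secinhom}, with $\ma_\chi(w_1,\dots,w_m)$ embedded diagonally along the partition $\mathcal B$ and tensored over the centers with the (limit) homogeneous Gaudin subalgebras from the attached subcurves. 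Applying this argument inductively on codimension, and combining the two types of walls, transforms the recursive Section~\ref{secinhom} presentation of $\ma_\chi(C)$ into the presentation of $\mathbfcal A^0_\chi(C)$ coming from the generators $\{(\chi\circ\Rees\psi)(s^m_{l,v,\tau})\}$ at a forest $\tau$ with $C\in\CW_\tau$, yielding the desired equality.

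The main technical obstacle is bookkeeping: one must check that at each step the Knop-type factorization used in Proposition~\ref{pr:twofiltrations} is compatible with the pushforward through $\chi\circ\Rees\psi$, so that no generators are lost or collapsed when restricting to the stratum. For the first-type walls this is immediate because $\psi$ commutes with $j^n_I$ on the appropriate invariant subalgebras; for the second-type walls, the diagonal embedding $\Delta^{\mathcal B}$ on the $\{0\}\cup I$ factors forces the generators of the $C_I$-piece to lie in $(U\fg^{\otimes I})^\fg$, which after Hamiltonian reduction remain unchanged, while those of the $C_{J\cup\{I\}}$-piece go through $\psi$ nontrivially and produce the shifted (limit) inhomogeneous algebra. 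Once this compatibility is verified, the induction closes, and combining the identification with the restricted flat-family structure from Theorem~\ref{th:algCFn} gives both assertions of the corollary.
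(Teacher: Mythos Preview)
Your proposal is correct and takes essentially the same approach as the paper: specialize the families $\mathbfcal A$ and $\mathbfcal A_\chi$ to $\varepsilon=0$ and match the recursive structure coming from Lemmas~\ref{lem:restriction-codim1-strata-1sttype} and~\ref{lem:restriction-codim1-strata-2ndtype} against the recursive definition of $\ma_\chi(C)$ in Section~\ref{secinhom}, with flatness and faithfulness inherited from Theorem~\ref{th:algCFn} and Corollary~\ref{co:quad}. You have simply written out the induction and the identification of the two types of codimension-one walls with the ``multi-petal'' and ``single-petal'' clauses more explicitly than the paper's one-line proof does.
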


\begin{proof}
We simply set $ \varepsilon = 0 $ and compare the iterative construction from Lemmas \ref{lem:restriction-codim1-strata-1sttype} and \ref{lem:restriction-codim1-strata-2ndtype}  to the construction in section \ref{secinhom}.
\end{proof}

\begin{rem}
    
    Note that the above filtration on $\mathbfcal{A}^0_\chi(C)$ is \emph{not} the one induced from the PBW filtration on $U\fg^{\otimes n}$.  For example, the $PBW$-quadratic part consists of two types of generators, namely, the KZ and dynamical Gaudin Hamiltonians; however with respect to the image of the PBW filtration on the universal inhomogeneous Gaudin subalgebra, only KZ Gaudin Hamiltonians are quadratic, and the dynamical ones have bigger degrees.  If we use the PBW filtration on the algebras $ \ma_\chi(z_1, \dots, z_n) $, we will get a different compactified parameter space. 
\end{rem}

\section{Spectra of Gaudin models.}
\subsection{Simple spectrum property.}
In this section, we will recall and prove certain theorems about the simplicity of the spectrum for Gaudin models. To prove the simplicity of the spectrum we use the following method:
\begin{itemize}
    \item Show the existence of a cyclic vector for the Gaudin algebra;
    \item Show that the Gaudin algebra acts by normal operators with respect to a positive-definite Hermitian form.
\end{itemize}
The second property implies semisimplicity of action, and combined with the first property, we deduce the simplicity of the spectrum.
Let $V(\ul) = V(\lambda_1) \otimes \ldots \otimes V(\lambda_n)$ be the tensor product of irreducible finite-dimensional representations of $\fg$ with highest weights $\lambda_1, \ldots, \lambda_n$. 
Let $V(\ul)^{\fn_+} \subset V(\ul)$ be the subspace of singular vectors. For any weight $\mu $, let $V(\ul)_{\mu} \subset V(\ul)$ be the subspace of vectors of weight $\mu$.
Concerning the cyclic vector, we have the following results:
\begin{itemize}
\item For the homogeneous Gaudin model, $\ma(\uz)$ acts with a cyclic vector on $V(\ul)^{\fn_+}$.
\item For the inhomogeneous Gaudin model, if $\chi \in \fg^{reg}$, then $\ma_{\chi}(\uz)$ acts with a cyclic vector on $V(\ul)$.
\item For the trigonometric Gaudin model, if $\theta \in \fh^{reg}$ is generic, then  $\ma_{\theta}^{trig}(\uz)$ acts with a cyclic vector on $V(\ul)$.
\end{itemize}
Moreover, it is possible to generalize the statements above to the limit subalgebras, see below.
Concerning the semisimplicity of the action, we have the following results (here, $\fh^{split}$ is the Cartan subalgebra of the split real form of $\fg$, and $\fh^{comp}=i\fh^{split}$ is the compact real form):
\begin{itemize}
\item For the homogeneous Gaudin model, if $z_i \in \br$ then $\ma(z_1, \ldots, z_n)$ acts by normal operators on $V(\ul)^{\fn_+}$.
\item For the inhomogeneous Gaudin model, if $\chi \in \fh^{split}$ and all $z_i \in \br$, then $\ma_{\chi}(z_1, \ldots, z_n)$ acts by normal operators on $V(\ul)$.
\item For the trigonometric Gaudin model we have two cases where we know semisimplicity.  \begin{enumerate} 
\item If  $\theta \in (\fh^{reg})^{split}$ is generic and $z_i \in \br$, then $\ma_{\theta}^{trig}(\uz)$ acts by normal operators on $V(\ul)$ (with respect to non-standard form, see below). 
\item If  $\theta \in (\fh^{reg})^{comp}$ is generic, $\theta + \frac{1}{2} \mu \in \fh^{comp}$ and $\overline{z_i} = z_i^{-1}$ for all $i$, then   $\ma_{\theta}^{trig}(\uz)$ acts by normal operators on $V(\ul)_{\mu}$ with respect to standard Hermitian form on $V(\ul)$.
\end{enumerate}

\end{itemize}

Again it is possible to generalize these statements to the appropriate limit subalgebras (corresponding to a certain real form).
\subsection{Spectrum of homogeneous Gaudin model.}
Consider the standard Hermitian form on $V(\lambda)$ uniquely determined (up to a constant factor) by the property $f_i^+=e_i,\ h_i^+=h_i$, where $+$ denotes the Hermitian conjugation. 
Let $V(\ul) = V(\lambda_1) \otimes \ldots \otimes V(\lambda_n)$ be the tensor product of irreducible finite-dimensional representations of $\fg$ with highest weights $\lambda_1, \ldots, \lambda_n$. We consider $V(\ul)$ with the standard Hermitian form which is the product of standard Hermitian forms on each component. Let $V(\ul)^{\fn_+} \subset V(\ul)$ be the subspace of singular vectors.
\begin{thm}\label{th:old-cyclic-homogeneous} (\cite{r4})
For any $C \in \overline M_{n+1}$, the subalgebra $\ma(C)$ acts with a cyclic vector on $V(\ul)^{\fn_+}$. In particular, it acts with a cyclic vector on any Hom-space $\Hom_\fg(V(\nu),V(\ul))$.
\end{thm}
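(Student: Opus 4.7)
The plan is to proceed by induction on $n$, using the operadic description of limit Gaudin subalgebras (Proposition \ref{pr:limitGaudin}) to reduce boundary points to lower cases. The base case $n = 1$ is trivial: $\overline M_2$ is a single point, $\ma(C) = Z(U\fg)$, and $V(\lambda_1)^{\fn_+} = \mathbb{C}\cdot v_{\lambda_1}$ is one-dimensional with $v_{\lambda_1}$ a (tautologically) cyclic vector.

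For the inductive step with $n \geq 2$, first consider $C \in M_{n+1}$. Here cyclicity of $\ma(C)$ on $V(\ul)^{\fn_+}$ follows from the Feigin--Frenkel--Reshetikhin theory: the joint spectrum on each weight space of $V(\ul)^{\fn_+}$ is parametrized by monodromy-free $\mathfrak{g}^\vee$-opers on $\mathbb{P}^1$ with regular singularities at $z_1, \dots, z_n, \infty$ of prescribed residues, and for generic $\ul$ the count of such opers matches the weight-space dimension, yielding simple spectrum and hence a cyclic vector; the non-generic case follows by a deformation/flatness argument, since having a cyclic vector is an open condition in the flat family $\ma(\uz)$ over $M_{n+1}$.

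For boundary $C \in \overline M_{n+1} \setminus M_{n+1}$, Proposition \ref{pr:limitGaudin} gives
$$\ma(C) = \Delta^\CB\bigl(\ma(w_1, \dots, w_m)\bigr) \otimes_Z \bigotimes_{k=1}^m \ma(C_k),$$
with $m \geq 2$ and each $C_k$ carrying strictly fewer than $n$ marked points. Using iterated fusion along the partition $\CB = (B_1, \dots, B_m)$, one decomposes
$$V(\ul)^{\fn_+} \cong \bigoplus_{\underline{\nu}} \bigl(V(\nu_1) \otimes \cdots \otimes V(\nu_m)\bigr)^{\fn_+} \otimes \bigotimes_{k=1}^m \Hom_\fg\bigl(V(\nu_k), V(\ul_{B_k})\bigr),$$
where $V(\ul_{B_k}) = \bigotimes_{i \in B_k} V(\lambda_i)$. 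Under this decomposition, each $\ma(C_k)$ preserves the summands and acts on the $k$-th Hom-factor (identified with a $\nu_k$-isotypic component of $V(\ul_{B_k})^{\fn_+}$), while $\Delta^\CB(\ma(w_1, \dots, w_m))$ acts on the leftmost factor as the $m$-point Gaudin algebra acts on its singular vectors. By induction, cyclic vectors exist for each inner factor in each $\underline \nu$-summand, and the plan is to assemble a cyclic vector for $\ma(C)$ as a sum over $\underline \nu$ of tensor products of these inductive cyclic vectors.

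The main obstacle is separating the $\underline \nu$-summands, for which I would invoke the central subalgebra $Z = \Delta^\CB(Z(U\fg)^{\otimes m}) \subset \ma(C)$: its elements $\Delta^{B_k}(z)$ act on the $\underline \nu$-summand by the central character $\chi_{\nu_k}(z)$, and since the Harish-Chandra isomorphism separates dominant integral weights, distinct tuples $\underline \nu$ yield distinct joint central characters. Combined with the inductive cyclicity within each summand, this separation lets one glue the inductive data into a cyclic vector for the full algebra $\ma(C)$, completing the induction. The final clause of the theorem, cyclicity on each $\Hom_\fg(V(\nu), V(\ul))$, then follows immediately since $\ma(C)$ is $\fg$-invariant and preserves isotypic components.
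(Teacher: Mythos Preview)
The paper does not give its own proof of this statement; it is simply cited from \cite{r4}. So there is nothing to compare against in the paper itself. Your outline does follow the general strategy of \cite{r4}: the inductive step on the boundary via the operadic description of $\ma(C)$, the decomposition of $V(\ul)^{\fn_+}$ along the partition $\CB$, and the separation of the $\underline\nu$-summands by central characters are all correct and essentially match the argument there.

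The gap is in your treatment of the open stratum $M_{n+1}$, which is the actual substance of \cite{r4} and cannot be dispatched in a sentence. You claim simple spectrum for ``generic $\ul$'' and then invoke openness of cyclicity ``in the flat family $\ma(\uz)$ over $M_{n+1}$''. But this conflates two different parameters: the flatness is in $\uz$, while $\ul$ is a discrete tuple of dominant weights, so there is no ``generic $\ul$'' deformation argument available. And even in $\uz$, openness of the cyclic-vector locus gives you cyclicity only on a Zariski-open subset of $M_{n+1}$, not at every point; yet you need it at \emph{every} $\uz$ to seed the induction (each boundary stratum contains smaller open pieces that must already be covered). The proof in \cite{r4} is considerably deeper here: it identifies the image of $\ma(\uz)$ in $\End(V(\ul)^{\fn_+}_\mu)$ with the coordinate ring of a scheme of monodromy-free $G^\vee$-opers, shows this scheme is a zero-dimensional Gorenstein complete intersection of the correct length, and uses the Frobenius-algebra structure to produce a cyclic vector directly, for every $\uz$ and every $\ul$. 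Without that input, your induction never gets off the ground on the open part.
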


\begin{thm}[\cite{ffry}]
\label{l1}
Suppose that $C \in \overline M_{n+1}^{split}$. Then the subalgebra $\ma(C)$ acts by normal operators on $V(\ul)$.
\end{thm}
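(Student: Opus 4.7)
The plan is to combine the cyclic vector from Theorem~\ref{th:old-cyclic-homogeneous} with the fact that $\ma(C)$ acts by normal (in fact Hermitian) operators on $V(\ul)$ whenever $C$ belongs to the split real form. Once both facts are in hand, the elementary observation that a commutative algebra of normal operators with a cyclic vector on a finite-dimensional Hermitian space must act with simple spectrum closes the argument: by the spectral theorem for commuting normal operators the space decomposes as an orthogonal sum of joint eigenspaces, and the cyclic vector forces each eigenspace to be one-dimensional.

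First, I would handle the open stratum $M_{n+1}^{split}\subset \overline M_{n+1}^{split}$, which consists of real configurations $(z_1,\dots,z_n)\in\br^n\setminus\Delta$ modulo real affine transformations. The key input is Proposition~\ref{pr:A-invariance-sigma}, which states that the universal Gaudin algebra $\ma_{\fg}\subset U(t^{-1}\fg[t^{-1}])$ is stable under the antilinear Cartan involution $\sigma$. When $z_1,\dots,z_n\in\br$, the evaluation map $\ev_{z_1,\dots,z_n}\circ\Delta$ intertwines $\sigma$ on $U(t^{-1}\fg[t^{-1}])$ with the Cartan involution on $U\fg^{\otimes n}$. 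Since the standard Hermitian form on $V(\ul)$ satisfies $x^+=\sigma(x)$ for $x\in\fg$, the subalgebra $\ma(\uz)=(\ev_{z_1,\dots,z_n}\circ\Delta)(\ma_{\fg})$ is mapped to itself by Hermitian conjugation. Thus every element of $\ma(\uz)$ is a $\sigma$-stable element of $U\fg^{\otimes n}$, and its action on $V(\ul)$ is Hermitian (and in particular normal).

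Next, I would extend this to the boundary points of $\overline M_{n+1}^{split}$ by induction on the number of components of $C$, using the operadic description of limit subalgebras from Proposition~\ref{pr:limitGaudin}. For $C\in\overline M_{n+1}^{split}$, all gluing parameters $w_k$ and all marked points of each sub-curve $C_k$ can be chosen real, so that
$$\ma(C)=\Delta^{\CB}(\ma(w_1,\dots,w_m))\otimes_Z\bigotimes_{k=1}^m\ma(C_k)$$
is a product of diagonal embeddings of lower-rank Gaudin subalgebras, each of which is of the same split real form type. The diagonal embeddings $\Delta^{\CB}\colon U\fg^{\otimes m}\to U\fg^{\otimes n}$ manifestly intertwine the Cartan involution componentwise, so they preserve the class of $\sigma$-stable elements, and hence the class of operators which are Hermitian with respect to the standard form on $V(\ul)$. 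By induction, each factor acts by Hermitian operators, and so does their product; this proves the normality statement for every $C\in\overline M_{n+1}^{split}$.

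The main obstacle I anticipate is making the inductive structure genuinely clean. It is essential that the splitting of $C$ into $C_\infty$ and the attached curves $C_k$ can be realized by a real choice of coordinates (so that the sub-configurations of $C_\infty$ and of the $C_k$ all lie in the corresponding split real loci); this requires identifying each component of $C$ with $\CP^1$ via a real M\"obius transformation, which is possible precisely because $C\in \overline M_{n+1}^{split}$. Once this real normalization is in place, the rest is routine: Hermiticity propagates through diagonal embeddings and tensor products, cyclicity is supplied by Theorem~\ref{th:old-cyclic-homogeneous}, and the simple spectrum conclusion follows from the spectral theorem applied to the commuting family of Hermitian operators on $V(\ul)$.
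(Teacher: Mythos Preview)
Your overall strategy is sound and arrives at the right conclusion, but it differs from the paper's approach in one notable way, and contains one genuine slip.

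\textbf{On the approach.} The paper does not reprove this theorem (it is quoted from \cite{r4}), but its preferred argument is visible in the proof of Proposition~\ref{t2} just below: rather than inducting over the operadic description of limit subalgebras, one simply observes that the condition ``$\ma(C)$ acts by normal operators on $V(\ul)$'' is a Zariski-closed condition on $C\in\overline M_{n+1}$. Since $M_{n+1}^{split}$ is dense in $\overline M_{n+1}^{split}$, it suffices to check normality for nondegenerate real $(z_1,\dots,z_n)$, which is exactly your open-stratum step via Proposition~\ref{pr:A-invariance-sigma}. Your inductive route through Proposition~\ref{pr:limitGaudin} works too, and has the virtue of being completely explicit about what the limit algebras look like; but the closed-condition argument is shorter and avoids the bookkeeping of real coordinate choices on each component that you flag as the ``main obstacle''.

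\textbf{On the slip.} You write that ``every element of $\ma(\uz)$ is a $\sigma$-stable element of $U\fg^{\otimes n}$, and its action on $V(\ul)$ is Hermitian''. This is not correct: what Proposition~\ref{pr:A-invariance-sigma} gives is that the \emph{algebra} $\ma(\uz)$ is stable under $\sigma$, i.e.\ $\sigma(\ma(\uz))=\ma(\uz)$, not that each element is $\sigma$-fixed. Individual generators (such as $H_i$ for complex coefficients) need not be self-adjoint. The correct deduction is that for any $x\in\ma(\uz)$ one has $x^{+}=\sigma(x)\in\ma(\uz)$, and since $\ma(\uz)$ is commutative, $x$ commutes with $x^{+}$; hence $x$ acts \emph{normally}, not Hermitianly. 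This is exactly what the statement claims and what you need for the spectral theorem, so the argument survives once you replace ``Hermitian'' by ``normal'' throughout.
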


From the previous two theorems, it follows that the family $\ma(C), C \in \overline M_{n+1}^{split}$ has a simple spectrum on any $V(\ul)^{\fn_+}$.

We will need a generalization of Theorem \ref{l1}.
Let $\theta \in \fh^{split}$.
Consider the Hermitian version of the Shapovalov form on $M(\theta)$, i.e. the Hermitian form such that for the highest vector $v_{\theta} \in M(\theta)$ we have
$$(v_{\theta}, v_{\theta}) = 1$$
and, for any $v,w\in M(\theta)$, we have
$$ (f_i \cdot v, w) = (v, e_i \cdot w),\ \ (h_i \cdot v, v) = (v, h_i \cdot v).$$
Note that the Shapovalov form for finite-dimensional representation coincides with the standard Hermitian form.
Define the Hermitian form on $M(\theta) \otimes V(\lambda_1) \otimes \ldots \otimes V(\lambda_{n-1})$ as the product of the above Shapovalov form on $M(\theta)$ and standard Hermitian forms on $V(\lambda_i), i = 1, \ldots, n-1$. 
\begin{prop}
The restriction of the above Hermitian form to $(M(\theta) \otimes V(\lambda_1) \otimes \ldots \otimes V(\lambda_{n-1}))^{\fn_+}$ for big enough $\theta \in \fh^{split}$ is positive definite.
\end{prop}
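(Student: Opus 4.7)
The strategy is to reduce positive definiteness of the restricted form on $(M(\theta) \otimes W)^{\fn_+}$, where $W := V(\lambda_1) \otimes \cdots \otimes V(\lambda_{n-1})$, to positive definiteness of the Shapovalov form on the finitely many weight spaces of $M(\theta)$ that actually contribute, and then to establish the latter for $\theta \in \fh^{split}$ sufficiently dominant.

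First I would note that the Hermitian form on $M(\theta) \otimes W$ is block-diagonal with respect to the weight decomposition, and every singular vector lies in $\bigoplus_{\beta} M(\theta)_{\theta - \beta} \otimes W_{\mu + \beta}$ for some weight $\mu$ of $W$, with $\beta \ge 0$ satisfying $W_{\mu + \beta} \ne 0$. Since $W$ is finite-dimensional, only finitely many $\beta$ contribute, bounded in height by a constant $H$ depending only on $\lambda_1, \dots, \lambda_{n-1}$. Because the standard Hermitian forms on the $V(\lambda_i)$ are already positive definite, it is enough to show that, for $\theta$ sufficiently dominant in $\fh^{split}$, the Shapovalov form on $M(\theta)_{\theta - \beta}$ is positive definite for every $\beta$ with $|\beta| \le H$. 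The tensor product form on $\bigoplus_{|\beta| \le H} M(\theta)_{\theta - \beta} \otimes W$ is then positive definite, and positive definiteness is preserved by restriction to the subspace of singular vectors.

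To prove this Shapovalov positivity I would analyse the Gram matrix in a PBW basis $\{F_{\mathbf{i}} v_\theta\}$ of $M(\theta)_{\theta - \beta}$. Each entry $(F_{\mathbf{i}} v_\theta, F_{\mathbf{j}} v_\theta) = \langle v_\theta \mid \sigma(F_{\mathbf{i}}) F_{\mathbf{j}} v_\theta \rangle$ is a polynomial in the coordinates $\theta(h_i)$. Specialising to $\theta = t\rho$ and letting $t \to \infty$, a straightforward PBW-straightening argument (moving each $e_k$ past the $f_k$'s, with each commutator $[e_k, f_k] = h_k$ producing one factor of $t$) shows that the diagonal entry has leading term $c_{\mathbf{i}}\,t^{|\mathbf{i}|}$ with $c_{\mathbf{i}} > 0$, while the off-diagonal entries grow strictly more slowly in $t$. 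After rescaling the basis by $t^{-|\mathbf{i}|/2}$, the Gram matrix converges to a positive diagonal matrix as $t \to \infty$, so is positive definite for $t$ sufficiently large.

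The main obstacle is controlling the off-diagonal terms in this leading-order analysis with full rigour, since in principle an off-diagonal pairing could tie the diagonal one. The cleanest way to bypass this is to combine the one-point calculation above with the Shapovalov determinant formula of Kac--Kazhdan: for $\theta$ lying in the open region
\[
R_H := \{\theta \in \fh^{split} \ :\ (\theta + \rho, \alpha) > k(\alpha,\alpha)/2 \text{ for all } \alpha > 0 \text{ and } 1 \le k \le H\},
\]
the Shapovalov form is non-degenerate on every $M(\theta)_{\theta - \beta}$ with $|\beta| \le H$. Since $R_H$ is an open convex cone in $\fh^{split}$, hence connected, and the signature of a non-degenerate Hermitian form is locally constant on the non-degenerate locus, checking positive definiteness at a single test point (for example $\theta = t\rho$ with $t$ large enough that both the leading-order argument applies and $\theta \in R_H$) is enough to conclude positive definiteness throughout $R_H$. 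This is the precise meaning of ``$\theta$ big enough in $\fh^{split}$'' and completes the proof.
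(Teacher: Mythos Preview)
Your global strategy agrees with the paper's: both confine attention to the finitely many Shapovalov weight spaces $M(\theta)_{\theta-\beta}$ that actually meet the singular subspace, use the Shapovalov determinant to exhibit a connected open region of $\fh^{split}$ on which these forms are non-degenerate, and then conclude by checking positivity at a single point together with constancy of signature.

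The gap is in your one-point check. Your ``bypass'' via the determinant formula still needs positive-definiteness at some test point $\theta=t\rho$, and for that you fall back on the asymptotic claim that off-diagonal PBW entries grow strictly slower than diagonal ones. As you yourself flag, this is exactly the unproven step; it is in fact true in the \emph{ordered root-vector} PBW basis (the top-degree contribution to $(F_{\mathbf i}v_\theta,F_{\mathbf j}v_\theta)$ comes only from Cartan commutators $[e_\alpha,f_\alpha]=h_\alpha$, hence vanishes unless $\mathbf i$ and $\mathbf j$ have the same multiset of roots, forcing $\mathbf i=\mathbf j$), but this argument has to be supplied. Without it the rescaled Gram matrix might only converge to something positive \emph{semi}-definite, and that together with non-degeneracy along the ray does not force positive-definiteness at any finite $t$.

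The paper sidesteps the asymptotics altogether. For $\theta$ integral dominant with each $(\theta,\alpha_i)$ exceeding the height bound $H$, the quotient $M(\theta)\twoheadrightarrow V(\theta)$ is an isomorphism on every weight space $M(\theta)_{\theta-\beta}$ with $\operatorname{ht}(\beta)\le H$ (the kernel is generated by the $f_i^{(\theta,\alpha_i^\vee)+1}v_\theta$, all of height exceeding $H$). On $V(\theta)$ the Shapovalov form is by construction the standard positive-definite Hermitian form, so positivity at such $\theta$ is immediate. Since these integral points lie in your region $R_H$, the signature argument finishes as you wrote. Replacing your Gram-matrix analysis with this finite-dimensional comparison closes the gap.
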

\begin{proof}
For any $\theta$, the tensor product 
$M(\theta) \otimes V(\lambda_1) \otimes \ldots \otimes V(\lambda_{n-1})$ is is an object from the category $\mathcal{O}$ which has a standard filtration with the subquotients being the Verma modules with the highest weights $\theta+\mu$, for all weights $\mu$ of $V(\lambda_1) \otimes \ldots V(\lambda_{n-1})$. It follows that any singular vector belongs to a finite sum of weight spaces. On each weight space, the above form degenerates for $\theta$ from a finite union of hyperplanes parallel to the root hyperplanes \cite{shap}. On the other hand, for integral dominant $\theta$ such that for any positive root $\alpha$, the value of $(\theta,\alpha)$ is big enough, the above Hermitian form coincides with the standard one on the weight space in the tensor product of finite dimensional $\fg$-modules, so is positive definite. This means that for any $\theta\in\fh^{split}$ such that for any positive root $\alpha$, the value of $(\theta,\alpha)$ is big enough, this form is positive definite as well.  
\end{proof}
Using the proposition above we can now state the following

\begin{prop}
\label{t2}
For any $C \in \overline M_{n+1}^{split}$ and big dominant $\theta \in \fh^{split}$, the
subalgebra $\ma(C)$ acts by normal operators on $(M(\theta) \otimes V(\lambda_1) \otimes \ldots \otimes V(\lambda_{n-1}))^{\fn_+}$.
\end{prop}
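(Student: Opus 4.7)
The plan is to adapt the proof of Proposition~\ref{l1} to the setting where one tensor factor is the Verma module $M(\theta)$ instead of an irreducible finite-dimensional representation. The key input that makes the argument go through is the preceding proposition, which guarantees positive-definiteness of the Hermitian form on the $\fn_+$-invariants for sufficiently dominant $\theta \in \fh^{split}$.

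First, I would extend the $\bc$-antilinear anti-involution $\sigma$ on $U\fg$ (with $\sigma(e_i) = f_i$, $\sigma(f_i) = e_i$, $\sigma(h_i) = h_i$) componentwise to $U\fg^{\otimes n}$. The defining relations of the Shapovalov form on $M(\theta)$ (for $\theta \in \fh^{split}$) and of the standard Hermitian form on each $V(\lambda_i)$ show that for any $x \in \fg$ acting on a single tensor factor its Hermitian adjoint equals the action of $\sigma(x)$; hence $\sigma$ computes the Hermitian adjoint on the whole tensor product $M(\theta) \otimes V(\lambda_1) \otimes \cdots \otimes V(\lambda_{n-1})$.

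Next, I would verify that $\ma(C)$ is $\sigma$-stable for every $C \in \overline M_{n+1}^{split}$. For $C \in M_{n+1}^{split}$, this follows from Proposition~\ref{pr:A-invariance-sigma} together with the observation that the evaluation homomorphism intertwines $\sigma$ with $\sigma^{\otimes n}$ when the parameters are real: indeed $\ev_z(x[-m]) = z^{-m} x$ has real coefficients for $z \in \br$. For boundary points of $\overline M_{n+1}^{split}$, one can either argue by continuity ($\sigma$-stability being a closed condition on subspaces of fixed dimension in each filtered piece of $U\fg^{\otimes n}$), or use the recursive description of limit subalgebras in Proposition~\ref{pr:limitGaudin}: each building block is a (diagonally embedded) Gaudin subalgebra attached to a real point of a smaller moduli space, and $\sigma$-stability is preserved by diagonal embeddings and tensor products over $\sigma$-stable subalgebras of the center.

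Once $\sigma$-stability is established, the conclusion is immediate: for any $A \in \ma(C)$, its Hermitian adjoint $A^+ = \sigma(A)$ again lies in $\ma(C)$, and since $\ma(C)$ is commutative we have $[A, A^+] = 0$, so $A$ acts as a normal operator. Because $\ma(C) \subset (U\fg^{\otimes n})^\fg$ commutes with the diagonal $\fg$-action, the subspace $(M(\theta) \otimes V(\lambda_1) \otimes \cdots \otimes V(\lambda_{n-1}))^{\fn_+}$ is preserved, and by the preceding proposition the Hermitian form restricted to it is positive definite for $\theta$ sufficiently dominant, so we obtain normality with respect to a genuine inner product. The main subtlety to handle is the $\sigma$-stability at the boundary of $\overline M_{n+1}^{split}$, but both routes above (closedness of the $\sigma$-stability condition in the appropriate Grassmannian, or direct unwinding of the operadic formula for $\ma(C)$) dispatch this point cleanly.
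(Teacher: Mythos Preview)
Your proof is correct and follows essentially the same strategy as the paper: use Proposition~\ref{pr:A-invariance-sigma} together with reality of the evaluation parameters to obtain $\sigma$-stability of $\ma(C)$, invoke the preceding proposition for positive-definiteness of the form, and conclude normality from commutativity. The only cosmetic difference is in handling the boundary: the paper observes that normality of the operators on the fixed finite-dimensional space $(M(\theta)\otimes V(\ul))^{\fn_+}$ is a closed condition on $C$ and reduces immediately to $C\in M_{n+1}^{split}$, whereas you argue that $\sigma$-stability of the subalgebra itself is closed (or alternatively unwind the operadic formula); both routes are valid and equivalent here.
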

\begin{proof}
Note that the condition on operators to be normal is a closed condition hence it is enough to check the statement for $\ma(0,z_1, \ldots, z_n)$ with $z_1, \ldots, z_n \in \br$. From the proposition above we have a positive defined Hermitian form on $(M(\theta) \otimes V(\lambda_1) \otimes \ldots \otimes V(\lambda_{n-1}))^{\fn_+}$. 
From Proposition~\ref{pr:A-invariance-sigma}, we know that the universal Gaudin subalgebra $\ma_\fg$ is invariant under the anti-involution $f_i[k] \mapsto e_i[k]$ and $h_i[k] \to h_i[k]$, so for any $x \in \ma(C)$ we have $x^{+} \in \ma(C)$, i.e. all the operators coming from $\ma(C)$ are normal with respect to the Hermitian form.
\end{proof}

\subsection{Spectra of inhomogeneous Gaudin model.}
For the inhomogeneous Gaudin model, the existence of cyclic vector and conditions for simplicity of spectra are also known.

\begin{thm}[\cite{ffry}] \label{th:old-cyclic}
Suppose that $\chi \in \fg^{reg}$. Then the subalgebra $\ma_{\chi}(\uz)$ acts with cyclic vector on $V(\ul)$ for any $\uz=(z_1,\ldots,z_n)$ with $z_i\ne z_j$.
\end{thm}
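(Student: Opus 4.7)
The plan is to reduce to the $n=1$ case via the degeneration to the flower point in $\overline F_n$, and then to transport cyclicity from the flower point to all of $F_n$.

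First I would establish the base case $n = 1$. Here $\ma_\chi$ is the classical shift of argument subalgebra, and for regular $\chi \in \fh^{\mathrm{reg}}$ it is known (Vinberg/Feigin--Frenkel--Toledano Laredo/Rybnikov) that $\ma_\chi$ acts on every irreducible finite-dimensional $V(\lambda)$ with simple spectrum. Simple spectrum on a finite-dimensional module immediately gives a cyclic vector: take the sum of representatives from each joint eigenline.

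Next, I would consider the ``maximal flower point'' $C_{\mathrm{flower}} \in \overline F_n$ described in Example \ref{eg:flower}, where by the operadic description
\[
\ma_\chi(C_{\mathrm{flower}}) = \ma_\chi^{\otimes n} \subset U\fg^{\otimes n}.
\]
Combined with the base case, each factor $\ma_\chi$ acts cyclically on $V(\lambda_i)$ with cyclic vector $w_i$, so the tensor product acts cyclically on $V(\ul)$ with cyclic vector $w_1\otimes\cdots\otimes w_n$. Thus cyclicity holds at the flower point.

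The third step is to transport cyclicity back to $F_n$. By Theorem \ref{compinhom}, the subalgebras $\ma_\chi(C)$ form a flat family over $\overline F_n$, with fixed Hilbert--Poincar\'e series and hence fixed PBW-filtered pieces. The condition that the cyclic span $\ma_\chi(C)\cdot v$ fills $V(\ul)$ can be expressed as the non-vanishing of a suitable maximal minor built from the matrix entries of a finite set of filtered generators acting on $v$; this is a Zariski open condition on $(C,v) \in \overline F_n \times V(\ul)$. Since it is satisfied at $(C_{\mathrm{flower}}, w_1\otimes\cdots\otimes w_n)$, it holds on a dense open set and projects to a dense open subset $U \subset \overline F_n$ containing $C_{\mathrm{flower}}$ over which cyclicity holds.

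The main obstacle is extending cyclicity from the open set $U$ to \emph{every} $\uz \in F_n$ with distinct coordinates, since cyclicity is neither open nor closed in general. To handle this, I would argue directly: the product of highest weight vectors $v_+ := v_{\lambda_1}\otimes\cdots\otimes v_{\lambda_n}$ is in fact cyclic for every such $\uz$. The strategy is to use the $\fh$-weight decomposition of $V(\ul)$ and to produce, within $\ma_\chi(\uz)\cdot v_+$, vectors of every weight below the sum of highest weights. Regularity of $\chi \in \fh^{\mathrm{reg}}$ ensures that the dynamical Hamiltonians $G_i = \sum_{\alpha>0}\tfrac{\alpha(h_i)}{\alpha(\chi)}\Delta^n(x_\alpha x^\alpha) + \sum_j z_j h_i^{(j)}$ contain the full diagonal action of $\Delta^n(x_\alpha x^\alpha)$ with invertible coefficients, so by inverting a Vandermonde-type matrix in the $z_j$ (using $z_i \ne z_j$) one can extract each $x_\alpha^{(j)} x^{\alpha,(j)}$ from the $G_i$ modulo lower-filtered terms. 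Together with the quadratic Hamiltonians $H_{i,\chi}$ and their commutators, this should give enough operators to sweep $v_+$ across all weight spaces; within each weight space one completes the argument by an associated-graded calculation in $S(\fg)^{\otimes n}$, where the regularity of $\chi$ makes the classical analogue of the statement equivalent to surjectivity of an explicit linear map, checkable by Lemma~\ref{lem:fft}-style arguments. The technical heart lies precisely in this last calculation, which is where the regularity of $\chi$ and the distinctness of the $z_i$ enter in an essential way.
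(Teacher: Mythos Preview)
The paper does not prove this theorem; it is quoted from \cite{ffry}, where the argument goes through the identification of the spectrum of $\ma_\chi(\uz)$ on $V(\ul)$ with a scheme of monodromy-free $G^\vee$-opers on $\mathbb{P}^1$ with prescribed singularities. One shows that this scheme has length equal to $\dim V(\ul)$, which forces the image of $\ma_\chi(\uz)$ in $\End V(\ul)$ to have maximal dimension and hence to admit a cyclic vector. Your degeneration-to-the-flower-point argument is a natural idea, and steps 1--3 are fine: they yield cyclicity on a Zariski-dense open subset of $F_n$. The problem is that this is strictly weaker than the theorem, and your step 4 does not close the gap.

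The concrete error in step 4 is the choice of candidate cyclic vector. For $\chi\in\fh^{\mathrm{reg}}$ one has $\fz_\fg(\chi)=\fh$, so $\ma_\chi(\uz)\subset (U\fg^{\otimes n})^{\fh}$ and in particular $\ma_\chi(\uz)$ preserves every weight space of $V(\ul)$ for the diagonal $\fh$-action. The vector $v_+=v_{\lambda_1}\otimes\cdots\otimes v_{\lambda_n}$ has weight $\lambda_1+\cdots+\lambda_n$, so $\ma_\chi(\uz)\cdot v_+$ is contained in the one-dimensional highest weight space and $v_+$ is never cyclic (unless $V(\ul)$ is one-dimensional). The same obstruction kills the informal plan to ``sweep $v_+$ across all weight spaces'': none of the $H_{i,\chi}$ or $G_i$ change weight, and the ``commutators'' you invoke are zero because the algebra is commutative. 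Thus step 4 collapses entirely, and what remains is only generic cyclicity, not the statement for every $\uz$ with distinct coordinates. Getting from ``generic'' to ``all'' is exactly where the hard input (opers, or an equivalent algebro-geometric length count) enters in \cite{ffry}.
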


\begin{thm}[\cite{ffry}] \label{th:old-simple-spec}
Suppose that $\chi \in \fh^{split}$ is regular and $z_i \in \br$. Then subalgebras $\ma_{\chi}(\uz)$ act by normal operators and hence with simple spectrum on $V(\ul)$.
\end{thm}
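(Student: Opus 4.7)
The plan is to apply the two-step strategy explicitly laid out at the opening of this section: combine a cyclic vector result with a normality result, and then invoke standard spectral theory to promote these two ingredients into simple spectrum.

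Step 1 is essentially free of charge. Since $(\fh^{split})^{reg}\subset \fg^{reg}$, Theorem \ref{th:old-cyclic} already supplies a cyclic vector for $\ma_\chi(\uz)$ acting on any $V(\ul)$, with no reality hypothesis needed on the $z_i$.

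For Step 2, I would equip $V(\ul)$ with the standard (product) Hermitian form, which is positive-definite, and let $+$ denote the corresponding antilinear anti-involution of $U\fg^{\otimes n}$, characterized on generators by $e_\alpha^+=f_\alpha$ and $h^+=h$ for $h\in\fh$. Since $\ma_\chi(\uz)$ is commutative, it suffices to prove the stability $\ma_\chi(\uz)^+=\ma_\chi(\uz)$: once this is known, every $a\in\ma_\chi(\uz)$ commutes with $a^+\in\ma_\chi(\uz)$ and so is a normal operator. To establish $+$-stability, I would transport the antilinear automorphism $\sigma$ of $U(t^{-1}\fg[t^{-1}])$ from Proposition \ref{pr:A-invariance-sigma} through the defining composition
$$U(t^{-1}\fg[t^{-1}])^\fg \xrightarrow{\ev_{\infty,z_1,\ldots,z_n}\circ\Delta} S(\fg)\otimes U\fg^{\otimes n} \xrightarrow{\chi\otimes\Id} U\fg^{\otimes n}.$$
The reality conditions enter at exactly two spots: $\overline{z_i}=z_i$ makes each $\ev_{z_i}$ intertwine $\sigma$ with $+$ on its image (the factors $z_i^{-m}$ are real), and the hypothesis $\chi\in\fh^{split}$ is precisely the condition that $\chi\circ\sigma$ agrees with complex conjugation on the $\fh$-component (indeed, $\fh^{split}$ is the real locus in $\fh$ on which $\sigma$ acts trivially). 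Combined with the universal $\sigma$-invariance of $\mathcal{A}_\fg$, this yields the desired $+$-stability of $\ma_\chi(\uz)$, and hence the normality of every operator in the image.

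Step 3 is then standard: a commuting family of normal operators on a finite-dimensional Hilbert space is simultaneously unitarily diagonalizable, so the action of $\ma_\chi(\uz)$ on $V(\ul)$ is semisimple; together with the cyclic vector from Step 1 this forces every joint eigenspace to be one-dimensional, i.e. simple spectrum. I expect the only genuinely delicate point to be the careful bookkeeping between $\sigma$ (an antilinear extension of the Cartan involution, acting on an auxiliary loop algebra) and $+$ (Hermitian conjugation on $U\fg^{\otimes n}$), especially on the intermediate semisimple factor $S(\fg)$ where the interaction between $\sigma$ and the character $\chi$ must be tracked; but this amounts to a direct check on generators rather than a serious obstacle.
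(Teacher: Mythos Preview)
Your approach is correct and is precisely the standard argument that the reference [FFRy] uses; note, however, that the paper itself does not prove this theorem at all --- it simply cites [FFRy] and moves on. So there is no ``paper's own proof'' to compare against, only your reconstruction of the cited result.

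One small point worth tightening: you conflate the antilinear \emph{automorphism} $\sigma$ of Proposition~\ref{pr:A-invariance-sigma} (the Cartan involution, with $h_i\mapsto -h_i$) and the antilinear \emph{anti-automorphism} $+$ (Hermitian conjugation, with $h_i\mapsto h_i$). Your parenthetical claim that ``$\fh^{split}$ is the real locus in $\fh$ on which $\sigma$ acts trivially'' is false for the Cartan involution --- that fixed locus is $\fh^{comp}$. What you actually need is invariance of $\ma_\fg$ under the anti-involution $e_i[k]\leftrightarrow f_i[k]$, $h_i[k]\mapsto h_i[k]$, which the paper itself invokes (see the proof of Proposition~\ref{t2}) as a consequence of Proposition~\ref{pr:A-invariance-sigma} via composition with the antipode on the commutative subalgebra $\ma_\fg$. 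With that correction, your Step~2 goes through exactly as you describe: real $z_i$ make the evaluation maps intertwine this anti-involution with $+$, and $\chi\in\fh^{split}$ makes the specialization $\chi\otimes\Id$ commute with complex conjugation on the $S(\fg)$ factor. You correctly anticipated that this bookkeeping is the only delicate spot.
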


One can extend the above theorems to the compactified parameter space $\overline F_n$. Consider the natural real form of the space $\overline F_n$ (where all functions $ \nu_{ij}, \mu_{ijk} $ take on real values; equivalently, its points are real cactus flower curves). We denote this space by $\overline F_{n}^{split}$.

\begin{thm}\label{th:cyclic-ss-inhomogeneous} For any $\chi\in\fh^{reg}$ and for any $C\in \overline{F}_n$, the subalgebra $\ma_\chi(C)$ acts on $V(\ul)$ with a cyclic vector.
If moreover $\chi\in \fh^{split}$, then $\ma_{\chi}(C)$ acts with simple spectrum on $V(\ul)$.
\end{thm}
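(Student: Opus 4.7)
The plan is to establish the cyclic vector for every $C\in \overline F_n$ first, and then derive simple spectrum under the reality hypothesis $\chi\in\fh^{split}$ (and $C\in \overline F_n(\br)$, consistent with Theorem~\ref{thmA}(3)) by combining cyclicity with a normal-operator argument.

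For the cyclic vector I will exploit the recursive description of $\ma_\chi(C)$ from Section~\ref{secinhom}. In the multi-petal case $C = C_1\cup\cdots\cup C_m$, the identity $\ma_\chi(C)=\bigotimes_k\ma_\chi(C_k)$ acting on $V(\ul)=\bigotimes_k V(\ul^{S_k})$ reduces cyclicity to the single-petal case via the standard observation that a tensor of cyclic vectors is cyclic for the tensor-product algebra. For a single petal $C\in \widetilde M_{n+1}$ one has
$$
\ma_\chi(C)=\Delta^\CB(\ma_\chi(w_1,\dots,w_m))\otimes_Z\bigotimes_k\ma(C_k),
$$
acting on $V(\ul)=\bigotimes_k V(\ul^{B_k})$. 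I will decompose each $V(\ul^{B_k})$ isotypically as a $\fg$-module via $\Delta^{B_k}$, writing $V(\ul^{B_k})\cong\bigoplus_{\mu_k}V(\mu_k)\otimes W_{\mu_k}$ with $W_{\mu_k}=\Hom_\fg(V(\mu_k),V(\ul^{B_k}))$. Then $V(\ul)\cong\bigoplus_{\underline{\mu}}V(\underline{\mu})\otimes W_{\underline{\mu}}$ with $V(\underline{\mu})=\bigotimes_k V(\mu_k)$ and $W_{\underline{\mu}}=\bigotimes_k W_{\mu_k}$, and on each summand $\Delta^\CB(\ma_\chi(w_1,\dots,w_m))$ acts only on the $V(\underline{\mu})$-factor (cyclically by Theorem~\ref{th:old-cyclic}, since the $w_k$ are distinct and $\chi\in\fh^{reg}\subset\fg^{reg}$), while $\bigotimes_k\ma(C_k)$ acts only on the $W_{\underline{\mu}}$-factor (cyclically on each $W_{\mu_k}$ by Theorem~\ref{th:old-cyclic-homogeneous}, then on the tensor product by the same tensor-of-cyclic-vectors observation). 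Hence $\ma_\chi(C)$ is cyclic on each isotypic component $V(\underline{\mu})\otimes W_{\underline{\mu}}$.

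To glue these into a single cyclic vector on $V(\ul)$, I will use that the leading coefficients of the principal parts at the $w_k$ lie in $\ma_\chi(w_1,\dots,w_m)$, so the central subalgebra $\Delta^\CB(Z(U\fg)^{\otimes m})$ is contained in $\ma_\chi(C)$; its elements act on $V(\underline{\mu})\otimes W_{\underline{\mu}}$ as the scalar given by the central characters of $V(\mu_1),\dots,V(\mu_m)$. Since distinct dominant tuples $\underline{\mu}$ yield distinct central characters, interpolating polynomials in these generators produce the spectral projections onto each isotypic component, and the sum $v=\sum_{\underline{\mu}}v_{\underline{\mu}}$ of component-wise cyclic vectors is then cyclic for $\ma_\chi(C)$ on all of $V(\ul)$.

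For the simple spectrum, assuming $\chi\in\fh^{split}$ and $C\in \overline F_n(\br)$, I will establish Hermitian invariance of $\ma_\chi(C)$ with respect to the standard positive-definite form on $V(\ul)$. The universal Gaudin algebra $\ma_\fg$ is invariant under the Cartan anti-involution $\sigma$ (Proposition~\ref{pr:A-invariance-sigma}), the evaluation homomorphisms at real points are $\sigma$-equivariant, and the shift by $\chi\in\fh^{split}$ preserves $\sigma$-invariance. Hence each generic $\ma_\chi(\uz)$ for real distinct $\uz$ is $\sigma$-stable, and since $\sigma$-stability is a closed condition in the Grassmannian, it passes to the limit $\ma_\chi(C)$ for every $C\in \overline F_n(\br)$ via the flat family of Corollary~\ref{co:compinhom}. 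Then $\ma_\chi(C)$ acts by normal (hence simultaneously diagonalizable) operators on $V(\ul)$, and diagonalizable plus cyclic is equivalent to simple spectrum. The main obstacle I anticipate is the gluing step across isotypic components: it relies on pinning down which central elements survive in the limit algebra and on invoking Theorem~\ref{th:old-cyclic-homogeneous} to obtain cyclicity of $\ma(C_k)$ on multiplicity spaces for arbitrarily degenerate $C_k$, where a direct Bethe-ansatz approach would be unavailable.
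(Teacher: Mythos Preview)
Your proposal is correct and follows essentially the same approach as the paper: reduce multi-petal to single-petal via tensor products of cyclic vectors, decompose the single-petal case isotypically and apply Theorems~\ref{th:old-cyclic} and~\ref{th:old-cyclic-homogeneous} to the two tensor factors, then use normality plus a closed-condition argument for the simple spectrum. Your gluing step via central characters is actually more explicit than the paper's, which simply asserts that the sum of the component-wise cyclic vectors is cyclic; your observation that $\Delta^\CB(Z(U\fg)^{\otimes m})\subset\ma_\chi(C)$ separates the isotypic summands is exactly what makes that assertion work, so the ``main obstacle'' you anticipated is not one.
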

\begin{proof} 
The existence of a cyclic vector follows from the explicit description of the subalgebras $\ma_\chi(C)$ corresponding to the boundary points of $\overline{F}_n$, as given in section \ref{secinhom}. Suppose first that the underlying flower curve has only one petal. Then the subalgebra $\ma_\chi(C)$ is the product of $\Delta^{\mathcal{B}}\ma_\chi(\underline{u})$ (for some partition $\mathcal{B}$ of $\{1,\ldots,n\}$ into $m$ parts $B_1,\ldots, B_m$ and $\underline{u}=(u_1,\ldots,u_m)$ being a collection of pairwise distinct points on the line) and $j_{B_i}^n\ma(C_i)$ (for $i=1,\ldots,m$). So it preserves the decomposition $V(\ul)=\bigoplus W_\ul^{\underline{\nu}}\otimes V(\underline{\nu})$ with respect to $\Delta^{\mathcal{B}} U\fg^{\otimes m}$. Then according to Theorem~\ref{th:old-cyclic}, $\Delta^{\mathcal{B}}\ma_\chi(\underline{u})$ acts on each of the $V(\underline{\nu})$ with some cyclic vector $\xi_{\underline{\nu}}$. Next, the multiplicity space $W_\ul^{\underline{\nu}}$ is the tensor product of $\Hom_\fg(V({\nu_i}),\bigotimes\limits_{j\in B_i}V(\lambda_j))$ that is acted on by $j_{B_i}\ma(C_i)$ with some cyclic vector $\eta_{\nu_i}$ according to Theorem~\ref{th:old-cyclic-homogeneous}. So the sum of all the vectors $\eta_{\nu_1}\otimes\ldots\otimes\eta_{\nu_m}\otimes\xi_{\underline{\nu}}$ over all $\underline{\nu}$ is a cyclic vector for $\ma_\chi(C)$.

Now suppose $C$ has several petals and $C_1,\ldots,C_k$ are the cactus curves growing from each of them. Let $B_i$ be the set of indices of all the marked points on $C_i$. Then $\ma_\chi(C)$ is the product of $j_{B_i}^n \ma_\chi(C_i)$ acting each on its own $\bigotimes\limits_{j\in B_i} V(\lambda_j)$. So the cyclic vector is just the tensor product of cyclic vectors for these actions of inhomogeneous Gaudin algebras corresponding to one-petal curves, and the cyclicity is proved.

Finally, the property of $\ma_\chi(C)$ acting by normal operators on $V(\ul)$ is a closed condition on $C$ satisfied on $F_n^{split}$, so it holds on $\overline{F}_n^{split}$ as well. 
\end{proof}

\subsection{Cyclicity property for trigonometric Gaudin model.} Fix a regular element $\chi\in\fh^{reg}$ and the finite-dimensional tensor product representation $V(\ul)$.
\begin{prop}
\label{cyc}
For all except finitely many $\theta \in \mathbb{C}^\times \cdot \chi $, all the subalgebras $\ma_{\theta}^{trig}(C), C \in \overline M_{n+2}$ act with a cyclic vector on $V(\ul)$. 
\end{prop}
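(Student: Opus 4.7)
The plan is to deduce this from Theorem~\ref{th:cyclic-ss-inhomogeneous} via the degeneration of trigonometric Gaudin subalgebras to inhomogeneous ones (Theorem~\ref{th:familyCFn}), combined with a properness argument. The key object is the flat family of subalgebras $\mathbfcal A_\chi \subset \CO(\overline \CF_n) \otimes U\fg^{\otimes n}$ from Theorem~\ref{th:algCFn}: its fibers over $\varepsilon \in \BA^1$ interpolate between trigonometric Gaudin algebras $\ma^{trig}_{\varepsilon^{-1}\chi}(C')$, for $C' \in \overline \CF_n(\varepsilon) \cong \overline M_{n+2}$ (when $\varepsilon \ne 0$), and inhomogeneous Gaudin algebras $\ma_\chi(C)$, for $C \in \overline \CF_n(0) \cong \overline F_n$ (when $\varepsilon = 0$), and $\overline \CF_n$ is proper over $\BA^1$.

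The main step is to show that the locus
\[ Z := \{ s \in \overline \CF_n : \mathbfcal A_\chi(s) \text{ does not act on $V(\ul)$ with a cyclic vector} \} \]
is closed. Fix $k$ large enough so that the image of the $k$-th filtered piece $\mathbfcal A_\chi^{(k)}(s) \to \End V(\ul)$ stabilizes, uniformly in $s$; such $k$ exists because we have free generators in bounded PBW-degree and $V(\ul)$ is finite-dimensional. Then cyclicity of $v \in V(\ul)$ is equivalent to the surjectivity of the evaluation map $\mathbfcal A_\chi^{(k)}(s) \to V(\ul)$, $a \mapsto a\cdot v$, an open condition on the pair $(s,v)$ detected locally by the non-vanishing of a maximal minor in some trivialization of the bundle $\mathbfcal A_\chi^{(k)}$. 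Since this condition is scale-invariant in $v$, we descend to $\overline \CF_n \times \PP(V(\ul))$, where the non-cyclic locus is closed, and then project along $\PP(V(\ul))$ which is proper; the image in $\overline \CF_n$ is exactly $Z$, and is therefore closed.

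By Theorem~\ref{th:cyclic-ss-inhomogeneous}, every $C \in \overline F_n$ gives an inhomogeneous Gaudin subalgebra that acts cyclically on $V(\ul)$; thus $Z \cap \overline \CF_n(0) = \emptyset$. Since the structure map $\pi: \overline \CF_n \to \BA^1$ is proper, $\pi(Z) \subset \BA^1$ is closed and does not contain $0$, hence is a finite subset $\{\varepsilon_1, \ldots, \varepsilon_N\} \subset \BA^1 \setminus \{0\}$. For every $\varepsilon \in \BA^1 \setminus (\{0\} \cup \{\varepsilon_1, \ldots, \varepsilon_N\})$, the entire fiber $\overline \CF_n(\varepsilon) \cong \overline M_{n+2}$ consists of subalgebras acting cyclically, and these subalgebras are exactly the $\ma^{trig}_{\varepsilon^{-1}\chi}(C')$ with $C' \in \overline M_{n+2}$. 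Hence the proposition holds for $\theta \in \bc^\times \cdot \chi$ outside the finite set $\{\varepsilon_i^{-1}\chi\}$.

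The main obstacle is Step~2, the closedness of $Z$: one must be careful because $V(\ul)$ itself is not proper, which is why the passage through $\PP(V(\ul))$ (together with the scale-invariance of cyclicity) is essential. All other steps are direct consequences of the flat-family construction of Theorem~\ref{th:algCFn} and the previously established cyclicity for inhomogeneous Gaudin subalgebras.
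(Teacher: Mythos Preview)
Your overall strategy is exactly the paper's: pass to the family $\mathbfcal A_\chi$ over $\overline\CF_n$, show the non-cyclic locus $Z$ is closed, use that $Z$ misses the fibre over $0$ by Theorem~\ref{th:cyclic-ss-inhomogeneous}, and conclude by properness of $\overline\CF_n\to\BA^1$ that $\pi(Z)$ is finite. The paper simply asserts that $Z$ is Zariski closed; you supply an argument, which is commendable, but your argument as written contains a genuine slip.

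You form the non-cyclic locus $W^c\subset\overline\CF_n\times\PP(V(\ul))$ and project it to $\overline\CF_n$ using properness of $\PP(V(\ul))$, claiming the image is $Z$. But the image of $W^c$ is $\{s:\exists\,[v],\ v\text{ is not cyclic}\}$, which is typically all of $\overline\CF_n$ (whenever $\dim V(\ul)>1$ there are non-cyclic vectors even for algebras that do admit a cyclic vector). What you actually need is $Z=\{s:\forall\,[v],\ v\text{ is not cyclic}\}=\overline\CF_n\setminus\mathrm{pr}_1(W)$, where $W$ is the open cyclic locus. The correct argument is that $\mathrm{pr}_1:\overline\CF_n\times\PP(V(\ul))\to\overline\CF_n$ is smooth (hence open), so $\mathrm{pr}_1(W)$ is open and $Z$ is closed. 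Properness of the fibre plays no role here; openness of the projection does. With this one-line fix your proof is complete and matches the paper's.
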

\begin{proof}
Note that if $\theta=\varepsilon^{-1}\chi$ then $\ma_{\theta}^{trig}(C)=\mathbfcal{A}^\varepsilon_\chi(C)$, so it suffices to prove the statement for $\mathbfcal{A}^\varepsilon_\chi(C)$. Let $Y$ be the subset of $\overline{\mathcal{F}}_n$ consisting of such $(C; \varepsilon)$ that there is no cyclic vector in $V(\ul)$ for the corresponding subalgebra $\mathbfcal{A}^\varepsilon_\chi(C)$. This set is Zariski closed. The map
$$\varphi:\overline{\mathcal{F}}_n \to \ba^1,\ (C; \varepsilon)\mapsto \varepsilon$$
is proper, hence the image $\varphi(Y)$ is Zariski closed as well. According to Theorem~\ref{th:cyclic-ss-inhomogeneous}, $\mathbfcal{A}^0_\chi(C)=\ma_{\chi}(C)$ has a cyclic vector on any tensor product for all $C\in \overline{F}_n$, so it follows that $\varphi(Y)$ is not the whole $\mathbb{A}^1$. As a result, $\varphi(Y)$ is a finite set and the statement follows.
\end{proof}

\begin{corol}
    Once $\ma_\theta^{trig}(C)$ acts on $V(\ul)$ semisimply, it has a simple spectrum on $V(\ul)$.
\end{corol}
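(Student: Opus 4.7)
The plan is to reduce the corollary to the standard linear-algebra fact that a commutative algebra which acts semisimply on a finite-dimensional vector space with a cyclic vector must have simple joint spectrum, and then simply invoke Proposition \ref{cyc} to supply the cyclic vector.

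First I would record the linear-algebra lemma: let $A \subset \operatorname{End}(V)$ be a commutative subalgebra, with $V$ finite-dimensional, and suppose $A$ acts semisimply, so that $V = \bigoplus_\chi V_\chi$ decomposes into a direct sum of joint eigenspaces indexed by the distinct characters $\chi : A \to \C$. If $v = \sum_\chi v_\chi$ is a cyclic vector, then because $A$ acts on each $V_\chi$ through the character $\chi$, we have $A \cdot v_\chi = \C v_\chi$ for each $\chi$, hence $A \cdot v = \bigoplus_\chi \C v_\chi$. The cyclicity assumption $A \cdot v = V$ then forces $\dim V_\chi = 1$ for every $\chi$, i.e.\ the spectrum is simple.

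To apply this to $\mathcal{A}_\theta^{trig}(C)$ acting on $V(\underline{\lambda})$, I would invoke Proposition \ref{cyc}, which guarantees that for the values of $\theta \in \C^\times \cdot \chi$ under consideration (all but finitely many) and for every $C \in \overline{M}_{n+2}$, the subalgebra $\mathcal{A}_\theta^{trig}(C)$ admits a cyclic vector on $V(\underline{\lambda})$. Combined with the hypothesis that $\mathcal{A}_\theta^{trig}(C)$ acts semisimply on $V(\underline{\lambda})$, the lemma above yields simple spectrum.

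There is essentially no technical obstacle, since all of the nontrivial content (the construction of a cyclic vector as a degeneration limit of the inhomogeneous case) has been carried out in Proposition \ref{cyc} and Theorem \ref{th:cyclic-ss-inhomogeneous}. The only thing to verify is that the notion of ``cyclic vector'' used in Proposition \ref{cyc} is compatible with the one needed in the linear-algebra lemma, which is immediate: both simply mean that a single vector generates $V(\underline{\lambda})$ under the action of the algebra.
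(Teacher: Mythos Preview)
Your proposal is correct and matches the paper's intended argument: the corollary is stated without proof in the paper precisely because it is an immediate consequence of Proposition~\ref{cyc} via the elementary linear-algebra fact you spell out (semisimple action plus cyclic vector forces one-dimensional joint eigenspaces). There is nothing to add.
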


\begin{rem}
Our proof gives us some description of what kind of generic set we have:
for example, on each line inside $\fh^{reg}$ there are only finitely many possible values of $\theta$ for which there may be no cyclic vector. We expect that a cyclic vector in fact exists for any $\theta \in \fh, C \in \overline M_{n+2}$.
\end{rem}

\subsection{Split real form and simplicity of spectrum for the trigonometric Gaudin model.}

Fix a regular real element $\chi\in (\fh^{split})^{reg}$ and the finite-dimensional tensor product representation $V(\ul)$.

\begin{thm}\label{th:split-ss}
The family of trigonometric Gaudin subalgebras $\ma_{\theta}^{trig}(C)$ with $C \in \overline M_{ n+2}^{split}$ and big enough $(\theta >> 0)$ generic dominant $\theta \in \br\chi\subset (\fh^{split})^{reg}$ acts with simple spectrum on $V(\ul)$.
\end{thm}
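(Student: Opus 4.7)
The plan is to mimic the two-pronged strategy already used for the homogeneous case (Theorem~\ref{l1}) and the inhomogeneous case (Theorem~\ref{th:cyclic-ss-inhomogeneous}): combine a cyclic vector for $\ma_\theta^{trig}(C)$ on $V(\ul)$ with semisimplicity of the action by showing that its operators are normal with respect to a positive-definite Hermitian form. The first ingredient is essentially free: for generic $\theta\in\mathbb{R}\chi$, Proposition~\ref{cyc} already gives a cyclic vector for every $C\in \overline M_{n+2}$, so in particular for every $C\in \overline M_{n+2}^{split}$.

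For semisimplicity, the key is to transport the problem to the homogeneous Gaudin picture via the ``Verma tensor product'' realization of Section~\ref{4.3}. More precisely, by Theorem~\ref{th:actiontensorprod}, for generic $\theta$ and each weight $\mu$, the action of $\ma_\theta^{trig}(z_1,\ldots,z_n)$ on $V(\ul)_\mu$ is intertwined, via the isomorphism
\[
\pi_\theta: (M(\theta)\otimes V(\ul))^{\fn_+}_{\theta+\mu} \xrightarrow{\ \sim\ } V(\ul)_\mu,
\]
with the action of $\ma(0,z_1,\ldots,z_n)$ on the left-hand side. Since both families are parametrized by $\overline M_{n+2}$ and the equality of the two images in the endomorphism algebra is a closed condition, it extends from $M_{n+2}$ to all of $\overline M_{n+2}$; hence for $C\in \overline M_{n+2}^{split}$ the trigonometric algebra $\ma_\theta^{trig}(C)$ acts on $V(\ul)_\mu$ through the action of $\ma(C)$ on the singular weight space.

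Now I invoke Proposition~\ref{t2}, applied with the Verma factor playing the role of the $0$-th marked point: for $C\in \overline M_{n+2}^{split}$ and $\theta\in\fh^{split}$ sufficiently dominant, $\ma(C)$ acts by normal operators on $(M(\theta)\otimes V(\ul))^{\fn_+}$ with respect to the product of the Hermitian Shapovalov form on $M(\theta)$ and the standard forms on the $V(\lambda_i)$. For big dominant $\theta$ this form is positive definite on the singular vectors (Proposition preceding Theorem~\ref{t2}), and since the weight spaces are mutually orthogonal (as eigenspaces of the self-adjoint diagonal Cartan action), the restriction to each $(M(\theta)\otimes V(\ul))^{\fn_+}_{\theta+\mu}$ is still positive definite. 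A commuting family of normal operators on a finite-dimensional positive-definite Hermitian space is simultaneously diagonalizable, so $\ma_\theta^{trig}(C)$ acts semisimply on each $V(\ul)_\mu$, hence on $V(\ul)$.

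Combined with cyclicity, semisimplicity forces simple spectrum: a diagonalizable commutative algebra acting with a cyclic vector must split the representation into distinct joint eigenlines. The main obstacle is bookkeeping, namely making sure that (i)~the genericity window from Proposition~\ref{cyc} and the ``big dominant'' window from Proposition~\ref{t2} can be chosen simultaneously inside $\mathbb{R}\chi\cap(\fh^{split})^{reg}$ (they can, since the failure locus for cyclicity is a finite union of hyperplanes in $\mathbb{C}^\times\chi$, while the positivity and dominance conditions cut out an open half-line), and (ii)~the identification of Section~\ref{4.3} is valid at such a $\theta$, which it is because Proposition~\ref{thm1} only requires $\theta$ generic. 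Once these are assembled, the theorem follows.
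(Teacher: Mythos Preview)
Your proposal is correct and follows essentially the same approach as the paper: cyclicity from Proposition~\ref{cyc}, then semisimplicity by transporting the problem to the homogeneous Gaudin action on $(M(\theta)\otimes V(\ul))^{\fn_+}_{\theta+\mu}$ via Section~\ref{4.3} and invoking Proposition~\ref{t2} to get normality with respect to the (non-standard) Hermitian form coming from the Shapovalov form. Your closed-condition argument for extending the identification to the boundary is in fact unnecessary, since $\ma_\theta^{trig}(C)=\psi_\theta(\ma(C))$ by definition and the commutative diagram in the proof of Theorem~\ref{th:actiontensorprod} holds for arbitrary $x\in(U\fg^{\otimes n+1})^\fg$; and your ``finite union of hyperplanes'' should read ``finite set of points'' in the one-dimensional line $\mathbb{C}^\times\chi$, but these are cosmetic.
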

\begin{proof}
Indeed, by Proposition~\ref{cyc}, there is a cyclic vector in $V(\ul)$ for $\ma_\theta^{trig}(C)$ for all $C\in\overline{M}_{n+2}$ and generic $\theta$ in a linear span of any regular element in $\fh^{split}$, so it holds for all $C\in\overline{M}_{n+2}$ and sufficiently large $\theta \in \br\chi\subset (\fh^{split})^{reg}$. So we have to prove that these subalgebras act semisimply
.
Using Proposition~\ref{t2} we see that for big enough $\theta \in \fh^{split}$ the Gaudin subalgebra $\mathcal{A}(0,z_1,\ldots,z_n)$ with $z_i\in\br$ acts by normal operators with respect to the restriction of the standard Hermitian form on $M(\theta)\otimes V(\ul)$ to the space $(M(\theta)\otimes V(\ul))_{\theta+\mu}^{\fn_+}$. The identification $(M(\theta)\otimes V(\ul))^{\fn_+}_{\theta+\mu}\simeq V(\ul)_{\mu}$ gives a Hermitian form on $V(\ul)_{\mu}$ (note that this form is not the standard form on $V(\ul)$!) such that all the trigonometric Gaudin subalgebras $\ma_{\theta}^{trig}(C)$ with $C \in \overline M_{n+2}^{split}$ act by normal operators, hence semisimply. 

\end{proof}

\subsection{Compact real form and simplicity of spectrum for trigonometric Gaudin model.}

Recall the Cartan antilinear anti-involution $\sigma:\fg\to\fg, e_i\mapsto -f_i, f_i\mapsto -e_i, h_i\mapsto -h_i$. The compact real form $\fg^\sigma=\fg^{comp}$ of the Lie algebra $\fg$ is the fixed point set for this involution so that $\fh^{comp}=\fg^{comp}\cap \fh=i\fh^{split}$.

\begin{prop}
    Suppose that $z_i^{-1}=\overline{z_i}$ for any $i = 1, \ldots, n$ and $\theta \in \fh$ such that $\theta-\frac{1}{2}\mu\in \fh^{comp}$, the algebra $\ma_\theta^{trig}(z_1, \ldots, z_n)$ acts on $V(\ul)_{\mu}$ by normal operators with respect to the standard Hermitian form on $V(\ul)$.
\end{prop}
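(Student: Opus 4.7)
The approach mirrors the split-case argument of Proposition~\ref{t2}.  The plan is first to use Theorem~\ref{th:actiontensorprod} (combined with Proposition~\ref{thm1}) to translate the action of $\ma_\theta^{trig}(z_1,\dots,z_n)$ on $V(\ul)_\mu$ into the action of the homogeneous Gaudin subalgebra $\ma(0,z_1,\dots,z_n)\subset U\fg^{\otimes(n+1)}$ on $\Hom_\fg(M(\theta+\mu), M(\theta)\otimes V(\ul))$.  In the split case one equips this Hom space with a Hermitian form built from the Shapovalov form on $M(\theta)$; here $\theta$ is complex, so the Shapovalov form itself is not Hermitian and a substitute is required.

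Let $\theta^+$ denote the image of $\theta$ under the Cartan antilinear involution $+$ on $\fh$; the hypothesis $\theta-\tfrac12\mu\in\fh^{comp}$ translates precisely to $\theta+\theta^+=\mu$.  I will construct a sesquilinear pairing $H\colon M(\theta)\times M(\theta^+)\to\bc$ characterised by $H(v_\theta,v_{\theta^+})=1$ and $H(xv,w)=H(v,x^+ w)$ for $x\in\fg$, which exists and is non-degenerate for generic $\theta$.  Combined with the standard Hermitian forms on the factors $V(\lambda_i)$, it yields a pairing
$$
H'\colon (M(\theta)\otimes V(\ul))\times (M(\theta^+)\otimes V(\ul))\to\bc
$$
satisfying $H'(av,w)=H'(v,a^+ w)$ for $a\in U\fg^{\otimes(n+1)}$.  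Since $\theta+\theta^+=\mu$, weight considerations show that $H'$ restricts to a pairing between the singular vector spaces $(M(\theta)\otimes V(\ul))^{\fn_+}_{\theta+\mu}$ and $(M(\theta^+)\otimes V(\ul))^{\fn_+}_{\theta^++\mu}$; applying Proposition~\ref{thm1} on both sides descends $H'$ to a sesquilinear form on $V(\ul)_\mu$.

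To obtain normality with respect to this form I invoke Proposition~\ref{pr:A-invariance-sigma} (the $+$-invariance of $\ma_\fg$) together with the identity $\ev_z(a)^+=\ev_{\bar z}(a^+)$ and the hypothesis $\bar z_i=z_i^{-1}$.  These imply that the $H'$-adjoint of any $A\in\ma(0,\uz)$ acting on $M(\theta)\otimes V(\ul)$ is the action of an element of $\ma(0,z_1^{-1},\dots,z_n^{-1})$ on $M(\theta^+)\otimes V(\ul)$; by the projective invariance of homogeneous Gaudin algebras (Theorem~\ref{projective-invariance}) this is matched, through the two Hom-space identifications, with the original $\ma_\theta^{trig}(\uz)$ acting on $V(\ul)_\mu$.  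Hence the trigonometric Gaudin algebra is self-adjoint under the induced form, and so acts by normal operators with respect to it.

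The main obstacle, and the step where the specific form of the hypothesis enters essentially, is to identify the $H'$-induced form on $V(\ul)_\mu$ with the standard Hermitian form up to a positive scalar.  The singular vector corresponding to $v\in V(\ul)_\mu$ has leading term $v_\theta\otimes v$ with further tails of the form $(f_{i_1}\cdots f_{i_k}v_\theta)\otimes(\cdots)$, and the leading $H'$-contribution $H(v_\theta,v_{\theta^+})\langle v,w\rangle=\langle v,w\rangle$ already reproduces the standard form.  The symmetry $\theta+\theta^+=\mu$ is precisely what forces the tail corrections, which involve matrix coefficients of $H$ on products of $f$'s applied to $v_\theta$ and $v_{\theta^+}$, to combine into a positive-real multiple of the standard pairing rather than introducing imaginary phases or off-diagonal cross-terms.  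Carrying out this identification through the recursive structure of singular vectors is the computational heart of the proof.
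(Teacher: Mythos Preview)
Your Step~4 is a genuine gap, and I do not see how to close it.  The $H'$-induced form on $V(\ul)_\mu$ pairs the singular vector $v_\theta\otimes v+\sum_{I\ne\emptyset}(f_I v_\theta)\otimes c_I(v)$ with $v_{\theta^+}\otimes w+\sum_{J\ne\emptyset}(f_J v_{\theta^+})\otimes c_J(w)$; the tail contribution is $\sum_{I,J}H(f_I v_\theta,f_J v_{\theta^+})\,\langle c_I(v),c_J(w)\rangle$ with $c_I$, $c_J$ nontrivial operators on $V(\ul)_\mu$ depending on $\theta$, $\theta^+$.  Nothing in the hypothesis $\theta+\theta^+=\mu$ forces this to be a scalar multiple of $\langle v,w\rangle$: it would require the operator $\sum_{I,J}H(f_I v_\theta,f_J v_{\theta^+})\,c_J^+ c_I$ to be a scalar, which has no reason to hold.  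Your claim that the symmetry ``forces the tail corrections \dots\ to combine into a positive-real multiple of the standard pairing'' is exactly the content of the proposition and is not supported by anything you have set up.  Even positivity (as opposed to scalarity) is unclear, since the Shapovalov-type pairing $H$ between two different Verma modules need not be sign-definite.

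The paper bypasses this entirely.  Rather than building an auxiliary form and trying to match it to the standard one, it computes the Hermitian adjoint of $\pi^\mu_\ul(\ma^{trig}_\theta(\uz))$ with respect to the \emph{standard} form directly, via two identities on the image in $\End V(\ul)_\mu$:
\begin{enumerate}
\item $\pi^\mu_\ul(\ma^{trig}_{\theta}(\uz))=\pi^\mu_\ul(\ma^{trig}_{-\theta-\mu}(z_1^{-1},\dots,z_n^{-1}))$, obtained by viewing $V(\ul)_\mu$ both as $(M(\theta)\otimes V(\ul))^{\fn_+}_{\theta+\mu}$ and as the coinvariant space $(M(\theta+\mu)^*\otimes V(\ul))_{\fn_+}$, and using the $PGL_2$-invariance of the $(n{+}2)$-point homogeneous Gaudin algebra;
\item $\pi^\mu_\ul(\ma^{trig}_{\theta}(\uz))^+=\pi^\mu_\ul(\ma^{trig}_{-\sigma(\theta)}(\overline z_1,\dots,\overline z_n))$, obtained by identifying the Hermitian dual of the Hom-space with another coinvariant space and tracking the antipode-twisted Cartan involution.
\end{enumerate}
Chaining these with the hypotheses $\overline z_i=z_i^{-1}$ and $\sigma(\theta)-\mu=\theta$ closes the loop and gives $\pi^\mu_\ul(\ma^{trig}_{\theta}(\uz))^+=\pi^\mu_\ul(\ma^{trig}_{\theta}(\uz))$ directly.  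The moral is that the projective invariance you invoke should be applied at the level of the image in $\End V(\ul)_\mu$ (where swapping $0\leftrightarrow\infty$ is legitimate), not at the level of forms on $M(\theta)\otimes V(\ul)$; doing so makes the construction of $H'$ and its comparison with the standard form unnecessary.
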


\begin{proof} The condition that $\ma_\theta^{trig}(z_1, \ldots, z_n)$ acts on $V(\ul)_{\mu}$ by normal operators with respect to a fixed Hermitian scalar product is a closed condition on the parameters $\theta$ and $z_i$, so we can always assume that these parameters are generic.
Denote by $\pi_\ul^\mu$ the homomorphism from $(U\fg^{\otimes n})^\fh$ to ${\rm End} \, V(\ul)_\mu$ and denote by $M(\theta)^*$ the restricted dual of the Verma module $M(\theta)$. From the projective invariance property of homogeneous Gaudin subalgebras, we have
\begin{lem}
    $\pi_\ul^\mu(\ma^{trig}_{\theta}(z_1,\ldots,z_n))=\pi_\ul^\mu(\ma^{trig}_{-\theta-\mu}(z_1^{-1},\ldots,z_n^{-1}))$.
\end{lem}
\begin{proof}
    The weight space $V(\ul)_\mu=\text{Hom}_{\fg}(M(\theta+\mu),M(\theta)\otimes V(\ul))$ is naturally a module over $(U\fg^{\otimes(n+2)}/U\fg^{\otimes(n+2)}\Delta(\fg))^\fg$, with $\pi_\ul^\mu(\ma^{trig}_{\theta}(z_1,\ldots,z_n))$ being identified with the image of $\ma(u_\infty,u_0,u_1,\ldots,u_n)$, where $u_\infty,u_0,u_i$ are the images of $\infty,0,z_i$, respectively, under a projective transformation of $\bp^1$. The latter vector space can be regarded as a  $(U\fg^{\otimes(n+1)}/U\fg^{\otimes(n+1)}\Delta(\fn_+))^{\fn_+}$-module in two equivalent ways.
    \begin{enumerate}
        \item Identifying the Hom-space $\text{Hom}_{\fg}(M(\theta+\mu),M(\theta)\otimes V(\ul))$ with $(M(\theta)\otimes V(\ul))^{\fn_+}_{\theta}$. The image of $\ma(u_\infty,u_0,u_1,\ldots,u_n)$ is then the image of the Gaudin algebra $\ma(0,z_1,\ldots,z_n)$ which is, by definition, $\pi_\ul^\mu(\ma^{trig}_{\theta}(\uz))$.
        
        \item Identifying the Hom-space $\text{Hom}_{\fg}((M(\theta+\mu),M(\theta)\otimes V(\ul))$ with the \emph{coinvariants space} $(M(\theta+\mu)^*\otimes V(\ul))_{\fn_+}^{-\theta}$. The image of $\ma(u_\infty,u_0,u_1,\ldots,u_n)$ is then the image of the Gaudin algebra $\ma(0,z_1^{-1},\ldots,z_n^{-1})$ which factors through $\ma^{trig}_{-\theta-\mu}(z_1^{-1},\ldots,z_n^{-1})$, since $-\theta-\mu$ is the lowest weight of $M(\theta+\mu)^*$, so it is $\pi_\ul^\mu(\ma^{trig}_{-\theta-\mu}(z_1^{-1},\ldots,z_n^{-1}))$.
    \end{enumerate}
\end{proof}

\begin{lem} We have
    $\pi^\mu_{\ul}(\ma^{trig}_{\theta}(z_1,\ldots,z_n))^+=\pi^\mu_{\ul}(\ma^{trig}_{-\sigma(\theta)}(\overline{z}_1,\ldots,\overline{z}_n))$
\end{lem}

\begin{proof}
    The Hermitian dual space to $V(\ul)_\mu=\text{Hom}_{\fg}(M(\theta+\mu),M(\theta)\otimes V(\ul))$ is the coinvariant space $V(\ul)_\mu^*=(M(\theta+\mu)\otimes M(\theta)^*\otimes V(\ul)^*)_\fg$ with the $(U\fg^{\otimes(n+2)}/U\fg^{\otimes(n+2)}\Delta(\fg))^\fg$-action twisted by the antiautomorphism $s\circ\sigma^{\otimes(n+2)}$ with $s$ being the antipode. The latter space then is isomorphic to $(M(-\sigma(\theta+\mu))^*\otimes M(-\sigma(\theta))\otimes V(\ul))_\fg$ as a $(U\fg^{\otimes(n+2)}/U\fg^{\otimes(n+2)}\diag(\fg))^\fg$-module. Since $$s\circ\sigma^{\otimes(n+2)}(\ma(z_\infty,z_0,z_1,\ldots,z_n))=\ma(\overline{z_\infty},\overline{z_0},\overline{z_1},\ldots,\overline{z_n}),$$ we have $\ma^{trig}_{\theta}(z_1,\ldots,z_n)^+=\ma^{trig}_{-\sigma(\theta)}(\overline{z_1},\ldots,\overline{z_n})$.
\end{proof}
Combining the assertions of the above Lemmas, we get 
\begin{gather*}
    \pi^\mu_{\ul}(\ma^{trig}_{\theta}(z_1,\ldots,z_n))^+=\pi^\mu_{\ul}(\ma^{trig}_{-\sigma(\theta)}(\overline{z_1},\ldots,\overline{z_n}))= \\ 
    =\pi^\mu_{\ul}(\ma^{trig}_{-\sigma(\theta)}(z_1^{-1},\ldots,z_n^{-1}))=\pi^\mu_{\ul}(\ma^{trig}_{\sigma(\theta)-\mu}(z_1,\ldots,z_n))=\pi^\mu_{\ul}(\ma^{trig}_{\theta}(z_1,\ldots,z_n)).
\end{gather*}
\end{proof}

\begin{thm}\label{th:trig-ss}
The family of trigonometric Gaudin subalgebras with $C \in \overline M_{n+2}^{comp}$ and generic $\theta \in \fh$ such that $\theta - \frac{1}{2} \mu \in \fh^{comp}$ acts with simple spectrum on $V(\ul)_{\mu}$ for any weight $\mu$ of $V(\ul)$.
\end{thm}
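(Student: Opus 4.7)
The plan is to follow the same strategy as in the proof of Theorem \ref{th:split-ss}, combining three ingredients: cyclicity from Proposition \ref{cyc}, normality from the Proposition immediately preceding Theorem \ref{th:trig-ss}, and the elementary fact (recorded in the Corollary following Proposition \ref{cyc}) that a commutative subalgebra of $\End V(\ul)_\mu$ which acts both semisimply and with a cyclic vector must have simple joint spectrum. The compact case is in fact cleaner than the split case, because normality here is established directly with respect to the \emph{standard} Hermitian form on $V(\ul)$, which restricts to a positive-definite form on each $V(\ul)_\mu$ without any Shapovalov-type deformation being needed.

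For cyclicity, I would fix any $\chi \in (\fh^{reg})^{comp}$ and apply Proposition \ref{cyc}: for all but finitely many $\theta \in \bc^\times \cdot \chi$, the algebra $\ma_\theta^{trig}(C)$ acts on $V(\ul)$ with a cyclic vector for every $C \in \overline M_{n+2}$. Because $\ma_\theta^{trig}(C) \subset (U\fg^{\otimes n})^{\fh}$ preserves the weight grading on $V(\ul)$, the weight-$\mu$ component of a cyclic vector is cyclic for the induced action on $V(\ul)_\mu$. Allowing $\chi$ to vary, this produces cyclicity on a Zariski-dense subset of the affine slice $\tfrac{1}{2}\mu + \fh^{comp}$, which is all one needs for a ``generic $\theta$'' statement.

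For normality (hence semisimplicity), I would use the Proposition immediately preceding the theorem, which gives the desired property over the open locus $M_{n+2}^{comp} \subset \overline M_{n+2}^{comp}$ cut out by the conditions $\overline{z_i} = z_i^{-1}$. Since by Theorem \ref{trigcomp} the subalgebras $\ma_\theta^{trig}(C)$ form a flat family faithfully parametrized by $\overline M_{n+2}$, the assignment $C \mapsto \ma_\theta^{trig}(C)|_{V(\ul)_\mu}$ is a morphism into a Grassmannian of subspaces of $\End V(\ul)_\mu$; the condition that every element of a given subspace commute with its Hermitian adjoint (with respect to a fixed positive-definite form) is Zariski-closed. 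It therefore propagates from $M_{n+2}^{comp}$ to its closure $\overline M_{n+2}^{comp}$.

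Combining the two inputs, for generic admissible $\theta$ and every $C \in \overline M_{n+2}^{comp}$, the algebra $\ma_\theta^{trig}(C)$ acts semisimply and with a cyclic vector on $V(\ul)_\mu$, which forces simple spectrum. The main point requiring care is the identification of $\overline M_{n+2}^{comp}$ as the closure inside $\overline M_{n+2}$ of the unit-circle locus $\{\overline{z_i} = z_i^{-1}\}/\bc^\times$, so that the closedness argument is legitimate; this should follow from the description of real forms of $\overline M_{n+2}$ in \cite{iklpr}. A second minor check is that the finite exceptional set in Proposition \ref{cyc} and the closed non-semisimple locus do not together swallow the entire allowed slice $\tfrac{1}{2}\mu + \fh^{comp}$, which is clear on dimensional grounds.
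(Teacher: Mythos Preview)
Your proposal is correct and follows essentially the same approach as the paper's own proof, which simply records that cyclicity (Proposition~\ref{cyc}) plus semisimplicity (the normality Proposition immediately before Theorem~\ref{th:trig-ss}) yields simple spectrum. You have in fact supplied more detail than the paper does: the paper's proof does not explicitly extend normality from the open locus $\{\overline{z_i}=z_i^{-1}\}$ to the closure $\overline M_{n+2}^{comp}$, whereas you spell out the closedness argument (exactly parallel to what is done in Proposition~\ref{t2} and Theorem~\ref{th:split-ss}). One small point: your phrasing ``allowing $\chi$ to vary'' over $(\fh^{reg})^{comp}$ is slightly awkward when $\mu\neq 0$, since $\bc^\times\cdot\chi$ need not meet the affine slice $\tfrac{1}{2}\mu+\fh^{comp}$ cleanly; the cleaner justification of your ``dimensional grounds'' remark is that the bad locus for cyclicity is a proper Zariski-closed subset of $\fh$, and a totally real affine subspace of maximal dimension cannot lie inside such a subset (its Zariski closure is all of $\fh$).
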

\begin{proof}
From the above discussion, we have 
\begin{itemize}
\item Cyclic vector for generic $\theta$.
\item Semisimplicity of action for $\theta-\frac{1}{2} \mu \in \fh^{comp}$.
\end{itemize}
The combination of these two facts implies the statement of the theorem.
\end{proof}

\bigskip

\noindent\footnotesize{
{\bf Aleksei Ilin} \\
Higher School of Modern Mathematics, MIPT, Russia \\
HSE University, Moscow, Russia \\
{\tt alex.omsk2@gmail.com}}\\

\noindent\footnotesize{
{\bf Joel Kamnitzer} \\
Department of Mathematics and Statistics, McGill University, Montreal QC, Canada \\
{\tt joel.kamnitzer@mcgill.ca}} \\

\noindent\footnotesize{
{\bf Leonid Rybnikov} \\
Department of Mathematics and Statistics,
University of Montreal, Montreal QC, Canada\\
{\tt leo.rybnikov@gmail.com}}
\end{document}